\documentclass[11pt,a4paper]{article}
\usepackage[utf8]{inputenc}
\usepackage{amsmath}
\usepackage{amsfonts}
\usepackage{amssymb}
\usepackage{amsthm}
\usepackage{setspace}
\usepackage{hyperref}
\usepackage{graphicx}
\usepackage{stmaryrd}
\usepackage{mathtools}
\usepackage{subcaption}
\usepackage[T1]{fontenc}
\usepackage{xcolor}
\usepackage{float}
\usepackage{tikz-cd}
\usepackage[noadjust]{cite}
\usepackage{cite}
\usepackage{ragged2e}
\usepackage[margin=1.2in]{geometry}
\graphicspath{ {./images/} }
\newtheorem{theorem}{Theorem}[section]
\newtheorem{proposition}[theorem]{Proposition}
\newtheorem{lemma}[theorem]{Lemma}
\newtheorem{corollary}[theorem]{Corollary}

\theoremstyle{definition}
\newtheorem{definition}[theorem]{Definition}
\newtheorem{notation}[theorem]{Notation}

\newtheorem{remark}[theorem]{Remark}
\newtheorem{conjecture}[theorem]{Conjecture}

\newtheorem*{mainques}{Main Question}

\newcommand{\mcg}{\operatorname{Mod}}

\newcommand{\lk}{\operatorname{lk}}
\newcommand{\qi}{\operatorname{QI}}
\newcommand{\aut}{\operatorname{Aut}}
\newcommand{\fix}{\operatorname{Fix}}
\newcommand{\actson}{\curvearrowright}

\begin{document}
	
	\begin{center}
		{\large \textbf{Rigidity and classification results for large-type Artin groups}}
	\end{center}
	
	\begin{center}
		Jingyin Huang, Damian Osajda and Nicolas Vaskou
	\end{center}
	
	\begin{abstract}
		\centering \justifying We compute the automorphism group of the intersection graph of many large-type Artin groups. This graph is an analogue of the curve graph of mapping class groups but in the context of Artin groups. As an application, we deduce a number of rigidity and classification results for these groups, including computation of outer automorphism groups, commensurability classification, quasi-isometric rigidity, measure equivalence rigidity, orbit equivalence rigidity, rigidity of lattice embedding, and rigidity of cross-product von Neumann algebra.
	\end{abstract}

	\noindent \rule{7em}{.4pt}\par
	
	\small
	
	\noindent 2020 \textit{Mathematics subject classification.} 20F36, 20F65, 20F28, 20E36, 37A20, 46L36.
	
	\normalsize
	
	\tableofcontents
	
	\section{Introduction}
	
	\subsection{Background and motivation}
	
	Let $\Gamma$ be a finite simplicial graph, with each edge labelled by an integer which is $\ge 2$. The \emph{Artin group with defining graph $\Gamma$}, denoted $A_{\Gamma}$, is the group defined by the following presentation: its generators are the vertices of $\Gamma$, and its relators are given by $$\underbrace{aba\cdots}_{m}=\underbrace{bab\cdots}_{m}$$ whenever $a$ and $b$ span an edge with label $m$. 
	Some well-known subclasses of Artin groups include braid groups, and \emph{right-angled} Artin groups, where all label of edges are $2$ (hence all relators are commutators).
	
	We are motivated by taking the class of Artin groups as a playground of understanding different forms of rigidity properties of groups (as mentioned in the abstract); and looking for new rigidity phenomenon. 
	Many earlier works in this direction are on right-angled Artin groups \cite{bestvina2008asymptotic,behrstock2008quasi,huang2016groups,huang2018commensurability,margolis2020quasi,horbez2022measure}. It was understood that several rigidity properties of more classical classes of groups, like higher rank lattices and mapping class groups of surfaces, either break down for right-angled Artin groups, or they still hold, but for a different reason. From the viewpoint of quasi-isometric rigidity and measure equivalence rigidity, right-angled Artin groups are actually quite ``flexible'', and known examples that demonstrate the ``flexibility'' of right-angled Artin groups crucially exploit the commuting relations of these Artin groups \cite[Section 11]{bestvina2008asymptotic}, \cite[Example 4.19]{MR3692971}, \cite[Section 4]{horbez2022measure}. This makes it also interesting to the study of Artin groups on the other extreme - \emph{large-type} Artin groups, where all edges have label $\ge 3$ (hence there are no commuting relations).
	
	Some classes of large-type Artin groups indeed satisfy much stronger rigidity properties compared to right-angled Artin groups, and there are even examples of large-type Artin groups satisfying form of rigidity which are still open for higher rank lattices and mapping class groups of surfaces \cite[Section 11]{horbez2020boundary}. However, a general picture of rigidity and classification of large-type Artin groups is still missing, as these results relies on a foundational work of Crisp \cite{crisp2005automorphisms}, which requires the defining graph to be \emph{triangle free} (i.e. $\Gamma$ does not contain 3-cycles). As Crisp already explained in his article, his method breaks down outside the triangle free situation. Currently, we do not even know in general what could be the generating set of the automorphism group of a large-type Artin group. Very recently, the third author initiated an in-depth study of several fundamental properties of large-type Artin groups whose defining graphs allow 3-cycles inside \cite{vaskou2023automorphisms}.
	
	Several aspects in the study of large-type Artin groups, including commensurability classification and computation of automorphism groups, as well as understanding advanced form of rigidity like quasi-isometric rigidity and measure equivalence rigidity, all reduce to the study of a single object, namely the \emph{intersection graphs} of large-type Artin groups, by previous works \cite{huang2017quasi,horbez2020boundary}. More precisely, we will study the Main Questions in Section~\ref{subsec:intro1}.
	
	\subsection{Main result: automorphism group of intersection graph}
	\label{subsec:intro1}
	
	\begin{definition}
		Suppose $A_\Gamma$ is a large-type Artin group. The \emph{intersection graph} of $A_\Gamma$, denoted by $I_\Gamma$, is a simplicial graph whose vertices are in 1-1 correspondence with $\mathbb Z$-subgroups of $A_\Gamma$ whose centraliser is not virtually abelian, and two vertices are adjacent if the associated $\mathbb Z$-subgroups commute.
	\end{definition}
	
	\begin{remark}
		The intersection graph is an analogue of the curve graph of mapping class groups in the setting of large-type Artin groups. In a few sporadic cases when the Artin groups are commensurable to some mapping class groups, the intersection graphs and the curve graphs are isomorphic.
		Intersection graphs were first introduced by Crisp \cite{crisp2005automorphisms}, under the name of $\Theta$-graph, in the special case of triangle free large-type Artin groups. Crisp's definition requires an adjustment for studying more general large-type Artin groups, and this is done in \cite{huang2017quasi}, where the intersection graph is actually defined for all 2-dimensional Artin groups, which contain large-type Artin groups as a subclass. It is worth mentioning that for studying right-angled Artin groups, there is also an analogue of curve graph, called the extension graph, introduced by Kim and Koberda \cite{kim2013embedability}.
	\end{remark}
	
	The graph automorphism group of $I_\Gamma$, denoted by $\aut(I_\Gamma)$, is endowed with the topology of pointwise convergence, which makes it a Polish group. Note that $I_\Gamma$ is locally infinite except for a few more or less trivial cases, so a priori $\aut(I_\Gamma)$ could possibly be a huge Polish group. Nevertheless, we ask the following.
	
	\begin{mainques}
		Suppose $A_\Gamma$ is of large-type. When is the Polish group $\aut(I_\Gamma)$ locally compact? When is $\aut(I_\Gamma)$ discrete?
	\end{mainques}
	
	The graph $I_\Gamma$ is a common invariant for studying all the forms of rigidity/classification aspects mentioned in the abstract, which motivates the main question.
	Very roughly speaking, the ``smaller'' $\aut(I_\Gamma)$ is, the more ``rigid'' $A_\Gamma$ is. 
	
	There is an obstruction to $\aut(I_\Gamma)$ being locally compact, which has its trace in the outer automorphism group of $A_\Gamma$. Recall that if a group $G$ admits a splitting $G=A*_CB$, for an element $z\in A$ that centralises $C$, the \emph{twist} by $z$ near $A$ is defined as the automorphism of $G$ which is the identity on $A$ and is the conjugation by $z$ on $B$ (see \cite{levitt2005automorphisms} for more general definition of twists). The most obvious twist for a large-type Artin group $A_\Gamma$ happens when $\Gamma$ has a separating vertex or a separating edge and consider a splitting of $A_\Gamma$ induced by this separating vertex or edge. Hence we will say $\Gamma$ is \emph{twistless} if $\Gamma$ does not contain separating vertex or edge.

	\begin{conjecture}
		\label{conj:main}
		Suppose $A_\Gamma$ is of large-type. Then
		\begin{enumerate}
			\item $\aut(I_\Gamma)$ is locally compact if and only if $\Gamma$ is connected and twistless, moreover, in this case, $\aut(I_\Gamma)$ is isomorphic to the cellular automorphism of the Cayley complex of $A_\Gamma$;
			\item $\aut(I_\Gamma)$ is discrete if and only if $\Gamma$ is connected, twistless and star rigid, moreover, in this case the natural action of $A_\Gamma$ on $I_\Gamma$ gives an embedding $A_\Gamma\to \aut(I_\Gamma)$ with finite index image.
		\end{enumerate}
	\end{conjecture}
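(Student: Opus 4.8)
The plan is to treat both parts with the same two-step architecture. For each ``only if'' implication I would exhibit enough explicit automorphisms of $I_\Gamma$ to witness the failure of local compactness (resp.\ of discreteness). For each ``if'' implication I would reconstruct, from the abstract graph $I_\Gamma$ alone, a locally finite geometric model $\mathcal X$ on which $A_\Gamma$ already acts cocompactly --- the Cayley complex of $A_\Gamma$, equivalently the Deligne complex $D_\Gamma$, which for large-type $\Gamma$ is $2$-dimensional and $\mathrm{CAT}(0)$ by Charney--Davis (cf.\ the intersection-graph set-up of \cite{huang2017quasi}) --- and then prove that the natural homomorphism $\aut(I_\Gamma)\to\aut_{\mathrm{cell}}(\mathcal X)$ is an isomorphism. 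Everything is understood modulo the finitely many sporadic $\Gamma$ with $I_\Gamma$ empty or locally finite, which the Main Question already sets aside.

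For the ``only if'' direction of (1): if $\Gamma$ is disconnected then $A_\Gamma$ is a nontrivial free product, and by malnormality of free factors every $\mathbb Z$-subgroup with non-virtually-abelian centraliser is conjugate into a factor, with two such commuting only when conjugate into a common factor; hence $I_\Gamma$ is a disjoint union of infinitely many copies of the graphs $I_{\Gamma_i}$, and the pointwise stabiliser of any finite vertex set contains a copy of $\operatorname{Sym}(\mathbb N)$ acting with infinite orbits and so fails to be compact, so $\aut(I_\Gamma)$ is not locally compact. If $\Gamma$ is connected but has a separating vertex or edge $e$, then $A_\Gamma$ splits as an amalgam over the parabolic $A_e$ with Bass--Serre tree $T$; writing $z_e$ for a generator of the infinite cyclic centre of $A_e$, the twist by $z_e$ (identity on one side, conjugation by $z_e$ on the other) is an automorphism of $A_\Gamma$, hence of $I_\Gamma$, and is nontrivial because it moves the standard $\mathbb Z$-subgroups on the far side that are not normalised by $z_e$. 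Given a finite $F\subseteq V(I_\Gamma)$, a twist along an edge of $T$ having $F$ on one side fixes $F$ pointwise while its powers give some vertex an infinite orbit; thus no basic identity neighbourhood of $\aut(I_\Gamma)$ has compact closure. The same twists --- or, in their absence, the ``local'' automorphisms that witness the failure of star rigidity --- lie in pointwise stabilisers of finite sets and are nontrivial, which settles the ``only if'' direction of (2).

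For the ``if'' direction of (1), assume $\Gamma$ connected and twistless. The crucial step is a combinatorial recognition theorem: using the known description of centralisers in large-type Artin groups, characterise intrinsically --- in terms of the combinatorics of links in $I_\Gamma$ --- the ``standard'' vertices (those coming from standard generators and from centres of dihedral parabolics) together with the incidence relations among them, and assemble this data into a complex $\mathcal X$ on which $\aut(I_\Gamma)$ acts simplicially, identifying $\mathcal X$ with a subdivision of the Cayley complex of $A_\Gamma$. Twistlessness enters exactly here: it removes the sole ambiguity in how the pieces of $\mathcal X$ glue together, so that $\mathcal X$, and hence $\aut(I_\Gamma)\to\aut_{\mathrm{cell}}(\mathcal X)$, is well-defined and bijective. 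Since $\Gamma$ is finite, $\mathcal X$ is locally finite, so $\aut_{\mathrm{cell}}(\mathcal X)$ in the topology of pointwise convergence is locally compact: a vertex stabiliser embeds as a closed subgroup of the product of the finite symmetric groups on the successive combinatorial spheres about the vertex, hence is profinite by a K\"onig's-lemma argument. This gives local compactness and the stated identification of $\aut(I_\Gamma)$. For (2), $\aut(I_\Gamma)\cong\aut_{\mathrm{cell}}(\mathcal X)$ is discrete iff vertex stabilisers are trivial; unwinding the definitions, a nontrivial one is precisely a nontrivial automorphism of $A_\Gamma$ fixing the closed star of a standard generator together with its incident dihedral data --- a failure of star rigidity. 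Hence when $\Gamma$ is connected, twistless and star rigid, $\aut(I_\Gamma)$ acts freely and cocompactly on the connected complex $\mathcal X$, and since $A_\Gamma$ also acts cocompactly, $A_\Gamma\hookrightarrow\aut(I_\Gamma)$ has finite index.

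The main obstacle --- and the reason this is stated as a conjecture --- is the recognition step in the presence of $3$-cycles in $\Gamma$. For triangle-free $\Gamma$ this is essentially Crisp's analysis of the $\Theta$-graph \cite{crisp2005automorphisms}, but once $\Gamma$ contains triangles the Deligne complex develops ``thick'' rank-$2$ behaviour, distinct dihedral parabolics can overlap and interact in ways invisible from the triangle-free viewpoint, and Crisp's method for locating standard subtrees inside the intersection graph breaks down. One needs a characterisation of standard vertices and of the local structure of $\mathcal X$ robust enough to survive triangles --- building on the third author's recent work \cite{vaskou2023automorphisms} on large-type Artin groups whose defining graphs contain $3$-cycles --- together with a proof that twistlessness really is the \emph{only} obstruction to rigid reconstruction. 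Establishing these under suitable additional hypotheses is presumably the content of the paper's main theorems, with the full conjecture left open.
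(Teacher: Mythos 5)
Your overall architecture --- explicit twist automorphisms for the ``only if'' directions, and a recognition-and-reconstruction of a locally finite geometric model for the ``if'' directions --- matches the paper's own treatment of this conjecture. The paper proves the ``only if'' direction in full (Proposition~\ref{lem:onlyif}): for a separating vertex or edge $e$ it takes a central $z\in A_e$, forms the twists $\phi_n$ by $z^n$, and observes that the induced sequence $\varphi_n\in\aut(I_\Gamma)$ lies in a vertex stabiliser yet has no convergent subsequence, so that stabiliser is not compact; and when $\Gamma$ is twistless but not star rigid it reuses the infinitely many elements of $\aut_0(D_\Gamma)$ constructed in Lemma~\ref{lem:star rigid}. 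Your twist argument is essentially the same. Your disconnected case (writing $A_\Gamma$ as a free product, using malnormality of free factors, and finding $\operatorname{Sym}(\mathbb N)$ inside a pointwise stabiliser) is a small, reasonable supplement; the paper's Proposition~\ref{lem:onlyif} only writes out the separating vertex/edge case.

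For the ``if'' directions your plan is the same in spirit as the paper's machinery: isolate exotic vertices via the local combinatorics of $5$-cycles and Farey-type modified links (Section~\ref{SectionIsolatingExoticVertices}), show types $T$ and $D$ are preserved (Section~\ref{subsecTD}), then transfer an automorphism of $I_\Gamma^{TD}$ to a simplicial automorphism of $D_\Gamma$ via the ``fundamental'' condition (Definitions~\ref{DefiFundGraphI}, \ref{DefiFundGraphII} and Theorem~\ref{thm:2ndiso}), and finally compare $\aut(D_\Gamma)$ with $\aut(C_\Gamma)$ (Proposition~\ref{prop:first comparison}). Where you write that ``twistlessness removes the sole ambiguity'' in gluing, however, the paper is more cautious: it does not claim that twistlessness alone yields fundamentality, proving this only when $\Gamma$ admits a twistless hierarchy terminating in twistless stars (Proposition~\ref{prop:fund}) or, via \cite{blufstein2024homomorphisms}, when $\Gamma$ is XXXL, and it poses the stronger statement as Conjecture~\ref{conj:deligne}. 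So your sketch is a touch more optimistic than what the paper actually establishes, and you would need to supply a proof of Conjecture~\ref{conj:deligne} to close the gap --- precisely the step you flag as the obstacle.

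Two small inaccuracies in the passage you present as routine: for a locally finite complex $\mathcal X$, $\aut_{\mathrm{cell}}(\mathcal X)$ is discrete iff vertex stabilisers are \emph{finite}, not trivial --- the global inversion always stabilises the identity vertex, and more generally the groups $\aut_0(D_\Gamma)$ in Lemma~\ref{lem:star rigid} can be finite without vanishing; correspondingly, $\aut(I_\Gamma)$ does not act freely on $\mathcal X$ even in the star-rigid case, and the finite-index conclusion for the image of $A_\Gamma\hookrightarrow\aut(I_\Gamma)$ should instead be drawn from the fact that two discrete groups acting properly and cocompactly on the same proper space with one contained in the other have finite index. Also, $D_\Gamma$ is not literally a subdivision of $C_\Gamma$; the paper relates them through the injective comparison homomorphism $h_1$, shown to be onto in Proposition~\ref{prop:first comparison}. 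None of these affect the high-level correctness of your plan, but each would need the stated fix.
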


	Recall that $\Gamma$ is \emph{star rigid} if any label-preserving automorphism of $\Gamma$ fixing the star of a vertex pointwise must be identity. The only if the direction of the conjecture is not hard, proved in Proposition~\ref{lem:onlyif}. The other direction is much more interesting, which we manage to prove in the following case. 
	
	\begin{definition} \label{DefiTwistlessHierarchy}
		An \emph{admissible decomposition of $\Gamma$} is made of a pair of full subgraphs $\Gamma_1$ and $\Gamma_2$ of $\Gamma$ such that $\Gamma=\Gamma_1\cup\Gamma_2$. This decomposition is \emph{twistless} if $\Gamma_1\cap\Gamma_2$ is not the empty set, or a vertex, or an edge. We say $\Gamma$ has a \emph{hierarchy} terminating in a class of graphs $\mathcal C$, if it is possible to start with $\Gamma$ and keep performing admissible decomposition for finitely many times until we end up with a collection of graphs in $\mathcal C$.
		
		To see why such hierarchy is a natural thing to consider,
		note that any graph has a hierarchy terminating in complete graphs, which corresponds to any Artin groups being an iterated amalgamated product starting from Artin groups with complete defining graphs. This is a standard consideration in the study of Artin groups.

		The hierarchy is \emph{twistless} if each step of the decomposition is twistless. We say $\Gamma$ is a \emph{twistless star}, if $\Gamma$ is the star of a vertex $v\in \Gamma$, and $\Gamma$ is twistless. Note that every complete graph is a twistless star. We will be considering graphs admitting hierarchies terminating on twistless stars, instead of graphs admitting hierarchies terminating on complete graphs.
	\end{definition}
	
	\begin{theorem}(=Theorem~\ref{thm:main})
		\label{thm:mainintro}
		Suppose $A_\Gamma$ is of large-type. If $\Gamma$ admits a twistless hierarchy terminating in twistless stars, then Conjecture~\ref{conj:main} holds.
	\end{theorem}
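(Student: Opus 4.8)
The plan is to run an induction along the twistless hierarchy. The base case is when $\Gamma$ is a twistless star (in particular a complete graph), where $A_\Gamma$ is a large-type Artin group whose defining graph has diameter $\le 2$; here $I_\Gamma$ should be small enough that one can compute $\aut(I_\Gamma)$ directly from the algebraic structure of $A_\Gamma$ (the $\mathbb{Z}$-subgroups with non-virtually-abelian centraliser, together with their commutation relations, should be recognizable from the geometry of the Deligne complex / Cayley complex), matching it with the cellular automorphism group of the Cayley complex. Since a twistless star is automatically connected and twistless, part (1) of Conjecture~\ref{conj:main} in the base case amounts to this direct identification, and part (2) follows by combining it with the star rigidity hypothesis to pin down the image of $A_\Gamma$ and its index. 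The ``only if'' directions are already supplied by Proposition~\ref{lem:onlyif}, so throughout we only need the ``if'' directions.

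For the inductive step, suppose $\Gamma = \Gamma_1 \cup \Gamma_2$ is a twistless admissible decomposition, so $\Lambda := \Gamma_1 \cap \Gamma_2$ is not empty, a vertex, or an edge, and each $\Gamma_i$ satisfies the inductive hypothesis (or is a twistless star). This decomposition induces a graph-of-groups splitting $A_\Gamma = A_{\Gamma_1} *_{A_\Lambda} A_{\Gamma_2}$. The key structural claim to establish is that the intersection graph $I_\Gamma$ is assembled from $I_{\Gamma_1}$, $I_{\Gamma_2}$ and $I_\Lambda$ in a controlled ``tree-of-spaces'' fashion along the Bass--Serre tree $T$ of the splitting: each $\mathbb{Z}$-subgroup contributing a vertex of $I_\Gamma$ is conjugate into one of the factors, and adjacency across different factors is forced to go through $A_\Lambda$. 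The fact that $\Lambda$ is not an edge or a vertex is exactly what guarantees this splitting is twistless, i.e.\ there are no exotic twist automorphisms, so that $\aut(I_\Gamma)$ cannot acquire an extra non-compact direction. One then argues: an automorphism of $I_\Gamma$ preserves the ``partition by factors'' up to the $A_\Gamma$-action (this uses a rigidity property of $I_\Lambda$ inside each $I_{\Gamma_i}$, detecting the edge groups metrically or combinatorially), hence descends to an automorphism of the tree $T$ compatible with automorphisms of the $I_{\Gamma_i}$; by induction these are cellular automorphisms of the respective Cayley complexes, and they glue to a cellular automorphism of the Cayley complex of $A_\Gamma$. Local compactness of $\aut(I_\Gamma)$ then follows from local compactness of the vertex stabilizers (the $\aut(I_{\Gamma_i})$, by induction) together with the fact that $\aut(T)$ acts with compact open stabilizers; discreteness in part (2) follows similarly once star rigidity is propagated through the hierarchy.

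The main obstacle I expect is the structural claim in the inductive step: showing that an arbitrary graph automorphism of $I_\Gamma$ respects the decomposition coming from the Bass--Serre tree. A priori $\aut(I_\Gamma)$ is a large Polish group and there is no reason an element should ``see'' the splitting. The right tool is presumably an intrinsic, automorphism-invariant characterization of the vertices of $I_\Gamma$ that come from the edge group $A_\Lambda$ — e.g.\ via the local structure of $I_\Gamma$ around such vertices (links, or the pattern of ``parallelism classes'' of adjacent vertices), exploiting that $\Lambda$ is large and is itself a subgraph to which the inductive hypothesis or the base case applies. A secondary difficulty is the base case itself: for a twistless star that is not complete, one must genuinely understand the $\mathbb{Z}$-subgroups of $A_\Gamma$ with non-virtually-abelian centralizer and their commutation graph, which is where the large-type hypothesis and the acylindrical hyperbolicity / CAT(0) geometry of the associated complexes enter, and where one ties the answer to the cellular automorphisms of the Cayley complex. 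Finally, one must check the gluing of cellular automorphisms is well-defined and exhausts everything — this is routine but needs care at the junction graph $\Lambda$, and is exactly where the twistless condition on $\Lambda$ prevents the count from being off by the would-be twist automorphisms.
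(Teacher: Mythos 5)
Your plan has the right macro-structure (induct along the twistless hierarchy, base case a twistless star, ``only if'' from Proposition~\ref{lem:onlyif}), but the execution of the inductive step is not what the paper does, and the step you flag yourself as ``the main obstacle'' is in fact the whole problem: there is no argument given that an automorphism of $I_\Gamma$ respects the Bass--Serre decomposition $A_\Gamma = A_{\Gamma_1} *_{A_\Lambda} A_{\Gamma_2}$. The paper never decomposes $I_\Gamma$ as a tree of graphs built from $I_{\Gamma_1}$, $I_{\Gamma_2}$, $I_\Lambda$, and for good reason --- the adjacency relation in $I_\Gamma$ does not respect the splitting in any obvious way (a standard tree can pass through translates of both factors, and two commuting stable $\mathbb{Z}$-subgroups need not be conjugate into the same factor). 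Instead the paper defines an intrinsic, purely combinatorial property of the ambient complex (``$\Gamma$ is fundamental'', Definition~\ref{DefiFundGraphI}/\ref{DefiFundGraphII}): every subgraph of $I_\Gamma^{TD}$ isomorphic, as a coloured graph, to the barycentric subdivision of $\Gamma$ singles out a unique fundamental domain $gK_\Gamma$ of $D_\Gamma$. This property is what is propagated along the hierarchy (Lemma~\ref{LemmaExtendingToTwistlessHierarchy}), and the induction is carried out entirely inside the one Deligne complex $D_\Gamma$, never comparing different Deligne complexes or intersection graphs as you propose. Once fundamentality is verified, one gets $\aut(I_\Gamma^{TD}) \cong \aut(D_\Gamma)$ (Theorem~\ref{thm:2ndiso}) and then $\aut(D_\Gamma) \cong \aut(C_\Gamma)$ (Proposition~\ref{prop:first comparison}); your proposal has no analogue of either comparison homomorphism, nor of the surjectivity argument (via the $\Phi$-map of Definition~\ref{def:Phi}) needed to conclude the isomorphism rather than just an embedding.

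There is also a prerequisite your proposal omits entirely. The graph $I_\Gamma$ has three kinds of vertices ($T$, $D$, and $E$, cf.\ Proposition~\ref{PropositionCentralisers}), and before any fundamental-domain-recognition argument can start one must show that graph automorphisms of $I_\Gamma$ preserve these types. This is the content of all of Sections~\ref{SectionIsolatingExoticVertices} and~\ref{subsecTD} (Corollary~\ref{CorollaryExoticSentToExotic}, Proposition~\ref{PropAutomorphismsPreserveType}), and it is genuinely hard: the isolation of type-$E$ vertices runs through the identification of the relevant $5$-pieces $I_{\Delta_{333}}$ with the curve graph of the $5$-punctured sphere and a Farey-graph analysis of modified links, while separating $T$ from $D$ uses the bipartite structure of $I_\Gamma^{TD}$ together with a delicate counting argument (Lemma~\ref{LemmaDifference}) via chains of $6$-cycles in $D_\Gamma$. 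Without this type-preservation step your sketch cannot even begin to associate fundamental domains to configurations in $I_\Gamma$. In short: the induction is the right shape, but the induced object is wrong (the property ``fundamental'' of $\Gamma$ relative to $D_\Gamma$, not the group $\aut(I_\Gamma)$), and the two genuinely hard ingredients --- type-preservation, and the local-to-global gluing of fundamental domains across a non-edge intersection --- are both absent.
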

	
	As an example, if $\Gamma$ is the 1-skeleton of a triangulation of a closed $n$-manifold with $n\ge 2$, then $\Gamma$ satisfies the assumption of Theorem~\ref{thm:mainintro}, hence $\aut(I_\Gamma)$ is locally compact, see Remark~\ref{rmk:example}.

	By contrast, the analogue object in the world of right-angled Artin groups, namely the extension graph, never has locally compact automorphism group.
	
	Theorem~\ref{thm:mainintro} relies on a more general (and more technical) theorem converting Conjecture~\ref{conj:main} to a property of the modified Deligne complexes associated with the Artin groups, see Corollary~\ref{cor:convert} for a precise statement, which we believe could be a useful tool towards fully resolve Conjecture~\ref{conj:main}.
	
	While we prove that this property holds true in the case where $\Gamma$ admits a twistless hierarchy terminating in twistless stars, the authors of \cite{blufstein2024homomorphisms} prove that provided $A_{\Gamma}$ is of XXXL-type (i.e. all coefficient are $\geq 6$), this property holds true whenever $\Gamma$ is twistless, which provide another strong evidence for Conjecture~\ref{conj:main}.
	
	In particular, from Corollary~\ref{cor:convert}, we also deduce the following.
	
	\begin{theorem}
		Suppose that $A_{\Gamma}$ is of XXXL-type and $\Gamma$ is connected and twistless. Then Conjecture~\ref{conj:main} holds.
	\end{theorem}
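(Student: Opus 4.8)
The plan is to obtain this theorem in exactly the same way Theorem~\ref{thm:mainintro} is obtained, namely as a formal consequence of Corollary~\ref{cor:convert} once its hypothesis is checked. Since the ``only if'' directions of both items of Conjecture~\ref{conj:main} are established in Proposition~\ref{lem:onlyif} with no restriction on the edge labels beyond large-type (and XXXL-type is a subclass of large-type), it suffices to verify the ``if'' directions, i.e.\ that $\aut(I_\Gamma)$ is locally compact (resp.\ discrete) and has the asserted description, whenever $\Gamma$ is connected, twistless (resp.\ also star rigid).

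Recall that Corollary~\ref{cor:convert} reduces Conjecture~\ref{conj:main} for $A_\Gamma$ to a rigidity property of the modified Deligne complex $D_\Gamma$ on which $A_\Gamma$ acts properly and cocompactly: informally, that the canonical homomorphism from the group of cellular automorphisms of $D_\Gamma$ (equivalently, of the Cayley complex of $A_\Gamma$) to $\aut(I_\Gamma)$ is an isomorphism, so that every automorphism of the intersection graph is realised by, and determined by, a cellular automorphism of $D_\Gamma$. Granting this, local compactness and discreteness of $\aut(I_\Gamma)$, together with the identification of $\aut(I_\Gamma)$ with cellular automorphisms of the Cayley complex and the finite-index embedding $A_\Gamma \to \aut(I_\Gamma)$, are read off from the geometry of the action $A_\Gamma \actson D_\Gamma$, with the twistless and star rigid hypotheses entering precisely as in the proof of Theorem~\ref{thm:mainintro}. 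So I would first check that the standing hypotheses make Corollary~\ref{cor:convert} applicable: XXXL-type forces $A_\Gamma$ to be large-type and hence $2$-dimensional with $D_\Gamma$ the relevant $\mathrm{CAT}(0)$ complex, and connectedness of $\Gamma$ guarantees that $D_\Gamma$ and $I_\Gamma$ are connected.

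It then remains only to establish the required rigidity property of $D_\Gamma$ under the hypotheses ``$A_\Gamma$ XXXL-type, $\Gamma$ connected and twistless''. This is exactly the content of \cite{blufstein2024homomorphisms}: the bound $\ge 6$ on every label yields strong local rigidity of the dihedral Artin subgroups and of the links of vertices in $D_\Gamma$, which, combined with twistlessness of $\Gamma$ (no separating vertex or edge), upgrades to the global statement that every automorphism of $I_\Gamma$ extends to a cellular automorphism of $D_\Gamma$. Substituting this into Corollary~\ref{cor:convert} gives Conjecture~\ref{conj:main} for $A_\Gamma$, which is the assertion of the theorem.

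The reduction step itself is immediate once Corollary~\ref{cor:convert} is in place; the one genuinely substantial ingredient, which I would cite rather than reprove, is the Deligne-complex rigidity property supplied by \cite{blufstein2024homomorphisms}. The only point requiring care on our side is the bookkeeping that the hypotheses of Corollary~\ref{cor:convert} are met verbatim for all connected twistless XXXL-type $\Gamma$, which is routine given that large-type groups are $2$-dimensional and that the relevant connectivity of $I_\Gamma$ and $D_\Gamma$ follows from connectivity of $\Gamma$.
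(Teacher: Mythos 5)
Your proposal is correct and follows essentially the same route the paper takes: the ``only if'' directions come from Proposition~\ref{lem:onlyif}; the ``if'' directions are reduced, via Proposition~\ref{prop:convert} (the per-graph form underlying Corollary~\ref{cor:convert}), to the graph $\Gamma$ being \emph{fundamental} in the sense of Definition~\ref{DefiFundGraphI}; and the fundamentality of connected twistless XXXL graphs is imported from \cite{blufstein2024homomorphisms}. One small imprecision worth flagging: the ingredient supplied by \cite{blufstein2024homomorphisms} is the fundamentality property itself (a statement about uniqueness of the translate $gK_\Gamma$ matched to a $\Gamma$-characteristic subgraph of $I_\Gamma^{TD}$), not directly the assertion that $\aut(D_\Gamma)\to\aut(I_\Gamma)$ is an isomorphism; the latter is deduced inside the present paper (Theorem~\ref{thm:2ndiso} together with Lemma~\ref{lem:third comparison} and Proposition~\ref{prop:first comparison}). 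You should also record explicitly that twistlessness rules out leaf vertices and that the XXXL hypothesis forces $\Gamma\neq\Delta_{333}$, so the standing hypotheses of Proposition~\ref{prop:convert} are indeed satisfied.
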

	
	In the following sections (Section~\ref{Section:isomorphism} to Section~\ref{Section:W*}), we discuss a variety of applications of the main result. And in Section~\ref{Secfurtherresult}, we discuss further results on a purely combinatorial criterion on how to generalize all the applications we have. We will discuss most applications under the assumption of twistless hierarchy, then discuss how to adjust them for XXXL case, as well as some more general criterion which could potentially give the optimal results.
	
	\subsection{Applications to automorphisms and isomorphism classification} \label{Section:isomorphism}
	
	The question of determining the full automorphism group of an Artin group is wide open in general, although it has been solved in specific cases, such as for right-angled Artin groups \cite{servatius1989automorphisms}. In the world of large-type Artin groups, a generating set has been found for so-called CLTTF Artin groups (\cite{crisp2005automorphisms}), and more recently, a presentation has been given for large-type free-of-infinity Artin groups (\cite{vaskou2023automorphisms}).
	
	For $A_\Gamma$, we let $\aut_{\Gamma}(A_{\Gamma})$ be the subgroup of the automorphism group of $A_\Gamma$ generated by inner automorphisms, the global inversion that sends every generator to its inverse, and automorphisms induced by label-preserving automorphisms of $\Gamma$. This subgroup was extensively studied and showed to have a very natural interpretation in \cite{jones2024fixed}.
	
	While the general case of large-type Artin groups remains open, we extend the result of \cite{vaskou2023automorphisms} and compute $\aut(A_{\Gamma})$ for many new Artin groups:
	
	\begin{theorem}(=Corollary~\ref{cor:qi}) \label{ThmAutomorphismsIntro}
		Suppose $A_\Gamma$ is a large-type Artin group such that $\Gamma$ admits a twistless hierarchy terminating in twistless stars. Then $\aut(A_{\Gamma}) = \aut_{\Gamma}(A_{\Gamma})$. In particular, the outer automorphism group of $A_\Gamma$ is finite, and it is generated by the global inversion and label-preserving automorphisms of the defining graph $\Gamma$.
	\end{theorem}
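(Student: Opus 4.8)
The plan is to deduce the automorphism computation from the rigidity of the intersection graph (Theorem~\ref{thm:mainintro}, equivalently Conjecture~\ref{conj:main}) together with the previously-established reduction of \cite{huang2017quasi} (and the functoriality used in \cite{horbez2020boundary}). The starting point is that any automorphism $\phi \in \aut(A_\Gamma)$ permutes the conjugacy classes of $\mathbb{Z}$-subgroups whose centralisers are not virtually abelian, and preserves the commutation relation between them; hence $\phi$ induces a graph automorphism $\phi_* \in \aut(I_\Gamma)$, yielding a homomorphism $\Psi\colon \aut(A_\Gamma)\to\aut(I_\Gamma)$. First I would check that $\Psi$ restricts to the natural embedding $A_\Gamma \hookrightarrow \aut(I_\Gamma)$ on inner automorphisms (this is essentially the statement that the $A_\Gamma$-action on $I_\Gamma$ is the one induced by conjugation), and that $\aut_\Gamma(A_\Gamma)$ maps onto the ``obvious'' finite-index extra symmetries coming from the global inversion and label-preserving graph automorphisms.

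Next I would invoke Theorem~\ref{thm:mainintro}: since $\Gamma$ admits a twistless hierarchy terminating in twistless stars, $\Gamma$ is in particular connected and twistless, so part~(1) of Conjecture~\ref{conj:main} identifies $\aut(I_\Gamma)$ with the cellular automorphism group of the Cayley complex of $A_\Gamma$; and part~(2), once one also knows star rigidity (which, as noted in Definition~\ref{DefiTwistlessHierarchy} and the surrounding discussion, is automatic here — a twistless star is star rigid, and this property is inherited along the hierarchy), shows that $A_\Gamma \to \aut(I_\Gamma)$ has finite index image. Combining these, $\aut(I_\Gamma)$ is a finite extension of $A_\Gamma$, and more precisely it is generated by $A_\Gamma$ together with the finite group generated by the global inversion and the label-preserving automorphisms of $\Gamma$; hence the image of $\Psi$ is contained in $\Psi(\aut_\Gamma(A_\Gamma))$.

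It then remains to control the kernel of $\Psi$, i.e. to show that an automorphism of $A_\Gamma$ acting trivially on $I_\Gamma$ is inner. The key input is that the dihedral Artin subgroups $A_{ab} = \langle a, b\rangle$ (for $ab$ an edge of $\Gamma$, with centre generated by a ``Garside-like'' element whose centraliser is non-virtually-abelian) are recovered from the vertices and edges of $I_\Gamma$, so an element of $\ker\Psi$ preserves each such subgroup up to conjugacy; using the acylindricity / normaliser control available in the $2$-dimensional large-type setting (and the fact that $\Gamma$ is connected so these subgroups generate), one upgrades this to the automorphism being a conjugation. This is the standard ``from graph rigidity to group rigidity'' argument, parallel to Ivanov's theorem for mapping class groups and to Crisp's argument in the triangle-free case; I would cite \cite{huang2017quasi} and \cite{vaskou2023automorphisms} for the pieces of this that are already in place, and supply the bookkeeping for the hierarchy case.

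Putting these together gives $\aut(A_\Gamma) = \aut_\Gamma(A_\Gamma)$, and the ``in particular'' statements follow immediately: $\mathrm{Out}(A_\Gamma)$ is the image of the finite group generated by the global inversion and label-preserving graph automorphisms, modulo those that turn out to be inner, hence finite and generated as claimed. The main obstacle I anticipate is \emph{not} the graph-rigidity input (that is exactly Theorem~\ref{thm:mainintro}) but rather the injectivity-up-to-inner-automorphisms step: verifying that the only automorphisms inducing the identity on $I_\Gamma$ are inner requires knowing that $I_\Gamma$ ``sees enough'' of the group — concretely, that the collection of distinguished $\mathbb{Z}$-subgroups together with their pairwise commutation determines the standard generating data up to the finite ambiguity — and this is where the twistless-hierarchy hypothesis is genuinely used beyond just ``connected and twistless'', to rule out exotic symmetries that would otherwise be invisible to the intersection graph.
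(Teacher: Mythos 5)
Your broad strategy---map $\aut(A_\Gamma)$ into $\aut(I_\Gamma)$, exploit the identification of $\aut(I_\Gamma)$ with something geometric, and finish with a Crisp-style argument on standard dihedrals---is the right spirit, and you've correctly localized where the twistless-hierarchy hypothesis enters (it gives the ``fundamental'' property, Proposition~\ref{prop:fund}, which drives Theorem~\ref{thm:2ndiso}). However, there is a genuine gap at the step where you invoke part~(2) of Conjecture~\ref{conj:main}.

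You assert that star rigidity ``is automatic here --- a twistless star is star rigid, and this property is inherited along the hierarchy.'' This is false, and the paper never claims it. A twistless star being star rigid as a graph by itself says nothing about the ambient $\Gamma$: one can easily build a $\Gamma$ with a twistless hierarchy terminating in twistless stars that nevertheless admits a nontrivial label-preserving automorphism fixing the closed star of some vertex pointwise (e.g.\ certain highly symmetric triangulated spheres). The theorem you are proving is precisely supposed to cover such $\Gamma$. Without star rigidity, $\aut(I_\Gamma)\cong\aut(C_\Gamma)$ is locally compact but generally \emph{not} discrete, $A_\Gamma$ is \emph{not} of finite index in it, and the ``$\aut(I_\Gamma)$ is a finite extension of $A_\Gamma$'' statement you base the rest of your argument on is simply unavailable. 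Your proposed route therefore only proves the theorem under the extra assumption of star rigidity.

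The paper's actual proof (Assertion~4 of Proposition~\ref{prop:qi}) avoids discreteness entirely. Given $\alpha\in\aut(A_\Gamma)$, one converts it to an automorphism of $D_\Gamma$ via $h_2^{-1}\circ h_3\circ\Phi$ (this is where fundamentality is used), then corrects by an inner automorphism so that the induced $\alpha'\in\aut(D_\Gamma)$ preserves the base chamber $F$, extracts a label-preserving graph automorphism from $\alpha'|_{\lk(v_0)}$, corrects by it so $\alpha'$ fixes $F$ pointwise, and concludes from Crisp's \cite[Lemma 40]{crisp2005automorphisms} and connectivity of $\Gamma$ that $\alpha$ is either the identity or the global inversion. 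No finite-index statement is needed. If you want to salvage your outline, you should replace the ``finite extension'' step with this stabiliser-of-a-chamber argument, which works in the merely locally compact case.

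A secondary, smaller issue: you spend a paragraph on ``controlling the kernel of $\Psi$,'' i.e.\ showing automorphisms trivial on $I_\Gamma$ are inner. This is not where the difficulty lies in the paper's proof. Injectivity of the relevant comparison maps ($\Phi$, $h_2$, $h_3$) is established separately (Proposition~\ref{prop:Phi injective}, Lemma~\ref{lem:second comparison}, Lemma~\ref{lem:third comparison}) and does not require the twistless-hierarchy hypothesis at all; the hierarchy hypothesis is used for \emph{surjectivity} of $h_2$ (Theorem~\ref{thm:2ndiso}), which is what lets one realize an abstract automorphism of $I_\Gamma^{TD}$ as a genuine automorphism of $D_\Gamma$ and hence eventually of $C_\Gamma$ or of $A_\Gamma$.
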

	
	The isomorphism problem, that of determining whether two presentation graphs $\Gamma$ and $\Gamma'$ give rise to isomorphic Artin groups, is also open in general. A conjecture from \cite{brady2002rigidity} is that $A_{\Gamma}$ and $A_{\Gamma'}$ are isomorphic if and only if $\Gamma'$ can be obtained from $\Gamma$ by a series of diagram twists. This conjecture has been proved true for all large-type Artin groups (see \cite{vaskou2023isomorphism}). In \cite{martin2023characterising}, it was further proved true while only assuming that one of the two Artin groups is large-type.
	
	Another natural question is to ask when two Artin groups $A_{\Gamma}$ and $A_{\Gamma'}$ can be commensurable (i.e. they have isomorphic finite index subgroups). The only results concerning large-type Artin groups is that of Crisp (\cite{crisp2005automorphisms}) for some triangle-free Artin groups.
	
	In this paper, we recover a solution to the isomorphism problem for many large-type Artin groups, and we provide a solution to the question of commensurability for many new Artin groups:

	\begin{theorem}(=Corollary~\ref{cor:classification1}) \label{ThmIsomorphismsIntro}
		Suppose $A_\Gamma$ and $A_{\Gamma'}$ are two large-type Artin groups such that $\Gamma$ admits a twistless hierarchy terminating in twistless stars. 
		Then the following are equivalent:
		\begin{enumerate}
			\item $A_\Gamma$ and $A_{\Gamma'}$ are isomorphic;
			\item $A_\Gamma$ and $A_{\Gamma'}$ are commensurable;
			\item there is a label-preserving isomorphism between $\Gamma$ and $\Gamma'$.
		\end{enumerate}
	\end{theorem}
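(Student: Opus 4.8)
The plan is to establish the cycle $(3)\Rightarrow(1)\Rightarrow(2)\Rightarrow(3)$. The implication $(3)\Rightarrow(1)$ is immediate: a label-preserving graph isomorphism $\Gamma\to\Gamma'$ carries the Artin presentation of $A_\Gamma$ to that of $A_{\Gamma'}$, hence induces a group isomorphism. The implication $(1)\Rightarrow(2)$ is trivial, since a group is commensurable with itself. Thus the entire content is $(2)\Rightarrow(3)$; as a byproduct one also obtains the a priori stronger conclusion that commensurable such Artin groups are already isomorphic, recovering in particular the solution to the isomorphism problem in this range.

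For $(2)\Rightarrow(3)$ I would first reduce to a statement about intersection graphs. The graph $I_\Gamma$ is built from $A_\Gamma$ by a purely group-theoretic recipe --- its vertices are the commensurability classes of $\mathbb Z$-subgroups whose centraliser is not virtually abelian, and edges record commuting --- and each ingredient is stable under passage to a finite-index subgroup: a finite-index subgroup meets every commensurability class of $\mathbb Z$-subgroup, being virtually abelian is a finite-index-insensitive property of a centraliser, and in these Artin groups the relevant centralisers are unchanged when a $\mathbb Z$-subgroup is replaced by a finite-index subgroup. Hence, writing $I_H$ for the graph obtained by applying the same recipe to a group $H$, one has $I_\Gamma\cong I_H$ for every finite-index $H\le A_\Gamma$; see also \cite{huang2017quasi}. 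Consequently, if $H\le A_\Gamma$ and $H'\le A_{\Gamma'}$ are finite-index subgroups with $H\cong H'$, then $I_\Gamma\cong I_H\cong I_{H'}\cong I_{\Gamma'}$ as simplicial graphs. (Alternatively one could route this through quasi-isometries, since commensurable groups are quasi-isometric.)

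The second and main step is to recover the \emph{labelled} defining graph from the mere isomorphism type of $I_\Gamma$; this is where the hypothesis on $\Gamma$ enters, via Theorem~\ref{thm:mainintro} (through Corollary~\ref{cor:convert}). A graph isomorphism $\phi\colon I_\Gamma\to I_{\Gamma'}$ induces a topological group isomorphism $\aut(I_\Gamma)\cong\aut(I_{\Gamma'})$. By Theorem~\ref{thm:mainintro}, $\aut(I_\Gamma)$ is locally compact and is canonically identified with the group of cellular automorphisms of the Cayley complex $\hat{X}_{\Gamma}$ of $A_\Gamma$ (equivalently, of the modified Deligne complex), inside which $A_\Gamma$ acts geometrically. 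Since $\aut(I_{\Gamma'})\cong\aut(I_\Gamma)$ is then also locally compact, transporting this structure along $\phi$ lets one work entirely inside $\aut(I_\Gamma)\cong\aut(\hat{X}_{\Gamma})$, and in particular shows a posteriori that $\Gamma'$ is also of the required form, with a cellular isomorphism $\hat{X}_{\Gamma}\cong\hat{X}_{\Gamma'}$. Finally, the cell structure of $\hat{X}_{\Gamma}$ --- the $1$-cells and the $2$-cells of its quotient by $A_\Gamma$, together with the number of sides of the latter --- reads off the vertices, the edges, and the edge-labels of $\Gamma$; doing the same on the $\Gamma'$-side and using $\hat{X}_{\Gamma}\cong\hat{X}_{\Gamma'}$ produces a label-preserving isomorphism $\Gamma\to\Gamma'$, closing the cycle.

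I expect the genuine obstacle to be this last reconstruction step: making precise how the labelled defining graph is visible inside the locally compact group $\aut(I_\Gamma)$, and checking that it is canonical enough that an abstract isomorphism of automorphism groups yields a label-preserving --- not merely an unlabelled --- isomorphism of defining graphs. This must reuse the fine dictionary, produced while proving Theorem~\ref{thm:mainintro}, between the combinatorics of $I_\Gamma$ and the cells of $\hat{X}_{\Gamma}$, in particular the intrinsic characterisation of which vertices and edges of $I_\Gamma$ correspond to standard generators, to dihedral parabolic subgroups, and to the labels of those subgroups. A secondary subtlety is the direction of the logic noted above: one must deploy the structural consequences of Theorem~\ref{thm:mainintro} on the $\Gamma'$-side before knowing $\Gamma\cong\Gamma'$, which is why the argument is set up by transporting everything along $\phi$ into $\aut(I_\Gamma)\cong\aut(\hat{X}_{\Gamma})$ rather than by applying Theorem~\ref{thm:mainintro} directly to $\Gamma'$.
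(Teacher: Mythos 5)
Your trivial implications are fine, and the reduction from commensurability to an isomorphism of intersection graphs is correct (the paper routes this through quasi-isometry and \cite[Theorem~10.16]{huang2017quasi}, as you note parenthetically). The gap is in the main step. You propose to transport the graph isomorphism $\phi:I_\Gamma\to I_{\Gamma'}$ to an identification $\aut(I_\Gamma)\cong\aut(I_{\Gamma'})$ of Polish groups and then read off a cellular isomorphism of Cayley complexes, claiming that $\Gamma'$ is "a posteriori of the required form." This does not follow. Knowing $\aut(I_{\Gamma'})$ is locally compact tells you (via Proposition~\ref{lem:onlyif}) that $\Gamma'$ is connected and twistless, but to say $\aut(I_{\Gamma'})\cong\aut(C_{\Gamma'})$ you would also need $\Gamma'$ to be fundamental --- and that is exactly what Conjecture~\ref{conj:deligne} asserts but the paper does not prove for an arbitrary twistless graph. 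Moreover, even granting an abstract isomorphism of topological groups $\aut(C_\Gamma)\cong\aut(I_{\Gamma'})$, there is no mechanism in your sketch that produces an actual cell-preserving map $C_\Gamma\to C_{\Gamma'}$ between the two complexes; automorphism groups are not obviously faithful invariants here, and you never say how the cells of the $\Gamma'$-side are to be located inside $\aut(C_\Gamma)$.

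The paper's proof of the step $I_\Gamma\cong I_{\Gamma'}\Rightarrow \Gamma\cong\Gamma'$ is deliberately one-sided precisely to sidestep this bootstrapping issue. Lemma~\ref{lem:classification} works directly with a single graph isomorphism $\alpha:I_\Gamma\to I_{\Gamma'}$: it first shows (Corollary~\ref{CorollaryExoticSentToExotic} and Proposition~\ref{PropAutomorphismsPreserveType}, which only need connectivity on both sides) that $\alpha$ preserves vertex types and hence gives a bijection between the type~$2$ vertices of the Deligne complexes; it then extends that bijection to an isometric embedding $\varphi:D_\Gamma\to D_{\Gamma'}$ following Crisp's argument; and finally it proves $\varphi$ is surjective using that $D_{\Gamma'}$ has the geodesic extension property (once one rules out leaf vertices of $\Gamma'$ by a local argument on links). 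The label-preserving isomorphism of defining graphs is then read off from $\varphi$ restricted to the closed star of a type~$0$ vertex. The hypothesis that $\Gamma$ has a twistless hierarchy is consumed through Proposition~\ref{prop:fund} (to supply the fundamentality needed for the isometric-embedding construction), not through the local compactness of $\aut(I_\Gamma)$. Your plan of recovering the defining graph from the cell structure of the Cayley complex is the right final step, but you need to build a genuine map between the spaces $D_\Gamma\to D_{\Gamma'}$ first; passing to automorphism groups loses exactly that information.
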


	\subsection{Applications to QI rigidity and classification}
	\label{subsec:intro2}
	We say two groups are \emph{virtually isomorphic}, if they are isomorphic up to quotienting by finite normal subgroups and passing to finite index subgroups.
	
	Let $\Delta_{333}$ be a triangle such that each edge is labelled by $3$.
	For the study of quasi-isometric rigidity of large-type Artin groups $A_\Gamma$, it is only interesting to consider the case when $\Gamma\neq\Delta_{333}$, as when $\Gamma=\Delta_{333}$, $A_\Gamma$ is a finite index subgroup of the mapping class group of the $5$-punctured sphere, whose quasi-isometric rigidity is already known \cite{hamenstaedt2005geometry,behrstock2008quasi}.
	
	\begin{theorem}(=Corollary~\ref{cor:qi})
		\label{theo:qi}
		Suppose $A_\Gamma$ is a large-type Artin group with $\Gamma\neq \Delta_{333}$ such that $\Gamma$ admits a twistless hierarchy terminating in twistless stars.  Then 
		\begin{enumerate}
			\item any self quasi-isometry of $A_\Gamma$ is at bounded distance from an automorphism of the Cayley complex $C_\Gamma$;
			\item the quasi-isometry group of $A_\Gamma$ is isomorphic to $\aut(C_\Gamma)$;
			\item any finitely generated group quasi-isometric to $A_\Gamma$ is virtually isomorphic to a uniform lattice in the locally compact topological group $\aut(C_\Gamma)$.
		\end{enumerate}
		If in addition $\Gamma$ is star rigid, then 
		\begin{enumerate}
			\item any self quasi-isometry of $A_\Gamma$ is at bounded distance from an automorphism of $A_\Gamma$;
			\item the quasi-isometry group of $A_\Gamma$ is isomorphic to $\aut_{\Gamma}(A_{\Gamma})$ ;
			\item any finitely generated group quasi-isometric to $A_\Gamma$ is virtually isomorphic to $A_\Gamma$.
		\end{enumerate}
	\end{theorem}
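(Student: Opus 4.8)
The plan is to deduce everything from the quasi-isometric rigidity framework already available in the literature (Huang--Osajda, and the work of Horbez cited in the introduction), whose input is precisely control over $\aut(I_\Gamma)$ and the action of $A_\Gamma$ on its Cayley complex $C_\Gamma$. The starting point is that for a $2$-dimensional Artin group $A_\Gamma$ with $\Gamma \neq \Delta_{333}$, any self-quasi-isometry of $A_\Gamma$ coarsely permutes the "standard flats" (maximal quasi-flats, equivalently the cosets of $\mathbb Z^2$-subgroups coming from edges), and hence induces a graph automorphism of the intersection graph $I_\Gamma$; conversely, coarse geometry lets one promote an automorphism of $I_\Gamma$ to a cellular automorphism of $C_\Gamma$. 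So the first step is to invoke this machinery (from \cite{huang2017quasi} and the boundary-rigidity results of \cite{horbez2020boundary}) to reduce the computation of $\qi(A_\Gamma)$ to the computation of $\aut(I_\Gamma)$, together with the identification of $\aut(I_\Gamma)$ with $\aut(C_\Gamma)$.

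Second, I would feed in Theorem~\ref{thm:mainintro}: under the hypothesis that $\Gamma$ admits a twistless hierarchy terminating in twistless stars, Conjecture~\ref{conj:main}(1) holds, so $\aut(I_\Gamma)$ is locally compact and is isomorphic to the cellular automorphism group of the Cayley complex $C_\Gamma$ (note $\Gamma$ is automatically connected and twistless, since a twistless hierarchy starts from a connected twistless $\Gamma$ by Definition~\ref{DefiTwistlessHierarchy}). Combining with the first step, any self-quasi-isometry of $A_\Gamma$ is at bounded distance from an element of $\aut(C_\Gamma)$, and two elements of $\aut(C_\Gamma)$ at bounded distance from each other must coincide (cellular automorphisms of a uniformly locally finite complex on which the automorphism group acts properly cocompactly), giving $\qi(A_\Gamma) \cong \aut(C_\Gamma)$; that proves (1) and (2). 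For (3), $\aut(C_\Gamma)$ is a locally compact group acting properly and cocompactly on $C_\Gamma$, so $A_\Gamma$ is a uniform lattice in it; then any finitely generated $G$ quasi-isometric to $A_\Gamma$ is quasi-isometric to $C_\Gamma$, hence (by the Milnor--Švarc-type argument for locally compact groups, as in \cite{margolis2020quasi}) acts geometrically on $C_\Gamma$ up to taking a finite-index subgroup and quotienting by a finite normal subgroup, i.e. is virtually isomorphic to a uniform lattice in $\aut(C_\Gamma)$.

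Third, for the strengthened conclusion under the additional star-rigidity hypothesis, I would invoke Conjecture~\ref{conj:main}(2): when $\Gamma$ is (connected, twistless and) star rigid, the natural map $A_\Gamma \to \aut(I_\Gamma) \cong \aut(C_\Gamma)$ has finite-index image. Since finite-index subgroups of $\aut(C_\Gamma)$ containing the image of $A_\Gamma$ are, up to the standard identifications, exactly $\aut_\Gamma(A_\Gamma)$ (inner automorphisms together with the global inversion and label-preserving graph symmetries — one checks these are precisely the cellular symmetries of $C_\Gamma$ that normalize $A_\Gamma$), we get $\qi(A_\Gamma)\cong \aut_\Gamma(A_\Gamma)$, and the finiteness of the index upgrades the "virtually a uniform lattice in $\aut(C_\Gamma)$" of part (3) to "virtually isomorphic to $A_\Gamma$" itself; similarly every self-quasi-isometry is now at bounded distance from an honest automorphism of $A_\Gamma$.

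The main obstacle I expect is not in this deduction — which is essentially bookkeeping around cited QI-rigidity results — but lies entirely upstream in Theorem~\ref{thm:mainintro}/Conjecture~\ref{conj:main}, i.e.\ in showing that $\aut(I_\Gamma)$ really is locally compact and matches $\aut(C_\Gamma)$ (respectively that the index of $A_\Gamma$ is finite in the star-rigid case). Within the proof of the present corollary, the subtlest point to get right is the precise passage between "automorphism of $I_\Gamma$", "cellular automorphism of the (modified Deligne / Cayley) complex", and "quasi-isometry of $A_\Gamma$" — in particular checking that the correspondence is a topological group isomorphism (continuous with continuous inverse for the relevant Polish/locally compact topologies) and that the $\Delta_{333}$ exclusion is exactly what is needed for the quasi-flats to be detected, so that no extra quasi-isometries sneak in.
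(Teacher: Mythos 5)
Your proposal takes essentially the same route as the paper's proof (Proposition~\ref{prop:qi} together with Proposition~\ref{prop:fund}): one reduces $\qi(A_\Gamma)$ to $\aut(I_\Gamma)$ via the map $\Phi$ from \cite{huang2017quasi}, then uses the chain of comparison isomorphisms $\aut(I_\Gamma)\cong\aut(I^{TD}_\Gamma)\cong\aut(D_\Gamma)\cong\aut(C_\Gamma)$ furnished by fundamentality of $\Gamma$ (Theorem~\ref{thm:2ndiso}, Lemma~\ref{lem:second comparison}, Proposition~\ref{prop:first comparison}), and the star-rigid case reduces to discreteness of $\aut(C_\Gamma)$ (Lemma~\ref{lem:star rigid}). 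One caution worth flagging: the uniqueness of the cellular automorphism representing a given quasi-isometry is not the general principle you cite --- two cellular automorphisms of a uniformly locally finite complex with cocompact automorphism group can be at bounded distance without coinciding (e.g.\ $\mathbb Z \actson \mathbb R$ by unit translations) --- but rather follows in this specific setting from Proposition~\ref{prop:Phi injective}: since distinct stable $\mathbb Z$-subgroups are at infinite Hausdorff distance, a quasi-isometry at bounded distance from the identity induces the trivial element of $\aut(I_\Gamma)$, and injectivity of the comparison homomorphisms then forces it to be the identity automorphism of $C_\Gamma$.
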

	
	Quasi-isometric rigidity of some triangle-free large-type Artin groups were proved in \cite{huang2017quasi}. The simplest examples which are covered here, but not in \cite{huang2017quasi}, is that $\Gamma$ being the 1-skeleton of a triangulation of some closed $n$-manifold, with $n\ge 2$.
	
	Note that there are two forms of quasi-isometric rigidity in Theorem~\ref{theo:qi}, corresponding the two scenarios in Conjecture~\ref{conj:main}. The first form of rigidity is that any self quasi-isometry of the group $G$ is at bounded distance from an isometry of a canonical space $X$ associated with this group, hence we can ``uniformize'' the class of all groups quasi-isometric to $G$ in the sense that they can be put virtually as lattices in the same canonical locally compact topological group, which is the isometry group of $X$. However, isometry group of $X$ could be rather large. This form of rigidity is quite similar to the quasi-isometric rigidity of uniform higher rank lattices \cite{kleiner1997rigidity,eskin1997quasi}.
	
	The second form of rigidity, which is stronger, is that the quasi-isometry group of $G$ has a finite index subgroup such that any self quasi-isometry of $G$ inside this subgroup is at bounded distance from a left translation of $G$, hence any finitely generated group quasi-isometric to $G$ is virtually isomorphic to $G$. This form of rigidity is quite similar to the quasi-isometric rigidity of mapping class groups of surfaces \cite{hamenstaedt2005geometry,behrstock2012geometry}.
	
	It is natural to ask how far away are we from a complete understanding of these two forms of rigidity in the class of large-type Artin groups. To this end, we propose the following conjecture.
	\begin{conjecture}
		\label{conj:qi}
		Suppose $A_\Gamma$ is a large-type Artin groups with $\Gamma\neq\Delta_{333}$. Then
		\begin{enumerate}
			\item any self quasi-isometry of $A_\Gamma$ is at bounded distance from an automorphism of the Cayley complex $C_\Gamma$ if and only if $\Gamma$ is connected and twistless;
			\item any self quasi-isometry of $A_\Gamma$ is at bounded distance from an automorphism of $A_\Gamma$ if and only if $\Gamma$ is connected, twistless and star rigid.
		\end{enumerate}
	\end{conjecture}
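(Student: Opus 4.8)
The plan is to deduce Conjecture~\ref{conj:qi} from Conjecture~\ref{conj:main} (in fact from its ``if'' direction alone) by transporting combinatorial information across the now-standard dictionary between the coarse geometry of a large-type Artin group and the combinatorics of its intersection graph. The foundational input, provided by the quasi-isometric rigidity machinery for $2$-dimensional Artin groups \cite{huang2017quasi,horbez2020boundary}, is a canonical isomorphism of topological groups $\qi(A_\Gamma)\cong\aut(I_\Gamma)$, available for \emph{every} large-type $A_\Gamma$ with $\Gamma\neq\Delta_{333}$ and no hierarchy assumption, fitting into a commuting square with the natural maps $A_\Gamma\hookrightarrow\aut(C_\Gamma)\to\aut(I_\Gamma)$ (a cellular automorphism of the Cayley complex permutes the $\mathbb Z$-subgroups with non-virtually-abelian centraliser, hence acts on $I_\Gamma$) and $\aut(C_\Gamma)\to\qi(A_\Gamma)$ (a cellular automorphism of $C_\Gamma$ is a quasi-isometry of $A_\Gamma$); this is exactly the mechanism behind Corollary~\ref{cor:convert} and Theorem~\ref{theo:qi}. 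The exclusion of $\Delta_{333}$ is essential here, since there $A_\Gamma$ is virtually the mapping class group of the five-punctured sphere, $I_\Gamma$ is commensurable to its curve graph, and the hyperelliptic symmetry produces extra automorphisms that break the clean correspondence.

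Granting this, the ``if'' directions are a diagram chase. If $\Gamma$ is connected and twistless, Conjecture~\ref{conj:main}(1) identifies $\aut(I_\Gamma)$ with the group of cellular automorphisms of $C_\Gamma$, i.e.\ with $\aut(C_\Gamma)$; since the composite $\aut(C_\Gamma)\to\qi(A_\Gamma)\cong\aut(I_\Gamma)\cong\aut(C_\Gamma)$ is the identity, every self-quasi-isometry of $A_\Gamma$ is at bounded distance from an element of $\aut(C_\Gamma)$, which is Conjecture~\ref{conj:qi}(1). If moreover $\Gamma$ is star rigid, Conjecture~\ref{conj:main}(2) makes $\aut(I_\Gamma)$ discrete with $A_\Gamma\hookrightarrow\aut(I_\Gamma)$ of finite index; combined with the fact that in the twistless star-rigid regime the cellular automorphisms of $C_\Gamma$ are realised by group automorphisms (as in Theorem~\ref{theo:qi} and Theorem~\ref{ThmAutomorphismsIntro}), this gives $\qi(A_\Gamma)\cong\aut_\Gamma(A_\Gamma)$ and every self-quasi-isometry at bounded distance from an automorphism of $A_\Gamma$, which is Conjecture~\ref{conj:qi}(2). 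This is literally the proof of Theorem~\ref{theo:qi}, run without the twistless-hierarchy hypothesis because Conjecture~\ref{conj:main} is now being assumed in full.

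The ``only if'' directions should be the easy part, parallel to Proposition~\ref{lem:onlyif}. If $\Gamma$ is disconnected then $A_\Gamma$ is a nontrivial free product of torsion-free infinite groups, hence infinitely ended, and its quasi-isometry group is strictly (vastly) larger than $\aut(C_\Gamma)$, so there are self-quasi-isometries at bounded distance from no element of $\aut(C_\Gamma)$, a fortiori from no automorphism of $A_\Gamma$; this kills both (1) and (2). If $\Gamma$ is connected but not twistless, a separating vertex $v$ (whose star is $\{v\}$) or a separating edge $e$ gives a splitting of $A_\Gamma$ over $\mathbb Z$, respectively over a dihedral Artin group, and the associated twists together with shearing quasi-isometries along the corresponding cylinder of cosets produce self-quasi-isometries not at bounded distance from cellular automorphisms of $C_\Gamma$ --- the same obstruction that prevents $\aut(I_\Gamma)$ from being locally compact in Conjecture~\ref{conj:main}(1). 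Finally, if $\Gamma$ is connected and twistless but not star rigid, a label-preserving automorphism of $\Gamma$ fixing a vertex star pointwise but not globally witnesses the non-discreteness of $\aut(I_\Gamma)$, hence yields an element of $\qi(A_\Gamma)$ outside the (then proper) image of $\aut(A_\Gamma)$, ruling out (2).

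The genuine obstacle is Conjecture~\ref{conj:main} itself: the ``if'' direction of Conjecture~\ref{conj:qi} is essentially equivalent to the ``if'' direction of Conjecture~\ref{conj:main}, and the present paper establishes the latter only under a twistless-hierarchy hypothesis (Theorem~\ref{thm:mainintro}) or the XXXL-type hypothesis. Removing these, i.e.\ proving $\aut(I_\Gamma)\cong\aut(C_\Gamma)$ for every connected twistless $\Gamma$, is where all the difficulty lies; this presumably requires controlling the modified Deligne complex --- in particular the local combinatorics at triangles of $\Gamma$, exactly where Crisp's methods fail --- beyond what the hierarchy induction of Theorem~\ref{thm:mainintro} provides. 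A secondary point that would need to be pinned down is that the quasi-isometry-to-$\aut(I_\Gamma)$ dictionary of the first paragraph holds for all large-type $\Gamma\neq\Delta_{333}$ with no hidden hyperbolicity or rank-one assumption, and that the exceptional locus is exactly $\Delta_{333}$.
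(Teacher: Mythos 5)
This statement is a \emph{conjecture}, and the paper does not prove it: it proves only a reduction, namely Theorem~\ref{thm:qi reduction} (Conjecture~\ref{conj:deligne} implies Conjecture~\ref{conj:qi}), and the discussion around it makes clear that the ``if'' direction remains open in general. Your proposal correctly identifies this and reduces Conjecture~\ref{conj:qi} to Conjecture~\ref{conj:main}; since the paper reduces both conjectures to the single combinatorial input Conjecture~\ref{conj:deligne} (fundamentalness of twistless $\Gamma$), via Corollary~\ref{cor:convert} and Proposition~\ref{prop:qi}, your route and theirs converge on the same missing ingredient. Your sketch of the only-if direction (shearing quasi-isometries supported along a cylinder of cosets over a separating vertex/edge, plus large quasi-isometry group in the disconnected case) is in the same spirit as the paper's Proposition~\ref{lem:onlyif}; the paper's proof of Theorem~\ref{thm:qi reduction} also only gives a one-line reference to that easy direction.

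However, there is one genuine overclaim to flag. You assert a ``canonical isomorphism of topological groups $\qi(A_\Gamma)\cong\aut(I_\Gamma)$, available for every large-type $A_\Gamma$ with $\Gamma\neq\Delta_{333}$'' as a foundational input. What is actually available (Definition~\ref{def:Phi} and Proposition~\ref{prop:Phi injective}) is only the \emph{injective} homomorphism $\Phi:\qi(A_\Gamma)\to\aut(I_\Gamma)$, for $\Gamma$ without leaf vertices. Surjectivity of $\Phi$ is not known in general: $\aut(I_\Gamma)$ is a priori a large Polish group, and showing every graph automorphism is realised by a quasi-isometry is precisely the content of the comparison-homomorphism machinery of Section~\ref{SectionIntersectionSubgraph}, which needs fundamentalness. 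Fortunately, your deduction of Conjecture~\ref{conj:qi}(1) from Conjecture~\ref{conj:main}(1) does not really need surjectivity of $\Phi$: given $q\in\qi(A_\Gamma)$, one has $\Phi(q)\in\aut(I_\Gamma)$; Conjecture~\ref{conj:main}(1), interpreted as asserting that the natural map $\aut(C_\Gamma)\to\aut(I_\Gamma)$ is an isomorphism (which is what the paper's proof via $h_1,h_2,h_3$ in Proposition~\ref{prop:convert} produces), supplies a cellular automorphism $\gamma$ with the same image; and injectivity of $\Phi$ then forces $q$ and $\gamma$ to be at bounded distance, as in the first paragraph of the proof of Proposition~\ref{prop:qi}. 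Similarly your phrase ``essentially equivalent'' for the two if-directions is too strong: Conjecture~\ref{conj:main} implies Conjecture~\ref{conj:qi}, but the converse implication would again require knowing that every element of $\aut(I_\Gamma)$ is realised by a quasi-isometry, which is not obviously a consequence of Conjecture~\ref{conj:qi}. These are fixable points, and with them corrected your proposal accurately describes the state of the problem and matches the paper's reduction; it does not, and cannot with current tools, prove the conjecture.
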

	
	We have reduced Conjecture~\ref{conj:qi} to a purely combinatorial statement about the modified Deligne complex of a twistless large-type Artin group, see Theorem~\ref{thm:qi reduction}. Again, the only if direction is not hard, and the if direction is more interesting. Conjecture~\ref{conj:main} and Conjecture~\ref{conj:qi} are closely related in the sense that they are reduced to the same combinatorial statement about modified Deligne complexes. 
	
	By contract, right-angled Artin groups never satisfy any of the forms of quasi-isometric rigidity mentioned in Conjecture~\ref{conj:qi}.
	
	We also have a classification result.
	
	\begin{theorem}(=Corollary~\ref{cor:classification1}) \label{ThmQiClassIntro}
		Suppose $A_\Gamma$ and $A_{\Gamma'}$ are two large-type Artin groups such that $\Gamma$ admits a twistless hierarchy terminating in twistless stars. 
		Then  $A_\Gamma$ and $A_{\Gamma'}$ are quasi-isometric if and only if  there is a label-preserving isomorphism between $\Gamma$ and $\Gamma'$. 
	\end{theorem}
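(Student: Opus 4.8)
The \emph{if} direction is immediate: a label-preserving isomorphism $\Gamma\to\Gamma'$ carries standard generators to standard generators and defining relators to defining relators, hence extends to an isomorphism $A_\Gamma\cong A_{\Gamma'}$, and isomorphic finitely generated groups are quasi-isometric. So all the content lies in the \emph{only if} direction: assuming a quasi-isometry $\varphi\colon A_\Gamma\to A_{\Gamma'}$, I must produce a label-preserving isomorphism $\Gamma\cong\Gamma'$. The plan is to first pass from $\varphi$ to an isomorphism of intersection graphs, and then to use Theorem~\ref{thm:mainintro} to rigidify this into an isomorphism of the defining labelled graphs. One preliminary reduction: if $\Gamma=\Delta_{333}$ (which does satisfy the hypothesis, being complete and hence a twistless star), then $A_\Gamma$ is virtually the mapping class group of the $5$-punctured sphere, and the conclusion follows from quasi-isometric rigidity of mapping class groups \cite{hamenstaedt2005geometry,behrstock2008quasi} together with the fact that a large-type Artin group quasi-isometric to such a mapping class group must have defining graph $\Delta_{333}$. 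So from now on I assume $\Gamma\neq\Delta_{333}$, and I note that the twistless-hierarchy hypothesis forces $\Gamma$ to be connected and twistless.

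\textbf{Step 1: a quasi-isometry induces an isomorphism of intersection graphs.} Here I would quote the reductions of \cite{huang2017quasi,horbez2020boundary}: a quasi-isometry between large-type (indeed $2$-dimensional) Artin groups coarsely permutes the maximal product subsets, and therefore matches up the maximal cyclic subgroups with non-virtually-abelian centraliser together with their commutation pattern, which is exactly the combinatorial data recorded by the intersection graph. This yields an isomorphism $\varphi_*\colon I_\Gamma\xrightarrow{\sim}I_{\Gamma'}$.

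\textbf{Step 2: rigidifying $\varphi_*$ to a label-preserving isomorphism $\Gamma\cong\Gamma'$.} This is where Theorem~\ref{thm:mainintro}, in the form of Corollary~\ref{cor:convert}, enters essentially. That result says the modified Deligne complex $D_\Gamma$ — together with its stratification by cell types — is reconstructed from the bare abstract graph $I_\Gamma$ by a canonical, hence functorial, procedure; this is what underlies the identification in Conjecture~\ref{conj:main}(1) of $\aut(I_\Gamma)$ with the cellular automorphism group of $D_\Gamma$ (equivalently of the Cayley complex $C_\Gamma$). Applying the same procedure to $I_{\Gamma'}$, whose output is $A_{\Gamma'}$-equivariantly identified with $D_{\Gamma'}$, I can extend $\varphi_*$ to a type-preserving cellular isomorphism $\Phi\colon D_\Gamma\xrightarrow{\sim}D_{\Gamma'}$. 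Finally, $\Phi$ matches the type-$\{v\}$ vertices of the two complexes, giving a bijection between the generating sets of $A_\Gamma$ and $A_{\Gamma'}$, and it matches the type-$\{a,b\}$ vertices, whose local combinatorics encodes the dihedral Artin group $A_{m_{ab}}$ and hence the integer $m_{ab}$; reading off the incidences between these two strata produces the desired label-preserving isomorphism $\Gamma\cong\Gamma'$. The chain $(3)\Rightarrow(1)\Rightarrow(2)\Rightarrow\text{(QI)}$ in Corollary~\ref{cor:classification1} is then routine (isomorphic groups are commensurable, commensurable groups are quasi-isometric), so all stated equivalences follow.

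The hard part is Step 2: it asserts that the full labelled graph $\Gamma$ is already determined, through $D_\Gamma$, by the a priori featureless graph $I_\Gamma$, so that an abstract graph isomorphism $I_\Gamma\cong I_{\Gamma'}$ is forced to respect all of this hidden structure. This self-rigidity of $I_\Gamma$ is precisely the content of Theorem~\ref{thm:mainintro}/Corollary~\ref{cor:convert}, and it is the only place where the twistless-hierarchy hypothesis is needed; Step 1, by contrast, is soft and is imported essentially verbatim from \cite{huang2017quasi,horbez2020boundary}.
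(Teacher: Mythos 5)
Your architecture (quasi-isometry $\Rightarrow$ isomorphism $I_\Gamma\cong I_{\Gamma'}$ via \cite{huang2017quasi}, then rigidify to a label-preserving graph isomorphism through the Deligne complex) is indeed the paper's route, and the ``if'' direction and Step~1 are fine. But Step~2 has a genuine gap, and the fact that it is difficult is hidden by the optimistic phrasing ``applying the same procedure to $I_{\Gamma'}$''. The hypothesis of the theorem is \emph{asymmetric}: only $\Gamma$ is assumed to admit a twistless hierarchy; nothing is assumed about $\Gamma'$. Consequently you only know that $\Gamma$ is fundamental (Proposition~\ref{prop:fund}, whose building blocks LemmaNoSepVer and LemmaExtendingToTwistlessHierarchy actually show that $\Gamma$ is fundamental relative to $D_{\Gamma''}$ for \emph{any} large-type $\Gamma''$, in particular relative to $D_{\Gamma'}$), and that is what lets you propagate each $\Gamma$-characteristic subgraph of $I^{TD}_{\Gamma'}$ to a unique fundamental domain of $D_{\Gamma'}$. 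What this produces is a type-preserving isometric \emph{embedding} $\varphi\colon D_\Gamma\hookrightarrow D_{\Gamma'}$, not an isomorphism: you have no fundamentality statement for $\Gamma'$ and hence no inverse reconstruction on the $\Gamma'$-side. The paper (Lemma~\ref{lem:classification}) supplies the missing surjectivity separately: it first shows that $\Gamma'$ has no leaf vertices (a local-star argument comparing the simplices incident to a type~$2$ vertex under $\varphi$), concludes that $D_{\Gamma'}$ has the geodesic extension property, and then shows an isometric embedding of CAT(0) complexes with this property is onto, following \cite[Theorem 8.7]{horbez2020boundary}. None of this is automatic, and appealing to Corollary~\ref{cor:convert} is not enough: that corollary speaks about automorphisms of a fixed $I_\Gamma$, not about transporting a reconstruction across an isomorphism $I_\Gamma\to I_{\Gamma'}$ when only one side is known to be fundamental.

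Two further, smaller issues. First, your handling of $\Gamma=\Delta_{333}$ quietly imports the claim that a large-type Artin group quasi-isometric to $\mcg^\pm(\Sigma_5)$ must have defining graph $\Delta_{333}$; this is exactly what needs to be shown and the paper proves it inside Lemma~\ref{lem:classification}, via the observation that $\aut(\mathcal C_5)$ is vertex-transitive while $\aut(I_{\Gamma'})$ for a connected $\Gamma'\neq\Delta_{333}$ is not (Corollary~\ref{CorollaryExoticSentToExotic} and Proposition~\ref{PropAutomorphismsPreserveType}). Second, the paper strengthens the isomorphism $I_\Gamma\cong I_{\Gamma'}$ to a \emph{type-preserving} one ($T$, $D$, $E$) before building $\varphi$, which is Proposition~\ref{PropAutomorphismsPreserveType} and is not free; your sketch elides this.
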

	
	\subsection{Applications to ME/OE rigidity and classification}
	Recall that two countable groups $\Gamma_1,\Gamma_2$ are \emph{measure equivalent} (a notion introduced by Gromov \cite{gromov1993geometric} as a measurable analogue of quasi-isometry) if there exists a standard measure space $\Sigma$, equipped with an action of the product $\Gamma_1\times\Gamma_2$ by measure-preserving Borel automorphisms, such that for every $i\in\{1,2\}$, the action of the factor $\Gamma_i$ on $\Sigma$ is free and has a fundamental domain of finite measure. An important example is that any two lattices in the same locally compact second countable group $G$ are always measure equivalent, through their left/right action by multiplication on $G$, preserving the Haar measure. Another example to have in mind, if that if two countable groups are virtually isomorphic, then they are measure equivalent.
	Some notable measure equivalence rigidity results are proved for higher rank lattices \cite{furman1999gromov}, mapping class groups of surfaces \cite{kida2010measure} and $Out(F_n)$ \cite{guirardel2021measure}. See \cite{furman2009survey} for a nice survey.

	We can obtain measure equivalence rigidity results under almost the same assumption as Theorem~\ref{theo:qi}, with an extra requirement as follows. Recall that an Artin group is of \emph{hyperbolic-type}, if its associated Coxeter group is word hyperbolic. hyperbolic-type Artin groups are never word-hyperbolic unless they are free groups.
	A large-type Artin group $A_\Gamma$ is of hyperbolic-type, if $\Gamma$ does not contain a triangle with all of its edges labelled by $3$.
	
	\begin{theorem}(=Corollary~\ref{cor:ME})
		\label{theo:ME}
		Suppose $A_\Gamma$ is a large-type Artin group of hyperbolic-type and suppose that $\Gamma$ admits a twistless hierarchy terminating in twistless stars. Then any countable group measure equivalent to $A_\Gamma$ is virtually a lattice in the locally compact topological group $\aut(C_\Gamma)$.
		
		If in addition the graph $\Gamma$ is star rigid, then any countable group measure equivalent to $A_\Gamma$ is virtually isomorphic to $A_\Gamma$. 
	\end{theorem}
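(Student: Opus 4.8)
The plan is to deduce this as a combination of the measure equivalence rigidity package of Horbez and Huang \cite{horbez2020boundary} with the computation of $\aut(I_\Gamma)$ provided by Theorem~\ref{thm:mainintro}; the substantial work is already contained in those two inputs, so what remains is essentially to line up hypotheses and conclusions. First I would record that the assumption on $\Gamma$ — a twistless hierarchy terminating in twistless stars — forces $\Gamma$ to be connected and twistless (connectedness and twistlessness are preserved when one undoes a twistless admissible decomposition, and twistless stars are connected and twistless). Hence Theorem~\ref{thm:mainintro} tells us that Conjecture~\ref{conj:main} holds for $\Gamma$; by part~(1) the Polish group $\aut(I_\Gamma)$ is locally compact and is isomorphic to the group $\aut(C_\Gamma)$ of cellular automorphisms of the Cayley complex, inside which $A_\Gamma$ sits as a uniform lattice (its action on $C_\Gamma$ being free and cocompact).

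Next I would invoke the reduction of measure equivalence questions for large-type Artin groups of \emph{hyperbolic type} to the intersection graph. Since $A_\Gamma$ is of hyperbolic type, it is boundary amenable and acts acylindrically on the Gromov-hyperbolic graph $I_\Gamma$, and by \cite{horbez2020boundary} it is measure equivalence superrigid relative to $\aut(I_\Gamma)$: given any measure equivalence coupling between $A_\Gamma$ and a countable group $H$, Kida's strategy \cite{kida2010measure} produces an essentially unique ME-equivariant measurable reduction forcing $H$ to act on $I_\Gamma$, hence a homomorphism $\rho\colon H\to\aut(I_\Gamma)$ with finite kernel whose image is discrete of finite covolume — and it is exactly the local compactness of $\aut(I_\Gamma)$, established in the previous step, that upgrades ``discrete of finite covolume'' to ``lattice''. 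Transporting along the isomorphism $\aut(I_\Gamma)\cong\aut(C_\Gamma)$, we get that $H$ modulo a finite normal subgroup embeds as a lattice in $\aut(C_\Gamma)$, i.e. $H$ is virtually a lattice in $\aut(C_\Gamma)$, which is the first assertion.

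For the final statement, if $\Gamma$ is in addition star rigid, then part~(2) of Conjecture~\ref{conj:main} (again via Theorem~\ref{thm:mainintro}) gives that $\aut(I_\Gamma)=\aut(C_\Gamma)$ is discrete and contains the image of $A_\Gamma$ with finite index. A lattice in a discrete group is simply a finite-index subgroup, so the image of $\rho$ is commensurable with $A_\Gamma$; combined with $\ker\rho$ being finite and normal, this is precisely the assertion that $H$ is virtually isomorphic to $A_\Gamma$.

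The step I expect to be delicate is not this bookkeeping but making sure the input from \cite{horbez2020boundary} genuinely applies in the present generality: one needs boundary amenability of $A_\Gamma$ together with the measured-groupoid ``reconstruction'' of the $A_\Gamma$-action on $I_\Gamma$ for \emph{all} large-type Artin groups of hyperbolic type, including those whose defining graph contains triangles, rather than only the triangle-free ones covered by Crisp's analysis; and one must know that the hyperbolicity of $I_\Gamma$ and the acylindricity of the action survive the presence of triangles in $\Gamma$. This is also where the hyperbolic-type hypothesis is essential — and where $\Delta_{333}$, whose associated Coxeter group is Euclidean rather than word hyperbolic, is necessarily excluded — since without hyperbolicity of $I_\Gamma$ the Kida-type machinery has no traction.
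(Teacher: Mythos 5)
Your proposal is correct and follows essentially the same route as the paper. The paper's proof (Corollary~\ref{cor:ME} via Proposition~\ref{prop:ME}, Proposition~\ref{prop:SOE} and Proposition~\ref{prop:fund}) establishes that $\Gamma$ being fundamental forces the comparison map $h_2\circ h_1^{-1}:\aut(C_\Gamma)\to\aut(I_\Gamma)$ to be an isomorphism and then invokes \cite[Theorems~10.4 and~10.5]{horbez2020boundary}; you reach the same comparison isomorphism through Theorem~\ref{thm:mainintro}/Conjecture~\ref{conj:main}.1, which is a reformulation of the same content, and apply the same black box.

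One small caveat on your "delicate step" paragraph: the hyperbolic-type hypothesis is not used via Gromov-hyperbolicity or acylindricity of $I_\Gamma$ itself (it is not asserted anywhere that $I_\Gamma$ is hyperbolic outside the $\Delta_{333}$ case, where it becomes the curve graph of $\Sigma_5$). In \cite{horbez2020boundary} the hypothesis enters through the geometry of the Deligne complex and the strong solidity/boundary amenability machinery for $2$-dimensional hyperbolic-type Artin groups, and the relevant theorems are already stated in the generality needed here (not just triangle-free). Since you invoke \cite{horbez2020boundary} as a black box, this imprecision in motivation does not affect the correctness of the argument; it would only matter if you tried to reprove their theorems from scratch.
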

	
	Again, similar to the situation of quasi-isometric rigidity, here we see two different forms of measure equivalence rigidity, corresponding to the two scenarios in Conjecture~\ref{conj:main}. The first form of rigidity is closer to what happens to higher rank lattices \cite{furman1999gromov}, and the second form of rigidity is closer to what happens to mapping class groups of surfaces \cite{kida2010measure}.

	Measure-equivalence rigidity of some triangle-free large-type Artin groups were proved in \cite{horbez2020boundary}. The simplest examples which are covered here, but not in \cite{horbez2020boundary}, is that $\Gamma$ being the 1-skeleton of a triangulation of some closed $n$-manifold, with $n\ge 2$.
	
	Next we recall the notion of \emph{stably orbit equivalence}, which is closely related to measure equivalence.
	Let $G_1$ and $G_2$ be two countable groups, and for every $i\in\{1,2\}$, let $X_i$ be a standard probability space equipped with an essentially free measure-preserving action of $G_i$ by Borel automorphisms. The actions $G_1\actson X_1$ and $G_2\actson X_2$ are \emph{stably orbit equivalent} if there exist Borel subsets $Y_1\subseteq X_1$ and $Y_2\subseteq X_2$ of positive measure and a measure-scaling isomorphism $f:Y_1\to Y_2$ such that for almost every$y\in Y_1$, one has $$f((G_1\cdot y)\cap Y_1)=(G_2\cdot f(y))\cap Y_2.$$

	The actions $G_1\actson X_1$ and $G_2\actson X_2$ are \emph{virtually conjugate} if there exist short exact sequences $1\to F_i\to G_i\to Q_i\to 1$ with $F_i$ finite for every $i\in\{1,2\}$, finite-index subgroups $Q_1^0\subseteq Q_1$ and $Q_2^0\subseteq Q_2$, and conjugate actions $Q_1^0\actson X'_1$ and $Q_2^0\actson X'_2$ such that for every $i\in\{1,2\}$, the $Q_i$-action on $X_i/F_i$ is induced from the $Q_i^0$-action on $X'_i$.  Virtually conjugate actions are always stably orbit equivalent.

	It turns out that two countable groups are measure equivalent if and only if they admit essentially free ergodic measure-preserving actions by Borel automorphisms on standard probability spaces which are stably orbit equivalent (see e.g.\ \cite[Theorem~2.5]{furman2009survey}).
	
	\begin{theorem}(=Corollary~\ref{cor:ME})
		\label{theo:OE}
		Suppose $A_\Gamma$ is a large-type Artin group such that it is also of hyperbolic-type and $\Gamma$ admits a twistless hierarchy terminating in twistless stars. Also suppose that $\Gamma$ is star-rigid.
		
		Then for any ergodic measure-preserving essentially free action $A_\Gamma\actson X$ on a probability measure space $X$, and any countable group $G'$ with an ergodic measure-preserving essentially free $G'$-action on a standard probability space $X'$, as long as actions $G\actson X$ and $G'\actson X'$ are stably orbit equivalent, then they are virtually conjugate.
	\end{theorem}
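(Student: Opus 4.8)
The plan is to obtain this statement as a consequence of the strong (coupling-level) form of measure equivalence rigidity that already underlies Theorem~\ref{theo:ME} (Corollary~\ref{cor:ME}), combined with the standard dictionary between stable orbit equivalence and measure equivalence couplings.

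\emph{Step 1: from a stable orbit equivalence to a coupling.} Given the stable orbit equivalence $f\colon Y_1\to Y_2$ between the essentially free ergodic measure-preserving actions $A_\Gamma\actson X$ and $G'\actson X'$, I would first invoke the standard construction (see \cite[Theorem~2.5]{furman2009survey}) producing a measure equivalence coupling $\Sigma$ of $A_\Gamma$ and $G'$: a standard measure space carrying commuting, measure-preserving, essentially free $A_\Gamma$- and $G'$-actions, each with a finite-measure fundamental domain, the ratio of whose masses records the compression constant of $f$. This construction is reversible, in the sense that an isomorphism of couplings compatible with the two group actions translates back into a virtual conjugacy of the two actions; so the whole statement reduces to identifying $\Sigma$ explicitly.

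\emph{Step 2: the coupling is tautological.} Next one uses that, under the hypotheses of the theorem ($A_\Gamma$ of large-type and of hyperbolic-type, $\Gamma$ admitting a twistless hierarchy terminating in twistless stars and being star rigid), $A_\Gamma$ is measure equivalence \emph{superrigid}: any measure equivalence coupling of $A_\Gamma$ with an arbitrary countable group $G'$ is, after passing to finite-index subgroups and quotienting by finite normal subgroups, measurably isomorphic \emph{as a coupling} to the tautological coupling attached to a group isomorphism $A_\Gamma\xrightarrow{\ \sim\ }G'$ (between the relevant subquotients). This is exactly the information produced in the proof of Theorem~\ref{theo:ME}: the computation that $\aut(I_\Gamma)$ is discrete with $A_\Gamma$ of finite index---which is where star rigidity enters, via Theorem~\ref{thm:mainintro} and Conjecture~\ref{conj:main}(2)---is fed into the Horbez--Huang framework \cite{horbez2020boundary} (boundary amenability of the action of $A_\Gamma$ on the intersection graph $I_\Gamma$, the role of $I_\Gamma$ as a substitute for the curve graph, and the measured-groupoid arguments of Kida \cite{kida2010measure}), and the hyperbolic-type hypothesis is what places us in the range where those boundary-amenability inputs are available. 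Since $A_\Gamma$ is torsion-free, the finite normal subgroup on the $A_\Gamma$-side is in fact trivial.

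\emph{Step 3: unwinding to virtual conjugacy.} Finally one runs the general mechanism turning ``the coupling built from the stable orbit equivalence is tautological'' into ``the two actions are virtually conjugate'' (Furman \cite{furman1999gromov,furman2009survey}, in the form used by Kida \cite{kida2010measure}): the tautological structure on $\Sigma$ supplies the short exact sequences $1\to F_i\to G_i\to Q_i\to 1$ with $F_i$ finite, the finite-index subgroups $Q_i^0\subseteq Q_i$, the conjugating isomorphism $Q_1^0\cong Q_2^0$, and one checks directly that the resulting $Q_i$-action on $X_i/F_i$ is induced from a common $Q_i^0$-action, which is precisely the definition of virtual conjugacy. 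I expect the main obstacle to be Step~2: passing from ``any countable group measure equivalent to $A_\Gamma$ is virtually isomorphic to $A_\Gamma$'' to the sharper assertion that the \emph{coupling itself} is tautological requires the full strength of the intersection-graph automorphism computation together with the measured-groupoid technology, and that is where essentially all of the content lies; Steps~1 and~3 are formal.
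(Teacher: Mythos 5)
Your proposal follows essentially the same route as the paper: reduce to the discreteness of $\aut(I_\Gamma)$ with $A_\Gamma$ (virtually) of finite index — which is where star rigidity, fundamentality (via the twistless hierarchy), and the comparison isomorphisms $\aut(I_\Gamma)\cong\aut(D_\Gamma)\cong\aut(C_\Gamma)$ enter — and then feed this into the Horbez–Huang cocycle/coupling rigidity machine for hyperbolic-type $2$-dimensional Artin groups together with the Furman–Kida dictionary between stable orbit equivalence, couplings, and virtual conjugacy. The paper compresses your Steps 1–3 into a citation of \cite[Theorem~10.1]{horbez2020boundary} and \cite[Lemma~4.18]{furman2009survey} after invoking Proposition~\ref{prop:convert}; you have simply unfolded what those black boxes do, so the content and the key inputs are the same.
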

	
	We also obtain classification results as follows.
	
	\begin{theorem}(=Corollary~\ref{cor:classification2}) \label{ThmMEClassIntro}
		Suppose $A_\Gamma$ and $A_{\Gamma'}$ are two large-type and hyperbolic-type Artin groups such that $\Gamma$ admits a twistless hierarchy terminating in twistless stars. Then the following are equivalent:
		\begin{enumerate}
			\item $A_\Gamma$ and $A_{\Gamma'}$ are measure-equivalent;
			\item $A_\Gamma$ and $A_{\Gamma'}$ admit essentially free ergodic measure-preserving actions by Borel automorphisms on standard probability spaces which are stably orbit equivalent;
			\item there is a label-preserving isomorphism between $\Gamma$ and $\Gamma'$. 
		\end{enumerate}
	\end{theorem}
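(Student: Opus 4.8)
The plan is to prove the cycle of implications $(3)\Rightarrow(1)\Rightarrow(2)\Rightarrow(1)\Rightarrow(3)$, in which only the last implication carries geometric content. For $(3)\Rightarrow(1)$: a label-preserving isomorphism $\Gamma\to\Gamma'$ induces a group isomorphism $A_\Gamma\cong A_{\Gamma'}$, and isomorphic (hence virtually isomorphic) countable groups are measure equivalent via the trivial (point) coupling. For $(1)\Leftrightarrow(2)$: this is the general fact recalled just before the statement and attributed to \cite{furman2009survey}, that two countable groups are measure equivalent if and only if they admit essentially free ergodic measure-preserving actions on standard probability spaces that are stably orbit equivalent; nothing Artin-specific enters. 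So the whole statement reduces to $(1)\Rightarrow(3)$.

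To prove $(1)\Rightarrow(3)$, suppose $A_\Gamma$ and $A_{\Gamma'}$ are measure equivalent. Since $\Gamma$ admits a twistless hierarchy terminating in twistless stars and $A_\Gamma$ is of hyperbolic-type, Theorem~\ref{theo:ME} applies and tells us that $A_{\Gamma'}$ is virtually a lattice in the locally compact group $\aut(C_\Gamma)$. On the other hand, $A_\Gamma$ itself sits in $\aut(C_\Gamma)$ as a cocompact lattice: it acts freely, properly discontinuously and cocompactly on its Cayley complex $C_\Gamma$, which is locally finite, so that $\aut(C_\Gamma)$ is totally disconnected with compact-open cell stabilisers and $A_\Gamma\backslash C_\Gamma$ is compact. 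If one can upgrade ``lattice'' to \emph{uniform} lattice for the (virtual copy of) $A_{\Gamma'}$, then $A_{\Gamma'}$ acts properly and cocompactly on $C_\Gamma$ up to finite kernel and finite index, hence by the Milnor--\v{S}varc lemma $A_{\Gamma'}$ is quasi-isometric to $C_\Gamma$ and therefore to $A_\Gamma$. At that point the quasi-isometry classification Theorem~\ref{ThmQiClassIntro} --- whose hypotheses ask only for the twistless-hierarchy condition on $\Gamma$, which we have --- immediately yields a label-preserving isomorphism $\Gamma\cong\Gamma'$, which is $(3)$. Composing, $(2)\Rightarrow(1)\Rightarrow(3)$ as well, so all three conditions are equivalent.

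I expect the main obstacle to be precisely the uniformity of the lattice: measure equivalence does not in general imply quasi-isometry --- any two lattices, uniform or not, in the same locally compact second countable group are measure equivalent but need not be quasi-isometric --- so one cannot simply invoke ``$(1)\Rightarrow$ quasi-isometric''. What makes it go through here is the mechanism behind Theorem~\ref{theo:ME}: the measure-equivalence coupling is promoted, through the action on the intersection graph $I_\Gamma$ --- equivalently on the modified Deligne complex and its locally finite model $C_\Gamma$ --- to a homomorphism from a finite-index subgroup of $A_{\Gamma'}$ (modulo a finite kernel) into $\aut(C_\Gamma)$ whose image acts \emph{cocompactly}, the cocompactness being forced by the combinatorial rigidity underlying Theorem~\ref{thm:mainintro} together with the fact that $\aut(C_\Gamma)$ already acts cocompactly on $C_\Gamma$. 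Once that geometric action is in hand, the remainder is a short assembly of results already established in the paper.
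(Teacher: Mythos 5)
Your reduction $(3)\Rightarrow(1)\Leftrightarrow(2)$, and the framing that everything rests on $(1)\Rightarrow(3)$, is correct and matches the paper's organization. The trouble is in the proof of $(1)\Rightarrow(3)$ itself.

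You want to pass through quasi-isometry: use Theorem~\ref{theo:ME} to place a virtual copy of $A_{\Gamma'}$ as a lattice in $\aut(C_\Gamma)$, upgrade to a \emph{uniform} lattice, conclude $A_{\Gamma'}\sim_{QI} A_\Gamma$ by Milnor--\v Svarc, then invoke Theorem~\ref{ThmQiClassIntro}. As you yourself flag, the pivot point is upgrading ``lattice'' to ``uniform lattice'', and you offer no proof of this --- only the assertion that ``the cocompactness [is] forced by the combinatorial rigidity \dots together with the fact that $\aut(C_\Gamma)$ already acts cocompactly on $C_\Gamma$.'' That reasoning does not work: $\aut(C_\Gamma)$ acting cocompactly on $C_\Gamma$ says nothing about whether a given lattice in $\aut(C_\Gamma)$ is cocompact (compare: $\mathrm{SL}_n(\mathbb{Z})$ in $\mathrm{SL}_n(\mathbb{R})$, which acts cocompactly on its symmetric space). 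Theorem~\ref{theo:ME} and its source Proposition~\ref{prop:ME} are stated for lattices \emph{without} any cocompactness qualifier, and no hypothesis in your setting (star-rigidity is not assumed) forces $\aut(C_\Gamma)$ to be discrete. So this is a genuine gap, not a technicality to be filled in with a sentence.

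The paper avoids the issue entirely by never routing through quasi-isometry. Its proof of the nontrivial direction is: a measure equivalence coupling between $A_\Gamma$ and $A_{\Gamma'}$ induces, by \cite[Theorem 10.1]{horbez2020boundary}, an isomorphism of intersection graphs $I_\Gamma\cong I_{\Gamma'}$; then Lemma~\ref{lem:classification} (which requires fundamentality, supplied by Proposition~\ref{prop:fund}) upgrades a graph isomorphism $I_\Gamma\to I_{\Gamma'}$ directly to an isometry of modified Deligne complexes and hence a label-preserving isomorphism $\Gamma\cong\Gamma'$. This argument is entirely combinatorial once the intersection-graph isomorphism is extracted from the coupling, and it makes no claim about, and has no need for, $A_{\Gamma'}$ being a \emph{uniform} lattice in $\aut(C_\Gamma)$. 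To salvage your approach you would have to prove that every lattice in $\aut(C_\Gamma)$ arising from a measure equivalence coupling with $A_\Gamma$ is automatically cocompact, which is not established anywhere in the paper and would require a separate argument.
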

	
	\subsection{Application to lattice envelope}
	
	Recall a \emph{lattice envelope} of a discrete group $G$ is a locally compact second countable topological group where $G$ embeds as a lattice. The study of lattice envelope was initiated in \cite{bader2020lattice}.
	Using the measure-equivalence rigidity results and \cite{horbez2020boundary}, we characterise all lattice envelopes of certain large-type Artin groups, as well as their finite index subgroups.
	
	\begin{theorem}(=Theorem~\ref{theo:lattice-embeddings1})
		\label{theo:lattice-embeddings}
		Suppose $A_\Gamma$ is a large-type Artin group of hyperbolic-type and suppose that $\Gamma$ admits a twistless hierarchy terminating in twistless stars. Suppose $G'$ is a finite index subgroup of $A_\Gamma$ which embeds into a locally compact second countable group $\mathsf{H}$ as a lattice via $f:G'\to \mathsf{H}$.
		
		Then there exists a continuous homomorphism $\psi:\mathsf{H}\to \aut(C_\Gamma)$ with compact kernel such that $\psi\circ f$ coincides with the natural map $G'\to\aut(C_\Gamma)$. 
	\end{theorem}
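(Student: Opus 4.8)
The plan is to combine the structure theory of lattice envelopes from \cite{bader2020lattice} with the measure-equivalence rigidity underlying Theorem~\ref{theo:ME}, and then to promote the resulting measurable equivariant map into a genuine continuous homomorphism. As a preliminary observation, under the standing hypotheses $A_\Gamma$ is acylindrically hyperbolic --- it acts acylindrically on its modified Deligne complex, which is hyperbolic since $A_\Gamma$ is of hyperbolic-type --- and it is not virtually cyclic, so $A_\Gamma$, and with it every finite-index subgroup $G'$, has trivial amenable radical and no non-trivial finite normal subgroup. A standard argument in lattice-envelope theory then shows that the amenable radical $W$ of $\mathsf H$ is compact, that $W\cap f(G')=\{1\}$, and that $f$ descends to an isomorphism of $G'$ onto a lattice $\overline{G'}$ in $\overline{\mathsf H}:=\mathsf H/W$, a locally compact second countable group with trivial amenable radical.

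Next I would rule out the possibility that $\overline{\mathsf H}$ has a connected noncompact simple Lie factor. By the classification of lattice envelopes in \cite{bader2020lattice} (see also \cite{horbez2020boundary} for the triangle-free case), if this were the case then $A_\Gamma$ would be, up to finite index and finite kernel, a lattice in a product of a connected semisimple Lie group with a totally disconnected group, forcing $A_\Gamma$ to be virtually a non-trivial direct product, a higher-rank lattice, Gromov hyperbolic, or virtually abelian; none of these hold for a large-type Artin group whose defining graph admits a twistless hierarchy terminating in twistless stars (for instance such an $A_\Gamma$ contains a $\mathbb Z^2$ coming from a dihedral Artin subgroup, so it is not hyperbolic, while acylindrical hyperbolicity rules out being a product). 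Hence $\overline{\mathsf H}$ is totally disconnected.

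The main step is then to produce the homomorphism. The lattice inclusion $f$ yields a measure-equivalence self-coupling of $G'$, namely $\Sigma=(\mathsf H,\mathrm{Haar})$ with $G'\times G'$ acting by $(g_1,g_2)\cdot h=f(g_1)\,h\,f(g_2)^{-1}$ and Borel fundamental domains the left and right translates of a fundamental domain for $f(G')\actson\mathsf H$. The measure-equivalence superrigidity for $A_\Gamma$ established on the way to Theorem~\ref{theo:ME} --- whose engine is the computation of $\aut(I_\Gamma)$ in Theorem~\ref{thm:mainintro}, which forces any such coupling to be controlled by $\aut(C_\Gamma)$ --- provides an essentially unique $(G'\times G')$-equivariant Borel map $\Phi\colon\mathsf H\to\aut(C_\Gamma)$, equivariant for the left/right multiplication action of $\aut(C_\Gamma)$ pulled back along the natural injective homomorphism $\rho\colon G'\hookrightarrow A_\Gamma\to\aut(C_\Gamma)$; that is, $\Phi(f(g_1)\,h\,f(g_2)^{-1})=\rho(g_1)\,\Phi(h)\,\rho(g_2)^{-1}$ for a.e.\ $h$. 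Applying the essential uniqueness of $\Phi$ to its left translates gives $\Phi(hh')=\Phi(h)\,\Phi(1)^{-1}\,\Phi(h')$ for a.e.\ $(h,h')$, so $\psi(h):=\Phi(h)\,\Phi(1)^{-1}$ is a Borel homomorphism $\mathsf H\to\aut(C_\Gamma)$, hence continuous (a Borel homomorphism between locally compact second countable groups is continuous). By construction $\psi\circ f=\rho$ is the natural map, and $\ker\psi$ is a closed normal subgroup with $\ker\psi\cap f(G')=\ker\rho=\{1\}$, so by the argument of the first paragraph $\ker\psi$ is compact, as required.

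I expect the main obstacle to be isolating and justifying, in exactly the form used above, the measure-equivalence \emph{coupling} superrigidity: Theorem~\ref{theo:ME} as stated speaks only of \emph{countable} groups measure equivalent to $A_\Gamma$, whereas the argument needs the sharper statement that every self-coupling of $A_\Gamma$ (equivalently of $G'$) admits an essentially unique $\aut(C_\Gamma)$-valued equivariant Borel map. This must be read off from the proof of Theorem~\ref{theo:ME}, following the boundary-theoretic framework of \cite{horbez2020boundary} but now fed with the identification of $\aut(I_\Gamma)$ valid beyond the triangle-free setting (via Corollary~\ref{cor:convert} and Theorem~\ref{thm:mainintro}); checking that the straightened map is genuinely multiplicative, rather than multiplicative only off a null set, is exactly where the uniqueness clause does the real work. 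The remaining ingredients --- compactness of the amenable radical of a lattice envelope and the exclusion of Lie/product factors --- are routine once the structural properties of $A_\Gamma$ recalled above are in hand.
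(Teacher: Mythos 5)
Your high-level plan is the right one, but it is \emph{not} a different route from the paper: you are reconstructing from scratch the machinery that the paper simply imports as a black box. The paper's proof is essentially a one-liner: once one knows that the comparison isomorphism $\aut(C_\Gamma)\cong\aut(I_\Gamma)$ holds (which is Proposition~\ref{prop:convert}, following from fundamentality, Proposition~\ref{prop:fund}), the conclusion is an instance of \cite[Theorem 10.9]{horbez2020boundary}, argued exactly as in Proposition~\ref{prop:ME}. Everything you describe --- compactness of the amenable radical, exclusion of Lie factors, building the ME self-coupling from the Haar measure, invoking coupling superrigidity, straightening the equivariant map into a Borel hence continuous homomorphism --- is precisely the content of that cited theorem and its proof (itself following the Bader--Furman--Sauer and Kida frameworks). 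You yourself flag this at the end: you admit you would need a coupling-superrigidity statement that ``must be read off from the proof of Theorem~\ref{theo:ME}.'' The paper's point is that you don't need to read it off; the statement is already packaged in \cite{horbez2020boundary} in a form that takes the identification of $\aut(I_\Gamma)$ with $\aut(C_\Gamma)$ as input, and the only new content here is supplying that identification in the non-triangle-free case.

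On top of the strategic point, there is one genuine gap in the step you do spell out. You conclude compactness of $\ker\psi$ ``by the argument of the first paragraph'' because $\ker\psi\cap f(G')$ is trivial; but the argument of your first paragraph is about the \emph{amenable radical} --- it shows a maximal amenable normal subgroup of the envelope of a group with trivial amenable radical must be compact. There is no reason $\ker\psi$ should be amenable, so that argument simply does not apply. A closed normal subgroup meeting a lattice trivially need not be compact (consider $\{0\}\times\mathbb R\trianglelefteq\mathbb R^2$ with lattice $\mathbb Z\times\{0\}$). The conclusion is still reachable, but by a different mechanism, namely total disconnectedness of $\aut(C_\Gamma)$: take a compact open subgroup $U\le\aut(C_\Gamma)$; then $\psi^{-1}(U)$ is open in $\mathsf H$, $f(G')\cap\psi^{-1}(U)$ is finite (as $\rho(G')$ is discrete and $\psi|_{f(G')}=\rho\circ f^{-1}$ is injective), so $\psi^{-1}(U)$ is an open subgroup containing a finite lattice and hence compact, and $\ker\psi\subseteq\psi^{-1}(U)$ is compact. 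Finally, the straightening step is handled too loosely: $\Phi$ is only defined almost everywhere, so ``$\Phi(1)$'' has no meaning as written, and one needs the standard (Mackey--Weil) normalization argument to pass from an a.e.\ multiplicative map to a genuine continuous homomorphism; this is routine but should not be elided.
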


	If we assume in addition that $\Gamma$ is star rigid, then the lattice envelope of $A_\Gamma$ is more or less trivial.
	
	\begin{theorem}(=Theorem~\ref{theo:lattice-embeddings2})
		\label{theo:lattice-embeddings-2}
		Suppose $A_\Gamma$ is a large-type Artin group of hyperbolic-type and suppose that $\Gamma$ admits a twistless hierarchy terminating in twistless stars. Also suppose that $\Gamma$ is star-rigid. Then there exists $G_0$ which is virtually isomorphic to $G$ such that the following holds true.
		
		Suppose $A_\Gamma$ or one of its finite index subgroup embeds into a locally compact second countable group $\mathsf{H}$ as a lattice. Then there exists a continuous homomorphism $g:\mathsf{H}\to G_0$ with compact kernel.
	\end{theorem}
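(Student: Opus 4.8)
The plan is to reduce the statement to Theorem~\ref{theo:lattice-embeddings}, using the star-rigidity hypothesis only to pin down the locally compact group $\aut(C_\Gamma)$; the candidate for $G_0$ will be $\aut(C_\Gamma)$ itself.

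First I would record the structural consequences of the hypotheses. Since $\Gamma$ admits a twistless hierarchy terminating in twistless stars, it is in particular connected and twistless (this is straightforward from the definitions: twistless stars are connected and twistless, and these two properties survive admissible twistless decompositions). Combined with star rigidity, Theorem~\ref{thm:mainintro} (which asserts Conjecture~\ref{conj:main} for $A_\Gamma$) yields two facts: by part~(2), $\aut(I_\Gamma)$ is discrete and the natural map $A_\Gamma\to\aut(I_\Gamma)$ is injective with finite-index image; by part~(1), $\aut(I_\Gamma)$ is isomorphic as a topological group to $\aut(C_\Gamma)$, the cellular automorphism group of the Cayley complex of $A_\Gamma$. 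Consequently $\aut(C_\Gamma)$ is a countable discrete group containing a copy of $A_\Gamma$ as a finite-index subgroup; in particular it is locally compact second countable and virtually isomorphic to $G=A_\Gamma$. I would then set $G_0:=\aut(C_\Gamma)$, which depends only on $\Gamma$.

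Next I would invoke the lattice-envelope theorem. Suppose $G'$ is $A_\Gamma$ or a finite-index subgroup of it, and $f:G'\to\mathsf H$ realizes $G'$ as a lattice in a locally compact second countable group $\mathsf H$. A finite-index subgroup of $A_\Gamma$ is again a finite-index subgroup of $A_\Gamma$, so the hypotheses of Theorem~\ref{theo:lattice-embeddings} hold verbatim, and that theorem produces a continuous homomorphism $\psi:\mathsf H\to\aut(C_\Gamma)=G_0$ with compact kernel such that $\psi\circ f$ is the natural map $G'\to\aut(C_\Gamma)$. Taking $g:=\psi$ completes the proof. Since $G_0$ is discrete, $\ker\psi$ is open as well as compact, hence finite, so the conclusion in fact says that $\mathsf H$ is commensurable with $A_\Gamma$ up to a finite normal subgroup, exactly as expected for a lattice envelope of a group virtually isomorphic to $A_\Gamma$.

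The only genuinely substantial input here is Theorem~\ref{theo:lattice-embeddings} itself, whose proof rests on the measure-equivalence rigidity of Theorem~\ref{theo:ME} and on the boundary-theoretic results of \cite{horbez2020boundary}; granting that, the present statement is essentially bookkeeping. The one point where I would be careful is the passage from ``$\Gamma$ admits a twistless hierarchy terminating in twistless stars'' to ``$\Gamma$ is connected and twistless'', which is what lets the ``if'' direction of Conjecture~\ref{conj:main}(2) apply and deliver the discreteness of $\aut(C_\Gamma)$; beyond that I do not anticipate a serious obstacle.
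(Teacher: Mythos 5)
Your proposal is correct and does yield the stated conclusion; the only subtlety, which you have handled, is making $G_0$ independent of the choice of lattice embedding. Your route is, however, a bit different from what the paper does. The paper proves Theorem~\ref{theo:lattice-embeddings2} directly, in the same way as Proposition~\ref{prop:ME}, by first observing that the composition $h_2\circ h_1^{-1}:\aut(C_\Gamma)\to\aut(I_\Gamma)$ is an isomorphism coinciding with the comparison map of \cite[Section 8]{horbez2020boundary}, and then invoking \cite[Theorem 10.10]{horbez2020boundary}, the sibling of \cite[Theorem 10.9]{horbez2020boundary} that is used for Theorem~\ref{theo:lattice-embeddings1}; the star-rigidity hypothesis enters via Lemma~\ref{lem:star rigid} to ensure that $\aut(C_\Gamma)$ is discrete and hence that $A_\Gamma\to\aut(C_\Gamma)$ has finite-index image, which is the hypothesis the second of Horbez--Huang's theorems needs. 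You instead take Theorem~\ref{theo:lattice-embeddings1} as a black box and deduce Theorem~\ref{theo:lattice-embeddings-2} from it by the purely formal observation that once $\aut(C_\Gamma)$ is discrete and contains $A_\Gamma$ as a uniform lattice (hence of finite index), one may simply set $G_0:=\aut(C_\Gamma)$; the continuous homomorphism with compact kernel supplied by Theorem~\ref{theo:lattice-embeddings1} then lands in a group virtually isomorphic to $A_\Gamma$, which is all the statement asks. Your derivation is arguably cleaner because it replaces a citation to an additional external theorem with an elementary bootstrapping argument from the preceding result in the paper; the paper's route, on the other hand, follows the template set up in \cite{horbez2020boundary}, where the two lattice-envelope theorems are proved in parallel rather than one from the other. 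Both are fine.
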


	\subsection{Application to $W^*$-rigidity} \label{Section:W*}
	
	Given a countable group $G$ and a standard probability space $(X,\mu)$ equipped with an ergodic measure-preserving free $G$-action $G\actson (X,\mu)$ by Borel automorphisms, a celebrated construction of Murray and von Neumann \cite{MvN} associates a cross-product von Neumann algebra $L(G\actson X)$ to the $G$-action on $X$. The actions $G\actson X$ and $H\actson Y$ are $W^*$-equivalent if $L(G\actson X)$ and $L(H\actson Y)$ are isomorphic: this is a weaker notion than orbit equivalence. They are \emph{stably} $W^*$-equivalent if the associated von Neumann Algebras have isomorphic amplifications.
	
	As a consequence of previous discussion and \cite{horbez2020boundary}, 
	we can obtain $W^*$-rigidity results under almost the same assumption as Theorem~\ref{theo:OE}, with an extra requirement that $\Gamma$ is not a complete graph.
	
	\begin{theorem}(=Corollary~\ref{cor:von-neumann})
		\label{theo:W}
		Let $A_\Gamma$ be a large-type Artin group of hyperbolic-type. Suppose that
		\begin{enumerate}
			\item $\Gamma$ admits a twistless hierarchy terminating in twistless stars. 
			\item $\Gamma$ is star rigid, and $\Gamma$ is not a complete graph.
		\end{enumerate}
		Let $H$ be a countable group, and let $A_\Gamma\actson X$ and $H\actson Y$ be free, ergodic, measure-preserving actions by Borel automorphisms on standard probability spaces.  
		
		If these two actions are stably $W^*$-equivalence, then they are virtually conjugate (in particular $A_\Gamma$ and $H$ are virtually isomorphic).
	\end{theorem}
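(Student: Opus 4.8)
The plan is to derive this $W^*$-rigidity statement from the measure/orbit equivalence rigidity result (Theorem~\ref{theo:OE}) together with a ``$W^*$-superrigidity up to OE'' input from \cite{horbez2020boundary}. The key point is that for the relevant groups, stable $W^*$-equivalence of essentially free ergodic p.m.p.\ actions already forces stable orbit equivalence of the actions; once this is established the result follows immediately from Theorem~\ref{theo:OE}. So the first step is to identify the precise class-theoretic property of $A_\Gamma$ that guarantees this ``$W^*$ implies OE'' passage, and verify that $A_\Gamma$ satisfies it under the standing hypotheses.

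Concretely, I would argue as follows. First, one checks that $A_\Gamma$ is an ICC group: since $\Gamma$ is twistless (hence connected with no separating vertex), $A_\Gamma$ is one-ended, not virtually abelian, and a standard centraliser computation for large-type Artin groups shows the centre is trivial and all conjugacy classes other than the identity are infinite; the extra hypothesis that $\Gamma$ is not a complete graph rules out the exceptional spherical/dihedral-type cases where the group has nontrivial centre. Next, invoke the relevant structural theorem from \cite{horbez2020boundary}: large-type hyperbolic-type Artin groups (with the appropriate non-degeneracy) lie in a class of groups for which Popa's deformation/rigidity machinery applies — they are, for instance, acylindrically hyperbolic with respect to a suitable action (coming from the action on the modified Deligne complex, using the hyperbolic-type assumption), and more specifically they satisfy a property such as being ``$\mathcal{C}_{\mathrm{rss}}$'' or admitting a proper $1$-cocycle / being $W^*$-superrigid in the sense that any stable $W^*$-equivalence between a free ergodic p.m.p.\ action of such a group and a free ergodic p.m.p.\ action of an arbitrary countable group is implemented by a stable orbit equivalence. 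This is exactly the content we borrow; the hypotheses (1) and (2) in the theorem are precisely those under which \cite{horbez2020boundary} (combined with the ME-rigidity proven here) verifies the needed properties.

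With that in hand, the proof is short: given free ergodic p.m.p.\ actions $A_\Gamma \actson X$ and $H \actson Y$ that are stably $W^*$-equivalent, the cited $W^*$-to-OE input upgrades this to a stable orbit equivalence between $A_\Gamma \actson X$ and $H \actson Y$. Since $A_\Gamma$ is of hyperbolic-type, large-type, with $\Gamma$ admitting a twistless hierarchy terminating in twistless stars and $\Gamma$ star-rigid, Theorem~\ref{theo:OE} applies and yields that the two actions are virtually conjugate; in particular $A_\Gamma$ and $H$ are virtually isomorphic. One should also handle amplifications: ``stably $W^*$-equivalent'' means the von Neumann algebras have isomorphic amplifications, but the $W^*$-superrigidity statement from \cite{horbez2020boundary} is formulated to accommodate amplifications, producing a stable orbit equivalence (which already allows a measure-scaling $f$), so no separate argument is needed.

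The main obstacle I anticipate is verifying the ``stable $W^*$-equivalence implies stable orbit equivalence'' step in the exact generality needed — i.e.\ confirming that the hypotheses (1)+(2) place $A_\Gamma$ inside the precise class of groups for which \cite{horbez2020boundary} proves $W^*$-superrigidity (as opposed to merely OE-rigidity). This requires checking that the $A_\Gamma$ under consideration are ICC and satisfy the technical properties (e.g.\ a form of acylindrical hyperbolicity or a Haagerup-type / deformation input compatible with Popa's intertwining techniques) that the cited paper uses; the role of the ``not a complete graph'' hypothesis is to exclude the residual cases where $A_\Gamma$ fails ICC or fails to be acylindrically hyperbolic in the relevant sense. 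Once the correct black box from \cite{horbez2020boundary} is pinned down, the remainder is a routine deduction via Theorem~\ref{theo:OE}.
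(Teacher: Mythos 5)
Your two-step outline is the right one and matches the paper's: (i) upgrade stable $W^*$-equivalence to stable orbit equivalence, (ii) feed that into the OE-rigidity (Proposition~\ref{prop:SOE}, i.e.\ Theorem~\ref{theo:OE}). But the mechanism you propose for step (i) is not the one that works, and the role you assign to the ``$\Gamma$ is not a complete graph'' hypothesis is incorrect. The paper's step (i) is not about ICC, acylindrical hyperbolicity, class $\mathcal{C}_{\mathrm{rss}}$, proper $1$-cocycles, or a Haagerup-type deformation. What is actually used is \emph{uniqueness of the Cartan subalgebra $L^\infty(X)$ in $L(A_\Gamma \actson X)$ up to unitary conjugacy}, obtained from \cite[Theorem~11.1]{horbez2020boundary}, which in turn rests on Ioana's theorem \cite{ioana2015cartan} on Cartan subalgebras of cross-products by groups that split as amalgamated free products over amenable subgroups. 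This is the precise point where ``$\Gamma$ is not a complete graph'' enters: it guarantees a twistless splitting $A_\Gamma = A_{\Gamma_1} *_{A_{\Gamma_1\cap\Gamma_2}} A_{\Gamma_2}$ with the amalgamating parabolic small enough (abelian-type) to satisfy Ioana's hypotheses. It has nothing to do with ensuring ICC or acylindrical hyperbolicity, and your proposed exclusion rationale would not survive scrutiny.

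Concretely, the gap is this: you write that the passage from $W^*$ to OE comes from a ``$W^*$-superrigidity'' black box and suggest the proof verifies some cocycle/acylindricity property. In fact, the correct chain is: (a) $\Gamma$ not complete $\Rightarrow$ admissible amalgam splitting of $A_\Gamma$; (b) Ioana's theorem $\Rightarrow$ $L^\infty(X)$ is the unique Cartan of $L(A_\Gamma\actson X)$ up to unitary conjugacy; (c) uniqueness of Cartan is exactly what turns an isomorphism (or amplification-isomorphism) of cross-product algebras into a stable orbit equivalence (cf.\ the explanation in the proof of \cite[Corollary~6.4]{horbez2022measure}); (d) apply Proposition~\ref{prop:SOE}. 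If you replace step (b) by ``acylindrically hyperbolic'' or ``proper $1$-cocycle'' you have not shown uniqueness of Cartan, which is what the deduction needs; those properties alone are not known to imply it in this setting. So while you correctly sensed that the first step is the crux, you have not located the actual tool, and your justification of the auxiliary hypothesis is wrong.
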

	
	Popa's deformation/rigidity theory has led to many examples of $W^*$-superrigid actions, starting with the works of Peterson \cite{peterson2010examples} and Popa and Vaes \cite{popa2010group}; but knowing that all actions of a given group are $W^*$-superrigid is a phenomenon that has only
	been established in very few cases so far \cite{houdayer2013class,chifan2015w,horbez2020boundary,horbez2022measure}. Our $W^*$-superrigidity
	theorems are obtained as a consequence of orbit equivalence rigidity and of the uniqueness of $L^\infty(X)$ as a Cartan subalgebra of $L(A_\Gamma\actson X)$, up to unitary conjugacy. The
	latter is obtained from the existence of decompositions of $A_\Gamma$ as an amalgamated free
	product, using a theorem of Ioana \cite{ioana2015cartan}. The extra assumption that $\Gamma$ is not a complete graph is used to produce splittings of $A_\Gamma$ that satisfy the assumptions in Ioana's theorem.
	
	Note that many previous rigidity results (quasi-isometric, measure equivalent etc.) were already know in the context of more classical objects, like higher rank lattices or mapping class groups of surfaces. However, the form of rigidity in Theorem~\ref{theo:W} (i.e. $W^*$-rigidity) is still open for higher rank lattices or mapping class groups of surfaces.
	
	We also have a classification result.
	\begin{theorem}
		\label{thm:classification2}
		Suppose $A_\Gamma$ and $A_{\Gamma'}$ are two large-type and hyperbolic-type Artin groups such that $\Gamma$ admits a twistless hierarchy terminating in twistless stars. If in addition both $\Gamma$ and $\Gamma'$ are not complete graphs, then the following are equivalent. 
		\begin{enumerate}
			\item there is a label-preserving isomorphism between $\Gamma$ and $\Gamma'$ ;
			\item	the cross product von-Neumann algebras $L(A_\Gamma\actson X)$ and $L(A_{\Gamma'}\actson Y)$ have isomorphic amplifications, for some free, ergodic, measure-preserving actions by Borel automorphisms on standard probability spaces $A_\Gamma\actson X$ and $A_{\Gamma'}\actson Y$.
		\end{enumerate}
	\end{theorem}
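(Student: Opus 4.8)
The plan is to prove the two implications separately. The implication $(1)\Rightarrow(2)$ is soft, while for $(2)\Rightarrow(1)$ I would deduce stable orbit equivalence, hence measure equivalence, from the given stable $W^*$-equivalence, and then invoke the measure-equivalence classification already recorded in Corollary~\ref{cor:classification2}.

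For $(1)\Rightarrow(2)$: a label-preserving isomorphism between $\Gamma$ and $\Gamma'$ induces an isomorphism $A_\Gamma\cong A_{\Gamma'}$. Pick any free ergodic measure-preserving Borel action of the common group on a standard probability space $Z$ (say the Bernoulli shift $[0,1]^{A_\Gamma}$). Reading $Z$ as an action of $A_\Gamma$ and, through the isomorphism, as an action of $A_{\Gamma'}$, produces isomorphic cross-product von Neumann algebras; taking the trivial $t=1$ amplification on each side gives $(2)$.

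For $(2)\Rightarrow(1)$: fix free ergodic measure-preserving actions $A_\Gamma\actson X$ and $A_{\Gamma'}\actson Y$ as in $(2)$. After rescaling amplifications we may fix an isomorphism $\theta\colon L(A_\Gamma\actson X)^{t}\to M$, where $M:=L(A_{\Gamma'}\actson Y)$ and $t>0$; here $M$ is a $\mathrm{II}_1$ factor because $A_{\Gamma'}$ is i.c.c.\ (a large-type Artin group is i.c.c.\ unless its defining graph is a single vertex or a single edge, and neither occurs since $\Gamma'$ is not complete). The decisive step is that $L^\infty(Y)$ is, up to unitary conjugacy, the unique Cartan subalgebra of $M$. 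This is exactly the uniqueness statement underlying the proof of Corollary~\ref{cor:von-neumann}: since $\Gamma'$ is not a complete graph, $A_{\Gamma'}$ decomposes as an amalgamated free product of standard parabolic subgroups to which Ioana's theorem \cite{ioana2015cartan} applies, and the uniqueness follows as there. I emphasise that star rigidity is not needed for this step; it enters Theorem~\ref{theo:W} only to upgrade measure equivalence to virtual conjugacy by way of the orbit-equivalence rigidity of Theorem~\ref{theo:OE}, which we do not require. Granting this, the Cartan subalgebras $\theta\bigl(L^\infty(X)^t\bigr)$ and $L^\infty(Y)$ of $M$ are unitarily conjugate; composing $\theta$ with the implementing unitary yields an isomorphism of Cartan inclusions $\bigl(L(A_\Gamma\actson X)^t,\,L^\infty(X)^t\bigr)\cong\bigl(L(A_{\Gamma'}\actson Y),\,L^\infty(Y)\bigr)$. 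By the Feldman--Moore and Singer correspondence between Cartan inclusions and orbit equivalence relations of free actions, in its amplified form, the actions $A_\Gamma\actson X$ and $A_{\Gamma'}\actson Y$ are stably orbit equivalent; in particular $A_\Gamma$ and $A_{\Gamma'}$ are measure equivalent. As both are large-type of hyperbolic-type and $\Gamma$ admits a twistless hierarchy terminating in twistless stars, Corollary~\ref{cor:classification2} now delivers a label-preserving isomorphism between $\Gamma$ and $\Gamma'$, i.e.\ $(1)$.

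I expect the main obstacle to be the uniqueness of the Cartan subalgebra of $M$: one must choose the amalgamated decomposition of $A_{\Gamma'}$ carefully enough — non-completeness of $\Gamma'$ guaranteeing that a proper decomposition exists — so that Ioana's criterion not only locates an arbitrary Cartan subalgebra inside one of the vertex algebras, but, after controlling the vertex pieces and iterating along the Bass--Serre tree, forces it to be unitarily conjugate to $L^\infty(Y)$. Everything downstream of this — the chain ``unique Cartan $+$ stable $W^*$-equivalence $\Rightarrow$ stable orbit equivalence $\Rightarrow$ measure equivalence'' and the appeal to Corollary~\ref{cor:classification2} — is routine.
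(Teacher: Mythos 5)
Your proposal is correct and follows essentially the same route as the paper: for $(2)\Rightarrow(1)$ the paper likewise invokes the uniqueness of the Cartan subalgebra $L^\infty(\cdot)$ via Ioana's theorem (cited as \cite[Theorem 11.1]{horbez2020boundary}, which applies because the defining graph is not complete) to upgrade stable $W^*$-equivalence to stable orbit equivalence, and then applies the measure-equivalence part of the classification already established in Corollary~\ref{cor:classification2}. The only cosmetic difference is that you apply the Cartan-uniqueness on the $A_{\Gamma'}$ side while the paper's reference proof (Proposition~\ref{prop:von-neumann-strong}) phrases it for the $A_\Gamma$ side; the argument is symmetric, and your observation that star-rigidity is not needed for the classification (only for the stronger virtual-conjugacy rigidity) is accurate.
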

	
	\subsection{Further results} \label{Secfurtherresult}
	
	As previously mentioned, Theorem \ref{thm:mainintro} and all the applications can be reduced to a combinatorial property concerning the intersection pattern of some families of standard trees in the modified Deligne complex. The presentation graphs $\Gamma$ satisfying that property are called “fundamental” (see Definition \ref{DefiFundGraphI}). Basically, a graph $\Gamma$ is fundamental if every subgraph $G$ of $I_{\Gamma}$ that is isomorphic to the barycentric subdivision of $\Gamma$ corresponds to a unique fundamental domain in the Deligne complex $D_{\Gamma}$ (equivalently, see Definition \ref{DefiFundGraphII}).
	
	The following follows from our work:
	
	\begin{theorem} \label{ThmExtendingAllResults}
		Let $\mathcal{C}$ be a class of presentation graphs that correspond to large-type Artin groups. If every graph in $\mathcal{C}$ is fundamental, then we recover the results of Theorems \ref{ThmAutomorphismsIntro}, \ref{ThmIsomorphismsIntro}, \ref{theo:qi} and \ref{ThmQiClassIntro} if we replace the condition “$\Gamma$ (resp. $\Gamma$ and $\Gamma'$) admits a twistless hierarchy terminating in twistless stars” with the condition “$\Gamma$ (resp. $\Gamma$ and $\Gamma'$) belongs to $\mathcal{C}$”.
		
		Moreover, if the graphs in $\mathcal{C}$ are also of hyperbolic-type, then the above also works for Theorems \ref{theo:ME}, \ref{theo:OE}, \ref{ThmMEClassIntro}, \ref{theo:lattice-embeddings}, \ref{theo:lattice-embeddings-2}, \ref{theo:W} and \ref{thm:classification2}.
	\end{theorem}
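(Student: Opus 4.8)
The plan is to recognise Theorem~\ref{ThmExtendingAllResults} as essentially a bookkeeping statement: in the proofs of all of Theorems~\ref{ThmAutomorphismsIntro}--\ref{thm:classification2}, the hypothesis ``$\Gamma$ admits a twistless hierarchy terminating in twistless stars'' is invoked in exactly one place, and only in order to verify that $\Gamma$ is fundamental in the sense of Definition~\ref{DefiFundGraphI} (equivalently Definition~\ref{DefiFundGraphII}). Concretely, this hypothesis is what Theorem~\ref{thm:main} feeds into Corollary~\ref{cor:convert}, which identifies $\aut(I_\Gamma)$ with the cellular automorphism group of the Cayley complex $C_\Gamma$ (and, under star rigidity, shows $\aut(I_\Gamma)$ is discrete with $A_\Gamma$ of finite index). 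So the first step is to go through the proof of Theorem~\ref{thm:main}, isolate the unique place where the hierarchy is used, and check that what is extracted there is precisely the ``fundamental'' property and nothing stronger; in particular one records that a fundamental graph — together with connectedness and twistlessness, which are built into the set-up of the intersection-graph computation and are on the necessity side handled by Proposition~\ref{lem:onlyif} — already suffices to run Corollary~\ref{cor:convert}.

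Granting this, the first block of applications — the computation of $\aut(A_\Gamma)$ (Theorem~\ref{ThmAutomorphismsIntro}), the isomorphism and commensurability classification (Theorem~\ref{ThmIsomorphismsIntro}), and the quasi-isometric rigidity and classification results (Theorems~\ref{theo:qi} and \ref{ThmQiClassIntro}) — is deduced from the output of Corollary~\ref{cor:convert} together with external inputs that make no further reference to the hierarchy: the reduction of self quasi-isometries of $2$-dimensional Artin groups to automorphisms of the modified Deligne complex of \cite{huang2017quasi}, the solution of the isomorphism problem for large-type Artin groups \cite{vaskou2023isomorphism, martin2023characterising}, and the description of $\aut_\Gamma(A_\Gamma)$ from \cite{jones2024fixed}. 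Hence each of these proofs goes through verbatim after substituting ``$\Gamma\in\mathcal C$'' (equivalently, ``$\Gamma$ fundamental'') for the hierarchy hypothesis, with the case $\Gamma=\Delta_{333}$ still excluded wherever it was excluded before.

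The second block — measure equivalence rigidity and classification (Theorems~\ref{theo:ME}, \ref{ThmMEClassIntro}), orbit equivalence rigidity (Theorem~\ref{theo:OE}), rigidity of lattice envelopes (Theorems~\ref{theo:lattice-embeddings} and \ref{theo:lattice-embeddings-2}), and $W^*$-rigidity and classification (Theorems~\ref{theo:W} and \ref{thm:classification2}) — additionally requires $A_\Gamma$ to be of hyperbolic-type, which is exactly the extra clause imposed on $\mathcal C$ for this block. Here the further ingredients are the measure-equivalence and boundary-theoretic results of \cite{horbez2020boundary} and, for the $W^*$ statements, the uniqueness of the Cartan subalgebra of $L(A_\Gamma\actson X)$ via Ioana's theorem \cite{ioana2015cartan}, whose hypotheses use only that $A_\Gamma$ is large-type, of hyperbolic-type, and (for $W^*$) has non-complete defining graph, in order to produce the required amalgamated-product splittings. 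These inputs are insensitive to how the ``fundamental'' property was obtained, so the proofs are unchanged.

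The content of the argument is therefore minimal, and the only step that requires genuine care — the one I expect to be the real obstacle — is the first: confirming that the hierarchy hypothesis enters the proof of Theorem~\ref{thm:main} in a single, cleanly isolable place, and that the property it yields there is exactly Definition~\ref{DefiFundGraphI}/\ref{DefiFundGraphII} rather than some implicit strengthening used later in the chain leading to Corollary~\ref{cor:convert}. In practice this amounts to tracking the ``fundamental'' condition through the identification $\aut(I_\Gamma)\cong\aut(C_\Gamma)$ and verifying that no intermediate lemma silently re-uses the inductive decomposition of $\Gamma$; once that audit is complete, Theorem~\ref{ThmExtendingAllResults} follows.
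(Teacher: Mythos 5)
Your proposal is correct and follows the same route as the paper: the paper proves Theorem~\ref{ThmExtendingAllResults} simply by pointing to Propositions~\ref{prop:qi}, \ref{prop:ME}, \ref{prop:SOE}, \ref{prop:von-neumann-strong} and Lemma~\ref{lem:classification}, all of which are already stated under the hypothesis that $\Gamma$ is fundamental (plus the standing connectedness / no-leaf / $\Gamma\neq\Delta_{333}$ / star-rigidity conditions that are carried over from the respective theorems in the introduction), and your audit correctly identifies that the hierarchy hypothesis entered only via Proposition~\ref{prop:fund} to certify fundamentality. The one caveat worth flagging is that you describe the audit as a step ``yet to be carried out''; in fact the paper has already done it — the intermediate propositions are deliberately phrased using only fundamentality — so the theorem really is just a pointer to them.
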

	
	This theorem is a combination of Proposition~\ref{prop:qi}, Proposition~\ref{prop:ME}, Proposition~\ref{prop:SOE}, Proposition~\ref{prop:von-neumann-strong} and Lemma~\ref{lem:classification}.
	\medskip
	
	In this paper, we prove that any large-type graph $\Gamma$ that admit a twistless hierarchy terminating in twistless stars is fundamental (see Proposition \ref{prop:fund}). 
	
	In \cite{blufstein2024homomorphisms}, the authors also study the property of being fundamental, albeit not under that name. They show that every graph $\Gamma$ that is connected, twistless, and whose labels are at least $6$, is fundamental. In particular, the following holds. 
	
	\begin{theorem} \label{ThmExtendingAllResults1}
		Theorems \ref{ThmAutomorphismsIntro}, \ref{ThmIsomorphismsIntro}, \ref{theo:qi}, \ref{ThmQiClassIntro}, \ref{theo:ME}, \ref{theo:OE}, \ref{ThmMEClassIntro}, \ref{theo:lattice-embeddings}, \ref{theo:lattice-embeddings-2}, \ref{theo:W} and \ref{thm:classification2} hold if we replace the condtion “$\Gamma$ (resp. $\Gamma$ and $\Gamma'$) admits a twistless hierarchy terminating in twistless stars” with the condition “$\Gamma$ (resp. $\Gamma$ and $\Gamma'$) is XXXL, connected, and twistless”.
	\end{theorem}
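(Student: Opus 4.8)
The plan is to deduce this statement directly from Theorem~\ref{ThmExtendingAllResults} by feeding that theorem the right class of presentation graphs. Concretely, I would take $\mathcal{C}$ to be the class of all presentation graphs $\Gamma$ with $A_\Gamma$ of large-type that are connected, twistless, and XXXL (every label $\ge 6$). The only external input needed is the result of \cite{blufstein2024homomorphisms}, which establishes that every such $\Gamma$ is fundamental. So the first step is a bookkeeping one: verify that the notion studied in \cite{blufstein2024homomorphisms} — phrased there in terms of the intersection pattern of standard trees in the modified Deligne complex, without the word ``fundamental'' — is literally Definition~\ref{DefiFundGraphI} (equivalently Definition~\ref{DefiFundGraphII}). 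This translation is the one place where any care is required, and I expect it to be routine.

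Once $\mathcal{C}$ is in place, Theorem~\ref{ThmExtendingAllResults} immediately yields Theorems~\ref{ThmAutomorphismsIntro}, \ref{ThmIsomorphismsIntro}, \ref{theo:qi} and \ref{ThmQiClassIntro} with the hypothesis ``$\Gamma$ (resp.\ $\Gamma$ and $\Gamma'$) admits a twistless hierarchy terminating in twistless stars'' replaced by ``$\Gamma$ (resp.\ $\Gamma$ and $\Gamma'$) belongs to $\mathcal{C}$'', which is exactly ``$\Gamma$ (resp.\ $\Gamma$ and $\Gamma'$) is XXXL, connected, and twistless''. For the measure-theoretic and von Neumann algebraic statements (Theorems~\ref{theo:ME}, \ref{theo:OE}, \ref{ThmMEClassIntro}, \ref{theo:lattice-embeddings}, \ref{theo:lattice-embeddings-2}, \ref{theo:W}, \ref{thm:classification2}), Theorem~\ref{ThmExtendingAllResults} additionally requires the graphs in the class to be of hyperbolic-type; but each of those theorems already carries ``hyperbolic-type'' as a standing hypothesis, so I would simply apply Theorem~\ref{ThmExtendingAllResults} to the subclass $\mathcal{C}_{\mathrm{hyp}} \subseteq \mathcal{C}$ of hyperbolic-type graphs, which still consists entirely of fundamental graphs. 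The residual compatibility checks — that the star-rigidity clauses continue to make sense for graphs in $\mathcal{C}$, and that the extra requirement in Theorems~\ref{theo:W} and \ref{thm:classification2} that $\Gamma$ (and $\Gamma'$) be non-complete is unaffected — are trivial.

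The main, and essentially only, obstacle is therefore not in this theorem at all: it lies in the two ingredients it invokes, namely Theorem~\ref{ThmExtendingAllResults} of the present paper and the fundamentality statement of \cite{blufstein2024homomorphisms}. Granting those, the argument is a one-line combination, and I would present it as such: choose $\mathcal{C}$ (or $\mathcal{C}_{\mathrm{hyp}}$), cite \cite{blufstein2024homomorphisms} for fundamentality of its members, and invoke Theorem~\ref{ThmExtendingAllResults}.
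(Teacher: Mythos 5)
Your proposal is correct and is essentially the paper's own (implicit) argument: the paper states the result of \cite{blufstein2024homomorphisms} that connected, twistless, XXXL graphs are fundamental and then immediately derives Theorem~\ref{ThmExtendingAllResults1} from Theorem~\ref{ThmExtendingAllResults}, exactly as you describe. One small simplification you could make: there is no need to pass to a subclass $\mathcal{C}_{\mathrm{hyp}}$, since XXXL (all labels $\ge 6$) already rules out any $(3,3,3)$-triangle, so every graph in your $\mathcal{C}$ is automatically of hyperbolic-type and $\mathcal{C}_{\mathrm{hyp}}=\mathcal{C}$.
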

	
	We believe that the following conjecture holds true:
	
	\begin{conjecture}
		Let $A_{\Gamma}$ be any large-type Artin group. If $\Gamma$ is connected and twistless, then it is fundamental.
	\end{conjecture}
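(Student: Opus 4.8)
The plan is to work entirely inside the modified Deligne complex $D_\Gamma$, using the reformulation of being ``fundamental'' from Definition~\ref{DefiFundGraphII}: one must show that every family of standard trees in $D_\Gamma$ whose intersection pattern realizes a copy of the barycentric subdivision of $\Gamma$ inside $I_\Gamma$ is, up to the action of $A_\Gamma$, the family of standard trees carried by a single fundamental domain. Since $D_\Gamma$ is nonpositively curved (with the Moussong-type metric on the modified Deligne complex of a $2$-dimensional Artin group) and $A_\Gamma$ acts cocompactly on it, the natural strategy is a local-to-global one: establish a rigidity statement in the links of the vertices of $D_\Gamma$, propagate it across $D_\Gamma$ using connectedness of $\Gamma$, and finally use twistlessness to eliminate the last remaining ambiguity.

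Concretely, I would first fix a copy $S\subseteq I_\Gamma$ of the barycentric subdivision of $\Gamma$ and, for each vertex of $S$, record the associated $\mathbb Z$-subgroup together with its standard tree in $D_\Gamma$: one tree $T_v$ for each vertex $v$ of $\Gamma$ and one tree $T_e$ for each edge $e$, with $T_v\cap T_e\neq\emptyset$ exactly when $v$ is an endpoint of $e$, and the non-incident trees disjoint (here one uses crucially that $S$ is a copy of the \emph{full} barycentric subdivision of $\Gamma$, not merely some subgraph of $I_\Gamma$). After translating by an element of $A_\Gamma$ we may assume that one chosen tree is a standard tree of the base fundamental domain. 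The core step is then a local rigidity lemma at the type-$2$ vertices of $D_\Gamma$, i.e.\ those stabilized by the dihedral parabolics $A_e$: one shows that if two standard trees $T_v,T_{v'}$ and an edge-tree $T_e$ pairwise meet, then they already meet in a common type-$2$ vertex, and the local configuration of the three trees there is the standard one. This is a statement purely about the dihedral Artin group $A_e$ and its fixed-point geometry, and — when $v,v'$ both lie in a $3$-cycle of $\Gamma$ — about the triangle Artin group on that triangle; for large labels it follows from the angle estimates of \cite{blufstein2024homomorphisms}, while in general it must be extracted from the finer combinatorics of the links, which for a label-$3$ edge are governed by the $I_2(3)$ Coxeter complex and for a triangle by the corresponding (Euclidean or hyperbolic) reflection group.

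Granting the local rigidity, I would propagate it along $S$: starting from the normalized tree and using that $\Gamma$ (hence its barycentric subdivision) is connected, walk along the edges of $\Gamma$, at each step using the local lemma to pin the next vertex-tree and edge-tree into standard position relative to the ones already fixed. The only freedom that can survive this process is the freedom to ``twist'' the configuration across a separating vertex or a separating edge of $\Gamma$ — precisely the twists described in the introduction — since along such a separating simplex the propagation does not transmit enough information to rigidify the two sides relative to one another. As $\Gamma$ is twistless, no such separating simplex exists, so the propagated family is forced to be a standard fundamental domain, which is exactly what ``fundamental'' asserts; this is also the mechanism behind the ``only if'' directions recorded in Proposition~\ref{lem:onlyif} and Theorem~\ref{ThmExtendingAllResults}.

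The main obstacle I expect is the local rigidity lemma in the presence of $3$-cycles in $\Gamma$, i.e.\ a genuinely uniform understanding of the triangle Artin groups $A_{pqr}$ with $p,q,r\geq 3$ and of how standard trees can intersect in their Deligne complexes — this is exactly the point at which Crisp's original method, and all subsequent triangle-free arguments, break down. In particular the Euclidean triangle $\Delta_{333}$ is delicate, as $A_{\Delta_{333}}$ is virtually the mapping class group of the five-punctured sphere and its Deligne complex is not Gromov-hyperbolic, so it likely requires separate treatment; and for small labels one cannot rely on the clean angle-comparison arguments available in the XXXL case, so one expects instead a case analysis of the possible ways two standard trees can run through a type-$2$ vertex, ruling out the exotic configurations by hand from the explicit structure of the links and from the results of \cite{vaskou2023automorphisms} on centralizers and parabolic subgroups of large-type Artin groups.
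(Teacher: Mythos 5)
The statement you were asked to prove is an \emph{open conjecture} in the paper, not a theorem: the authors prove it only under additional hypotheses (a twistless hierarchy terminating in twistless stars, Proposition~\ref{prop:fund}) and record the XXXL case from~\cite{blufstein2024homomorphisms}. There is therefore no ``paper's own proof'' to compare against, and your plan, read on its own terms, does not close the gap.

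The central missing ingredient in your proposal is the ``local rigidity lemma at type-$2$ vertices,'' which you state but do not prove and indeed explicitly flag as the main obstacle. This lemma is not a technicality: for small labels, and in particular around $\Delta_{333}$-triangles, it is exactly the point at which all known methods fail. The angle-comparison estimate the paper actually uses (Lemma~\ref{LemmaTwoStdTreesBorderingTwoFD}) controls two \emph{disjoint} standard trees both running through two fundamental-domain translates; it says nothing about three pairwise-intersecting standard trees around a single type-$2$ vertex, and the paper's argument avoids needing that case precisely by arranging that the base pieces of the hierarchy are cones over cycles, where the combinatorial chain lemma (Lemma~\ref{LemmaChainOf6Cycles}) and the Garside normal form in dihedral parabolics (Lemma~\ref{Lemmaf1v1f2}) suffice. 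Nothing in your outline supplies a replacement for that step when $\Gamma$ is twistless but does not decompose along cone-over-cycle pieces, which is exactly the territory the conjecture is about.

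There is also a logical problem in the propagation step. You assert that ``the only freedom that can survive this process is the freedom to twist across a separating vertex or edge.'' But this assertion is not established by the walk-along-$\Gamma$ argument; it is essentially a restatement of the conjecture. The paper's Lemma~\ref{LemmaExtendingToTwistlessHierarchy} proves a precise gluing statement --- if $\Gamma_1$ and $\Gamma_2$ are \emph{each already known to be fundamental} and their intersection is neither empty nor a single vertex nor a single edge, then $\Gamma_1\cup\Gamma_2$ is fundamental --- and notes the hypotheses are sharp. Your propagation, by contrast, would have to rule out \emph{a priori} any exotic configuration of standard trees not arising from twists, without the inductive hypothesis that the smaller pieces are fundamental. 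Finally, a minor confusion worth fixing: the barycentric-subdivision vertices sitting at edge midpoints of $\Gamma$ correspond to type-$D$ vertices of $I_\Gamma^{TD}$, which are single type-$2$ vertices in $D_\Gamma$, not to additional ``edge-trees'' $T_e$; only the original vertices of $\Gamma$ correspond to (infinite) standard trees.
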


	\subsection{Discussion of proofs}
	We will only discuss the proof of the main theorem Theorem~\ref{thm:mainintro} and some of its generalizations, as the passage from the main theorem to all the applications in rigidity and classification is standard.
	
	Let $A_{\Gamma}$ be a large-type Artin group. Our goal is to compute the automorphism group of the associated intersection of graph $I_{\Gamma}$, eventually prove that under appropriate assumptions, the automorphism group $\aut(I_\Gamma)$ of $I_\Gamma$ is isomorphic to the automorphism group $\aut(D_\Gamma)$ of the associated modified Deligne complex $D_\Gamma$. Then we relate $\aut(D_\Gamma)$ to the automorphism group of the Cayley complexes or automorphism group of $A_\Gamma$. We will only discuss how we relate $\aut(I_\Gamma)$ and $\aut(D_\Gamma)$, which is the most interesting part of the proof.
	
	We recall that the vertices of $I_{\Gamma}$ are in $1-1$ correspondence with $\mathbb{Z}$-subgroups whose centraliser is not virtually abelian. Despite the description of the vertices of $I_\Gamma$ looking uniform, our first step is show the automorphism group of $I_\Gamma$ cannot act transitively on the vertex set of $I_\Gamma$, except when $\Gamma=\Delta_{333}$ is a triangle with all edges labelled by 3. In the special case $\Gamma=\Delta_{333}$, the corresponding Artin group $A_{\Gamma}$ injects with finite index in the mapping class group $\mcg^\pm(\Sigma_5)$ of the $5$-punctured sphere. In particular, the intersection graph $I_{\Gamma}$ is isomorphic to the curve graph $\mathcal{C}_5$ of the $5$-punctured sphere (see Section \ref{SectionI=CurveGraph}). As such, it is vertex-transitive, and the vertices are indistinguishable from one another.
	
	In contrast to what happens in curve graphs, transitivity fails whenever $\Gamma\neq\Delta_{333}$. This lack of symmetry is actually a crucial point for us. It turns out that the graph $I_{\Gamma}$ contains three naturally different types of vertices, that we call vertices of type $T$, of type $D$ and of type $E$. These different types of vertices correspond to the different types of non-virtually abelian centralisers appearing in Proposition \ref{PropositionCentralisers}. It is possible to build connection between these vertices with certain convex subcomplexes of $D_\Gamma$. The simplest case is vertices of type $D$ in $I_\Gamma$, which actually corresponds to certain type of vertices in $D_\Gamma$ (we call them \emph{type 2 vertices}). Vertices of type $T$ in $I_\Gamma$ corresponding to certain collections of trees in $D_\Gamma$ (we call them \emph{standard trees}); and vertices of type $E$ are the most complicated ones, corresponding to more intricate subcomplexes of $D_\Gamma$.

	First we try to distinguish vertices of type $E$ in the intersection graph $I_\Gamma$. To this end, we carefully add certain 2-cells to $I_\Gamma$ such that automorphisms of $I_\Gamma$ also permutes these 2-cells. Now vertices of type $E$ in $I_\Gamma$ can be distinguished from $T$ and $D$ using the local structure of this 2-complex. This is done in Section \ref{SectionIsolatingExoticVertices}, where we prove the following.
	
	\begin{proposition} (=Corollary~\ref{CorollaryExoticSentToExotic})
		Let $A_{\Gamma}$ and $A_{\Gamma'}$ be two large-type Artin groups where both $\Gamma$ and $\Gamma'$ are connected, and $\Gamma \neq \Delta_{333} \neq \Gamma'$. Then every isomorphism $\psi : I_{\Gamma} \rightarrow I_{\Gamma'}$ sends vertices of type $E$ to vertices of type $E$.    
	\end{proposition}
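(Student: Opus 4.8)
The plan is to enrich $I_\Gamma$ into a canonical $2$-complex $\widehat I_\Gamma$ with the same $1$-skeleton, whose $2$-cells are attached along a family of subgraphs of $I_\Gamma$ singled out by a purely graph-theoretic rule. Since the rule refers only to the abstract graph $I_\Gamma$ — and not to the generating set of $A_\Gamma$ — any graph isomorphism $\psi\colon I_\Gamma\to I_{\Gamma'}$ extends to a cellular isomorphism $\widehat\psi\colon\widehat I_\Gamma\to\widehat I_{\Gamma'}$. One then isolates a local combinatorial invariant of a vertex $v$ in $\widehat I_\Gamma$ — for instance the isomorphism type of the link of $v$, or of the union of $2$-cells containing $v$ — which holds precisely when $v$ is of type $E$. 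As $\widehat\psi$ preserves links and $2$-cells, it preserves this invariant, hence sends type-$E$ vertices to type-$E$ vertices. The hypotheses that $\Gamma,\Gamma'$ are connected and different from $\Delta_{333}$ are what make the trichotomy $T/D/E$ non-degenerate on both sides (recall that for $\Delta_{333}$ the graph is vertex-transitive, so no such distinction can exist).

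The first task is to choose the subgraphs that bound $2$-cells. The natural choice mirrors the geometry of the modified Deligne complex $D_\Gamma$: type-$D$ vertices of $I_\Gamma$ correspond to the type-$2$ vertices of $D_\Gamma$, type-$T$ vertices track the standard trees, and type-$E$ vertices track more complicated subcomplexes; the $2$-cells we attach should record the ``standard residues'' around the type-$2$ vertices of $D_\Gamma$ (and their translates), read off from the commutation pattern among the corresponding $\mathbb Z$-subgroups. Concretely, one describes the relevant configurations intrinsically — say as certain short cycles of $I_\Gamma$ together with the pattern of their common neighbours — and then one must verify that no ``accidental'' subgraph of $I_\Gamma$ has the same shape, so that the set of $2$-cells is indeed determined by the graph alone. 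This verification rests on the structure theory of $2$-dimensional Artin groups acting on $D_\Gamma$ and on the classification of non-virtually-abelian centralizers in Proposition~\ref{PropositionCentralisers}, together with the fact that distinct $\mathbb Z$-subgroups with non-virtually-abelian centralizer are ``separated'' in $D_\Gamma$ in a controlled way.

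With $\widehat I_\Gamma$ constructed, the second task is the local separation of the three types: compute, for a vertex $v$ of each type, which $2$-cells contain $v$ and how they are glued around it. One expects a type-$D$ vertex to sit at the centre of a large, symmetric family of $2$-cells (its $\mathbb Z$-subgroup being central in a whole dihedral parabolic), a type-$T$ vertex to lie on such families but in a boundary-like position, and a type-$E$ vertex to have a visibly different — strictly poorer or differently shaped — local picture. Extracting from this a single property $P(v)$ equivalent to ``$v$ is of type $E$'' finishes the proof, since $\widehat\psi$ transports $P$ isomorphically.

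The main obstacle is the tension inside the first task: one needs a rule for attaching $2$-cells that is at once \emph{genuinely intrinsic} to the graph $I_\Gamma$ — so that $\widehat\psi$ exists and is an isomorphism — and \emph{rich enough} that the resulting local structure really does separate type $E$ from types $T$ and $D$. With too few $2$-cells the types stay locally indistinguishable; with a rule that secretly uses the Artin generators, naturality is lost. Proving that the chosen configurations do not appear elsewhere in $I_\Gamma$ by accident amounts to a uniform analysis, over all connected $\Gamma\neq\Delta_{333}$, of the possible commutation patterns among $\mathbb Z$-subgroups with non-virtually-abelian centralizer; this is the technical core, and is what I expect occupies Section~\ref{SectionIsolatingExoticVertices}.
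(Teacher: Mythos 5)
Your high-level plan does match the paper's: one enriches $I_\Gamma$ with $2$-cells attached by a purely graph-theoretic rule and reads off a local invariant. But as written the proposal is only a strategy sketch that explicitly defers the two steps that actually carry the argument, and the intuition you offer about what the local invariant will be is not what works.

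Concretely, the paper attaches $2$-cells along \emph{$5$-cycles}. This is the crucial choice, and its soundness is not automatic: one must show that every $5$-cycle of $I_\Gamma$ lies inside a unique ``piece'' $gI_{\Gamma'}g^{-1}$ coming from a $(3,3,3)$-triangle $\Gamma'\subset\Gamma$. The proof of this (Lemmas~\ref{Lemma5CyclesContainExotic}--\ref{LemmaPentagonInSinglePiece}) is precisely the ``uniform analysis of commutation patterns'' you flag as the technical core, and it runs through the parabolic-closure machinery (Theorem~\ref{ThmIntersectionParabolics}, Corollary~\ref{CorollaryTypeOfIntersections}) plus the identification of a $5$-piece with the curve graph $\mathcal C_5$ of the $5$-punctured sphere. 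Your sketch neither identifies $5$-cycles as the right objects nor indicates why accidental pentagons can be ruled out, so the first task is genuinely unresolved.

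The second task is also resolved differently from what you predict. Your guess is that a type-$D$ vertex is locally ``richest'', a type-$T$ vertex sits in a boundary position, and a type-$E$ vertex is ``strictly poorer or differently shaped''. In fact the relevant invariant is the \emph{modified link} $\widetilde{\lk}(v)$ built from $5$-cycles through $v$, and the computation (Lemmas~\ref{LemmaModLinkve}--\ref{LemmaModLinkvt}, Corollary~\ref{CorollaryModifiedLinks}) gives the opposite picture at $T$ and, more importantly, shows that $D$ and $E$ have the \emph{same} local invariant: both have $\widetilde{\lk}(v)$ isomorphic to the Farey graph, while $T$-vertices in a $5$-piece have $\widetilde{\lk}(v)$ \emph{strictly containing} a Farey graph. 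So $E$ cannot be recognized from $\widetilde{\lk}(v)$ alone; the paper instead characterizes $E$ as the vertices having \emph{both} a neighbour with $\widetilde{\lk}$ strictly containing a Farey graph and a neighbour with $\widetilde{\lk}$ a Farey graph (Proposition~\ref{PropExoticIIFSomeCondition}), which works because edge types in $I_\Gamma$ are restricted to $T$--$D$, $T$--$E$, $E$--$E$ (Lemma~\ref{LemmaPossibleEdges}). This second-order characterization via neighbours, and the Farey-graph rigidity facts used to prove the non-degeneracy when $\Gamma\neq\Delta_{333}$, are missing from your sketch and are not the kind of detail one can wave away; without them the argument does not close.
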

	
	The above proposition shows that every automorphism of $I_{\Gamma}$ reduces to an automorphism of the subgraph $I_{\Gamma}^{TD}$ spanned by the vertices of type $T$ and $D$.
	
	In Section \ref{subsecTD} we strengthen the previous result provided $\Gamma$ is not a tree. Our arguments rely on understanding intersections of standard trees in the Deligne complex. We prove the following:
	
	\begin{proposition} (=Proposition~\ref{PropAutomorphismsPreserveType})
		Let $A_{\Gamma}$ and $A_{\Gamma'}$ be two large-type Artin groups where both $\Gamma$ and $\Gamma'$ are connected but not trees, and $\Gamma \neq \Delta_{333} \neq \Gamma'$. Then every isomorphism $\psi : I_{\Gamma} \rightarrow I_{\Gamma'}$ preserves the type of the vertices ($T$, $D$ or $E$).
	\end{proposition}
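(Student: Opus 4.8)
The plan is to leverage Corollary~\ref{CorollaryExoticSentToExotic}: since $\psi$ already sends type-$E$ vertices to type-$E$ vertices, it restricts to an isomorphism $\psi\colon I_{\Gamma}^{TD}\to I_{\Gamma'}^{TD}$, and it remains to show that $\psi$ does not interchange types $T$ and $D$. I would first translate the situation into the geometry of the modified Deligne complex. Using Proposition~\ref{PropositionCentralisers} together with the intersection properties of parabolic subgroups of $2$-dimensional large-type Artin groups, one records the dictionary: a type-$D$ vertex of $I_\Gamma$ is the class of the centre of a conjugate of an edge group, i.e.\ it corresponds to a type $2$ vertex of $D_\Gamma$; a type-$T$ vertex is the class of a conjugate of a standard generator, i.e.\ it corresponds to a standard tree of $D_\Gamma$; and a type-$T$ vertex and a type-$D$ vertex are adjacent exactly when the corresponding standard tree contains the corresponding type $2$ vertex. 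Moreover two commuting conjugates of standard generators are necessarily commensurable (they never span a $\mathbb Z^2$, whose maximal instances in these groups always involve a $z_e$, which is not conjugate to a standard generator), and the analogous statement holds for centres of edge groups. Hence, after collapsing commensurability classes, the graph $I_\Gamma^{TD}$ becomes \emph{bipartite}, the two parts being precisely the vertices of type $T$ and those of type $D$.

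Next I would check that this collapsed bipartite graph is \emph{connected}, which follows from connectedness of $\Gamma$: every edge $e=\{a,b\}$ of $\Gamma$ with both endpoints thick produces a path $\langle a\rangle - \langle z_e\rangle - \langle b\rangle$, leaves of $\Gamma$ only contribute type-$D$ vertices still joined to the thick part, and the $A_\Gamma$-translates of any vertex are reached because $wz_ew^{-1}=z_e$ for $w\in A_e$, so that $\langle a\rangle$ and $\langle waw^{-1}\rangle$ always share the neighbour $\langle z_e\rangle$. A connected bipartite graph has a unique bipartition into two parts, so $\psi$ must send the pair $\{T_\Gamma,D_\Gamma\}$ onto $\{T_{\Gamma'},D_{\Gamma'}\}$, either respecting the types or swapping them; it therefore suffices to rule out the swap.

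To exclude the swap I would exhibit a property, preserved by any abstract graph isomorphism of $I^{TD}$, that holds for all type-$T$ vertices and for no type-$D$ vertex. This is where the intersection pattern of standard trees is essential and where the hypotheses enter. The two types look deceptively similar locally — each vertex of $I^{TD}$ is joined to an infinite independent set — so the invariant must exploit how these neighbourhoods are glued to the rest of the graph, via common neighbourhoods of pairs of non-adjacent neighbours (and, where needed, the auxiliary $2$-cells introduced in Section~\ref{SectionIsolatingExoticVertices}). The geometric input is that two standard trees meet in at most a point; that a type $2$ vertex $v_e$ of $D_\Gamma$ is exactly the unique point where two crossing standard trees $Y_a,Y_b$ meet, and carries around it the full combinatorics of the dihedral Artin group $A_e$, governed by the single label $m_e\ge 3$; whereas a standard tree $Y_a$ is an honest tree whose branching is governed by the $\Gamma$-link of $a$. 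The non-tree hypothesis on $\Gamma$ (and on $\Gamma'$) is what makes the relevant standard trees and type $2$ vertices non-degenerate — in particular it forces enough edge endpoints to be thick, so that a type $2$ vertex genuinely sees two distinct families of standard trees crossing at it — and hence prevents the tree-type and dihedral-type local pictures from being matched; for a tree these two families of pictures can coincide, which is precisely why the swap is not obstructed there. When $\Gamma=\Delta_{333}$ there is nothing to distinguish at all, since $I_\Gamma$ is then the vertex-transitive curve graph of the $5$-punctured sphere; this case is excluded by hypothesis. Granting the non-tree and non-$\Delta_{333}$ hypotheses, the swap is impossible, so $\psi$ preserves types $T$ and $D$, and combined with Corollary~\ref{CorollaryExoticSentToExotic} it preserves all three types.

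The main obstacle is this last step: extracting from the bare graph $I_\Gamma^{TD}$ a combinatorial model of the neighbourhood of each vertex that remembers enough of $D_\Gamma$ to tell a standard tree from a type $2$ vertex, and proving the two models are never isomorphic under the standing hypotheses. This forces a detailed understanding of which type $2$ vertices lie on which standard trees and of the precise shape of their overlaps, which is exactly the analysis carried out in Section~\ref{subsecTD}.
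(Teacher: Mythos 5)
Your proposal correctly reproduces the set-up of the paper's argument: reduce to $I_\Gamma^{TD}$ via Corollary~\ref{CorollaryExoticSentToExotic}, observe that $I_\Gamma^{TD}$ is bipartite (Lemma~\ref{LemmaPossibleEdges}, Corollary~\ref{CoroBipartite}), note that connectedness makes the bipartition essentially unique, and therefore reduce to ruling out the ``swap'' by exhibiting a graph-theoretic invariant that separates type-$T$ from type-$D$ vertices. Up to that point you are in line with the paper (except that the paper settles the triangle-free case by citing Crisp's Proposition~41 and only runs its own argument when $\Gamma$ contains a triangle, and it looks for a witness vertex of type $D$ rather than a property of \emph{all} type-$T$ vertices, which is what the bipartite structure actually requires).

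However, the decisive step is missing, and you acknowledge this yourself in your last paragraph. You correctly observe that the invariant must come from how neighbourhoods in $I_\Gamma^{TD}$ are glued to the rest of the graph, but you never construct it; you only assert that ``the swap is impossible'' and that the non-tree hypothesis ``prevents the tree-type and dihedral-type local pictures from being matched.'' That is exactly the content of Lemma~\ref{LemmaDifference}, which is the heart of the proof: the paper builds an explicit subgraph $G_1\subset I_\Gamma^{TD}$ made of $n=m_{ab}$ consecutive $6$-cycles attached cyclically around a central type-$D$ vertex (coming from the translates $K_\Gamma, aK_\Gamma, abK_\Gamma,\dots$ of the fundamental domain, whose boundaries produce the $6$-cycles), and then shows that the bipartite dual configuration $G_2$ with a type-$T$ centre cannot occur. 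Proving that $G_2$ does not embed uses the rigidity of $6$-cycles in $I_\Gamma^{TD}$ relative to fundamental domains in $D_\Gamma$ (Lemma~\ref{Lemma6CyclesinFD}), the Garside normal form in $A_{ab}$ to show that two fundamental domains overlapping along a $t_1$-$v_1$-$t_2$ piece differ by a power of the Garside element $\Delta_{v_1}$ (Lemma~\ref{Lemmaf1v1f2}, Lemma~\ref{Lemmaf1v1}), and a geodesic/convexity accounting in $D_\Gamma$ for a chain of such $6$-cycles (Lemma~\ref{LemmaChainOf6Cycles}). None of this machinery appears in your outline, and without it the claim that the swap is impossible is unjustified. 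A secondary inaccuracy: you suggest the auxiliary $2$-cells from Section~\ref{SectionIsolatingExoticVertices} (the $5$-cycles) might be needed here; they are not — that apparatus is used only to isolate type-$E$ vertices, whereas the $T$/$D$ distinction is driven entirely by the $6$-cycle geometry just described.
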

	
	A consequence of this proposition is that an automorphism $\psi \in \aut(I_{\Gamma})$ naturally sends (infinite) standard trees of the Deligne complex $D_{\Gamma}$ to other (infinite) standard trees, and similarly, it sends type $2$ vertices of $D_{\Gamma}$ to other type $2$ vertices. This is the kind of information we need to be able to construct a simplicial automorphism of the Deligne complex from $\psi$.
	
	To extract the key property of constructing a simplicial automorphism of $D_\Gamma$ from an automorphism of $I_{\Gamma}$, we then introduce the notion of fundamental subgraphs, and we prove the following general consequence of being fundamental:
	
	\begin{theorem} (=Theorem~\ref{thm:2ndiso}) \label{ThmComparisonMapIntro}
		Let $A_{\Gamma}$ be a large-type Artin group such that $\Gamma$ is connected and does not have leaf vertices. If $\Gamma$ is fundamental, then $\aut(I^{TD}_\Gamma)$ is isomorphic to the group $\aut(D_\Gamma)$ of simplicial automorphisms of the Deligne complex.
	\end{theorem}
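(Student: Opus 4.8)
The plan is to build mutually inverse homomorphisms between $\aut(D_\Gamma)$ and $\aut(I^{TD}_\Gamma)$. I will freely use the correspondences established in the previous sections: the type $D$ vertices of $I_\Gamma$ are in canonical $A_\Gamma$-equivariant bijection with the type $2$ vertices of $D_\Gamma$; the type $T$ vertices correspond to the standard trees of $D_\Gamma$; and under these identifications, adjacency in $I^{TD}_\Gamma$ records incidence between standard trees and type $2$ vertices of $D_\Gamma$ (and the way two standard trees, resp.\ two type $2$ vertices, meet). A simplicial automorphism $\phi$ of $D_\Gamma$ preserves the type of vertices and carries standard trees to standard trees, hence preserves all this data and induces an automorphism $\Theta(\phi)$ of $I^{TD}_\Gamma$; clearly $\Theta$ is a group homomorphism. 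For injectivity, if $\phi\in\ker\Theta$ then $\phi$ fixes every type $2$ vertex of $D_\Gamma$; since $\Gamma$ has no leaf vertex, every type $1$ vertex lies below at least two type $2$ vertices, and two distinct type $2$ vertices have at most one common rank-$1$ neighbour (this uses that distinct standard parabolic subgroups $A_e$, $A_{e'}$ intersect in the parabolic $A_{e\cap e'}$), so $\phi$ fixes every type $1$ vertex; finally $\bigcap_{s\in V(\Gamma)}\langle s\rangle=\{1\}$ forces $\phi$ to fix every rank-$0$ vertex, whence $\phi=\mathrm{id}$.

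For surjectivity, let $\psi\in\aut(I^{TD}_\Gamma)$. The crucial observation is that the \emph{trace} in $I^{TD}_\Gamma$ of a fundamental domain $K$ of the $A_\Gamma$-action on $D_\Gamma$ --- the full subgraph spanned by the type $T$ vertices attached to the type $1$ vertices of $K$ and the type $D$ vertices attached to its type $2$ vertices --- is a copy of the barycentric subdivision of $\Gamma$, and that the hypothesis ``$\Gamma$ is fundamental'' says exactly that, conversely, every copy of the barycentric subdivision of $\Gamma$ inside $I_\Gamma$ is the trace of a \emph{unique} fundamental domain (Definition~\ref{DefiFundGraphI}, equivalently Definition~\ref{DefiFundGraphII}). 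Two things follow. First, since every vertex of $I^{TD}_\Gamma$ lies in the trace of some fundamental domain and $\psi$ sends a trace to a full subgraph again isomorphic to the barycentric subdivision of $\Gamma$ --- hence again a trace --- the automorphism $\psi$ must preserve the types $T$ and $D$, because in the trace of a fundamental domain the type $D$ vertices are precisely those indexed by the edges of $\Gamma$. Second, $\psi$ induces a bijection $\Psi$ of the set of fundamental domains of $D_\Gamma$.

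I then define $\phi$ on the vertices of $D_\Gamma$: on type $2$ vertices $\phi$ is the bijection induced by $\psi$ on type $D$ vertices; a type $1$ vertex $v$ is sent to the common rank-$1$ neighbour of the $\psi$-images of its (at least two) type $2$ neighbours, which exists because $v$ and all its type $2$ neighbours lie in a common fundamental domain $K$ and $\Psi(K)$ contains a type $1$ vertex with the corresponding incidences, and which is unique by the unique-common-neighbour fact above; and a rank-$0$ vertex $w$ is sent to the unique rank-$0$ vertex (the apex) of $\Psi(K)$, where $K$ is the unique fundamental domain with apex $w$. One checks these are independent of the auxiliary choices, using that every fundamental domain containing a given type $1$ (resp.\ rank-$0$) vertex ``sees'' the full set of type $2$ (resp.\ type $1$) neighbours of that vertex. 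Now every simplex of $D_\Gamma$ lies in some fundamental domain $K$, and by construction $\phi$ restricts on the vertex set of $K$ to a bijection onto the vertex set of $\Psi(K)$ matching apex with apex and otherwise compatible with $\psi$ on the two traces; hence $\phi$ carries $K$ isomorphically onto $\Psi(K)$ and in particular maps simplices to simplices. Running the same construction on $\psi^{-1}$ gives a two-sided inverse, so $\phi\in\aut(D_\Gamma)$; and $\Theta(\phi)=\psi$ since the two agree on type $D$ vertices, and an automorphism of $I^{TD}_\Gamma$ is determined by its restriction to these (distinct standard trees containing distinct sets of type $2$ vertices).

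The routine ingredients are the unique-neighbour facts inside $D_\Gamma$ (resting on the intersection theorem for parabolic subgroups of $2$-dimensional Artin groups) and the verification that the maps are simplicial. The real obstacle is the \emph{coherence} of the recipe across overlapping fundamental domains --- equivalently, that the per-domain isomorphisms $K\to\Psi(K)$ agree on common faces and thus glue to a global simplicial automorphism. This is precisely where the \emph{uniqueness} built into the notion of ``fundamental'' is used: if fundamental domains $K,K'$ share a face, their traces share the corresponding sub-configuration of the barycentric subdivision of $\Gamma$, $\psi$ preserves this sub-configuration, and uniqueness forces $\Psi(K)$ and $\Psi(K')$ to share the matching face. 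Making this precise requires a careful dictionary between the faces of a fundamental domain of $D_\Gamma$ and the sub-configurations of a copy of the barycentric subdivision of $\Gamma$ inside $I^{TD}_\Gamma$, and it is there that the bulk of the technical work lies.
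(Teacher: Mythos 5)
Your pair of constructions mirrors the paper's very closely: the direction $\aut(D_\Gamma)\to\aut(I^{TD}_\Gamma)$ together with its injectivity is the content of Lemma~\ref{lem:second comparison}, and your ``fundamental-domain trace'' construction of the inverse is essentially identical to the paper's construction of $h_2^{-1}$ in the proof of Theorem~\ref{thm:2ndiso}, down to the same layering (type $2$, then type $0$ via a unique translate $g_y K_{\Gamma}$, then type $1$ by common-neighbour considerations). The coherence across overlapping fundamental domains that you flag at the end is indeed the point where the uniqueness in Definition~\ref{DefiFundGraphI} is used, and the paper handles it the same way.

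There is, however, a genuine gap in your ``crucial observation.'' You argue that because $\psi$ sends a full subgraph isomorphic to $\Gamma_{bar}$ to another full subgraph isomorphic to $\Gamma_{bar}$, the image must again be the trace of a fundamental domain, and hence $\psi$ preserves the types $T$ and $D$. But the hypothesis ``$\Gamma$ is fundamental'' only gives a unique fundamental domain for subgraphs that are \emph{$\Gamma$-characteristic}, and being $\Gamma$-characteristic already encodes the type information: it requires an isomorphism to $\Gamma_{bar}$ carrying type $T$ vertices to vertices of $\Gamma$. So ``$\psi$ sends $\Gamma$-characteristic subgraphs to $\Gamma$-characteristic subgraphs'' is essentially \emph{equivalent} to ``$\psi$ preserves types,'' not a consequence of fundamentality. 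In particular $I^{TD}_\Gamma$ is a connected bipartite graph and one must rule out that $\psi$ swaps the two sides globally; the mere existence of some abstract isomorphism $\psi(G_y)\cong\Gamma_{bar}$ does not decide this. The paper establishes type-preservation as a separate, nontrivial result in Section~\ref{subsecTD} (Proposition~\ref{PropAutomorphismsPreserveType}): the triangle-free case is imported from Crisp, and the general case goes through the subgraphs $G_1,G_2$ of Lemma~\ref{LemmaDifference}, whose distinguishing feature is a specific asymmetric configuration of $6$-cycles, built using the chain-of-$6$-cycles analysis (Lemmas~\ref{Lemmaf1v1f2}--\ref{LemmaChainOf6Cycles}). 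Your proof needs to invoke that proposition (or reproduce that argument); it cannot be bypassed by the fundamentality hypothesis.
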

	
	We also show in Section~\ref{subsecFund} that the following class of graphs $\Gamma$ are fundamental.
	
	\begin{theorem} (=Proposition~\ref{prop:fund})
		Suppose $\Gamma$ admits a twistless hierarchy terminating in twistless stars. Then $\Gamma$ is fundamental.
	\end{theorem}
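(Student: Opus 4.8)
My plan is to induct on the length of a twistless hierarchy of $\Gamma$ terminating in twistless stars. Recall from Definition~\ref{DefiFundGraphII} that to say $\Gamma$ is fundamental is to say that every embedded copy of the barycentric subdivision of $\Gamma$ inside $I_\Gamma$ — equivalently, a family of standard trees $\{T_v\}_{v\in V\Gamma}$ together with a family of type~$2$ vertices $\{p_e\}_{e\in E\Gamma}$ of $D_\Gamma$ whose incidences reproduce those of $\Gamma$ — arises, for a unique $g\in A_\Gamma$, as the $g$-translate of the family attached to the base fundamental domain of $D_\Gamma$. The induction is set up using the fact that each full subgraph produced along a twistless hierarchy of $\Gamma$ itself inherits a shorter twistless hierarchy terminating in twistless stars, so the inductive hypothesis applies to the two pieces of an admissible twistless decomposition; along the way one also records that graphs built this way are automatically connected and leaf-free.

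\textbf{Base case.} Here $\Gamma=\operatorname{St}(v)$ is a twistless star, so it has minimal degree $\ge 2$, $A_\Gamma$ is $2$-dimensional, $D_\Gamma$ is assembled from parabolics of rank $\le 2$, and $\langle v\rangle$ is a vertex of $I_\Gamma$. The point I would exploit is that the hub $v$ is adjacent to everything, so every edge of $\Gamma$ either contains $v$ (a spoke) or joins two neighbours of $v$ and spans a triangle with $v$; hence in any barycentric copy the vertex $Z_v$ is adjacent to \emph{every} type~$2$ vertex $p_e$ with $v\in e$, i.e.\ $T_v$ passes through all of them. Using the centraliser computations of Proposition~\ref{PropositionCentralisers} and the description of how standard trees of distinct generators meet (they share at most one type~$2$ vertex), I would pin down the copy in stages: intersecting the centralisers of two spokes at $v$ forces $Z_v$ to be the $v$-subgroup of a single fundamental domain; each $Z_a$, $a\in\lk(v)$, is then forced by its incidences with $Z_v$ and with the rim; and finally each $p_e$ is the unique type~$2$ vertex compatible with $Z_v$ and the other endpoint. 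This realises the copy as a translate of the standard family, with the translating element unique because the base fundamental domain is rigid under its own stabiliser. (For complete $\Gamma$ this is the classical picture; the general twistless star runs the same argument organised around $v$.)

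\textbf{Inductive step.} Let $\Gamma=\Gamma_1\cup\Gamma_2$ be an admissible twistless decomposition with $\Gamma_{12}=\Gamma_1\cap\Gamma_2$ neither empty nor a single vertex nor a single edge; by the inductive hypothesis $\Gamma_1$ and $\Gamma_2$ are fundamental, and we may take the decomposition nontrivial. It induces the splitting $A_\Gamma=A_{\Gamma_1}*_{A_{\Gamma_{12}}}A_{\Gamma_2}$ and a tree-of-spaces decomposition of $D_\Gamma$ over the associated Bass–Serre tree $\mathcal T$, with vertex spaces isometric to $D_{\Gamma_1}$, $D_{\Gamma_2}$ and (convexly embedded) edge spaces isometric to $D_{\Gamma_{12}}$. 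Given a barycentric copy of $\Gamma$ in $I_\Gamma$, my first task is to localise it: the sub-family indexed by $\Gamma_{12}$ should pin down a single edge $e^\ast$ of $\mathcal T$ — not merely a bounded subset, and this is exactly where $\Gamma_{12}$ being more than a vertex or an edge enters — and then coherence of the remaining data drags the $\Gamma_1$-sub-family into the $\Gamma_1$-vertex space at $e^\ast$ and the $\Gamma_2$-sub-family into the $\Gamma_2$-vertex space at $e^\ast$. Conjugating into standard position, each $\Gamma_i$-sub-family becomes a barycentric copy of $\Gamma_i$ inside $I_{\Gamma_i}$ (one checks via Proposition~\ref{PropositionCentralisers} and malnormality of the parabolic $A_{\Gamma_i}$ that the relevant $\mathbb Z$-subgroups keep non-virtually-abelian centralisers inside $A_{\Gamma_i}$), hence by fundamentality of $\Gamma_i$ lies in a unique fundamental domain $F_i$ of $D_{\Gamma_i}$; since $F_1$ and $F_2$ agree on the common $\Gamma_{12}$-data they glue across $e^\ast$ to a fundamental domain of $D_\Gamma$ containing the whole copy, with uniqueness inherited from that of $F_1,F_2$.

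\textbf{Expected main obstacle.} Two steps carry the weight. The first is the base case: with no further decomposition available, one has to argue directly from the internal geometry of $D_{\operatorname{St}(v)}$, using the hub $v$ to manufacture enough pairwise incidence constraints and the standard-tree intersection lemmas of Section~\ref{subsecTD} to convert them into a unique fundamental domain. The second, and I expect the more delicate, is the localisation step in the induction — ruling out that a barycentric copy straddles two vertex spaces of $\mathcal T$. This is precisely where twistlessness of the decomposition is indispensable: were $\Gamma_{12}$ a single vertex or edge, the shared data would confine the copy only to a bounded neighbourhood, the copy could genuinely be ``bent'' across an edge of $\mathcal T$, and fundamentality would fail, mirroring exactly the twist automorphisms that obstruct Conjecture~\ref{conj:main}.
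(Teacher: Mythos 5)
Your top-level plan — induct on the hierarchy with twistless stars at the base — matches the paper's architecture, but the inductive step as you set it up does not close, because the paper's notion of ``fundamental'' is relative to the \emph{ambient} Deligne complex $D_\Gamma$ throughout. Definition~\ref{DefiFundGraphI} is a property of a pair $(\Gamma',\Gamma)$: the standard trees live in $D_\Gamma$ and the conclusion concerns a coset $gK_\Gamma$ with $g\in A_\Gamma$. Accordingly the gluing lemma (Lemma~\ref{LemmaExtendingToTwistlessHierarchy}) takes as input that $\Gamma_1,\Gamma_2$ are fundamental \emph{relative to $D_\Gamma$}, immediately obtains $g_1,g_2\in A_\Gamma$ with $G_i\subset g_iK_\Gamma$, and proves $g_1=g_2$ directly — via Garside normal forms in a rank-two parabolic when $\Gamma_1\cap\Gamma_2$ contains two edges (Lemma~\ref{Lemmaf1v1f2}), or a CAT(0) angle comparison when it contains two non-adjacent vertices (Lemma~\ref{LemmaTwoStdTreesBorderingTwoFD}). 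No Bass--Serre tree enters. Your version instead appeals to fundamentality of $\Gamma_i$ in its own $D_{\Gamma_i}$, which forces a localisation step — confining the $\Gamma_i$-sub-family to a single vertex space of $\mathcal T$ — that you flag but do not prove, and which the inductive hypothesis cannot supply: $\Gamma_1\cap\Gamma_2$ need not admit a twistless hierarchy, nor even be connected (it could be two disjoint edges), so nothing you have established says its sub-family ``pins down a single edge $e^\ast$.'' The phrase ``coherence of the remaining data drags the $\Gamma_1$-sub-family into the $\Gamma_1$-vertex space'' states the desired conclusion rather than arguing for it. The paper's relative formulation is precisely what lets one avoid ever passing to a smaller Deligne complex, and hence avoid this localisation problem altogether.

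The base case is also short of a proof. That the hub's standard tree meets every spoke $D$-vertex does not by itself force the configuration into a single fundamental domain — even for a single $3$-cycle this requires the convexity/geodesic argument cited as Lemma~\ref{Lemma6CyclesFundamental}. The paper then bootstraps: from $3$-cycles to cones over induced cycles via the chain-of-$6$-cycles estimate in Lemma~\ref{LemmaChainOf6Cycles}, and from there to general twistless stars via the cycle cover of the link (Lemma~\ref{LemmaConditionsC1C2} and Lemma~\ref{LemmaNoSepVer}). Your sketch does not supply a replacement for this chain of reductions.
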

	
	Other examples of graphs that are fundamental are given \cite{blufstein2024homomorphisms}, see Section \ref{Secfurtherresult}.
	
	Obtaining various rigidity results from Theorem \ref{ThmComparisonMapIntro} is done throughout Section \ref{SectionProvingMainResults}.

	\section*{Acknowledgements}
	
	The third author is supported by the Postdoc Mobility $\sharp$P500PT$\_$210985 of the Swiss National Science Foundation. The first author is partially supported by a Sloan fellowship and NSF DMS-2305411. The first author thanks Camille Horbez for several helpful discussions.

	\section{Preliminaries}
	
	In this section we recall several known facts about Artin groups. We also introduce objects central to this paper, such as the Deligne complex or the intersection graph. Finally, we prove such basic results about the intersection graph of a large-type Artin group.

	\subsection{General things about Artin groups}
	
	It is a common fact about Artin groups (see \cite{van1983homotopy}) that for any Artin group $A_{\Gamma}$ and any full subgraph $\Gamma' \subseteq \Gamma$ the natural morphism $A_{\Gamma'} \hookrightarrow A_{\Gamma}$ is an injection. Consequently, we will write $A_{\Gamma'}$ to refer to both the Artin group over $\Gamma'$ and the subgroup of $A_{\Gamma}$ spanned by the standard generators associated with the vertices of $\Gamma'$. If $\Gamma'$ is a single edge connecting $a$ and $b$, we will often write $A_{ab}$ instead of $A_{\Gamma'}$.
	
	he subgroups of $A_{\Gamma}$ of the form $A_{\Gamma'}$ are called \emph{standard parabolic subgroups}, and their conjugates are called \emph{parabolic subgroups}. Parabolic subgroups play a central role in the study of Artin groups. One key feature of their combinatorics is the construction of the celebrated (modified) Deligne complex introduced in \cite{charney1995k}.
	
	Thereafter we define this complex in the context of \emph{2-dimensional} Artin groups, which are Artin groups in which for every triangle $T = T(a,b,c)$ in $\Gamma$ we have $\frac{1}{m_{ab}} + \frac{1}{m_{ac}} + \frac{1}{m_{bc}} \leq 1$.
	
	\begin{definition}[Modified Deligne complex]
		\label{def:md}
		Let $A_\Gamma$ be a 2-dimensional Artin group with defining graph $\Gamma$. The \emph{(modified) Deligne complex} $D_\Gamma$ is the geometric realization of the poset (ordered by inclusion) of all cosets of $A_\Gamma$ of the form $gA_{\Gamma'}$, where $\Gamma'$ is either the empty subgraph (in which case $A_{\Gamma'}$ is the trivial subgroup), a vertex of $\Gamma$, or an edge of $\Gamma$. The Artin group $A_{\Gamma}$ act on $D_{\Gamma}$ by left-multiplication, and we denote by $K_{\Gamma}$ the fundamental domain of this action. A vertex of $D_\Gamma$ is of type $i$ if it corresponds to $gA_{\Gamma'}$ such that $\Gamma'$ has $i$ vertices.
	\end{definition}
	
	The Deligne complex is generally well-understood for $2$-dimensional Artin groups, thanks to the following:
	
	\begin{theorem} \label{Thm:DeligneCAT(0)} \cite{charney1995k}
		The Deligne complex $D_{\Gamma}$ associated with any $2$-dimensional Artin group $A_{\Gamma}$ is $2$-dimensional, and it is CAT(0) when given the Moussong metric.
	\end{theorem}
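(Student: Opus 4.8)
The plan is to equip $D_\Gamma$ with the Moussong piecewise-Euclidean metric and then apply the Cartan--Hadamard theorem. The dimension bound is immediate: a simplex of $D_\Gamma$ is a chain in the poset of cosets $gA_{\Gamma'}$ with $\Gamma'$ empty, a vertex, or an edge, and any such chain has at most three terms $g\{1\}\subsetneq g\langle x\rangle\subsetneq gA_{xy}$, so $\dim D_\Gamma\le 2$ and the top-dimensional cells are triangles. I would metrize each triangle $[\,g\{1\},g\langle x\rangle,gA_{xy}\,]$ (with $x$ an endpoint of the edge $xy$ and $m:=m_{xy}$) as the Euclidean right triangle $\Delta_m$ having a right angle at the type-$1$ vertex $g\langle x\rangle$, angle $\pi/(2m)$ at the type-$2$ vertex $gA_{xy}$, angle $\tfrac{\pi}{2}\!\left(1-\tfrac1m\right)$ at the type-$0$ vertex $g\{1\}$, normalising the leg from the type-$0$ to the type-$1$ vertex to have length $1$. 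A short check shows that two triangles sharing an edge assign it the same length, so this is a well-defined piecewise-Euclidean structure; since $\Gamma$ is finite there are only finitely many shapes of cells, hence by Bridson's theorem $D_\Gamma$ is a complete geodesic metric space.

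The next ingredient is that $D_\Gamma$ is simply connected (in fact contractible) — a combinatorial statement, independent of the metric, which I would quote from \cite{charney1995k}; its flavour is that the $1$-skeleton of $D_\Gamma$ is connected (its type-$0$ vertices, joined through the type-$1$ vertices, carry a copy of the Cayley graph of $A_\Gamma$) and that every edge-loop is filled by the triangles together with the local pattern around the type-$2$ vertices, which records the fact that the only relations among the standard generators are the edge relations. Granting simple connectivity, by Cartan--Hadamard it suffices to show $D_\Gamma$ is locally CAT$(0)$, and by Gromov's link condition for piecewise-Euclidean complexes this reduces to proving that the link of every vertex — a metric graph, i.e.\ a piecewise-spherical complex of dimension $1$ — has systole at least $2\pi$.

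This is where the $2$-dimensionality hypothesis enters, and it breaks into three cases according to the type of the vertex. \emph{Type $0$, $g\{1\}$:} the link is the barycentric subdivision of $\Gamma$, the half-edge from a vertex $x$ to the midpoint of an edge $xy$ having length equal to the type-$0$ angle $\tfrac{\pi}{2}(1-\tfrac1{m_{xy}})$; an embedded loop is an embedded cycle $x_1\cdots x_k$ of $\Gamma$ and has length $\pi\bigl(k-\sum_i 1/m_{x_ix_{i+1}}\bigr)$, which for $k=3$ is $\ge 2\pi$ precisely because $1/m_{ab}+1/m_{ac}+1/m_{bc}\le 1$ for every triangle of $\Gamma$, and for $k\ge 4$ is $\ge\pi(k-k/2)\ge 2\pi$ as every label is $\ge 2$. \emph{Type $1$, $g\langle x\rangle$:} the cosets strictly contained in $g\langle x\rangle$ are exactly the $gx^n\{1\}$ and those strictly containing it are exactly the $gA_{xy}$ over edges $xy$ at $x$, so the link is the complete bipartite graph on $\{gx^n\}_{n\in\mathbb Z}$ and $\{gA_{xy}\}_{xy\ni x}$ with every edge of length $\pi/2$; it is a tree if $x$ is a leaf and otherwise has girth $4$, hence systole $4\cdot\tfrac\pi2=2\pi$. \emph{Type $2$, $gA_{xy}$:} the link is isomorphic to the link of the cone point in the Deligne complex of the dihedral Artin group $A_{xy}$, a bipartite metric graph whose embedded cycles correspond to cyclically reduced alternating relators of $A_{xy}$; since the shortest such relator is the defining relator, with $2m$ syllables and hence $4m$ edges in the graph, and each edge has length $\pi/(2m)$, the systole is $2\pi$. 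In all three cases the systole is $\ge 2\pi$, so every vertex link is CAT$(1)$, $D_\Gamma$ is locally CAT$(0)$, and with simple connectivity it follows that $D_\Gamma$ is CAT$(0)$.

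The delicate point is choosing the Moussong metric correctly: one wants precisely the angles for which the type-$0$ link inequality becomes the $2$-dimensionality condition, while the type-$1$ and type-$2$ links come out with systole exactly $2\pi$. A second point requiring care is the girth of the type-$2$ link, which depends on the combinatorial fact that a dihedral Artin group has no nontrivial alternating relation with fewer than $2m$ syllables (equivalently, short alternating words are geodesic). Simple connectivity of $D_\Gamma$ is also not formal, but it is standard and I would cite it rather than reprove it.
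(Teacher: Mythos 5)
The paper states this theorem without proof, deferring entirely to \cite{charney1995k}, so there is no paper-side argument to compare against; what you have written is a reconstruction of the standard Charney--Davis argument, and it is correct. The computations all check out against the Moussong metric as the paper defines it in Definition~\ref{DefiMoussong}: the residual angle at the type-$0$ vertex is $\tfrac{\pi}{2}(1-\tfrac{1}{m_{xy}})$, so the type-$0$ link is $\Gamma_{\mathrm{bar}}$ with exactly the edge lengths you give, the $k=3$ cycle bound reproduces the $2$-dimensionality inequality $\sum 1/m \le 1$, and $k\ge 4$ follows from every label being $\ge 2$; the type-$1$ link is the complete bipartite graph with $\pi/2$-edges, girth $4$ unless the vertex is a leaf; and the type-$2$ link is the one-vertex link of $D_{A_{xy}}$, a bipartite graph of girth $4m$ with edges of length $\pi/(2m)$, hence systole exactly $2\pi$. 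The two inputs you flag as nonformal — simple connectivity of $D_\Gamma$ and the $2m$-syllable lower bound for nontrivial cyclically alternating relators of a dihedral Artin group — are indeed the only substantive imported facts, and citing them rather than reproving them is appropriate; the remainder of your sketch is self-contained and accurate.
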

	
	We define the Moussong metric thereafter:
	
	\begin{notation}
		In the Deligne complex $D_{\Gamma}$, we will sometimes denote the vertices $\{1\}$, $\langle a \rangle$ and $A_{ab}$ by $v_{\emptyset}$, $v_a$ and $v_{ab}$ respectively. This will be useful to avoid confusion between vertices of $D_{\Gamma}$ and subgroups of $A_{\Gamma}$.
	\end{notation}
	
	\begin{definition} \label{DefiMoussong}
		Let $A_{\Gamma}$ be a $2$-dimensional Artin group. The \emph{Moussong metric} is the piecewise-Euclidean metric $d$ defined as follows. Let $T_{ab}$ be the base triangle joining $v_{\emptyset}$, $v_a$ and $v_{ab}$. Then we set
		$$d(v_{\emptyset}, v_a) \coloneqq 1, \ \ \angle_{v_a}(v_{\emptyset}, v_{ab}) \coloneqq \frac{\pi}{2}, \ \text{ and } \ \angle_{v_{ab}}(v_{\emptyset}, v_{ab}) \coloneqq \frac{\pi}{2 m_{ab}},$$
		where $m_{ab}$ is the coefficient of the edge $e^{ab} \in V(\Gamma)$. Note that the above information entirely determines the isometry type of $T_{ab}$ as a Euclidean triangle. Then, $d$ is defined on $K_{\Gamma}$ and further on $D_{\Gamma}$ as a piecewise extension of the metrics on the base triangles.
	\end{definition}
	
	\begin{remark} \label{RemFDConvex}
		One can easily show from Definition \ref{DefiMoussong} that the fundamental domain $K_{\Gamma}$ is always convex.
	\end{remark}
	
	Large-type Artin groups are known to have well-behaved parabolic subgroups, in the following sense:
	
	\begin{theorem} \cite[Theorem A]{cumplido2022parabolic} \label{ThmIntersectionParabolics}
		Let $A_{\Gamma}$ be a large-type Artin group. Then the intersection of any family of parabolic subgroups of $A_{\Gamma}$ is a parabolic subgroup of $A_{\Gamma}$.
	\end{theorem}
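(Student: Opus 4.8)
The plan is to prove the statement for the intersection of \emph{two} parabolic subgroups, and to deduce the case of an arbitrary family by a descending chain argument; the two-parabolic case is handled inside the CAT(0) Deligne complex $D_\Gamma$ (Theorem~\ref{Thm:DeligneCAT(0)}). For the reduction I would first record that in the large-type setting every parabolic subgroup $P=gA_{\Gamma'}g^{-1}$ has a well-defined \emph{type} $\Gamma'$, and that a proper inclusion $P\subsetneq Q$ of parabolics forces the type of $P$ to have strictly fewer vertices than the type of $Q$. The essential input is that in a large-type Artin group no standard generator is conjugate to a proper power of a standard generator — this is immediate from the length homomorphism $A_\Gamma\to\mathbb Z$ sending every generator to $1$, since the two sides of each Artin relator have equal length — which identifies the minimal nontrivial parabolics as the $\mathbb Z$-subgroups $\langle a\rangle$ and rules out "hidden" containments among parabolics. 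Hence parabolic subgroups satisfy a descending chain condition of height at most $|V(\Gamma)|$: given a family $\{P_i\}_{i\in I}$, the collection of finite sub-intersections $P_{i_1}\cap\cdots\cap P_{i_k}$ (each parabolic by induction, once the two-fold case is known) has a minimal element, which must equal $\bigcap_{i\in I}P_i$.

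For the two-parabolic case, recall that in the large-type (hence $2$-dimensional) setting the spherical standard parabolics are exactly $\{1\}$, $\langle a\rangle$ and the dihedral groups $A_{ab}$, and that these are precisely the stabilizers of the type-$0$, type-$1$ and type-$2$ vertices of $D_\Gamma$; a non-spherical standard parabolic $A_{\Gamma'}$ stabilizes the convex subcomplex $D_{\Gamma'}\hookrightarrow D_\Gamma$ (convexity being a condition on links, in the spirit of Remark~\ref{RemFDConvex}). So, after translating, write $P=\mathrm{Stab}(Y_P)$ and $Q=\mathrm{Stab}(Y_Q)$ with $Y_P,Y_Q$ convex subcomplexes (single vertices in the spherical case). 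Then $P\cap Q$ stabilizes $Y_P\cap Y_Q$ if this is nonempty, and otherwise preserves the CAT(0) geodesic realizing $\mathrm{dist}(Y_P,Y_Q)$ and so fixes a point of a cell of $D_\Gamma$; in either case $P\cap Q$ is the pointwise fixator of its own fixed set $\fix(P\cap Q)$, a nonempty convex subcomplex (an intersection of the convex fixed sets of its elements). It therefore suffices to prove that \emph{the pointwise fixator of any nonempty convex subcomplex $Z\subseteq D_\Gamma$ is parabolic}. I would prove this by induction on the combinatorial size of $Z$ near a chosen vertex, using the local structure of $D_\Gamma$: the fixator of a vertex is a spherical parabolic; the fixator of an edge is the intersection of its two vertex-fixators, which by inspecting the three edge-types of $D_\Gamma$ is $\{1\}$ or a conjugate of some $\langle a\rangle$; and the fixator of a triangle is trivial, since a maximal simplex of $D_\Gamma$ is a full flag of cosets with trivial common stabilizer.

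The one genuinely delicate point — where a naive induction breaks and where largeness is indispensable — is the step in which $Z$ contains an edge of the form $[gv_a,gv_{ab}]$: the running fixator then lies in the $\mathbb Z$-subgroup $g\langle a\rangle g^{-1}$ and, a priori, could be a proper subgroup $g\langle a^k\rangle g^{-1}$, which is \emph{not} parabolic. To exclude this one needs the fact that in a large-type Artin group $\fix_{D_\Gamma}(a^k)=\fix_{D_\Gamma}(a)$ for every $k\neq 0$ (a periodic element and its powers have the same fixed set in $D_\Gamma$). Granting this, a group $g\langle a^k\rangle g^{-1}$ with $k\ge 2$ cannot arise as the fixator of a convex subcomplex, for the fixator of its own (convex) fixed set is the strictly larger $g\langle a\rangle g^{-1}$, contradicting the fact that $P\cap Q$ was built as such a fixator. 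Establishing this fixed-set equality — together with the companion rigidity for the "standard trees" attached to dihedral parabolics, namely that any two such trees meet in $\varnothing$, in a point, or in a common standard tree — is the technical heart of the argument; everything else is bookkeeping with CAT(0) convexity and the local combinatorics of $D_\Gamma$.
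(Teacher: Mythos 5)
The paper does not prove this statement at all: it cites it directly as \cite[Theorem A]{cumplido2022parabolic}, and the only place the paper engages with the underlying argument is in the proof of Corollary~\ref{CorollaryTypeOfIntersections}, where it recalls that Cumplido--Martin--Vaskou work in the \emph{Artin complex} $\mathcal{A}_\Gamma$ (a simplicial complex built from cosets of the \emph{maximal} proper standard parabolics, shown to be systolic in the large-type case) and deduce the intersection theorem from the fact that every parabolic of type $n$ is the pointwise stabiliser of a codimension-$n$ simplex there. So the approach in the actual reference is combinatorial/systolic, whereas you work in the CAT(0) Deligne complex $D_\Gamma$. That is a genuinely different route, and there is nothing wrong with attempting it — but as written, it has a gap that no amount of bookkeeping near the vertices of $D_\Gamma$ will close.

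The gap is in how you model a \emph{non-spherical} parabolic. For a large-type group, a parabolic $P=gA_{\Gamma'}g^{-1}$ with $|V(\Gamma')|\ge 3$ is the \emph{setwise} stabiliser of the unbounded convex subcomplex $gD_{\Gamma'}$, not the pointwise fixator of anything; in fact its pointwise fixed set in $D_\Gamma$ is empty, and $P$ (hence possibly $P\cap Q$) contains elements acting hyperbolically. Your pivot "$P\cap Q$ is the pointwise fixator of its own fixed set $\fix(P\cap Q)$, a nonempty convex subcomplex" therefore fails precisely in the case that makes the theorem interesting: if $P\cap Q$ contains a hyperbolic element then $\fix(P\cap Q)=\varnothing$ (consider already $P=Q=A_{\Gamma'}$ non-spherical). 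Replacing $\fix$ by $\mathrm{Min}$-sets or by a convexity argument about $Y_P\cap Y_Q$ doesn't rescue the step either, because "setwise-stabilises a convex subcomplex cocompactly" is not something that passes to intersections for free — that is essentially what the cited theorem asserts, and establishing it is the whole content of the Cumplido--Martin--Vaskou paper. Your reduction sketch works fine when $P$ and $Q$ are both spherical (there $P$ and $Q$ really are vertex fixators, $\fix(P\cap Q)\supseteq[v_P,v_Q]$, and fixators of cells of $D_\Gamma$ are parabolic of the expected type), which is more or less Crisp's triangle-free territory, but the theorem is needed in this paper precisely for rank-$3$ parabolics, e.g. in Lemma~\ref{LemmaTypeEinUniquePiece} and Lemma~\ref{LemmaIntersectionOfPieces}.

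A secondary issue: for the descending chain condition you invoke that a proper inclusion of parabolics strictly decreases the number of vertices of the type, but your justification (the length homomorphism) only shows that no standard generator is conjugate to a proper power of one, i.e. it only forbids proper containments among rank-$1$ parabolics. Ruling out $gA_{ab}g^{-1}\subsetneq hA_{cd}h^{-1}$, or a rank-$k$ inside a rank-$k$, is genuinely harder and in the source is deduced from the Artin-complex picture (compare the argument reproduced in the proof of Corollary~\ref{CorollaryTypeOfIntersections}), so using it as a black box in the reduction is borrowing exactly what one is trying to prove.
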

	
	\begin{definition} \cite[Definition 35]{cumplido2022parabolic} \label{DefiParabolicClosure}
		Let $A_{\Gamma}$ be a large-type Artin group, and let $g \in A_{\Gamma}$. Then there exists a parabolic subgroup $P_g$ that is minimal with respect to the inclusion amongst the parabolic subgroups that contain $g$. $P_g$ is called the \emph{parabolic closure} of $g$.
	\end{definition}
	
	\begin{definition} \label{DefiType}
		The \emph{type} of a parabolic subgroup $g A_{\Gamma'} g^{-1}$ of $A_{\Gamma}$ is the cardinality of the set $|V(\Gamma')|$, that is, the rank of $A_{\Gamma'}$. The \emph{type} of an element $g \in A_{\Gamma}$ is the type of its parabolic closure $P_g$.
	\end{definition}

	As $D_{\Gamma}$ is a simplicial complex with finitely many shapes, the action $A_{\Gamma} \curvearrowright D_{\Gamma}$ is semi-simple, meaning that every element $g \in A_{\Gamma}$ either acts elliptically, with a fixed set $\fix(g)$, or it acts hyperbolically, with a minimal set $\min(g)$. The following lemma describes the behaviour of elliptic elements:
	
	\begin{lemma} \cite[Lemma 8]{crisp2005automorphisms} \label{LemmaCrisp}
		Let $A_{\Gamma}$ be a $2$-dimensional Artin group, and let $g \in A_{\Gamma} \backslash \{1\}$ act elliptically on $D_{\Gamma}$. Then exactly one of the following happens:
		\medskip
		
		\noindent $\bullet$ \underline{$type(g) = 1$.} Equivalently, $g \in h \langle a \rangle h^{-1}$ for some $a \in V(\Gamma)$ and $h \in A_{\Gamma}$. In this case, $\fix(g) = h \fix(a)$ is a tree. We will call such trees \emph{standard trees}.
		\medskip
		
		\noindent $\bullet$ \underline{$type(g) = 2$.} Equivalently, $g \in h A_{ab} h^{-1}$ for some $a, b \in V(\Gamma)$ satisfying $m_{ab} < \infty$ and some $h \in A_{\Gamma}$. In this case, $\fix(g)$ is the single vertex corresponding to the coset $h A_{ab}$.
	\end{lemma}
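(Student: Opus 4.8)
The plan is to combine the known geometry of the modified Deligne complex $D_\Gamma$ with the fact that $A_\Gamma$ is $2$-dimensional and $D_\Gamma$ is CAT(0) (Theorem~\ref{Thm:DeligneCAT(0)}). Let $g\in A_\Gamma\setminus\{1\}$ act elliptically, so $\fix(g)\subseteq D_\Gamma$ is a nonempty convex subcomplex. The first step is to note that $\fix(g)$ contains at least one vertex of $D_\Gamma$, and every vertex of $D_\Gamma$ is a coset $hA_{\Gamma'}$ with $|V(\Gamma')|\in\{0,1,2\}$; since $g\neq1$, it cannot fix a type $0$ vertex $h\{1\}h^{-1}$, so $g$ lies in a conjugate of a standard parabolic of rank $1$ or $2$. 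Up to translating by $h$ (which is an isometry of $D_\Gamma$ permuting the fixed-point sets), I may assume $g\in A_{\Gamma'}$ with $\Gamma'$ a vertex $\{a\}$ or an edge with $m_{ab}<\infty$. The dichotomy ``$\type(g)=1$ or $\type(g)=2$'' is then read off from the parabolic closure $P_g$ (Definition~\ref{DefiParabolicClosure}, available since $A_\Gamma$ is large-type): since $P_g$ is the minimal parabolic containing $g$, its rank is $1$ or $2$, and these two cases are mutually exclusive because a rank $1$ parabolic is not contained in... wait, one must check a rank-$1$ element is never also ``rank $2$'', which follows from minimality of $P_g$. So the two bullets are exhaustive and disjoint.

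Next I would identify $\fix(g)$ in each case. For $\type(g)=2$, say $g\in hA_{ab}h^{-1}$ with $m_{ab}<\infty$: the vertex $v=hA_{ab}$ is fixed by $g$. To see it is the \emph{only} fixed point, suppose $g$ also fixes another vertex $w$; by convexity $g$ fixes the geodesic $[v,w]$ pointwise, hence fixes the first edge of that geodesic, hence fixes a vertex $w'$ adjacent to $v$ of strictly smaller type, i.e.\ a type $1$ vertex $h'\langle c\rangle$ contained (in the poset sense) in $hA_{ab}$. Then $g\in h'\langle c\rangle h'^{-1}$, so $P_g$ has rank $\le1$, contradicting $\type(g)=2$. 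Here I am using the link structure of $D_\Gamma$: the only vertices adjacent to a type $2$ vertex $hA_{ab}$ are the type $1$ vertices $hg'\langle c\rangle$ with $c\in\{a,b\}$ and $g'\in A_{ab}$, and the type $0$ vertex $h\{1\}$ — but $g$ cannot fix a type $0$ vertex. So $\fix(g)=\{v\}$.

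For $\type(g)=1$, say $g\in h\langle a\rangle h^{-1}$: then $g$ fixes $h\,\fix(\langle a\rangle)$, and conversely if $g$ fixes a vertex then as above that vertex must be a type $1$ or type $2$ vertex $w$ with $\langle a\rangle$-conjugate structure; the point is that $\fix(a)$ (for the generator $a$ itself) consists exactly of the type $1$ vertex $\langle a\rangle$ together with all type $2$ vertices $A_{ab}$ and type $0$... no, $a$ doesn't fix $v_\emptyset$; it fixes $v_a$ and the cosets $A_{ab}$ over edges at $a$, and more generally the orbit under the centraliser. The cleanest route is: $\fix(g)=\fix(h a^k h^{-1})$ for the appropriate power, and $\fix(a^k)=\fix(a)$ since powers of a generator have the same parabolic closure $\langle a\rangle$; then one invokes the explicit description (from \cite{charney1995k} or \cite{crisp2005automorphisms}) that $\fix(a)$ is the union of $\{v_a\}$ with the simplices $[v_a,v_{ab}]$ over all edges $e^{ab}$ at $a$, together with everything those vertices $v_{ab}$ are below — assembling into a tree because $D_\Gamma$ is $2$-dimensional and $\mathrm{lk}(v_a)$ is a bipartite graph with no cycles in the relevant subcomplex. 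Convexity plus $2$-dimensionality forces $\fix(g)$ to be a (simplicial) tree, and the translates $h\fix(a)$ are what I call standard trees.

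The main obstacle I expect is the clean verification that $\fix(a)$ is a \emph{tree} (rather than just a convex subcomplex): this requires understanding the local structure of $D_\Gamma$ at type $1$ and type $2$ vertices — specifically that $\mathrm{lk}(v_a)$ in $D_\Gamma$ is a complete bipartite-type graph whose relevant fixed subgraph has no embedded loops, so that $\fix(a)$ contains no cycle — and the absence of cycles ultimately uses CAT(0)-ness (a loop would bound a disk, but $D_\Gamma$ is $2$-dimensional and the Moussong metric makes such configurations impossible around the link of $v_a$) or, more elementarily, that the centraliser of $a$ acts on $\fix(a)$ with quotient a single edge and the stabilisers are the obvious parabolics, so $\fix(a)$ is a Bass--Serre-type tree. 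Everything else (non-triviality and convexity of $\fix(g)$, semisimplicity, the type dichotomy from parabolic closures) is standard and follows directly from the results already quoted.
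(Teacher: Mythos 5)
The paper does not prove this lemma; it is imported wholesale via the citation \cite[Lemma~8]{crisp2005automorphisms}, so there is no authorial argument to compare against. That said, your reconstruction has the right skeleton. The reduction is fine: the $A_\Gamma$-action on $D_\Gamma$ is type-preserving, hence without inversions, so $\fix(g)$ is a nonempty convex \emph{subcomplex}; a nontrivial $g$ fixes no type-$0$ vertex (a singleton coset $\{h\}$); so $\fix(g)$ contains a type-$1$ or type-$2$ vertex, placing $g$ in a conjugate of $\langle a\rangle$ or of $A_{ab}$ with $m_{ab}<\infty$, and the dichotomy follows from minimality of $P_g$. Your argument in the $type(g)=2$ case --- a second fixed vertex would force $g$ to fix an adjacent type-$1$ vertex, collapsing $P_g$ to rank $\le 1$ --- is exactly right. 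One cosmetic slip: the type-$0$ vertices adjacent to $hA_{ab}$ are all cosets $hg'\{1\}$ with $g'\in A_{ab}$, not only $h\{1\}$; this does not affect the conclusion.

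Where you over-complicate matters is the tree step, and there is no need for the $\lk(v_a)$ analysis nor for the Bass--Serre picture of the centraliser that you sketch at the end. Once you know $\fix(a)$ contains no type-$0$ vertex, it contains no $2$-simplex of $D_\Gamma$, since every $2$-simplex is a chain with one vertex of each type. Hence $\fix(a)$ is a convex subcomplex of dimension at most $1$. A convex subset of a CAT(0) space is itself CAT(0) and therefore contractible, and a contractible graph is a simplicial tree. That is the whole argument. Your slogan that ``convexity plus $2$-dimensionality forces $\fix(g)$ to be a tree'' is not a theorem as phrased: what actually does the work is that the exclusion of type-$0$ vertices pushes $\dim\fix(a)$ down to $1$, and then contractibility from CAT(0) convexity --- not the $2$-dimensionality of the ambient complex --- is what rules out cycles.
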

	
	\begin{remark} \label{RemStandardTrees}
		(1) Standard trees are convex.
		\\(2) Note that while every element of type $1$ is elliptic, elements of type $2$ are elliptic if and only if their parabolic closure $h A_{ab} h^{-1}$ satisfies $m_{ab} < \infty$.
		\\(3) It was showed in \cite[Remark 2.17]{vaskou2023isomorphism} that the standard tree $\fix(a)$ is infinite if and only if $a$ is not an isolated vertex, and does not lie at the tip of an even-labelled leaf in $\Gamma$.
	\end{remark}
	
	\begin{lemma} \cite[Lemmas 2.20 \& 2.21]{vaskou2023isomorphism} \label{LemmaCommuteStabilises}
		Let $g, h \in A_{\Gamma}$ be such that $g$ is elliptic when acting on $D_{\Gamma}$. If $g$ and $h$ commute, then $h$ stabilises $Fix(g)$.
	\end{lemma}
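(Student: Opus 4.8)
The plan is to deduce this from the completely general fact that any isometry commuting with an elliptic isometry preserves its fixed-point set, combined with the structural input already available: by Theorem~\ref{Thm:DeligneCAT(0)} the complex $D_\Gamma$ is CAT(0) and $A_\Gamma$ acts on it by simplicial isometries, and by Lemma~\ref{LemmaCrisp} the set $\fix(g)$ is nonempty (it is either a standard tree or a single vertex, according to whether $g$ is of type $1$ or of type $2$). The only facts I would actually use are that ellipticity of $g$ forces $\fix(g)\neq\emptyset$ and that $h$ acts as a simplicial isometry of $D_\Gamma$.

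First I would fix a point $x\in\fix(g)$, so $g\cdot x=x$. Since $gh=hg$, I compute $g\cdot(h\cdot x)=h\cdot(g\cdot x)=h\cdot x$, hence $h\cdot x\in\fix(g)$. As $x$ was arbitrary, this gives $h\cdot\fix(g)\subseteq\fix(g)$. The element $h^{-1}$ also commutes with $g$, so the same computation yields $h^{-1}\cdot\fix(g)\subseteq\fix(g)$; combining the two inclusions gives $h\cdot\fix(g)=\fix(g)$. Because $h$ acts simplicially on $D_\Gamma$, this equality is exactly the statement that $h$ stabilises the subcomplex $\fix(g)$.

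The main (and essentially only) subtlety here is hypothesis management rather than any geometric difficulty: one must invoke ellipticity of $g$ precisely to guarantee that $\fix(g)$ is nonempty — for a hyperbolic $g$ the fixed set is empty and the correct statement instead concerns $\mathrm{Min}(g)$ and the translation axis, which is presumably why the two source lemmas in \cite{vaskou2023isomorphism} split along the elliptic/hyperbolic dichotomy — and one should route through $h^{-1}$ to obtain genuine stabilisation (equality of the two sets) rather than merely forward invariance under $h$. No further work with the Moussong metric or with the parabolic structure of $A_\Gamma$ is needed for this particular statement.
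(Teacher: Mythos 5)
Your proof is correct, and it is the standard argument one expects behind a citation of this kind: the identity $g\cdot(h\cdot x)=h\cdot(g\cdot x)$ for commuting $g,h$ shows $h\cdot\fix(g)\subseteq\fix(g)$, and applying the same to $h^{-1}$ (equivalently, noting $h\cdot\fix(g)=\fix(hgh^{-1})=\fix(g)$) upgrades this to equality. The only small overstatement is the remark that ellipticity is needed to keep $\fix(g)$ nonempty: the stabilisation statement would be vacuously true if $\fix(g)=\emptyset$, so ellipticity is really there because it is the hypothesis under which the lemma is subsequently applied (and under which \cite{vaskou2023isomorphism} splits the elliptic and hyperbolic cases into two lemmas), not because the argument would otherwise break.
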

	
	Finally, we briefly prove the following Corollary to Theorem \ref{ThmIntersectionParabolics}, that will be used several times throughout the paper:
	
	\begin{corollary} \label{CorollaryTypeOfIntersections}
		Let $P$ and $P'$ be two parabolic subgroups of $A_{\Gamma}$ such that none contains the other, and suppose that $type(P) = n$ and $type(P') = m$. Then $type(P \cap P') < \min\{n, m\}$.
	\end{corollary}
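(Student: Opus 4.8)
The plan is to first invoke Theorem~\ref{ThmIntersectionParabolics} to guarantee that $Q := P \cap P'$ is itself a parabolic subgroup of $A_\Gamma$, so that $type(Q)$ is well-defined, and then to reduce the statement to a single monotonicity principle: whenever $Q \subsetneq R$ is a \emph{strict} inclusion of parabolic subgroups, one has $type(Q) < type(R)$. Granting this principle, the corollary is immediate: since $P \not\subseteq P'$ we cannot have $Q = P$ (that would force $P \subseteq P'$), so $Q \subsetneq P$ and hence $type(Q) < type(P) = n$; symmetrically $Q \subsetneq P'$ gives $type(Q) < type(P') = m$, and therefore $type(Q) < \min\{n,m\}$.

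To establish the monotonicity principle, I would conjugate so that $R$ is standard, say $R = A_{\Gamma_0}$ with $\Gamma_0 \subseteq \Gamma$ a full subgraph and $|V(\Gamma_0)| = type(R)$. Then $Q$ is a parabolic subgroup of $A_\Gamma$ contained in the standard parabolic $A_{\Gamma_0}$; by the theory of parabolic subgroups in large-type Artin groups \cite{cumplido2022parabolic}, such a $Q$ is a parabolic subgroup of $A_{\Gamma_0}$, i.e.\ $Q = g A_{\Gamma_1} g^{-1}$ for some full subgraph $\Gamma_1 \subseteq \Gamma_0$ and some $g \in A_{\Gamma_0}$. Because $g \in A_{\Gamma_0}$ we have $g A_{\Gamma_0} g^{-1} = A_{\Gamma_0}$, so if $\Gamma_1 = \Gamma_0$ then $Q = A_{\Gamma_0} = R$, contradicting strictness; hence $\Gamma_1 \subsetneq \Gamma_0$, and therefore $type(Q) = |V(\Gamma_1)| < |V(\Gamma_0)| = type(R)$, as desired.

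The only genuine input beyond Theorem~\ref{ThmIntersectionParabolics} is the fact that a parabolic subgroup of $A_\Gamma$ lying inside a standard parabolic $A_{\Gamma_0}$ is already a parabolic subgroup of $A_{\Gamma_0}$, with the conjugating element taken inside $A_{\Gamma_0}$; this is where the large-type hypothesis enters, and it is the step one must be careful to cite precisely from \cite{cumplido2022parabolic}. Everything else is bookkeeping about ranks of special subgroups: a proper full subgraph has strictly fewer vertices, and conjugation by an element of $A_{\Gamma_0}$ fixes $A_{\Gamma_0}$ setwise. No appeal to the Deligne complex or to any metric geometry is needed.
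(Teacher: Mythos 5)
Your proof is correct and takes a genuinely different route from the paper's. The paper works directly inside the Artin complex $\mathcal{A}_\Gamma$: it identifies each parabolic subgroup with the pointwise stabiliser of a simplex of the corresponding codimension (\cite[Lemma 12]{cumplido2022parabolic}), observes that $Q = P\cap P'$ fixes both $\Delta_P$ and $\Delta_{P'}$ pointwise, notes that neither simplex contains the other, and concludes that the unique maximal simplex fixed by $Q$ has codimension $\le \min\{n,m\}-1$; it also treats the case $n=m=1$ separately. You instead extract a clean monotonicity principle for strict inclusions of parabolic subgroups and derive the corollary in one stroke, with no case split. That is a nice simplification, and the reduction to the monotonicity principle is logically correct.

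The one place you should be more precise is the source for the claim that a parabolic subgroup of $A_\Gamma$ contained in a (standard) parabolic $A_{\Gamma_0}$ is already a parabolic subgroup of $A_{\Gamma_0}$, with conjugator taken in $A_{\Gamma_0}$. You attribute this to \cite{cumplido2022parabolic}, but that paper's Theorem A is about intersections, not about parabolic subgroups of parabolics; the precise statement you need is \cite[Theorem 1.1]{blufstein2023parabolic}, which is in fact the reference this paper invokes for exactly this purpose (see the proof of Lemma~\ref{LemmaModLinkvd}). With that citation corrected, your argument goes through: $Q$ is parabolic by Theorem~\ref{ThmIntersectionParabolics}, it is a parabolic of $A_{\Gamma_0}$ by \cite[Theorem 1.1]{blufstein2023parabolic}, so $Q = g A_{\Gamma_1} g^{-1}$ with $g\in A_{\Gamma_0}$ and $\Gamma_1\subsetneq\Gamma_0$ (else $Q=R$), giving $type(Q)<type(R)$, and the corollary follows by applying this to both $Q\subsetneq P$ and $Q\subsetneq P'$.
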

	
	\begin{proof}
		If $n = m = 1$, the statement follows from the fact that there is no non-trivial element that fixes two distinct standard trees pointwise.
		
		So we suppose without loss of generality that $n \geq 2$. The proof then essentially follows from \cite{cumplido2022parabolic}. In their work, the authors study a simplicial complex called the Artin complex $\mathcal{A}_{\Gamma}$, and show that every parabolic subgroup $P$ of $A_{\Gamma}$ of type $n$ is the pointwise stabiliser of a simplex $\Delta_P$ of codimension $n$ in $\mathcal{A}_{\Gamma}$ (see \cite[Lemma 12]{cumplido2022parabolic}). We now consider the subgroup $Q \coloneqq P \cap P'$, which is a parabolic subgroup of $A_{\Gamma}$ by Theorem \ref{ThmIntersectionParabolics}. By \cite[Lemma 20]{cumplido2022parabolic}, there is a unique maximal simplex $\Delta$ of $\mathcal{A}_{\Gamma}$ such that $Q$ fixes $\Delta$ pointwise. By the above, there are two simplices $\Delta_P$ and $\Delta_{P'}$ of codimension $n$ and $m$ respectively, such that $Q$ fixes both simplices pointwise. By \cite[Lemma 12]{cumplido2022parabolic} again, it can't be that either of these simplices contains the other, or we would have $P \subseteq P'$ or $P' \subseteq P$, contradicting our hypotheses. This means that $Q$ fixes strictly more than just $\Delta_P$ and just $\Delta_{P'}$. It follows that $\Delta$ must have codimension at most $\min\{n, m\} -1$. By \cite[Lemma 12]{cumplido2022parabolic} again, this means that $Q$ is a parabolic subgroup of type at most $\min\{n, m\} -1$. The result follows.
	\end{proof}

	\subsection{Exotic dihedral Artin subgroups and Centralisers}
	
	We first recall a few things regarding dihedral Artin subgroups of exotic type.
	
	\begin{definition} \cite{vaskou2023isomorphism}
		A subgroup $H$ of $A_{\Gamma}$ is called a \emph{dihedral Artin subgroup} if it is abstractly isomorphic to a (non-abelian) dihedral Artin group. The subgroup $H$ is said to be an \emph{exotic} dihedral Artin subgroup if it is not contained in a dihedral Artin parabolic subgroup of $A_{\Gamma}$. Finally, a dihedral Artin subgroup $H$ of $A_{\Gamma}$ is said to be \emph{maximal} if it is not strictly contained in another dihedral Artin subgroup of $A_{\Gamma}$.
	\end{definition}

	\begin{theorem}
		\label{theoremexotic}
		\cite[Theorem D]{vaskou2023isomorphism}
		Let $H$ be a maximal exotic dihedral Artin subgroup of a large-type Artin group $A_{\Gamma}$. Then there are three standard generators $a, b, c \in V(\Gamma)$ satisfying $m_{ab} = m_{ac} = m_{bc} = 3$ such that up to conjugation, $H$ takes the form
		\begin{align*}
			&H = \langle s, t \ | \ stst = tsts \rangle, \\
			&s \coloneqq b^{-1}, \ \ \ \ t \coloneqq babc, \\
			&z \coloneqq stst = tsts = abcabc,
		\end{align*}
		where $z$ generates the centre of $H$. Note that $H = \langle b, abc \rangle$.
	\end{theorem}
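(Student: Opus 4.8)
The natural arena is the action of $A_\Gamma$ on its $2$-dimensional CAT(0) Deligne complex $D_\Gamma$ (Theorem~\ref{Thm:DeligneCAT(0)}). Abstractly $H$ is a non-abelian dihedral Artin group, so $H\cong\langle s_0,t_0\mid \underbrace{s_0t_0s_0\cdots}_{m}=\underbrace{t_0s_0t_0\cdots}_{m}\rangle$ for some $m\ge 3$, with infinite cyclic centre $Z(H)=\langle z\rangle$ generated by the relevant power of the Garside element. The plan has four stages: (a) reduce to the case where the parabolic closure of $H$ is all of $A_\Gamma$ and $\Gamma$ is connected with at least three vertices; (b)--(c) use the exotic hypothesis and $z\in Z(H)$ to locate $H$ with respect to the standard trees and type-$2$ vertices of $D_\Gamma$, and deduce that $\Gamma$ must be a triangle $abc$ with $m_{ab}=m_{ac}=m_{bc}=3$; (d) finish by an explicit computation inside $A_{abc}$.

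For (a): by Theorem~\ref{ThmIntersectionParabolics} the intersection of all parabolic subgroups containing $H$ is a parabolic subgroup $P$ (the parabolic closure of $H$); conjugating, $P=A_{\Gamma'}$ is standard. As $H$ is non-abelian it lies in no rank-$\le 1$ parabolic; a rank-$2$ parabolic on a non-edge is free of rank $2$ with trivial centre, while $H$ has centre $\mathbb Z$; and $H$ being exotic lies in no dihedral Artin (rank-$2$ edge) parabolic; hence $|V(\Gamma')|\ge 3$. If $\Gamma'$ were disconnected then $A_{\Gamma'}$ would split as a nontrivial direct product, and since $H$ has infinite cyclic centre and abelianisation of rank $\le 2$ it is directly indecomposable, so $H$ would lie in one factor, contradicting minimality of $P$. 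Thus we may assume $\Gamma=\Gamma'$ is connected with $|V(\Gamma)|\ge 3$.

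For (b)--(c): consider the $z$-action on $D_\Gamma$. If $z$ is elliptic, Lemma~\ref{LemmaCrisp} gives type $1$ or $2$; type $2$ is impossible, for then $\fix(z)$ is a single type-$2$ vertex $hA_{ab}$, and every element of $H$ centralises $z$ hence stabilises $\fix(z)$ by Lemma~\ref{LemmaCommuteStabilises}, so $H\le hA_{ab}h^{-1}$, a dihedral Artin parabolic, contradicting exoticness. In the type-$1$ case $\fix(z)$ is a standard tree and $z$ lies in its pointwise stabiliser, which is the cyclic type-$1$ parabolic $h\langle a_0\rangle h^{-1}$; conjugating, $z=a_0^{k}$ and $H$ lies in the setwise stabiliser of $\fix(a_0)$, which equals $Z_{A_\Gamma}(a_0)$ (conjugation preserves total exponent sum, so $a_0\not\sim a_0^{-1}$). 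If instead $z$ is hyperbolic, $H$ preserves $\min(z)\cong\mathbb R\times Y$ with $\dim Y\le 1$; $Y$ is not a point (else $Z_{A_\Gamma}(z)\supseteq H$ is virtually cyclic), and an analysis of the $H$-action on the tree $Y$ --- using the product splitting, the map $H\to\operatorname{Isom}(\mathbb R)$ from the $\mathbb R$-coordinate (whose kernel is not virtually cyclic), and that $H/\langle z\rangle$ is non-elementary virtually free --- produces a standard tree $\fix(a_0)$ preserved by a large subgroup $H_0\le H$, again landing inside $Z_{A_\Gamma}(a_0)$. In either case a non-virtually-abelian dihedral Artin subgroup sits inside $Z_{A_\Gamma}(a_0)$. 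One then shows, via the local structure of $D_\Gamma$ at the type-$1$ vertex $v_{a_0}$ together with Corollary~\ref{CorollaryTypeOfIntersections}, that $Z_{A_\Gamma}(a_0)$ is virtually abelian unless $a_0$ lies in a triangle $abc$ of $\Gamma$ with all three labels equal to $3$; in the latter case the parabolic closure of $H$ is forced to be $A_{abc}$, so (by (a)) $\Gamma=\Delta_{333}$ and $H$ is conjugate into $A_{abc}$.

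For (d): $A_{abc}$ is the Artin group of affine type $\widetilde A_2$. A direct Garside-theoretic computation (equivalently, via the $\widetilde A_2$ Deligne complex) classifies its maximal non-abelian dihedral Artin subgroups with nontrivial centre: each is conjugate in $A_{abc}$ to $\langle b,abc\rangle$; writing $s:=b^{-1}$, $t:=babc$, one verifies $st=abc$, $ts=babcb^{-1}$, $sts\neq tst$, and $stst=(abc)^2=b(abc)^2b^{-1}=tsts=:z$, so the pair $(s,t)$ satisfies exactly the length-$4$ braid relation, $z$ generates the centre, and $H=\langle b,abc\rangle$. The value $m=4$ is forced because $(abc)^{2}$, and no smaller power of the Coxeter-type element $abc$, commutes with $b$. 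Combined with (a)--(c) this yields the statement up to conjugacy in $A_\Gamma$.

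\textbf{Main obstacle.} The delicate step is (b)--(c) in the case where $z$ acts \emph{hyperbolically} on $D_\Gamma$ (which, notably, is the case for the actual examples, since $(abc)^2$ has type-$3$ parabolic closure): one must show that a dihedral Artin subgroup whose centre translates along an axis nevertheless preserves a standard tree --- ruling out in particular that $\min(z)$ is a genuine flat $\mathbb R^2$ on which $H$ acts with no invariant standard tree --- and then identify precisely which standard generators have non-virtually-abelian centraliser (the $\Delta_{333}$ dichotomy). Step (d) is a finite computation and step (a) is routine.
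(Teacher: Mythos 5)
The paper does not prove this statement; it is imported verbatim as \cite[Theorem~D]{vaskou2023isomorphism}, so there is no in-paper argument to compare yours against. What I can do is flag where your outline, taken on its own terms, breaks.

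The central problem is in stages (b)--(c). You funnel both the type-$1$-elliptic case and the hyperbolic case into the claim that $H$ (or a large subgroup $H_0\le H$) lands inside the centraliser $Z_{A_\Gamma}(a_0)$ of some conjugate $a_0$ of a standard generator, and then assert that ``one shows $Z_{A_\Gamma}(a_0)$ is virtually abelian unless $a_0$ lies in a $\Delta_{333}$-triangle.'' That assertion is false, and it is directly contradicted by Proposition~\ref{PropositionCentralisers}: for \emph{any} large standard generator $a$, i.e.\ any $a$ that is neither isolated nor the tip of an even-labelled leaf, one has $C(a)\cong\langle a\rangle\times F_{\ge 2}$, which is not virtually abelian, whether or not $a$ lies in a fully $3$-labelled triangle. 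So your criterion cannot detect the $\Delta_{333}$ condition, and the step ``$H$ lives in a non-virtually-abelian $C(a_0)$, hence $\Gamma$ contains a $\Delta_{333}$'' is a non sequitur. (What is true, and could salvage part of the elliptic analysis, is that a non-abelian dihedral Artin group cannot embed in $\mathbb Z\times F$ at all: any normal infinite-cyclic subgroup of $H$ is central, and $H/Z(H)$ has torsion, so the projection to $F$ cannot be injective. That would show the type-$1$-elliptic case is outright vacuous --- but it also shows your "both cases land in $C(a_0)$" framing is the wrong picture.) The genuine content of the theorem lives entirely in the hyperbolic case, which you yourself note is the one that actually occurs ($z$ has type-$3$ parabolic closure, so it is hyperbolic on $D_\Gamma$ by Lemma~\ref{LemmaCrisp}), and there your sketch (``analysis of the $H$-action on the tree $Y$ \dots produces a standard tree'') does not contain an argument; the geometry one must exploit is that of the Euclidean $\widetilde A_2$-flat in $\min(z)$, and I do not see how that produces an invariant standard tree in the way your outline suggests.

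A smaller issue in (a): if $\Gamma'$ is disconnected then $A_{\Gamma'}$ splits as a \emph{free} product of the Artin groups on the components, not a direct product. Your conclusion (that $H$, being freely indecomposable because it has nontrivial centre and is torsion-free, lies in a conjugate of a free factor, contradicting minimality of the parabolic closure) is still reachable via Kurosh, but the justification you wrote is wrong. Stage (d), the explicit identification inside the $(3,3,3)$-group, is fine once the dichotomy has been established, but it is downstream of the gap.
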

	
	The centralisers of elements in large-type Artin groups were completely described in \cite{martin2023characterising}: every non-trivial element $g \in A_{\Gamma}$ has a centraliser that is virtually $\langle g \rangle \times F$ where $F$ is a possibly abelian free group. The non-virtually-abelian centralisers are at the heart of our strategy. We start with the following definition:
	
	\begin{definition}
		We say that a subgroup is $\mathbb{Z} \times F_{\geq 2}$ if it is isomorphic to a direct product $\mathbb{Z} \times F$ where $F$ is a free group of rank at least $2$.
	\end{definition}
	
	We are particularly interested in subgroups that are virtually $\mathbb{Z} \times F_{\geq 2}$, because some of them appear as quasi-isometric invariants for studying large-type Artin groups (see \cite{huang2017quasi}).

	We recall the reader that a (non-abelian) dihedral Artin group always contains a finite index subgroup that is $\mathbb{Z} \times F_{\geq 2}$ (\cite{ciobanu2020equations}). In particular, it is always virtually $\mathbb{Z} \times F_{\geq 2}$ .
	
	\begin{proposition} \label{PropositionCentralisers} (\cite[Remark 3.6]{martin2023characterising}, \cite[Proposition 4.2]{jones2024fixed})
		Suppose that $A_{\Gamma}$ is large-type. Then up to conjugation and permutation of the generators, there are only up to three kinds of centralisers that are virtually $\mathbb{Z} \times F_{\geq 2}$:
		\begin{enumerate}
			\item The centralisers of the form $C(a)$ where $a$ is a standard   generator that is not an isolated vertex, nor the tip of an even-labelled leaf. In this case, $C(a) \cong \langle a \rangle \times F_{\geq 2}$.
			\item The centralisers of the form $C(z_{ab})$ where $z_{ab}$ generates the centre of a dihedral Artin parabolic subgroup $A_{ab}$. In this case, $C(z_{ab})$ is precisely $A_{ab}$.
			\item The centralisers of the form $C(abcabc)$ where $a, b, c \in V(\Gamma)$ satisfy $m_{ab} = m_{ac} = m_{bc} = 3$. In this case, $C(abcabc)$ is an exotic dihedral Artin subgroup. It is isomorphic to the abstract dihedral Artin group $\langle x, y \ | \ xyxy =yxyx \rangle$.
		\end{enumerate}
		All other non-trivial centralisers are isomorphic to $\mathbb{Z}$ or $\mathbb{Z}^2$.
	\end{proposition}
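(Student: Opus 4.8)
The plan is to combine the centraliser description of \cite{martin2023characterising} — every non-trivial $g\in A_\Gamma$ has $C(g)$ virtually $\langle g\rangle\times F$ with $F$ a (possibly abelian) free group — with a case analysis according to $\operatorname{type}(P_g)$, the type of the parabolic closure of $g$ (Definitions~\ref{DefiParabolicClosure} and~\ref{DefiType}); the three families should correspond exactly to $\operatorname{type}(P_g)$ being $1$, $2$ and $3$, with larger types not occurring. First I would dispose of the non-exceptional centralisers: if $\operatorname{rk}F\le 1$ then $C(g)$ is virtually $\mathbb Z$ or $\mathbb Z^2$, and since $A_\Gamma$ is torsion-free and has no Klein bottle subgroup (visible from the flats of the CAT(0) complex $D_\Gamma$, or directly from \cite{martin2023characterising}) one gets $C(g)\cong\mathbb Z$ or $\mathbb Z^2$. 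So one may assume $C(g)$ is virtually $\mathbb Z\times F_{\ge 2}$, hence not virtually abelian; then $g$ lies in the centre of every finite-index subgroup of $C(g)$ isomorphic to $\mathbb Z\times F_{\ge 2}$, so $g$ is a power of a generator of its $\mathbb Z$-factor.

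Suppose $\operatorname{type}(P_g)=1$, so $g=ha^kh^{-1}$ for some vertex $a\in V(\Gamma)$ and $k\ne 0$, and $C(g)=h\,C(a^k)\,h^{-1}$. I would first show $C(a^k)=C(a)$: any element commuting with $a^k$ stabilises the standard tree $\fix(a^k)=\fix(a)$ by Lemma~\ref{LemmaCommuteStabilises}, and inspecting the action on $\fix(a)$ (on which $\langle a\rangle$ acts trivially) forces it to commute with $a$ itself. Then $C(a)$ acts on $\fix(a)$, and by Remark~\ref{RemStandardTrees}(3) this tree is infinite precisely when $a$ is not isolated and not the tip of an even-labelled leaf; in that case one identifies $C(a)=\langle a\rangle\times F$ with $F\cong C(a)/\langle a\rangle$ a free group of rank $\ge 2$ acting on $\fix(a)$ (its rank read off from the even-labelled edges and odd-labelled paths at $a$), while otherwise $\fix(a)$ is a point or an edge and $\operatorname{rk}F\le 1$, contrary to assumption. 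This gives item (1).

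Suppose $\operatorname{type}(P_g)=2$, so $P_g=hA_{ab}h^{-1}$. If $m_{ab}=\infty$ then $A_{ab}\cong F_2$, the filling element $g$ has $C_{A_{ab}}(g)=\langle g\rangle$, and since $C(g)$ normalises $P_g$ (the parabolic closure being conjugation-equivariant) $C(g)$ is virtually cyclic, contrary to assumption; so $m_{ab}<\infty$. Then $g$ is elliptic with $\fix(g)=\{hA_{ab}\}$ (Lemma~\ref{LemmaCrisp}), hence $C(g)\le hA_{ab}h^{-1}$ by Lemma~\ref{LemmaCommuteStabilises}, and $C(g)=h\,C_{A_{ab}}(w)\,h^{-1}$ with $w=h^{-1}gh$ filling the dihedral Artin group $A_{ab}$; computing in the virtually free quotient $A_{ab}/\langle z_{ab}\rangle$ one finds $C_{A_{ab}}(w)$ is either all of $A_{ab}$ (when $w$ is central, or a root of a central element) or $\cong\mathbb Z^2$, and the assumption forces the former, so $C(g)=hA_{ab}h^{-1}$, which up to conjugation and relabelling is item (2). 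Suppose finally $\operatorname{type}(P_g)\ge 3$: then $g$ cannot be elliptic (its fixed set would be a vertex of $D_\Gamma$, forcing $\operatorname{type}(P_g)\le 2$), so it acts hyperbolically, and combining $C(g)\le N(P_g)$ with known control on normalisers of parabolic subgroups reduces the question to centralisers of filling elements inside the ($2$-dimensional) Artin group $A_{\Gamma'}$ on a clique $\Gamma'$ with $\ge 3$ vertices. For every such $\Gamma'$ other than the $(3,3,3)$-triangle the analysis of \cite{martin2023characterising} (and, when $A_{\Gamma'}$ is of hyperbolic type, acylindricity of its action on its Deligne complex, making a filling element loxodromic with virtually cyclic centraliser) shows $C(g)$ cannot be virtually $\mathbb Z\times F_{\ge 2}$; so $\Gamma'$ is a triangle $abc$ with $m_{ab}=m_{ac}=m_{bc}=3$, and by Theorem~\ref{theoremexotic} the maximal exotic dihedral subgroups arising from it are conjugates of $H=\langle b,abc\rangle\cong\langle x,y\mid xyxy=yxyx\rangle$ with centre $\langle z\rangle$, $z=abcabc$. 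One then checks $C(abcabc)=H$ and that every relevant filling element of $A_{abc}$ is conjugate to a power of $abcabc$ for some ordering of $\{a,b,c\}$, giving item (3); combined with the easy converse (each of $\langle a\rangle\times F_{\ge 2}$, $A_{ab}$ with $m_{ab}\ge 3$, and $H$ is virtually $\mathbb Z\times F_{\ge 2}$), this completes the classification.

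The hard part will be the $\operatorname{type}(P_g)\ge 3$ case: obtaining enough control on $N(P_g)$ to pass to the clique subgroup, excluding every clique except the $(3,3,3)$-triangle, and — inside $A_{(3,3,3)}$ — proving that $C(abcabc)$ is exactly the exotic dihedral subgroup $H$ of Theorem~\ref{theoremexotic} while no other filling element has a virtually-$(\mathbb Z\times F_{\ge 2})$ centraliser; this is precisely the point where one leans on the detailed work of \cite{martin2023characterising} and \cite{jones2024fixed}. The remaining technical points — identifying $C(a)$ from the action of its stabiliser on the standard tree $\fix(a)$, and the centraliser computation in dihedral Artin groups — are more routine but still require some care.
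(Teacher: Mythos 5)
The proposition is stated in the paper purely as a citation to \cite[Remark~3.6]{martin2023characterising} and \cite[Proposition~4.2]{jones2024fixed}; there is no proof in the paper to compare against. That said, your reconstruction of the argument is a reasonable sketch of the strategy one would follow, and your type-$1$ and type-$2$ analyses are essentially sound (granting that the normaliser-of-$F_2$ step for $m_{ab}=\infty$ would need a reference, and that $C_{F_2}(g)$ is the maximal cyclic root closure of $g$, not necessarily $\langle g\rangle$).

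The genuine gap is in your treatment of $\operatorname{type}(P_g)\ge 3$. You write that $C(g)\le N(P_g)$ ``reduces the question to centralisers of filling elements inside the Artin group $A_{\Gamma'}$ on a clique $\Gamma'$ with $\ge 3$ vertices.'' But there is no reason for the defining graph of $P_g$ to be a clique. For instance, if $\Gamma$ is a $4$-cycle with all labels $\ge 3$, the parabolic closure of a generic element is all of $A_\Gamma$, which is supported on a non-clique graph; and more generally $P_g$ may be any full subgraph. The hard content — that for $\Gamma'$ with $\ge 3$ vertices the centraliser of a filling element is virtually cyclic unless $\Gamma'$ is the $(3,3,3)$-triangle — is not something you can reach by restricting attention to cliques; it is precisely the substance of \cite{martin2023characterising}, which handles hyperbolic elements via the structure of $\min(g)$ in $D_\Gamma$ (axis versus flat), not via a clique reduction. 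The invocation of acylindricity is also shaky: a large-type Artin group whose defining graph merely \emph{contains} a $\Delta_{333}$ but is bigger does not act acylindrically on $D_\Gamma$, so that route would need significant repair even when $\Gamma'$ is a triangle. In short, the outline is plausible but the ``reduce to cliques'' step is false as written, and the type-$\ge 3$ case really does need to be imported wholesale from the cited references rather than derived.

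There is also a circularity hazard elsewhere in your sketch: you use $C(a^k)=C(a)$ in the type-$1$ case, but the paper's own Lemma~\ref{LemmaCentralisersOfPowers} derives that fact \emph{from} Proposition~\ref{PropositionCentralisers}. If you want $C(a^k)=C(a)$ to be an ingredient of the proof rather than a consequence, you would need an independent argument (your suggestion via the action on $\fix(a)$ could work, but the claim ``inspecting the action forces it to commute with $a$ itself'' needs to be spelt out, since stabilising $\fix(a)$ is not the same as centralising $a$).
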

	
	\begin{remark} \label{RemarkFirst}
		(1) When $\Gamma$ is connected and without even-labelled leaves, then every standard generator has a centraliser that is $\mathbb{Z} \times F_{\geq 2}$.
		\\(2) The dihedral Artin subgroup in Proposition \ref{PropositionCentralisers}.3 occur if and only if the large-type Artin group $A_{\Gamma}$ is not of hyperbolic-type, i.e. if and only if $\Gamma$ contains the subgraph $\Delta_{333}$ (\cite[Theorem D]{vaskou2023isomorphism}).
	\end{remark}

	\subsection{The intersection graph.}
	\label{subsec:intersection graph}
	
	Throughout this section we let $A_{\Gamma}$ be a large-type Artin group. The goal is to define the intersection graph associated with $A_{\Gamma}$, which is a quasi-isometric invariant for large-type Artin groups. The original definition of intersection \cite[Definition 10.13 and Definition 10.7]{huang2017quasi} was in terms of certain types of subcomplexes in the Cayley complex of $A_\Gamma$. Here we will introduce a group-theoretical reformulation of this definition, which is easier to work with, and explain how it corresponds to the old definition.
	
	Recall that two subgroups $G_1$ and $G_2$ of $A_\Gamma$ are \emph{commensurable} if $G_1\cap G_2$ is finite index in both $G_1$ and $G_2$. A standard generator $a$ is called \emph{large} if its centraliser is large, i.e. it is not virtually abelian. By Proposition \ref{PropositionCentralisers}, $a$ is large if and only if it is not isolated, nor the tip of an even-labelled leaf.
	
	We say an element in $A_\Gamma$ is \emph{stable} if $g$ is either conjugated to a large standard generator, or $g$ generates the centre of a dihedral Artin parabolic subgroup, or $g$ generates the centre of an exotic maximal dihedral Artin subgroup. In the last case, by Theorem~\ref{theoremexotic}, there exists $a,b,c\in V(\Gamma)$ with $m_{ab}=m_{bc}=m_{ac}=3$ such that $g=h^{-1}(abcabc)^{\pm 1} h$ for some $h\in A_\Gamma$. In the first case, we say the stable element is of \emph{type T}, in the second case, the element is of \emph{type D}, and in the last case, the element is of \emph{type E}. A \emph{stable} $\mathbb Z$-subgroup of $A_\Gamma$ is a subgroup generated by a stable element. The \emph{type} of stable subgroups are defined as the type of the element generating them.
	
	\begin{remark} \label{RemType}
		One should not mix the above notion of type with the notion of type defined in Definition \ref{DefiType}, although stable elements of type $T$ always have type $1$, stable elements of type $D$ have type $2$, and stable elements of type $E$ have type $3$.
	\end{remark}
	
	\begin{corollary} \label{CoroCNVA}
		The centraliser of each stable $\mathbb Z$-subgroup of $A_\Gamma$ is not virtually abelian. Conversely, each $\mathbb Z$-subgroup of $A_\Gamma$ whose centraliser is not virtually abelian is contained in a stable $\mathbb Z$-subgroup of $A_\Gamma$.
	\end{corollary}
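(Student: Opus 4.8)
The forward implication is essentially immediate from Proposition~\ref{PropositionCentralisers} once one unpacks the three types of stable elements. If $g$ generates a stable $\mathbb{Z}$-subgroup of type~$T$ then, up to conjugation, $g$ is a large standard generator $a$, so the first case of Proposition~\ref{PropositionCentralisers} gives $C(g)\cong\langle a\rangle\times F_{\geq 2}$. If $g$ is of type~$D$ then, up to conjugation, $g=z_{ab}$ generates the centre of a dihedral Artin parabolic $A_{ab}$ with $m_{ab}\geq 3$, and the second case gives $C(g)=A_{ab}$, a non-abelian dihedral Artin group. If $g$ is of type~$E$ then $C(g)=C(g^{-1})$ is, up to conjugation, $C(abcabc)$, which by the third case is an exotic dihedral Artin subgroup isomorphic to $\langle x,y\mid xyxy=yxyx\rangle$. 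In each case the resulting group is virtually $\mathbb{Z}\times F_{\geq 2}$ --- using that non-abelian dihedral Artin groups are virtually $\mathbb{Z}\times F_{\geq 2}$, as recalled before Proposition~\ref{PropositionCentralisers} --- hence not virtually abelian, and conjugating $g$ conjugates its centraliser, so the conclusion is conjugation-invariant.

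For the converse, suppose $\langle g\rangle$ is a $\mathbb{Z}$-subgroup with $C(g)$ not virtually abelian. Since $C(g)$ is virtually $\langle g\rangle\times F$ with $F$ free (the general description of centralisers recalled before Proposition~\ref{PropositionCentralisers}), and $C(g)$ is not virtually $\mathbb{Z}$ or $\mathbb{Z}^2$, the free group $F$ has rank $\geq 2$, so $C(g)$ is virtually $\mathbb{Z}\times F_{\geq 2}$. Proposition~\ref{PropositionCentralisers} then yields that, up to conjugacy in $A_\Gamma$ (the permutation of generators allowed there being induced by a label-preserving automorphism of $\Gamma$, which maps stable elements to stable elements of the same type), one has $C(g)=hC(g')h^{-1}$ with $g'$ equal to a large standard generator $a$, a central generator $z_{ab}$ of a dihedral Artin parabolic, or $abcabc$ for a $\Delta_{333}$-triangle $a,b,c$ --- in all three cases $g'$ is a stable element. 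The key point is that every element lies in the centre of its own centraliser, so $g\in Z(C(g))$, whence $h^{-1}gh\in Z(C(g'))$. In each case $Z(C(g'))$ is infinite cyclic: for type~$T$ it is the $\langle a\rangle$-factor of $\langle a\rangle\times F_{\geq 2}$; for type~$D$ it is $Z(A_{ab})=\langle z_{ab}\rangle$; and for type~$E$ it is $\langle abcabc\rangle$ by Theorem~\ref{theoremexotic}. Therefore $h^{-1}gh=(g')^{n}$ for some $n\neq 0$, so $\langle g\rangle\subseteq\langle hg'h^{-1}\rangle$, and $hg'h^{-1}$ is again stable of the same type as $g'$ (since ``parabolic'' includes conjugates), giving the required stable $\mathbb{Z}$-subgroup containing $\langle g\rangle$.

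The step I expect to require the most care is the identification $Z(C(abcabc))=\langle abcabc\rangle$ in the type-$E$ case. Theorem~\ref{theoremexotic} tells us that the centre of a \emph{maximal} exotic dihedral Artin subgroup $H=\langle b,abc\rangle$ is generated by $z=abcabc$, so what is actually needed is that $C(abcabc)$ is precisely this maximal $H$, rather than merely \emph{some} exotic dihedral Artin subgroup inside which $abcabc$ might a priori be a proper power of the central generator; this rests on the structure theory of exotic dihedral Artin subgroups from \cite{vaskou2023isomorphism} underlying the third case of Proposition~\ref{PropositionCentralisers}. Alternatively, if it is cleaner to avoid pinning this down directly, one can let $w$ generate $Z(C(abcabc))$ and observe that $w$ centralises all of $C(abcabc)$ while some power of $w$ equals $abcabc$, forcing $C(w)=C(abcabc)$; this common centraliser is not virtually abelian, is not isomorphic to $\mathbb{Z}\times F_{\geq 2}$ (compare abelianisations) and is not a dihedral Artin parabolic (it is \emph{exotic}), so by Proposition~\ref{PropositionCentralisers} it is of the third type and $w$ is itself a stable element of type~$E$ --- which already suffices, since $h^{-1}gh$ is a power of $w$.
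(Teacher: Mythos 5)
Your proof is correct and follows essentially the same route as the paper: both directions reduce to Proposition~\ref{PropositionCentralisers}. The paper's own proof is considerably terser (it simply asserts that a non-virtually-abelian centraliser forces $h$ to be a non-trivial power of a stable element $h_0$), and your elaboration via $g\in Z(C(g))$ together with the identification of $Z(C(a))=\langle a\rangle$, $Z(A_{ab})=\langle z_{ab}\rangle$, and $Z(C(abcabc))=\langle abcabc\rangle$ (the last resting on Theorem~\ref{theoremexotic} and the maximality of the exotic dihedral subgroup) is exactly the argument that makes that terse assertion rigorous.
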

	
	\begin{proof}
		For any $\mathbb{Z}$-subgroup $H = \langle h \rangle$ of $A_\Gamma$ we have $C(H) = C(h)$. Thus:
		\medskip
		
		\noindent $\bullet$ If $H$ is stable, then $h$ is a stable element and $C(h)$ is not virtually abelian by Proposition \ref{PropositionCentralisers}. Thus so is $C(H)$.
		\medskip
		
		\noindent $\bullet$ If $C(H)$ is not virtually abelian then $C(h)$ is not virtually abelian, hence $h$ is a non-trivial power of a stable element $h_0$, by Proposition \ref{PropositionCentralisers}. In particular, $H$ is contained in the stable $\mathbb{Z}$-subgroup $\langle h_0 \rangle$.
	\end{proof}

	\begin{lemma}
		\label{lemcommensurableequal}
		If two stable $\mathbb Z$-subgroups of $A_\Gamma$ are commensurable, then they are equal.
	\end{lemma}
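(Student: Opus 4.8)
My plan is to reduce the statement to a computation with centralisers, using two facts: (a) for a stable element $g$ one has $\langle g\rangle = Z(C(g))$, and (b) for a stable element $g$ and any $m\neq 0$ one has $C(g^m)=C(g)$. Fact (a) is immediate from Proposition \ref{PropositionCentralisers}: up to conjugation $C(g)$ is either $\langle a\rangle\times F_{\geq 2}$ (type $T$), or a dihedral Artin parabolic $A_{ab}$ (type $D$), or a maximal exotic dihedral Artin subgroup $\cong\langle x,y\mid xyxy=yxyx\rangle$ (type $E$); in each case the centre is the infinite cyclic group generated by the element defining $g$ — respectively the relevant conjugate of $\langle a\rangle$, of $\langle z_{ab}\rangle$, or (by Theorem \ref{theoremexotic}) of $\langle abcabc\rangle$ — which is exactly $\langle g\rangle$. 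Granting (a) and (b), the lemma follows at once: if $\langle g\rangle$ and $\langle h\rangle$ are commensurable then $\langle g\rangle\cap\langle h\rangle$ is a non-trivial infinite cyclic group, so $g^m=h^n=:k$ for some $m,n\neq 0$; then $C(g)=C(g^m)=C(k)=C(h^n)=C(h)$ by (b), and hence $\langle g\rangle=Z(C(g))=Z(C(h))=\langle h\rangle$ by (a).

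So the real content is fact (b), where only the inclusion $C(g^m)\subseteq C(g)$ requires an argument. One clean option is to quote the equality directly from the centraliser computations of \cite{martin2023characterising, jones2024fixed}. If one prefers to re-derive it from the material at hand, I would argue as follows. Since $C(g^m)\supseteq C(g)$ is not virtually abelian, Proposition \ref{PropositionCentralisers} forces $C(g^m)$ to be, again, a conjugate (after permuting generators) of one of the three model groups, with infinite cyclic centre $\langle w\rangle$ for a stable element $w$; as $g^m$ commutes with everything in its own centraliser, $g^m\in Z(C(g^m))=\langle w\rangle$, so $g^m$ is a non-trivial power of $w$, and in particular $g^m$ and $w$ have the same type in the sense of Definition \ref{DefiType}. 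Using that parabolic subgroups of large-type Artin groups are closed under roots — equivalently, that the type of an element equals the type of each of its non-trivial powers — one gets that $g$, $g^m$ and $w$ all have the same type, so $C(g)$ and $C(g^m)$ are conjugates of the same model. After conjugating so that $g$ itself is a standard generator (resp. a dihedral centre, resp. an exotic centre), a short computation in the model upgrades $C(g)\subseteq C(g^m)$ to equality: in the type‑$T$ model $\langle a\rangle\times F_{\geq 2}$, any element whose $m$‑th power is central has trivial free component, so $g$ lies in the centre $\langle a\rangle$, and then since $C(g)$ already contains a non‑abelian free subgroup meeting $\langle a\rangle$ trivially and centralisers of non‑abelian subgroups of free groups are trivial, no further element of the ambient group can centralise it, forcing $C(g)=C(g^m)$; in the type‑$D$ and type‑$E$ cases one argues similarly, additionally using that parabolic subgroups of equal type that are nested must coincide (as in the proof of Corollary \ref{CorollaryTypeOfIntersections}) and the maximality built into exotic dihedral subgroups.

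I expect fact (b) — precisely, excluding that replacing $g$ by a power strictly enlarges its centraliser — to be the only genuine obstacle; once the type of $g^m$ is pinned down, Proposition \ref{PropositionCentralisers} leaves only a bounded list of configurations to rule out. The delicate one is type $E$, where one must understand which elements of an exotic dihedral Artin subgroup have central powers, and this is also the case where having root‑closure of parabolics (or the cited centraliser descriptions) at one's disposal is most valuable.
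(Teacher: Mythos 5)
Your argument is correct, and it takes a genuinely different route from the paper's proof. The paper argues geometrically in the Deligne complex, case by case on the type of the stable element: for types $T$ and $D$ it uses that $\fix(g^n)=\fix(g)$ (a standard tree, resp.\ a type $2$ vertex), and for type $E$ it uses that $\min(g^n)=\min(g)$ together with minimality of the translation length; in each case it must first observe separately that stable subgroups of different types cannot be commensurable. You instead work purely algebraically with centralisers, from two facts: (a) $\langle g\rangle=Z(C(g))$ for a stable $g$, which is immediate from the explicit models in Proposition~\ref{PropositionCentralisers} and Theorem~\ref{theoremexotic}; and (b) $C(g^m)=C(g)$, which is exactly Lemma~\ref{LemmaCentralisersOfPowers} of the paper (stated just after the lemma in question, but proved independently of it, so there is no circularity). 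Granting (a) and (b), your chain $C(g)=C(g^m)=C(h^n)=C(h)$ followed by $\langle g\rangle=Z(C(g))=Z(C(h))=\langle h\rangle$ is clean and entirely uniform in the type, so the preliminary type-separation observation becomes unnecessary. The trade-off is that you've shifted the case analysis into fact~(b): your attempted re-derivation of $C(g^m)\subseteq C(g)$ is the delicate part (especially the type-$E$ case), and indeed the paper also simply cites the literature for that (from \cite{martin2023characterising,jones2024fixed} and \cite[Remark~3.31]{vaskou2023isomorphism}) rather than re-proving it. Your in-principle sketch of that re-derivation looks plausible, but if you want a self-contained proof you would still need to carefully carry out the type-$E$ case, e.g.\ characterising which elements of an exotic dihedral Artin subgroup have central powers and invoking maximality.
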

	
	\begin{proof}
		First of all, it is easy to see that two stable $\mathbb{Z}$-subgroup of different types amongst $T$, $D$ or $E$ are never commensurable. This directly follows from Remark \ref{RemType}. Let now $H$, $H'$ be two commensurable stable $\mathbb{Z}$-subgroups. By the above, they must have the same type. Now:
		\medskip
		
		\noindent (1) If they are both of type $T$, then $G = \langle g \rangle$ and $H = \langle h \rangle$, where $g$, $h$ are conjugates of standard generators. The fixed sets $\fix(g)$ and $\fix(h)$ are two standard trees, and for any $n \neq 0$ we have $\fix(g^n) = \fix(g)$ and $\fix(h^n) = \fix(h)$ (this follows from Lemma \ref{LemmaCrisp}). By hypothesis, there are some $n, m \neq 0$ such that $g^n = h^m$. Using the above, this gives $\fix(g) = \fix(h)$, which yields $G = H$.
		\medskip
		
		\noindent (2) If they are both of type $D$, then $G = \langle g \rangle$ and $H = \langle h \rangle$, where $g$, $h$ each generate the centre of a dihedral Artin parabolic subgroup of $A_{\Gamma}$. The fixed-sets $Fix(g)$ and $Fix(h)$ are two type $2$ vertices of $D_{\Gamma}$, and for any $n \neq 0$ we have $Fix(g^n) = Fix(g)$ and $Fix(h^n) = Fix(h)$ (this also follows from Lemma \ref{LemmaCrisp}). As above, commensurability implies $G = H$.
		\medskip
		
		\noindent (3) If they are both of type $E$, then $G = \langle g \rangle$ and $H = \langle h \rangle$, where $g$, $h$ each generate the centre of an exotic Artin parabolic subgroup of $A_{\Gamma}$. The minimal sets $Min(g)$ and $Min(h)$ are two subcomplexes of $X_{\Gamma}$, and for any $n \neq 0$, we have $Min(g^n) = Min(g)$ and $Min(h^n) = Min(h)$ (see \cite[Remark 3.31]{vaskou2023isomorphism}). As before, commensurability implies that $Min(g) = Min(h)$. In particular, $g$ and $h$ are hyperbolic elements that act by translations along the same axes. They are both translation of minimal length by maximality of $G$ and $H$, so it must be that $g = h^{\pm1}$. It follows that $G = H$.
	\end{proof}
	
	Let $P_\Gamma$ be the presentation complex of $A_\Gamma$ such that each edge of $P_\Gamma$ is labelled by a standard generator of $A_\Gamma$, and we choose an orientation for each edge of $P_\Gamma$. Let $X_\Gamma$ be the Cayley complex of $A_\Gamma$, i.e. $X_\Gamma$ is the universal cover of $P_\Gamma$, with induced labelling and orientation of edges.
	In \cite{huang2017quasi}, we introduce three types of connected subcomplexes of the Cayley complex $X_\Gamma$ (while these subcomplexes are called ``lines'', they are not exactly homemorphic to $\mathbb R$, but they are quasi-isometric to $\mathbb R$):
	\begin{enumerate}
		\item the first type of subcomplexes are called \emph{diamond lines}, see \cite[Definition 5.1 and Figure 14]{huang2017quasi}; it follows from the definition that for each diamond line $L\subset X_\Gamma$, there is a stable $\mathbb Z$-subgroup of $A_\Gamma$ of type D which stabilizes $L$ and acts cocompactly on $L$; conversely, each stable $\mathbb Z$-subgroup of $A_\Gamma$ of type $D$ stabilizes a diamond line and acts cocompactly on it;
		\item the second type are called \emph{plain lines}, see \cite[Definition 5.4]{huang2017quasi}; it follows from the definition that each diamond line $L\subset X_\Gamma$ is stabilized by a stable $\mathbb Z$-subgroup of type T which acts cocompactly on $L$; conversely, each stable $\mathbb Z$-subgroup of type T stabilizes a plain line and acts cocompactly on it;
		\item the third type are \emph{Coxeter lines}, see \cite[Definition 5.3 and Figure 15]{huang2017quasi}; each Coxeter line $L\subset X_\Gamma$ is stabilized by a stable $\mathbb Z$-subgroup of type E which acts cocompactly on $L$; conversely, each stable $\mathbb Z$-subgroup of type E stabilizes a Coxeter line and acts cocompactly on it.
	\end{enumerate}
	
	A \emph{singular line} in $X_\Gamma$ is either a diamond line, or a plain line, or a Coxeter line. Two singular lines are \emph{parallel} if they have finite Hausdorff distance from each other. It follows from \cite[Corollary 2.4]{MR2867450}  that if two singular lines are parallel, and $Z_1$ and $Z_2$ are stable $\mathbb Z$-subgroups of $A_\Gamma$ stabilizing each of them, then $Z_1$ and $Z_2$ are commensurable. By Lemma~\ref{lemcommensurableequal}, $Z_1=Z_2$. This together with the above discussion imply there is a 1-1 correspondence between the collection of parallel classes of singular lines in $X_\Gamma$, and the collection of stable $\mathbb Z$-subgroups of $A_\Gamma$. This allows us to reformulate \cite[Definition 10.13]{huang2017quasi} in the large-type case as follows.
	
	\begin{definition} \label{DefIntersectionGraphI} [Intersection graph: definition I]
		Suppose $A_\Gamma$ is a large-type Artin group. The \emph{intersection graph} of $A_\Gamma$, denoted $I_\Gamma$, is defined to be the graph whose vertices are the stable $\mathbb Z$-subgroups of $A_\Gamma$, and two vertices are adjacent, if the associated $\mathbb Z$-subgroups commute.
	\end{definition}
	
	One can actually define the edges in the intersection graph $I_{\Gamma}$ in Definition \ref{DefIntersectionGraphI} with a seemingly looser yet equivalent condition (see Lemma \ref{LemmaCommuteIIFVirtuallyCommute}).
	
	\begin{lemma} \label{LemmaCentralisersOfPowers}
		Let $g \in A_{\Gamma}$ be a stable element. Then for any $n \neq 0$, we have $C(g) = C(g^n)$.
	\end{lemma}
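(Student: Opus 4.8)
The plan is to combine the classification of non–virtually-abelian centralisers in Proposition~\ref{PropositionCentralisers} with the commensurability rigidity of stable $\mathbb Z$-subgroups in Lemma~\ref{lemcommensurableequal}. One inclusion is automatic, $C(g)\subseteq C(g^n)$, so the work is the reverse inclusion. First I would note that $C(g^n)$ is not virtually abelian: since $g$ is stable, $\langle g\rangle$ is a stable $\mathbb Z$-subgroup, so $C(g)=C(\langle g\rangle)$ is not virtually abelian by Corollary~\ref{CoroCNVA}; as $C(g)\subseteq C(g^n)$ and every subgroup of a virtually abelian group is virtually abelian, $C(g^n)$ is not virtually abelian either. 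By Proposition~\ref{PropositionCentralisers} it follows that $C(g^n)$ is conjugate to one of the three model centralisers $C(a)$, $C(z_{ab})$, $C(abcabc)$; write $C(g^n)=wC(x)w^{-1}$ with $w\in A_\Gamma$ and $x\in\{a,z_{ab},abcabc\}$.

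Next I would locate $g^n$ inside $C(g^n)$ using the centre. Since $g^n$ commutes with everything in $C(g^n)$, we have $g^n\in Z(C(g^n))=w\,Z(C(x))\,w^{-1}$, so the key point is the identification $Z(C(x))=\langle x\rangle$ in all three cases: for $x=a$ because $C(a)\cong\langle a\rangle\times F_{\geq 2}$ and $F_{\geq 2}$ is centreless; for $x=z_{ab}$ because $C(z_{ab})=A_{ab}$ and $z_{ab}$ generates its centre by definition; and for $x=abcabc$ because $C(abcabc)$ coincides with the maximal exotic dihedral Artin subgroup $H=\langle b,abc\rangle$ of Theorem~\ref{theoremexotic}, whose centre is $\langle abcabc\rangle$. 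Setting $y:=wxw^{-1}$, this makes $y$ again a stable element (a conjugate of one of the three distinguished generators, hence stable of type $T$, $D$ or $E$ respectively), with $C(y)=wC(x)w^{-1}=C(g^n)$, and it exhibits $g^n$ as a power $y^{k}$; the exponent $k$ is nonzero because $g$, being stable, has infinite order.

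Finally I would conclude by a commensurability argument: $\langle g^n\rangle$ has finite index in both $\langle g\rangle$ and $\langle y\rangle=\langle x'\rangle$, which are stable $\mathbb Z$-subgroups, so Lemma~\ref{lemcommensurableequal} forces $\langle g\rangle=\langle y\rangle$, and therefore $C(g)=C(\langle g\rangle)=C(\langle y\rangle)=C(y)=C(g^n)$.

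The only genuinely non-formal step is the identification $Z(C(abcabc))=\langle abcabc\rangle$ in the exotic case, which I expect to be the main (though minor) obstacle. It should follow by unwinding Theorem~\ref{theoremexotic}: the subgroup $H=\langle b,abc\rangle$ is maximal not merely among exotic dihedral Artin subgroups but among all dihedral Artin subgroups — any dihedral Artin subgroup properly containing it would either be exotic, contradicting maximality of $H$, or be contained in a dihedral Artin parabolic, contradicting that $H$ is exotic — and since $H\subseteq C(abcabc)$ (as $abcabc\in Z(H)$) while $C(abcabc)$ is itself a dihedral Artin subgroup by Proposition~\ref{PropositionCentralisers}, we get $H=C(abcabc)$, whence $Z(C(abcabc))=Z(H)=\langle abcabc\rangle$.
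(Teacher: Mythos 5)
Your proof is correct, and it is noticeably more explicit than the paper's, which dismisses the type $T$ and type $D$ cases as ``standard'' consequences of Proposition~\ref{PropositionCentralisers} and simply cites an external reference (Remark 3.31 of the earlier paper on isomorphism of large-type Artin groups) for type $E$. Where the paper argues case by case, you give a uniform argument: observe $C(g^n)$ is not virtually abelian, invoke the classification to identify $C(g^n)$ with a conjugate of a model centraliser, locate $g^n$ inside the centre $\langle y\rangle$ of that model, and then feed commensurability of $\langle g\rangle$ and $\langle y\rangle$ into Lemma~\ref{lemcommensurableequal}. This route has the virtue of avoiding the external citation and treating all three types at once.

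Two small remarks. First, your worry that the exotic case requires extra work is unfounded, and even the digression you supply is redundant: the paper's definition of a \emph{maximal} dihedral Artin subgroup already means maximal among all dihedral Artin subgroups, not only the exotic ones, so $H\subseteq C(abcabc)$ together with $C(abcabc)$ being a dihedral Artin subgroup and $H$ maximal immediately gives $H=C(abcabc)$; there is no need to argue separately that a dihedral Artin subgroup containing $H$ cannot be parabolic. Second, when you identify $Z(C(a))=\langle a\rangle$ it is worth being careful that the displayed isomorphism $C(a)\cong\langle a\rangle\times F_{\geq2}$ is in fact an internal direct-product decomposition with first factor equal to the centre (this is asserted in the paper, e.g. in the proof of Lemma~\ref{LemmaTwoDefinitionsCoincide}); since $a\in Z(C(a))$, knowing only the abstract isomorphism type of $Z(C(a))$ as $\mathbb Z$ would not by itself rule out $\langle a\rangle$ being a proper finite-index subgroup of the centre. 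As stated the argument is fine because the decomposition is internal, but this is worth flagging as the one place where ``$\cong$'' must be read as ``$=$''.
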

	
	\begin{proof}
		The result is standard if $g$ is of type $T$ or of type $D$ (for the reader, it can be proved easily from Proposition \ref{PropositionCentralisers}). If $g$ is of type $E$, this is \cite[Remark 3.31]{vaskou2023isomorphism}.
	\end{proof}
	
	\begin{lemma} \label{LemmaCommuteIIFVirtuallyCommute}
		Two stable $\mathbb{Z}$-subgroups commute if and only if they have finite index subgroups that commute.
	\end{lemma}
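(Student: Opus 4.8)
The plan is to prove the two implications separately, the forward one being trivial and the reverse one being where the real content lies. If two stable $\mathbb{Z}$-subgroups $\langle g\rangle$ and $\langle h\rangle$ commute, then certainly any pair of subgroups of them commute, so the ``only if'' direction is immediate. For the converse, suppose $\langle g\rangle$ and $\langle h\rangle$ have finite-index subgroups that commute; since a finite-index subgroup of $\mathbb{Z}$ is just $\langle g^n\rangle$ for some $n\neq 0$ (resp. $\langle h^m\rangle$), the hypothesis says precisely that $g^n$ and $h^m$ commute for some nonzero $n,m$. I want to upgrade this to: $g$ and $h$ commute.

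The key tool is Lemma \ref{LemmaCentralisersOfPowers}, which tells us that for a stable element $g$ and any nonzero $n$ we have $C(g)=C(g^n)$, and likewise $C(h)=C(h^m)$. Now $g^n$ and $h^m$ commuting means $h^m\in C(g^n)=C(g)$, so $h^m$ centralises $g$; equivalently $g\in C(h^m)=C(h)$. Hence $g$ centralises $h$, i.e. $g$ and $h$ commute, and therefore $\langle g\rangle$ and $\langle h\rangle$ commute. This is essentially a two-line deduction once Lemma \ref{LemmaCentralisersOfPowers} is in hand, so the ``main obstacle'' is really just making sure the notion of ``commute'' for subgroups is unpacked correctly (two subgroups $H_1,H_2$ commute iff every element of $H_1$ commutes with every element of $H_2$, which for cyclic $H_i=\langle g\rangle,\langle h\rangle$ reduces to $[g,h]=1$).

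One subtlety worth a sentence in the write-up: a finite-index subgroup of $\langle g\rangle\cong\mathbb{Z}$ need not a priori be generated by a power of $g$ if one allowed torsion, but $\langle g\rangle$ is infinite cyclic so every nontrivial subgroup is of the form $\langle g^n\rangle$; and a finite-index subgroup is automatically nontrivial (the trivial subgroup has infinite index in $\mathbb{Z}$), so this is fine. With that observed, the argument is complete and no further case analysis on the type $T$, $D$, or $E$ of the stable elements is needed, since Lemma \ref{LemmaCentralisersOfPowers} already handles all three cases uniformly.
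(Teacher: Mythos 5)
Your proof is correct and follows exactly the same route as the paper's: the trivial forward direction, then using Lemma~\ref{LemmaCentralisersOfPowers} to upgrade $h^m\in C(g^n)=C(g)$ to $g\in C(h^m)=C(h)$. The small remark about finite-index subgroups of $\mathbb{Z}$ being $\langle g^n\rangle$ is a harmless addition that the paper takes as evident.
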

	
	\begin{proof}
		The “only if” is trivial, so we prove the “if”. We call the stable $\mathbb{Z}$-subgroups $G$ and $H$. Recall that there are two elements $g, h \in A_{\Gamma}$ such that $G = \langle g \rangle$ and $H = \langle h \rangle$. By hypothesis, there are subgroups $\langle g^n \rangle$ and  $\langle h^m \rangle$ that commute, for some $n, m \neq 0$. In particular, $h^m \in C(g^n)$. By Lemma \ref{LemmaCentralisersOfPowers}, we have $C(g) = C(g^n)$, so $h^m$ actually commutes with $g$. By Lemma \ref{LemmaCentralisersOfPowers} again, we have $C(h) = C(h^m)$, so $g$ and $h$ must commute.
	\end{proof}
	
	\begin{lemma} \label{LemmaCommutingStable}
		If $G$ and $H$ are two distinct stable $\mathbb{Z}$-subgroups, then $G \cap H = \{1\}$. If additionally $G$ and $H$ commute, then $\langle G, H \rangle \cong \mathbb{Z}^2$.
	\end{lemma}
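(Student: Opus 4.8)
The plan is to prove the two assertions of Lemma~\ref{LemmaCommutingStable} in turn, using the structure of centralisers from Proposition~\ref{PropositionCentralisers} together with the description of fixed sets in the Deligne complex.

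\textbf{First claim: $G \cap H = \{1\}$.} Suppose for contradiction that $G = \langle g \rangle$ and $H = \langle h \rangle$ are distinct stable $\mathbb{Z}$-subgroups with $G \cap H \neq \{1\}$. Since $G$ and $H$ are infinite cyclic, $G \cap H$ is a non-trivial subgroup of each, hence of finite index in each. Thus $G$ and $H$ are commensurable stable $\mathbb{Z}$-subgroups, so by Lemma~\ref{lemcommensurableequal} we get $G = H$, contradicting the assumption that they are distinct. This settles the first statement.

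\textbf{Second claim: if $G$ and $H$ commute then $\langle G, H \rangle \cong \mathbb{Z}^2$.} Since $G$ and $H$ commute, $\langle G, H \rangle$ is an abelian group generated by two elements $g$ and $h$, hence a quotient of $\mathbb{Z}^2$; it is therefore either $\mathbb{Z}^2$, or $\mathbb{Z}$, or finite. It cannot be finite since it contains the infinite subgroup $G$. If it were infinite cyclic, then both $G$ and $H$ would be finite-index subgroups of $\langle G, H \rangle$, hence commensurable, hence equal by Lemma~\ref{lemcommensurableequal} — contradicting distinctness. Therefore $\langle G, H \rangle \cong \mathbb{Z}^2$.

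I expect no serious obstacle here: both parts reduce immediately to the already-established Lemma~\ref{lemcommensurableequal} via the elementary observation that two commensurable infinite cyclic subgroups of an ambient group, or two infinite cyclic subgroups sharing a non-trivial element, are forced to be commensurable. The only point requiring a moment's care is ruling out $\langle G, H\rangle \cong \mathbb{Z}$ in the second part, which again is handled by commensurability and Lemma~\ref{lemcommensurableequal}. One could alternatively argue the second claim geometrically — a commuting pair $g,h$ with $g$ elliptic forces $h$ to stabilise $\fix(g)$ by Lemma~\ref{LemmaCommuteStabilises}, and analysing the action on $\fix(g)$ (a tree, or a vertex, or an axis in the exotic case) pins down $\langle g, h\rangle$ — but the purely algebraic route via Lemma~\ref{lemcommensurableequal} is cleaner and avoids a case split on the type $T$/$D$/$E$.
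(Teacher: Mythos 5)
Your first claim is word-for-word the paper's argument: a non-trivial intersection of two $\mathbb{Z}$-subgroups is finite index in each, so $G$ and $H$ are commensurable, hence equal by Lemma~\ref{lemcommensurableequal}. For the second claim the paper just writes ``clear,'' so you filled in detail, but your enumeration of quotients of $\mathbb{Z}^2$ is incomplete: you list $\mathbb{Z}^2$, $\mathbb{Z}$, and finite, omitting $\mathbb{Z}\oplus (\text{finite nontrivial})$. That case must be ruled out by noting that Artin groups of dimension two are torsion-free, which you never invoke. The gap is easily closed, but the cleanest route avoids it entirely: once $G\cap H=\{1\}$ is established and $G,H$ commute, the multiplication map $G\times H\to\langle G,H\rangle$ is an isomorphism, giving $\langle G,H\rangle\cong\mathbb{Z}\times\mathbb{Z}=\mathbb{Z}^2$ in one line and with no quotient case analysis; this is almost certainly what ``clear'' means in the paper.
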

	
	\begin{proof}
		If $G \cap H$ is strictly bigger than $\{1\}$, then it has finite index in both $G$ and $H$. In particular, both subgroups are commensurable, and hence equal by Lemma \ref{lemcommensurableequal}. The second point is clear.
	\end{proof}
	
	\begin{definition} \label{DefIntersectionGraphII} [Intersection graph: definition II]
		The \emph{intersection graph} $I_{\Gamma}$ of $A_{\Gamma}$ is the graph whose vertices are the centralisers $C(g)$ that are virtually $\mathbb{Z} \times F_{\geq 2}$, and two vertices $C(g)$ and $C(h)$ are adjacent if $C(g) \cap C(h) \cong \mathbb{Z}^2$.
	\end{definition}
	
	\begin{lemma} \label{LemmaTwoDefinitionsCoincide}
		The two definitions of the intersection graph $I_{\Gamma}$ coincide.
	\end{lemma}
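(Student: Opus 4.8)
The plan is to prove that the assignment sending a stable $\mathbb{Z}$-subgroup $\langle g\rangle$ of $A_\Gamma$ to its centraliser $C(g)=C(\langle g\rangle)$ is an isomorphism from the graph of Definition~\ref{DefIntersectionGraphI} to the graph of Definition~\ref{DefIntersectionGraphII}. On vertices this map is independent of the chosen generator since $C(g)=C(g^{-1})$, and it lands in the correct set: for stable $g$, $C(g)$ is not virtually abelian by Corollary~\ref{CoroCNVA}, and since $C(g)$ is always virtually $\langle g\rangle\times F$ with $F$ a free group (the structure result recalled before Proposition~\ref{PropositionCentralisers}), the factor $F$ must be non-abelian, so $C(g)$ is virtually $\mathbb{Z}\times F_{\geq 2}$. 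It is surjective: if $C(g)$ is virtually $\mathbb{Z}\times F_{\geq 2}$, then $\langle g\rangle\cong\mathbb{Z}$ has non-virtually-abelian centraliser, so by Corollary~\ref{CoroCNVA} it lies in some stable $\langle g_0\rangle$, and writing $g=g_0^k$ with $k\neq 0$ we get $C(g_0)=C(g_0^k)=C(g)$ by Lemma~\ref{LemmaCentralisersOfPowers}. It is injective: if $C(g)=C(h)$ with $\langle g\rangle,\langle h\rangle$ stable, then $g\in C(g)=C(h)$ forces $[g,h]=1$, so by Lemma~\ref{LemmaCommutingStable} either $\langle g\rangle=\langle h\rangle$ or $\langle g,h\rangle\cong\mathbb{Z}^2$; in the second case $h$ commutes with all of $C(h)=C(g)$, so $\langle g,h\rangle\leq Z(C(g))$, contradicting that $Z(C(g))$ is virtually cyclic (indeed one can read off $Z(C(g))=\langle g\rangle$ directly from Proposition~\ref{PropositionCentralisers} and Theorem~\ref{theoremexotic}).

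Next I would show the bijection preserves adjacency, i.e.\ that for distinct stable subgroups $\langle g\rangle\neq\langle h\rangle$ one has $[g,h]=1$ if and only if $C(g)\cap C(h)\cong\mathbb{Z}^2$. For the forward implication, assume $[g,h]=1$. By Lemma~\ref{LemmaCommutingStable} we have $\langle g,h\rangle\cong\mathbb{Z}^2$, and this copy of $\mathbb{Z}^2$ is contained in $C(g)\cap C(h)$ since $g,h$ both lie there. It remains to see the intersection is no larger. As $h\in C(g)$ we have $C(g)\cap C(h)=C_{C(g)}(h)$, and $h\notin\langle g\rangle=Z(C(g))$ (otherwise $\langle g\rangle,\langle h\rangle$ would be commensurable, hence equal by Lemma~\ref{lemcommensurableequal}), so $h$ is a non-central element of $C(g)$. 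Now invoke Proposition~\ref{PropositionCentralisers}: up to conjugation $C(g)$ is either $\langle a\rangle\times F_{\geq 2}$, in which case $C_{C(g)}(h)$ is the product of $\langle a\rangle$ with the centraliser in $F$ of the projection of $h$, a cyclic group, hence $\cong\mathbb{Z}^2$; or $C(g)$ is a (possibly exotic) dihedral Artin group with infinite cyclic centre $\langle g\rangle$ and virtually free quotient $C(g)/\langle g\rangle$, in which case $h$ has infinite order modulo the centre (else a power of $h$ lies in $\langle g\rangle$ and $\langle g,h\rangle$ would have rank $1$), so $C_{C(g)}(h)/\langle g\rangle$ embeds in a virtually free group as a subgroup containing a central infinite-order element and is therefore virtually cyclic; hence $C_{C(g)}(h)$ is a torsion-free, virtually-$\mathbb{Z}^2$ group containing the central subgroup $\langle g,h\rangle\cong\mathbb{Z}^2$, which forces $C_{C(g)}(h)\cong\mathbb{Z}^2$ (a Klein-bottle group has cyclic centre).

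For the backward implication, suppose $K:=C(g)\cap C(h)=C(\langle g,h\rangle)\cong\mathbb{Z}^2$. Every element of $K$ commutes with $g$ and with $h$, so $g,h\in C(K)=C(\mathbb{Z}^2)$, and it suffices to show that the centraliser of any $\mathbb{Z}^2$-subgroup $\langle z_1,z_2\rangle$ of $A_\Gamma$ is abelian. We have $C(\langle z_1,z_2\rangle)=C_{C(z_1)}(z_2)$. If $C(z_1)$ is virtually abelian it is $\cong\mathbb{Z}$ or $\mathbb{Z}^2$, hence abelian. Otherwise, by Corollary~\ref{CoroCNVA} and Lemma~\ref{LemmaCentralisersOfPowers}, $C(z_1)=C(z_1')$ for a stable element $z_1'$, so $Z(C(z_1))=\langle z_1'\rangle$; since $\langle z_1,z_2\rangle\cong\mathbb{Z}^2$ we cannot have $z_2\in Z(C(z_1))$, so $z_2$ is a non-central element of $C(z_1)$, and the computation of the previous paragraph gives $C_{C(z_1)}(z_2)\cong\mathbb{Z}^2$, which is abelian. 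Hence $g$ and $h$ commute, completing the proof that the two models of $I_\Gamma$ coincide as graphs.

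I expect the edge correspondence to be the main obstacle, and within it the delicate point is showing that $C(g)\cap C(h)$ is \emph{exactly} $\mathbb{Z}^2$ rather than something larger. This is precisely where one must unpack the explicit centraliser structure of Proposition~\ref{PropositionCentralisers} and use standard facts about dihedral Artin groups — that the quotient by the centre is virtually free, that centralisers of non-central elements are small, and that torsion-freeness forces the relevant central extensions to split as $\mathbb{Z}^2$. The backward implication additionally rests on the observation that centralisers of $\mathbb{Z}^2$-subgroups of $A_\Gamma$ are abelian, which reduces to the same structural input. Everything else — the vertex bijection and the bookkeeping with Lemmas~\ref{lemcommensurableequal}, \ref{LemmaCentralisersOfPowers} and \ref{LemmaCommutingStable} — is routine.
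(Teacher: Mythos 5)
Your proof is correct and rests on the same overall strategy as the paper's — the assignment $\langle g\rangle \mapsto C(g)$, well-defined by Lemma~\ref{LemmaCentralisersOfPowers}, with adjacency checked against the explicit centraliser structure of Proposition~\ref{PropositionCentralisers}. The differences are tactical. For the forward implication in the dihedral/exotic-dihedral case, where the paper simply invokes ``the classification of centralisers in dihedral Artin groups'' to conclude $C_{C(G)}(h)$ is $\mathbb{Z}$, $\mathbb{Z}^2$ or all of $C(G)$, you instead argue from first principles: $h$ has infinite order modulo the centre, a subgroup of a virtually free group with a central element of infinite order is virtually cyclic, and a torsion-free virtually-$\mathbb{Z}^2$ group with centre of rank two must be $\mathbb{Z}^2$ (ruling out the Klein bottle group). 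This is more self-contained at the cost of some extra abstract group theory. For the backward implication, you deviate more: you show $g,h \in C(K)$ for $K := C(g)\cap C(h)\cong\mathbb{Z}^2$ and then establish the general fact that centralisers of $\mathbb{Z}^2$-subgroups in $A_\Gamma$ are abelian, recycling the forward computation. The paper's route is shorter — it observes that any $\mathbb{Z}^2$ inside a centraliser of the form $\langle g\rangle\times F$ (up to finite index) has one factor meeting the centre $\langle g\rangle$, so $K$ contains nontrivial powers $g^n$ and $h^m$, and then applies Lemma~\ref{LemmaCommuteIIFVirtuallyCommute}. Both routes are valid; yours works a bit harder in the backward direction but avoids the ``factor meets the centre'' structural observation, while the paper's argument is cleaner there once that observation is in hand.
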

	
	\begin{proof}
		For the purpose of this proof, we call $I_{\Gamma}$ the intersection graph (definition I) and $J_{\Gamma}$ the intersection graph (definition II). The isomorphism $\Lambda : I_{\Gamma} \rightarrow J_{\Gamma}$ is given by
		$$\Lambda : G \mapsto C(G), \ \ \ \Lambda^{-1} : C(g) \mapsto \langle g \rangle$$
		We first show that $\Lambda$ and $\Lambda^{-1}$ are indeed inverse of each other's. Let $G \in I_{\Gamma}$. Then there is a stable element $g$ such that $G = \langle g \rangle$. We obtain
		$$\Lambda^{-1} (\Lambda(G)) = \Lambda^{-1} (C(G)) = \Lambda^{-1} (C(\langle g \rangle)) \overset{\ref{LemmaCentralisersOfPowers}} = \Lambda^{-1} (C(g)) = \langle g \rangle = G.$$
		Conversely,
		$$\Lambda (\Lambda^{-1} (C(g))) = \Lambda (\langle g \rangle) = C(\langle g \rangle) \overset{\ref{LemmaCentralisersOfPowers}} = C(g).$$
		Now suppose that $G, H \in I_{\Gamma}$ are adjacent. Then $G$ and $H$ are distinct yet commute. In particular, $C(G)$ and $C(H)$ both contain $\langle G, H \rangle$, which is isomorphic to $\mathbb{Z}^2$ by Lemma \ref{LemmaCommutingStable}. We want to show that $C(G) \cap C(H)$ is exactly $\mathbb{Z}^2$.
		
		By Proposition \ref{PropositionCentralisers}, we know that each of $C(G)$ or $C(H)$ is virtually $\mathbb{Z} \times F_{\geq 2}$. Notice that $C(G) \cap C(H) = C_{C(G)}(C(H)) = C_{C(G)}(C(h))$, where $H = \langle h \rangle$.
		
		If one of $C(G)$ or $C(H)$, say $C(G)$ is abstractly isomorphic to a dihedral Artin group, then we can use the classification of centralisers in dihedral Artin groups (see \cite[Remark 3.6]{martin2023characterising} or \cite[Proposition 4.2]{jones2024fixed}) to deduce that $C_{C(G)}(C(h))$ is isomorphic to one of $\mathbb{Z}$, $\mathbb{Z}^2$ or $C(G)$ itself. The first and the third case cannot happen as it would require $G$ and $H$ to intersect non-trivially. Thus $C(G) \cap C(H) = C_{C(G)}(C(h)) \cong \mathbb{Z}^2$.
		
		If none of $C(G)$ or $C(H)$ is a dihedral Artin group, then both take the form $\mathbb{Z} \times F_{\geq 2}$. Since $h \in C(G) = G \times F$, we can write $h = (h_1, h_2)$ with $h_1 \in G$ and $h_2 \in F$. Let now $k \in C(G) \cap C(H)$. Because $k \in C(G)$, we can write $k = (k_1, k_2)$ with $k_1 \in G$ and $k_2 \in F$. Because $k \in C(H)$, we know that $hk = kh$, so in particular, $h_2 k_2 = k_2 h_2$. This forces $h_2$ and $k_2$ to belong to a common maximal $\mathbb{Z}$-subgroup $K$ of $F$. This shows that $C(G) \cap C(H)$ is contained in $G \times K \cong \mathbb{Z}^2$, as wanted.
		
		We conclude that $\Lambda(G)$ and $\Lambda(H)$ are adjacent in $J_{\Gamma}$.
		\medskip
		
		Conversely, if $C(g), C(h) \in J_{\Gamma}$ are adjacent, then $C(g) \cap C(h) \cong \mathbb{Z}^2$. Note that any $\mathbb{Z}^2$-subgroup of a centraliser that is non-virtually abelian is always such that one of its two factors is contained in the centre of the centraliser, because such centralisers always split as a product $\mathbb{Z} \times F$ where the first factor is precisely their centre. In particular, $C(g) \cap C(h)$ contains both $\langle g^n \rangle$ and $\langle h^m \rangle$ for some $n, m \neq 0$. This means $g^n$ commute with $h^m$, and consequently, $g$ commute with $h$ (use Lemma \ref{LemmaCommuteIIFVirtuallyCommute}). Finally, $G = \langle g \rangle$ and $H = \langle h \rangle$ are adjacent in $I_{\Gamma}$.
	\end{proof}
	
	\begin{lemma} \label{LemmaDisjointStandardTreesDontCommute}
		Let $T_g$ and $T_h$ be two standard trees in $D_{\Gamma}$, with respective local groups $G = \langle g \rangle$ and $H = \langle h \rangle$. If $T_g \cap T_h = \emptyset$, then $g$ and $h$ do not commute.
	\end{lemma}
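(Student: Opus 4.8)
The plan is to argue by contradiction: assume $g$ and $h$ commute and produce a point lying in $T_g\cap T_h$. The essential input is Lemma~\ref{LemmaCommuteStabilises}: an element commuting with an elliptic element must stabilise that element's fixed set.

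First I would record the elementary facts. Since $T_g$ (resp.\ $T_h$) is a standard tree with local group $\langle g\rangle$ (resp.\ $\langle h\rangle$), the generator $g$ (resp.\ $h$) is conjugate to a standard generator, hence is of type $1$, hence acts elliptically on $D_\Gamma$ by Lemma~\ref{LemmaCrisp}, with $\fix(g)=T_g$ and $\fix(h)=T_h$. Applying Lemma~\ref{LemmaCommuteStabilises} to the commuting pair $(g,h)$, with $g$ in the role of the elliptic element, yields that $h$ stabilises $\fix(g)=T_g$ setwise.

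Now the heart of the argument: I want to find a point of $T_g$ fixed by $h$, since such a point automatically lies in $\fix(h)=T_h$, contradicting $T_g\cap T_h=\emptyset$. As $h$ is elliptic it fixes some $p\in D_\Gamma$, so for every $n$ we have $d(x,h^{n}x)\le 2\,d(x,p)$ and hence every $\langle h\rangle$-orbit is bounded; since $h$ stabilises $T_g$, the $\langle h\rangle$-orbit of any point of $T_g$ is a bounded subset of $T_g$. Because $T_g$ is a closed convex subcomplex (Remark~\ref{RemStandardTrees}) of the CAT(0) complex $D_\Gamma$ (Theorem~\ref{Thm:DeligneCAT(0)}), which is complete as it has finitely many shapes, $T_g$ is itself complete CAT(0); so the Bruhat--Tits (Cartan) fixed point theorem applies to the bounded $\langle h\rangle$-orbit and gives a point $x_0\in T_g$ fixed by $h$. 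Then $x_0\in\fix(h)=T_h$, so $x_0\in T_g\cap T_h=\emptyset$, a contradiction.

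The one step requiring care — and thus the main obstacle, though a mild one in this case — is the passage from ``$h$ is elliptic on $D_\Gamma$ and stabilises $T_g$'' to ``$h$ fixes a point of $T_g$'': one must not conflate being elliptic on the ambient complex with being elliptic on an invariant subcomplex, and the clean way around this is the bounded-orbit argument above (equivalently: $h$ cannot translate along an axis contained in $T_g$, for that axis would be a geodesic of $D_\Gamma$ and the orbits of $h$ would be unbounded, contradicting ellipticity). Everything else is bookkeeping with the notions of standard tree, local group, and type of an element.
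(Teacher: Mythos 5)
Your proof is correct, and it takes a genuinely different route from the paper's. Both arguments begin the same way: invoke Lemma~\ref{LemmaCommuteStabilises} to deduce that $h$ stabilises $T_g$. The paper then picks a pair $(x,y)\in T_g\times T_h$ realising the minimum distance between the two trees, notes that $h\cdot x\in T_g$ while $h\cdot y=y$, invokes a lemma of Martin (\cite[Lemma 3.8]{martin2024tits}) to conclude that the angle at $y$ between $[x,y]$ and $[y,h\cdot x]$ equals $\pi$, so that $[x,y]\cup[y,h\cdot x]$ is a geodesic in $D_\Gamma$; since it joins two points of the convex set $T_g$ yet passes through $y\notin T_g$, this is a contradiction. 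You instead construct a fixed point of $h$ inside $T_g$ directly: $h$ is elliptic so its orbits are bounded, $T_g$ is a complete CAT(0) $h$-invariant subspace, so the Bruhat--Tits fixed point theorem gives $x_0\in T_g$ with $h\cdot x_0=x_0$, hence $x_0\in\fix(h)=T_h$, contradicting $T_g\cap T_h=\emptyset$. Structurally the two arguments are parallel (stabiliser lemma plus convexity of $T_g$), but they differ in the concluding step: the paper uses a concrete angle/nearest-point computation that depends on an external lemma, while yours uses the standard Bruhat--Tits fixed-point theorem as a black box. Your version is more self-contained relative to the cited literature; the paper's is more elementary in that it avoids invoking a fixed-point theorem and produces an explicit geodesic violation. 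Both are complete and correct.
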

	
	\begin{proof}
		Let $(x, y) \in T_g \times T_h$ be a couple of points minimising the distance between $T_g$ and $T_h$. We suppose that $g$ and $h$ commute, so that $h$ stabilises $T_g$ by Lemma \ref{LemmaCommuteStabilises}. Consequently, the point $h \cdot x$ also belongs to $T_g$. Note that $h \cdot y = y$ by construction. By \cite[Lemma 3.8]{martin2024tits}, the angle that the geodesics $[x, y]$ and $[y, g \cdot x]$ make at $y$ is exactly $\pi$. It follows that the union $[x, y] \cup [y, g \cdot x]$ is a geodesic of $D_{\Gamma}$. As it connects two points of $T_g$ and $T_g$ is convex (Remark \ref{RemStandardTrees}.(1)), it must be that the whole geodesic is contained in $T_g$. This contradicts $y \notin T_g$, which shows that $g$ and $h$ in fact cannot commute.
	\end{proof}
	
	\begin{lemma} \label{LemmaPossibleEdges}
		The only possible edges in $I_{\Gamma}$ are of the form $T-D$, $T-E$ or $E-E$.
	\end{lemma}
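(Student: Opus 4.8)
Since every edge of $I_\Gamma$ joins two stable $\mathbb Z$-subgroups, each of type $T$, $D$ or $E$, it suffices to rule out edges of type $T$--$T$, $D$--$D$ and $D$--$E$. So suppose $\langle g\rangle$ and $\langle h\rangle$ are adjacent vertices of $I_\Gamma$; then $g$ and $h$ commute, $\langle g\rangle\ne\langle h\rangle$, and by Lemma~\ref{LemmaCommutingStable} we have $\langle g,h\rangle\cong\mathbb Z^2$. I will use repeatedly that a stable element of type $T$ acts elliptically on $D_\Gamma$ with $\fix$ equal to a standard tree, that a stable element of type $D$ acts elliptically on $D_\Gamma$ with $\fix$ equal to a single type $2$ vertex (Lemma~\ref{LemmaCrisp} and Remark~\ref{RemStandardTrees}, via Remark~\ref{RemType}), that a stable element of type $E$ acts hyperbolically on $D_\Gamma$ (as recalled in the proof of Lemma~\ref{lemcommensurableequal}), and that whenever $x$ is elliptic and $y$ commutes with $x$, then $y$ stabilises $\fix(x)$ (Lemma~\ref{LemmaCommuteStabilises}).

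\textbf{The cases involving type $D$.} Suppose $\langle g\rangle$ has type $D$ and $\langle h\rangle$ has type $D$ or $E$. Then $g$ is elliptic with $\fix(g)=\{v_g\}$ a single vertex, and $h$ stabilises $\fix(g)=\{v_g\}$, hence $h$ fixes $v_g$. If $\langle h\rangle$ has type $E$, this contradicts the hyperbolicity of $h$ on $D_\Gamma$. If $\langle h\rangle$ has type $D$, then symmetrically $g$ fixes $v_h$, so $v_h\in\fix(g)=\{v_g\}$, i.e.\ $v_g=v_h$; but then $g$ and $h$ both generate the centre of the dihedral Artin parabolic $\mathrm{Stab}(v_g)$, which is infinite cyclic, forcing $\langle g\rangle=\langle h\rangle$ --- a contradiction. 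Hence no $D$--$E$ or $D$--$D$ edge exists.

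\textbf{The case $T$--$T$: reduction to a dihedral parabolic.} Now suppose $\langle g\rangle$ and $\langle h\rangle$ both have type $T$, with standard trees $T_g=\fix(g)$ and $T_h=\fix(h)$. By Lemma~\ref{LemmaDisjointStandardTreesDontCommute} (contrapositive) we have $T_g\cap T_h\ne\emptyset$, and since this is a non-empty subcomplex it contains a vertex $v$, which is fixed by both $g$ and $h$, hence by $\langle g,h\rangle$. Thus $\mathbb Z^2\cong\langle g,h\rangle\le\mathrm{Stab}(v)$; as the stabiliser of a type $0$, $1$ or $2$ vertex of $D_\Gamma$ is, respectively, trivial, infinite cyclic, or a conjugate of a dihedral Artin parabolic $A_{ab}$ with $m_{ab}\ge 3$, the vertex $v$ must be of type $2$ and $P:=\mathrm{Stab}(v)$ is such a conjugate. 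Now $g,h\in P$ are conjugates in $A_\Gamma$ of standard generators, hence primitive, hence primitive in $P$; neither lies in $Z(P)$ (otherwise it would be commensurable with a type $D$ generator, contradicting Lemma~\ref{lemcommensurableequal}); so by the classification of centralisers and of conjugacy classes of primitive elements in dihedral Artin groups (\cite[Remark~3.6]{martin2023characterising}, \cite[Proposition~4.2]{jones2024fixed}; one may alternatively invoke that parabolics contained in $P$ are parabolics of $P$, cf.\ \cite{cumplido2022parabolic}), each of $g$ and $h$ is conjugate inside $P$ to a standard generator of $P$, and $C_P(g)=\langle g\rangle\times Z(P)\cong\mathbb Z^2$, $C_P(h)=\langle h\rangle\times Z(P)\cong\mathbb Z^2$. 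Since $h$ commutes with $g$ we get $\langle g,h\rangle\le C_P(g)$, so $C_P(g)$ is an abelian group containing $h$, whence $C_P(g)=C_P(h)$; writing $z$ for a generator of $Z(P)$, both $\{g,z\}$ and $\{h,z\}$ are $\mathbb Z$-bases of this rank $2$ free abelian group, which forces $h=g^{\pm1}z^{q}$ for some $q\in\mathbb Z$.

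\textbf{The case $T$--$T$: conclusion.} It remains to see $q=0$ and that the sign is $+$, which gives $h=g$, contradicting $\langle g\rangle\ne\langle h\rangle$. This is a direct computation in the abelianisation of the dihedral Artin group $P\cong A_m$ with $m=m_{ab}\ge 3$: every conjugate of a standard generator has image in $\{[a],[b]\}$, while $z$ has image $\tfrac{m}{2}([a]+[b])$ if $m$ is even and $2m[a]$ if $m$ is odd (where $[a]=[b]$); comparing the image of $h=g^{\pm1}z^{q}$ with that of a standard generator and using $m\ge 3$ leaves only the solution with sign $+$ and $q=0$. This abelianisation step is the one place where large-type ($m\ge 3$) is genuinely needed, and setting up this last case --- producing the common fixed vertex of $\langle g,h\rangle$, checking it has type $2$, and passing to centralisers inside the dihedral parabolic --- is the main obstacle; the $D$--$D$ and $D$--$E$ cases are essentially immediate from the rigidity of the fixed sets of type $D$ and type $E$ elements.
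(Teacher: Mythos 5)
Your argument is correct, and for the hardest case ($T$--$T$) it is a genuinely different (and more self-contained) argument than the paper's. The $D$--$D$ and $D$--$E$ cases are handled essentially as in the paper, via the rigidity of elliptic fixed sets and the incompatibility of an elliptic type-2 element commuting with a hyperbolic one. For $T$--$T$, the paper also begins with Lemma~\ref{LemmaDisjointStandardTreesDontCommute} to rule out disjoint standard trees, but then disposes of the intersecting case by citing \cite[Lemma 2.7]{blufstein2024homomorphisms} as a black box. You instead locate a vertex $v$ in $T_g\cap T_h$ fixed by $\langle g,h\rangle\cong\mathbb Z^2$, argue from the vertex-stabiliser structure of $D_\Gamma$ that $v$ has type 2, pass to the dihedral Artin parabolic $P=\mathrm{Stab}(v)$, use the nesting property of parabolics (\cite{blufstein2023parabolic}) and the centraliser classification to get $h\in\langle g\rangle\times Z(P)\cong\mathbb Z^2$ with $\{g,z\}$ and $\{h,z\}$ both bases, and finish by a short abelianisation computation in $P$ that forces $q=0$ for $m\geq 3$. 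This replaces an external citation by a hands-on argument inside a dihedral Artin group, and makes transparent exactly where the large-type hypothesis enters; the trade-off is that it is longer and it does rely on the parabolic-nesting and centraliser-classification inputs. Two small imprecisions worth noting: (i) when you pass through abelianisation you should track that $g$ and $h$ may each be conjugate in $P$ to the \emph{inverse} of a standard generator, so the possible images are $\pm[a],\pm[b]$ — this does not affect the conclusion $q=0$, and in fact once $q=0$ you do not need to pin down the sign, since $\langle g^{-1}\rangle=\langle g\rangle$ already yields the contradiction; (ii) the claim $C_P(g)=\langle g\rangle\times Z(P)$ (direct product, not just product of subgroups) implicitly uses $\langle g\rangle\cap Z(P)=\{1\}$, which one can justify by comparing $\fix(g^n)=T_g$ to $\fix(z^k)=\{v\}$.
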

	
	\begin{proof}
		We need to show that there can't be edges of the form $T-T$, $D-D$ or $D-E$.
		\medskip
		
		\noindent Suppose that $G, H \in I_{\Gamma}$ are both of type $T$ yet adjacent vertices, and write $G = \langle g \rangle$, $H = \langle h \rangle$. By hypothesis, $G$ and $H$ commute (and thus, so do $g$ and $h$). In particular, $g$ stabilises the standard tree $Fix(H)$ by Lemma \ref{LemmaCommuteStabilises}. We now exhibit a contradiction. Indeed, it can't be that $Fix(G)$ and $Fix(H)$ are disjoint as $g$ and $h$ commute (use Lemma \ref{LemmaDisjointStandardTreesDontCommute}), but it also can't be that $Fix(G)$ and $Fix(H)$ intersect, as \cite[Lemma 2.7]{blufstein2024homomorphisms} would then prove that we can't have $\langle G, H \rangle \cong \mathbb{Z}^2$.
		\medskip
		
		\noindent Suppose that $G, H \in I_{\Gamma}$ are both of type $D$ yet adjacent vertices, and write $G = \langle g \rangle$, $H = \langle h \rangle$. As before, since $g$ and $h$ commute, then $g$ stabilises the type $2$ vertex $Fix(H)$ by Lemma \ref{LemmaCommuteStabilises}. This forces $Fix(G) = Fix(H)$, which then forces $G = H$, giving a contradiction.
		\medskip
		
		\noindent Suppose that $G, H \in I_{\Gamma}$ are such that $G$ is type $D$ and $H$ is type $E$, and write $G = \langle g \rangle$, $H = \langle h \rangle$. Once again, $g$ and $h$ commute, which gives a contradiction as elements of type $2$ such as $g$ cannot commute with hyperbolic elements such as $h$ (see \cite[Lemma 2.29]{vaskou2023isomorphism}).
	\end{proof}
	
	\begin{definition}
		We define $I_{\Gamma}^{TD}$ to be the subgraph of $I_{\Gamma}$ spanned by the vertices of type $T$ and $D$.
	\end{definition}
	
	We already know that $I_{\Gamma}^{TD}$ can be defined in two equivalent ways (see Definition \ref{DefIntersectionGraphI}, Definition \ref{DefIntersectionGraphII} and Lemma \ref{LemmaTwoDefinitionsCoincide}). Thereafter we give yet an equivalent definition of $I_{\Gamma}^{TD}$, that will be used extensively in Section \ref{SectionIntersectionSubgraph}. The following definition is actually equivalent to Crisp's $\Theta$-graph \cite{crisp2005automorphisms}. This is because Crisp considered large-type triangle-free Artin groups, which have no exotic dihedral Artin subgroups. In particular, their intersection graphs have no vertices of type $E$, in which case $I_{\Gamma}^{TD} = I_{\Gamma}$.
	
	\begin{definition} \label{DefIntersectionGraphIII} [Intersection subgraph $I_{\Gamma}^{TD}$: definition III]
		The graph $I_{\Gamma}^{TD}$ is the graph whose vertices are the set $\mathcal{T}$ of unbounded standard trees in $D_{\Gamma}$ and the set $\mathcal{D}$ of type $2$ vertices in $D_{\Gamma}$, and there is an edge between $t \in \mathcal{T}$ and $v \in \mathcal{D}$ if and only if $v \in t$ in $D_{\Gamma}$.
	\end{definition}
	
	\begin{lemma} \label{LemmaThreeDefinitionsCoincide}
		The three definitions of the intersection subgraph $I_{\Gamma}^{TD}$ coincide.
	\end{lemma}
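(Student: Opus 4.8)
The plan is to keep Definitions~I and~II bundled together via Lemma~\ref{LemmaTwoDefinitionsCoincide} and to concentrate on matching Definition~\ref{DefIntersectionGraphIII} with Definition~\ref{DefIntersectionGraphI}. Write $\mathcal{V}^{TD}$ for the set of stable $\mathbb{Z}$-subgroups of $A_\Gamma$ of type $T$ or $D$, which is the vertex set of $I_\Gamma^{TD}$ in Definition~\ref{DefIntersectionGraphI}, and recall that in Definition~\ref{DefIntersectionGraphIII} the vertices are the unbounded standard trees $\mathcal{T}$ together with the type $2$ vertices $\mathcal{D}$ of $D_\Gamma$. I would define $\Phi\colon \mathcal{V}^{TD}\to \mathcal{T}\sqcup\mathcal{D}$ by $\Phi(\langle g\rangle)=\fix(g)$. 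A type $D$ element is elliptic by Remark~\ref{RemStandardTrees}(2) (its parabolic closure is a dihedral Artin parabolic $hA_{ab}h^{-1}$ with $m_{ab}<\infty$), and a type $T$ element is elliptic of type $1$; so Lemma~\ref{LemmaCrisp} applies and shows that $\fix(g)$ is a standard tree when $g$ is of type $T$ and a single type $2$ vertex when $g$ is of type $D$. Using Remark~\ref{RemStandardTrees}(3) together with the characterisation of large generators preceding Definition~\ref{DefIntersectionGraphI}, the standard tree attached to a type $T$ stable subgroup is exactly an unbounded one, and conversely every unbounded standard tree, being of the form $h\fix(a)$ with $a$ large, has pointwise stabiliser $h\langle a\rangle h^{-1}$, a stable $\mathbb{Z}$-subgroup of type $T$. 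Similarly every type $2$ vertex $hA_{ab}$ of $D_\Gamma$ is $\fix(g)$ for $g$ the generator of the (infinite cyclic, since $A_\Gamma$ is of large type) centre of the dihedral Artin parabolic $hA_{ab}h^{-1}$, a stable subgroup of type $D$. This produces the inverse map; injectivity in both cases is clean, since a standard tree (respectively a type $2$ vertex) determines the parabolic subgroup stabilising it, whose pointwise stabiliser (respectively centre) is the stable subgroup in question — this is essentially cases (1) and (2) of the proof of Lemma~\ref{lemcommensurableequal}, only easier. Hence $\Phi$ is a bijection of vertex sets respecting the partition into types $T$ and $D$.

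For the edges, by Lemma~\ref{LemmaPossibleEdges} every edge of $I_\Gamma^{TD}$ joins a type $T$ vertex to a type $D$ vertex, and Definition~\ref{DefIntersectionGraphIII} likewise only permits edges between $\mathcal{T}$ and $\mathcal{D}$; so it suffices to show, for $\langle g\rangle$ of type $T$ and $\langle h\rangle$ of type $D$, that $g$ and $h$ commute if and only if the type $2$ vertex $\fix(h)$ lies on the standard tree $\fix(g)$. If $g$ and $h$ commute, then since $h$ is elliptic, Lemma~\ref{LemmaCommuteStabilises} applied with $h$ in the role of the elliptic element forces $g$ to stabilise $\fix(h)$; as this is a single vertex, $g$ fixes it, i.e. $\fix(h)\in\fix(g)$. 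Conversely, if $\fix(h)\in\fix(g)$ then $g$ fixes the type $2$ vertex $\fix(h)$, which is the coset of a dihedral Artin parabolic $P$ of $A_\Gamma$; the stabiliser of that vertex in $D_\Gamma$ is $P$, and by Proposition~\ref{PropositionCentralisers}(2) $P=C(h)$ because $\langle h\rangle$ is the centre of $P$; hence $g\in C(h)$, so $g$ and $h$ commute. Combining the vertex bijection $\Phi$ with this edge correspondence gives an isomorphism between the graph of Definition~\ref{DefIntersectionGraphIII} and $I_\Gamma^{TD}$ as in Definition~\ref{DefIntersectionGraphI}, and precomposing with the isomorphism $\Lambda$ from Lemma~\ref{LemmaTwoDefinitionsCoincide} (which sends a stable $\mathbb{Z}$-subgroup to its centraliser and hence preserves type, so restricts to $I_\Gamma^{TD}$) settles Definition~\ref{DefIntersectionGraphII} as well.

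I do not expect a serious obstacle: the argument is a translation through Lemma~\ref{LemmaCrisp} between stable $\mathbb{Z}$-subgroups and their fixed sets in $D_\Gamma$. The points needing genuine care are (a) confirming that type $D$ elements are elliptic so that Lemma~\ref{LemmaCrisp} applies, (b) matching ``unbounded standard tree'' with ``conjugate of a large standard generator'' via Remark~\ref{RemStandardTrees}(3) and the definition of \emph{large}, and (c) obtaining both directions of the edge condition — here one must invoke Lemma~\ref{LemmaCommuteStabilises} with the correct element in the elliptic role, and identify the stabiliser of a type $2$ vertex of $D_\Gamma$ with the relevant parabolic subgroup and, via Proposition~\ref{PropositionCentralisers}(2), with $C(h)$.
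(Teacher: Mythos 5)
Your argument is correct and supplies exactly the "standard" argument that the paper declines to spell out: the paper's proof of this lemma simply cites Lemma~\ref{LemmaTwoDefinitionsCoincide} for the equivalence of Definitions~\ref{DefIntersectionGraphI} and~\ref{DefIntersectionGraphII} and then declares the remaining comparison with Definition~\ref{DefIntersectionGraphIII} to be routine. Your map $\langle g\rangle\mapsto\fix(g)$, justified via Lemma~\ref{LemmaCrisp}, Remark~\ref{RemStandardTrees}, Lemma~\ref{LemmaCommuteStabilises} and Proposition~\ref{PropositionCentralisers}(2), is the natural argument the authors have in mind, and each of the steps you flag for care — ellipticity of type~$D$ elements, the large--generator/unbounded--tree dictionary, and the two directions of the edge condition — is handled correctly.
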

	
	\begin{proof}
		We already know that the first two definitions coincide by Lemma \ref{LemmaTwoDefinitionsCoincide}. The rest of the argument is standard.
	\end{proof}
	
	Before finishing this section, we say a few words regarding loops in $I_{\Gamma}^{TD}$ and in $I_{\Gamma}$.
	
	\begin{definition}
		The \emph{girth} of a graph is the infimum of the length of embedded cycles.
	\end{definition}
	
	\begin{corollary} \label{CoroBipartite}
		$I_{\Gamma}^{TD}$ is a bipartite graph. In particular, its girth is even and at least $6$.
	\end{corollary}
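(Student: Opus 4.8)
The plan is to derive the second assertion from the first. For bipartiteness, recall from Definition~\ref{DefIntersectionGraphIII} (or equivalently Lemma~\ref{LemmaPossibleEdges}) that every edge of $I_{\Gamma}^{TD}$ joins a vertex of $\mathcal T$ (an unbounded standard tree in $D_\Gamma$) to a vertex of $\mathcal D$ (a type $2$ vertex of $D_\Gamma$); thus $(\mathcal T,\mathcal D)$ is a bipartition of $I_{\Gamma}^{TD}$. Since a bipartite graph has no embedded cycle of odd length, and since $I_{\Gamma}^{TD}$ is simplicial and so has no embedded cycle of length $\le 2$, every embedded cycle of $I_{\Gamma}^{TD}$ has even length $\ge 4$. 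It then remains only to rule out embedded $4$-cycles.

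Suppose $t_1-v_1-t_2-v_2-t_1$ is an embedded $4$-cycle, with $t_1\ne t_2$ in $\mathcal T$ and $v_1\ne v_2$ in $\mathcal D$; write $t_1=\fix(g)$ and $t_2=\fix(h)$, where $\langle g\rangle$ and $\langle h\rangle$ are the local groups of the two trees. By Definition~\ref{DefIntersectionGraphIII} the four edges mean exactly that $v_1,v_2\in t_1$ and $v_1,v_2\in t_2$, i.e.\ both $g$ and $h$ fix both $v_1$ and $v_2$. I would now invoke that $D_\Gamma$ is $\mathrm{CAT}(0)$ (Theorem~\ref{Thm:DeligneCAT(0)}) and that standard trees are convex (Remark~\ref{RemStandardTrees}(1)): the unique $\mathrm{CAT}(0)$ geodesic $[v_1,v_2]$ lies inside $t_1$ and inside $t_2$, and inside a tree this geodesic is the unique embedded edge-path from $v_1$ to $v_2$. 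Since $v_1,v_2$ are distinct type $2$ vertices and $D_\Gamma$ has no edge joining two type $2$ vertices (clear from Definition~\ref{def:md}), this edge-path has length $\ge 2$; let $u$ be the vertex on it adjacent to $v_1$. Then $u\in t_1\cap t_2$, the vertex $u$ has type $0$ or $1$, and $u$ is fixed by both $g$ and $h$.

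If $u$ had type $0$ its $A_\Gamma$-stabiliser would be trivial, contradicting $g\ne 1$ (indeed a standard tree contains no type $0$ vertex, for exactly this reason). Hence $u$ has type $1$, so $\operatorname{Stab}_{A_\Gamma}(u)$ is a conjugate of some $\langle c\rangle$ with $c\in V(\Gamma)$, in particular infinite cyclic. As $g$ and $h$ both lie in this group, $\langle g\rangle$ and $\langle h\rangle$ are commensurable stable $\mathbb Z$-subgroups of $A_\Gamma$, so $\langle g\rangle=\langle h\rangle$ by Lemma~\ref{lemcommensurableequal}; but then $t_1=\fix(g)=\fix(h)=t_2$, contradicting $t_1\ne t_2$. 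Hence $I_{\Gamma}^{TD}$ has no embedded $4$-cycle, so every embedded cycle has even length $\ge 6$; in particular its girth is even and at least $6$.

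The routine parts are bipartiteness and the bookkeeping of cycle lengths. The step I expect to be the main obstacle is the geometric one: securing a single vertex $u$ lying on both standard trees. Because $D_\Gamma$ is genuinely $2$-dimensional, the edge-paths from $v_1$ to $v_2$ inside $t_1$ and inside $t_2$ need not a priori coincide, and it is precisely convexity of standard trees together with uniqueness of $\mathrm{CAT}(0)$ geodesics that forces both to equal $[v_1,v_2]$ and hence to share the vertex $u$. (Alternatively, one could argue algebraically: the type $2$ parabolic subgroups fixing two distinct type $2$ vertices are not nested, so Corollary~\ref{CorollaryTypeOfIntersections} gives that their intersection, which contains $\langle g,h\rangle$, has type $\le 1$ and hence is infinite cyclic, and one concludes as before.) Once $u$ is pinned down, passing to its cyclic stabiliser and applying Lemma~\ref{lemcommensurableequal} is immediate.
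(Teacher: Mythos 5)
Your proof is correct and follows essentially the same strategy as the paper's: bipartiteness from Lemma~\ref{LemmaPossibleEdges}, and a $4$-cycle ruled out via convexity of standard trees (Remark~\ref{RemStandardTrees}.(1)) and uniqueness of geodesics in the CAT(0) complex $D_\Gamma$ (Theorem~\ref{Thm:DeligneCAT(0)}). You actually make explicit a step the paper leaves implicit in its ``this yields a contradiction'' — namely locating a common type-$1$ vertex $u$ on the shared geodesic, whose cyclic stabiliser together with Lemma~\ref{lemcommensurableequal} forces $t_1=t_2$ — so your write-up is, if anything, more complete than the paper's.
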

	
	\begin{proof}
		By Lemma \ref{LemmaPossibleEdges}, the only possible edges in $I_{\Gamma}^{TD}$ are of the form $T - D$. This means $I_{\Gamma}^{TD}$ is bipartite, and its girth is even.
		
		Suppose that $I_{\Gamma}^{TD}$ contains a $4$-cycle, which necessarily takes the form $(T - D - T - D)$. Geometrically, this means there is two distinct type $2$ vertices in $D_{\Gamma}$ with two distinct standard trees connecting them. But standard trees are convex and $D_{\Gamma}$ is CAT(0) hence uniquely geodesic (see \ref{RemStandardTrees}.(1) and Theorem \ref{Thm:DeligneCAT(0)}), so this yields a contradiction.
	\end{proof}
	
	\begin{lemma} \label{LemmaNeighoursOfExotic}
		Let $H$ be a stable $\mathbb{Z}$-subgroup of type $E$. We know that up to conjugation, $H = \langle h \rangle$ where $h = abcabc$ and $a, b, c \in V(\Gamma)$ are such that $m_{ab} = m_{ac} = m_{bc} = 3$. Then $H$, seen as a vertex of $I_{\Gamma}$, is contained in the $5$-cycle described in Figure \ref{fig5cycle}.
		
		It follows that every vertex of type $E$ is contained in a $5$-cycle, and every vertex of type $E$ has a neighbour of type $T$ and a neighbour of type $E$.
	\end{lemma}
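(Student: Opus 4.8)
The plan is to exhibit, up to conjugation, five explicit stable $\mathbb Z$-subgroups of $A_\Gamma$ that form the pentagon of Figure~\ref{fig5cycle}, all contained in the parabolic subgroup $A_{abc}\cong A_{\Delta_{333}}$, and then to verify the five adjacencies together with the fact that the five vertices are pairwise distinct.

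First I would reduce to working inside $A_{abc}$. Since $a,b,c$ have $\Gamma$-degree at least $2$ (they lie in the triangle $\{a,b,c\}$), each of $\langle a\rangle,\langle b\rangle,\langle c\rangle$ is a stable $\mathbb Z$-subgroup of type $T$ of $A_\Gamma$; each of $A_{ab},A_{ac},A_{bc}$ is a dihedral Artin parabolic of $A_\Gamma$, whose centre is generated by a stable element of type $D$; and by Theorem~\ref{theoremexotic} the subgroup $\langle b,abc\rangle$ is a maximal exotic dihedral Artin subgroup of $A_\Gamma$ with centre $\langle abcabc\rangle=H$ (note $C_{A_\Gamma}(abcabc)=\langle b,abc\rangle$ already lies in $A_{abc}$, so it equals $C_{A_{abc}}(abcabc)$). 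Hence every stable $\mathbb Z$-subgroup of $A_{abc}$ of type $T$, $D$ or $E$ remains stable of the same type in $A_\Gamma$, distinct subgroups remain distinct, and commuting subgroups remain commuting; so it suffices to produce the pentagon inside the intersection graph of $A_{abc}$ and then view it in $I_\Gamma$.

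For the vertices: by Lemma~\ref{LemmaPossibleEdges}, a $5$-cycle containing a vertex of type $D$ must have vertices of types $E,E,T,D,T$ read cyclically, which is the shape of Figure~\ref{fig5cycle}. One takes $v_1=H=\langle abcabc\rangle$ (type $E$); $v_2=\langle b\rangle$ (type $T$); $v_3=\langle z_{ab}\rangle$, where $z_{ab}$ generates the centre of $A_{ab}$ (type $D$); $v_4$ a type-$T$ vertex adjacent to $v_3$ and distinct from $v_2$ (for instance $\langle a\rangle$, or a suitable conjugate of it inside $A_{ab}$); and $v_5$ a suitable conjugate of $H$, or a second exotic dihedral $\mathbb Z$-subgroup, chosen so that $v_5\neq v_1$ while its generator commutes with that of $v_4$. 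The adjacencies $v_1\sim v_2$, $v_2\sim v_3$, $v_3\sim v_4$ are immediate, since $b\in\langle b,abc\rangle=C(abcabc)$ and both $b$ and the generator of $v_4$ lie in $A_{ab}=C(z_{ab})$ by Proposition~\ref{PropositionCentralisers}.(2). Distinctness of $v_1,\dots,v_5$ follows from the threefold type distinction of Remark~\ref{RemType}, together with the fact that the two type-$T$ (resp.\ the two type-$E$) vertices have distinct fixed sets (resp.\ distinct axes) in $D_\Gamma$ (resp.\ $X_\Gamma$), via Lemma~\ref{lemcommensurableequal}.

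The crux — and the step I expect to be the main obstacle — is the remaining pair of adjacencies $v_4\sim v_5$ and $v_5\sim v_1$. These amount to producing a maximal exotic dihedral Artin subgroup $C(v_5)\neq C(v_1)$ whose central $\mathbb Z$-subgroup commutes simultaneously with $H$ and with the (conjugate of a) standard generator spanning $v_4$; equivalently, to exhibiting an exotic element lying in $C(abcabc)=\langle b,abc\rangle$ and in the centraliser of the generator of $v_4$. This is where one must use the combinatorics of the $(3,3,3)$-triangle group $A_{abc}\cong A_{\tilde A_2}$: one writes down a second ``around-the-triangle'' exotic element and checks the two commutations from the relations $aba=bab$, $bcb=cbc$, $aca=cac$ and the descriptions of centralisers of type-$T$ and type-$D$ elements. (An alternative route uses the identification $I_{A_{\Delta_{333}}}\cong\mathcal C_5$ with the Petersen graph established in Section~\ref{SectionI=CurveGraph}: a pentagon through the chosen vertex exists by vertex-transitivity of the Petersen graph, and one then only needs to locate the $3+3+4$ partition of the ten vertices into types $T,D,E$ in order to read off the types around such a pentagon.) Granting this, the ``it follows'' part is immediate: every type-$E$ stable $\mathbb Z$-subgroup is conjugate to $\langle abcabc\rangle$ for some $\Delta_{333}$ triangle by Proposition~\ref{PropositionCentralisers}.(3), so conjugating the whole pentagon shows every type-$E$ vertex lies on a $5$-cycle, and reading off $v_2$ and $v_5$ exhibits a neighbour of type $T$ and a neighbour of type $E$.
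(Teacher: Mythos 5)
Your setup is sound: you correctly reduce to $A_{abc}$, identify the cycle shape $E$-$T$-$D$-$T$-$E$ (by Lemma~\ref{LemmaPossibleEdges} this is the only possible shape for a $5$-cycle through a type-$D$ vertex), place $v_1=\langle abcabc\rangle$, $v_2=\langle b\rangle$, $v_3=\langle z_{ab}\rangle$, $v_4=\langle a\rangle$, and correctly check their adjacencies and distinctness. But you flag the key step — the exhibition of $v_5$ with the two remaining adjacencies — as ``the crux'' and then do not carry it out. That last step \emph{is} the lemma. The paper takes $v_5=\langle bacbac\rangle$ and verifies that $bacbac$ commutes with $a$ (an elementary check using $aba=bab$, in the spirit of \cite[Lemma~4.2]{vaskou2023isomorphism}) and that $bacbac$ commutes with $abcabc$; the latter is not a simple relation chase but is justified geometrically, by observing that both elements act as pure translations along a common Euclidean plane in $D_\Gamma$ (\cite[Lemma~4.12 and Figure~17]{vaskou2023isomorphism}). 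Without naming $bacbac$ and supplying these two verifications, the proposal does not establish existence of the second type-$E$ neighbour, which is precisely what the ``it follows'' sentence needs.

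Your proposed alternative — ``$I_{A_{\Delta_{333}}}\cong\mathcal C_5$ with the Petersen graph'' — does not work as stated. The curve graph $\mathcal C_5$ of the five-punctured sphere is an infinite, locally infinite graph (it has one vertex per isotopy class of essential simple closed curve), not the $10$-vertex Petersen graph, and there is no ``$3+3+4$ partition of the ten vertices''; the number of type-$T$, type-$D$ and type-$E$ vertices in $I_{\Gamma}$ for $\Gamma=\Delta_{333}$ is infinite in each case (conjugates). One could conceivably run a version of your alternative using vertex-transitivity of $\mathcal C_5$ (Lemma~\ref{lemmatransitive}) and then identifying types of vertices via the embedding $F:A_\Gamma\hookrightarrow\mcg^\pm(\Sigma_5)$, but that identification is itself nontrivial work which you do not do, and it would invert the paper's logical order (this lemma is used in Section~\ref{SectionIsolatingExoticVertices} as input before the full $\mathcal C_5$ analysis). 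The direct route with the explicit element $bacbac$ is both shorter and what the paper does.
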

	
	\begin{proof}
		The first paragraph of the lemma directly follows from Theorem \ref{theoremexotic}. Let $z_{ab}$ be the element $ababab$ generating the centre of $A_{ab}$. We exhibit a $5$-cycle containing $H$:
		
		\begin{figure}[H]
			\centering
			\includegraphics[scale=1]{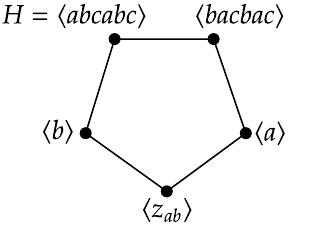}
			\caption{A $5$-cycle of $I_{\Gamma}$ containing $H$.}
			\label{fig5cycle}
		\end{figure}
		
		We need to check that the edges from Figure \ref{fig5cycle} actually exist. Note that:
		\begin{itemize}
			\item The elements $a$ and $b$ commute with $z_{ab}$ as the latter is central in $A_{ab}$.
			\item One can easily see using the relations that the elements $abcabc$ and $b$ commute. The same goes for $bacbac$ and $a$. See \cite[Lemma 4.2]{vaskou2023isomorphism} for details.
			\item The elements $abcabc$ and $bacbac$ commute because they act as pure translations on a common Euclidean plane of $D_{\Gamma}$. See \cite[Proof of Lemma 4.12 \& Figure 17]{vaskou2023isomorphism} for details.
		\end{itemize}
	\end{proof}
	
	In Section 3, we will prove that the $5$-cycle described in Figure \ref{fig5cycle} is induced.
	
	\section{On the intersection graph of the $(3,3,3)$-Artin group}
	
	In this section we establish several properties of the intersection graph of the $(3,3,3)$-Artin group for later uses, via the topological interpretation of this graph as the curve graph of $5$-punctured sphere.

	\subsection{Small cycles in the curve graph of $5$-punctured sphere}
	
	\label{secsmallcycle}
	Let $\Sigma$ be a finite type surface, with possibly punctures and boundary components. The \emph{extended mapping class group} of $\Sigma$, denoted by $\mcg^\pm(\Sigma)$, is made of isotopy classes of homeomorphisms of $\Sigma$ that is identity on $\partial\Sigma$, and the isotopy is taken within such classes of homeomorphisms. The \emph{mapping class group} $\mcg(\Sigma)$ is defined to be the finite index subgroup corresponding to orientation-preserving homeomorphisms. A simple closed curve on $\Sigma$ is \emph{essential} is it is not null-homotopic, and it is not homotopic to a puncture or a boundary component. Let $\mathcal S(\Sigma)$ be the collection of homotopy classes of essential simple closed curves in $\Sigma$. There is a natural action $\mcg(\Sigma)\curvearrowright \mathcal S(\Sigma)$.
	Let $\alpha\neq \beta\in S(\Sigma)$. 
	Take any two (oriented) transverse simple closed curves representing $\alpha$ and $\beta$. 
	Then the \emph{algebraic intersection number} of $\alpha$ and $\beta$ is the sum of the indices of intersection points, where index is $+1$ when the orientation of the intersection coincides with a given orientation of $\Sigma$ and is $-1$ otherwise. The algebraic intersection number does not depend on the choice of the two representatives.
	The \emph{geometric intersection number} of $\alpha$ and $\beta$, denoted by $I(\alpha,\beta)$, is defined to be the minimal possible cardinality of $a\cap b$, with $a\in \alpha$ and $b\in \beta$. We write $\alpha\cap\beta=\emptyset$ if $I(\alpha,\beta)=0$. We write $\alpha \perp_0 \beta$ if $I(\alpha,\beta)=2$ and their algebraic intersection number is $0$.
	
	The \emph{curve graph} for $\Sigma$, is the graph whose vertices are elements in $\mathcal S(\Sigma)$, and two vertices are adjacent if the associated two classes can be represented by disjoint curves.
	
	For an positive integer $n$,  let $\Sigma_n$ be the $n$-punctured sphere. We will be especially interested in $\Sigma_5$. We denote the curve graph of $\Sigma_5$ by $\mathcal C_5$.  
	
	Let $P\mcg(\Sigma_5)$ be the finite index subgroup of $\mcg(\Sigma_5)$ fixing each puncture of $\Sigma_5$. An essential simple closed curve $b$ in $\Sigma_5$ \emph{encloses} two punctures $p$ and $p'$ of $\Sigma_5$ if they are contained in the same connected component $C$ of $\Sigma_5\setminus b$ and $C$ does not contain any other punctures of $\Sigma_5$. In that case we also say that the class of $b$ in $\mathcal S(\Sigma_5)$ \emph{encloses} $p$ and $p'$.
	\begin{lemma}
		\label{lemmatransitive}
		The action of $\mcg(\Sigma_5)$ on $\mathcal C_5$ is vertex transitive and edge transitive. Moreover, given two punctures $p$ and $p'$ of $\Sigma_5$, the group $P\mcg(\Sigma_5)$ acts transitively on the set of elements in $\mathcal C(\Sigma_5)$ that enclose $p$ and $p'$.
	\end{lemma}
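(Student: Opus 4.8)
The plan is to derive all three statements from the change of coordinates principle for mapping class groups, after recording the elementary topology of curves on $\Sigma_5$. Since $\chi(\Sigma_5)=-3$ and a pair of pants has Euler characteristic $-1$, every pants decomposition of $\Sigma_5$ consists of exactly two disjoint non-isotopic essential simple closed curves and cuts $\Sigma_5$ into three pairs of pants; its dual trivalent graph is forced to be the segment with three vertices, two of the punctures attached at each end and the fifth attached in the middle, so $\Sigma_5$ has a single topological type of pants decomposition. Similarly, every essential simple closed curve on $\Sigma_5$ separates the five punctures into a pair and a triple, so there is a single topological type of essential simple closed curve. The change of coordinates principle then gives that $\mcg(\Sigma_5)$ acts transitively on isotopy classes of essential simple closed curves, and on isotopy classes of pants decompositions; here one uses that every permutation of the punctures of $S^2$ is induced by an orientation-preserving homeomorphism, so the relevant homeomorphisms may be taken in $\mcg(\Sigma_5)$.

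Vertex-transitivity of $\mcg(\Sigma_5)\actson\mathcal C_5$ is then immediate from the first fact. For edge-transitivity, I would observe that an edge of $\mathcal C_5$ is exactly an unordered pair of disjoint non-isotopic essential curves, i.e. a pants decomposition of $\Sigma_5$; since there is a unique topological type, $\mcg(\Sigma_5)$ acts transitively on these, hence on the edges of $\mathcal C_5$.

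For the last assertion, fix two punctures $p,p'$ and two curves $b,b'$ of $\mathcal C_5$ each enclosing exactly $\{p,p'\}$. By vertex-transitivity choose $\phi\in\mcg(\Sigma_5)$ with $\phi(b)=b'$; since $\phi$ carries the set of punctures enclosed by $b$ onto the set enclosed by $b'$, automatically $\phi(\{p,p'\})=\{p,p'\}$, so $\phi$ merely induces some permutation of the remaining three punctures $q_1,q_2,q_3$ and either fixes or exchanges $p$ and $p'$. To upgrade $\phi$ to a pure mapping class I would post-compose it with a homeomorphism $\psi$ fixing $b'$ setwise and correcting this combinatorial defect: write $\Sigma_5\setminus b'=P_1\sqcup P_2$, where $P_1$ is the pair of pants carrying $\{p,p'\}$ and $P_2$ is the four-holed sphere carrying $\{q_1,q_2,q_3\}$ as punctures and $b'$ as boundary; the half-twist exchanging the two punctures of $P_1$ is supported in the interior of $P_1$ and so fixes $b'$ pointwise, and every permutation of $q_1,q_2,q_3$ is realised by a braid-type homeomorphism supported in the interior of $P_2$, again fixing $b'$ pointwise. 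Gluing these two pieces (and the identity near $b'$) gives $\psi\in\mcg(\Sigma_5)$ fixing $b'$, so that $\psi\circ\phi$ lies in $P\mcg(\Sigma_5)$ and sends $b$ to $b'$.

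The main obstacle is precisely this final upgrading step: one must check that the defect of $\phi$ — its action on $\{p,p'\}$ and on $\{q_1,q_2,q_3\}$ — can be undone by mapping classes supported on the two sides of $b'$; equivalently, that the stabiliser of $b'$ in $\mcg(\Sigma_5)$ surjects onto all permutations of $\{p,p'\}$ and of $\{q_1,q_2,q_3\}$. This is where the explicit half-twist and braid constructions in $P_1$ and $P_2$ are needed, whereas every other step is a direct invocation of change of coordinates together with the realisability of puncture permutations by orientation-preserving homeomorphisms of the sphere.
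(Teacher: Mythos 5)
Your proof is correct and follows essentially the same approach as the paper, namely the change of coordinates principle: the paper's proof gives the argument only for vertex transitivity (noting that every essential curve separates $\Sigma_5$ into a fixed pair of topological types) and dismisses the remaining two assertions with ``can be proved by a similar argument,'' whereas you spell out those arguments explicitly via the uniqueness of the pants-decomposition type and the correcting half-twist/braid supported off the curve. Both reach the same conclusions by the same method.
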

	
	\begin{proof}
		For each essential simple curve $b\in \Sigma_5$, we know $\Sigma_5\setminus b$ has two components, one homeomorphic to $\Sigma_3$ and one homeomorphic to $\Sigma_2$. Thus by the change of coordinate principle (see e.g. \cite[Chapter 1.3]{farb2011primer}), the action $\mcg(\Sigma_5)\curvearrowright \mathcal C_5$ is vertex transitive. Other parts of the lemma can be proved by a similar argument.
	\end{proof}
	
	We recall several basic facts about $\mathcal C_5$.
	\begin{lemma}
		\label{lem:girth 5}
		The girth of $\mathcal C_5$ is 5.
	\end{lemma}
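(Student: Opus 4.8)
The plan is to use the topological description of $\mathcal{C}_5$ as the curve graph of the $5$-punctured sphere and argue via geometric intersection numbers and the Euler characteristic / change-of-coordinates principle. First I would show there are no cycles of length $3$ or $4$, i.e.\ that the girth is at least $5$. For a triangle, I would need three pairwise disjoint essential simple closed curves on $\Sigma_5$ that are pairwise non-homotopic; but $\Sigma_5\setminus b$ for a single essential curve $b$ splits into a copy of $\Sigma_2$ and a copy of $\Sigma_3$, and the only essential curves disjoint from $b$ live in the $\Sigma_3$-piece, which contains (up to homotopy rel $b$) exactly the two curves each enclosing two of the three remaining punctures together with $b$ itself; a short case check shows any two such curves already intersect, so no triangle exists. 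For a $4$-cycle $\alpha - \beta - \gamma - \delta - \alpha$, I would use that $\alpha$ and $\gamma$ are both disjoint from $\beta$ and from $\delta$: cutting along $\beta$ reduces the problem to a subsurface of lower complexity, and a direct enumeration of the finitely many isotopy classes of essential curves in $\Sigma_2\sqcup\Sigma_3$ (there are very few) shows that one cannot close up a $4$-cycle — equivalently, $\mathcal{C}_5$ has no induced square, which is a standard low-complexity fact (cf.\ the connection with the Farey graph / $\mcg(\Sigma_5)$ acting on the $5$-punctured sphere).

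Next I would exhibit an explicit embedded $5$-cycle to show the girth is at most $5$. Label the punctures $1,2,3,4,5$ cyclically on the sphere and let $\alpha_i$ be the essential simple closed curve enclosing the pair of punctures $\{i, i+1\}$ (indices mod $5$). Then $\alpha_i$ and $\alpha_j$ are disjoint precisely when the index sets $\{i,i+1\}$ and $\{j,j+1\}$ are disjoint, i.e.\ when $j = i+2$ or $j = i-2$ mod $5$; one checks $\alpha_i$ and $\alpha_{i+1}$ (overlapping in one puncture) have geometric intersection number $2$, hence are non-adjacent and distinct. Therefore $\alpha_1 - \alpha_3 - \alpha_5 - \alpha_2 - \alpha_4 - \alpha_1$ is an embedded $5$-cycle in $\mathcal{C}_5$. (This is exactly the standard ``pentagon'' in the curve complex of $\Sigma_5$, and it also matches the $5$-cycle produced by Lemma~\ref{LemmaNeighoursOfExotic} under the identification $I_{\Delta_{333}} \cong \mathcal{C}_5$.)

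Combining the two parts gives that the girth of $\mathcal{C}_5$ equals $5$. The main obstacle is the no-$4$-cycle verification: while intuitively clear from low complexity, it requires either a careful enumeration of all essential simple closed curves on $\Sigma_5$ up to homotopy that are disjoint from a fixed one (organised using that any such curve encloses a pair of punctures, so there are finitely many ``types'' to track), or an appeal to the well-known structure of the curve graph of a five-times-punctured sphere — namely that it is isomorphic to the curve graph of the twice-punctured torus and to the Farey-type graph whose smallest cycles have length $5$. I would present the enumeration argument in terms of which pairs of punctures are enclosed, since that bookkeeping is short: a $4$-cycle would force two distinct curves enclosing the same complementary data, contradicting uniqueness of the isotopy class. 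Once girth $\ge 5$ is in hand, the explicit pentagon closes the proof.
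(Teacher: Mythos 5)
Your explicit pentagon (curves $\alpha_i$ enclosing the pairs $\{i,i+1\}$ mod $5$, with adjacency exactly when the pairs are disjoint) is correct and more concrete than the paper's unelaborated ``it is not hard to see.'' Your $3$-cycle exclusion, though phrased via a cut-and-enumerate argument, can be stated even more cleanly: three pairwise-disjoint essential curves on $\Sigma_5$ would enclose three pairwise-disjoint pairs of punctures, hence $\geq 6$ punctures — impossible. This differs from the paper's one-line argument that $\mcg(\Sigma_5)$ contains no $\mathbb Z^3$, but both work.

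The $4$-cycle exclusion, however, has a genuine gap. First, cutting $\Sigma_5$ along $\beta$ produces a twice-punctured disk and a thrice-punctured \emph{disk}, not $\Sigma_2\sqcup\Sigma_3$; the thrice-punctured disk (cap the boundary and you get $\Sigma_4$) supports \emph{infinitely many} isotopy classes of essential simple closed curves, so there is no ``direct enumeration of the finitely many isotopy classes'' to perform. Second, and more fundamentally, the closing claim — ``a $4$-cycle would force two distinct curves enclosing the same complementary data, contradicting uniqueness of the isotopy class'' — rests on a false premise: in $\Sigma_5$ there are infinitely many non-isotopic essential curves enclosing a fixed pair of punctures (Dehn twist one along a curve meeting it). The combinatorics of enclosed pairs \emph{does} force $P_\alpha = P_\gamma$ or $P_\beta = P_\delta$ (this part is sound), but to conclude $\alpha=\gamma$ from $P_\alpha=P_\gamma$ you additionally need to show that, because $\alpha$ and $\gamma$ are both disjoint from $\beta\cup\delta$, they lie in the same complementary face of $N(\beta\cup\delta)$ and that this face is a twice-punctured disk, in which essentiality does determine the isotopy class uniquely. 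That extra topological step is exactly what is missing from your write-up. The paper avoids it entirely by a different route: it forms a pseudo-Anosov $g$ on the thrice-punctured side $D$ of $\beta_1$ from Dehn twists along $\beta_2,\beta_4$, notes $T_{\beta_3}$ commutes with $g$, hence (by commutation with a pA) $T_{\beta_3}$ is supported outside $D$, forcing $\beta_3=\beta_1$ — a contradiction.
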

	
	\begin{proof}
		This is a well-known fact. We sketch a proof for the convenience of the reader.
		It is not hard to see $\mathcal C_5$ does contain an embedded 5-cycle. $3$-cycles of $\mathcal C_5$ can be ruled out as the associated mapping class group does not contain a subgroup isomorphic to $\mathbb Z^3$.
		Now we rule out embedded 4-cycles. Suppose $\{\beta_i\}_{i=1}^4$ are consecutive vertices of an embedded 4-cycle. Take representatives $b_i\in \beta_i$ for $1\le i\le 4$ such that $b_i\cap b_{i+1}=\emptyset$ with the index $i$ understood as modulo 4. Let $D$ be the closure of the component of $\Sigma_5\setminus b_1$ that contains three punctures. Then $b_i\subset D$ for $i=2,4$. Note that the subgroup of $\mcg(\Sigma_5)$ generated by Dehn twists along $\beta_2,\beta_4$ contain an element $g$ which is pseudo-Anosov on $D$. As the Dehn twist along $\beta_3$ commute with $g$, it can be represented by a homeomorphism whose support is outside $D$. This implies that $\beta_1=\beta_3$, contradicting that the 4-cycle is embedded.
	\end{proof}
	
	The following is a consequence of a work of Feng Luo \cite{luo2000}.
	\begin{lemma} \label{LemmaOneOrbitOf5Cycles}
		Let $\omega$ be an embedded 5-cycle in $\mathcal C_5$ with consecutive vertices corresponding to the homotopy classes $\{\beta_i\}_{i\in \mathbb Z/5\mathbb Z}$. Then there exists a labelling of the 5 punctures $\{p_i\}_{i\in \mathbb Z/5\mathbb Z}$ by elements in $\mathbb Z/5 \mathbb Z$ (this labelling depends on $\omega$) and five simple arcs $\{\gamma_i\}_{i\in \mathbb Z/5\mathbb Z}$ with each $\gamma_i$ connecting $p_i$ and $p_{i+1}$ such that
		\begin{enumerate}
			\item the concatenation of $\{\gamma_i\}_{i\in \mathbb Z/5\mathbb Z}$ gives a simple closed curve on the sphere;
			\item for each $i$, the boundary of a small enough neighborhood of $\gamma_i$ gives a simple closed curve representing one of $\{\beta_i\}_{i=1}^5$.
		\end{enumerate}
	\end{lemma}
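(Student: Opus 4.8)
The plan is to reduce the lemma to an elementary statement about the Petersen graph and then to import from Feng Luo's work the geometric rigidity that turns that statement into an actual realisation by arcs.

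First I would fix pairwise-minimal-position representatives $b_i\in\beta_i$ and, for each $i$, record the $2$-element set $S_i$ of punctures lying on the twice-punctured side of $\beta_i$ (every essential simple closed curve on $\Sigma_5$ separates it into a twice- and a thrice-punctured disc). The first observation is that two distinct disjoint curves enclose disjoint pairs: if $\beta_i$ and $\beta_{i+1}$ are disjoint, then $\beta_{i+1}$ lies in one complementary component of $\beta_i$, and it cannot lie in the twice-punctured one (it would be isotopic to $\beta_i$), so it lies in the thrice-punctured one, and hence $S_i\cap S_{i+1}=\emptyset$. Thus $i\mapsto S_i$ is a graph morphism from the $5$-cycle $\omega$ to the Petersen graph $\mathcal P$ (vertices: the $2$-subsets of the $5$ punctures; edges: disjoint pairs). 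Since $\mathcal C_5$ has no triangle (Lemma~\ref{lem:girth 5}), $\omega$ is induced, so $\beta_i$ and $\beta_{i+2}$ intersect; and since $\mathcal P$ is simple, loopless and triangle-free, every closed walk of length $5$ in it is an embedded $5$-cycle (a non-simple one would contain a closed subwalk of odd length $\le 3$, hence a loop or a triangle). So $i\mapsto S_i$ is injective with image a $5$-cycle of $\mathcal P$.

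Next I would read off the pentagon picture. View $S_1,\dots,S_5$ as the edges of a graph $H$ on the five punctures; if three of them contained a common puncture those three would pairwise intersect, hence be pairwise non-adjacent in $\mathcal P$, i.e.\ an independent set of size $3$ in $\omega$, which is impossible. So $H$ is $2$-regular on $5$ vertices, hence a single $5$-cycle; labelling the punctures $p_1,\dots,p_5$ around $H$ gives $\{S_1,\dots,S_5\}=\{\{p_j,p_{j+1}\}\mid j\in\mathbb Z/5\mathbb Z\}$. Now place the $p_j$ at the vertices of a convex pentagon on $S^2$ and let $\gamma_j$ be the side joining $p_j$ and $p_{j+1}$: the concatenation of the $\gamma_j$ is a simple closed curve, which gives~(1), and $\partial N(\gamma_j)$ is a simple closed curve enclosing $\{p_j,p_{j+1}\}$, so it encloses the same pair of punctures as exactly one $\beta_i$.

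What remains — promoting ``$\partial N(\gamma_j)$ and $\beta_i$ enclose the same pair'' to ``$\partial N(\gamma_j)$ is isotopic to $\beta_i$'' — is the only substantial step, and it is where \cite{luo2000} is used. Enclosing a prescribed pair of punctures does not determine a curve up to the pure mapping class group of $\Sigma_5$; the extra leverage is that membership in an embedded $5$-cycle forces each non-adjacent pair $\beta_i,\beta_{i+2}$ to meet in exactly two points (the minimum), which is precisely the kind of statement Luo's classification of simple-closed-curve systems on $\Sigma_5$ supplies. Granting it, $b_1\cup\cdots\cup b_5$ is a filling $1$-complex on $\Sigma_5$ with $10$ vertices of rigid combinatorial type; cutting $\Sigma_5$ along $\beta_1$ exhibits $\beta_2$ and $\beta_5$ as two minimally-intersecting curves of prescribed types in a four-holed sphere — a single orbit of that subsurface's pure mapping class group — and re-attaching and repeating the cut along $\beta_2$ locates $\beta_3$ and $\beta_4$, yielding a homeomorphism of $\Sigma_5$ sending $\{\beta_i\}$ to $\{\partial N(\gamma_j)\}$. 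Pulling the arcs $\gamma_j$ back through this homeomorphism produces the arcs required by~(2). The main obstacle is exactly this intersection-number input: the combinatorial reduction is routine, whereas controlling the isotopy classes of the $\beta_i$ — not merely their enclosed pairs — is what genuinely needs Luo's analysis of simple loops on surfaces of small complexity.
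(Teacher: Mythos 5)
Your argument takes a genuinely different route from the paper's, although the two share the same key input and meet at the same crux. The paper proves $\beta_i\cap\beta_{i+2}\neq\emptyset$ by a direct cut argument and then invokes \cite[Lemma 4.2]{luo2000} to get $\beta_i\perp_0\beta_{i+2}$, exactly as you do; but then it \emph{reads the arcs directly off the curves}: $a_i$ is taken to be the arc spanning the two punctures on the twice-punctured side of $b_i$ (determined up to isotopy rel endpoints by $b_i$ alone), disjointness $a_i\cap a_{i+1}=\emptyset$ is immediate from $b_i\cap b_{i+1}=\emptyset$, and $a_i\cap a_{i+2}=\{\text{one endpoint}\}$ is arranged using $\perp_0$. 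Nothing further; no change of coordinates is needed because the arcs are already attached to the given $\beta_i$'s. Your version inserts a combinatorial intermediate (the map $i\mapsto S_i$ into the Petersen graph, which is a clean and correct way to force the pentagonal labelling that the paper handles implicitly via its choice $\gamma_1=a_1,\gamma_2=a_3,\gamma_3=a_5,\gamma_4=a_4,\gamma_5=a_2$), but then goes the other way: it builds a \emph{model} pentagon whose neighbourhood boundaries merely enclose the right puncture pairs, and must promote this to an actual homeomorphism moving the model onto $\{\beta_i\}$. That step --- transitivity of $\mcg^\pm(\Sigma_5)$ on embedded $5$-cycles in $\mathcal C_5$ via iterated change of coordinates --- is where your write-up is thin: the ``single orbit of that subsurface's pure mapping class group'' assertion for $\perp_0$-pairs in a four-holed sphere with prescribed enclosed pairs, and the claim that after normalising $b_1,b_2,b_5$ the remaining stabiliser lands $b_3,b_4$ correctly, are asserted rather than proved, and checking either honestly would bring you back to extracting arcs from the $b_i$'s, i.e.\ to the paper's argument. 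So the Petersen-graph reduction is a worthwhile explicit gloss on the labelling, but the realisation step is both more roundabout and less complete than the paper's direct arc extraction; if you want to keep your structure, you should replace the transitivity claim with the observation that the arcs $a_i$ dual to the $b_i$'s already trace a pentagon and carry the punctures to the required combinatorics.
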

	
	\begin{proof}
		We take representative $b_i\in \beta_i$ for each $i$ such that the cardinality of $b_i\cap b_j$ is minimized among all representatives whenever $i\neq j$ (this is a possible by putting a finite volume complete hyperbolic {metric} on $\Sigma_5$ and choosing $b_i$ to be the unique geodesic representative in the homotopy class).
		We claim $\beta_i\cap\beta_{i+2}\neq\emptyset$. Note that  $\Sigma_5\setminus b_i$ has two components $C_1$ and $C_2$ with $C_1$ homeomorphic to the sphere with 4 punctures $\Sigma_4$ and $C_2$ homeomorphic to $\Sigma_3$. Then $b_{i+1},b_{i+2}$ are contained in $C_1$ and $b_{i+1}\cap b_{i+2}=\emptyset$. As $C_1\setminus b_{i+1}$ is a union of two components, each of which is homeomorphic to $\Sigma_3$, we must have $b_{i+1}\cap b_{i+2}\neq\emptyset$, which is a contradiction. Thus the claim is proved. Now it follows from \cite[Lemma 4.2]{luo2000} that $\beta_i\perp_0\beta_{i+2}$ for each $i$.
		
		For each $\beta_i$, let $a_i$ be a simple arc in the $\Sigma_3$-component of $\Sigma_5\setminus b_i$ connecting the two punctures in this component. This definition determines $a_i$ up to isotopy rel endpoints. Note that $a_i\cap a_{i+1}=\emptyset$ as $b_1\cap b_{i+1}=\emptyset$. Moreover we can arrange $a_i\cap a_{i+2}=\emptyset$ except at endpoints for each $i$ as $\beta_i\perp_0\beta_{i+2}$.
		Let $\gamma_1=a_1$, $\gamma_2=a_3$, $\gamma_3=a_5$, $\gamma_4=a_4$ and $\gamma_5=a_2$. Then the lemma follows.
	\end{proof}

	\subsection{The $(3,3,3)$ Artin group and $\mcg^\pm(\Sigma_5)$}
	
	Throughout this section we let $A_{\Gamma}$ be the Artin group where $\Gamma$ is a triangle with coefficients $(3, 3, 3)$.
	
	\begin{notation}
		We let the north pole $N$ and the south pole $S$ be two of the punctures of $\Sigma_5$. The three other punctures of $\Sigma_5$ will be supposed to lie on the equator and be denoted by $X$, $Y$ and $Z$. The set of punctures is called $\mathcal{P}$. The equator is cut by $X,Y$ and $Z$ into three segments $\gamma_{X,Y}$, $\gamma_{Y,Z}$ and $\gamma_{Z,X}$.
		Let $T_{XY}$ be the half-twist permuting $X$ and $Y$ such that $T_{XY}$ is supported in a small neighbourhood of $\gamma_{X,Y}$. Similarly we define $T_{YZ}$ and $T_{ZX}$.
	\end{notation}
	
	Let $\Gamma$ be the presentation graph consisting of a triangle with labels $(3,3,3)$.
	The following is proved in \cite{charney2005automorphism} - see \cite[Figure 3]{charney2005automorphism} and the discussion before that figure. Note that the definition of mapping class group in \cite{charney2005automorphism} corresponds to our extended mapping class group.
	\begin{proposition}
		\label{propiso}
		There exists a natural injection $F : A_{\Gamma} \rightarrow \mcg^\pm(\Sigma_5)$ of the $(3, 3, 3)$-Artin group $A_{\Gamma} = \langle a, b, c \rangle$ into the mapping class group of the $5$-punctured sphere $\Sigma_5$, defined by
		\begin{align*}
			&F(a) = T_{XY}, \\
			&F(b) = T_{YZ}, \\
			&F(c) = T_{ZX}.
		\end{align*}
		The image of this injection has index $120$ in $\mcg^\pm(\Sigma_5)$.
	\end{proposition}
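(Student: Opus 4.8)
Since the proposition is due to \cite{charney2005automorphism}, a proof is an unpacking of that reference; here is the route, with the one nontrivial ingredient isolated. \emph{$F$ is a homomorphism.} I would check the three braid relations defining $A_\Gamma$, e.g.\ $T_{XY}T_{YZ}T_{XY}=T_{YZ}T_{XY}T_{YZ}$ and its two cyclic images. For each such pair the two relevant arcs among $\gamma_{X,Y},\gamma_{Y,Z},\gamma_{Z,X}$ share exactly one endpoint and are otherwise disjoint, so restricting to a closed disk neighbourhood of their union --- a thrice-punctured disk --- the two half-twists become the standard generators $\sigma_1,\sigma_2$ of the braid group of that disk, where the braid relation $\sigma_1\sigma_2\sigma_1=\sigma_2\sigma_1\sigma_2$ holds; hence the corresponding relation holds in $\mcg^\pm(\Sigma_5)$. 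Since $\gamma_{X,Y},\gamma_{Y,Z},\gamma_{Z,X}$ realise all three edges of $\Delta_{333}$ (all labelled $3$), this gives $F\colon A_\Gamma\to\mcg^\pm(\Sigma_5)$, and as half-twists preserve orientation, $F$ lands in the index-$2$ subgroup $\mcg(\Sigma_5)$.

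\emph{$F$ is injective --- the crux.} All three half-twists are supported in a regular neighbourhood $U$ of the equatorial triangle $\gamma_{X,Y}\cup\gamma_{Y,Z}\cup\gamma_{Z,X}$; being the neighbourhood of a $3$-cycle in the orientable surface $\Sigma_5$, $U$ is an annulus carrying $X,Y,Z$ on its core, and the two complementary pieces of $U$ in $\Sigma_5$ are once-punctured disks around $N$ and $S$. Hence $F$ factors as $A_\Gamma\to\mcg(U,\{X,Y,Z\};\partial U)\to\mcg(\Sigma_5)$, where the middle group is the $3$-strand annular braid group --- canonically the finite-type Artin group $A_{B_3}$ of type $B_3$ --- the first map sending the generators of $A_\Gamma$ to the three ``rotationally placed'' half-twists of $U$, and the second capping the two boundary circles of $U$ with once-punctured disks. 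The content is that these three half-twists generate inside $A_{B_3}$ a faithful copy of the affine Artin group $A_{\widetilde A_2}=A_{\Delta_{333}}$, and that the capping map remains injective on it: its kernel is generated by the two boundary Dehn twists of $U$, which are central in $A_{B_3}$, hence powers of the Garside element $\Delta$, and the affine subgroup meets $\langle\Delta\rangle$ trivially. This faithful realisation of an affine Artin group as the rotational subgroup of an annular braid group is exactly what \cite{charney2005automorphism} provides (it is classical in the theory of annular/affine braid groups). Granting it, $F$ is a composite of two injections. I expect this faithfulness-plus-kernel step to be the only genuine obstacle; the rest is routine surface topology.

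\emph{The index is $120$.} Each generator's image fixes $N$ and $S$ and preserves orientation, so $\im(F)$ lies in the subgroup $\mathcal G\le\mcg^\pm(\Sigma_5)$ of orientation-preserving classes fixing $N$ and $S$. The homomorphism $\mcg^\pm(\Sigma_5)\to S_5\times\mathbb Z/2$ recording the permutation of the punctures and the orientation is onto, and $\mathcal G$ is the preimage of $S_3\times\{1\}$, with $S_3$ the symmetric group on $\{X,Y,Z\}$; hence $[\mcg^\pm(\Sigma_5):\mathcal G]=2\cdot[S_5:S_3]=40$. It then remains to see $[\mathcal G:\im(F)]=3$: identifying $\mathcal G$ with the image of the capping map (whose kernel is $\langle\Delta\rangle$) and $\im(F)$ with the image of $A_{\widetilde A_2}$, the quotient $\mathcal G/\im(F)$ is the quotient of $A_{B_3}/A_{\widetilde A_2}\cong\mathbb Z$ by the class of $\Delta$, which is $\mathbb Z/3\mathbb Z$ because that $\mathbb Z$ is generated by the $2\pi/3$-rotation $\rho$ of $U$ while $\Delta$ equals the full $2\pi$-rotation $\rho^{3}$. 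Therefore $[\mcg^\pm(\Sigma_5):\im(F)]=40\cdot 3=120$; this finite coset count is the bookkeeping carried out in \cite{charney2005automorphism}.
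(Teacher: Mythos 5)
The paper's ``proof'' of this proposition is a bare citation to Charney--Crisp \cite{charney2005automorphism} (``see Figure~3 and the discussion before that figure''), so you are not reproducing an argument the paper spells out; you are unpacking the citation. Your unpacking follows the same route as the reference: realize the three half-twists as supported in an equatorial annulus, pass through the annular/type-$B_3$ braid group where the affine Artin group $A_{\widetilde A_2}$ sits as the winding-number-zero subgroup, deduce injectivity by showing the capping kernel is central and meets this affine subgroup only trivially (since $A_{\widetilde A_2}$ has trivial centre), and then do the coset bookkeeping $40\cdot 3=120$. This matches the structure of what Charney--Crisp actually do, and the coset arithmetic is consistent with the paper's Lemma~\ref{lemH}: one has $[\mcg^\pm(\Sigma_5):H]=10$, $[H:\mathcal G]=4$, $[\mathcal G:\im F]=3$.

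One inaccuracy you should fix: you assert that $\mcg(U,\{X,Y,Z\};\partial U)$ \emph{is} $A_{B_3}$. It is not --- it is a $\mathbb Z$-central extension of $A_{B_3}$. The group $\mcg(U,\{X,Y,Z\};\partial U)$ has centre $\mathbb Z^2$ generated by the two boundary Dehn twists $\delta_1,\delta_2$ (these are genuinely independent once there is at least one marked point in the annulus), whereas $A_{B_3}$ has infinite cyclic centre $\langle\Delta\rangle$. The correct statement is that capping \emph{one} boundary of $U$ with a once-punctured disk gives the Birman exact sequence $1\to\langle\delta_1\rangle\to\mcg(U,\{X,Y,Z\};\partial U)\to A_{B_3}\to 1$, identifying $A_{B_3}$ with the stabiliser of one strand in $B_4$; only after this first capping does the remaining boundary twist $\delta_2$ become the central element $\Delta_{B_4}^2=\Delta_{B_3}$ of $A_{B_3}$, so that the second capping has kernel $\langle\Delta_{B_3}\rangle$ and your computation $\mathcal G\cong A_{B_3}/\langle\Delta\rangle$, $\mathcal G/\im F\cong\mathbb Z/3\mathbb Z$ goes through. (The identity $\Delta_{B_4}^2=\Delta_{B_3}$, equivalently $\rho^3=\Delta$ with $\rho$ the $2\pi/3$-rotation, follows exactly as you implicitly use it: both are central with the same winding number $3$, so their ratio is a central element of $A_{\widetilde A_2}$, hence trivial.) With that adjustment the sketch is sound; the genuinely hard ingredient --- faithfulness of $A_{\widetilde A_2}\hookrightarrow A_{B_3}$ and the split exact sequence $1\to A_{\widetilde A_2}\to A_{B_3}\to\mathbb Z\to 1$ --- is correctly attributed to the literature.
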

	
	We recall the following result about automorphisms of the $(3, 3, 3)$ Artin group $A_{\Gamma}$:
	
	\begin{theorem} \cite[Theorem A]{vaskou2023automorphisms}
		Let $A_{\Gamma}$ be the $(3, 3, 3)$ Artin group. Then $\aut(A_{\Gamma}) = \aut_{\Gamma}(A_{\Gamma})$, i.e. the automorphism group of $A_{\Gamma}$ is generated by the conjugations, the graph automorphisms, and the global inversion.
	\end{theorem}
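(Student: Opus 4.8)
The plan is to exploit the finite-index embedding $F\colon A_\Gamma\hookrightarrow\mcg^\pm(\Sigma_5)$ of Proposition~\ref{propiso} together with the identification of the intersection graph $I_\Gamma$ with the curve graph $\mathcal C_5$. Every automorphism of $A_\Gamma$ induces an automorphism of $I_\Gamma\cong\mathcal C_5$; rigidity of the curve graph identifies $\aut(\mathcal C_5)$ with $\mcg^\pm(\Sigma_5)$; and one then checks that the resulting homomorphism $\Phi\colon\aut(A_\Gamma)\to\mcg^\pm(\Sigma_5)$ has both kernel and image inside $\Phi(\aut_\Gamma(A_\Gamma))$, which forces $\aut(A_\Gamma)=\aut_\Gamma(A_\Gamma)$.

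To construct $\Phi$, I would use the description of $I_\Gamma$ via centralisers (Definition~\ref{DefIntersectionGraphII}): any $\phi\in\aut(A_\Gamma)$ sends a centraliser $C(g)$ to $C(\phi(g))=\phi(C(g))$, and this preserves the property of being virtually $\mathbb Z\times F_{\ge 2}$ and the adjacency relation $C(g)\cap C(h)\cong\mathbb Z^2$, so $\phi$ induces a graph automorphism $\Phi(\phi)$ of $I_\Gamma$, with $\Phi|_{\mathrm{Inn}(A_\Gamma)}$ equal to the natural $A_\Gamma$-action. By rigidity of the curve graph (Ivanov, Korkmaz, Luo), valid for $\Sigma_5$, one has $\aut(I_\Gamma)=\aut(\mathcal C_5)\cong\mcg^\pm(\Sigma_5)$; under this identification $\Phi(\mathrm{Inn}(A_\Gamma))=F(A_\Gamma)$, $\Phi$ of the graph automorphisms is the finite subgroup realised by the homeomorphisms permuting the equatorial punctures $X,Y,Z$ and fixing $N,S$, and $\Phi$ of the global inversion $\iota$ is the class of an orientation-reversing involution fixing the equator and reversing the three half-twists. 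In particular $\Phi(\aut_\Gamma(A_\Gamma))$ is generated in $\mcg^\pm(\Sigma_5)$ by $F(A_\Gamma)$ and these ``configuration symmetries''.

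For the kernel: if $\Phi(\phi)=\mathrm{id}$ then $\phi$ fixes the vertices $C(a),C(b),C(c)$, and since $Z(C(a))=\langle a\rangle$ by Proposition~\ref{PropositionCentralisers} (similarly for $b,c$), we get $\phi(a)=a^{\pm1}$, $\phi(b)=b^{\pm1}$, $\phi(c)=c^{\pm1}$; testing such a ``sign change'' against the relation $aba=bab$ inside the parabolic subgroup $A_{ab}\cong B_3$ — e.g.\ via the representation $B_3\to SL_2(\mathbb Z)$, which shows $ab^{-1}a\ne b^{-1}ab^{-1}$ — forces all three signs to agree, so $\ker\Phi\subseteq\{\mathrm{id},\iota\}\subseteq\aut_\Gamma(A_\Gamma)$. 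For the image: since $\mathrm{Inn}(A_\Gamma)\trianglelefteq\aut(A_\Gamma)$, its image $F(A_\Gamma)$ is normal in $\Phi(\aut(A_\Gamma))$, so $\Phi(\aut(A_\Gamma))\subseteq N_{\mcg^\pm(\Sigma_5)}(F(A_\Gamma))$, and the key claim is that this normaliser equals $\Phi(\aut_\Gamma(A_\Gamma))$. Granting this, one finishes: given $\phi\in\aut(A_\Gamma)$, choose $\psi\in\aut_\Gamma(A_\Gamma)$ with $\Phi(\psi)=\Phi(\phi)$; then $\phi\psi^{-1}\in\ker\Phi\subseteq\aut_\Gamma(A_\Gamma)$, so $\phi\in\aut_\Gamma(A_\Gamma)$, and the reverse inclusion is immediate.

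The main obstacle is precisely this normaliser computation: ruling out mapping classes that conjugate $F(A_\Gamma)$ to itself while ``mixing'' the poles $N,S$ with the equatorial punctures beyond the $S_3\times\mathbb Z/2$ coming from graph automorphisms and $\iota$. The relevant tools are the fine combinatorics of $\mathcal C_5$ developed in this section — vertex- and edge-transitivity, girth $5$, and the classification of $3$- and $5$-cycles (Lemmas~\ref{lemmatransitive}, \ref{lem:girth 5}, \ref{LemmaOneOrbitOf5Cycles}) — together with the preservation of the vertex types $T$, $D$, $E$ of $I_\Gamma$ (equivalently of diamond, plain and Coxeter lines): a normalising class must preserve the induced puncture partition, and then the type-$D$ vertices (curves enclosing a pair of equatorial punctures) pin it down modulo $F(A_\Gamma)$. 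A variant avoiding the curve graph is to argue directly that $\aut(A_\Gamma)$ preserves the set of conjugates of standard generators — exactly the $g$ with $C(g)$ virtually $\mathbb Z\times F_{\ge 2}$, $C(g)/Z(C(g))$ free, and $g$ generating $Z(C(g))$ — and the ``edge'' relation witnessed by common type-$D$ neighbours, then normalise $\phi$ by a graph automorphism, an inner automorphism, and possibly $\iota$; the essential difficulty there is the same.
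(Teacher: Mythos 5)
The theorem you are proving is cited in the paper from \cite{vaskou2023automorphisms} and is not proved internally, so there is no ``paper's proof'' to match against; I will therefore assess your route on its own terms. Your plan --- push $\aut(A_\Gamma)$ through $I_\Gamma\cong\mathcal C_5$ and curve-graph rigidity (Ivanov--Korkmaz--Luo, valid for $\Sigma_5$), then constrain the image by the normaliser of $F(A_\Gamma)$ in $\mcg^\pm(\Sigma_5)$ --- is genuinely different from the approach of the cited reference, which treats all large-type free-of-infinity Artin groups and thus cannot lean on a mapping class group coincidence; your route is essentially the Charney--Crisp argument \cite{charney2005automorphism} for the $\tilde A_2$ case. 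The construction of $\Phi$ via Definition~\ref{DefIntersectionGraphII}, the identification $\Phi(\mathrm{Inn})=F(A_\Gamma)$, and the kernel computation (the $SL_2(\mathbb Z)$ check that a mixed-sign assignment violates $aba=bab$) are all correct; as a side remark, in fact $\iota\notin\ker\Phi$ --- the global inversion sends $h\langle a\rangle h^{-1}$ to $\iota(h)\langle a\rangle\iota(h)^{-1}$, not back to itself --- so $\ker\Phi$ is trivial, but your weaker containment $\ker\Phi\subseteq\{\mathrm{id},\iota\}\subseteq\aut_\Gamma(A_\Gamma)$ suffices for the argument.

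You correctly flag the normaliser computation as the real work, but the sketch you give (``girth $5$, classification of $3$- and $5$-cycles'') reaches for the wrong tools; the efficient argument is the one you hint at with ``preserve the induced puncture partition''. Concretely: any $g\in N_{\mcg^\pm(\Sigma_5)}(F(A_\Gamma))$ induces, by conjugation and $F^{-1}$, a group automorphism of $A_\Gamma$ whose action on $I_\Gamma$ matches the action of $g$ on $\mathcal C_5$ under $F_*$; group automorphisms preserve the partition of vertices into types $T$, $D$, $E$ because these are distinguished by the \emph{abstract isomorphism type} of the centraliser (a direct product $\mathbb Z\times F_{\ge2}$, the dihedral Artin group with $m=3$, and the dihedral Artin group with $m=4$ have pairwise distinct abelianisations); under $F_*$ the three types correspond exactly to curves enclosing two equatorial punctures, to curves enclosing $\{N,S\}$, and to curves enclosing one pole and one equatorial puncture, respectively (a bijective count on the ten $2$--$3$ puncture splits settles this); hence $g$ preserves the set of type-$D$ curves and therefore fixes $\{N,S\}$ setwise, i.e.\ $g\in H$. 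Combined with the direct check that $H$ is generated by $F(A_\Gamma)$, the graph automorphisms and $\iota$ (an index computation $[H:F(A_\Gamma)]=12=|S_3|\cdot 2$), this gives $N_{\mcg^\pm(\Sigma_5)}(F(A_\Gamma))=H=\Phi(\aut_\Gamma(A_\Gamma))$ and your proof closes. One should also note that $F_*$ and the observation $\aut(I_\Gamma)\cong\mcg^\pm(\Sigma_5)$ are constructed in this paper \emph{after} the theorem is cited (Section~\ref{SectionI=CurveGraph}), so your argument is narratively out of order, though not circular, since that construction does not invoke the theorem.
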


	Following the previous theorem, the homomorphism map $F : A_{\Gamma} \rightarrow \mcg^\pm(\Sigma_5)$ extends to an injective homormophism $\overline{F} : \aut(A_{\Gamma}) \rightarrow \mcg^\pm(\Sigma_5)$ as follows:
	\begin{itemize}
		\item If $\varphi_g$ is the conjugation by $g$, then $\overline{F}(\varphi_g) \coloneqq F(g)$;
		\item If $\sigma$ is a graph automorphism of order $2$, then $\overline{F}(\sigma)$ is a rotation of $\Sigma_5$ of angle $\pi$. E.g. if $\sigma : a \mapsto a, b \mapsto c, c \mapsto b$, then it is the rotation along the axis between $Z$ and the midpoint of $X$ and $Y$;
		\item If $\sigma$ is a graph automorphism of order $3$, then $\overline{F}(\sigma)$ is a rotation of $\Sigma_5$ around the axis $(N, S)$. E.g. if $\sigma : a \mapsto b, b \mapsto c, c \mapsto a$, then it is a rotation of angle $(2\pi/3)$;
		\item If $\iota$ is the global inversion, then $\iota$ is the reflection of $\Sigma_5$ along the plane that contains $X, Y, Z$.
	\end{itemize}

	\begin{lemma} \label{lemH}
		Let $H \leq \mcg^\pm(\Sigma_5)$ be the subgroup consisting of the homeomorphisms that preserve the two sets of punctures $\{X, Y, Z\}$ and $\{N, S\}$. Then $\overline{F}(\aut(A_{\Gamma})) = H$.
	\end{lemma}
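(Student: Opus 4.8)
First I would establish the inclusion $\overline{F}(\aut(A_{\Gamma})) \subseteq H$. This is immediate from the explicit description of $\overline{F}$ on generators of $\aut(A_{\Gamma})$: conjugations map to elements of $F(A_\Gamma)$, which are products of the half-twists $T_{XY}, T_{YZ}, T_{ZX}$, each of which is supported near an equatorial arc and hence permutes $\{X,Y,Z\}$ and fixes $\{N,S\}$; the order-$2$ and order-$3$ graph automorphisms map to rotations of $\Sigma_5$ preserving the equatorial triple and the polar pair; and the global inversion maps to the reflection through the plane of $X,Y,Z$, which again preserves both sets. Since these generate $\aut(A_\Gamma)$ (by the cited Theorem~A of \cite{vaskou2023automorphisms}) and $H$ is a subgroup, the inclusion follows.

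For the reverse inclusion $H \subseteq \overline{F}(\aut(A_{\Gamma}))$, I would argue by a counting/index computation. By Proposition~\ref{propiso}, $F(A_\Gamma)$ has index $120$ in $\mcg^\pm(\Sigma_5)$, and $\overline{F}$ extends $F$ with $\overline{F}(\aut(A_\Gamma)) \supseteq F(A_\Gamma) = \overline{F}(\operatorname{Inn}(A_\Gamma))$. The outer automorphism group $\operatorname{Out}(A_\Gamma) = \aut(A_\Gamma)/\operatorname{Inn}(A_\Gamma)$ is generated by the global inversion together with the graph automorphisms of the triangle $\Gamma$; the label-preserving automorphism group of a triangle is $S_3$ of order $6$, and together with the commuting global inversion this gives $\operatorname{Out}(A_\Gamma)$ of order $12$ (one should check the global inversion is not inner and commutes with the graph automorphisms modulo inner ones, which is standard). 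Hence $[\overline{F}(\aut(A_\Gamma)) : F(A_\Gamma)] = 12$, so $\overline{F}(\aut(A_\Gamma))$ has index $120/12 = 10$ in $\mcg^\pm(\Sigma_5)$. On the other hand, $H$ is the stabiliser of the partition $\{X,Y,Z\}\sqcup\{N,S\}$ of $\mathcal P$ under the natural action of $\mcg^\pm(\Sigma_5)$ on unordered such partitions; since $\mcg^\pm(\Sigma_5)$ surjects onto $\operatorname{Sym}(\mathcal P) \cong S_5$ (via the action on punctures) with kernel the pure mapping class group, and the number of ways to split five punctures into a set of three and a set of two is $\binom{5}{2} = 10$, the orbit of this partition has size $10$, so $[\mcg^\pm(\Sigma_5) : H] = 10$. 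Combining with $\overline{F}(\aut(A_\Gamma)) \subseteq H$ and equal indices $10$ forces $\overline{F}(\aut(A_\Gamma)) = H$.

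The main obstacle I anticipate is the precise bookkeeping of $\operatorname{Out}(A_\Gamma)$ — specifically verifying that its order is exactly $12$ (that no graph automorphism or the inversion becomes inner, and that the subgroup of $\operatorname{Out}$ they generate is really $S_3 \times \mathbb{Z}/2$ and not a proper quotient). An alternative, cleaner route that sidesteps delicate $\operatorname{Out}$-computations is to directly exhibit, for each element of $H$, a preimage: decompose an arbitrary $h \in H$ as a product of a rotation/reflection normalising the configuration (accounting for the induced permutation of $\{X,Y,Z\}$ and of $\{N,S\}$, which is all of $S_3 \times \mathbb{Z}/2$ realised by the explicit rotations and the reflection listed before Lemma~\ref{lemH}) times an element of the pure-with-respect-to-this-partition subgroup, and then show that the latter subgroup is generated by the half-twists $T_{XY}, T_{YZ}, T_{ZX}$ and hence lies in $F(A_\Gamma)$. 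Either way, the geometric input — that the listed symmetries of $\Sigma_5$ genuinely realise all of $H/F(A_\Gamma)$ — is the crux, and the index count is the most economical way to package it.
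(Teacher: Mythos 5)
Your proof is correct and follows essentially the same strategy as the paper: establish $\overline{F}(\aut(A_\Gamma)) \subseteq H$ from the explicit formulas, compute $[\mcg^\pm(\Sigma_5):H]=10$ from the orbit of the $3{+}2$ partition, and compare with $[\mcg^\pm(\Sigma_5):\overline{F}(\aut(A_\Gamma))]=120/12=10$ to force equality. The paper in fact simply asserts $[\overline{F}(\aut(A_\Gamma)):F(A_\Gamma)]=12$ without elaboration, so your remark about verifying $|\operatorname{Out}(A_\Gamma)|=12$ (no graph automorphism or the inversion being inner) is a reasonable piece of extra diligence, not a deviation.
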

	
	\begin{proof}
		It is easy to see that $[\mcg^\pm(\Sigma_5) : H] = 10$ because $|S_5| / (|S_3| \cdot |S_2|) = 120 / (6 \cdot 2) = 10$. By definition, $\overline{F}(\aut(A_\Gamma))\subset H$. We have an inclusion of subgroups:
		$$F(A_{\Gamma}) \ \overset{12} \leq \ \overline{F}(\aut(A_{\Gamma})) \ \overset{n} \leq \ H \ \overset{10} \leq \ \mcg^\pm(\Sigma_5),$$
		where the indices of each inclusion are written accordingly. Since $[\mcg^\pm(\Sigma_5) : F(A_{\Gamma})] = 120$, we obtain $n = 1$, thus the lemma follows.
	\end{proof}

	\subsection{Intersection graph and curve graph} \label{SectionI=CurveGraph}
	
	Let $\Gamma$ be a triangle with labels $(3, 3, 3)$. Let $I_\Gamma$ be the associated intersection graph (Definition~\ref{DefIntersectionGraphI}). Recall that two subgroups $G_1$ and $G_2$ of $A_\Gamma$ are \emph{commensurable}, if $G_1\cap G_2$ is finite index in both $G_1$ and $G_2$. 
	
	By Corollary \ref{CoroCNVA}, the intersection graph $I_\Gamma$ can also be characterised as follows: vertices of $I_\Gamma$ are in 1-1 correspondence with commensurability classes of $\mathbb Z$ subgroups of $A_\Gamma$ whose centralisers are not virtually abelian, and two vertices are adjacent if the associated commensurability classes commute, which means that each class has a representative such that the two representatives commute.

	\begin{lemma}
		\label{lemcommensurable}
		Vertices of $\mathcal C_5$ are in 1-1 correspondence with commensurability classes of $\mathbb Z$ subgroups of $\mcg^\pm(\Sigma_5)$ whose centralisers are not virtually abelian, and two vertices are adjacent if the associated classes commute. 
	\end{lemma}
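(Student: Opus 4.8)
The plan is to characterize the $\mathbb{Z}$-subgroups of $\mcg^\pm(\Sigma_5)$ with non-virtually-abelian centralizer, show these are exactly (up to commensurability) the cyclic groups generated by powers of Dehn twists along essential simple closed curves, and then translate the commuting relation into disjointness of curves. Since $\mcg^\pm(\Sigma_5)$ contains $\mcg(\Sigma_5)$ as a finite-index subgroup, passing to centralizers and commensurability classes is insensitive to this, so it suffices to understand centralizers of cyclic subgroups of $\mcg(\Sigma_5)$ (or of $P\mcg(\Sigma_5)$, which is also finite index). The key classical input is the Nielsen--Thurston classification: every element of $\mcg(\Sigma_5)$ is periodic, reducible, or pseudo-Anosov, together with the standard computation of centralizers in each case.

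First I would treat the pseudo-Anosov case: if $f$ is pseudo-Anosov, its centralizer is virtually cyclic (it is commensurable to $\mathbb{Z}$, being virtually $\mathbb{Z}$ by the theorem of McCarthy, since the stabilizer of the unordered pair of invariant foliations is virtually cyclic for a closed or punctured surface). So pseudo-Anosov elements do not give vertices. Second, the periodic case: since $\Sigma_5$ has genus $0$ with $5$ punctures, a periodic mapping class has order dividing a small bound, and its centralizer, while it can be infinite, must fix the whole conjugacy structure of a finite-order symmetry; one checks directly (using that the quotient orbifold has low complexity, so the centralizer is itself essentially a mapping class group of a disk or sphere with few marked points) that such centralizers are virtually abelian in this small case — in fact for $\Sigma_5$ the only relevant periodic elements have centralizer virtually $\mathbb{Z}$ or $\mathbb{Z}^2$. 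Third, and most importantly, the reducible case: if $f$ preserves (a power of) a single essential simple closed curve $\beta$, write the canonical reduction system; the centralizer of $\langle f\rangle$ preserves this canonical reduction system and hence, up to finite index, decomposes as a product over the pieces of $\Sigma_5$ cut along the reduction curves together with the Dehn twists along those curves. For a power of a single Dehn twist $T_\beta$, the curve $\beta$ separates $\Sigma_5$ into a $\Sigma_3$-piece and a $\Sigma_2$-piece; the centralizer of $\langle T_\beta\rangle$ is (up to finite index) $\langle T_\beta\rangle \times \mcg(\Sigma_3)$ — and $\mcg(\Sigma_3)$, the mapping class group of the thrice-punctured sphere, is virtually free (indeed finite up to the relevant subgroup), so this centralizer is virtually $\mathbb{Z}\times F$ with $F$ nonabelian free — hence \emph{not} virtually abelian, giving a vertex. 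Conversely, any $\mathbb{Z}$-subgroup with non-virtually-abelian centralizer must, by the case analysis, be commensurable to $\langle T_\beta\rangle$ for a unique $\beta\in\mathcal{S}(\Sigma_5)$: uniqueness because $T_\beta$ and $T_{\beta'}$ generate commensurable cyclic groups iff $\beta=\beta'$ (their canonical reduction systems are $\{\beta\}$, $\{\beta'\}$, an invariant of the commensurability class). This gives the bijection between $V(\mathcal{C}_5)$ and commensurability classes of such $\mathbb{Z}$-subgroups.

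For the edge condition: two such classes $[\langle T_\beta\rangle]$ and $[\langle T_{\beta'}\rangle]$ commute (some representatives commute) iff $T_\beta$ and $T_{\beta'}$ commute (powers of Dehn twists commute iff the twists themselves do, since $C(T_\beta)=C(T_\beta^n)$ by the reduction-system argument above), and the classical fact that $T_\beta$ and $T_{\beta'}$ commute iff $I(\beta,\beta')=0$, i.e. $\beta$ and $\beta'$ can be realized disjointly, which is precisely adjacency in $\mathcal{C}_5$. The main obstacle is the bookkeeping in the reducible and periodic cases: one must rule out that some reducible but non-Dehn-twist element (e.g. a multitwist along several curves, or a partial pseudo-Anosov) could have a $\mathbb{Z}$-subgroup with non-virtually-abelian centralizer that is \emph{not} commensurable to a single Dehn twist power — here the low complexity of $\Sigma_5$ is essential, since cutting along any multicurve of two or more curves leaves only pieces (thrice- and twice-punctured spheres, annuli) whose mapping class groups are finite or virtually cyclic, forcing the centralizer of such an element to be virtually abelian. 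Once that is verified, the correspondence and the translation of the commuting relation are routine, and the lemma follows; this also gives the precise analogue, on the mapping class group side, of Corollary~\ref{CoroCNVA}.
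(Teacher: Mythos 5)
Your proposal is correct in outline but takes a genuinely different route from the paper. The paper's proof is short and leverages the Artin group side: it invokes Ivanov's algebraic characterisation of Dehn twist powers (an element of a suitable finite-index subgroup $G\le\mcg^\pm(\Sigma_5)$ is a twist power iff the centre of its centraliser is $\mathbb Z$ and is proper in the centraliser), and then transfers the centraliser computation from $A_\Gamma$ (Proposition~\ref{PropositionCentralisers}) across the commensuration of Proposition~\ref{propiso} to show that any $\mathbb Z$-subgroup with non-virtually-abelian centraliser has centraliser virtually $\mathbb Z\times F_{\ge 2}$, hence satisfies Ivanov's criterion. Your proof instead works purely on the mapping class group side via the Nielsen--Thurston trichotomy: pseudo-Anosov centralisers are virtually $\mathbb Z$ (McCarthy), periodic elements are irrelevant for $\mathbb Z$-subgroups, and among reducible infinite-order elements the low complexity of $\Sigma_5$ (pants decompositions have only two curves, and cutting leaves only pairs of pants) forces every $\mathbb Z$-subgroup with large centraliser to be commensurable to a single twist power; then the edge condition is the standard fact that twists commute iff the curves are disjoint. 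This is more self-contained (no appeal to $A_\Gamma$) and arguably more illuminating, at the cost of a longer case analysis; the paper's version is more economical by reusing Proposition~\ref{PropositionCentralisers}.

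One local slip worth flagging: you write that $C(T_\beta)$ is up to finite index $\langle T_\beta\rangle\times\mcg(\Sigma_3)$ and that $\mcg(\Sigma_3)$ is ``finite up to the relevant subgroup,'' yet conclude that $C(T_\beta)$ is virtually $\mathbb Z\times F$ with $F$ nonabelian free. As stated this does not follow --- if the second factor were finite the centraliser would be virtually $\mathbb Z$, hence virtually abelian. The correct complementary piece after cutting along $\beta$ is a thrice-punctured \emph{disk} (equivalently a four-holed sphere, once the boundary circle is counted), whose mapping class group is virtually $F_2$; with that repair the claim that $C(T_\beta)$ is virtually $\mathbb Z\times F_{\ge 2}$ is correct, and your argument goes through.
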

	
	\begin{proof}
		By a criterion of Ivanov \cite[Theorem 7.5B]{ivanov2002mapping}, there is an appropriate finite index subgroup $G$ of $\mcg^\pm(\Sigma_5)$ such that an element $g\in G$ is a non-trivial power of some Dehn twist along an element in $\mathcal S(\Sigma_5)$ if and only if the centre of the centraliser of $g$ in $G$ is isomorphic to $\mathbb Z$ and is not equal to the centraliser of $g$ in $G$. Given a $\mathbb Z$ subgroup $L$ of $\mcg^\pm(\Sigma_5)$ with its centraliser not virtually abelian, then the centraliser of $L\cap G$ in $G$ is not virtually abelian. Recall that $G$ and $A_\Gamma$ share isomorphic finite index subgroups (Proposition~\ref{propiso}), and we know that if a $\mathbb Z$-subgroup of $A_\Gamma$ has its centraliser in $A_\Gamma$ being not virtually abelian, then its centraliser has a finite index subgroup isomorphic to $\mathbb Z \times F_k$ with $k \geq 2$ (see Proposition \ref{PropositionCentralisers}).
		
		Thus the centraliser of $L\cap G$ in $G$ has a finite index subgroup isomorphic to $\mathbb Z \times F_k$ with $k \geq 2$. By Ivanov's criterion as before, $L$ is commensurable to a $\mathbb Z$-subgroup of $\mcg^\pm(\Sigma_5)$ generated by a Dehn twist along an element $\alpha\in \mathcal S(\Sigma_5)$. Thus each $\mathbb Z$ subgroup of $\mcg^\pm(\Sigma_5)$ with non-virtually-abelian centraliser gives rise to an element in $\alpha\in \mathcal S(\Sigma_5)$. Conversely, each element in $\mathcal S(\Sigma_5)$ gives a Dehn twist which generates a $\mathbb Z$-subgroup whose centraliser is not virtually abelian. Thus the lemma follows.
	\end{proof}
	
	\begin{definition}
		\label{defFstar}
		Let $F:A_\Gamma\to \mcg^\pm(\Sigma_5)$ be the injective homomorphism with finite index image as in Proposition~\ref{propiso}. Then $F$ induces a 1-1 correspondence between $\mathbb Z$-subgroups of $A_\Gamma$ with non-virtually-abelian centralisers and $\mathbb Z$-subgroups of $\mcg^\pm(\Sigma_5)$ with non-virtually-abelian centralisers. Thus by the previous discussion and Lemma~\ref{lemcommensurable}, $F$ induces an isomorphism $F_*:I_\Gamma\to \mathcal C_5$.
	\end{definition}
	
	\section{Reducing to automorphisms of $I_{\Gamma}^{TD}$} \label{SectionIsolatingExoticVertices}
	
	Throughout this section we let $A_{\Gamma}$ be a large-type Artin group of rank at least $3$.
	
	\subsection{Pieces and $5$-cycles}
	
	Let $A_{\Gamma'}$ be a (standard) parabolic subgroup of $A_{\Gamma}$. A stable $\mathbb{Z}$-subgroup of $A_{\Gamma'}$ is inherently a stable $\mathbb{Z}$-subgroup of $A_{\Gamma}$. Conversely, a stable $\mathbb{Z}$-subgroup $\langle g \rangle$ of $A_{\Gamma}$ is a stable $\mathbb{Z}$-subgroup of $A_{\Gamma'}$ if and only if $\langle g \rangle$ (or equivalently $g$) is contained in $A_{\Gamma'}$.
	
	Additionally, two stable $\mathbb{Z}$-subgroup of $A_{\Gamma'}$ commute in $A_{\Gamma'}$ if and only if they commute in $A_{\Gamma}$, given that the natural map $A_{\Gamma'} \hookrightarrow A_{\Gamma}$ is injective by \cite{van1983homotopy}.
	
	Consequently, the above means that the intersection graph $I_{\Gamma'}$ associated with $A_{\Gamma'}$ naturally injects in the intersection graph $I_{\Gamma}$ associated with $A_{\Gamma}$. We shall then see $I_{\Gamma'}$ as a subgraph of $I_{\Gamma}$. A corollary of this discussion is the following:
	
	\begin{corollary} \label{CoroVertexOfIntersectionSubgraph}
		A stable $\mathbb{Z}$-subgroup $H = \langle h \rangle$ is a vertex of $I_{\Gamma'}$ if and only if $H$ (equivalently $h$) is contained in $A_{\Gamma'}$.
	\end{corollary}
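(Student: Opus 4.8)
The plan is to unwind both definitions and reduce the statement to a transfer principle for the three types $T$, $D$, $E$ of stable elements. One implication is immediate from Definition~\ref{DefIntersectionGraphI}: by construction of the embedding $I_{\Gamma'}\hookrightarrow I_\Gamma$, the vertices of $I_{\Gamma'}$ are exactly the stable $\mathbb Z$-subgroups of $A_{\Gamma'}$, and such a subgroup is in particular contained in $A_{\Gamma'}$; so if $H$ is a vertex of $I_{\Gamma'}$ then $H\subseteq A_{\Gamma'}$ and $h\in A_{\Gamma'}$.

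For the converse, suppose $H=\langle h\rangle$ is a stable $\mathbb Z$-subgroup of $A_\Gamma$ with $h\in A_{\Gamma'}$; the goal is to exhibit a witness that $H$ is stable \emph{inside} $A_{\Gamma'}$. The main device is the parabolic closure $P_h$ of $h$ in $A_\Gamma$ (Definition~\ref{DefiParabolicClosure}): by Theorem~\ref{ThmIntersectionParabolics}, $P_h$ is the intersection of all parabolic subgroups of $A_\Gamma$ containing $h$, hence $P_h\subseteq A_{\Gamma'}$ because $A_{\Gamma'}$ is one of them. I would then invoke the fact (from \cite{cumplido2022parabolic}) that a parabolic subgroup of $A_\Gamma$ contained in the standard parabolic $A_{\Gamma'}$ is again a parabolic subgroup of $A_{\Gamma'}$, of the same rank; so $P_h$ is a parabolic subgroup of $A_{\Gamma'}$ of rank $1$, $2$ or $3$ according as $H$ is of type $T$, $D$ or $E$ (Remark~\ref{RemType}).

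Now I argue case by case. If $H$ is of type $D$, then $P_h$ is a rank-$2$ parabolic subgroup of $A_{\Gamma'}$, abstractly a non-abelian dihedral Artin group, and $h$ generates its centre, so $H$ is a stable $\mathbb Z$-subgroup of $A_{\Gamma'}$ of type $D$. If $H$ is of type $E$, then by Theorem~\ref{theoremexotic} there is a $(3,3,3)$-triangle $\{a,b,c\}$ such that, up to conjugation, $h=(abcabc)^{\pm1}$ generates the centre of the maximal exotic dihedral Artin subgroup $K=\langle b,abc\rangle$, and $K\subseteq P_h\subseteq A_{\Gamma'}$. The subgroup $K$ is still exotic when viewed in $A_{\Gamma'}$, since a dihedral Artin \emph{parabolic} of $A_{\Gamma'}$ containing $K$ would also be a dihedral Artin parabolic of $A_\Gamma$ containing $K$; and it is still maximal, since a strictly larger dihedral Artin subgroup of $A_{\Gamma'}$ would in particular be one of $A_\Gamma$. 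Hence $H$ is a stable $\mathbb Z$-subgroup of $A_{\Gamma'}$ of type $E$. Finally, if $H$ is of type $T$, then $\langle h\rangle=P_h$ is a rank-$1$ parabolic subgroup of $A_{\Gamma'}$, so it is conjugate inside $A_{\Gamma'}$ to $\langle a'\rangle$ for some standard generator $a'$ of $\Gamma'$, and thus $h$ is conjugate in $A_{\Gamma'}$ to $a'^{\pm1}$.

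The one point that needs genuine care — and the main obstacle — is the type-$T$ case: one must still check that the generator $a'$ is \emph{large} in $\Gamma'$ (not isolated, not the tip of an even-labelled leaf), which is what the definition of a type-$T$ stable $\mathbb Z$-subgroup of $A_{\Gamma'}$ demands. In the applications of this section the parabolic $A_{\Gamma'}$ will always be of a restricted kind, with $\Gamma'$ connected and free of even-labelled leaves, so every standard generator of $A_{\Gamma'}$ is large by Remark~\ref{RemarkFirst}.(1) and the verification is automatic; in general one would instead compare $C_{A_{\Gamma'}}(h)=C_{A_\Gamma}(h)\cap A_{\Gamma'}$ with the explicit description of $C_{A_\Gamma}(h)$ from Proposition~\ref{PropositionCentralisers}. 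Granting this, $H$ is in every case a stable $\mathbb Z$-subgroup of $A_{\Gamma'}$, hence a vertex of $I_{\Gamma'}$, which closes the argument.
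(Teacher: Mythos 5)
Your argument is actually more careful than the paper's own treatment: the paper offers no proof at all here. Corollary~\ref{CoroVertexOfIntersectionSubgraph} is presented as a corollary ``of this discussion'', and the relevant sentence in that discussion (``Conversely, a stable $\mathbb{Z}$-subgroup $\langle g\rangle$ of $A_\Gamma$ is a stable $\mathbb{Z}$-subgroup of $A_{\Gamma'}$ if and only if $\langle g\rangle$ is contained in $A_{\Gamma'}$'') is itself asserted without justification. Your parabolic-closure route (use minimality and Theorem~\ref{ThmIntersectionParabolics} to get $P_h\subseteq A_{\Gamma'}$, then transfer the notion of parabolic from $A_\Gamma$ to $A_{\Gamma'}$) supplies exactly the missing content, and your case analyses for types $D$ and $E$ are correct: for $D$, the rank-two parabolic and its centre both pass down; for $E$, the exotic dihedral $K=C(h)$ lies inside $P_h\subseteq A_{\Gamma'}$, and exoticness and maximality are inherited because every (dihedral) parabolic or dihedral Artin subgroup of $A_{\Gamma'}$ is one of $A_\Gamma$.

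The obstruction you flag in the type-$T$ case is the genuine subtlety, and you are right to single it out. ``Large'' is a property of a standard generator \emph{relative to the ambient defining graph}, and passing to a full subgraph can only shrink the centraliser, so a generator that is large in $\Gamma$ need not remain large in $\Gamma'$ (for instance, if it becomes a leaf-tip in $\Gamma'$, or if $\Gamma'$ is a single edge). So, read for an arbitrary standard parabolic $A_{\Gamma'}$, the ``if'' direction of the corollary is not automatic. In every place the paper actually invokes Corollary~\ref{CoroVertexOfIntersectionSubgraph} (Lemmas~\ref{LemmaTypeEinUniquePiece}, \ref{LemmaNeighboursInSamePiece}, \ref{LemmaPentagonInSinglePiece}, \ref{LemmaIntersectionOfPieces}, \ref{LemmaModLinkvd}), the subgraph $\Gamma'$ is a $3$-cycle, so every vertex of $\Gamma'$ has degree two and hence is large in $A_{\Gamma'}$, and your argument then closes completely. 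Your observation that the applications sit in this safe range is exactly right. One small bookkeeping correction: the fact you invoke, that a parabolic subgroup of $A_\Gamma$ contained in the standard parabolic $A_{\Gamma'}$ is again a parabolic subgroup of $A_{\Gamma'}$ of the same rank, is what this paper cites as \cite[Theorem 1.1]{blufstein2023parabolic} (used in the proof of Lemma~\ref{LemmaModLinkvd}); \cite{cumplido2022parabolic} is the source for the intersection-of-parabolics theorem and the parabolic closure, both of which you also use correctly.
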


	If $g A_{\Gamma'} g^{-1}$ is a parabolic subgroup of $A_{\Gamma}$, we denote by $g I_{\Gamma'} g^{-1}$ the associated intersection subgraph of $I_{\Gamma}$. Note that $I_{\Gamma'}$ and $g I_{\Gamma'} g^{-1}$ are naturally isomorphic.
	
	\begin{definition}
		A subgraph of $I_{\Gamma}$ of the form $g I_{\Gamma'} g^{-1}$ for some $g \in A_{\Gamma}$, where $\Gamma'$ is a $3$-cycle subgraph of $\Gamma$ is called a \emph{piece} of $I_{\Gamma}$. We say that $g I_{\Gamma'} g^{-1}$ is a \emph{k-piece} if it is a piece and its girth as a graph is $k$.
	\end{definition}
	
	Our next goal is to prove Lemma \ref{LemmaPentagonInSinglePiece} below. We start with the following:
	
	\begin{lemma} \label{Lemma5CyclesContainExotic}
		Every $5$-cycle in $I_{\Gamma}$ contains a vertex of type $E$.
	\end{lemma}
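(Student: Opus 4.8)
The plan is to show that a $5$-cycle in $I_\Gamma$ cannot consist entirely of vertices of type $T$ and $D$, so by the trichotomy it must contain a vertex of type $E$. By Lemma~\ref{LemmaPossibleEdges}, the only edges available among type $T$ and $D$ vertices are of the form $T-D$; hence a $5$-cycle using only such vertices would have to be a properly $2$-colored odd cycle, which is impossible. More precisely, a $5$-cycle inside $I_\Gamma^{TD}$ would be an embedded odd cycle in a bipartite graph, contradicting Corollary~\ref{CoroBipartite} (which says $I_\Gamma^{TD}$ is bipartite with even girth $\geq 6$). So the first step is simply: assume for contradiction that all five vertices of the $5$-cycle lie in $\{T,D\}$.

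Then I would argue as follows. Since consecutive vertices of the cycle must be joined by an edge, and the only edges among $T,D$ vertices are $T-D$ (Lemma~\ref{LemmaPossibleEdges}), walking around the cycle forces the types to alternate $T,D,T,D,\dots$. Going once around a cycle of odd length $5$ and returning to the start, this alternation is inconsistent (the $1$st and $6$th vertex would be forced to have opposite types while being the same vertex). Equivalently, the bipartition of $I_\Gamma^{TD}$ into $\mathcal{T}\sqcup\mathcal{D}$ given by Lemma~\ref{LemmaPossibleEdges}/Corollary~\ref{CoroBipartite} forbids odd cycles. This contradiction shows at least one vertex of the $5$-cycle is of type $E$.

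Strictly speaking, Lemma~\ref{LemmaNeighoursOfExotic} already exhibits such a $5$-cycle and shows vertices of type $E$ do occur in $5$-cycles, so the content here is the converse: \emph{every} $5$-cycle meets type $E$. The only subtlety to be careful about is that Lemma~\ref{LemmaPossibleEdges} is an exhaustive list of possible edge types in $I_\Gamma$ (including $E-E$ and $T-E$), so a $5$-cycle could in principle be built only from type $T$ and $E$ vertices, or only from type $E$ vertices; but that is fine, since in all those cases a type $E$ vertex is present. The one case to exclude is precisely the $\{T,D\}$-only case, handled by the bipartiteness argument above. I expect this to be entirely routine; the ``main obstacle,'' such as it is, is just making sure one has correctly reduced to the bipartite subgraph $I_\Gamma^{TD}$ and invoked Corollary~\ref{CoroBipartite} rather than re-deriving parity by hand.
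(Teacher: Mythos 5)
Your proposal is correct and matches the paper's proof exactly: both observe that a $5$-cycle avoiding type $E$ vertices would lie in $I_\Gamma^{TD}$, which is bipartite by Corollary~\ref{CoroBipartite}, and hence cannot contain an odd cycle.
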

	
	\begin{proof}
		Let $\gamma$ be a cycle of $I_{\Gamma}$ that does not contain a vertex of type $E$. Then $\gamma$ is contained in $I_{\Gamma}^{TD}$. In particular, $\gamma$ has an even number of edges by Corollary \ref{CoroBipartite}.
	\end{proof}
	
	\begin{lemma} \label{LemmaTypeEinUniquePiece}
		A vertex of type $E$ is contained in a single $5$-piece.
	\end{lemma}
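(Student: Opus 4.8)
The plan is to use Theorem~\ref{theoremexotic} to pin down the unique candidate for a $5$-piece containing a given type $E$ vertex, and then use the rigidity of standard trees and type $2$ vertices in the CAT(0) complex $D_\Gamma$ to rule out any competitor. Let $H = \langle h \rangle$ be a stable $\mathbb{Z}$-subgroup of type $E$, so that after conjugating we may assume $h = abcabc$ with $m_{ab} = m_{bc} = m_{ac} = 3$, and $H = \langle b, abc\rangle$ as in Theorem~\ref{theoremexotic}. By Remark~\ref{RemarkFirst}.(2), the three generators $a,b,c$ span a $\Delta_{333}$ in $\Gamma$, so $\Gamma' := \Delta(a,b,c)$ is a $3$-cycle subgraph and $I_{\Gamma'}$ is a piece containing $H$; by Definition~\ref{defFstar} and Lemma~\ref{lem:girth 5} this piece has girth $5$, i.e. it is a $5$-piece. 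The $5$-cycle of Figure~\ref{fig5cycle} witnessed in Lemma~\ref{LemmaNeighoursOfExotic} lives inside it. So existence of a containing $5$-piece is clear; the content is uniqueness.

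First I would reduce to showing that any $5$-piece $g I_{\Gamma''} g^{-1}$ containing $H$ forces $g A_{\Gamma''} g^{-1}$ to be the parabolic subgroup $A_{\Gamma'}$ (up to the finitely many graph symmetries of a $3$-cycle, which do not change the subgraph). Indeed, the vertices of $g I_{\Gamma''} g^{-1}$ are exactly the stable $\mathbb{Z}$-subgroups contained in $g A_{\Gamma''} g^{-1}$ by Corollary~\ref{CoroVertexOfIntersectionSubgraph}, so two pieces coincide as subgraphs of $I_\Gamma$ iff the corresponding parabolic $\Delta_{333}$-subgroups coincide. Now $H$ being of type $E$ means $h$ is (up to sign and conjugacy) the central element $abcabc$ of the \emph{exotic} dihedral Artin subgroup $\langle b, abc\rangle$; by Proposition~\ref{PropositionCentralisers}.(3) its centraliser $C(h)$ is precisely this exotic dihedral Artin subgroup, and it is an honest invariant of $H$. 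The key point is that $C(h)$ \emph{determines} the $\Delta_{333}$: one recovers the Euclidean plane $\min(h) \subset D_\Gamma$ on which $h$ (and all of $C(h)$) acts by translations (see the discussion around \cite[Proof of Lemma 4.12 \& Figure 17]{vaskou2023isomorphism}), and the parabolic $A_{\Gamma'}$ is characterised as the setwise stabiliser of this plane, equivalently it is the parabolic closure of $h$ in the sense of Definition~\ref{DefiParabolicClosure}. Since parabolic closure is canonically attached to $h$ and $P_h$ has type $3$ (Remark~\ref{RemType}), any $3$-cycle parabolic $g A_{\Gamma''} g^{-1}$ whose intersection subgraph contains $H$ must contain $h$, hence must contain $P_h = A_{\Gamma'}$; having the same type $3$, it equals $A_{\Gamma'}$. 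Therefore $g I_{\Gamma''} g^{-1} = I_{\Gamma'}$.

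The main obstacle I anticipate is the step asserting that a $3$-cycle parabolic subgroup containing $h$ must contain the parabolic closure $P_h$, and that $P_h$ is itself of type $3$ (i.e. genuinely a $\Delta_{333}$-parabolic and not something smaller). The first half is immediate from minimality in Definition~\ref{DefiParabolicClosure} together with Theorem~\ref{ThmIntersectionParabolics} (the intersection of the given $3$-cycle parabolic with $P_h$ is parabolic and still contains $h$, so by minimality it is $P_h$, whence $P_h$ is contained in the given parabolic); the subtlety is the second half, that $h = abcabc$ has parabolic closure of type exactly $3$ rather than $1$ or $2$. This is exactly what the word ``exotic'' encodes --- $h$ is not conjugate into any proper standard parabolic, equivalently it is not contained in any dihedral Artin parabolic --- and it follows from the classification in Proposition~\ref{PropositionCentralisers} together with Theorem~\ref{theoremexotic}: if $h$ had type $\le 2$ then $C(h)$ would be of type $T$ or $D$, contradicting that $H$ is of type $E$ (Remark~\ref{RemType}). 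Once this is in place, uniqueness of the $5$-piece is a formal consequence, and the proof is complete.
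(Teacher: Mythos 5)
Your proof is correct and follows essentially the same route as the paper's: both arguments hinge on Corollary~\ref{CoroVertexOfIntersectionSubgraph}, Theorem~\ref{ThmIntersectionParabolics}, and the fact (Remark~\ref{RemType}) that the exotic central element $abcabc$ has type $3$. The only organisational difference is that you pin down the unique candidate parabolic as the parabolic closure $P_h$ and show every competitor must contain it, whereas the paper takes two competing $5$-pieces and applies Corollary~\ref{CorollaryTypeOfIntersections} to conclude their underlying rank-$3$ parabolics coincide; these are logically equivalent presentations of the same argument.

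Two small expository remarks. First, your aside about recovering $A_{\Gamma'}$ as the stabiliser of the Euclidean plane $\min(h)$ is not needed and slightly overcomplicates the step: once you know $P_h$ has rank $3$ and $P_h \subseteq A_{\Gamma'}$ (since $h \in A_{\Gamma'}$), you can conclude $P_h = A_{\Gamma'}$ directly, e.g. via \cite[Theorem 1.1]{blufstein2023parabolic}, without any geometry. Second, the sentence ``if $h$ had type $\le 2$ then $C(h)$ would be of type $T$ or $D$'' is a loose paraphrase; the clean justification is simply the statement of Remark~\ref{RemType}, which you also cite. Neither affects the validity of the argument.
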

	
	\begin{proof}
		Existence was proved in Lemma \ref{LemmaNeighoursOfExotic}, so we only prove unicity. Let $v_e$ be our vertex, and let $g I_{\Gamma'} g^{-1}$ and $h I_{\Gamma''} h^{-1}$ be two $5$-pieces containing $v_e$. Up to conjugation, $v_e$ corresponds to a stable $\mathbb{Z}$-subgroup of the form $\langle abcabc \rangle$ for appropriate $a, b, c \in V(\Gamma)$. By Corollary \ref{CoroVertexOfIntersectionSubgraph}, the element $abcabc$ is both contained in $g A_{\Gamma'} g^{-1}$ and in $h A_{\Gamma''} h^{-1}$. Since $abcabc$ has type $3$, the intersection $g A_{\Gamma'} g^{-1} \cap h A_{\Gamma''} h^{-1}$ has type at least $3$. But $g A_{\Gamma'} g^{-1}$ and $h A_{\Gamma''} h^{-1}$ both have type $3$, so by Corollary \ref{CorollaryTypeOfIntersections} it must be that one of the two parabolic subgroup contains the other. Since they have the same type, we obtain $g A_{\Gamma'} g^{-1} = h A_{\Gamma''} h^{-1}$. Using Corollary \ref{CoroVertexOfIntersectionSubgraph} again, this proves that $g I_{\Gamma'} g^{-1} = h I_{\Gamma''} h^{-1}$, as wanted.
	\end{proof}
	
	\begin{lemma} \label{LemmaNeighboursInSamePiece}
		If a vertex $v \in I_{\Gamma}$ of type $D$ or $E$ belongs to a piece then every neighbour $v'$ of $v$ belongs to that piece.
	\end{lemma}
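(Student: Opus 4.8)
The plan is to translate the statement into the language of the modified Deligne complex $D_\Gamma$ via the equivalence between Definitions~\ref{DefIntersectionGraphI} and~\ref{DefIntersectionGraphIII}. A vertex $v$ of type $D$ corresponds to a type~$2$ vertex $v_{ab}$ of $D_\Gamma$ (the coset $gA_{ab}$ with $m_{ab}<\infty$), while a vertex $v$ of type $E$ corresponds, up to conjugation, to a stable $\mathbb{Z}$-subgroup $\langle abcabc\rangle$ with $m_{ab}=m_{ac}=m_{bc}=3$, and by Lemma~\ref{LemmaTypeEinUniquePiece} such a vertex determines a unique $5$-piece, namely $gI_{\Gamma'}g^{-1}$ where $\Gamma'=\{a,b,c\}$ spans a $(3,3,3)$-triangle. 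So the content is: a piece $p=gI_{\Gamma'}g^{-1}$ is a \emph{full} subgraph of $I_\Gamma$ at every vertex of type $D$ or $E$ — i.e. if such a vertex lies in $p$, so do all its $I_\Gamma$-neighbours.

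First I would handle the type $E$ case. Suppose $v$ is of type $E$ and $v\in p$. Since $p$ is a $5$-piece containing a type $E$ vertex, Lemma~\ref{LemmaTypeEinUniquePiece} tells us $p$ is \emph{the} unique $5$-piece through $v$; and by Lemma~\ref{LemmaPossibleEdges}, every neighbour $v'$ of $v$ is of type $T$ or of type $E$. If $v'$ is of type $E$, then $v'$ also lies in a unique $5$-piece, and I want to argue it is the same one as $p$: this should follow because $\langle v, v'\rangle \cong \mathbb{Z}^2$ lies in the centraliser of the type $E$ element generating $v$, which by Proposition~\ref{PropositionCentralisers}.3 is an exotic dihedral Artin subgroup $C(abcabc)$ supported in $A_{\Gamma'}$ (up to conjugation); hence the generator of $v'$ already lies in $gA_{\Gamma'}g^{-1}$, so $v'\in gI_{\Gamma'}g^{-1}=p$ by Corollary~\ref{CoroVertexOfIntersectionSubgraph}. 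If $v'$ is of type $T$, then $v'=\langle h\rangle$ with $h$ conjugate to a standard generator, $h$ commutes with $abcabc$ (after conjugating so that $v$ corresponds to $\langle abcabc\rangle$), and again $h\in C(abcabc)$, which is a rank-$2$ subgroup of $A_{\Gamma'}$; so $h\in A_{\Gamma'}$ and $v'\in p$.

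Next I would treat the type $D$ case. Suppose $v$ is of type $D$ and $v\in p=gI_{\Gamma'}g^{-1}$ with $\Gamma'$ a $(3,3,3)$-triangle on $\{a,b,c\}$. Up to conjugation $v$ is generated by $z_{ab}=(ab)^3$ (or one of the three analogous central elements of $A_{ab}, A_{ac}, A_{bc}$), and $C(z_{ab})=A_{ab}$ by Proposition~\ref{PropositionCentralisers}.2. Any neighbour $v'$ of $v$ has type $T$ (by Lemma~\ref{LemmaPossibleEdges} the only edges at a $D$-vertex are $T$–$D$), so $v'=\langle h\rangle$ with $h$ commuting with $z_{ab}$, hence $h\in C(z_{ab})=A_{ab}\subseteq A_{\Gamma'}$; therefore $v'\in gI_{\Gamma'}g^{-1}=p$ by Corollary~\ref{CoroVertexOfIntersectionSubgraph}. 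The main obstacle I anticipate is the bookkeeping needed to make ``up to conjugation'' uniform across the two cases and to be sure that the centraliser of the relevant stable element is genuinely contained in the rank-$3$ parabolic $gA_{\Gamma'}g^{-1}$ rather than merely commensurable to a subgroup of it — this is where Theorem~\ref{ThmIntersectionParabolics} (parabolics are closed under intersection) and the explicit description in Proposition~\ref{PropositionCentralisers} of the three centraliser types do the real work. Once that containment is pinned down, Corollary~\ref{CoroVertexOfIntersectionSubgraph} finishes each case immediately.
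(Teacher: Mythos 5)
Your proof is correct and follows essentially the same route as the paper's: in both cases one pins down the centraliser of the defining stable element (Proposition~\ref{PropositionCentralisers} / Theorem~\ref{theoremexotic}) and observes that it is literally contained in the rank-$3$ parabolic $gA_{\Gamma'}g^{-1}$, so every neighbour's generator lands in $gA_{\Gamma'}g^{-1}$ and Corollary~\ref{CoroVertexOfIntersectionSubgraph} finishes. One small cosmetic slip in your type $D$ case: you assume the piece is over a $(3,3,3)$-triangle (and write $z_{ab}=(ab)^3$), but a piece can be over any $3$-cycle $\Gamma'$ with arbitrary labels; your argument nevertheless goes through verbatim once that is corrected, since $C(z_{ab})=A_{ab}\subseteq A_{\Gamma'}$ holds whenever $\{a,b\}$ is an edge of $\Gamma'$, regardless of the label.
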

	
	\begin{proof}
		By Lemma \ref{LemmaPossibleEdges}, the only possible edges in $I_{\Gamma}$ are of type $T - D$, of type $T - E$ or of type $E - E$. 
		
		If $v$ is of type $E$, then up to conjugating it corresponds to the stable $\mathbb{Z}$-subgroup $\langle abcabc \rangle$ for some appropriate generators $a, b, c \in V(\Gamma)$. By Corollary \ref{CoroVertexOfIntersectionSubgraph}, $v$ belongs to $I_{\Gamma'}$ thus $abcabc \in A_{\Gamma'}$. As $v$ and $v'$ are adjacent, $v'$ corresponds to a stable $\mathbb{Z}$-subgroup $\langle h \rangle$ and $h \in C(abcabc)$. By Theorem \ref{theoremexotic}, we have $C(abcabc) =  \langle b, abc \rangle \subseteq A_{\Gamma'} = \langle a, b, c \rangle$. Thus $h \in A_{\Gamma'}$. By Corollary \ref{CoroVertexOfIntersectionSubgraph}, this means that $v' \in I_{\Gamma'}$.
		
		If $v$ is of type $D$, we can proceed as above. Since $v'$ must be of type $T$, we obtain $h \in C(z_{ab}) = A_{ab} \subseteq A_{\Gamma'}$, so we must also have $h \in A_{\Gamma'}$, i.e. $v' \in I_{\Gamma'}$.
	\end{proof}
	
	\begin{lemma} \label{LemmaPentagonInSinglePiece}
		Each $5$-cycle in $I_{\Gamma}$ is contained in a single $5$-piece.
	\end{lemma}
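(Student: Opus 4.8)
The plan is to combine the structural facts already established: every $5$-cycle contains a vertex of type $E$ (Lemma~\ref{Lemma5CyclesContainExotic}), a vertex of type $E$ lies in a unique $5$-piece (Lemma~\ref{LemmaTypeEinUniquePiece}), and if a vertex of type $D$ or $E$ belongs to a piece then so do all of its neighbours (Lemma~\ref{LemmaNeighboursInSamePiece}). First I would take an arbitrary $5$-cycle $\gamma$ in $I_\Gamma$. By Lemma~\ref{Lemma5CyclesContainExotic}, $\gamma$ contains at least one vertex $v_e$ of type $E$. By Lemma~\ref{LemmaTypeEinUniquePiece}, $v_e$ is contained in a (unique) $5$-piece $\mathcal{P} = g I_{\Gamma'} g^{-1}$, which will be the piece we claim contains all of $\gamma$.

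The key step is to propagate membership in $\mathcal{P}$ around the cycle. Since $v_e \in \mathcal{P}$ and $v_e$ has type $E$, Lemma~\ref{LemmaNeighboursInSamePiece} shows that both neighbours of $v_e$ along $\gamma$ lie in $\mathcal{P}$. Now I need to continue this around the remaining three vertices. The neighbours of $v_e$ have type $T$ or $E$ by Lemma~\ref{LemmaPossibleEdges}; the subtlety is that Lemma~\ref{LemmaNeighboursInSamePiece} only lets me propagate through vertices of type $D$ or $E$, not through type $T$ vertices. So the main obstacle is handling the case where a neighbour of $v_e$ on $\gamma$ has type $T$: I cannot directly conclude that \emph{its} other neighbour lies in $\mathcal{P}$. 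To get around this I would analyze the type pattern of a $5$-cycle more carefully. By Lemma~\ref{LemmaPossibleEdges} the edges are of types $T\!-\!D$, $T\!-\!E$, $E\!-\!E$ only; since $I_\Gamma^{TD}$ is bipartite (Corollary~\ref{CoroBipartite}) and hence contains no odd cycle, a $5$-cycle cannot avoid type $E$, and in fact one can enumerate the possible cyclic type-sequences. For instance, a type $D$ vertex on $\gamma$ has both neighbours of type $T$, and a type $T$ vertex has neighbours of type $D$ or $E$; tracing the constraints around a $5$-cycle, every type $T$ vertex of $\gamma$ is adjacent (within $\gamma$) to at least one vertex of type $D$ or $E$. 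Whenever I reach a vertex $w$ of type $D$ or $E$ already known to be in $\mathcal{P}$, Lemma~\ref{LemmaNeighboursInSamePiece} pushes membership to both of $w$'s neighbours on $\gamma$; iterating, and using that every vertex of $\gamma$ is either of type $D$ or $E$ or adjacent on $\gamma$ to such a vertex, I conclude all five vertices of $\gamma$ lie in $\mathcal{P}$.

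Finally, once all vertices (and hence, being an induced path of edges of $I_\Gamma$ between them, all edges) of $\gamma$ lie in the subgraph $\mathcal{P} = g I_{\Gamma'} g^{-1}$, the cycle $\gamma$ is contained in $\mathcal{P}$. For uniqueness: if $\gamma$ were contained in two $5$-pieces $\mathcal{P}_1$ and $\mathcal{P}_2$, then in particular the type $E$ vertex $v_e$ of $\gamma$ (which exists by Lemma~\ref{Lemma5CyclesContainExotic}) would lie in both, contradicting Lemma~\ref{LemmaTypeEinUniquePiece}. Hence the $5$-piece containing $\gamma$ is unique, completing the proof. The one place that needs care, and which I expect to be the crux of the write-up, is the case analysis showing that the type-$T$ vertices on a $5$-cycle are always reachable by a chain of type-$(D\text{ or }E)$ vertices starting from $v_e$; I would handle this by simply listing the admissible cyclic type patterns of a $5$-cycle subject to the edge constraints of Lemma~\ref{LemmaPossibleEdges} and checking each one.
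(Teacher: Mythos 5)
Your overall strategy (find a type~$E$ vertex of $\gamma$, use Lemma~\ref{LemmaTypeEinUniquePiece} to pin down a candidate $5$-piece $\mathcal{P}$, and propagate membership around the cycle via Lemma~\ref{LemmaNeighboursInSamePiece}) is the same as the paper's up to the last step, but the propagation argument has a genuine gap. Lemma~\ref{LemmaNeighboursInSamePiece} only propagates membership \emph{from} a vertex of type $D$ or $E$ \emph{to} its neighbours; it says nothing in the reverse direction. Yet your closing sentence ("every vertex of $\gamma$ is either of type $D$ or $E$ or adjacent on $\gamma$ to such a vertex") tries to use exactly the reverse direction: knowing that a type~$T$ vertex is in $\mathcal{P}$ does not allow you to conclude that its type~$D$ or $E$ neighbour on $\gamma$ is in $\mathcal{P}$.

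Concretely, the admissible cyclic type patterns for a $5$-cycle under Lemma~\ref{LemmaPossibleEdges} include $E\text{--}T\text{--}D\text{--}T\text{--}E$ and $E\text{--}T\text{--}E\text{--}T\text{--}E$. In both, there is a vertex $v_3$ of type $D$ or $E$ whose two cycle-neighbours are both type $T$. Starting from the initial type~$E$ vertex $v_e$, your chain of implications places the two neighbours of $v_e$ and possibly further vertices into $\mathcal{P}$, but it stalls exactly at $v_3$: both its neighbours that you already know are in $\mathcal{P}$ are of type $T$, so Lemma~\ref{LemmaNeighboursInSamePiece} gives nothing. In the pattern $E\text{--}T\text{--}E\text{--}T\text{--}E$ the vertex $v_3$ is itself of type $E$ and hence lives in \emph{some} $5$-piece by Lemma~\ref{LemmaTypeEinUniquePiece}, but equality of that piece with $\mathcal{P}$ is precisely what must be proved. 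The paper's Case 2 treats this residual situation with a substantive additional argument: it considers the parabolic closure $P_g$ of the stable generator $g$ of $v_3$, intersects with $A_{\Gamma'}$, invokes Corollary~\ref{CorollaryTypeOfIntersections} to bound the rank of $P_g\cap A_{\Gamma'}$, and then derives a contradiction by building a forbidden $4$-cycle in a $5$-piece, which contradicts the girth bound from Definition~\ref{defFstar} and Lemma~\ref{lem:girth 5}. That argument is the crux of the lemma, and your proposal is missing it.
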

	
	\begin{proof}
		It is enough to prove that each $5$-cycle in $I_{\Gamma}$ is contained is some $5$-piece, as unicity then follows from the fact that each $5$-cycle contains a vertex of type $E$ (Lemma \ref{LemmaNeighoursOfExotic}) and that each vertex of type $E$ is contained in a single $5$-piece (Lemma \ref{LemmaTypeEinUniquePiece}).
		
		Let $\gamma$ be a $5$-cycle of $I_{\Gamma}$. By Lemma \ref{Lemma5CyclesContainExotic}, we know that $\gamma$ contains a vertex $v_e$ of type $E$. By Lemma \ref{LemmaPossibleEdges}, we also know that the only possible edges in $I_{\Gamma}$ are of type $T - D$, of type $T - E$ or of type $E - E$. We label the other vertices of $\gamma$ by $v_1, v_2, v_3, v_4$, and we proceed case by case, according to the following picture:
		
		\begin{figure}[H]
			\centering
			\includegraphics[scale=0.71]{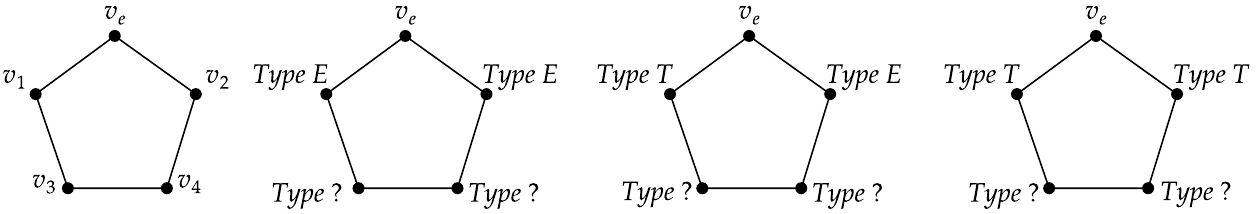}
			\caption{From left to right: The general case, Case 1, Case 2, and Case 3.}
		\end{figure}
		
		Note that the neighbours $v_1$ and $v_2$ of $v_e$ are always contained in $I_{\Gamma'}$ by Lemma \ref{LemmaNeighboursInSamePiece}. We will use Lemma \ref{LemmaNeighboursInSamePiece} extensively.
		\medskip
		
		\noindent \underline{Case 1: $v_1$ and $v_2$ are of type $E$.} By Lemma \ref{LemmaNeighboursInSamePiece} the vertices $v_3$ and $v_4$ also belong to $I_{\Gamma'}$, so we are done. 
		\medskip
		
		\noindent \underline{Case 2: $v_1$ is of type $T$ and $v_2$ are of type $E$.} By Lemma \ref{LemmaNeighboursInSamePiece} we know that $v_4$ belongs to $I_{\Gamma'}$. We also know by Lemma \ref{LemmaPossibleEdges} that $v_3$ is either of type $D$ or $E$, and that $v_4$ is either of type $T$ or $E$. If $v_4$ is of type $E$ then we can use Lemma \ref{LemmaNeighboursInSamePiece} to obtain that $v_3$ also belongs to $I_{\Gamma'}$, and we are done.
		
		The last case to consider is when $v_4$ is of type $T$, and we want to show that $v_3$ also belongs to $I_{\Gamma'}$. So we now assume that $v_4$ is of type $T$, and let $\langle g \rangle$ be the stable $\mathbb{Z}$-subgroup corresponding to $v_3$. If $g \in A_{\Gamma'}$, then $v_3 \in I_{\Gamma'}$ by Corollary \ref{CoroVertexOfIntersectionSubgraph}, and we are done. So we suppose that $g \notin A_{\Gamma'}$, and we consider the subgroup $P \coloneqq P_g \cap A_{\Gamma'}$. Note that $P$ is a parabolic subgroup by Theorem \ref{ThmIntersectionParabolics}. Note that none of $P_g$ or $A_{\Gamma'}$ contains the other, thus we can use Corollary \ref{CorollaryTypeOfIntersections}: $type(P) < \min\{type(g), 3\} = type(g)$. There are two cases:
		\medskip
		
		\noindent $\bullet$ If $v_3$ is of type $D$, then $type(P) < 2$ hence $P$ has rank at most $1$. In particular, it is cyclic. This contradicts the fact that $C(g)$ contains the two distinct stable $\mathbb{Z}$-subgroups associated with $v_1$ and $v_4$, because both vertices are adjacent to $v_3$.
		\medskip
		
		\noindent $\bullet$ If $v_3$ is of type $E$, then $P$ has rank at most $2$. It cannot be that $P$ has rank at most $1$, by the same reason as above. So $P$ has rank $2$, and in particular, there is a type $D$ vertex $v_d$ in $I_{\Gamma}$ that corresponds to $P$ (i.e. the corresponding stable $\mathbb{Z}$-subgroup is the centre of $P$). Call $H_1$ and $H_4$ the two stable $\mathbb{Z}$-subgroups corresponding to $v_1$ and $v_4$ respectively. On one hand, $H_1, H_4 \subseteq P_g$ because $v_1$ and $v_4$ are attached to $v_3$. On the other hand, we know that $H_1, H_4 \subseteq A_{\Gamma'}$ as $v_1, v_4 \in I_{\Gamma'}$ (see Corollary \ref{CoroVertexOfIntersectionSubgraph}). This means we actually have $H_1, H_4 \subseteq P$. Altogether, the string $(v_1, v_3, v_4, v_d)$ forms a $4$-cycle in the $5$-piece associated with $P_g$, which contradicts Definition \ref{defFstar} and Lemma~\ref{lem:girth 5}.
		\medskip
		
		\noindent \underline{Case 3: $v_1$ and $v_2$ are of type $T$.} By Lemma \ref{LemmaPossibleEdges}, $v_3$ and $v_4$ can only be or type $D$ or $E$. But by the same Lemma, there is no $D - D$ or $D - E$ edges in $I_{\Gamma}$. This forces $v_3$ and $v_4$ to both be of type $E$. We are now in the same situation as in Case 2, but with $v_3$ playing the role of $v_e$. So we conclude similarly.
	\end{proof}

	\subsection{Characterising vertices of type $E$}
	
	\begin{definition}
		Let $v$ be a vertex of $I_{\Gamma}$. We define the \emph{modified link} of $v$ as the graph $\widetilde{lk}(v)$ defined as follows:
		\begin{itemize}
			\item The vertices of $\widetilde{lk}(v)$ are the vertices of $lk(v)$ that are contained in a $5$-cycle of $I_{\Gamma}$ ;
			\item We put an edge between two vertices $w$ and $w'$ if there is      an embedded $5$-cycle in $I_{\Gamma}$ that contain $w$, $v$ and         $w'$ as consecutive vertices.
		\end{itemize}
		If $G$ is a subgraph of $I_{\Gamma}$, we write $\widetilde{lk}_G(v)$ to denote the modified link of $v$ in $G$ (where we require both vertices and $5$-cycles to be contained in $G$).
	\end{definition}
	
	The following lemma directly follows from the previous definition:
	
	\begin{lemma} \label{LemmaDiscreteModLink}
		Let $v \in I_{\Gamma}$. If $v$ is not contained in any $5$-cycle, then $\widetilde{lk}(v)$ is empty.
	\end{lemma}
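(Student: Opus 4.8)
The plan is to read the statement off directly from the definition of the modified link, since every part of that definition --- both the vertex set and the edge set of $\widetilde{lk}(v)$ --- is phrased in terms of embedded $5$-cycles of $I_\Gamma$ passing through $v$. So I would begin by assuming that $v\in I_\Gamma$ lies on no embedded $5$-cycle of $I_\Gamma$, and then check the two defining clauses in turn.

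First I would handle the edges: by definition, an edge of $\widetilde{lk}(v)$ joining vertices $w$ and $w'$ is witnessed by an embedded $5$-cycle of $I_\Gamma$ having $w$, $v$, $w'$ as consecutive vertices, and such a cycle contains $v$, contradicting the hypothesis; hence $\widetilde{lk}(v)$ has no edges. Second, for the vertex set: a vertex of $\widetilde{lk}(v)$ is a vertex $w$ of $lk(v)$ that lies on a $5$-cycle through $v$, and again no such $w$ exists under the hypothesis. Therefore $\widetilde{lk}(v)$ has neither vertices nor edges, i.e.\ it is empty.

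There is no genuine obstacle here: the lemma is an immediate unravelling of the definition, recorded for later use, where it will be paired with the complementary fact (extracted from Lemma~\ref{LemmaNeighoursOfExotic}, that a type $E$ vertex lies on a $5$-cycle with both neighbours on that cycle again lying on $5$-cycles) that a type $E$ vertex has \emph{nonempty} modified link, so that $\widetilde{lk}(\cdot)$ can be used as a local invariant recognising type $E$ vertices. The only point worth pinning down is that the $5$-cycles entering the vertex clause of the definition of $\widetilde{lk}(v)$ are to be understood as passing through $v$ --- this is forced by the role of $\widetilde{lk}(v)$ as a replacement for the ordinary link $lk(v)$ --- after which the argument above is complete; note that even under the most literal reading of that clause the edge analysis already shows $\widetilde{lk}(v)$ is edgeless, which is all that the subsequent sections use.
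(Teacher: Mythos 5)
Your proposal is correct and matches the paper's (implicit) proof exactly: the paper simply asserts that the lemma ``directly follows from the previous definition,'' and your argument is precisely the unravelling of that definition. You are also right to flag the reading of the vertex clause --- the $5$-cycles there are indeed meant to be the same class of $5$-cycles through $v$ appearing in the edge clause (this is both what is needed for the lemma to hold literally and what is consistent with the subsequent definition of $\widetilde{lk}_G(v)$), and your observation that even the strictly literal reading forces an edgeless modified link is a fair backstop given that the downstream applications in Corollary~\ref{CorollaryModifiedLinks} and Proposition~\ref{PropExoticIIFSomeCondition} only ever compare $\widetilde{lk}(v)$ against Farey graphs.
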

	
	\begin{lemma} \label{LemmaLinkOfExoticIsRestricted}
		Let $v_e$ be a vertex of type $E$, and up to conjugation let $I_{\Gamma'}$ be the unique $5$-piece containing it (see Lemma \ref{LemmaTypeEinUniquePiece}). Then $\widetilde{lk}(v) = \widetilde{lk}_{I_{\Gamma'}}(v)$.
	\end{lemma}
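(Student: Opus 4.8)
The plan is to show the two modified links coincide by proving that every embedded $5$-cycle of $I_\Gamma$ through $v_e$ is entirely contained in the unique $5$-piece $I_{\Gamma'}$, and that conversely every vertex of $lk(v_e)$ that lies on some $5$-cycle actually lies on a $5$-cycle inside $I_{\Gamma'}$. The first of these is essentially immediate from the machinery already in place: by Lemma~\ref{LemmaPentagonInSinglePiece} each $5$-cycle of $I_\Gamma$ is contained in a single $5$-piece, and by Lemma~\ref{LemmaTypeEinUniquePiece} a vertex of type $E$ sits in only one $5$-piece, namely $I_{\Gamma'}$. Hence any embedded $5$-cycle through $v_e$ must live in $I_{\Gamma'}$. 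This forces $\widetilde{lk}(v_e)\subseteq I_{\Gamma'}$ as a vertex set, and moreover any edge of $\widetilde{lk}(v_e)$ — which by definition comes from an embedded $5$-cycle through some $w, v_e, w'$ — is witnessed by a $5$-cycle inside $I_{\Gamma'}$, so it is also an edge of $\widetilde{lk}_{I_{\Gamma'}}(v_e)$.

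The reverse inclusion $\widetilde{lk}_{I_{\Gamma'}}(v_e)\subseteq\widetilde{lk}(v_e)$ is the direction that needs a small argument: a priori a vertex $w\in lk(v_e)$ could lie on a $5$-cycle of $I_\Gamma$ while failing to lie on any $5$-cycle contained in $I_{\Gamma'}$, or the edge relation in the big modified link could be strictly larger. But I claim this cannot happen. First, by Lemma~\ref{LemmaNeighboursInSamePiece}, since $v_e$ has type $E$, \emph{every} neighbour of $v_e$ in $I_\Gamma$ already lies in $I_{\Gamma'}$; so the vertex sets of $lk(v_e)$ in $I_\Gamma$ and in $I_{\Gamma'}$ agree. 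Combined with Lemma~\ref{LemmaPentagonInSinglePiece}, a neighbour $w$ of $v_e$ lies on a $5$-cycle of $I_\Gamma$ if and only if it lies on a $5$-cycle of $I_{\Gamma'}$ (any such $5$-cycle through $v_e$ is forced into $I_{\Gamma'}$; and if $w$ lies on a $5$-cycle not through $v_e$, one still needs that this $5$-cycle passes through a vertex of type $E$ by Lemma~\ref{Lemma5CyclesContainExotic}, and then uses uniqueness of the piece — but actually for the vertex set of $\widetilde{lk}(v_e)$ we may simply produce the $5$-cycle through $v_e, w$ guaranteed by the structure of $I_{\Gamma'}$ described in Lemma~\ref{LemmaNeighoursOfExotic}). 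Hence the vertex sets match. For edges, if $w, w'$ are consecutive with $v_e$ on an embedded $5$-cycle of $I_\Gamma$, that $5$-cycle passes through $v_e$ (type $E$), so by Lemma~\ref{LemmaPentagonInSinglePiece} it lies in $I_{\Gamma'}$, giving the edge in $\widetilde{lk}_{I_{\Gamma'}}(v_e)$ as well; and conversely a $5$-cycle in $I_{\Gamma'}$ witnessing an edge in $\widetilde{lk}_{I_{\Gamma'}}(v_e)$ is \emph{a fortiori} a $5$-cycle in $I_\Gamma$. So the edge relations agree too, and $\widetilde{lk}(v_e)=\widetilde{lk}_{I_{\Gamma'}}(v_e)$.

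The main (and only real) obstacle is bookkeeping: one must be careful that the definition of the modified link only ever uses $5$-cycles \emph{through $v_e$}, so that Lemma~\ref{LemmaPentagonInSinglePiece} applies directly and there is no need to worry about $5$-cycles of $I_\Gamma$ that avoid $v_e$ when determining the edge set; and one must invoke Lemma~\ref{LemmaNeighboursInSamePiece} to pin down the vertex set of the link inside the piece. No geometric input about the Deligne complex is needed beyond what is already packaged into Lemmas~\ref{LemmaNeighoursOfExotic}, \ref{Lemma5CyclesContainExotic}, \ref{LemmaTypeEinUniquePiece}, \ref{LemmaNeighboursInSamePiece} and \ref{LemmaPentagonInSinglePiece}.
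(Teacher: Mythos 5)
Your proposal is correct and takes essentially the same route as the paper: the edge containment follows from Lemma~\ref{LemmaPentagonInSinglePiece} (every $5$-cycle sits in a single $5$-piece) together with Lemma~\ref{LemmaTypeEinUniquePiece} (the only $5$-piece through $v_e$ is $I_{\Gamma'}$), and the vertex containment uses Lemma~\ref{LemmaNeighboursInSamePiece}, exactly as in the paper's proof. One small misattribution worth fixing: when you appeal to Lemma~\ref{LemmaNeighoursOfExotic} to produce a $5$-cycle of $I_{\Gamma'}$ through $v_e$ and an \emph{arbitrary} neighbour $w$, that lemma only exhibits a single explicit $5$-cycle; the fact you actually need --- that every neighbour of $v_e$ inside a $5$-piece lies on some $5$-cycle of that piece --- is what the later identification of $\widetilde{lk}_{I_{\Gamma'}}(v_e)$ with the Farey graph (Lemma~\ref{LemmaModLinkve}) records, and it follows from the interpretation of the $5$-piece as $\mathcal{C}_5$, not from Lemma~\ref{LemmaNeighoursOfExotic}.
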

	
	\begin{proof}
		First of all, by Lemma \ref{LemmaNeighboursInSamePiece}, every neighbour $v$ or $v_e$ also belongs to $I_{\Gamma'}$. Let us now consider an edge in $\widetilde{lk}(v)$. By definition, this edge corresponds to a $5$-cycle $\gamma$ containing the three vertices $v$, $v_e$ and $v'$, where $v$ and $v'$ are two neighbours of $v_e$. On the one hand, Lemma \ref{LemmaPentagonInSinglePiece} implies that $\gamma$ is contained in a single $5$-piece. On the second hand, this piece has to be $I_{\Gamma'}$ because it is the unique one that contains $v_e$ (see Lemma \ref{LemmaTypeEinUniquePiece}). This proves our edge already belonged to $\widetilde{lk}_{I_{\Gamma'}}(v)$.
	\end{proof}

	Recall that the \emph{Farey graph} is a simplicial graph whose vertices correspond to rational numbers in $\mathbb Q\cup \{\infty\}$. Given two vertices $x = a/c$ and $y = b/d$ written in irreducible forms, i.e. $a,b,c,d\in \mathbb Z\cup \{\infty\}$ and $\gcd(a,c)=\gcd(b,d)=1$ (the irreducible forms of $\infty$ is defined to be $1/0$ and $-1/0$), we define $x$ and $y$ are adjacent if $ad-bc=\pm 1$. See below for a picture of Farey graph, which we take from \cite{luo2000}.
	
	\begin{figure}[H]
		\centering
		\includegraphics[scale=0.75]{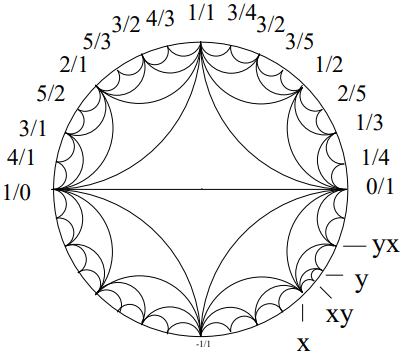}
		\caption{Farey graph.}
	\end{figure}

	Note that the Farey graph can also be viewed as the 1-skeleton of tessellation of the hyperbolic plane by ideal triangles, (the vertices of the Farey graph are contained in the boundary at infinity). It follows from this description that the link of each vertex in Farey graph is a bi-infinite line, which is a fact we will use several times later.
	
	\begin{lemma} \label{LemmaModLinkve}
		Let $I_{\Gamma'}$ be a $5$-piece. For every vertex $v \in I_{\Gamma'}$, the modified link $\widetilde{lk}(v,I_{\Gamma'})$ is isomorphic to the Farey graph.
	\end{lemma}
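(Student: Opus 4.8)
The plan is to first pin down the graph $I_{\Gamma'}$, then transport the whole computation into the curve graph $\mathcal{C}_5$.

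\emph{Identifying the $5$-piece.} By definition $I_{\Gamma'}$ is conjugate to $I_{\Gamma''}$ for some $3$-cycle subgraph $\Gamma''\subseteq\Gamma$ with $I_{\Gamma''}$ of girth $5$. If $\Gamma''\neq\Delta_{333}$, then $\Gamma''$, being itself a triangle, does not contain $\Delta_{333}$ as a full subgraph, so by Remark~\ref{RemarkFirst}.(2) there are no vertices of type $E$ in $I_{\Gamma''}$; hence $I_{\Gamma''}=I_{\Gamma''}^{TD}$ is bipartite with even girth (Corollary~\ref{CoroBipartite}), a contradiction. Thus $\Gamma''=\Delta_{333}$, and $F_*$ together with the conjugation isomorphism $I_{\Gamma''}\to gI_{\Gamma''}g^{-1}$ identifies $I_{\Gamma'}$ with $\mathcal{C}_5$ (Definition~\ref{defFstar}). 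It therefore suffices to prove $\widetilde{lk}(v,\mathcal{C}_5)\cong$ Farey graph for every vertex $v$; by vertex-transitivity of $\mcg(\Sigma_5)\actson\mathcal{C}_5$ (Lemma~\ref{lemmatransitive}) we may take $v$ to be any fixed essential simple closed curve $\beta$.

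\emph{The vertex set.} Since $\mathcal{C}_5$ has girth $5$ (Lemma~\ref{lem:girth 5}) and $\mcg(\Sigma_5)$ acts vertex-transitively, \emph{every} vertex of $\mathcal{C}_5$ lies on an embedded $5$-cycle; hence the vertex set of $\widetilde{lk}(\beta)$ equals $lk(\beta)$, namely the essential simple closed curves disjoint from $\beta$. Cutting $\Sigma_5$ along $\beta$ produces a copy of $\Sigma_{0,3}$ and a copy $Y$ of $\Sigma_{0,4}$ (the cutting circle counted as a puncture); as $\Sigma_{0,3}$ carries no essential curve and $Y\hookrightarrow\Sigma_5$ is $\pi_1$-injective, restriction gives a bijection between the vertex set of $\widetilde{lk}(\beta)$ and that of $\mathcal{C}(Y)$, which (with minimal-intersection adjacency, i.e.\ $\perp_0$) is the Farey graph.

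\emph{The edges.} It remains to check that under this bijection $w,w'$ span an edge of $\widetilde{lk}(\beta)$ if and only if $I(w,w')=2$ — equivalently $w\perp_0 w'$, since every curve on $Y$ is separating — which is exactly Farey-adjacency. For ``$\Rightarrow$'': such an edge is witnessed by an embedded $5$-cycle $\omega$ with $w,\beta,w'$ consecutive, and by the argument in the proof of Lemma~\ref{LemmaOneOrbitOf5Cycles} any two vertices of such an $\omega$ at distance $2$ satisfy $\perp_0$, so $w\perp_0 w'$. For ``$\Leftarrow$'': suppose $w\perp_0 w'$, both disjoint from $\beta$. Fix any embedded $5$-cycle $\omega$ in $\mathcal{C}_5$ (it exists as the girth is $5$); three consecutive vertices $\beta_1,\beta_2,\beta_3$ of $\omega$ satisfy $I(\beta_2,\beta_1)=I(\beta_2,\beta_3)=0$ and $\beta_1\perp_0\beta_3$. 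By the change-of-coordinates principle — cut along the first curve and use the standard classification of $\perp_0$-pairs in $\Sigma_{0,4}$, allowing puncture permutations inside $\mcg^\pm(\Sigma_5)$ — the triple $(\beta,w,w')$ lies in the same $\mcg^\pm(\Sigma_5)$-orbit as $(\beta_2,\beta_1,\beta_3)$; pushing $\omega$ forward by such an element gives an embedded $5$-cycle through $w,\beta,w'$ consecutively, so $ww'$ is an edge of $\widetilde{lk}(\beta)$. Combining the two steps yields $\widetilde{lk}(\beta,\mathcal{C}_5)\cong$ Farey graph.

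The only delicate point is the converse in the last step: producing an \emph{embedded} $5$-cycle realising a prescribed Farey edge with $\beta$ between its endpoints. I plan to deduce this from change of coordinates, which reduces it to exhibiting a single embedded $5$-cycle; here Lemma~\ref{LemmaOneOrbitOf5Cycles} does double duty, both forcing the $\perp_0$ pattern along any $5$-cycle and, read backwards, showing that this pattern is the only constraint.
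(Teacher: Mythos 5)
Your proof is correct and follows essentially the same route as the paper's: identify the $5$-piece with $\mathcal{C}_5$ via Definition~\ref{defFstar}, cut along $\beta$ to pass to the curve complex of a $4$-holed sphere, and show that the edge relation in the modified link is exactly $\perp_0$ by appealing to the first paragraph of the proof of Lemma~\ref{LemmaOneOrbitOf5Cycles} in one direction and change of coordinates in the other, finishing with Dehn's identification of $(\mathcal{C}(\Sigma_4),\perp_0)$ with the Farey graph. The only genuine addition you make is spelling out that a $5$-piece must arise from a $\Delta_{333}$-triangle (via Remark~\ref{RemarkFirst}.(2) and bipartiteness of $I_{\Gamma}^{TD}$), a step the paper leaves implicit; everything else — including the level of detail given to the “$\perp_0\Rightarrow$ $5$-cycle” converse — matches the paper's treatment.
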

	
	\begin{proof}
		By Definition~\ref{defFstar}, $I_{\Gamma'}$ and the curve graph $\mathcal C_5$ of the 5-punctured sphere are isomorphic, so it suffices to prove the same statement for $\mathcal C_5$. Let $\alpha$ be the isotopy class of simple closed curves corresponding to $v$. Take a representative $a\in \alpha$, and let $\mathcal D_3$ be the disk with three punctures in $\Sigma_5$ bounded by $a$. As vertices in $\lk_{\mathcal C_5}(v)$ can be represented by simple closed curves disjoint from $a$, and these representatives must be contained in $\mathcal D_3$, we know vertices in $\lk_{\mathcal C_5}(v)$ are in 1-1 correspondence with isotopy classes of essential simple closed curves in $\mathcal D_3$. Let $(\mathcal C(\mathcal D_3),\perp_0)$ be the graph whose vertices are isotopy classes of essential simple closed curves in $\mathcal D_3$, and edges correspond to the relation $\perp_0$ defined in the beginning of Section~\ref{secsmallcycle}. Given $v_1,v_2$ adjacent to $v$ in $\mathcal C_5$ represented by classes $\alpha_1$ and $\alpha_2$, viewed as vertices in $(\mathcal C(\mathcal D_3),\perp_0)$. If $\alpha_1\perp_0 \alpha_2$, then $v_1,v,v_2$ are contained in an embedded 5-cycle in $\mathcal C_5$. Conversely, by the first paragraph of the proof of Lemma~\ref{LemmaOneOrbitOf5Cycles}, if $v_1,v,v_2$ are consecutive vertices in an embedded 5-cycle in $\mathcal C_5$, then $\alpha_1\perp_0\alpha_2$. Thus $\widetilde{\lk}_{I_{\Gamma'}}(v)$ is isomorphic to $(\mathcal C(\mathcal D_3),\perp_0)$. Note that $(\mathcal C(\mathcal D_3),\perp_0)$ is isomorphic to $(\mathcal C(\Sigma_4),\perp_0)$ with $\Sigma_4$ being the sphere with 4 punctures, and it is a classical observation due to Max Dehn that $(\mathcal C(\Sigma_4),\perp_0)$ is isomorphic to the Farey graph. Hence the lemma follows.
	\end{proof}

	\begin{remark} \label{RemarkModifiedLinkIn5Pieces}
		Let $I_{\Gamma'}$ be a $5$-piece. Because $I_{\Gamma'}$ is vertex transitive by Lemma \ref{lemmatransitive}, then for every $v \in I_{\Gamma'}$ the modified link $\widetilde{lk}_{I_{\Gamma'}}(v)$ is the same. Using Lemma \ref{LemmaModLinkve}, $\widetilde{lk}_{I_{\Gamma'}}(v)$ is actually isomorphic to the Farey graph.
	\end{remark}
	
	\begin{definition}
		Let $G$ be a graph. For a vertex $v \in G$, the \emph{star} $st(v)$ is the full subgraph of $G$ spanned by $v$ and all the vertices that are adjacent to $v$. 
	\end{definition}
	
	\begin{lemma} \label{LemmaIntersectionOfPieces}
		Let us consider two distinct pieces of $I_{\Gamma}$, with no assumption on their girth. Up to conjugation, these pieces take the form $I_{\Gamma'}$ and $g I_{\Gamma''} g^{-1}$ for some $g \in A_{\Gamma}$ and some (non-necessarily distinct) triangular subgraphs $\Gamma', \Gamma''$ of $\Gamma$. Then exactly one of the following happens:
		\begin{enumerate}
			\item $I_{\Gamma}$ and $g I_{\Gamma''} g^{-1}$ do not intersect ;
			\item $I_{\Gamma} \cap g I_{\Gamma''} g^{-1}$ is a single vertex $v_t$ of type $T$ ;
			\item $I_{\Gamma} \cap g I_{\Gamma''} g^{-1}$ is the star $st(v_d)$ of a vertex $v_d$ of type $D$.
		\end{enumerate}
	\end{lemma}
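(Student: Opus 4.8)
The plan is to translate the statement about intersection subgraphs back into a statement about parabolic subgroups, and then use the type/codimension machinery of Corollary~\ref{CorollaryTypeOfIntersections} together with the bipartite structure of $I_\Gamma^{TD}$ (Corollary~\ref{CoroBipartite}) to control the intersection. First I would set $P \coloneqq A_{\Gamma'}$ and $Q \coloneqq g A_{\Gamma''} g^{-1}$, two parabolic subgroups of type $3$. By Theorem~\ref{ThmIntersectionParabolics} their intersection $R \coloneqq P \cap Q$ is again parabolic, and since the two pieces are distinct, Corollary~\ref{CoroVertexOfIntersectionSubgraph} tells us $R$ is a \emph{proper} parabolic subgroup of each; neither contains the other, so Corollary~\ref{CorollaryTypeOfIntersections} gives $type(R) < 3$, i.e. $type(R) \in \{0,1,2\}$. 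The case $type(R)=0$ (so $R$ trivial, or more precisely $R$ a parabolic of type $0$) is the content of alternative~(1): I would argue that a stable $\mathbb Z$-subgroup lying in $I_{\Gamma'} \cap g I_{\Gamma''}g^{-1}$ is by Corollary~\ref{CoroVertexOfIntersectionSubgraph} contained in both $P$ and $Q$, hence in $R$; if $type(R)=0$ then $R$ contains no stable $\mathbb Z$-subgroup (a stable element has type $1$, $2$ or $3$ by Remark~\ref{RemType}), so the two subgraphs are disjoint.

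Next the case $type(R)=1$: here $R = h\langle a\rangle h^{-1}$ for some generator $a$, and the stable $\mathbb Z$-subgroups contained in $R$ are exactly the powers/conjugates inside this cyclic parabolic, which (if $a$ is large) give a single vertex $v_t$ of type $T$ — its type cannot be $D$ or $E$ since those require parabolic type $2$ or $3$. One must check there is at least one such vertex, i.e. that $a$ is a large generator; this follows because $R$ sits inside the type-$3$ parabolic $A_{\Gamma'}$ defining a $5$-piece-or-ordinary-piece, and in a triangle subgraph every vertex is non-isolated and (being in a triangle) not the tip of an even leaf, so $C(a)$ is non-virtually-abelian by Proposition~\ref{PropositionCentralisers}. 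Any two stable $\mathbb Z$-subgroups inside $R$ are then commensurable, hence equal by Lemma~\ref{lemcommensurableequal}, giving alternative~(2). Finally the case $type(R)=2$: then $R = hA_{ab}h^{-1}$ is a dihedral Artin parabolic, and I claim $I_{\Gamma'} \cap g I_{\Gamma''}g^{-1} = st(v_d)$ where $v_d$ is the type-$D$ vertex generated by the centre $z_{ab}$ of $R$. Indeed $z_{ab} \in R \subseteq P\cap Q$, so $v_d$ lies in the intersection; and any stable $\mathbb Z$-subgroup $\langle h\rangle$ in the intersection lies in $R = A_{ab}$ (up to conjugation), so $h$ is either central in $A_{ab}$ — giving $v_d$ itself — or it is a large standard generator $a$ or $b$, giving a type-$T$ vertex adjacent to $v_d$. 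Conversely every type-$T$ neighbour of $v_d$ corresponds by Definition~\ref{DefIntersectionGraphIII} to a standard tree through the type-$2$ vertex $\fix(z_{ab})$, and I would show its local group lies in $C(z_{ab}) = A_{ab} = R \subseteq P \cap Q$, hence that vertex is in both pieces. So the intersection is exactly $st(v_d)$.

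The main obstacle I anticipate is the verification in the $type(R)=2$ case that the intersection is the \emph{full} star of $v_d$ and not just $v_d$ together with some of its neighbours — in other words, that every type-$T$ neighbour of $v_d$ in $I_\Gamma$ actually lies in \emph{both} pieces. The containment $\langle a \rangle, \langle b\rangle \subseteq A_{ab}$ handles the two ``obvious'' neighbours coming from the generators of $R$, but a priori $v_d$ could have further neighbours in $I_\Gamma$; I would need to argue that every stable $\mathbb Z$-subgroup of type $T$ commuting with $\langle z_{ab}\rangle$ is contained in $C(z_{ab}) = A_{ab}$, which is immediate from Proposition~\ref{PropositionCentralisers}.2 since $C(z_{ab})$ \emph{is} $A_{ab}$, and $A_{ab}$ is a rank-$2$ parabolic, so its only stable $\mathbb Z$-subgroups are $\langle z_{ab}\rangle$ and the conjugacy-in-$A_{ab}$ classes of $\langle a\rangle,\langle b\rangle$ — all of which lie in $R \subseteq P \cap Q$. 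A secondary subtlety is making sure the three alternatives are mutually exclusive, which is automatic once one observes that $type(R)$ is a well-defined invariant of the parabolic $R = P \cap Q$ by Theorem~\ref{ThmIntersectionParabolics}, so exactly one of the three cases $type(R) \in \{0,1,2\}$ occurs.
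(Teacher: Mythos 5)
Your proposal is correct and follows essentially the same route as the paper: translate the intersection of pieces to the intersection $R = A_{\Gamma'} \cap gA_{\Gamma''}g^{-1}$ of parabolics via Corollary~\ref{CoroVertexOfIntersectionSubgraph}, invoke Theorem~\ref{ThmIntersectionParabolics} and Corollary~\ref{CorollaryTypeOfIntersections} to bound $type(R) \le 2$, and split into three cases by $type(R)$. Where the paper is terser — it only argues the forward inclusion $I_{\Gamma'} \cap gI_{\Gamma''}g^{-1} \subseteq st(v_d)$ in the rank-$2$ case, leaving the reverse inclusion to the reader or to the earlier Lemma~\ref{LemmaNeighboursInSamePiece} — you spell out the reverse inclusion explicitly via $C(z_{ab}) = A_{ab} = R$, and you also flag the (elementary) verification that the rank-$1$ generator is large, neither of which the paper records but both of which are correct.
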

	
	\begin{proof}
		We consider the parabolic subgroups $A_{\Gamma'}$ and $g A_{\Gamma''} g^{-1}$ associated with $I_{\Gamma'}$ and $g I_{\Gamma''} g^{-1}$ respectively. By Corollary \ref{CoroVertexOfIntersectionSubgraph}, a vertex $v$ of $I_{\Gamma}$ belongs to $Q \coloneqq I_{\Gamma'} \cap g I_{\Gamma''} g^{-1}$ if and only if its corresponding stable $\mathbb{Z}$-subgroup $H = \langle h \rangle$ is such that $h \in P \coloneqq A_{\Gamma'} \cap g A_{\Gamma''} g^{-1}$. By Theorem \ref{ThmIntersectionParabolics}, the subgroup $P$ is also parabolic. The parabolic subgroups $A_{\Gamma'}$ and $g A_{\Gamma''} g^{-1}$ are distinct and both have type $3$. Consequently, none contains the other one and we can use Corollary \ref{CorollaryTypeOfIntersections}: the rank of $P$ is at most $2$. There are two possibilities:
		\medskip
		
		\noindent $\bullet$ If $P$ has rank $1$, then $h$ is an element of type $T$ by Remark \ref{RemType}, and the corresponding stable $\mathbb{Z}$-subgroup $H$ is the unique vertex in $Q$.
		\medskip
		
		\noindent $\bullet$ If $P$ has rank $2$, then $h$ is an element of type $T$ or $D$, by Remark \ref{RemType}. The subgraph $Q$ contains a single vertex $v_d$ of type $D$: the one that corresponds to the centre of $P$.  Note that $H \subseteq P$, so either $H = P$ corresponds to $v_d$, or $H$ corresponds to a vertex of type $T$ that's adjacent to $v_d$.
	\end{proof}
	
	\begin{lemma} \label{LemmaModLinkvd}
		If $v_d \in I_{\Gamma'}$ is a vertex of type $D$ in a $5$-piece, then $\widetilde{lk}(v_d)$ is isomorphic to the Farey graph.
	\end{lemma}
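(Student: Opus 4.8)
The plan is to prove $\widetilde{lk}(v_d)=\widetilde{lk}_{I_{\Gamma'}}(v_d)$ and then quote Lemma~\ref{LemmaModLinkve}. Write $v_d=\langle z_{ab}\rangle$, so that $C(z_{ab})=A_{ab}$ by Proposition~\ref{PropositionCentralisers}. Since $v_d$ lies in the $5$-piece $I_{\Gamma'}$, Corollary~\ref{CoroVertexOfIntersectionSubgraph} gives $z_{ab}\in A_{\Gamma'}$, so the rank-$2$ parabolic $A_{ab}=P_{z_{ab}}$ of $A_\Gamma$ sits inside $A_{\Gamma'}$ and is therefore a rank-$2$ parabolic of the $(3,3,3)$-Artin group $A_{\Gamma'}$; hence $m_{ab}=3$ and $A_{ab}$ is the dihedral Artin group of type $(3)$, i.e.\ $A_{ab}\cong B_3$. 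For the vertex set: by Lemma~\ref{LemmaNeighboursInSamePiece} every neighbour of $v_d$ in $I_\Gamma$ already lies in $I_{\Gamma'}$, so $\lk(v_d)=\lk_{I_{\Gamma'}}(v_d)$; and since $I_{\Gamma'}\cong\mathcal C_5$ is vertex transitive (Lemma~\ref{lemmatransitive}) of girth $5$ (Lemma~\ref{lem:girth 5}), every vertex of $I_{\Gamma'}$ lies on an embedded $5$-cycle of $I_{\Gamma'}$. Thus $\widetilde{lk}(v_d)$ and $\widetilde{lk}_{I_{\Gamma'}}(v_d)$ have the same vertex set, all of $\lk(v_d)$.

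Only the edge inclusion $\widetilde{lk}(v_d)\subseteq\widetilde{lk}_{I_{\Gamma'}}(v_d)$ requires work, so let $w,w'\in\lk(v_d)$ be consecutive with $v_d$ in some embedded $5$-cycle $\gamma$ of $I_\Gamma$. By Lemma~\ref{LemmaPentagonInSinglePiece}, $\gamma$ is contained in a single $5$-piece $\mathcal P'$, which then also contains $v_d$. If $\mathcal P'=I_{\Gamma'}$ we are done; otherwise Lemma~\ref{LemmaIntersectionOfPieces} gives $\mathcal P'\cap I_{\Gamma'}=st(v_d)$, so $\gamma\not\subseteq I_{\Gamma'}$ and no direct conclusion is available. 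The heart of the argument is the following claim: \emph{any two $5$-pieces containing $v_d$ induce the same graph on $\lk(v_d)$ through their respective modified links}. Granting this and applying it to $\mathcal P'$ and $I_{\Gamma'}$ shows $w\sim w'$ already in $\widetilde{lk}_{I_{\Gamma'}}(v_d)$, finishing the proof.

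To prove the claim I would pass to the curve graph. The parabolic $A_{ab}=C(z_{ab})$ acts by conjugation on each $5$-piece containing $v_d$, fixing $v_d$, hence on the modified link there, so the adjacency a piece induces on $\lk(v_d)$ is $A_{ab}$-invariant. Under the isomorphism of a $5$-piece with $\mathcal C_5$ from Definition~\ref{defFstar}, $v_d$ becomes a simple closed curve $\delta$, $\lk(v_d)$ becomes the set of essential curves in the thrice-punctured disc $\mathcal D_3$ bounded by $\delta$, and — by the proof of Lemma~\ref{LemmaModLinkve} — the modified-link adjacency becomes the relation $\perp_0$ on $\mathcal D_3$, i.e.\ Dehn's Farey graph; moreover the restriction of the surface action to $A_{ab}\cong\mcg(\mathcal D_3)$ is the standard action on this Farey graph, factoring through $A_{ab}\twoheadrightarrow PSL_2(\mathbb Z)$. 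So each $5$-piece through $v_d$ yields an $A_{ab}$-equivariant identification of $\lk(v_d)$, with its induced adjacency, with the Farey graph carrying its $PSL_2(\mathbb Z)$-action. Since $PSL_2(\mathbb Z)$ acts transitively on the vertices of the Farey graph with self-normalising point stabilisers, and $A_{ab}\cong B_3$ admits, up to an automorphism of $PSL_2(\mathbb Z)$, a unique such quotient, any two of these identifications differ by an automorphism of the Farey graph, and hence induce the same adjacency on $\lk(v_d)$; this is the claim.

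The main obstacle is exactly this claim. Indeed $v_d$ genuinely can lie in several distinct $5$-pieces — for instance when $\Gamma$ is the complete graph on four vertices with all labels $3$, where $A_{ab}$ is a rank-$2$ parabolic of two different $(3,3,3)$-parabolics — so one cannot merely assert that all $5$-cycles through $v_d$ live in one piece; one must show that the Farey structure each piece sees on $\lk(v_d)$ is intrinsic to the dihedral parabolic $A_{ab}$ rather than to the ambient piece, and it is the rigidity of the $PSL_2(\mathbb Z)$-action on the Farey graph that supplies this. An alternative route for this step, bypassing surfaces, would be to identify $\widetilde{lk}(v_d)$ directly with a graph built from the link of the type-$2$ vertex $v_{ab}$ in the Deligne complex $D_\Gamma$ — which coincides with its link in the Deligne complex of $A_{ab}$ alone, since every neighbour of $v_{ab}$ in $D_\Gamma$ involves only $A_{ab}$ — and to compute that link for the dihedral Artin group of type $(3)$.
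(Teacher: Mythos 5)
Your argument is correct in outline, and it takes a genuinely different route from the paper's. The paper handles the key step — showing that two $5$-pieces $I_{\Gamma'}$ and $gI_{\Gamma''}g^{-1}$ through $v_d$ induce the same modified link — by \emph{constructing an explicit graph isomorphism} $\varphi'\colon I_{\Gamma'}\to gI_{\Gamma''}g^{-1}$ that restricts to the identity on $\mathrm{st}(v_d)$: using Theorem~\ref{ThmIntersectionParabolics} and \cite[Theorem~1.1]{blufstein2023parabolic} to realize $A_{ab}$ as a standard dihedral parabolic of both $(3,3,3)$-parabolics, it extends the identity on $A_{ab}$ to a group isomorphism $A_{\Gamma'}\to gA_{\Gamma''}g^{-1}$ by sending $a\mapsto a$, $b\mapsto b$, $c\mapsto (hg)f(hg)^{-1}$, and lets this act on the intersection subgraphs. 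This is a purely algebraic move and requires no surface topology. You instead argue that the Farey structure on $\lk(v_d)$ is \emph{intrinsic} to the $A_{ab}$-action: each $5$-piece gives an $A_{ab}$-equivariant identification of $\lk(v_d)$ with the Farey graph, the relevant quotient $B_3\twoheadrightarrow PSL_2(\mathbb Z)$ is unique up to automorphism, the point stabilisers are self-normalising, and so any two identifications differ by a Farey automorphism and induce the same adjacency. That rigidity argument does go through (an equivariant vertex bijection of the Farey graph, twisted by an automorphism of $PSL_2(\mathbb Z)\cong C_2*C_3$ realised by conjugation in $PGL_2(\mathbb Z)$, is forced to be a graph automorphism), so you get a valid proof.

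What each route buys: the paper's construction is short, self-contained, and reuses the same parabolic-subgroup technology appearing throughout Section~\ref{SectionIsolatingExoticVertices}, whereas yours isolates a clean structural reason (rigidity of the $PSL_2(\mathbb Z)$-action on the Farey graph) and does not need the existence of compatible isomorphisms of $5$-pieces. The price you pay is that you should justify a bit more carefully the step you gloss as ``the restriction of the surface action to $A_{ab}\cong\mcg(\mathcal D_3)$ is the standard action, factoring through $A_{ab}\twoheadrightarrow PSL_2(\mathbb Z)$.'' Concretely, for an arbitrary $5$-piece $hA_{\Gamma''}h^{-1}$, the image of $A_{ab}$ under the composite $hA_{\Gamma''}h^{-1}\cong A_{(3,3,3)}\hookrightarrow\mcg^\pm(\Sigma_5)$ is a conjugate of $\langle T_{XY},T_{YZ}\rangle$ — i.e.\ the braid group of the $3$-punctured disc cut off by $\delta$ — and one should note that this acts on $\mathcal C(\mathcal D_3)$ via a \emph{surjection} onto $PSL_2(\mathbb Z)$ (the centre $\langle z_{ab}\rangle$ is the boundary twist and acts trivially, and $B_3/Z(B_3)\cong PSL_2(\mathbb Z)$ acts faithfully and transitively). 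This also requires knowing that $A_{ab}$ is conjugate into a standard dihedral subgroup of $hA_{\Gamma''}h^{-1}$, which is exactly the input from \cite{blufstein2023parabolic} that the paper uses; so the two proofs share this ingredient even though they use it differently.

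Your parenthetical ``alternative route, bypassing surfaces'' via the link of $v_{ab}$ in $D_\Gamma$ is also viable and closer in spirit to Section~\ref{subsecTD} of the paper, but it is not what the paper does for this lemma either.
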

	
	\begin{proof}
		Up to conjugation, let $I_{\Gamma'}$ be any $5$-piece containing $v_d$. By Remark \ref{RemarkModifiedLinkIn5Pieces} we already know that $\widetilde{lk}_{I_{\Gamma'}}(v_d)$ is isomorphic to the Farey graph. 
		
		Let $v$ be any vertex in $\widetilde{lk}(v_d)$. By Lemma \ref{LemmaPossibleEdges}, $v$ must be of type $T$. Note that $v$ also belongs to $I_{\Gamma'}$ by Lemma \ref{LemmaNeighboursInSamePiece}.
		
		Let now $e = (v, v')$ be any edge in $\widetilde{lk}(v_d)$. We claim that $e$ already existed in $\widetilde{lk}_{I_{\Gamma'}}(v_d)$. This edge corresponds to a $5$-cycle $\gamma$ in $I_{\Gamma}$ that contains the vertices $v$, $v_d$ and $v'$. By Lemma \ref{LemmaPentagonInSinglePiece}, $\gamma$ is contained in a single $5$-piece $g I_{\Gamma''} g^{-1}$. By Lemma~\ref{LemmaIntersectionOfPieces}, $I_{\Gamma'}\cap gI_{\Gamma''}g^{-1}=st(v_d)$. In what follows, we will construct a graph isomorphism $\varphi':I_{\Gamma'}\to  gI_{\Gamma''}g^{-1}$ such that $\varphi'|_{st(v_d)}$ is the identity map (though we do not require $\varphi'$ extends to an automorphism of $I_\Gamma$). This will imply the claim, hence finishes the proof of the lemma.
		
		The standard parabolic subgroup $A_{\Gamma'}$ is generated by three standard generators $a, b, c \in V(\Gamma)$. Up to further conjugating by an element of $A_{\Gamma'}$, we can assume that $v_d$ corresponds to the dihedral standard parabolic subgroup $A_{ab} \coloneqq \langle a, b \rangle$. The parabolic subgroup $g A_{\Gamma''} g^{-1}$ is generated by the elements $g d g^{-1}$, $g e g^{-1}$ and $g f g^{-1}$ for some standard generators $d, e, f \in V(\Gamma)$ satisfying $m_{de} = m_{df} = m_{ef} = 3$.
		
		As $v_d \in g I_{\Gamma''} g^{-1}$, then using Corollary \ref{CoroVertexOfIntersectionSubgraph} we know that the parabolic subgroup $A_{ab}$ is contained in the parabolic subgroup $g A_{\Gamma''} g^{-1}$. By \cite[Theorem 1.1]{blufstein2023parabolic}, it follows that $A_{ab}$ is also a parabolic subgroup of $g A_{\Gamma''} g^{-1}$. In particular, there exists some $h \in g A_{\Gamma''} g^{-1}$ such that $a = h (g d g^{-1}) h^{-1}$ and $b = h (g e g^{-1}) h^{-1}$.
		
		To construct the graph isomorphism $\varphi'$ as above, by Definition~\ref{DefIntersectionGraphII}, it is enough to exhibit an isomorphism $\varphi : A_{\Gamma'} \rightarrow g A_{\Gamma''} g^{-1}$ that restricts to the identity on $A_{ab}$.
		The isomorphism is given by
		$$\varphi(a) = a, \ \ \varphi(b) = b, \ \ \varphi(c) = (hg) f (hg)^{-1}.$$
	\end{proof}
	
	\begin{lemma} \label{LemmaModLinkvt}
		We assume that $\Gamma$ is connected, and does not simply consist of a $(3, 3, 3)$ cycle. If $v_t \in I_{\Gamma'}$ is vertex of type $T$ in a $5$-piece, then $\widetilde{lk}(v_t)$ is a graph that strictly contains a subgraph isomorphic to the Farey graph (in particular, it is not isomorphic to the Farey graph).
	\end{lemma}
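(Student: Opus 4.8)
\emph{Setup and reductions.} The first move is to normalise $v_t$. Since $A_\Gamma$ acts on $I_\Gamma$ by automorphisms carrying $5$-pieces to $5$-pieces and modified links to modified links, and since (by Corollary~\ref{CoroBipartite} together with Remark~\ref{RemarkFirst}.(2)) a piece has girth $5$ only when its underlying $3$-cycle carries all labels $3$, I may assume after conjugation that the given $5$-piece is $I_{\{a,b,c\}}$ for a $(3,3,3)$-triangle $\{a,b,c\}\subseteq\Gamma$, and that $v_t=\langle a\rangle$: a type $T$ vertex of $I_{\{a,b,c\}}$ is the cyclic group generated by an element whose parabolic closure is a rank-$1$ parabolic contained in $A_{\{a,b,c\}}$, hence $A_{\{a,b,c\}}$-conjugate to one of $\langle a\rangle,\langle b\rangle,\langle c\rangle$ by \cite{cumplido2022parabolic,blufstein2023parabolic}, and conjugating further inside $A_{\{a,b,c\}}$ (which fixes the piece) reduces to one of the three standard generators, relabelled $a$. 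By Lemma~\ref{LemmaModLinkve} and the identification $I_{\{a,b,c\}}\cong\mathcal C_5$ of Definition~\ref{defFstar} we already know $\widetilde{lk}_{I_{\{a,b,c\}}}(v_t)\cong$ Farey graph, so $\widetilde{lk}(v_t)$ contains a copy of the Farey graph; the content of the lemma is that this copy is \emph{proper} and that $\widetilde{lk}(v_t)\not\cong$ Farey graph.

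\emph{Decomposing the modified link.} Next I describe $\widetilde{lk}(v_t)$ globally. Every $5$-cycle of $I_\Gamma$ contains a type $E$ vertex (Lemma~\ref{Lemma5CyclesContainExotic}), which lies in a unique $5$-piece (Lemma~\ref{LemmaTypeEinUniquePiece}), and the whole $5$-cycle lies in that piece (Lemma~\ref{LemmaPentagonInSinglePiece}). Hence every $5$-cycle through $v_t$ lies in some $5$-piece containing $v_t$, so $\widetilde{lk}(v_t)=\bigcup_{P\ni v_t}\widetilde{lk}_P(v_t)$, each summand being a Farey graph by Lemma~\ref{LemmaModLinkve}. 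If $P\ne P'$ are two such pieces, then by Lemma~\ref{LemmaIntersectionOfPieces} the intersection $P\cap P'$ is either a single type $T$ vertex, which must be $v_t$, or the star $st(v_d)$ of a type $D$ vertex with $v_t$ a type $T$ neighbour of $v_d$; since no edge of $I_\Gamma$ joins two type $T$ vertices (Lemma~\ref{LemmaPossibleEdges}), the summands $\widetilde{lk}_P(v_t)$ and $\widetilde{lk}_{P'}(v_t)$ then meet in at most one vertex (empty in the first case, contained in $\{v_d\}$ in the second).

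\emph{Reduction to a second $5$-piece.} Thus everything reduces to the \textbf{key claim}: $v_t=\langle a\rangle$ lies in at least two $5$-pieces, i.e.\ there is a $(3,3,3)$-type parabolic $P\le A_\Gamma$ with $a\in P$ and $P\ne A_{\{a,b,c\}}$. Granting this, $\widetilde{lk}(v_t)$ contains two copies of the infinite Farey graph meeting in at most one vertex, so $\widetilde{lk}_{I_{\{a,b,c\}}}(v_t)\subsetneq\widetilde{lk}(v_t)$. To conclude $\widetilde{lk}(v_t)\not\cong$ Farey graph one shows $\widetilde{lk}(v_t)$ is not $2$-connected: if some pair of $5$-pieces through $v_t$ meets only in $\{v_t\}$ then $\widetilde{lk}(v_t)$ is disconnected, and otherwise all these pieces pass through a common type $D$ neighbour $v_d$ of $v_t$, in which case the link of $v_d$ inside $\widetilde{lk}(v_t)$ contains the disjoint union of the bi-infinite lines contributed by the individual summands, whereas in the Farey graph every vertex-link is a single bi-infinite line (this dichotomy requires a little bookkeeping with Lemma~\ref{LemmaPentagonInSinglePiece} to rule out further $5$-cycles through $v_t$ re-joining the summands around $v_d$).

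\emph{Proving the key claim — the main obstacle.} This is where ``$\Gamma$ connected'' and ``$\Gamma\ne\Delta_{333}$'' enter, and it is the hard step. As $\Gamma$ is connected and strictly contains $\{a,b,c\}$, some generator $d\in V(\Gamma)\setminus\{a,b,c\}$ is adjacent in $\Gamma$ to a vertex of $\{a,b,c\}$. The plan is a short case analysis. First, if $\Gamma$ contains a second $(3,3,3)$-triangle through the chosen vertex $a$, the corresponding standard parabolic already serves as $P$. Second, if $a$ is conjugate in $A_\Gamma$ (i.e.\ joined by a path of odd-labelled edges) to a generator $x$ lying in a $(3,3,3)$-triangle $T$ of $\Gamma$, then $a=gxg^{-1}$ gives $a\in gA_Tg^{-1}$, and one checks $gA_Tg^{-1}\ne A_{\{a,b,c\}}$ using rigidity of conjugacy of standard parabolics in large-type Artin groups, replacing $g$ by $gh$ with $h$ outside $N(A_T)$ if necessary. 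Third, in the remaining case $\Gamma$ has a separating vertex or edge isolating the $\{a,b,c\}$-side, $A_\Gamma$ splits as an amalgam over a rank-$\le 2$ parabolic with $A_{\{a,b,c\}}$ in a vertex group, and a Bass--Serre argument — using that $a$ is conjugate inside $A_{\{a,b,c\}}$ to $b$ or $c$, and that $C(a)=\langle a\rangle\times F_{\ge 2}$ is non-cyclic by Proposition~\ref{PropositionCentralisers} — shows $\mathrm{Fix}(a)$ in the Bass--Serre tree meets more than one translate of the vertex fixed by $A_{\{a,b,c\}}$, producing a second conjugate $(3,3,3)$-parabolic through $a$. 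Throughout, Theorem~\ref{ThmIntersectionParabolics} and \cite{cumplido2022parabolic,blufstein2023parabolic} control parabolics and their intersections. I expect the genuine difficulty to be organising this case analysis so that it covers \emph{every} connected $\Gamma\ne\Delta_{333}$ and every choice of $(3,3,3)$-triangle and vertex inside it — in particular a twistless $\Gamma$ in which the chosen generator lies in only one $(3,3,3)$-triangle — so finding one uniform argument for the key claim (e.g.\ directly via the panel of $\langle a\rangle$ in the Artin complex $\mathcal A_\Gamma$) would be the natural way to streamline the proof.
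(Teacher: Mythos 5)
Your overall architecture (normalise $v_t$, decompose $\widetilde{lk}(v_t)$ over the $5$-pieces through $v_t$, produce a second $5$-piece, conclude via rigidity of the Farey graph) is the same as the paper's. But the crucial step — producing a second $5$-piece through $v_t$ — is where there is a genuine gap, and your three-case analysis does not close it. You flag this yourself, and you are right to be worried: Cases~1 and~2 both require a second $(3,3,3)$-triangle somewhere in $\Gamma$ (either through $a$, or through a generator conjugate to $a$), and Case~3 requires a separating vertex or edge. Neither hypothesis is implied by ``$\Gamma$ connected, $\Gamma\ne\Delta_{333}$''. For instance, take $\Gamma$ to be the complete graph on $\{a,b,c,d\}$ with $m_{ab}=m_{bc}=m_{ac}=3$ and $m_{ad}=m_{bd}=m_{cd}=4$: $\{a,b,c\}$ is the only $(3,3,3)$-triangle, every generator conjugate to $a$ already lies in it (they are $b,c$ only), and there is no separating vertex or edge. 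Your trichotomy covers none of this.

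The piece you are missing is that the second $5$-piece need not come from a new $(3,3,3)$-triangle in $\Gamma$ — it can be a \emph{conjugate} of the same $\Gamma'=\{a,b,c\}$. The paper's proof of the claim is a one-paragraph conjugation: pick any generator $d\notin\{a,b,c\}$ adjacent to one of $a,b,c$ (which exists by connectedness and $\Gamma\ne\Delta_{333}$). If $d$ is adjacent to $a$, the central element $z_{ad}$ of the dihedral parabolic $A_{ad}$ commutes with $a$, so $z_{ad}^k I_{\Gamma'}z_{ad}^{-k}$ are (infinitely many) distinct $5$-pieces all through $v_t=\langle a\rangle$. If $d$ is adjacent to $b$ (or $c$), replace $z_{ad}$ by $\Delta_{ab}\,z_{bd}\,\Delta_{ab}^{-1}$ (resp.\ the analogous element), which again commutes with $a$. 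This is uniform, elementary, and produces infinitely many pieces, not just two.

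A second, smaller issue: in your concluding step you assert that if some pair of $5$-pieces through $v_t$ meets only in $\{v_t\}$ then $\widetilde{lk}(v_t)$ is disconnected. That does not follow directly — other pieces through $v_t$ might bridge those two summands via shared $st(v_d)$ intersections. The paper's argument is cleaner and avoids this: in the disjoint-intersection case it simply notes that $\widetilde{lk}(v_t)$ then contains two disjoint copies of the Farey graph, hence strictly contains one, and the Farey graph cannot strictly contain an isomorphic copy of itself (each edge of the Farey graph lies in exactly two $3$-cycles); in the $st(v_d)$ case it observes that the link of $v_d$ inside $\widetilde{lk}(v_t)$ contains two disjoint bi-infinite lines, whereas in the Farey graph every vertex link is a single line. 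You would do better to argue along those direct lines, for a \emph{single} pair of distinct pieces, rather than via a global $2$-connectivity dichotomy.
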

	
	\begin{proof}
		Let $I_{\Gamma'}$ be any $5$-piece containing $v_t$. Once again by Remark \ref{RemarkModifiedLinkIn5Pieces} we know that $\widetilde{lk}_{I_{\Gamma'}}(v_t)$ is isomorphic to the Farey graph. We want to show that $\widetilde{lk}(v_t)$ is not isomorphic to a Farey graph plus some discrete set of points. We start with the following:
		\medskip
		
		\noindent \underline{Claim:} $v_t$ is contained in infinitely many $5$-pieces.
		\medskip
		
		\noindent \underline{Proof of the claim:} By hypothesis $\Gamma$ contains a $(3, 3, 3)$-cycle $\Gamma'$, and we name the associated standard generators $a$, $b$ and $c$. Up to conjugation, we suppose that $v_t$ corresponds to the stable $\mathbb{Z}$-subgroup $\langle a \rangle$. By hypothesis again, $\Gamma$ is not just a $(3, 3, 3)$ triangle. Together with $\Gamma$ being connected, this means there is a fourth standard generator $d$ that is attached to either $a$, $b$ or $c$.
		
		If $d$ is attached to $a$, we let $z_{ad}$ be the element generating the centre of $A_{ad}$. Then for all $k \in \mathbb{Z}$, the pieces $z_{ad}^k I_{\Gamma'} z_{ad}^{-k}$ respectively associated with the parabolic subgroups $z_{ad}^k A_{\Gamma'} z_{ad}^{-k}$ are all distinct, and they all contain $v_t$, as $z_{ad}^k$ commutes with $a$.
		
		If $d$ is attached to $b$, we let $\Delta_{ab}$ be the Garside element corresponding to $a$ and $b$, and we let $z_{bd}$ be the element generating the centre of $A_{bd}$. Let $Z \coloneqq \Delta_{ab} z_{bd} \Delta_{ab}^{-1}$. Then for all $k \in \mathbb{Z}$, the pieces $Z^k I_{\Gamma'} Z^{-k}$ respectively associated with the parabolic subgroups $Z^k A_{\Gamma'} Z^{-k}$ are all distinct, and they all contain $v_t$, as $Z$ commutes with $a$. The case where $d$ is attached to $c$ is similar. This finishes the proof of the claim.
		\medskip
		
		We now consider two pieces $I_{\Gamma'}$ and $g I_{\Gamma''} g^{-1}$ containg $v_t$, for some appropriate $g \in A_{\Gamma}$. By Lemma \ref{LemmaIntersectionOfPieces}, there are two possibilities:
		\medskip
		
		\noindent $\bullet$ Either $I_{\Gamma'} \cap g I_{\Gamma''} g^{-1}$ is exactly $v_t$. Then $\widetilde{lk}_{I_{\Gamma'}}(v_t)$ and $\widetilde{lk}_{g I_{\Gamma''} g^{-1}}(v_t)$ are disjoint. This means that $\widetilde{lk}(v_t)$ contains two copies of a Farey graph. In particular, it cannot be that $\widetilde{lk}(v_t)$ is a Farey graph, as a Farey graph does not strictly contain a copy of itself - this is a consequence of the fact that each edge in the Farey graph is contained in exactly two 3-cycles of the Farey graph.
		\medskip
		
		\noindent $\bullet$ Or $I_{\Gamma'} \cap g I_{\Gamma''} g^{-1}$ is the star of a vertex $v_d$ of type $D$. If $\widetilde{lk}(v_t)$ was a Farey graph, then for every vertex $v$ adjacent to $v_t$ the link $lk_{\widetilde{lk}(v_t)}(v)$ would be an bi-infinite line. But the links $lk_{\widetilde{lk}_{I_{\Gamma'}}(v_t)}(v_d)$ and $lk_{\widetilde{lk}_{g I_{\Gamma''} g^{-1}}(v_t)}(v_d)$ are two disjoint components of $lk_{\widetilde{lk}(v_t)}(v_d)$ and they both are infinite line. This means that $lk_{\widetilde{lk}(v_t)}(v_d)$ is strictly more than a single infinite line, and thus $\widetilde{lk}(v_t)$ is not a Farey graph.
	\end{proof}
	
	\begin{corollary} \label{CorollaryModifiedLinks}
		We assume that $\Gamma$ is connected, and does not simply consist of a $(3, 3, 3)$-cycle.
		Let $v$ be any vertex of $I_{\Gamma}$. Then:
		\begin{enumerate}
			\item If $v$ is of type $T$, then $\widetilde{lk}(v)$ strictly contains a Farey graph if $v$ belongs to a $5$-piece, and it is empty otherwise.
			\item If $v$ is of type $D$, then $\widetilde{lk}(v)$ is a Farey graph if $v$ belongs to a $5$-piece, and it is empty otherwise.
			\item If $v$ is of type $E$, then $\widetilde{lk}(v)$ is a Farey graph.
		\end{enumerate}
	\end{corollary}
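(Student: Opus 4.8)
The plan is to assemble the three cases directly from the lemmas of this subsection, sorting by the type of the vertex $v$; no new argument is needed, only bookkeeping. The one structural point I would establish first is the equivalence: a vertex of $I_\Gamma$ lies in some embedded $5$-cycle if and only if it lies in some $5$-piece. Indeed, if $v$ lies in a $5$-cycle $\gamma$, then $\gamma$ is contained in a $5$-piece by Lemma~\ref{LemmaPentagonInSinglePiece}, so $v$ lies in that piece; conversely a $5$-piece is isomorphic to $\mathcal{C}_5$ (Definition~\ref{defFstar}), which has girth $5$ (Lemma~\ref{lem:girth 5}) and is vertex transitive (Lemma~\ref{lemmatransitive}), so each of its vertices lies in an embedded $5$-cycle. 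Combined with Lemma~\ref{LemmaDiscreteModLink}, this already yields: if $v$ lies in no $5$-piece, then $\widetilde{lk}(v)$ is empty, which is the ``otherwise'' half of statements (1) and (2).

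For (3), I would note that a vertex $v$ of type $E$ always lies in a $5$-cycle by Lemma~\ref{LemmaNeighoursOfExotic}, hence in a $5$-piece, and that this $5$-piece $I_{\Gamma'}$ is unique by Lemma~\ref{LemmaTypeEinUniquePiece}. Then Lemma~\ref{LemmaLinkOfExoticIsRestricted} identifies $\widetilde{lk}(v)$ with $\widetilde{lk}_{I_{\Gamma'}}(v)$, and Remark~\ref{RemarkModifiedLinkIn5Pieces} (which rests on Lemma~\ref{LemmaModLinkve}) identifies the latter with the Farey graph. There is no second case to treat here, since every type $E$ vertex lies in a $5$-piece.

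For (1) and (2), I would split according to whether $v$ lies in a $5$-piece. If $v$ is of type $D$ and lies in a $5$-piece, Lemma~\ref{LemmaModLinkvd} gives that $\widetilde{lk}(v)$ is a Farey graph; if $v$ is of type $T$ and lies in a $5$-piece, Lemma~\ref{LemmaModLinkvt}, whose hypotheses on $\Gamma$ are exactly those of the corollary, gives that $\widetilde{lk}(v)$ strictly contains a subgraph isomorphic to the Farey graph and hence is not itself isomorphic to one. If $v$ lies in no $5$-piece, the first paragraph gives $\widetilde{lk}(v)=\emptyset$. This exhausts all cases.

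I do not expect a genuine obstacle: the content lives entirely in the earlier lemmas, in particular Lemma~\ref{LemmaPentagonInSinglePiece} and the computation of modified links inside $5$-pieces. The only point requiring a moment's care is to check that, for vertices of type $T$ and $D$, the dichotomy ``$v$ lies in a $5$-piece'' versus ``$v$ lies in no $5$-piece'' is both exhaustive and coincides with ``$\widetilde{lk}(v)\neq\emptyset$'' versus ``$\widetilde{lk}(v)=\emptyset$'' — which is exactly the equivalence recorded in the first paragraph, and the place where Lemma~\ref{LemmaPentagonInSinglePiece} is indispensable.
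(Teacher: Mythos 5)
Your proof is correct and follows essentially the same route as the paper, which simply cites Lemmas~\ref{LemmaModLinkvt}, \ref{LemmaModLinkvd} and \ref{LemmaModLinkve}. You add some welcome explicit bookkeeping that the paper leaves implicit, namely the reduction of the ``empty otherwise'' clauses to Lemma~\ref{LemmaDiscreteModLink} via the equivalence (through Lemma~\ref{LemmaPentagonInSinglePiece}) between lying in a $5$-cycle and lying in a $5$-piece.
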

	
	\begin{proof}
		This follows from Lemma \ref{LemmaModLinkvt}, Lemma \ref{LemmaModLinkvd} and Lemma \ref{LemmaModLinkve}.
	\end{proof}
	
	\begin{proposition} \label{PropExoticIIFSomeCondition}
		We assume that $\Gamma$ is connected, and does not simply consist of a $(3, 3, 3)$-cycle.
		Let $v \in I_{\Gamma}$ be any vertex. Then $v$ is of type $E$ if and only if it has two neighbours $v'$ and $v''$ such that $\widetilde{lk}_{I_{\Gamma}}(v')$ strictly contains a Farey graph, and $\widetilde{lk}_{I_{\Gamma}}(v'')$ is a Farey graph.
	\end{proposition}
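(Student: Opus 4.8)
The plan is to deduce the proposition almost immediately from the classification of modified links in Corollary~\ref{CorollaryModifiedLinks}, combined with the list of allowed edge types in Lemma~\ref{LemmaPossibleEdges}. The point I will exploit is that the three possible shapes of $\widetilde{lk}$ --- empty, exactly a Farey graph, or strictly containing a Farey graph --- pin down the type of a vertex: among vertices lying in a $5$-cycle, only type $T$ vertices have modified link strictly larger than a Farey graph, and only type $D$ and type $E$ vertices have modified link equal to a Farey graph. So the hypothesis ``$v$ has neighbours $v',v''$ with $\widetilde{lk}(v')$ strictly containing a Farey graph and $\widetilde{lk}(v'')$ a Farey graph'' should be read as ``$v'$ is type $T$ (and in a $5$-piece) and $v''$ is type $D$ or $E$''.

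For the ``only if'' direction I would argue as follows. Assume $v$ has type $E$. By Lemma~\ref{LemmaTypeEinUniquePiece} it lies in a (unique) $5$-piece $I_{\Gamma'}$, and by Lemma~\ref{LemmaNeighoursOfExotic} it has a neighbour $v'$ of type $T$ and a neighbour $v''$ of type $E$. Since $v$ has type $E$ and lies in the piece $I_{\Gamma'}$, Lemma~\ref{LemmaNeighboursInSamePiece} forces $v'$ to lie in $I_{\Gamma'}$ as well; thus $v'$ is a type $T$ vertex contained in a $5$-piece, so Corollary~\ref{CorollaryModifiedLinks}(1) gives that $\widetilde{lk}(v')$ strictly contains a Farey graph. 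And $v''$ has type $E$, so Corollary~\ref{CorollaryModifiedLinks}(3) gives that $\widetilde{lk}(v'')$ is a Farey graph. This furnishes the required pair $v',v''$.

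For the ``if'' direction, suppose $v$ has neighbours $v',v''$ with $\widetilde{lk}(v')$ strictly containing a Farey graph and $\widetilde{lk}(v'')$ a Farey graph. By Corollary~\ref{CorollaryModifiedLinks}, the property ``$\widetilde{lk}$ strictly contains a Farey graph'' can occur only for a type $T$ vertex, so $v'$ has type $T$; and ``$\widetilde{lk}$ is a Farey graph'' cannot occur for a type $T$ vertex, so $v''$ has type $D$ or $E$. Since $v$ is adjacent to the type $T$ vertex $v'$, Lemma~\ref{LemmaPossibleEdges} forces $v$ to have type $D$ or $E$. If $v''$ had type $D$, then its neighbour $v$ would have to be of type $T$ by Lemma~\ref{LemmaPossibleEdges}, contradicting the previous sentence; hence $v''$ has type $E$. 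Finally $v$ is adjacent to the type $E$ vertex $v''$, so $v$ has type $T$ or $E$ by Lemma~\ref{LemmaPossibleEdges}; intersecting with ``type $D$ or $E$'' leaves only type $E$, as desired.

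The main obstacle here is not in this argument but in Corollary~\ref{CorollaryModifiedLinks} itself --- in particular in Lemma~\ref{LemmaModLinkvt}, whose proof that a type $T$ vertex in a $5$-piece has modified link \emph{strictly} larger than a Farey graph genuinely uses the standing hypothesis that $\Gamma$ is connected and is not merely the $(3,3,3)$-triangle. In the write-up of the present proposition, the only thing requiring care is the bookkeeping with the edge-type constraints of Lemma~\ref{LemmaPossibleEdges}, together with the observation that a type $T$ neighbour of a type $E$ vertex necessarily lies in a $5$-piece, which is exactly what Lemma~\ref{LemmaNeighboursInSamePiece} delivers when applied with the type $E$ vertex.
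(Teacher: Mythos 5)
Your proof is correct and follows essentially the same route as the paper, relying on Corollary~\ref{CorollaryModifiedLinks} together with the edge-type restrictions of Lemma~\ref{LemmaPossibleEdges}. The organizational differences --- running the ``if'' direction as a forward deduction of the types of $v'$ and $v''$ rather than the paper's case-by-case elimination of the type of $v$, and explicitly invoking Lemma~\ref{LemmaNeighboursInSamePiece} to see that the type $T$ neighbour of a type $E$ vertex lies in a $5$-piece --- are clarifications rather than a different argument.
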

	
	\begin{proof}
		(“Only if”) Let $v$ be of type $E$. We know by Lemma \ref{LemmaNeighoursOfExotic} that $v$ has a neighbour $v'$ of type $T$ and a neighbour $v''$ of type $E$. Then Corollary \ref{CorollaryModifiedLinks} gives the desired result.
		
		(“If”) If $v$ is of type $T$, then by Lemma \ref{LemmaPossibleEdges} any neighbour $v'$ is necessarily of type $D$ or $E$. By Corollary \ref{CorollaryModifiedLinks}, this means that $\widetilde{lk}_{I_{\Gamma}}(v')$ is a Farey graph or it is empty. In particular, $v$ has no neighbour whose modified link strictly contains a Farey graph. So $v$ is not of type $T$.
		
		If $v$ is of type $D$, then by Lemma \ref{LemmaPossibleEdges} any neighbour $v'$ is necessarily of type $T$. By Corollary \ref{CorollaryModifiedLinks}, this means that $\widetilde{lk}_{I_{\Gamma}}(v')$ either strictly contains a Farey graph, or it is empty. In particular, $v$ has no neighbour whose modified link is a Farey graph. So $v$ is not of type $D$.
		
		We conclude that $v$ is of type $E$.
	\end{proof}
	
	Given that the condition described in Proposition \ref{PropExoticIIFSomeCondition} is purely graphical, we directly obtain the following result:
	
	\begin{corollary} \label{CorollaryExoticSentToExotic}
		Let $A_{\Gamma_1}$ and $A_{\Gamma_2}$ be two large-type Artin groups where both $\Gamma_1$ and $\Gamma_2$ are connected, and neither $\Gamma_1$ nor $\Gamma_2$ consists of just a $(3, 3, 3)$-cycle. If there exists an isomorphism $\psi : I_{\Gamma_1} \rightarrow I_{\Gamma_2}$, then $\psi$ send the vertices of type $E$ onto vertices of type $E$.
	\end{corollary}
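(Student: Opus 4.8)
The plan is to read this off directly from Proposition~\ref{PropExoticIIFSomeCondition}, whose characterisation of type~$E$ vertices is phrased entirely in terms of the abstract graph $I_\Gamma$ and therefore transports across any graph isomorphism. Concretely, I would first note that the modified link $\widetilde{lk}_{I_\Gamma}(v)$ depends only on the isomorphism type of the pair $(I_\Gamma,v)$: by its very definition, its vertex set is the set of neighbours of $v$ that lie on some embedded $5$-cycle of $I_\Gamma$, and its edges record the pairs of such neighbours that appear consecutively with $v$ on an embedded $5$-cycle — both purely combinatorial conditions. Hence any graph isomorphism $\psi : I_{\Gamma_1}\to I_{\Gamma_2}$ carries embedded $5$-cycles to embedded $5$-cycles and, for every vertex $v$, restricts to a graph isomorphism $\widetilde{lk}_{I_{\Gamma_1}}(v)\cong\widetilde{lk}_{I_{\Gamma_2}}(\psi(v))$.

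Next I would record that the two properties entering Proposition~\ref{PropExoticIIFSomeCondition} — being isomorphic to the Farey graph, and strictly containing a subgraph isomorphic to the Farey graph — are invariants of the isomorphism type of a graph. Consequently, if $v\in I_{\Gamma_1}$ admits neighbours $v'$ and $v''$ with $\widetilde{lk}_{I_{\Gamma_1}}(v')$ strictly containing a Farey graph and $\widetilde{lk}_{I_{\Gamma_1}}(v'')$ isomorphic to a Farey graph, then $\psi(v')$ and $\psi(v'')$ are neighbours of $\psi(v)$ in $I_{\Gamma_2}$ whose modified links have, respectively, the same two properties, by the previous paragraph.

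Finally, since both $\Gamma_1$ and $\Gamma_2$ are connected and neither consists of just a $(3,3,3)$-cycle, Proposition~\ref{PropExoticIIFSomeCondition} applies verbatim to each of $I_{\Gamma_1}$ and $I_{\Gamma_2}$. Combining the two steps above: a vertex $v$ of $I_{\Gamma_1}$ is of type $E$ if and only if it satisfies the stated neighbour condition, if and only if $\psi(v)$ satisfies the analogous condition in $I_{\Gamma_2}$ (using that $\psi$ is a vertex bijection preserving adjacency), if and only if $\psi(v)$ is of type $E$. Running the same argument for $\psi^{-1}$ shows that $\psi$ restricts to a bijection between the type~$E$ vertices of $I_{\Gamma_1}$ and those of $I_{\Gamma_2}$, which is exactly the claim. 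I do not expect any real obstacle here: all of the geometric content has already been absorbed into Proposition~\ref{PropExoticIIFSomeCondition} (via Corollary~\ref{CorollaryModifiedLinks}), and the only point requiring care is the bookkeeping observation that every notion appearing there — neighbour, embedded $5$-cycle, modified link, Farey graph — is intrinsic to the abstract graph, which holds by construction.
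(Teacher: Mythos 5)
Your proposal is correct and is precisely the argument the paper has in mind: the paper derives the corollary directly from Proposition~\ref{PropExoticIIFSomeCondition} by observing that the characterisation there is ``purely graphical,'' which is exactly the invariance point you spell out. Your additional care in checking that embedded $5$-cycles, modified links, and Farey-graph (sub)isomorphism are intrinsic to the abstract graph is the right bookkeeping and matches the paper's (implicit) reasoning.
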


	\section{$\aut(I_\Gamma)$ versus $\aut(D_\Gamma)$} \label{SectionIntersectionSubgraph}
	
	The main goal of this section is to prove that, under certain assumptions, the automorphism group of the intersection graph is isomorphic to the automorphism group of the modified Deligne complex. This is carried out in several steps: first we prove an extra property of $\aut(I_\Gamma)$ in Section~\ref{subsecTD}, which sets the stage for us to describe a procedure to transfer information from $\aut(I_\Gamma)$ to $\aut(D_\Gamma)$ in Section~\ref{subsecFund}. In Section~\ref{subsecComparison} we compare different automorphism groups.
	
	\subsection{Preserving $T$ and $D$}
	\label{subsecTD}
	
	Let $A_{\Gamma_1}$ and $A_{\Gamma_2}$ be two large-type Artin groups with intersection graphs $I_{\Gamma_1}$ and $I_{\Gamma_2}$, and assume that both defining graphs are connected and do not consist of a single $(3, 3, 3)$-cycle. In Corollary \ref{CorollaryExoticSentToExotic}, we have proved that any isomorphism $\psi : I_{\Gamma_1} \rightarrow I_{\Gamma_2}$ sends the vertices of type $E$ to vertices of type $E$. In particular, $\psi$ reduces to an isomorphism $\overline{\psi} : I_{\Gamma_1}^{TD} \rightarrow I_{\Gamma_2}^{TD}$. The goal of this section is to strengthen this result and prove the following:
	
	\begin{proposition} \label{PropAutomorphismsPreserveType}
		Let $A_{\Gamma_1}$ and $A_{\Gamma_2}$ be as above, and suppose that neither $\Gamma_1$ nor $\Gamma_2$ is a tree. Then any isomorphism $\psi : I_{\Gamma_1} \rightarrow I_{\Gamma_2}$ preserves the type of vertices ($T$, $D$ or $E$).
	\end{proposition}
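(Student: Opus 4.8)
By Corollary~\ref{CorollaryExoticSentToExotic} the isomorphism $\psi$ carries type~$E$ vertices to type~$E$ vertices, hence restricts to a graph isomorphism $\overline\psi\colon I^{TD}_{\Gamma_1}\to I^{TD}_{\Gamma_2}$, and the whole problem reduces to showing that $\overline\psi$ sends type~$T$ to type~$T$ and type~$D$ to type~$D$. First I would record that $I^{TD}_\Gamma$ is connected when $\Gamma$ is connected and large-type: the standard trees $\fix(a)$, $\fix(b)$ of two adjacent large generators meet at the type~$2$ vertex $v_{ab}$, so they are at distance $2$ in $I^{TD}_\Gamma$, and every vertex of $I^{TD}_\Gamma$ is joined to such a base tree or vertex by a path obtained by pushing along the generators of $A_\Gamma$. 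Since $I^{TD}_\Gamma$ is bipartite with parts $\mathcal T$ (type~$T$) and $\mathcal D$ (type~$D$) by Corollary~\ref{CoroBipartite}, the connected isomorphism $\overline\psi$ must either preserve the partition $\{\mathcal T,\mathcal D\}$ or swap $\mathcal T\leftrightarrow\mathcal D$. Everything then comes down to excluding the swap.

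When $\Gamma_1$ (equivalently $\Gamma_2$) contains a $(3,3,3)$-triangle this is quick. Indeed, running $\psi$ through Corollary~\ref{CorollaryModifiedLinks}, the properties ``$\widetilde{lk}(v)$ strictly contains a Farey graph'' and ``$\widetilde{lk}(v)$ is a Farey graph and $v$ is not of type $E$'' are preserved by $\psi$, and they single out, respectively, the classes $\{$type $T$, contained in a $5$-piece$\}$ and $\{$type $D$, contained in a $5$-piece$\}$. If $\Gamma$ has a $(3,3,3)$-triangle then both of these classes are nonempty; since $\overline\psi$ preserves the first one, it cannot move all of $\mathcal T$ into $\mathcal D$, so it does not swap, i.e.\ it preserves $\mathcal T$ and $\mathcal D$.

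The substantial case is when $\Gamma$ contains no $(3,3,3)$-triangle. Then $A_\Gamma$ has no exotic dihedral Artin subgroups by Remark~\ref{RemarkFirst}, so $I_\Gamma=I^{TD}_\Gamma$ and there are no $5$-pieces, and the hypothesis that $\Gamma$ is \emph{not a tree} must be used to rule out the swap by hand. Here the plan is to manufacture a purely combinatorial invariant of a vertex $v\in I^{TD}_\Gamma$ which distinguishes $\mathcal T$ from $\mathcal D$, built from the intersection pattern of standard trees in $D_\Gamma$. The geometric inputs are: two distinct standard trees meet in at most one point, and that point is a type~$2$ vertex (from convexity of standard trees together with the CAT(0) geometry of $D_\Gamma$, cf.\ Corollary~\ref{CoroBipartite}); each type~$1$ vertex of $D_\Gamma$ lies on a unique standard tree; and the local pictures around the two kinds of vertices are genuinely different — the family of standard trees through a fixed type~$2$ vertex $v_{ab}$ is controlled by the dihedral Artin group $A_{ab}$ and carries a canonical $2$-colouring by $a$ and $b$, whereas the family of type~$2$ vertices on a fixed standard tree $\fix(a)$ is controlled by $C(a)/\langle a\rangle$ and by the edges of $\Gamma$ at $a$. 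Since $\Gamma$ is not a tree, $I^{TD}_\Gamma$ contains embedded cycles, and I would show that these two local structures force the shortest embedded cycles through $v$ (equivalently, a suitable ``cycle-modified link'' of $v$, defined by analogy with $\widetilde{lk}$ but using the embedded cycles of $I^{TD}_\Gamma$) to behave incompatibly for $v\in\mathcal T$ versus $v\in\mathcal D$ — for instance yielding a bipartite modified link on one side and a non-bipartite one on the other. Being combinatorial, such an invariant is preserved by $\overline\psi$, which excludes the swap.

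Combining the two cases, $\overline\psi$ preserves $\mathcal T$ and $\mathcal D$; together with Corollary~\ref{CorollaryExoticSentToExotic} this gives that $\psi$ preserves all three types $T$, $D$, $E$. The main obstacle is the last case: identifying the right combinatorial invariant and verifying that it really is asymmetric between $\mathcal T$ and $\mathcal D$ demands a fairly delicate hands-on analysis of how standard trees and type~$2$ vertices of $D_\Gamma$ intersect, and this is exactly the point at which the hypothesis ``$\Gamma$ is not a tree'' is indispensable — for a tree $\Gamma$ the graph $I^{TD}_\Gamma$ is symmetric enough that no such invariant can exist.
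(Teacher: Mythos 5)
Your reduction to $I^{TD}_\Gamma$ via Corollary~\ref{CorollaryExoticSentToExotic}, and your observation that a connected bipartite isomorphism must either preserve or swap the parts $\mathcal T$, $\mathcal D$, both match the paper. Your first case ($\Gamma$ contains a $(3,3,3)$-triangle) is a genuinely different and perfectly valid route: you extract the invariant directly from Corollary~\ref{CorollaryModifiedLinks} — the modified link of a type-$T$ vertex in a $5$-piece strictly contains a Farey graph, while that of a type-$D$ vertex in a $5$-piece is a Farey graph and it is not type $E$ — and this already forbids the swap. The paper instead proves a single Lemma~(\ref{LemmaDifference}) covering every case with a triangle, and cites Crisp's \cite[Proposition 41]{crisp2005automorphisms} for the triangle-free case, so it never needs your Farey-graph argument.

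The real problem is your second case, $\Gamma$ without a $(3,3,3)$-triangle, which includes both the triangle-free graphs and those with triangles of larger labels. There you do not give a proof: you propose to ``manufacture a purely combinatorial invariant'' from a ``cycle-modified link'' defined ``by analogy with $\widetilde{lk}$'' and explicitly concede that ``identifying the right combinatorial invariant and verifying that it really is asymmetric between $\mathcal T$ and $\mathcal D$ demands a fairly delicate hands-on analysis.'' That delicate analysis is precisely the content of the paper's Lemma~\ref{LemmaDifference}: using Lemmas~\ref{Lemmaf1v1f2}, \ref{Lemmaf1v1} and \ref{LemmaChainOf6Cycles} one shows that $I^{TD}_\Gamma$ contains a union of $n=m_{ab}$ six-cycles fanned out around a type-$D$ vertex (the graph $G_1$), but can never contain the ``swapped'' union $G_2$ fanned out around a type-$T$ vertex — the obstruction being that the element $g_n g_1^{-1}$ connecting the first and last fundamental domains would have to have type $1$ and type $2$ simultaneously. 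In short, for graphs whose triangles are not all $(3,3,3)$ your argument is a sketch, not a proof, and this is exactly the range where most of the technical work of the paper lives (together with the triangle-free case, which the paper offloads to Crisp and which your proposal also does not reduce to).
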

	
	Throughout the section, we will write $A_{\Gamma}$ to denote any of the two Artin groups $A_{\Gamma_1}$ or $A_{\Gamma_2}$.
	
	The graph $I_{\Gamma}^{TD}$ is the subgraph of the intersection graph $I_{\Gamma}$ that is spanned by the vertices of type $T$ and $D$, i.e. forgetting about the vertices of type $E$. By Lemma \ref{LemmaThreeDefinitionsCoincide}, we can freely choose any of the three definitions for $I_{\Gamma}^{TD}$ (see Definitions \ref{DefIntersectionGraphI}, \ref{DefIntersectionGraphII} and \ref{DefIntersectionGraphIII}).
	
	By Corollary \ref{CoroBipartite}, we know that the splitting $\mathcal{T} \sqcup \mathcal{D}$ gives a bipartition for the vertex set of $I_{\Gamma}^{TD}$, where $\mathcal{T}$ corresponds to the set of (unbounded) standard trees in $D_{\Gamma}$, and $\mathcal{D}$ corresponds to the set of type $2$ vertices in $D_{\Gamma}$. By Lemma \ref{LemmaThreeDefinitionsCoincide}, the vertices $t \in \mathcal{T}$ and $v \in \mathcal{D}$ are connected if and only if $v \in t$.
	\medskip
	
	Because $I_{\Gamma}^{TD}$ is bipartite, it is enough for proving Proposition \ref{PropAutomorphismsPreserveType} to show that there is a vertex $v$ of type $D$ in $I_{\Gamma_1}^{TD}$ such that $\psi(v)$ is of type $D$ in $I_{\Gamma_2}^{TD}$.
	
	If $\Gamma$ is triangle-free, the result of Proposition \ref{PropAutomorphismsPreserveType} follows from \cite[Proposition 41]{crisp2005automorphisms}. Therefore, throughout the rest of this section, we always suppose that $\Gamma$ contains at least one triangle.
	
	\begin{lemma} \label{LemmaBipartiteGirth6}
		$I_{\Gamma}^{TD}$ is bipartite and has girth $6$.
	\end{lemma}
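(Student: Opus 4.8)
The bipartiteness and the lower bound are already in hand: Corollary~\ref{CoroBipartite} states that $I_\Gamma^{TD}$ is bipartite (with parts $\mathcal T$ and $\mathcal D$) and that its girth is even and at least $6$. So the only new content is to exhibit an embedded $6$-cycle, and here I would use the standing assumption that $\Gamma$ contains a triangle. Fix standard generators $a,b,c$ spanning a triangle of $\Gamma$, so that all of $m_{ab},m_{bc},m_{ac}$ are finite and at least $3$.

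First I would check that the six candidate vertices are legitimate and pairwise distinct. On the $\mathcal D$ side, since each of $\{a,b\},\{b,c\},\{a,c\}$ spans an edge, the standard parabolics $A_{ab},A_{bc},A_{ac}$ are dihedral Artin parabolics, so their central elements $z_{ab},z_{bc},z_{ac}$ are stable of type $D$; in the Definition~\ref{DefIntersectionGraphIII} model these correspond to the type $2$ vertices $v_{ab},v_{bc},v_{ac}$ of $D_\Gamma$, and they are distinct because distinct vertex subsets yield distinct standard parabolic subgroups. On the $\mathcal T$ side, each of $a,b,c$ has degree $\ge 2$ in $\Gamma$, hence is neither isolated nor the tip of an even-labelled leaf, so by Remark~\ref{RemStandardTrees}.(3) the standard trees $\fix(a),\fix(b),\fix(c)$ are unbounded and give pairwise distinct stable $\mathbb Z$-subgroups $\langle a\rangle,\langle b\rangle,\langle c\rangle$ of type $T$ (distinct because, e.g., $a\in\langle b\rangle$ would force $a$ and $b$ to commute, contradicting $m_{ab}\ge 3$).

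Next, using the adjacency rule of Definition~\ref{DefIntersectionGraphIII} — a standard tree $t$ is joined to a type $2$ vertex $v$ if and only if $v\in t$ — I would observe that $v_{ab}\in\fix(a)$ simply because $a\cdot A_{ab}=A_{ab}$, i.e. $a\in A_{ab}$, and symmetrically for all six incidences appearing below. Hence
\[
v_{ab}\;-\;\fix(a)\;-\;v_{ac}\;-\;\fix(c)\;-\;v_{bc}\;-\;\fix(b)\;-\;v_{ab}
\]
is a cycle on six distinct vertices contained in $I_\Gamma^{TD}$. Combined with the lower bound from Corollary~\ref{CoroBipartite}, this shows the girth is exactly $6$.

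I do not expect a real obstacle here: the construction is explicit, and the only things needing attention are the genuine distinctness of the six vertices (handled above) and the worry that a chord might produce a shorter cycle — but any chord of this $6$-cycle would create a $4$-cycle in the bipartite graph $I_\Gamma^{TD}$, which Corollary~\ref{CoroBipartite} forbids, so the cycle is in fact even induced.
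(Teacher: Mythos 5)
Your proof is correct and follows essentially the same route as the paper: invoking Corollary~\ref{CoroBipartite} for bipartiteness and the lower bound on girth, then exhibiting the explicit $6$-cycle $(\langle a\rangle,\langle z_{ab}\rangle,\langle b\rangle,\langle z_{bc}\rangle,\langle c\rangle,\langle z_{ac}\rangle)$ coming from a triangle $a,b,c$ in $\Gamma$ (the paper works under the standing assumption, stated just before the lemma, that $\Gamma$ contains a triangle). Your additional checks of distinctness and chordlessness are carefully done and go slightly beyond what the paper spells out, but they do not change the argument.
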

	
	\begin{proof}
		This essentially comes from Corollary \ref{CoroBipartite}. The girth is exactly $6$ because by hypothesis $\Gamma$ contains $3$-cycle with vertices $a, b, c \in V(\Gamma)$, hence the $6$-cycle
		$$(\langle a \rangle, \langle z_{ab} \rangle, \langle b \rangle, \langle z_{bc} \rangle, \langle c \rangle, \langle z_{ac} \rangle)$$
		is contained in $I_{\Gamma}^{TD}$, where $z_{st}$ is an element generating the centre of $A_{st}$.
	\end{proof}
	
	Before stating the next lemma, we make the following observation, that is true for any $2$-dimensional Artin group:
	\medskip
	
	\noindent \textbf{Observation:} Let $\Gamma_{bar}$ denote the barycentric subdivision of $\Gamma$. Then the boundary of the fundamental domain $K_{\Gamma}$ is isomorphic to $\Gamma_{bar}$.
	\medskip
	
	We will thus often identify $\Gamma_{bar}$ with the boundary of $K_{\Gamma}$.
	\medskip
	
	To make the notation clearer, we will throughout the rest of the section denote by $t$ or by $t_i$ a vertex of type $T$, and by $v$ or by $v_i$ a vertex of $D$.
	
	\begin{lemma} \label{Lemma6CyclesinFD}
		Let $\gamma$ be a $6$-cycle in $I_{\Gamma}^{TD}$. Then up to conjugation, there are three standard generators $a, b, c \in V(\Gamma)$ such that
		$$\gamma = (\langle a \rangle, \langle z_{ab} \rangle, \langle b \rangle, \langle z_{bc} \rangle, \langle c \rangle, \langle z_{ac} \rangle).$$
	\end{lemma}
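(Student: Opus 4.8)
The plan is to translate the combinatorial statement into a statement about the Deligne complex $D_\Gamma$, where $6$-cycles in $I_\Gamma^{TD}$ correspond to very specific configurations of standard trees and type $2$ vertices. Recall from Definition~\ref{DefIntersectionGraphIII} that a vertex of type $T$ is an unbounded standard tree $t\subset D_\Gamma$, a vertex of type $D$ is a type $2$ vertex $v\in D_\Gamma$, and $t$ and $v$ are adjacent iff $v\in t$. So a $6$-cycle $\gamma=(t_1,v_1,t_2,v_2,t_3,v_3)$ consists of three standard trees $t_1,t_2,t_3$ and three type $2$ vertices $v_1,v_2,v_3$ with $v_1\in t_1\cap t_2$, $v_2\in t_2\cap t_3$, $v_3\in t_3\cap t_1$. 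Since standard trees are convex (Remark~\ref{RemStandardTrees}) and $D_\Gamma$ is CAT(0) (Theorem~\ref{Thm:DeligneCAT(0)}), the intersection of two distinct standard trees is convex, hence either empty or a single point (two points would force them to share a geodesic, contradicting Corollary~\ref{CoroBipartite}'s girth computation — indeed this is exactly the $4$-cycle argument). So the $v_i$ are the unique intersection points $t_i\cap t_{i+1}$, and in particular the $t_i$ are pairwise distinct and the $v_i$ are pairwise distinct.

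Next I would localize the configuration inside a single fundamental domain. The three type $2$ vertices $v_1,v_2,v_3$ are vertices of $D_\Gamma$, each of the form $g_iA_{a_ib_i}g_i^{-1}$. The key point is that the standard tree $t_2$ contains both $v_1$ and $v_2$: by Lemma~\ref{LemmaCrisp}, $t_2=h\fix(a)$ for some generator $a$ and some $h\in A_\Gamma$, so $v_1,v_2$ are type $2$ vertices lying on a common standard tree. Two type $2$ vertices on the standard tree $\fix(a)$ are adjacent in $D_\Gamma$ to the type $1$ vertex $\langle a\rangle$ (up to translating $h$), meaning (after conjugation) $v_1$ and $v_2$ both correspond to dihedral parabolics $A_{ab}$ and $A_{ac'}$ containing the generator $a$. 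Running this for all three trees $t_1,t_2,t_3$ forces the three dihedral parabolics, up to a single global conjugation, to be standard: $v_1=\langle A_{ab}\rangle$, $v_2=\langle A_{bc}\rangle$, $v_3=\langle A_{ac}\rangle$ for three generators $a,b,c$ spanning a triangle in $\Gamma$ (the triangle inequality $m$-condition being automatic since $A_\Gamma$ is $2$-dimensional, but here large-type guarantees it). Then the three trees $t_1,t_2,t_3$, being the unique standard trees joining consecutive pairs of these vertices and passing through the respective type $1$ vertices $\langle a\rangle,\langle b\rangle,\langle c\rangle$, must be exactly $\fix(a),\fix(b),\fix(c)$. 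This gives $\gamma=(\langle a\rangle,\langle z_{ab}\rangle,\langle b\rangle,\langle z_{bc}\rangle,\langle c\rangle,\langle z_{ac}\rangle)$ up to conjugation, using the dictionary of Lemma~\ref{LemmaThreeDefinitionsCoincide} between type $2$ vertices $\langle A_{ab}\rangle$ and the $\mathbb{Z}$-subgroup $\langle z_{ab}\rangle$ generated by the centre.

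The main obstacle I expect is the rigidity step asserting that the three type $2$ vertices and three trees can be simultaneously conjugated into the base fundamental domain $K_\Gamma$ — i.e.\ that a "closed hexagonal loop" of alternating standard trees and type $2$ vertices in $D_\Gamma$ cannot wander and must be the boundary of (a translate of) a single $K_\Gamma$. This is where one needs to combine convexity of $K_\Gamma$ (Remark~\ref{RemFDConvex}), the tree structure of the $\fix(a)$'s, and the local picture of how type $1$, type $2$ and rank-$0$ vertices fit around a triangle in $\Gamma$ via the Observation that $\partial K_\Gamma\cong\Gamma_{bar}$. Concretely, one argues that the geodesic hexagon in $D_\Gamma$ with vertices $v_1,v_2,v_3$ and edges running through $\langle a\rangle,\langle b\rangle,\langle c\rangle$ bounds a disk that, by the CAT(0) and piecewise-Euclidean structure with the Moussong metric, must lie in one chamber — a leaf-free defining graph $\Gamma$ (not a tree, containing a triangle) ensures no degeneracy. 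The remaining verification, that distinct generators genuinely give distinct vertices and that the labels $m_{ab},m_{bc},m_{ac}$ are all finite (so that the $z$'s exist), is routine from the hypothesis that $\Gamma$ contains a triangle and that parabolic closures are well-behaved (Theorem~\ref{ThmIntersectionParabolics}, Corollary~\ref{CorollaryTypeOfIntersections}).
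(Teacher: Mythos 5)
Your proposal correctly identifies the correct high-level structure, but it does not close the single step on which everything hinges, and its attempt to get there contains an error.

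The correct (and paper's) outline is: (i) write $\gamma=(t_1,v_{12},t_2,v_{23},t_3,v_{13})$ with $t_i$ unbounded standard trees and $v_{ij}$ type $2$ vertices of $D_\Gamma$; (ii) use convexity of standard trees to get a geodesic triangle $T_\gamma$ with vertices $v_{12},v_{23},v_{13}$ whose sides lie in $t_1,t_2,t_3$; (iii) show $T_\gamma$ lies in a single translate $gK_\Gamma$ of the fundamental domain; (iv) read off the standard generators from $\partial(gK_\Gamma)\cong g\Gamma_{bar}$. You identify (iii) as "the main obstacle," which is exactly right --- and then you describe, but do not prove, it ("Concretely, one argues that the geodesic hexagon \dots must lie in one chamber"). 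That assertion \emph{is} the lemma. The paper gets (iii) for free by citing the Claim in the proof of Proposition~3.5 of \cite{vaskou2023automorphisms}; you cannot cite that, so either a proof or a clear acknowledgment that this is an unproved black box is required.

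Moreover, the intermediate step by which you try to approach (iii) is wrong as stated. You claim that two type~$2$ vertices lying on the same standard tree $h\fix(a)$ are both "adjacent in $D_\Gamma$ to the type~$1$ vertex $\langle a\rangle$ (up to translating $h$)." That is false: an infinite standard tree $\fix(a)$ contains infinitely many type~$1$ and type~$2$ vertices, alternating along its branches, and two type~$2$ vertices on it can be arbitrarily far apart with no common type~$1$ neighbour. What one can say is only that the local group at each of $v_1,v_2$ contains the common elliptic $hah^{-1}$. Jumping from that to "after conjugation they are both standard dihedral parabolics $A_{ab}$, $A_{ac'}$" hides exactly the content of step (iii): you must rule out that the hexagon wanders through several chambers along the trees. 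So the overall shape of the argument is right and matches the paper's, but the key geodesic-triangle-in-one-chamber fact is left unproved and the partial argument offered in its place has a genuine gap.
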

	
	\begin{proof} 
		As $I_{\Gamma}^{TD}$ is bipartite, $\gamma$ takes the form
		$$\gamma = (t_1, v_{12}, t_2, v_{23}, t_3, v_{13}),$$
		where the $t_i$'s are standard trees and the $v_{ij}$'s are type $2$ vertices. By construction, the vertices $v_{12}$ and $v_{13}$ both lie on the standard tree $t_1$. Since standard trees are convex by \ref{RemStandardTrees}.(1), the geodesic from $v_{12}$ to $v_{13}$ lie entirely inside of $t_1$. The same goes for $t_2$ and $t_3$.
		
		We now consider the geodesic triangle $T_{\gamma}$ formed by concatenating these three geodesics. By \cite[Claim in the proof of Proposition 3.5]{vaskou2023automorphisms}, $T_{\gamma}$ is contained in a single translate $g K_{\Gamma}$ of the fundamental domain. It is actually contained in the boundary of that translate, and thus $\gamma$ corresponds to a $6$-cycle in $g \Gamma_{bar}$, which precisely means that there are three standard generators $a, b, c \in V(\Gamma)$ such that
		$$\gamma = g (\langle a \rangle, \langle z_{ab} \rangle, \langle b \rangle, \langle z_{bc} \rangle, \langle c \rangle, \langle z_{ac} \rangle) g^{-1}.$$
	\end{proof}
	
	\begin{notation}
		For every $6$-cycle $\gamma$ in $I_{\Gamma}^{TD}$, we will denote by $\widetilde{\gamma}$ the associated $6$-cycle in $D_{\Gamma}$, as described in Lemma \ref{Lemma6CyclesinFD}.
	\end{notation}
	
	\begin{notation}
		We will at times say that “two cycles $\gamma$ and $\gamma'$ of $I_{\Gamma}$ share a $t_1 - v_1 - t_2$ subgraph”. What this will mean, is that there are distinct vertices $t_1, v_1, t_2 \in I_{\Gamma}$ of type T, D and T respectively, such that $t_1$ is adjacent to $v_1$ and $v_1$ is adjacent to $t_2$, and such that those three vertices belong to both $\gamma$ and $\gamma'$. We will similarly sometimes say that $\gamma$ and $\gamma'$ share a $t_1 - v_1$ subgraph.
	\end{notation}
	
	\begin{lemma} \label{Lemmaf1v1f2}
		Up to conjugation, two $6$-cycles in $I^{TD}_\Gamma$ intersecting (exactly) along a $f_1-v_1-f_2$ subgraph correspond to one of the two following situations in $D_{\Gamma}$, where $\Delta_{v_1}$ is the Garside element of the dihedral Artin parabolic subgroup corresponding to $v_1$.
		\begin{figure}[H]
			\centering
			\includegraphics[scale=1]{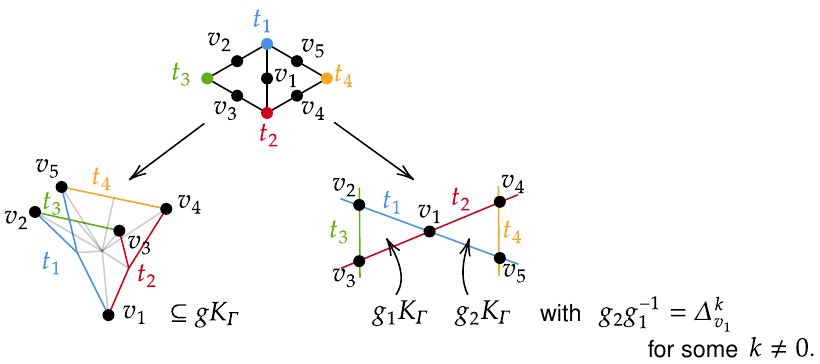}
			\caption{$6$-cycles intersection along a $f_1-v_1-f_2$ subgraph.}
			\label{Fig3}
		\end{figure}
	\end{lemma}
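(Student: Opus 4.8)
The plan is to pull both $6$-cycles back into the Deligne complex via Lemma~\ref{Lemma6CyclesinFD}, to identify $v_1$ with a type~$2$ vertex of $D_\Gamma$, and then to pin down the relative position of the two lifts from the way the dihedral parabolic fixing $v_1$ moves the standard trees $f_1,f_2$. First I would normalise: by Lemma~\ref{Lemma6CyclesinFD} applied to $\gamma$, after conjugating we may assume that $\widetilde\gamma$ is the hexagon $(v_a,v_{ab},v_b,v_{bc},v_c,v_{ac})$ sitting on $\partial K_\Gamma=\Gamma_{bar}$, for some triangle $\{a,b,c\}$ of $\Gamma$. Since $I_\Gamma^{TD}$ is bipartite on $\mathcal T\sqcup\mathcal D$ (Corollary~\ref{CoroBipartite}) and the subgraph shared with $\gamma'$ is $f_1-v_1-f_2$, necessarily $v_1=v_{ab}$ is of type $D$ and $f_1,f_2$ are the standard trees associated to the two neighbours $v_a,v_b$ of $v_{ab}$ in $\widetilde\gamma$, namely $\fix(a)$ and $\fix(b)$ (both contain $v_{ab}$ since $a,b\in A_{ab}$). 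Applying Lemma~\ref{Lemma6CyclesinFD} to $\gamma'$ gives $\widetilde\gamma'\subset g'\Gamma_{bar}$ for some $g'\in A_\Gamma$; the type~$2$ vertex of $\widetilde\gamma'$ corresponding to $v_1$ is a coset $g'A_{de}$ with $de$ an edge of $\Gamma$, and since $\gamma\cap\gamma'\ni v_1$ this coset equals $A_{ab}$, which (comparing left cosets of standard parabolics) forces $A_{de}=A_{ab}$ and $g'\in A_{ab}$. Thus $\widetilde\gamma'=g'(v_a,v_{ab},v_b,v_{bc'},v_{c'},v_{ac'})$ for a triangle $\{a,b,c'\}$ of $\Gamma$, and the two standard trees of $\widetilde\gamma'$ through $v_{ab}$ are $g'\fix(a)$ and $g'\fix(b)$. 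As $f_1,f_2\in\gamma'$ are the two neighbours of $v_1$ there, we obtain $\{g'\fix(a),g'\fix(b)\}=\{\fix(a),\fix(b)\}$.

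It then remains to determine which $g'\in A_{ab}$ satisfy this. Using $\fix(g'ag'^{-1})=g'\fix(a)$ together with the fact that two stable $\mathbb Z$-subgroups with the same fixed standard tree coincide (built into Lemma~\ref{lemcommensurableequal}), $g'ag'^{-1}$ generates the same $\mathbb Z$-subgroup as $a$ or as $b$; since no standard generator is conjugate to its inverse (apply the homomorphism $A_\Gamma\to\mathbb Z$ sending every generator to $1$), this forces $g'ag'^{-1}\in\{a,b\}$, and likewise $g'bg'^{-1}\in\{a,b\}$, compatibly. If $g'ag'^{-1}=a$ and $g'bg'^{-1}=b$ then $g'\in C_{A_{ab}}(a)\cap C_{A_{ab}}(b)=Z(A_{ab})$, which is cyclic, generated by $\Delta_{v_1}$ if $m_{ab}$ is even and by $\Delta_{v_1}^2$ if $m_{ab}$ is odd; if $g'ag'^{-1}=b$ and $g'bg'^{-1}=a$ then $g'$ realises the order-$2$ graph automorphism of $A_{ab}$, which is impossible for $m_{ab}$ even (it acts nontrivially on the abelianisation $\mathbb Z^2$, hence is not inner) and is realised by the Garside element $\Delta_{v_1}$ when $m_{ab}$ is odd, giving $g'\in\Delta_{v_1}Z(A_{ab})$. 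In all cases $g'\in\langle\Delta_{v_1}\rangle$; and modulo the central subgroup $Z(A_{ab})$ — which fixes $v_1$ and merely permutes the edges of $\fix(a)$ and $\fix(b)$ emanating from $v_1$ — we may take $g'\in\{1,\Delta_{v_1}\}$. These two choices of $g'$ yield the two configurations of Figure~\ref{Fig3}, the second differing from the first by $\Delta_{v_1}$, and the assumption that $\gamma\cap\gamma'$ equals $f_1-v_1-f_2$ \emph{exactly} guarantees that the remaining vertices of the two hexagons are distinct, so this list is complete.

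The step I expect to be the main obstacle is the determination, carried out above, of the elements of the dihedral parabolic $A_{v_1}$ that preserve or interchange the pair $\{\fix(a),\fix(b)\}$ of standard trees through $v_1$: one must combine the rigidity of standard trees (distinct trees come from distinct, hence non-commensurable, stable $\mathbb Z$-subgroups, Lemma~\ref{lemcommensurableequal}), the explicit structure of the centralisers and of the centre of a dihedral Artin group, and the non-conjugacy of a standard generator to its inverse; the bookkeeping with the parity of $m_{ab}$ — which makes the ``swap'' case available only when $m_{ab}$ is odd, the even case collapsing onto the first configuration — also needs a little care.
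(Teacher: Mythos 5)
Your argument reaches the same conclusion as the paper's proof — namely that the element $g_2g_1^{-1}$ carrying one translate of $K_\Gamma$ to the other lies in $\langle\Delta_{v_1}\rangle$ — but by a genuinely different route. After the common first step (lift both hexagons to $D_\Gamma$ via Lemma~\ref{Lemma6CyclesinFD} and deduce $g'\in A_{ab}$ from the shared type-$2$ vertex), the paper invokes \cite[Lemma~4.3]{martin2022acylindrical}, which directly puts $g_2$ into $\Delta_{ab}^k\langle a\rangle\cap\Delta_{ab}^{k'}\langle b\rangle$, and then kills the tails $a^q$, $b^{q'}$ by uniqueness of Garside normal forms. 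You instead exploit that $f_1,f_2$ are shared vertices of $I_\Gamma^{TD}$ to get $\{g'\fix(a),g'\fix(b)\}=\{\fix(a),\fix(b)\}$, pass to pointwise stabilisers of standard trees (which you correctly identify with the cyclic groups $\langle a\rangle$, $\langle b\rangle$), rule out $g'ag'^{-1}=a^{-1}$ via the length homomorphism $A_\Gamma\to\mathbb Z$, and then classify the $g'\in A_{ab}$ that preserve or swap $\{a,b\}$ by conjugation using the centre of $A_{ab}$ and the inner realisation of the swap by $\Delta_{ab}$ for odd $m_{ab}$. Both routes are sound; yours trades the appeal to \cite[Lemma~4.3]{martin2022acylindrical} for a more self-contained case analysis drawing only on standard facts about dihedral Artin groups and standard trees.

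One caveat on the last step: the reduction ``modulo $Z(A_{ab})$ we may take $g'\in\{1,\Delta_{v_1}\}$'' overshoots what the lemma claims and is in fact wrong as a statement about the picture in $D_\Gamma$ — multiplying $g'$ by a nontrivial central element moves the translate $g'K_\Gamma$ to a genuinely different fundamental domain, so these cases are not identified. The correct reading of Figure~\ref{Fig3} (consistent with how the lemma is used later, e.g.\ in the proofs of Lemma~\ref{LemmaDifference} and Lemma~\ref{LemmaExtendingToTwistlessHierarchy}) is simply $g_2g_1^{-1}=\Delta_{v_1}^k$ for some $k\in\mathbb Z$, which your main argument already delivers. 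You should delete that final normalisation clause; everything before it stands.
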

	
	\begin{proof}
		By Lemma \ref{Lemma6CyclesinFD}, each of the two $6$-cycles are contained into a translate of the fundamental domain $K_{\Gamma}$. So either these two fundamental domains are the same, and the situation is described on the left of Figure \ref{Fig3}. Or they are distinct, and the situation is described on the right of Figure \ref{Fig3}. What is left to show if that the element $g_2 g_1^{-1}$ is as wanted.
		
		Up to conjugation, we can suppose that $t_1 = Fix(a)$ and $t_2 = Fix(b)$ for two standard generators $a, b \in V(\Gamma)$ with $m_{ab} < \infty$. In particular, the local group at $v_1$ is $A_{ab}$. The element $g_2$ is non-trivial because $K_{\Gamma} \neq g_2 K_{\Gamma}$ by hypothesis. By \cite[Lemma 4.3]{martin2022acylindrical}, there is some $k \neq 0$ such that $g_2 \in \Delta_{ab}^k \langle a \rangle$, where $\Delta_{ab}$ is the Garside element in $a$ and $b$. This means there is some $q \in \mathbb{Z}$ such that
		$$g_2 = \Delta_{ab}^k a^q.$$
		A similar reasoning on the standard there $t_2$ shows that
		$$g_2 = \Delta_{ab}^{k'} b^{q'}.$$
		Dihedral Artin groups are known to have a normal form called the Garside normal form (see \cite{brieskorn1972artin}). Note that the two above expressions are in Garside normal forms. By unicity of this form, it must be that $\Delta_{ab}^k = \Delta_{ab}^{k'}$ and $a^q = b^{q'}$. This forces $k = k'$ and $q = q' = 0$. Finally, we obtain $g_2 = \Delta_{ab}^k$, as wanted.
	\end{proof}
	
	\begin{lemma} \label{Lemmaf1v1}
		Two $6$-cycles intersecting (exactly) along a $t_1-v_1$ subgraph correspond to the following situation in $D_{\Gamma}$, where $\Delta_{v_1}$ is the Garside element of the dihedral Artin parabolic subgroup corresponding to $v_1$.
		\begin{figure}[H]
			\centering
			\includegraphics[scale=1]{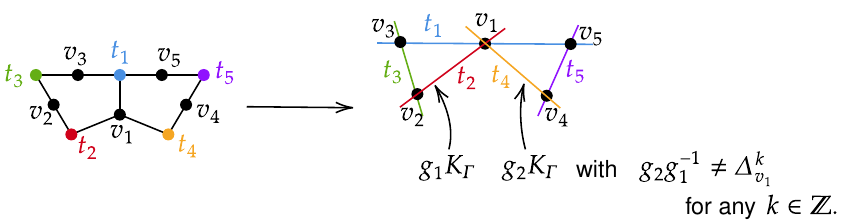}
			\caption{$6$-cycles intersection along a $f_1-v_1$ subgraph.}
			\label{Fig4}
		\end{figure}
	\end{lemma}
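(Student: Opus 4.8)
The plan is to mirror the proof of Lemma~\ref{Lemmaf1v1f2}: the only structural difference is that the two $6$-cycles now share a single standard tree instead of two, so we expect the conclusion to retain one extra parameter, a translation along $t_1$.

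First I would apply Lemma~\ref{Lemma6CyclesinFD} to both $6$-cycles. After a global conjugation we may assume the first one, $\gamma$, equals the standard $6$-cycle $(\langle a\rangle,\langle z_{ab}\rangle,\langle b\rangle,\langle z_{bc}\rangle,\langle c\rangle,\langle z_{ac}\rangle)$ for a triangle $a,b,c$ of $\Gamma$; then $\widetilde{\gamma}\subseteq\partial K_\Gamma$, with $t_1=\langle a\rangle=\fix(a)$ and $v_1=\langle z_{ab}\rangle=v_{ab}$, while the second $6$-cycle satisfies $\widetilde{\gamma'}\subseteq\partial(g_2K_\Gamma)$ for some $g_2\in A_\Gamma$, again by Lemma~\ref{Lemma6CyclesinFD}.

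Next I would rule out $g_2=1$. Indeed, if $g_2=1$ then $\widetilde{\gamma'}$ is another $6$-cycle of $\partial K_\Gamma\cong\Gamma_{bar}$; embedded $6$-cycles of $\Gamma_{bar}$ are precisely the barycentric subdivisions of triangles of $\Gamma$, and since $\widetilde{\gamma'}$ contains the edge of $\partial K_\Gamma$ joining the type-$1$ vertex $v_a$ to $v_{ab}$, the associated triangle must contain the edge $\{a,b\}$. Then $\widetilde{\gamma'}$ also passes through $v_b$ and the edge $v_{ab}-v_b$, so $\gamma$ and $\gamma'$ would share at least a $t_1-v_1-t_2$ subgraph, contradicting that they meet exactly along $t_1-v_1$. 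Hence $g_2\neq1$, whereas $K_\Gamma$ and $g_2K_\Gamma$ both contain the edge of the standard tree $\fix(a)$ incident to $v_{ab}$ that corresponds to the shared subgraph $t_1-v_1$.

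Finally, I would feed this configuration into \cite[Lemma~4.3]{martin2022acylindrical} exactly as in the proof of Lemma~\ref{Lemmaf1v1f2}: from $g_2\neq1$ and the fact that $g_2K_\Gamma$ meets $K_\Gamma$ along an edge of $\fix(a)$ at the type-$2$ vertex $v_{ab}$, one gets $g_2\in\Delta_{ab}^{k}\langle a\rangle$, that is $g_2=\Delta_{v_1}^{k}a^{q}$ for some integers $k,q$ with $(k,q)\neq(0,0)$, which is exactly the situation drawn in Figure~\ref{Fig4}. I expect the main difficulty to be the bookkeeping in this last step: in Lemma~\ref{Lemmaf1v1f2} a second shared standard tree $t_2=\fix(b)$, combined with uniqueness of the Garside normal form of $A_{ab}$, forced the translation parameter to vanish, whereas here only $t_1$ constrains $g_2$, so the parameter $a^{q}$ persists; one then has to confirm that the exact-intersection hypothesis is precisely what makes these fundamental domains distinct and attached along $t_1$ in the manner of Figure~\ref{Fig4}, rather than along a longer subpath.
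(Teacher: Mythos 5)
Your skeleton matches the paper's: invoke Lemma~\ref{Lemma6CyclesinFD}, reduce to a single shared fundamental domain $K_\Gamma$ versus $g_2K_\Gamma$, note both border the same type-$2$ vertex $v_{ab}$ along $\fix(a)$, and feed this into \cite[Lemma~4.3]{martin2022acylindrical} to get $g_2\in\Delta_{ab}^k\langle a\rangle$. But there is a genuine gap at the end, which you yourself half-notice without closing: concluding only that $(k,q)\neq(0,0)$ is strictly weaker than what the lemma (and its downstream use in Lemma~\ref{LemmaDifference}, where it is invoked as ``$g_ng_1^{-1}$ belongs to $G_v$ but not to $\langle\Delta_v\rangle$'') actually asserts. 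You must additionally exclude the case $q=0$, $k\neq 0$, i.e.\ $g_2=\Delta_{v_1}^k$.

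The argument to rule this out is exactly analogous to the one you already used to rule out $g_2=1$, but you stopped one step short. If $g_2=\Delta_{v_1}^k$ with $k\neq0$, then since $v_1=g_2v_{ab}$ forces the triangle underlying $\gamma'$ to be of the form $T(a,b,z')$, the two type-$T$ neighbours of $v_1$ in $\gamma'$ are the standard trees through $g_2\langle a\rangle$ and $g_2\langle b\rangle$. When $\Delta_{v_1}^k$ is central these are $\fix(a)$ and $\fix(b)$; when $m_{ab}$ is odd and $k$ odd, conjugation swaps $a\leftrightarrow b$ and so the pair of trees is again $\{\fix(a),\fix(b)\}$. In either case the neighbour of $v_1$ in $\gamma'$ other than $t_1=\fix(a)$ equals $\fix(b)=t_2$, the second neighbour of $v_1$ in $\gamma$, so $\gamma\cap\gamma'\supseteq\{t_1,v_1,t_2\}$ --- contradicting the exact-$t_1$-$v_1$ hypothesis. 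This is precisely the step the paper records as ``if $g_2g_1^{-1}$ belonged to the centre of the local group at $v_1$, then the standard trees $f_2$ and $f_4$ would coincide.'' Without it your conclusion does not match Figure~\ref{Fig4} and the lemma cannot be used in the proof of Lemma~\ref{LemmaDifference}.
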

	
	\begin{proof}
		By Lemma \ref{Lemma6CyclesinFD}, each of the two $6$-cycles are contained into a translate of the fundamental domain. An argument similar to that of Lemma \ref{Lemmaf1v1f2} shows that if $g_2 g_1^{-1}$ belonged to the centre of the local group at $v_1$, then the standard trees $f_2$ and $f_4$ would coincide. So the two fundamental domains are as described on Figure \ref{Fig4}.
	\end{proof}
	
	\begin{lemma} \label{LemmaChainOf6Cycles}
		Consider a family $\gamma_1, \cdots, \gamma_n$ of $6$-cycles and a vertex $t_* \in T$ satisfying $t_* = \bigcap\limits_{i=1}^n \gamma_i$ such that for every $i, j \in \{1, \cdots, n\}$ with $i < j$ we have
		$$\gamma_i \cap \gamma_j = \begin{cases} \text{some } t_i - v_i - t_* \text{ subgraph if } j = i+1 ;\\ \text{at least } t_* \text{ if } i=1, j=n ;\\ t_* \text{ otherwise}. \end{cases}$$
		(See Figure \ref{FigComb}). By Lemma \ref{Lemma6CyclesinFD}, there is for every $i \in \{1, \cdots, n\}$ a unique element $g_i \in A_{\Gamma}$ such that $\widetilde{\gamma_i} \in g_i K_{\Gamma}$. Let $\{ \lambda_1, \cdots, \lambda_m\} \subseteq \{1, \cdots, n-1\}$ be the (possibly empty) set of indexes for which $g_i \neq g_{i+1}$. Then the following is a geodesic path in $D_{\Gamma}$:
		$$[v_{\lambda_1}, v_{\lambda_2}] \cup [v_{\lambda_2}, v_{\lambda_3}] \cup \cdots \cup [v_{\lambda_{m-1}}, v_{\lambda_m}].$$
		Consequently, if $m \geq 1$, then the combinatorial distance between $v_1$ and $v_{n-1}$ in $D_{\Gamma}$ is $2m-2$. In particular, if $v_0 = v_n$ then we must have $m \leq 1$.
		\begin{figure}[H]
			\centering
			\includegraphics[scale=1]{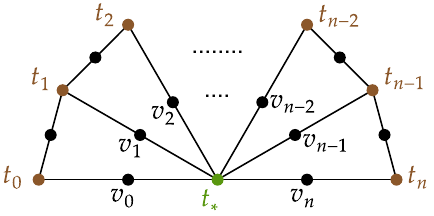}
			\caption{The situation of Lemma \ref{LemmaChainOf6Cycles} as seen in $I_{\Gamma}^{TD}$. We do not necessarilly assume that $v_0 \neq v_n$ or that $t_0 \neq t_n$.}
			\label{FigComb}
		\end{figure}
	\end{lemma}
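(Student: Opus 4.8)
The plan is to push the combinatorial chain into the CAT(0) geometry of $D_\Gamma$, using the standard tree attached to $t_*$ (identify the type $T$ vertex $t_*$ with this tree via Definition~\ref{DefIntersectionGraphIII} and write $t_*$ for it). Since $t_*$ is convex in $D_\Gamma$ (Remark~\ref{RemStandardTrees}), a geodesic read off inside the tree $t_*$ is automatically a geodesic of $D_\Gamma$, so it suffices to exhibit the zig-zag as a non-backtracking edge path of the tree $t_*$.

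First I would fix for each $i$ the unique $g_i$ with $\widetilde{\gamma_i}\subset\partial(g_iK_\Gamma)$ from Lemma~\ref{Lemma6CyclesinFD}, record that the type $2$ vertex $v_i$ lies on $\widetilde{\gamma_i}$ and on $t_*$, and decompose $\{1,\dots,n\}$ into the maximal blocks on which $g_i$ is constant, with values $h_0,\dots,h_m$ (so the transitions are at $\lambda_1<\dots<\lambda_m$). Let $\tau_j$ be the unique type $1$ vertex of $t_*$ lying in $h_jK_\Gamma$. Since $v_{\lambda_j}\in\widetilde{\gamma_{\lambda_j}}\cap\widetilde{\gamma_{\lambda_j+1}}\subset h_{j-1}K_\Gamma\cap h_jK_\Gamma$, the vertex $v_{\lambda_j}$ lies in the two stars $t_*\cap h_{j-1}K_\Gamma$ and $t_*\cap h_jK_\Gamma$, hence is adjacent in $t_*$ to both $\tau_{j-1}$ and $\tau_j$. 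Now at a transition $\lambda_j$ the $6$-cycles $\gamma_{\lambda_j},\gamma_{\lambda_j+1}$ meet exactly along $t_{\lambda_j}-v_{\lambda_j}-t_*$, so Lemma~\ref{Lemmaf1v1f2} applies: $h_j$ is obtained from $h_{j-1}$ by multiplying (after conjugation into standard position) by a non-trivial power of the Garside element $\Delta_{v_{\lambda_j}}$ of the dihedral parabolic fixing $v_{\lambda_j}$. Such a power fixes $v_{\lambda_j}$ but does not fix $\tau_{j-1}$ — it does not lie in a type $1$ vertex stabiliser, its parabolic closure having type $2$ — so $\tau_{j-1}\neq\tau_j$, and $\tau_{j-1}-v_{\lambda_j}-\tau_j$ is the length-$2$ geodesic of $t_*$ joining $\tau_{j-1}$ and $\tau_j$.

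Next I would show the concatenation $\tau_0-v_{\lambda_1}-\tau_1-v_{\lambda_2}-\tau_2-\dots-\tau_{m-1}-v_{\lambda_m}-\tau_m$ is a non-backtracking edge path in the tree $t_*$, hence the geodesic there, hence (by convexity) a geodesic of $D_\Gamma$. There is no backtracking at a $v_{\lambda_j}$ because $\tau_{j-1}\neq\tau_j$. There is no backtracking at a $\tau_j$ because $v_{\lambda_j}\neq v_{\lambda_{j+1}}$: if $\lambda_{j+1}=\lambda_j+1$ this holds since $v_{\lambda_j}$ and $v_{\lambda_{j+1}}$ are the two distinct type $D$ vertices adjacent to $t_*$ in the single $6$-cycle $\gamma_{\lambda_j+1}$; and if $\lambda_{j+1}>\lambda_j+1$ it holds since $v_{\lambda_j}\in\gamma_{\lambda_j+1}$, $v_{\lambda_{j+1}}\in\gamma_{\lambda_{j+1}}$ while the hypothesis gives $\gamma_{\lambda_j+1}\cap\gamma_{\lambda_{j+1}}=\{t_*\}$, a subgraph containing no type $2$ vertex. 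Thus the subpath from $v_{\lambda_1}$ to $v_{\lambda_m}$ has combinatorial length $2(m-1)=2m-2$. The asserted value of $d_{D_\Gamma}(v_1,v_{n-1})$ then follows by locating $v_1$ and $v_{n-1}$ in this picture — they lie in the stars $t_*\cap h_0K_\Gamma$ and $t_*\cap h_mK_\Gamma$ at the two ends of the zig-zag — and bookkeeping with the block indices. Finally, if $v_0=v_n$, this common type $2$ vertex is adjacent in $t_*$ to both $\tau_0$ (from $\gamma_1$) and $\tau_m$ (from $\gamma_n$), giving a path of length $2$ between $\tau_0$ and $\tau_m$; uniqueness of paths in the tree $t_*$ then forces $2m\leq 2$, i.e.\ $m\leq 1$.

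The main obstacle is the no-backtracking verification of the third paragraph: one must argue that successive Garside ``rotations'' of the fundamental domain about distinct type $2$ vertices of $t_*$ genuinely advance along $t_*$ rather than retracing. This is exactly where the hypothesis that non-adjacent $\gamma_i,\gamma_j$ meet only in $t_*$ is used — together with the fact that a $6$-cycle has pairwise distinct vertices, it forces the $v_{\lambda_j}$ to be pairwise distinct and prevents the chain from folding back through a configuration already traversed. A secondary technicality is keeping straight, in the application of Lemma~\ref{Lemmaf1v1f2}, which of the two candidate type $1$ vertices (according to the parities of the label $m_{v_{\lambda_j}}$ and of the Garside power) is the $\tau_j$ lying on $t_*$; but this choice never affects the distinctness $\tau_{j-1}\neq\tau_j$, which is all the argument requires.
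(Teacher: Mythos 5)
Your proof is correct and follows essentially the same route as the paper's: decompose $\{1,\dots,n\}$ into the constancy blocks of the $g_i$, read the transitions as Garside rotations about the shared type~$2$ vertices via Lemma~\ref{Lemmaf1v1f2}, and observe that the resulting zig-zag of type~$1$ and type~$2$ vertices lies inside the convex standard tree $t_*$, so that geodesics in $t_*$ are geodesics in $D_\Gamma$. You are in fact more careful than the paper at the crucial step: where the paper declares the concatenation ``clear'' from its figure, you verify explicitly that the edge path along $t_*$ is non-backtracking (using $\tau_{j-1}\neq\tau_j$ from the fact that a nontrivial Garside power has parabolic closure of type~$2$ and hence does not fix a type~$1$ vertex, together with $v_{\lambda_j}\neq v_{\lambda_{j+1}}$ from the disjointness hypothesis), and your direct length comparison in the tree for the $v_0=v_n$ case is exactly the contradiction the paper intends.
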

	
	\begin{proof}
		The statement is empty if $m = 0$, so we assume that $m \geq 1$. For any $k \in \{1, \cdots, m-1\}$, the vertices $v_{\lambda_k}$ and $v_{\lambda_{k+1}}$ are two type $2$ vertices both contained in $g_{\lambda_k} K_{\Gamma}$. So the combinatorial distance between them is precisely $2$, and the combinatorial length of $[v_{\lambda_k}, v_{\lambda_{k+1}}]$ is also $2$. By construction, and using Lemma \ref{Lemmaf1v1f2}, the $6$-cycles $\gamma_{\lambda_k}$ and $\gamma_{\lambda_{k+1}}$ are in the situation described below:
		\begin{figure}[H]
			\centering
			\includegraphics[scale=1]{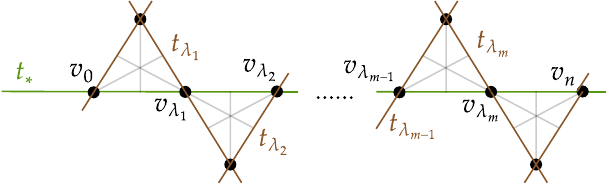}
			\caption{The situation of Lemma \ref{LemmaChainOf6Cycles} as seen in the Deligne complex.}
			\label{FigCombProof}
		\end{figure}
		
		To prove the main statement, it is now enough to prove that for any $k \in \{2, \cdots, m-1\}$ the two geodesics $[v_{\lambda_{k-1}}, v_{\lambda_k}]$ and $[v_{\lambda_k}, v_{\lambda_{k+1}}]$ concatenate into a geodesic.
		The result is now clear because these two segments are both contained in the standard tree $t_*$ with is convex by Lemma \ref{RemStandardTrees}.(1), and they meet at $v_{\lambda_k}$.
	\end{proof}
	
	The following lemma is crucial to show that vertices of type $T$ and vertices of type $D$ cannot be sent to each others.
	
	\begin{lemma} \label{LemmaDifference}
		Suppose that $\Gamma$ contains at least one triangle. Then there is some $n \geq 3$ (as defined in the figure below) such that $I_{\Gamma}^{TD}$ contains a subgraph isomorphic to $G_1$, but no subgraph isomorphic to $G_2$:
		\begin{figure}[H]
			\centering
			\includegraphics[scale=0.8]{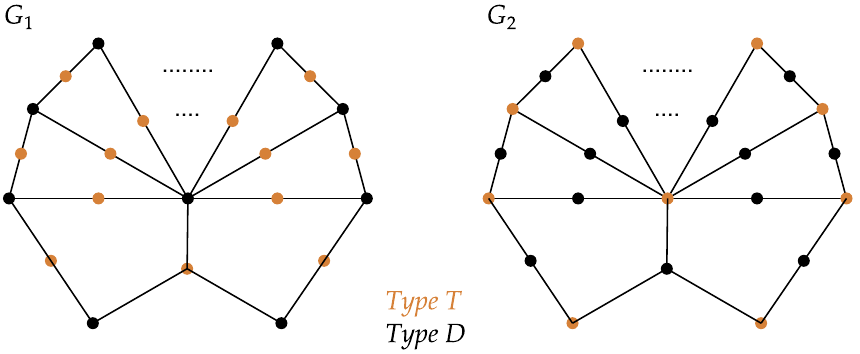}
			\caption{The subgraphs $G_1$ and $G_2$. Both graphs are the union of exactly $n$ $6$-cycles, attached as pictured. The vertices of type $T$ and type $D$ are drawn in different colours.}
			\label{FigureTwoSubgraphs}
		\end{figure}
	\end{lemma}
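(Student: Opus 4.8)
The plan is to realise $G_1$ as a ``closed necklace'' of $n$ hexagons glued consecutively along $v_i$--$t_i$--$v_\ast$ paths around a common type-$D$ vertex $v_\ast$, so that $G_2$ is the same abstract graph with the two colour classes exchanged (central vertex now of type $T$). The statement then decouples into two independent assertions: \emph{(a)} $I_\Gamma^{TD}$ contains a copy of $G_1$ for every $n\ge 3$; and \emph{(b)} for all sufficiently large $n$ (depending only on the number of triangles of $\Gamma$), $I_\Gamma^{TD}$ contains no copy of $G_2$. Any $n$ satisfying both — which exists since $\Gamma$ has a triangle by hypothesis — proves the lemma. (The reason this dichotomy is the useful one for Proposition~\ref{PropAutomorphismsPreserveType} is that $I_\Gamma^{TD}$ is connected and bipartite, so a graph isomorphism either preserves or globally swaps the two colour classes, and a global swap would carry a copy of $G_1$ to a copy of $G_2$, which (b) forbids.)

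For (a) I would fix a triangle $T(a,b,c)$ of $\Gamma$, set $v_\ast:=\langle z_{ab}\rangle$, and start from the hexagon $\gamma_0=(\langle a\rangle,\langle z_{ab}\rangle,\langle b\rangle,\langle z_{bc}\rangle,\langle c\rangle,\langle z_{ac}\rangle)$ of Lemma~\ref{LemmaBipartiteGirth6}. One then produces $\gamma_1,\dots,\gamma_n$ by conjugating $\gamma_0$ by a carefully chosen sequence of elements of $C(z_{ab})=A_{ab}$ — alternating Garside elements $\Delta_{ab}$ with powers of the centre generators $z_{ac},z_{bc}$ — arranged so that each $\gamma_i$ shares exactly one of its two standard trees at $v_\ast$ with $\gamma_{i-1}$, all $n$ hexagons are pairwise distinct, and after $n$ steps the chain closes up. Verifying that the pairwise intersections are \emph{exactly} the prescribed $v_i$--$t_i$--$v_\ast$ paths for consecutive indices and reduce to $\{v_\ast\}$ otherwise uses: Lemma~\ref{Lemma6CyclesinFD}, which attaches to each $\gamma_i$ a unique translate $g_iK_\Gamma$ with the $g_i$ pairwise distinct; the descriptions of how two hexagons may meet in Lemmas~\ref{Lemmaf1v1f2} and~\ref{Lemmaf1v1}; and the combinatorial-distance computations of Lemma~\ref{LemmaChainOf6Cycles}, together with convexity of standard trees (Remark~\ref{RemStandardTrees}.(1)) and uniqueness of geodesics in $D_\Gamma$, to rule out accidental coincidences.

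For (b), suppose a copy of $G_2$ embeds in $I_\Gamma^{TD}$. Its $n$ hexagons $\gamma_1,\dots,\gamma_n$ form a closed necklace about a common type-$T$ vertex $t_\ast$, with consecutive hexagons meeting along $t_i$--$v_i$--$t_\ast$ paths; this is precisely the situation of Lemma~\ref{LemmaChainOf6Cycles} with the necklace closed up, so that lemma forces $m\le 1$, i.e.\ all but at most one of the consecutive pairs $(\gamma_i,\gamma_{i+1})$ lie in a common translate of $K_\Gamma$. Since ``lying in a common translate'' is compatible with the cyclic order of the necklace, the whole necklace is carried by at most two translates $gK_\Gamma,g'K_\Gamma$. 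But inside a single translate $gK_\Gamma\cong K_\Gamma$ the hexagons through the fixed vertex $t_\ast$ correspond (via Lemma~\ref{Lemma6CyclesinFD}, looking at $\partial K_\Gamma\cong\Gamma_{bar}$) to the triangles of $\Gamma$ incident to the standard generator that $t_\ast$ represents in that translate, so there are at most $N:=\#\{\text{triangles of }\Gamma\}$ of them; hence $n\le 2N$. Choosing any $n$ with $2N<n$ and $n\ge 3$ therefore yields a contradiction while keeping $G_1$ available by part (a).

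The main obstacle is part (a): one must check that the explicit family $\gamma_1,\dots,\gamma_n$ meets pairwise in precisely the prescribed subgraphs — no extra shared vertices or edges, no premature closing — and that the result is an \emph{embedded} copy of $G_1$ rather than a proper quotient of it. This is the step where the intersection lemmas~\ref{Lemmaf1v1f2}--\ref{Lemmaf1v1}, the uniqueness of the translate of $K_\Gamma$ supporting a given hexagon, and the distance estimates of Lemma~\ref{LemmaChainOf6Cycles} all have to be used in concert; by contrast, once Lemma~\ref{LemmaChainOf6Cycles} is in hand, part (b) is short.
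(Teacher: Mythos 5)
Your reading of the two graphs is right: $G_1$ and $G_2$ are the same abstract closed necklace of $n$ hexagons with the colour classes $T$ and $D$ interchanged, consecutive hexagons meet along a $3$-vertex path through the central vertex, and the first and last hexagon meet along a single edge through the centre. Your application of Lemma~\ref{LemmaChainOf6Cycles} to the $G_2$-necklace to get $m\le 1$, and hence at most two translates of $K_\Gamma$, is also correct.

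The gap is the coupling between your parts (a) and (b). Your exclusion of $G_2$ is only a cardinality bound \emph{conditional on $n$ being large}: each of the at most two translates contributes at most $N=\#\{\text{triangles of }\Gamma\}$ hexagons through $t_\ast$, so $n\le 2N$, and you then need to \emph{choose} some $n>2N$. But for such an $n$ you must also produce a copy of $G_1$, and you assert ``(a) $I_\Gamma^{TD}$ contains a copy of $G_1$ for every $n\ge 3$'' without proof. The natural construction (the one the paper uses, translating $K_\Gamma$ by $1,a,ab,\dots$ around the type-$2$ vertex $v_{ab}$) produces a closed $G_1$-necklace only for the \emph{specific} value $n=m_{ab}$: after $m_{ab}$ translates the hexagon $\partial(wK_\Gamma)$ with $w=\underbrace{aba\cdots}_{m_{ab}-1}$ again contains the standard tree $\mathrm{Fix}(b)$ and closes up along exactly the edge $v_{ab}$--$\mathrm{Fix}(b)$, whereas one more translate shares a full $T$--$D$--$T$ path with $\partial K_\Gamma$ and breaks the $G_1$-shape. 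Your sketch of ``alternating Garside elements $\Delta_{ab}$ with powers of $z_{ac},z_{bc}$'' to get longer necklaces is not a construction; you would have to verify embeddedness and that the closure meets in exactly one edge, which is precisely where the difficulty lies. Since there is no reason for $m_{ab}>2N$, your (a) and (b) do not overlap, and the lemma does not follow.

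The paper sidesteps this entirely: it fixes $n=m_{ab}$ once and for all and derives a contradiction from a \emph{hypothetical} $G_2$ of that same size $n$ that is independent of the size of $n$. After $m\le 1$ (and $m=0$ is impossible because a closed $G_2$-necklace inside a single $g\Gamma_{bar}$ would force $\gamma_1\cap\gamma_n$ to be a whole $T$--$D$--$T$ path, not a single edge), the element $g_ng_1^{-1}$ is computed two ways: Lemma~\ref{Lemmaf1v1f2} shows it is a nontrivial power $\Delta_{v'}^k$ of a Garside element, hence has type $2$; Lemma~\ref{Lemmaf1v1} applied to the single-edge overlap $\gamma_1\cap\gamma_n=t_\ast$--$v$ shows it lies in $G_v\setminus\langle\Delta_v\rangle$, and since $v\ne v'$ and $G_v\cap G_{v'}=G_{t_\ast}$ has rank $1$, the element must have type $\le 1$. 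This is a direct contradiction for all $n\ge 3$, so the single value $n=m_{ab}$ from the explicit $Y$-construction suffices. You should replace your cardinality step with this type computation (or else genuinely prove (a) for arbitrarily large $n$, which is harder and unnecessary).
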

	
	\begin{proof}
		By hypothesis $\Gamma$ contains a triangle, so there are three vertices $a, b, c \in V(\Gamma)$ that satisfy $m_{ab}, m_{ac}, m_{bc} < \infty$. In the Deligne complex, we let $T_{st}$ be the $2$-cell formed by the three vertices $\{1\}$, $\langle s \rangle$ and $A_{st}$. We first consider the subcomplex $K$ of the fundamental domain $K_{\Gamma}$ described by
		$$K \coloneqq T_{ab} \cup T_{ba} \cup T_{bc} \cup T_{cb} \cup T_{ca} \cup T_{ac}.$$
		Then, we consider the subcomplex $Y$ of $D_{\Gamma}$ defined as
		$$Y \coloneqq K \cup aK \cup abK \cup abaK \cup \cdots \cup \underbrace{aba \cdots x}_{m_{ab}-1 \text{ terms}} K,$$
		where $x = a$ or $x = b$ depending on the parity of $m_{ab}$.
		\begin{figure}[H]
			\centering
			\includegraphics[scale=0.8]{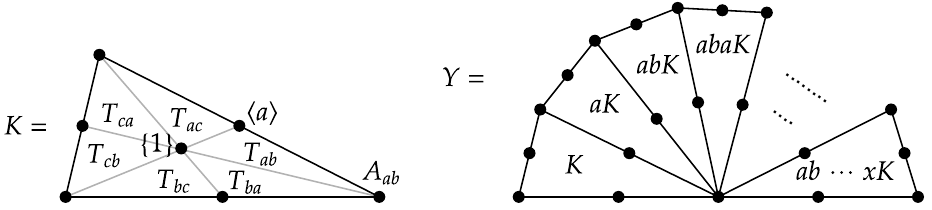}
			\caption{The subcomplexes $K$ and $Y$ in the Deligne complex. The horizontal segments of $Y$ are contained in the same standard tree.}
		\end{figure}
		One can easily notice from the above figure that this situation gives rise to a subgraph in $I_{\Gamma}$ that is isomorphic to the subgraph $G_1$ of Figure \ref{FigureTwoSubgraphs}, where $n = m_{ab}$. This proves the first statement of the lemma.
		\medskip
		
		We now want to show that $I_{\Gamma}^{TD}$ cannot contain any subgraph isomorphic to $G_2$, for $n = m_{ab}$. So we suppose it does, and we will point out a contradiction. We label the $6$-cycles appearing in $G_2$ as follows: the bottom-left one is $\gamma_1$, the one it shares three vertices with is $\gamma_2$, etc. until we reach $\gamma_n$ in the bottom-right corner.
		
		By construction, $\widetilde{\gamma_1}$ and $\widetilde{\gamma_n}$ share a type $2$ vertex $v$, so we can use Lemma \ref{LemmaChainOf6Cycles} and conclude that the subset $\{ \lambda_1, \cdots, \lambda_m\} \subseteq \{1, \cdots, n-1\}$ of indexes for which $g_i \neq g_{i+1}$ satisfies $m \leq 1$. By Lemma \ref{Lemmaf1v1f2}, this means that $g_{\lambda_1} g_{\lambda_1+1}^{-1} = \Delta_{v'}^k$, where $\Delta_{v'}$ is the Garside element associated with the type $2$ vertex $v'$ at which the two distinct translates of $K_{\Gamma}$ intersect, and $k \neq 0$. As $g_1 = \cdots = g_{\lambda_1 -1}$ and $g_{\lambda +2} = \cdots = g_n$, we also obtain $g_n g_1^{-1} = \Delta_{v'}^k$. In particular, $g_n g_1^{-1}$ has type $2$.
		
		The $6$-cycles $\gamma_1$ and $\gamma_n$ intersect (exactly) along a $t - v$ subgraph where $t$ is the type $T$ vertex that is at the centre of $G_2$ on Figure \ref{FigureTwoSubgraphs}, and $v$ is the type $D$ vertex below $t$. This means that we can apply Lemma \ref{Lemmaf1v1}, which states that $g_n g_1^{-1}$ belongs to $G_v$ but not to $\langle \Delta_v \rangle$, where $\Delta_v$ is the Garside element associated with $v$.
		
		Note that $v' \neq v$ by hypothesis, as $v'$ belongs to the upper-half of $G_2$ on Figure \ref{FigureTwoSubgraphs}, while $v$ lies in the lower-half of the picture (below the central vertex $t$). Consequently, the parabolic subgroups $G_v$ and $G_{v'}$ are distinct. Their intersection is exactly the subgroup $G_t$ fixing the standard tree $t$ pointwise. In particular, $g_n g_1^{-1}$ must be an element of type $1$. This gives a contradiction to the above argument that proved that $g_n g_1^{-1}$ had type $2$.
	\end{proof}
	
	We can now prove Proposition \ref{PropAutomorphismsPreserveType}:
	
	\begin{proof}
		Let $\psi : I_{\Gamma_1} \rightarrow I_{\Gamma_2}$ be an isomorphism. We already know by Corollary \ref{CorollaryExoticSentToExotic} that $\psi$ reduces to an isomorphism $\overline{\psi} : I_{\Gamma_1}^{TD} \rightarrow I_{\Gamma_2}^{TD}$.
		
		As already mentioned, if $\Gamma$ is triangle-free then the result follows from \cite[Proposition 41]{crisp2005automorphisms}. So we assume that $\Gamma$ contains at least one triangle.
		
		By Lemma \ref{LemmaBipartiteGirth6} the graphs $I_{\Gamma_1}^{TD}$ and $I_{\Gamma_2}^{TD}$ are bipartite, so it is enough to exhibit one vertex $v$ of type $D$ in $I_{\Gamma_1}^{TD}$ such that $\psi(v)$ is also of type $D$ in $I_{\Gamma_2}^{TD}$. By Lemma \ref{LemmaDifference} it is enough to let $v$ be the vertex of type $D$ located at the centre of the graph $G_1$ (see Figure \ref{FigureTwoSubgraphs}).
	\end{proof}

	\subsection{Fundamental subgraphs}
	\label{subsecFund}
	Throughout this section we suppose that Artin groups and presentation graphs are large-type.
	
	\begin{definition} \label{DefiFundGraphI} [Fundamental graph: definition I]
		Let $\Gamma$ and $\Gamma'$ be two connected presentation graphs, and write $V(\Gamma') = \{s_1, \cdots, s_n\}$. Then $\Gamma'$ is called \emph{fundamental} relatively to $D_{\Gamma}$ if for every distinct (infinite) standard trees $t_1, \cdots, t_n$ in $D_{\Gamma}$ satisfying
		\begin{enumerate}
			\item If $s_i$ and $s_j$ are adjacent in $\Gamma'$ then $t_i \cap t_j$ is a single vertex $v_{ij}$, and all $v_{ij}$'s are distinct ;
			\item If $s_i$ and $s_j$ are not adjacent in $\Gamma'$ then $t_i \cap t_j = \emptyset$ ;
		\end{enumerate}
		there is a unique element $g \in A_{\Gamma}$ such that the translate $g K_{\Gamma}$ contains all the vertices of the form $v_{ij}$ and contains a type $1$ vertex $x_i \in t_i$ for every $i \in \{1, \cdots, n\}$.
		
		If $\Gamma' = \Gamma$, we will simply say that $\Gamma$ is \emph{fundamental}.
	\end{definition}
	
	Being fundamental for a graph $\Gamma$ has strong consequences, that we will make explicit in Section \ref{subsecComparison}. In this section, we focus on proving the following proposition.
	
	\begin{proposition}
		\label{prop:fund}
		Let $A_{\Gamma}$ be a large-type Artin group. If $\Gamma$ admits a twistless hierarchy terminating at twistless stars (see Definition \ref{DefiTwistlessHierarchy}), then $\Gamma$ is fundamental.
	\end{proposition}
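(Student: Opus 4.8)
The plan is to induct along the twistless hierarchy. I would actually prove the slightly stronger statement that every full subgraph $\Gamma'$ of $\Gamma$ which admits a twistless hierarchy terminating in twistless stars is fundamental relatively to $D_\Gamma$ in the sense of Definition \ref{DefiFundGraphI}; taking $\Gamma' = \Gamma$ then yields the proposition. The base of the induction is the case where $\Gamma'$ is itself a twistless star (in particular a complete graph), and the inductive step reconstitutes a fundamental subgraph $\Gamma' = \Gamma'_1 \cup \Gamma'_2$ from two fundamental subgraphs glued along a twistless intersection. Throughout it is convenient to use the reformulation of fundamentality via barycentric subdivisions inside $I_\Gamma^{TD}$ (Definition \ref{DefiFundGraphII}): a family of infinite standard trees $\{t_s\}_{s \in V(\Gamma')}$ realising the adjacency pattern of $\Gamma'$ is exactly an embedded copy of $\Gamma'_{bar}$ in $I_\Gamma^{TD}$, each type $2$ vertex being recorded as an intersection point $t_j \cap t_k$, and $\Gamma'$ is fundamental precisely when every such copy is spanned by a unique translate $gK_\Gamma$ of the fundamental domain.

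For the base case, let $\Gamma' = \mathrm{st}(v)$ be a twistless star and let $\{t_s\}_{s\in V(\Gamma')}$ realise its pattern. After translating we may normalise $t_v$ so that it passes through a type $1$ vertex of $K_\Gamma$; since $v$ is adjacent to every other vertex of $\Gamma'$, every $t_s$ with $s \neq v$ meets the convex tree $t_v$ in a single type $2$ vertex $v_{vs}$ lying on $t_v$. Using the description of standard trees (Lemma \ref{LemmaCrisp}) this pins each $t_s$ down to an explicit form determined by $v_{vs}$ and an element centralising the local data along $t_v$, and the remaining constraints — that $t_s \cap t_{s'}$ is a single type $2$ vertex or empty according to the adjacency of $s,s'$ in $\Gamma'$ — together with convexity of standard trees and the angle data of the Moussong metric (Definition \ref{DefiMoussong}, Theorem \ref{Thm:DeligneCAT(0)}) reduce the possibilities to finitely many combinatorial configurations. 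The role of the twistless hypothesis is to eliminate exactly those configurations in which the trees could be ``re-glued'' across a cut vertex or cut edge of $\mathrm{st}(v)$: the absence of separating vertices and edges both produces the common translate $gK_\Gamma$ containing all the $v_{vs}$, the $v_{ss'}$, and a type $1$ vertex on each $t_s$, and forces $g$ to be unique. Complete graphs are the simplest instance of this.

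For the inductive step, let $\Gamma' = \Gamma'_1 \cup \Gamma'_2$ be a twistless admissible decomposition with $\Gamma'_1, \Gamma'_2$ fundamental relatively to $D_\Gamma$, and fix a configuration over $\Gamma'$. Restricting it to $V(\Gamma'_1)$ and to $V(\Gamma'_2)$ and applying fundamentality of $\Gamma'_i$ produces a unique translate $g_i K_\Gamma$ containing the type $2$ vertices $v_{jk}$ for edges of $\Gamma'_i$ and a type $1$ vertex on each $t_j$, $j \in V(\Gamma'_i)$; since $\mathrm{Stab}(K_\Gamma)$ is trivial it is enough to show $g_1 = g_2$. Both $g_1K_\Gamma$ and $g_2K_\Gamma$ carry the configuration over $\Gamma'_1 \cap \Gamma'_2$: comparing the shared type $2$ vertices places $g_1^{-1}g_2$ inside the intersection of the corresponding dihedral parabolics (which is a standard parabolic by Theorem \ref{ThmIntersectionParabolics}), and comparing the shared type $1$ vertices — using that a standard tree is determined by any of its type $1$ vertices — shows in addition that $g_1^{-1}g_2$ normalises, hence centralises, the parabolic $A_{\Gamma'_1 \cap \Gamma'_2}$. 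But $\Gamma'_1 \cap \Gamma'_2$ is, by the twistless hypothesis, neither empty, a vertex, nor an edge, so $A_{\Gamma'_1 \cap \Gamma'_2}$ is centre-free, which forces $g_1 = g_2$. Then $g := g_1$ witnesses fundamentality of $\Gamma'$ (the edges and vertices of $\Gamma'$ being covered by those of $\Gamma'_1$ and $\Gamma'_2$), and uniqueness of $g$ is inherited from uniqueness of $g_1$.

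The main obstacle is the base case: converting a purely abstract intersection pattern of standard trees into the conclusion that they assemble into a single translate of $K_\Gamma$, and pinpointing exactly how the twistlessness of $\mathrm{st}(v)$ is used to exclude the ``twisted'' configurations. The inductive step, by contrast, is a patching argument of Mayer--Vietoris type whose only essential input — once the base case is in hand — is that the amalgamated subgroup of a twistless decomposition has trivial centre, which is precisely what prevents the two candidate fundamental domains from disagreeing.
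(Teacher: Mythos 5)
Your overall skeleton matches the paper: the base case is a twistless star, and the inductive step glues two fundamental subgraphs along a twistless intersection (this is exactly Lemma~\ref{LemmaNoSepVer} plus Lemma~\ref{LemmaExtendingToTwistlessHierarchy}). But both halves of your argument have real gaps. For the base case, your description --- that the Moussong metric and convexity ``reduce the possibilities to finitely many combinatorial configurations,'' which twistlessness then eliminates --- is not an argument; you acknowledge this yourself in calling it the ``main obstacle.'' The paper's route is quite different: it first imports, from \cite{vaskou2023automorphisms}, that a $3$-cycle is fundamental (Lemma~\ref{Lemma6CyclesFundamental}); then runs a combinatorial chain argument over adjacent $6$-cycles (Lemma~\ref{LemmaChainOf6Cycles}) to prove cones over induced cycles are fundamental (Lemma~\ref{LemmaConeOverCycle}); and finally rewrites a twistless star as a union of such cones sharing edges (Lemma~\ref{LemmaConditionsC1C2}), using uniqueness on the shared triangle to align the fundamental domains. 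None of this is visible in your sketch.

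The inductive step also has a genuine gap. You claim that comparing shared data shows $g_1^{-1}g_2$ ``normalises, hence centralises'' $A_{\Gamma'_1\cap\Gamma'_2}$, and then conclude from centre-freeness. Neither step holds: normalising a parabolic does not imply centralising it, and even knowing $g_1^{-1}g_2$ lies in an intersection of dihedral parabolics gives only $g_1^{-1}g_2\in\langle a\rangle$ in the generic shared-path case, which is \emph{in} the subgroup, not in its centraliser. More fundamentally, when $\Gamma'_1\cap\Gamma'_2$ consists of two non-adjacent vertices there are \emph{no} shared type-$2$ vertices, so your first constraint is vacuous, and the second constraint (both $g_iK_\Gamma$ touching the same pair of disjoint standard trees) does not by itself produce any algebraic relation on $g_1^{-1}g_2$. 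The paper handles this via a separate geometric argument (Lemma~\ref{LemmaTwoStdTreesBorderingTwoFD}) built on the large-type angle bound and the CAT(0) flat quadrilateral theorem, and handles the shared-edges case via the stronger conclusion $g_2g_1^{-1}=\Delta_e^k$ from Garside normal forms (Lemma~\ref{Lemmaf1v1f2}), not merely $g_2g_1^{-1}\in A_e$. Without an analogue of one of these two mechanisms, your inductive step does not close.
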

	
	\begin{proof}
		Every twistless star is fundamental by Lemma \ref{LemmaNoSepVer}. The fact that this extends to twistless hierarchies terminating at twistless stars then comes from Lemma \ref{LemmaExtendingToTwistlessHierarchy}.
	\end{proof}
	
	\begin{remark}
		\label{rmk:example}
		Note that one scenario where Proposition~\ref{prop:fund} applies is if there are vertices $v_1, \cdots, v_n$ of $\Gamma$ such that:
		\begin{enumerate}
			\item $st(v_i)$ does not have separating vertices or edges ;
			\item $\Gamma=\cup_{i=1}^nst(v_i)$ ;
			\item for any $st(v_i)$ and $st(v_j)$, there is a finite chain of elements in $\{st(v_i)\}_{i=1}^n$ with the first member being $st(v_i)$ and last member being $st(v_j)$ such that adjacent members in the chain have a non-trivial intersection which is not a vertex or an edge.
		\end{enumerate}
		One readily checks that in this case, $\Gamma$ has a twistless hierarchy terminating at $\{st(v_i)\}_{i=1}^n$. As a consequence, if $\Gamma$ is the 1-skeleton of the triangulation of a closed manifold of dimension $\ge 2$, then $\Gamma$ satisfies the assumption of Proposition~\ref{prop:fund}.
	\end{remark}
	
	In what follows we give an equivalent definition of what is means to be fundamental.
	
	\begin{notation}
		Let $\Gamma$ be a presentation graph. Then we write $\Gamma_{bar}$ to denote the barycentric subdivision of $\Gamma$. Moreover, we will denote a vertex in $\Gamma$ and the corresponding vertex in $\Gamma_{bar}$ by the same name.
	\end{notation}
	
	\begin{definition}
		Let $\Gamma$ and $\Gamma'$ be two connected presentation graphs. A subgraph $G$ of $I_{\Gamma}^{TD}$ is called \emph{$\Gamma'$-characteristic} if there is a graph isomorphism $f : G \rightarrow \Gamma'_{bar}$ where for every vertex $t \in G$ of type $T$ the image $f(t)$ is a vertex of $\Gamma'$.
		
		If $\Gamma' = \Gamma$, we will simply say that $\Gamma$ is \emph{characteristic}.
	\end{definition}
	
	\begin{definition} \label{DefiFundGraphII} [Fundamental graph: definition II]
		Let $\Gamma$ and $\Gamma'$ be two connected presentation graphs. Then $\Gamma'$ is called \emph{fundamental} relatively to $D_{\Gamma}$ if for every $\Gamma'$-characteristic subgraph $G \subseteq I_{\Gamma}^{TD}$ there is a unique element $g \in A_{\Gamma}$ such that the translate $g K_{\Gamma}$ contains all the vertices of type $D$ of $G$, as well as one type $1$ vertex $x_i \in t_i$ of $D_{\Gamma}$ for every vertex $t_i$ of type $T$ of $G$.
		
		If $\Gamma' = \Gamma$, we will simply say that $\Gamma$ is \emph{fundamental}.
	\end{definition}
	
	\begin{lemma}
		Definitions \ref{DefiFundGraphI} and \ref{DefiFundGraphII} are equivalent.
	\end{lemma}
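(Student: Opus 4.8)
The plan is to exhibit an explicit bijection between the data quantified over in Definition~\ref{DefiFundGraphI} and the data quantified over in Definition~\ref{DefiFundGraphII}, under which the two ``$\exists!\,g$'' conclusions become literally the same assertion; equivalence of the two notions of ``fundamental'' is then immediate. The dictionary is dictated by Definition~\ref{DefIntersectionGraphIII} (which we may use by Lemma~\ref{LemmaThreeDefinitionsCoincide}): vertices of type $T$ of $I_\Gamma^{TD}$ are exactly the infinite standard trees of $D_\Gamma$, vertices of type $D$ are exactly the type $2$ vertices of $D_\Gamma$, and a type $T$ vertex $t$ is adjacent to a type $D$ vertex $v$ precisely when $v\in t$. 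So, starting from trees $t_1,\dots,t_n$ and intersection points $v_{ij}$ as in Definition~\ref{DefiFundGraphI}, I would let $G$ be the full subgraph of $I_\Gamma^{TD}$ spanned by $\{t_i\}\cup\{v_{ij}\}$; conversely, starting from a $\Gamma'$-characteristic subgraph $G$ with isomorphism $f\colon G\to\Gamma'_{bar}$, I would set $t_i:=f^{-1}(s_i)$ and $v_{ij}:=f^{-1}(\mathrm{bary}(s_is_j))$ for each edge $s_is_j$ of $\Gamma'$.

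The only geometric input needed to make this dictionary work is the following fact, which I would isolate as a preliminary step: \emph{two distinct infinite standard trees of $D_\Gamma$ meet in at most one point, and that point, when it exists, is a type $2$ vertex.} To prove it, note first that a standard tree $\fix(g)$ (with $g$ of type $1$) contains no type $0$ vertex, since $g$ is non-trivial while the stabiliser of a type $0$ vertex is trivial; hence its vertices are of type $1$ or $2$ and every edge joins a type $1$ to a type $2$ vertex. The intersection of two standard trees is a convex subtree by Remark~\ref{RemStandardTrees}.(1), so if it contained two vertices it would contain an edge, hence a type $1$ vertex $x$; but then the two defining type $1$ elements both lie in $\mathrm{Stab}(x)\cong\mathbb Z$, and since $\fix$ is insensitive to taking non-zero powers (Lemma~\ref{LemmaCrisp}) the two trees would coincide, a contradiction.

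With this in hand, the forward direction (Definition~\ref{DefiFundGraphI} $\Rightarrow$ Definition~\ref{DefiFundGraphII}) amounts to checking that the subgraph $G$ above is isomorphic to $\Gamma'_{bar}$ via $t_i\mapsto s_i$, $v_{ij}\mapsto\mathrm{bary}(s_is_j)$: its only edges are the $t_i$--$v_{ij}$ with $s_i$ adjacent to $s_j$, because there are no $T$--$T$ or $D$--$D$ edges (Lemma~\ref{LemmaPossibleEdges}), and an edge $t_i$--$v_{kl}$ with $i\notin\{k,l\}$ would put $v_{kl}$ in $t_i\cap t_k$, which conditions~(1)--(2) of Definition~\ref{DefiFundGraphI} together with distinctness of the $v_{ij}$ rule out. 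Since the type $D$ (resp. type $T$) vertices of $G$ are exactly the $v_{ij}$ (resp. the $t_i$), the conclusion of Definition~\ref{DefiFundGraphI} for $(t_i,v_{ij})$ is word-for-word that of Definition~\ref{DefiFundGraphII} for $G$. The backward direction is the same computation read in reverse: $t_i:=f^{-1}(s_i)$ are distinct infinite standard trees, $v_{ij}:=f^{-1}(\mathrm{bary}(s_is_j))$ are distinct type $2$ vertices, and since in $\Gamma'_{bar}$ the barycentre of $s_is_j$ is adjacent exactly to $s_i$ and $s_j$ we get $v_{ij}\in t_i\cap t_j$, whence $t_i\cap t_j=\{v_{ij}\}$ by the geometric fact; this gives~(1), while~(2) follows because a common type $2$ neighbour of $t_i$ and $t_j$ in the (induced) characteristic subgraph $G$ would map under $f$ to a vertex of $\Gamma'_{bar}$ adjacent to both $s_i$ and $s_j$, forcing an edge $s_is_j$ in $\Gamma'$.

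I expect the main (minor) obstacle to be precisely this last point: pinning down that a $\Gamma'$-characteristic subgraph may be taken to be an induced subgraph of $I_\Gamma^{TD}$ — equivalently, that the trees $t_i$ produced from it have no unexpected common type $2$ neighbour — so that condition~(2) of Definition~\ref{DefiFundGraphI} is not lost in translation. Everything else is bookkeeping with $\Gamma'_{bar}$ (whose vertices coming from $\Gamma'$ keep their $\Gamma'$-degree and whose edge-barycentres have degree $2$), together with the preliminary geometric fact about intersections of standard trees, which is the only statement here carrying genuine content.
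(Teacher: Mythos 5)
Your proposal follows essentially the same strategy as the paper's: set up the dictionary via Definition~\ref{DefIntersectionGraphIII} between collections $(t_i,v_{ij})$ in Definition~\ref{DefiFundGraphI} and $\Gamma'$-characteristic subgraphs $G$ in Definition~\ref{DefiFundGraphII}, and observe that once the data are matched, the ``$\exists!\,g$'' conclusions are literally the same. You are, however, noticeably more careful than the paper at two places. First, you isolate and prove the geometric fact that two distinct infinite standard trees meet in at most one point (and that point is type~$2$); the paper uses this silently (e.g.\ Remark~\ref{RemStandardTrees}.(1) and CAT(0)-convexity) but never records it, so your preliminary step is a genuine improvement in clarity and is what upgrades $v_{ij}\in t_i\cap t_j$ to the required equality $t_i\cap t_j=\{v_{ij}\}$. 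Second, when going from trees to a characteristic subgraph you explicitly check that the spanned subgraph has no spurious $t_i$--$v_{kl}$ edges using Lemma~\ref{LemmaPossibleEdges}, conditions (1)--(2), and distinctness of the $v_{ij}$; the paper merely asserts that the spanned subgraph is $\Gamma'$-characteristic.

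The concern you flag at the end is legitimate and the paper's proof shares it: in the direction ``Definition~\ref{DefiFundGraphI} $\Rightarrow$ Definition~\ref{DefiFundGraphII}'', the paper simply extracts $(t_i,v_{ij})$ from $G$ and applies Definition~\ref{DefiFundGraphI}, without ever verifying that the extracted trees satisfy hypothesis~(2). Since a $\Gamma'$-characteristic subgraph is just an abstract copy of $\Gamma'_{bar}$ sitting inside $I_\Gamma^{TD}$, it is not a priori excluded that two of the trees $t_i,t_j$ with $s_i,s_j$ non-adjacent in $\Gamma'$ nonetheless meet at a type~$2$ vertex $w$ that simply fails to belong to $G$; note also that ``$G$ is induced'' (which is not required by the definition and would not help anyway) does not exclude this: inducedness constrains edges among vertices of $G$, not the existence of a vertex $w\notin G$ adjacent to both $t_i$ and $t_j$ in $I_\Gamma^{TD}$. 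So the point you raise is not a defect of your write-up relative to the paper's; the paper leaves exactly the same verification implicit. If one wishes to close it, the natural route is the observation that the existence of the conclusion's translate $gK_\Gamma$ forces $t_i=g\,\mathrm{Fix}(a_i)$ and $t_j=g\,\mathrm{Fix}(a_j)$ for generators $a_i,a_j\in V(\Gamma)$, and for non-adjacent $a_i,a_j$ one has $\mathrm{Fix}(a_i)\cap\mathrm{Fix}(a_j)=\emptyset$ since no parabolic of rank $\le 2$ contains $\langle a_i\rangle\ast\langle a_j\rangle$; but that only shows condition~(2) is \emph{necessary} for a $g$ to exist, which reduces the question to whether a characteristic $G$ violating (2) exists at all — precisely your ``minor obstacle''.
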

	
	\begin{proof}
		First assume that $\Gamma'$ is fundamental relatively to Definition \ref{DefiFundGraphI}, and let $G \subseteq I_{\Gamma}^{TD}$ be $\Gamma'$-characteristic. Let $t_1, \cdots, t_n$ be the vertices of type $T$ in $G$, where each $t_i$ correspond to a standard generator $s_i \in V(\Gamma')$, and let the $v_{ij}$'s be the vertices of type $D$ of $G$, where $v_{ij}$ is the only vertex connecting $t_i$ and $t_j$, if it exists. Because $G$ is $\Gamma'$-characteristic, the vertex $v_{ij}$ exists if and only if $s_i$ and $s_j$ are connected in $\Gamma'$. As $\Gamma'$ is fundamental, there is by Definition \ref{DefiFundGraphI} a unique $g \in A_{\Gamma}$ and some $x_i \in t_i$ for each $i \in \{1, \cdots, n\}$, such that all $x_i$'s and all $v_{ij}$'s belong to $g K_{\Gamma}$. Therefore $\Gamma'$ is fundamental relatively to Definition \ref{DefiFundGraphII}.
		
		We now assume that $\Gamma'$ is fundamental relatively to Definition \ref{DefiFundGraphII}, and we let $t_1, \cdots, t_n$ and the $v_{ij}$'s be as in Definition \ref{DefiFundGraphI}. By construction, the $t_i$'s and the $v_{ij}$'s are all distinct vertices of $I_{\Gamma}^{TD}$. Note that $v_{ij}$ exists if and only if $s_i$ and $s_j$ are adjacent in $\Gamma'$, so there is a double-edge between $v_i$ and $v_j$ if and only if $s_i$ and $s_j$ are adjacent in $\Gamma'$. Together with the above, this shows that the subgraph $G$ of $I_{\Gamma}^{TD}$ spanned by the $t_i$'s and the $v_{ij}$'s is $\Gamma'$-characteristic. By Definition \ref{DefiFundGraphII}, there is a unique $g \in A_{\Gamma}$.  and some $x_i \in t_i$ for each $i \in \{1, \cdots, n\}$, such that all $x_i$'s and all $v_{ij}$'s belong to $g K_{\Gamma}$. Therefore $\Gamma$ is fundamental relatively to Definition \ref{DefiFundGraphI}.
	\end{proof}
	
	The following lemma from \cite{vaskou2023isomorphism} will be our starting point towards proving Proposition \ref{prop:fund}.
	
	\begin{lemma} \label{Lemma6CyclesFundamental}
		\cite[Claim in the proof of Proposition 3.5]{vaskou2023automorphisms}
		Let $A_{\Gamma}$ be a large-type Artin group. If $\Gamma'$ is a $3$-cycle, then $\Gamma'$ is fundamental relatively to $D_{\Gamma}$.
	\end{lemma}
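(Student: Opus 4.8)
The plan is to unwind Definition~\ref{DefiFundGraphII} in the special case where $\Gamma'$ is a $3$-cycle, say on vertices $a,b,c$ (these are genuine vertices and edges of $\Gamma$, since a $\Gamma'$-characteristic subgraph of $I_\Gamma^{TD}$ can only exist when the dihedral parabolics $A_{ab},A_{ac},A_{bc}$ occur in $A_\Gamma$). Since $\Gamma'_{bar}$ is a $6$-cycle, a $\Gamma'$-characteristic subgraph $G\subseteq I_\Gamma^{TD}$ is precisely an embedded $6$-cycle $G=(t_1,v_{12},t_2,v_{23},t_3,v_{13})$ in which the $t_i$ are unbounded standard trees (the type $T$ vertices of $G$) and the $v_{ij}$ are type $2$ vertices of $D_\Gamma$ with $v_{ij}\in t_i\cap t_j$. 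I then have to produce a unique $g\in A_\Gamma$ such that $gK_\Gamma$ contains $v_{12},v_{13},v_{23}$ together with a type $1$ vertex $x_i\in t_i$ for each $i$.

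For existence I would argue exactly as in the proof of Lemma~\ref{Lemma6CyclesinFD}. The vertices $v_{12}$ and $v_{13}$ both lie on the tree $t_1$, which is convex by Remark~\ref{RemStandardTrees}.(1) inside the uniquely geodesic space $D_\Gamma$ (Theorem~\ref{Thm:DeligneCAT(0)}); hence the geodesic $[v_{12},v_{13}]$ is contained in $t_1$, and likewise $[v_{12},v_{23}]\subseteq t_2$ and $[v_{13},v_{23}]\subseteq t_3$. Concatenating these three segments produces a geodesic triangle $T_G$ in $D_\Gamma$ with vertices $v_{12},v_{13},v_{23}$. The essential input is then \cite[Claim in the proof of Proposition 3.5]{vaskou2023automorphisms}, which says precisely that such a triangle is contained in a single translate $gK_\Gamma$ of the fundamental domain; moreover $T_G$ lies in the boundary $g\Gamma_{bar}$ of that translate, so $G$ is realised as a $6$-cycle in $g\Gamma_{bar}$, and its three type $T$ vertices $t_1,t_2,t_3$ run through the three type $1$ vertices of $g\Gamma_{bar}$, which are the required $x_i$.

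For uniqueness, suppose a translate $g'K_\Gamma$ also satisfied the conclusion; then it would contain the three distinct type $2$ vertices $v_{12},v_{13},v_{23}$. By the coset description of $D_\Gamma$, the condition $v_{ij}\in gK_\Gamma\cap g'K_\Gamma$ forces $g^{-1}g'$ to lie in the dihedral parabolic subgroup corresponding to $v_{ij}$, so $g^{-1}g'\in A_{ab}\cap A_{ac}\cap A_{bc}=A_{\{a\}}\cap A_{bc}=\{1\}$, using that standard parabolic subgroups intersect along the intersection of their defining subgraphs (van der Lek, \cite{van1983homotopy}); hence $g'=g$. I do not anticipate a real obstacle here: this lemma is essentially a repackaging of the cited geometric claim from \cite{vaskou2023automorphisms} in the language of fundamental subgraphs, and the only care needed is in the dictionary between the two pictures and in checking that the type $1$ vertices land on the right trees. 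If one wanted a self-contained argument, the crux would lie in that cited claim itself — that a geodesic triangle in $D_\Gamma$ whose vertices are type $2$ vertices and whose sides track standard trees must be confined to a single fundamental domain — which is proved by a CAT(0) argument exploiting the precise angles of the Moussong metric (Definition~\ref{DefiMoussong}).
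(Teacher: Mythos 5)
The proposal is correct and follows essentially the same route as the paper: the paper's entire justification for this lemma is the citation to the Claim in the proof of Proposition~3.5 of \cite{vaskou2023automorphisms}, and that is precisely the ingredient you identify as the crux (the geodesic triangle with type~$2$ vertices and sides along standard trees must lie in one translate of $K_\Gamma$). Your unwinding — and in particular your explicit uniqueness step via $A_{e_{12}}\cap A_{e_{13}}\cap A_{e_{23}}=\{1\}$, using \cite{van1983homotopy} — is more detailed than the paper's one-line citation, and correct; the only tiny imprecision is the opening parenthetical (the $\Gamma'$-characteristic $6$-cycle is realised by \emph{some} $3$-cycle of $\Gamma$ after translation, not necessarily by $a,b,c$ themselves, since $\Gamma'$ is abstract), but this does not affect the argument.
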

	
	The following lemma allows to extend fundamentality from $3$-cycles to cones over cycles.
	
	\begin{lemma} \label{LemmaConeOverCycle}
		Let $A_{\Gamma}$ be a large-type Artin group. If $\Gamma'$ is the cone over an induced simple cycle, then $\Gamma'$ is fundamental relatively to $D_{\Gamma}$.
	\end{lemma}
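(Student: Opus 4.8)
The plan is to build the fundamental domain inductively by peeling off triangles from the cone, using Lemma~\ref{Lemma6CyclesFundamental} as the base case and the convexity of standard trees and of $K_\Gamma$ (Remark~\ref{RemFDConvex}, Remark~\ref{RemStandardTrees}.(1)) to glue the pieces together. Concretely, let $\Gamma'$ be the cone over an induced cycle $c_1 - c_2 - \cdots - c_k - c_1$ with cone point $w$, so $V(\Gamma') = \{w, c_1, \dots, c_k\}$. Use Definition~\ref{DefiFundGraphI}: suppose we are given distinct infinite standard trees $t_w, t_1, \dots, t_k$ in $D_\Gamma$ with the prescribed intersection pattern — each $t_w \cap t_i$ is a single type $2$ vertex $v_i$ (all distinct), each $t_i \cap t_{i+1}$ is a single type $2$ vertex $u_i$ (indices mod $k$, all distinct from each other and from the $v_i$'s), and all other pairs of trees are disjoint. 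I want to produce the unique $g \in A_\Gamma$ with $gK_\Gamma$ containing every $v_i$, every $u_i$, and a type $1$ vertex in each $t_\ast$.

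First I would apply Lemma~\ref{Lemma6CyclesFundamental} to each triangle $\{w, c_i, c_{i+1}\}$ of $\Gamma'$: the trees $t_w, t_i, t_{i+1}$ satisfy the hypotheses of Definition~\ref{DefiFundGraphI} for a $3$-cycle (their pairwise intersections are the single vertices $v_i, v_{i+1}, u_i$), so there is a unique $g_i \in A_\Gamma$ whose translate $g_i K_\Gamma$ contains $v_i, v_{i+1}, u_i$ and a type $1$ vertex of each of $t_w, t_i, t_{i+1}$. The heart of the argument is to show that all the $g_i$ coincide, i.e. $g_1 K_\Gamma = g_2 K_\Gamma = \cdots = g_k K_\Gamma$. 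For consecutive indices, $g_i K_\Gamma$ and $g_{i+1} K_\Gamma$ both contain the type $2$ vertex $v_{i+1}$ and both contain a type $1$ vertex on $t_w$; moreover each contains a vertex on $t_w$, and both contain the edge of $K_\Gamma$ joining $v_{i+1}$ to a type $1$ vertex of $t_w$. Since $t_w$ is a standard tree, it meets $g_i K_\Gamma$ in a convex subtree (the fundamental domain is convex), and the combinatorial structure forces the local picture at $v_{i+1}$ to agree; using that $t_w$ is infinite and the uniqueness clause in the $3$-cycle case (as in Lemma~\ref{Lemmaf1v1f2}, the only way two translates of $K_\Gamma$ share a type $2$ vertex and a bit of a standard tree through it is if they are equal, unless the connecting element is a power of the Garside element — which is ruled out here because both translates also contain the \emph{same} type $1$ vertex of $t_w$ on the \emph{same} side), I would conclude $g_i K_\Gamma = g_{i+1} K_\Gamma$. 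Propagating around the cycle gives a single $g := g_1$ with $gK_\Gamma$ containing all $v_i$ and all $u_i$; it then automatically contains a type $1$ vertex of each $t_\ast$ since each $t_\ast$ appears in at least one triangle.

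For uniqueness of $g$: if $g'K_\Gamma$ also contains all the $v_i, u_i$ and type $1$ vertices on each tree, then restricting attention to a single triangle $\{w, c_1, c_2\}$ and invoking the uniqueness part of Lemma~\ref{Lemma6CyclesFundamental} forces $g' = g_1 = g$.

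The main obstacle I anticipate is the gluing step — showing $g_iK_\Gamma = g_{i+1}K_\Gamma$ rather than merely that they share a vertex. The subtlety is exactly the Garside-twist ambiguity encountered in Lemma~\ref{Lemmaf1v1f2}: two distinct translates of $K_\Gamma$ can meet along the star of a type $2$ vertex. What rescues the argument is that consecutive triangles of the cone share not just the type $2$ vertex $v_{i+1}$ but the \emph{entire} standard tree $t_w$ through it, and the chosen type $1$ vertex $x_w \in t_w$ must lie in \emph{both} $g_iK_\Gamma$ and $g_{i+1}K_\Gamma$ — a twist by a power of $\Delta_{v_{i+1}}$ would move $x_w$ off $g_iK_\Gamma$ (it would land on a different standard tree, as in the last paragraph of the proof of Lemma~\ref{LemmaDifference}), so no nontrivial twist is possible and the translates coincide. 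Once this rigidity is established, wrapping around the induced cycle is routine, and the inducedness of the cycle guarantees that no extra (non-adjacent) intersections among the $t_i$ occur that would obstruct the construction.
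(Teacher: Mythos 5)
Your overall plan — apply Lemma~\ref{Lemma6CyclesFundamental} to each triangle $\{w,c_i,c_{i+1}\}$ to obtain $g_i$, then glue — is the right starting point, but the gluing step is where the proposal breaks. You claim that $g_iK_\Gamma = g_{i+1}K_\Gamma$ can be forced \emph{locally} by the observation that consecutive translates must share a type~$1$ vertex $x_w$ on $t_w$, and that a twist by $\Delta_{v_{i+1}}^k$ would push $x_w$ off $g_iK_\Gamma$ (or onto a different standard tree). This is not correct. The two translates must each contain \emph{some} type~$1$ vertex of $t_w$ adjacent to $v_{i+1}$, but nothing forces them to contain the \emph{same} one. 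A type~$2$ vertex of a standard tree can have more than two type~$1$ neighbours on that tree, and indeed if $g_{i+1}=g_i\Delta_{v_{i+1}}^k$ with $k\ne 0$, then $g_{i+1}K_\Gamma$ still contains a type~$1$ vertex of $t_w$ adjacent to $v_{i+1}$ — it is simply a different one from the vertex $g_i\langle s_w\rangle$ sitting in $g_iK_\Gamma$. (Concretely, take $g_i=1$, $t_w=\mathrm{Fix}(a)$, $v_{i+1}=A_{ab}$ with $m_{ab}$ odd and $k$ odd: then $\Delta_{ab}^k\langle b\rangle\ne\langle a\rangle$, yet both are type~$1$ vertices of $\mathrm{Fix}(a)$ adjacent to $A_{ab}$.) The uniqueness in Lemma~\ref{Lemmaf1v1f2} rests on the translate sharing \emph{two} standard trees through $v_1$ and being the \emph{same} translate of $K_\Gamma$ on both sides, which is exactly what we are trying to prove here, so invoking it locally is circular. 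Thus the Garside-twist ambiguity cannot be killed one edge at a time.

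The paper's proof avoids this by a global argument. It works with Definition~\ref{DefiFundGraphII} directly: regard the $\Gamma'$-characteristic subgraph as a cyclic chain of $6$-cycles $\gamma_1,\dots,\gamma_n$ sharing the central tree $t_w$, and apply Lemma~\ref{LemmaChainOf6Cycles}, which packages precisely the Garside-twist bookkeeping you were missing. That lemma shows that the indices where $g_i\ne g_{i+1}$ produce a geodesic path inside $t_w$ of combinatorial length $2m-2$ between two boundary vertices of the chain; since the chain is cyclic ($v_0=v_n$), those boundary vertices coincide and one gets $m\le 1$. A rotation of the cyclic index then upgrades a hypothetical $m=1$ to $m=2$, a contradiction, so $m=0$ and all $g_i$ coincide. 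This geodesic/rotation argument is the genuinely new ingredient relative to the base case; it cannot be replaced by a purely local comparison of consecutive triangles.
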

	
	\begin{proof}
		Let $G$ be a $\Gamma'$-characteristic subgraph of $I_{\Gamma}^{TD}$. By hypothesis, $G$ takes the form described in Figure \ref{FigComb} with $t_0 = t_n$ and $v_0 = v_n$. Let $m$ and $g_1, \cdots, g_n$ be as in Lemma \ref{LemmaChainOf6Cycles}. Then by Lemma \ref{LemmaChainOf6Cycles}, it must be that $m \leq 1$.
		
		If $m = 1$, then $g_{\lambda_1} \neq g_{\lambda_1+1}$ and $g_n \neq g_1$. Then we can rotate all the indices by $+1$ modulo $n$ (or by $-1$ if $\lambda_1 = n-1$) so that $g_{\lambda_1+1} \neq g_{\lambda_1+2}$ and $g_1 \neq g_2$. In this new setting, we have $m = 2$, which gives a contradiction to Lemma \ref{LemmaChainOf6Cycles}.
		
		So we must have $m = 0$, i.e. all the $g_i$'s are the same. This proves that $G$ is $\Gamma'$-fundamental, as wanted.
	\end{proof}
	
	The next two lemmas extend fundamentality from cones over cycles to cones over some unions of cycles, or equivalently, to cones that have no separating edges or vertices.
	
	\begin{lemma} \label{LemmaConditionsC1C2}
		Let $G$ be a simplicial graph. Then the two following conditions are equivalent:
		\begin{align*}
			(C1) \ \ &G \text{ is the union of some induced cycles } \gamma_1, \cdots, \gamma_n \text{ for which for } \\
			&\text{every } i, j \in \{1, \cdots, n\}, \text{ there are some } r_1 = i, r_2, \cdots, r_m = j \\
			&\text{such that every intersection } \gamma_{r_k} \cap \gamma_{r_{k+1}} \text{ contains an edge.} \\
			(C2) \ \ &G \text{ is connected and has no separating vertex.}
		\end{align*}
	\end{lemma}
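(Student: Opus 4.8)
This is a purely graph-theoretic statement, and the plan is to treat it as such. We may assume $G$ contains a cycle, the cases of a single vertex or a single edge being vacuous in all our applications; then ``connected with no separating vertex'' is just the assertion that $G$ is $2$-connected, and I will use freely that a $2$-connected graph has no bridge.

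For $(C1)\Rightarrow(C2)$, the argument is routine bookkeeping. Each $\gamma_i$ is connected, and the chain condition links any two of the $\gamma_i$ through cycles meeting in an edge, hence in a vertex, so $G=\bigcup_i\gamma_i$ is connected. If $v$ were a separating vertex, write the components of $G-v$ as $C_1,\dots,C_k$ with $k\ge 2$; since $\gamma_i-v$ is connected (it is all of $\gamma_i$, or a path), each $\gamma_i$ lies in $C_{\ell(i)}\cup\{v\}$ for a unique index $\ell(i)$, and two cycles sharing an edge share an endpoint $\ne v$ of that edge, forcing their indices to agree. The chain condition then makes $\ell$ constant, so $G\subseteq C_{\ell_0}\cup\{v\}$ for a single $\ell_0$, contradicting $k\ge 2$.

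For $(C2)\Rightarrow(C1)$, the plan is to take $\gamma_1,\dots,\gamma_n$ to be the (finitely many) induced cycles of $G$ and verify the two requirements of $(C1)$. That they cover $G$ is easy: $G-e$ is connected for every edge $e=\{u,v\}$ (no bridge), a shortest $u$--$v$ path $P$ in $G-e$ is chordless and of length $\ge 2$, so $P\cup\{e\}$ is an induced cycle through $e$, whence every edge and every vertex lies on some $\gamma_i$. The chain condition is the delicate part: it is equivalent to the connectedness of the auxiliary graph whose vertices are the induced cycles with two joined when they share a (graph-)edge, and hence to the non-existence of a partition $E(G)=E'\sqcup E''$ into nonempty sets with every induced cycle ``monochromatic'' (all its edges in $E'$, or all in $E''$). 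Assuming such a partition, connectedness of $G$ gives a vertex $v$ incident to an $E'$-edge and an $E''$-edge; routing the two far endpoints through the connected graph $G-v$ yields a cycle using both colours, and among all such two-coloured cycles one picks one, $C$, of minimum length. Then $C$ must be induced: a chord $\{p,q\}$ would cut $C$ into two arcs of length $\ge 2$, giving two strictly shorter cycles, and a short case analysis on how the colours of $E(C)$ distribute over the two arcs, together with the colour of the chord, always exhibits one of them as a shorter two-coloured cycle, contradicting minimality. An induced two-coloured cycle contradicts monochromaticity, so no such partition exists.

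I expect the main obstacle to be exactly this last step, the upgrade from ``there is a two-coloured cycle'' to ``there is a two-coloured \emph{induced} cycle'': everything else is connectivity bookkeeping, whereas it is the minimal-length argument that does the real work, since it is precisely what forces the family of induced cycles not to split into several edge-disjoint blocks.
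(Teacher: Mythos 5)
Your proof is correct, and it differs from the paper precisely where the paper has no proof at all. For $(C1)\Rightarrow(C2)$ the two arguments are the same idea: removing a vertex $v$ leaves each $\gamma_i - v$ connected and leaves at least one endpoint of each shared edge, so the union stays connected; you phrase it contrapositively but the content is identical. The substantive difference is $(C2)\Rightarrow(C1)$: the paper simply outsources this implication to \cite[Lemma 6.4]{blufstein2024homomorphisms}, whereas you supply a complete self-contained argument. Your route --- take the family of \emph{all} induced cycles, show every edge lies on one via a shortest path in $G-e$ (bridgelessness), and rule out a disconnection of the auxiliary ``induced cycles sharing an edge'' graph by exhibiting a two-coloured induced cycle --- is sound, and the key minimality step does work: if the minimum-length two-coloured cycle $C$ had a chord, the chord (of a single colour) together with whichever arc of $C$ carries an edge of the other colour gives a strictly shorter two-coloured cycle, a contradiction. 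This buys a proof that is elementary and local to the paper rather than an external citation. One small caveat, which both you and the paper leave implicit: the equivalence as literally stated fails when $G$ is a single vertex or a single edge, both of which satisfy $(C2)$ but are not a union of cycles; your opening reduction to the case where $G$ contains a cycle is doing real work and should be retained if this argument were to replace the citation.
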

	
	\begin{proof}
		(C1) $\Rightarrow$ (C2). Pick any two cycles $\gamma_i, \gamma_j$. If we remove any vertex $v$ from $G$, the intersection $\gamma_{r_k} \cap \gamma_{r_{k+1}}$ still contains at least one vertex. Hence $G \backslash \{v\}$ is still connected, which proves that $G$ does not have any separating vertex.
		\medskip
		
		\noindent (C2) $\Rightarrow$ (C1). This is \cite[Lemma 6.4]{blufstein2024homomorphisms}.
	\end{proof}
	
	\begin{lemma} \label{LemmaNoSepVer}
		Let $A_{\Gamma}$ be a large-type Artin group. Suppose that $\Gamma'$ is the cone over a graph that is connected and has no separating vertex (equivalently, $\Gamma'$ is a cone and has no separating vertex or edge). Then $\Gamma'$ is fundamental relatively to $D_{\Gamma}$. 
	\end{lemma}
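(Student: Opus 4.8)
The plan is to reduce the statement to Lemma \ref{LemmaConeOverCycle} via the decomposition of $\Gamma'$ afforded by condition $(C1)$ in Lemma \ref{LemmaConditionsC1C2}. Write $\Gamma' = v * \Lambda$ where $v$ is the cone point and $\Lambda$ is connected with no separating vertex. By Lemma \ref{LemmaConditionsC1C2}, $\Lambda$ is a union of induced cycles $\delta_1, \dots, \delta_n$ such that any two are joined by a chain of consecutive cycles whose consecutive intersections contain an edge. Coning off, $\Gamma'$ is the union of the subgraphs $\Gamma'_i \coloneqq v * \delta_i$, each of which is a cone over an induced cycle, hence fundamental relative to $D_\Gamma$ by Lemma \ref{LemmaConeOverCycle}; moreover $\Gamma'_i \cap \Gamma'_j = v * (\delta_i \cap \delta_j)$ contains at least $v$, an edge of $\Lambda$, and the two triangles spanned by that edge and $v$, whenever $\delta_i$ and $\delta_j$ are consecutive in such a chain.

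Now let $G \subseteq I_\Gamma^{TD}$ be a $\Gamma'$-characteristic subgraph. For each $i$, the restriction $G_i$ of $G$ to the vertices labelled by $\Gamma'_i$ is $\Gamma'_i$-characteristic, so by Lemma \ref{LemmaConeOverCycle} there is a unique $g_i \in A_\Gamma$ with $g_i K_\Gamma$ containing all type $D$ vertices of $G_i$ and one type $1$ vertex $x_s \in t_s$ for each type $T$ vertex $t_s$ of $G_i$. The key point is that these elements agree: if $\delta_i$ and $\delta_j$ share an edge $\{s_1, s_2\}$ of $\Lambda$, then $G_i$ and $G_j$ share the $\Gamma'$-characteristic subgraph corresponding to the triangle $v * \{s_1, s_2\}$, which by Lemma \ref{Lemma6CyclesFundamental} is itself fundamental; so $g_i K_\Gamma$ and $g_j K_\Gamma$ both contain the same two type $D$ vertices (the barycenters of the edges $v s_1$, $v s_2$, $s_1 s_2$) and the same type $1$ vertex in the common standard tree $t_v$. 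Two translates of $K_\Gamma$ sharing two distinct type $2$ vertices must coincide (the type $2$ vertices of $D_\Gamma$ in a fixed $gK_\Gamma$ determine $g$, since $K_\Gamma$ is a strict fundamental domain, and any two of them span a unique base $2$-cell); hence $g_i = g_j$. Chaining through the connectivity hypothesis from $(C1)$, all $g_i$ are equal to a common $g$, and $g K_\Gamma$ then contains every type $D$ vertex of $G = \bigcup_i G_i$ and a type $1$ vertex in every type $T$ standard tree. Uniqueness of $g$ follows from the uniqueness already established for any single $g_i$. Therefore $\Gamma'$ is fundamental relative to $D_\Gamma$.

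The main obstacle I anticipate is the gluing step: making precise that the elements $g_i$ produced by Lemma \ref{LemmaConeOverCycle} on the pieces are forced to coincide on overlaps. This requires knowing that a translate $g K_\Gamma$ of the fundamental domain is determined by the set of type $2$ vertices it contains — equivalently that $g K_\Gamma \cap g' K_\Gamma$ cannot contain two distinct type $2$ vertices unless $g K_\Gamma = g' K_\Gamma$ — which should follow from convexity of $K_\Gamma$ (Remark \ref{RemFDConvex}) together with the $\mathrm{CAT}(0)$ geometry of $D_\Gamma$ (Theorem \ref{Thm:DeligneCAT(0)}) and the structure of $K_\Gamma$ as the union of base triangles; an argument of this flavour already appears in the proof of Lemma \ref{Lemmaf1v1f2}. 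The only subtlety is the bookkeeping needed to ensure that the type $1$ vertices $x_s$ chosen on the various pieces are consistent, but since on each overlap the common $g$ already lies in the (unique) standard tree $t_v$ and $g K_\Gamma \cap t_v$ is a single edge, there is no room for inconsistency once the $g_i$ are identified.
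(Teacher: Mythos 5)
Your decomposition and high--level strategy are exactly those of the paper: write $\Gamma'$ as the cone $v * \Lambda$, use Lemma~\ref{LemmaConditionsC1C2} to express $\Lambda$ as a chain of induced cycles with consecutive ones sharing an edge, apply Lemma~\ref{LemmaConeOverCycle} to each cone over a cycle to obtain elements $g_i$, and glue using the fundamentality of the shared triangle (Lemma~\ref{Lemma6CyclesFundamental}). That is the right argument.

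There is, however, a genuine error in the way you justify the gluing step. You assert that ``two translates of $K_\Gamma$ sharing two distinct type $2$ vertices must coincide.'' This is false: if $b \in V(\Gamma)$ has two incident edges $ab$ and $bc$, then $K_\Gamma$ and $b K_\Gamma$ are distinct translates, yet both contain the type $2$ vertices $v_{ab}$ and $v_{bc}$ (since $b A_{ab} = A_{ab}$ and $b A_{bc} = A_{bc}$). More generally, if two translates share the type $2$ vertices corresponding to two edges meeting at $b$, their difference can be any element of $A_{ab}\cap A_{bc} = \langle b\rangle$, which is nontrivial. The correct (and simpler) way to close the gap is the one the paper uses, and which you already have at hand via your citation of Lemma~\ref{Lemma6CyclesFundamental}: both $g_i K_\Gamma$ and $g_j K_\Gamma$ contain the three type-$D$ vertices of the shared triangle $T = v*\{s_1,s_2\}$ and a type $1$ vertex in each of the three standard trees $t_v$, $t_{s_1}$, $t_{s_2}$, because $T$ is a subgraph of both $\Gamma'_i$ and $\Gamma'_j$; the uniqueness clause in the definition of ``fundamental,'' applied to $T$, then forces $g_i = g_j$ directly. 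No separate claim about translates sharing two type-$2$ vertices is needed, and the statement that they share ``the same type $1$ vertex in $t_v$'' is a consequence of $g_i=g_j$, not a prior step toward it. Incidentally, you list three barycenters (of $vs_1$, $vs_2$, $s_1s_2$) while calling them ``two type $D$ vertices''; there are three.
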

	
	\begin{proof}
		By Lemma \ref{LemmaConditionsC1C2} we know that $G$ satisfies condition (C1). Let $t \in V(\Gamma)$ denote the cone point, and let $G_i$ be the subgraph spanned by $t$ and $\gamma_i$. We know by Lemma \ref{LemmaConeOverCycle} that for each $i \in \{1, \cdots, n\}$ the subgraph $G_i$ is fundamental relatively to $D_{\Gamma}$, each $G_i$ having a corresponding unique element $g_i \in A_{\Gamma}$. Since $\gamma_i$ and $\gamma_{i+1}$ share an edge $e = (t_1, t_2)$, the subgraphs $G_i$ and $G_{i+1}$ share the triangle $T \coloneqq (t, t_1, t_2)$. This triangle is fundamental by Lemma \ref{Lemma6CyclesFundamental}, with a unique corresponding element $g \in A_{\Gamma}$. But since $T \subseteq G_i, G_{i+1}$, it must be that $g_1 = g = g_2$ by unicity. Proceeding through all $i \in \{1, \cdots, n-1\}$, the above shows that $g_1 = g_2 = \cdots = g_n$, i.e. $\Gamma'$ is fundamental.
	\end{proof}
	
	Before proving the next result, we need the following technical lemma:
	
	\begin{lemma} \label{LemmaTwoStdTreesBorderingTwoFD}
		Let $A_{\Gamma}$ be a large-type Artin group, let $g_1, g_2 \in A_{\Gamma}$ and let $t_1, t_2$ be two non-intersecting standard trees in $D_{\Gamma}$. If each of $g_1 K_{\Gamma}$ and $g_2 K_{\Gamma}$ intersects each of $t_1$ and $t_2$ along at least a type $1$ vertex, then we must have $g_1 = g_2$.
	\end{lemma}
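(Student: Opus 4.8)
The plan is to locate a single \emph{canonical} type~$0$ vertex lying in both $g_1K_\Gamma$ and $g_2K_\Gamma$; since type~$0$ vertices have trivial stabiliser, this will force $g_1=g_2$.

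First I would make the type~$1$ vertices explicit. By Lemma~\ref{LemmaCrisp}, all type~$1$ vertices of a standard tree $h\fix(a)$ share the stabiliser $h\langle a\rangle h^{-1}$, and $K_\Gamma\cap\fix(a)$ is the ``spider'' consisting of $v_a$ together with the edges $[v_a,v_{ae}]$ ($e\sim a$); consequently, whenever a fundamental domain $gK_\Gamma$ meets a standard tree $t$ it does so in a subtree whose \emph{only} type~$1$ vertex is the ``centre'' $gv_c$ for a suitable generator $c$, and then $g^{-1}t=\fix(c)$. Applying this to $g_1$ gives generators $c\neq c'$ with $x_1:=g_1v_c\in t_1$, $x_2:=g_1v_{c'}\in t_2$, $g_1^{-1}t_1=\fix(c)$, $g_1^{-1}t_2=\fix(c')$, and similarly for $g_2$ with generators $d\neq d'$ and vertices $y_1:=g_2v_d\in t_1$, $y_2:=g_2v_{d'}\in t_2$. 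Since $t_1\cap t_2=\emptyset$, the trees $\fix(c)$ and $\fix(c')$ are disjoint, which forces $c,c'$ to be \emph{non-adjacent} in $\Gamma$ (otherwise $v_{cc'}$ lies in both); likewise $d,d'$ are non-adjacent. This is the one place the disjointness hypothesis enters, and it provides exactly the input the metric argument needs.

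Next I would exploit the ``cone'' structure of the fundamental domain at its type~$0$ vertex. Every $2$-cell of $K_\Gamma$ contains $v_\emptyset$, each $v_c$ is at distance $1$ from $v_\emptyset$, and $\lk(v_\emptyset)$ is $\Gamma_{bar}$ with edge lengths $\angle_{v_\emptyset}^{T_{ce}}=\tfrac\pi2-\tfrac{\pi}{2m_{ce}}\ge\tfrac\pi3$. As $\Gamma_{bar}$ is bipartite and $c,c'$ are distinct and non-adjacent, any edge-path in $\Gamma_{bar}$ from $v_c$ to $v_{c'}$ uses at least $4$ edges, so $d_{\lk(v_\emptyset)}(v_c,v_{c'})\ge\tfrac{4\pi}3>\pi$; hence the Alexandrov angle at $v_\emptyset$ between $[v_\emptyset,v_c]$ and $[v_\emptyset,v_{c'}]$ is $\pi$, and since $D_\Gamma$ is CAT(0) the geodesic $[v_c,v_{c'}]$ runs through $v_\emptyset$, so $[v_c,v_{c'}]=[v_c,v_\emptyset]\cup[v_\emptyset,v_{c'}]$ has length $2$. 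Translating, $[x_1,x_2]\subseteq g_1K_\Gamma$ is a geodesic of length $2$ with midpoint $g_1v_\emptyset$, and $[y_1,y_2]\subseteq g_2K_\Gamma$ is a geodesic of length $2$ with midpoint $g_2v_\emptyset$.

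Finally I would show $g_1v_\emptyset=g_2v_\emptyset$ (whence $g_1=g_2$, as such vertices have trivial stabiliser) by arguing that $[x_1,x_2]$ and $[y_1,y_2]$ are both \emph{the} bridge between $t_1$ and $t_2$. This reduces to two geometric facts about disjoint standard trees: (i) $d(t_1,t_2)=2$ — the bound $\le2$ is witnessed above, and for the reverse I would show that every type~$0$ vertex is at distance $\ge1$ from every standard tree (the open unit ball around $v_\emptyset$ misses $\partial K_\Gamma$) and that a shortest arc between two disjoint standard trees must pass through a type~$0$ vertex, since it cannot run inside the $1$-dimensional union of all standard trees; (ii) uniqueness of the minimal geodesic — were there two, the Flat Strip theorem would give a flat rectangle of width $2$ with opposite sides in $t_1$ and in $t_2$, which I would exclude via the fact (in the spirit of Lemma~\ref{LemmaDisjointStandardTreesDontCommute} and \cite[Lemma~3.8]{martin2024tits}) that the nearest-point projection of one standard tree onto a disjoint one is a single point. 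Then $[x_1,x_2]=[y_1,y_2]$, so their midpoints coincide. I expect (i) and (ii) — pinning down $d(t_1,t_2)=2$ and ruling out the flat strip — to be the genuine technical core; once they are in place, everything else is bookkeeping with Lemma~\ref{LemmaCrisp} and the cone metric on $K_\Gamma$.
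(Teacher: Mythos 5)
The first two-thirds of your proof track the paper's argument closely and correctly: you reduce to a type~$1$ vertex at the centre of each spider, you observe that disjointness of $t_1,t_2$ forces the two underlying generators to be non-adjacent in $\Gamma$, and you compute the link distance at $v_\emptyset$ to be $\ge 4\pi/3$ so that the geodesic $[x_1,x_2]$ is forced through $v_\emptyset$ and has length~$2$. All of that matches the paper. Where you and the paper part ways is in the final step: you reduce to showing (i) $d(t_1,t_2)=2$ and (ii) uniqueness of the minimal bridge, whereas the paper applies the \emph{flat quadrilateral theorem} (\cite[Theorem~II.2.11]{BridsonHaefliger1999}) directly to the quadrilateral $Q=(x_1,y_1,y_2,x_2)$: all four of its angles equal $\pi/2$ (because geodesics inside a standard tree leave each spider vertex at right angles to $[v_c,v_\emptyset]$), so $c(Q)$ is isometric to a Euclidean quadrilateral; but then the interior point $g_1v_\emptyset$ of the side $[x_1,x_2]$ would have to lie in a flat half-disk, contradicting the link distance $>\pi$ you already computed. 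This one application completely bypasses both of your steps (i) and (ii).

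Both of your steps (i) and (ii), which you yourself flag as ``the genuine technical core,'' are in fact not justified as written. For (i), the lower bound $d(t_1,t_2)\ge 2$ rests on the assertion that a shortest arc between disjoint standard trees must pass through a type~$0$ vertex; you offer ``because it cannot run inside the $1$-dimensional union of standard trees,'' but a shortest arc has no obligation to stay in that union — it will generically cut across $2$-cells and cross edges of type $[v_\emptyset,v_a]$ or $[v_\emptyset,v_{ab}]$ without touching any type~$0$ vertex, and excluding this requires an actual argument. For (ii), uniqueness of the bridge is \emph{equivalent} to the nearest-point projection of $t_1$ onto $t_2$ being a single point, so invoking that projection fact is circular unless it is proved independently; and \cite[Lemma~3.8]{martin2024tits}, as it is used in the paper's Lemma~\ref{LemmaDisjointStandardTreesDontCommute}, gives an angle statement (perpendicular directions at a single point of a standard tree are $\pi$ apart), not the projection statement you want — a rectangle with both short sides perpendicular to the trees would also satisfy that local condition, so it does not by itself rule out the flat strip. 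The cleanest repair is to drop your steps (i)--(ii) entirely and finish with the flat quadrilateral argument, which needs only ingredients you have already established.
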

	
	\begin{proof}
		Up to conjugation, we will assume that $g_1 = 1$ and that $t_1 = Fix(a)$, $t_2 = Fix(b)$ where $m_{ab} = \infty$. Call $v_a \in Fix(a)$ and $v_b \in Fix(b)$ the type $1$ vertices corresponding to the cosets $\langle a \rangle$ and $\langle b \rangle$ respectively. By construction of the Moussong metric $d$, we know that
		$$d(v_a, v_{\emptyset}) = 1 = d(v_b, v_{\emptyset}).$$
		Moreover, we know that the angle $\angle_{v_a}(Fix(a), v_{\emptyset})$ is precisely $\pi/2$, and similarly for $\angle_{v_b}(Fix(b), v_{\emptyset})$.
		
		If $\Gamma$ is disconnected, then $v_{\emptyset}$ disconnects the fundamental domain $K_{\Gamma}$, so it lies on the geodesic path from $v_a$ to $v_b$ by convexity of $K_{\Gamma}$ (Remark \ref{RemFDConvex}) and $d(v_a, v_b) = 2$. We now suppose that $\Gamma$ is connected. We want to compute the angle $\angle_{v_{\emptyset}}(v_a, v_b)$. Note that this angle is equal to the distance between $v_a$ and $v_b$ in the link of $v_{\emptyset}$ when given the angular metric. In particular, $lk_{X_{\Gamma}}(v_{\emptyset})$ is a graph isomorphic to the barycentric subdivision $\Gamma_{bar}$, where each edge $(v_s, v_{st})$ has length the angle $\angle_{v_{\emptyset}}(v_s, v_{st})$, that is $\pi / 2 - \pi / 2 m_{st}$. Because $\Gamma$ is connected, but $a$ and $b$ are not adjacent, the distance between these two vertices in $\Gamma_{bar}$ is at least $4$. Because $A_{\Gamma}$ is large-type, this yields:
		$$\angle_{v_{\emptyset}}(v_a, v_b) \geq 4 \cdot \left( \frac{\pi}{2} - \frac{\pi}{6} \right) = \frac{4 \pi}{3} > \pi.$$
		By convexity of $K_{\Gamma}$ (Remark \ref{RemFDConvex}), the geodesic between $v_a$ and $v_b$ in $D_{\Gamma}$ lies in $K_{\Gamma}$. So the previous inequation implies that $v_{\emptyset}$ also lies on the geodesic between $v_a$ and $v_b$, with an angle strictly greater than $\pi$.
		
		The standard trees $Fix(a)$ and $Fix(b)$ intersect $g_2 K_{\Gamma}$ along some type $1$ vertices $g_2 v_s$ and $g_2 v_t$ respectively, for appropriate $s, t \in V(\Gamma)$. A similar reasoning shows that $g_2 v_{\emptyset}$ lies on the geodesic between $g_2 v_s$ and $g_2 v_t$. Of course, we also have $\angle_{g_2 v_s}(Fix(a), g_2 v_{\emptyset}) = \angle_{g_2 v_t}(Fix(b), g_2 v_{\emptyset}) = \pi / 2$.
		
		We now suppose that $g_1 \neq g_2$. We consider the quadrilateral $Q \coloneqq (v_a, g_2 v_s, g_2 v_t, v_b)$, which is not degenerate as $g_1 \neq g_2$. Note that the sum of the four angles of $Q$ is exactly $2 \pi$. We can now apply \cite[Theorem II.2.11]{BridsonHaefliger1999}: the convex hull $c(Q)$ of $Q$ is isometric to the convex hull of a quadrilateral in the Euclidean plane. In particular, every point $p$ that lies on the interior of the geodesic $[v_a, v_b]$ must satisfy $\angle_p(v_a, v_b) = \pi$. This gives a contradiction as $\angle_{v_{\emptyset}}(v_a, v_b) > \pi$. Thus $g_1 = g_2$, as wanted.    
	\end{proof}
	
	\begin{lemma} \label{LemmaExtendingToTwistlessHierarchy}
		Let $A_{\Gamma}$ be a large-type Artin group, and let $\Gamma_1, \Gamma_2$ be two graphs that are fundamental relatively to $D_{\Gamma}$. Let now $\Gamma'$ be a graph satisfying the following: there exists $\varphi_1 : \Gamma_1 \hookrightarrow \Gamma'$ and $\varphi_2 : \Gamma_2 \hookrightarrow \Gamma'$ such that $\Gamma' = \varphi_1(\Gamma_1) \cup \varphi_2(\Gamma_2)$ and $\varphi_1(\Gamma_1) \cap \varphi_2(\Gamma_2)$ is none of the empty set, a single vertex, or a single edge. Then $\Gamma'$ is fundamental relatively to $D_{\Gamma}$.
	\end{lemma}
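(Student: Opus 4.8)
The plan is to work with the formulation of fundamentality given in Definition~\ref{DefiFundGraphI}, regarding $\Gamma_1$ and $\Gamma_2$ as \emph{full} subgraphs of $\Gamma'$ via $\varphi_1,\varphi_2$ (this is the situation occurring in a twistless hierarchy). So let $t_1,\dots,t_n$ be distinct infinite standard trees indexed by $V(\Gamma')$, with associated vertices $v_{ij}$, satisfying conditions (1) and (2) of Definition~\ref{DefiFundGraphI} relative to $\Gamma'$. Because $\Gamma_1,\Gamma_2$ are full, adjacency and non-adjacency of their vertices are inherited from $\Gamma'$, so for each $k\in\{1,2\}$ the sub-collection $\{t_i:s_i\in V(\Gamma_k)\}$ together with the corresponding $v_{ij}$'s satisfies conditions (1) and (2) relative to $\Gamma_k$. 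By fundamentality of $\Gamma_k$ there is a unique $g_k\in A_\Gamma$ such that $g_kK_\Gamma$ contains all those $v_{ij}$ and a type $1$ vertex on each of those $t_i$. Granting $g_1=g_2$, the element $g:=g_1=g_2$ witnesses fundamentality of $\Gamma'$: every $v_{ij}$ lies in $g_1K_\Gamma$ or $g_2K_\Gamma$ since the edge $s_is_j$ lies in $\Gamma_1$ or $\Gamma_2$, and likewise every $t_i$ is handled by $g_1$ or $g_2$; uniqueness of $g$ follows from uniqueness of $g_1$ by restricting any competitor to the $\Gamma_1$-sub-collection.

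It remains to prove $g_1=g_2$. Write $\Lambda:=\Gamma_1\cap\Gamma_2$, which is a full subgraph of $\Gamma'$ with at least two vertices and which is not a single edge. Hence either $\Lambda$ is not complete, so it contains two vertices $u,x$ that are non-adjacent in $\Gamma'$, or $\Lambda$ is complete on at least three vertices and so contains a $3$-cycle $\Delta=uwx$.

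In the first case, consider the standard trees $t_u,t_x$ attached to $u,x$; condition (2) relative to $\Gamma'$ gives $t_u\cap t_x=\emptyset$. Since $u,x\in V(\Lambda)\subseteq V(\Gamma_1)\cap V(\Gamma_2)$, both $g_1K_\Gamma$ and $g_2K_\Gamma$ meet each of $t_u$ and $t_x$ in a type $1$ vertex, and Lemma~\ref{LemmaTwoStdTreesBorderingTwoFD} yields $g_1=g_2$. In the second case, the triangle $\Delta$ is a full subgraph of $\Gamma'$ which is fundamental by Lemma~\ref{Lemma6CyclesFundamental}, and the trees $t_u,t_w,t_x$ with $v_{uw},v_{wx},v_{ux}$ form a configuration of Definition~\ref{DefiFundGraphI} for $\Delta$. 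Since $\Delta\subseteq\Lambda\subseteq\Gamma_1\cap\Gamma_2$, both $g_1$ and $g_2$ satisfy the defining property of the unique element attached to this configuration, so uniqueness forces $g_1=g_2$. This completes the proof.

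I expect the main obstacle to be the combinatorial bookkeeping rather than any single geometric estimate: the two geometric inputs, Lemma~\ref{LemmaTwoStdTreesBorderingTwoFD} (two disjoint standard trees pin down a fundamental domain) and Lemma~\ref{Lemma6CyclesFundamental} (triangles are fundamental), are applied essentially verbatim. What needs care is checking that the sub-configurations genuinely satisfy the hypotheses of Definition~\ref{DefiFundGraphI}, which is exactly why fullness of $\Gamma_1,\Gamma_2$ in $\Gamma'$ is needed, and verifying that excluding the empty graph, a single vertex, and a single edge leaves precisely the dichotomy ``two non-adjacent vertices, or a triangle''. That dichotomy is the point at which the twistlessness of the decomposition is consumed.
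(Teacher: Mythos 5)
Your proof is correct, and it is close to the paper's argument but not identical. Both proofs reduce to showing $g_1=g_2$ where $g_k$ is the unique fundamental domain associated to the $\Gamma_k$-subconfiguration, and both handle the case where $\Lambda:=\Gamma_1\cap\Gamma_2$ contains two non-adjacent vertices via Lemma~\ref{LemmaTwoStdTreesBorderingTwoFD}. Where you diverge is the complementary case. The paper splits into ``$\Lambda$ contains two non-adjacent vertices'' and ``$\Lambda$ contains at least two edges,'' and in the second case invokes Lemma~\ref{Lemmaf1v1f2} to pin $g_2g_1^{-1}$ as a power of the Garside element of each of two distinct type~$2$ vertices, forcing triviality; your split instead is ``$\Lambda$ non-complete'' vs.\ ``$\Lambda$ complete on $\ge 3$ vertices,'' with the complete case dispatched directly by fundamentality of $3$-cycles (Lemma~\ref{Lemma6CyclesFundamental}). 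Your version is slightly cleaner: it reuses a fundamentality statement already in hand rather than re-running a Garside normal form computation, and your dichotomy is a clean partition whereas the paper's two cases overlap. One caveat that does not affect correctness but is worth stating explicitly, as you did: the passage from the $\Gamma'$-configuration to valid $\Gamma_k$-configurations requires the $\varphi_k(\Gamma_k)$ to be \emph{full} subgraphs of $\Gamma'$ (otherwise condition~(2) for $\Gamma_k$ can conflict with condition~(1) for $\Gamma'$), which matches the usage in Definition~\ref{DefiTwistlessHierarchy} even though the lemma only writes $\hookrightarrow$.
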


	\begin{proof}
		To simplify notations in the proof, we will identify $\Gamma_i$ with $\varphi_i(\Gamma_i)$ for both $i \in \{1, 2\}$.
		
		We want to show that $\Gamma' = \Gamma_1 \cup \Gamma_2$ is fundamental, using Definition \ref{DefiFundGraphII}. Thus we let $G \subseteq I_{\Gamma}^{TD}$ be a $\Gamma'$-characteristic subgraph, and we let $G_1 \subseteq I_{\Gamma}^{TD}$ be a $\Gamma_1$-characteristic subgraph of $G$ and $G_2 \subseteq I_{\Gamma}^{TD}$ be a $\Gamma_2$-characteristic subgraph of $G$, such that $G = G_1 \cup G_2$ but $G_1 \cap G_2$ is none of the empty set, a single vertex of type $T$, or a double-edge connecting two vertices of type $T$.
		
		We denote the vertices of type $T$ of $G_1$ by $t_1, \cdots, t_n$, and similarly we denote the vertices of type $T$ of $G_2$ by $s_1, \cdots, s_m$. By hypothesis, $\Gamma_1$ is fundamental, so we adopt the terminology of Definition \ref{DefiFundGraphII} and let $g_1 \in A_{\Gamma}$ be the (unique) associated group element, $x_1, \cdots, x_n$ be the type $1$ vertices satisfying $x_i \in t_i$, and for every vertex $v_{ij}$ of type $D$ of $G_1$ connecting $t_i$ and $t_j$, we also view $v_{ij}$ as a type $2$ vertex in $D_{\Gamma}$. We proceed similarly for $G_2$, with the element $g_2 \in A_{\Gamma}$, the type $1$ vertices $y_1, \cdots, y_m$ and the type $2$ vertices $w_{ij}$'s. We split the argument in two cases:
		\medskip
		
		\noindent \underline{Case 1: $\Gamma_1$ and $\Gamma_2$ share at least two edges.}
		We call these edge $e$ and $e'$. Each edge corresponds to a double-edge in $I_{\Gamma}^{TD}$ connecting two vertices of type $T$ in $G$, so without loss of generality we assume that $t_1 = s_1$, $t_2 = s_2$ and $v_{12} = w_{12}$. In particular, Lemma \ref{Lemmaf1v1f2} stipulates that we must have $g_2 g_1^{-1} = \Delta_{v_{12}}^k$ for some $k \in \mathbb{Z}$. If $g_2 g_1^{-1}$ is not trivial, it would then have type $2$ and its fixed set on the Deligne complex would be $v_{12}$. Proceeding similarly for $e'$, we obtain a type $2$ vertex $v'$ such that $g_2 g_1^{-1} = \Delta_{v'}^q$ for some $q \in \mathbb{Z}$. Unless $g_2 g_1^{-1}$ is trivial, we have $Fix(g_2 g_1^{-1}) = \{v'\}$. Note that $v_{12}$ and $v'$ are distinct because $e$ and $e'$ are. So the above gives a contradiction unless $g_2 g_1^{-1} = 1$, which proves $g_1 = g_2$. This means $G \subseteq g_1 K_{\Gamma}$, i.e. $\Gamma'$ is fundamental.
		\medskip
		
		\noindent \underline{Case 2: $\Gamma_1$ and $\Gamma_2$ share two non-adjacent vertices.} Call these vertices $a$ and $b$. Each of $a$ and $b$ corresponds to a vertex of type $T$ in $G$, and without loss of generality, we suppose that these are $t_1 = s_1$ and $t_2 = s_2$. By construction, this means the translates $g_1 K_{\Gamma}$ and $g_2 K_{\Gamma}$ both contain a type $1$ vertex in the standard tree $t_1$, and a type $1$ vertex in the standard tree $t_2$. By Lemma \ref{LemmaTwoStdTreesBorderingTwoFD}, this forces $g_1 = g_2$, i.e. $G \subseteq g_1 K_{\Gamma}$, and $\Gamma'$ is fundamental.
	\end{proof}
	
	\begin{remark}
		We leave a curious remark which will not be used in the later part of the article.
		Lemma~\ref{LemmaExtendingToTwistlessHierarchy} breaks down if $\Gamma_1\cap \Gamma_2$ is empty, or a single vertex, or a single edge. The empty case is clear. If $\Gamma = \Gamma_1 \cup \Gamma_2$ with $\Gamma_1\cap \Gamma_2$ is a vertex of an edge, then we can consider a twist with respect to the splitting $A_{\Gamma}=A_{\Gamma_1}*_{A_{\Gamma_1\cap\Gamma_2}}A_{\Gamma_2}$. This twist induces an automorphism of the intersection graph of $A_{\Gamma}$, though it sends a copy of $\Gamma_{bar}$ in $I_\Gamma$ corresponding to a single fundamental domain of $D_\Gamma$ to another subgraph of $I_\Gamma$ which does not correspond to a single fundamental domain of $D_\Gamma$. In other words, Lemma \ref{LemmaExtendingToTwistlessHierarchy} is optimal.
	\end{remark}
	
	At last, we want to highlight a result from \cite{blufstein2024homomorphisms}, which is closely related to the work in this section. Their result only applies to XXXL Artin groups (where all coefficients are $\geq 6$), but it applies to all twistless graphs, and not only to twistless hierarchies terminating at twistless stars. We reformulate the result in our setting:
	
	\begin{proposition} \cite[Proposition 5.1, Lemma 6.7]{blufstein2024homomorphisms}
		Let $A_{\Gamma}$ be an XXXL Artin group  such that $\Gamma$ is connected and twistless. Then $\Gamma$ is fundamental.
	\end{proposition}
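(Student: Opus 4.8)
The plan is to establish Definition~\ref{DefiFundGraphII} for $\Gamma'=\Gamma$: given an arbitrary $\Gamma$-characteristic subgraph $G\subseteq I_\Gamma^{TD}$, produce a \emph{unique} $g\in A_\Gamma$ with $gK_\Gamma$ containing all type-$D$ vertices of $G$ and one type-$1$ vertex of $D_\Gamma$ in each type-$T$ tree of $G$. Uniqueness is the routine half. If $\Gamma$ is a complete graph, uniqueness follows from Lemma~\ref{Lemma6CyclesFundamental} applied to any triangle in it; otherwise $\Gamma$ has two non-adjacent vertices $a,b$, the corresponding standard trees $t_a,t_b$ of $G$ are disjoint by condition~(2) of Definition~\ref{DefiFundGraphI}, and any two admissible translates $g_1K_\Gamma$, $g_2K_\Gamma$ border each of $t_a,t_b$ along a type-$1$ vertex, so $g_1=g_2$ by Lemma~\ref{LemmaTwoStdTreesBorderingTwoFD}. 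All the work is therefore in existence.

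First I would invoke the structure of twistless graphs. For a connected graph on at least three vertices, having no separating vertex or edge is equivalent to $2$-connectedness, and by Lemma~\ref{LemmaConditionsC1C2} (that is, \cite[Lemma~6.4]{blufstein2024homomorphisms}) such a $\Gamma$ is a union of induced cycles $\gamma_1,\dots,\gamma_k$ whose overlap pattern is connected: after reordering, each $\gamma_{i+1}$ shares at least an edge with $\gamma_1\cup\dots\cup\gamma_i$. This cuts $G$ into $\gamma_i$-characteristic subgraphs $G_i$ (each a $2|\gamma_i|$-cycle in $I_\Gamma^{TD}$), with consecutive pieces overlapping along at least the double-edge $t-v-t'$ produced by a shared edge.

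The crux — and the only place XXXL enters — is that each induced cycle $\gamma$ is fundamental relative to $D_\Gamma$. Read the type-$D$ vertices of a $\gamma$-characteristic $G_i$ cyclically as type-$2$ vertices $v_1,\dots,v_n$ of $D_\Gamma$ and join consecutive ones by the geodesic inside the (convex, by Remark~\ref{RemStandardTrees}.(1)) standard tree containing both; since non-consecutive standard trees of $G_i$ are disjoint, this closed path $P$ is embedded in the CAT(0) complex $D_\Gamma$ (Theorem~\ref{Thm:DeligneCAT(0)}) and bounds a minimal disk $\Sigma$, tiled by simplices of translates of $K_\Gamma$. Now apply combinatorial Gauss--Bonnet to $\Sigma$: each standard tree runs geodesically through its own type-$1$ and type-$2$ vertices, so the only boundary vertices where $\Sigma$ ``turns'' are the corners $v_j$ where one switches trees, and at $v_j$ the subtended angle equals $\pi/m_{\cdot}$ when the two incident sides lie in one fundamental domain and is strictly larger (by a further positive multiple of $\pi/m_{\cdot}$) otherwise. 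With $m_{\cdot}\ge 6$ the within-domain angle $\pi/m_{\cdot}\le\pi/6$ is small, i.e.\ each $v_j$ carries a large positive curvature $\pi-\theta_{v_j}$, balanced by the negative curvature at the interior (necessarily fat, by CAT(0)) vertices; the $n$ corner contributions then force the tiling to be the single ``fan'' about one $0$-vertex once one rules out the twisted fillings, giving $G_i\subseteq g_iK_\Gamma$. Making this exclusion rigorous is the main obstacle: one must show that a disk whose boundary wanders across several fundamental domains has strictly too small a curvature budget to close up, and it is precisely the bound $m_{\cdot}\ge 6$ that supplies the slack — in general large type the statement fails (cf.\ the optimality remark after Lemma~\ref{LemmaExtendingToTwistlessHierarchy}), which is why the earlier sections needed the twistless-hierarchy hypothesis instead.

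Finally I would propagate the identification. From the previous step $G_i\subseteq g_iK_\Gamma$; consecutive $G_i,G_{i+1}$ share a double-edge $t-v-t'$, so Lemma~\ref{Lemmaf1v1f2} gives $g_{i+1}g_i^{-1}=\Delta_v^{\,\ell_i}$. Sharing a single edge does not by itself kill $\ell_i$, so instead of gluing abstractly I would re-run the disk/Gauss--Bonnet argument on the union $\gamma_i\cup\gamma_{i+1}$ — a $2$-connected theta-graph — or on all of $G$ at once, using that the cycle space of a $2$-connected graph is generated by induced cycles with connected overlap: the same curvature inequality then forbids any nontrivial $\Delta_v$-twist and yields $g_1=\dots=g_k=:g$, so $G\subseteq gK_\Gamma$. (A bottom-up alternative is to iterate a gluing lemma in the spirit of Lemma~\ref{LemmaExtendingToTwistlessHierarchy} once the single-cycle case is in hand, but one must first extend that lemma to permit overlaps that are a single edge, which is exactly what the XXXL curvature bound makes possible.) Together with uniqueness this shows $\Gamma$ is fundamental.
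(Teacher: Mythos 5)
The paper does not prove this proposition: it is imported wholesale from \cite{blufstein2024homomorphisms} (their Proposition~5.1 and Lemma~6.7), and within the present paper it only plays the role of a black box used to derive Theorem~\ref{ThmExtendingAllResults1}. So there is no in-house proof to compare against; what follows is an assessment of your sketch on its own merits.

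The uniqueness half and the high-level reduction are sound: twistless means connected with no separating vertex or edge, Lemma~\ref{LemmaConditionsC1C2} gives the (C1) decomposition of $\Gamma$ into induced cycles with connected edge-overlap, and for two non-adjacent vertices you correctly use Lemma~\ref{LemmaTwoStdTreesBorderingTwoFD} (with Lemma~\ref{Lemma6CyclesFundamental} handling the complete-graph case). The existence half, however, has two genuine gaps, both of which you flag yourself. First, fundamentality of a \emph{bare} induced $n$-cycle ($n\ge 4$), with no cone point. The paper's Lemma~\ref{LemmaConeOverCycle} handles \emph{cones} over cycles precisely because the cone point provides the shared standard tree $t_*$ that makes Lemma~\ref{LemmaChainOf6Cycles} close up; without it that chain argument is unavailable. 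Your Gauss--Bonnet heuristic is the right intuition for why $m\ge 6$ should help, but as stated it does not decide the issue: in a CAT(0) $2$-complex \emph{every} disk filling of a closed polygon satisfies $\sum_i(\pi-\theta_i)\ge 2\pi$, so the inequality by itself cannot distinguish the one-domain filling from a multi-domain one. One needs a quantitative statement tying the excess of $\theta_i$ over $\pi/m_i$ to how far the filling strays, or an algebraic argument tracking the Garside elements $g_{i+1}g_i^{-1}=\Delta_{v_i}^{k_i}$ around the cycle and showing $m_i\ge 6$ forces every $k_i=0$; you supply neither. Second, the propagation step: adjacent $\gamma_i,\gamma_{i+1}$ in the (C1) decomposition may share only one edge, which is exactly the case Lemma~\ref{LemmaExtendingToTwistlessHierarchy} leaves open, so gluing must go through the same unproven curvature mechanism. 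Finally, a small misreading: the optimality remark after Lemma~\ref{LemmaExtendingToTwistlessHierarchy} concerns genuine twists along separating vertices/edges of $\Gamma$ itself — it does not assert that bare cycles fail to be fundamental for general large type; in fact the paper conjectures that every connected twistless large-type $\Gamma$ is fundamental.
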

	
	\subsection{Comparison homomorphisms}
	\label{subsecComparison}
	Let $C_\Gamma$ be the Cayley complex of $A_\Gamma$, i.e. the universal cover of the presentation complex $P_\Gamma$ of $A_\Gamma$. The 1-skeleton of $C_\Gamma$ is the Cayley graph of $A_\Gamma$.
	We identify $A_\Gamma$ with the vertex set of $C_\Gamma$. A
	block of $C_\Gamma$ is the full subcomplex spanned by all vertices in a left coset of the form
	$gA_e$, where $g\in A_\Gamma$ and $e\subset \Gamma$ is an edge; it is a large block if $e$ has label at least 3. A standard line of $C_\Gamma$ is a line which covers a circle in $P_\Gamma$ 
	associated with a generator (corresponding to a vertex $v$ of $\Gamma$). Thus each standard line is labelled by a vertex of $\Gamma$. With this definition, note that type 1 vertices in the modified Deligne complex $D_\Gamma$ are in 1-1 correspondence with standard lines in $C_\Gamma$, and type 2 vertices in $D_\Gamma$ are in 1-1 correspondence with blocks in $C_\Gamma$.
	
	\begin{definition}(Comparison homomorphism I)
		\label{def:first comparison}
		Suppose $A=A_\Gamma$ is an Artin group such that each vertex
		of $\Gamma$ is contained in an edge of $\Gamma$ with label $\ge 3$ (this holds true when $A_\Gamma$ is of large-type). Let $C_\Gamma$ be the Cayley complex of $A_\Gamma$.
		Then there is an injective homomorphism $h_1:\aut(D_\Gamma)\to \aut(C_\Gamma)$, called the first comparison homomorphism,
		defined as follows \cite[Section 8.1]{horbez2020boundary}. Given an automorphism $\alpha:D_\Gamma\to D_\Gamma$, as $\alpha$ preserves the types of vertices in $D_\Gamma$ \cite[Lemma 2.4]{horbez2020boundary}, it follows from Definition~\ref{def:md} that
		an automorphism of $D_\Gamma$ gives a bijection $f:A_\Gamma\to A_\Gamma$ (viewed as a bijection of the vertex set of $A_\Gamma$) such that $f$ sends vertices in a standard line bijectively to vertices in a standard line; and $f$ sends vertices in a block bijectively to vertices in a block.
		By \cite[Proposition 40]{crisp2005automorphisms}, the restriction of $f$ to the vertex of each large block can be extended to a unique
		cellular automorphism from this block onto its image. By the assumption on $\Gamma$, each standard line 
		of $C_\Gamma$ is contained in a large block, so $f$ sends adjacent vertices to adjacent vertices. As each 2-cell of $C_\Gamma$ is contained in a block, so $f$ sends vertices in a 2-cell to vertices in a 2-cell. Thus $f$ extends to a cellular automorphism. This gives $h:\aut(D_\Gamma)\to \aut(C_\Gamma)$, which is clearly a group homomorphism. Any element of $\aut(D_\Gamma)$ fixing type 0 vertices of $D_\Gamma$
		pointwise will also fix other vertices as they correspond to certain left cosets of $A_\Gamma$. Thus $h_1$ is injective.  
	\end{definition}

	\begin{proposition}
		\label{prop:first comparison}
		Suppose $A_\Gamma$ is large-type. Then the first comparison homomorphism $h_1:\aut(D_\Gamma)\to \aut(C_\Gamma)$ is an isomorphism.
	\end{proposition}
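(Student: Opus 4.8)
By Definition~\ref{def:first comparison}, $h_1$ is an injective group homomorphism, so it only remains to prove that $h_1$ is onto. Fix $\phi\in\aut(C_\Gamma)$. Recall the three bijections underlying the construction of $h_1$: the vertex set of $C_\Gamma$ is $A_\Gamma$ and these are exactly the type~$0$ vertices of $D_\Gamma$; the type~$1$ vertices of $D_\Gamma$ (cosets $g\langle a\rangle$) correspond to the standard lines of $C_\Gamma$; the type~$2$ vertices (cosets $gA_e$, $e$ an edge of $\Gamma$) correspond to the blocks of $C_\Gamma$; and the simplices of $D_\Gamma$ record precisely the inclusions between a vertex, a standard line and a block. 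My plan is: (i) show $\phi$ permutes the blocks of $C_\Gamma$; (ii) deduce $\phi$ permutes the standard lines of $C_\Gamma$; (iii) read off from (i)--(ii) a simplicial automorphism $\alpha$ of $D_\Gamma$; (iv) verify $h_1(\alpha)=\phi$.

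For (i), I would first observe that every $2$-cell of $C_\Gamma$ is contained in a unique block: if a $2$-cell coming from the edge $e$ lay in both $gA_e$ and $hA_{e'}$, then since its $2m_e$ vertices generate $gA_e$ we would get $gA_e\subseteq hA_{e'}$, hence $gA_e=gA_{e'}$ with $A_e\subseteq A_{e'}$, forcing $e=e'$ (using that distinct standard parabolics over distinct edges are not contained in one another, which follows from the injectivity of standard parabolic inclusions). The crux is then to promote this to an \emph{intrinsic} description of blocks, valid for $\phi$: namely that the blocks of $C_\Gamma$ are exactly the maximal subcomplexes of $C_\Gamma$ that are isomorphic, as cell complexes, to the Cayley complex of a (large-type) dihedral Artin group. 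One inclusion is clear; for the other one shows that any such ``abstract dihedral Cayley subcomplex'' of $C_\Gamma$ has vertex set a coset of a rank-$2$ standard parabolic, which uses the good behaviour of parabolic subgroups in large-type Artin groups (Theorem~\ref{ThmIntersectionParabolics} together with the combinatorics of parabolic inclusions, cf.\ \cite{cumplido2022parabolic,blufstein2023parabolic}, and the local rigidity of the dihedral Cayley complex underlying \cite[Proposition~40]{crisp2005automorphisms}). Since $\phi$ is a cellular isomorphism it preserves the class of subcomplexes isomorphic to dihedral Cayley complexes and the property of being maximal among them, hence $\phi$ carries blocks to blocks. For (ii): since $A_\Gamma$ is of large type, every standard line $\ell$ lies in some block $B$; then $\phi(\ell)\subseteq\phi(B)$ is the $\phi$-image of a standard line, and standard lines form an automorphism-invariant family inside an abstract dihedral Cayley complex (again part of the rigidity behind \cite[Proposition~40]{crisp2005automorphisms}), so $\phi(\ell)$ is a standard line of the block $\phi(B)$.

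Granting (i) and (ii), step (iii) is formal: $\phi$ permutes the vertices, the standard lines and the blocks of $C_\Gamma$, and it preserves the relations ``vertex lies in standard line'', ``vertex lies in block'' and ``standard line lies in block'' because these are mere inclusions of subcomplexes; transporting these data through the three bijections above produces a type-preserving automorphism $\alpha$ of the poset defining $D_\Gamma$, i.e.\ an element of $\aut(D_\Gamma)$. For (iv), note that by construction $\alpha$ induces on $A_\Gamma$ the same permutation as $\phi$, and it sends each block $B$ to $\phi(B)$; therefore $h_1(\alpha)$ and $\phi$ are two cellular isomorphisms of $C_\Gamma$ which agree on all vertices and which, restricted to any large block $B$, are cellular isomorphisms $B\to\phi(B)$ inducing the same map on $B^{(0)}$. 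By the uniqueness statement in \cite[Proposition~40]{crisp2005automorphisms} they agree on every large block; since $A_\Gamma$ is of large type every edge and every $2$-cell of $C_\Gamma$ lies in a large block, so $h_1(\alpha)=\phi$. The main obstacle is clearly step (i): one needs a description of blocks robust enough to survive an arbitrary cellular isomorphism, equivalently a classification of the dihedral Cayley subcomplexes of $C_\Gamma$, and this is exactly where the structure theory of parabolic subgroups in large-type Artin groups is used.
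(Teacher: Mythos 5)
Your plan correctly identifies the crux: surjectivity of $h_1$ reduces to showing that every cellular automorphism $\phi\in\aut(C_\Gamma)$ permutes the blocks and the standard lines, after which the remaining steps (reading off a type-preserving automorphism of the defining poset of $D_\Gamma$, and verifying $h_1(\alpha)=\phi$ via the uniqueness in \cite[Proposition~40]{crisp2005automorphisms}) are formal and you carry them out correctly. The paper, however, does not re-derive this crux: it invokes \cite[Lemma~8.1]{horbez2020boundary} as a black box (with a remark that the same proof carries over from hyperbolic-type to large-type), whereas you propose to re-prove it from scratch via the intrinsic characterisation ``blocks are exactly the maximal subcomplexes of $C_\Gamma$ cellularly isomorphic to a dihedral Cayley complex.'' So the route is genuinely different: you replace a citation by a sketch of a direct combinatorial argument.

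On the substance of that sketch, the direction is plausible but you leave precisely the hard part at the level of a gesture. Two points need to be actually proved rather than signalled. First, the claim that any ``abstract dihedral Cayley subcomplex'' $Y\subset C_\Gamma$ has vertex set a coset of a rank-$2$ standard parabolic. The danger here is a ``Frankenstein'' configuration: two distinct edges $e,e'$ of $\Gamma$ with the same label could \emph{a priori} contribute $2$-cells of the same shape, and a cellular isomorphism $Y\cong\text{Cayley}(A_{pq})$ only preserves shape, not edge labels; you need to argue that the alternating labelling around each $2$-cell, propagated along shared edges (every $2$-cell alternates between exactly two fixed generators, and two $2$-cells of $Y$ sharing an edge must then share both generators), forces a single pair of generators globally on $Y$, and only then does the parabolic-subgroup machinery take over to identify the vertex set as a coset of $A_e$. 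Your citation chain (\cite{cumplido2022parabolic,blufstein2023parabolic}, Theorem~\ref{ThmIntersectionParabolics}) supports the second half of this, but the first half is the part that actually uses the local combinatorics of $C_\Gamma$ and is what is really hiding behind \cite[Lemma~8.1]{horbez2020boundary}. Second, your maximality claim (no dihedral Cayley subcomplex strictly contains a block) deserves a word: in a block every edge already has the full $2$-cell degree of a dihedral Cayley complex, so adjoining any further $2$-cell of $C_\Gamma$ through one of its edges would overshoot that degree; this is a clean argument but it is not in your text. Finally, a small wording fix in step (i): the $2m_e$ vertices of a $2$-cell do not ``generate the coset $gA_e$''; rather, after normalising one vertex to the identity, the differences of adjacent vertices generate $A_e$, which is what you actually use.

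In short: the overall architecture matches the paper's, the divergence is that you unpack the cited lemma into a direct argument, and that unpacking is sound in outline but leaves the label-propagation/maximality core at the level of a sketch rather than a proof. If you intend a self-contained argument (which would be a genuinely nice addition, since the cited lemma is proved under a different hypothesis), that core needs to be written out; otherwise the efficient move is the paper's, namely citing \cite[Lemma~8.1]{horbez2020boundary} and noting the proof transfers to large type.
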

	
	\begin{proof}
		We already see that $h_1$ is injective. To see $h_1$ is surjective, it suffices to show any automorphism of $C_\Gamma$ sends blocks to blocks and standard links to standard lines. However, this follows from \cite[Lemma 8.1]{horbez2020boundary} (note that \cite[Lemma 8.1]{horbez2020boundary} is under the assumption that $A_\Gamma$ is 2-dimensional hyperbolic-type, however, the same proof there also work in the large-type case). 
	\end{proof}
	
	\begin{lemma}
		\label{lem:second comparison}
		Suppose $A_\Gamma$ is of large-type. Then each automorphism of $D_\Gamma$ sends standard trees to standard trees. This gives a homomorphism $h_2:\aut(D_\Gamma)\to \aut(I^{TD}_\Gamma)$, called the second comparison homomorphism. 
		
		Suppose $\Gamma$ is connected and does not have leaf vertices. Then $h_2$ is injective.
	\end{lemma}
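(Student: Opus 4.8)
The statement has two halves, and I would prove them in turn.

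\textbf{Sending standard trees to standard trees, and defining $h_2$.} By \cite[Lemma 2.4]{horbez2020boundary} every $\alpha\in\aut(D_\Gamma)$ preserves the type of a vertex, so $\alpha$ permutes the type $1$ vertices and permutes the type $2$ vertices of $D_\Gamma$. I would first record two elementary facts. Each type $1$ vertex $v=g\langle a\rangle$ lies on a \emph{unique} standard tree, namely $g\,\fix(a)$: if $v\in\fix(h)$ for an elliptic element $h$ of type $1$, then $h$ fixes $v$, so $h\in g\langle a\rangle g^{-1}$ and $\fix(h)=g\,\fix(a^k)=g\,\fix(a)$ by Lemma~\ref{LemmaCrisp}. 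Moreover all the type $2$ neighbours of $v$ in $D_\Gamma$ (the vertices $gA_{ab}$, for the edges $ab$ of $\Gamma$ at $a$) lie on $g\,\fix(a)$, whereas none of its type $0$ neighbours does. Hence a standard tree is recovered from its type $1$ vertices by attaching \emph{all} incident type $2$ edges, and the only genuinely combinatorial input still needed is a description, at each type $2$ vertex $w$, of which incident local branches belong to a given standard tree through $w$. This is precisely the information controlled by \cite[Proposition~40]{crisp2005automorphisms}, already invoked in Definition~\ref{def:first comparison}: the restriction of $\alpha$ to the star of a type $2$ vertex $w$ is induced by a cellular isomorphism of the block at $w$ onto the block at $\alpha(w)$, and such a map carries standard-tree branches to standard-tree branches. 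Gluing these local identifications along the tree, $\alpha$ maps each standard tree onto a standard tree. Since $\alpha$ is an isomorphism it preserves the cardinality of a standard tree, hence it maps unbounded standard trees to unbounded ones (Remark~\ref{RemStandardTrees}.(3)); being simplicial and type-preserving, it preserves the incidence $v\in t$ of Definition~\ref{DefIntersectionGraphIII}. Thus $\alpha$ induces an automorphism $h_2(\alpha)$ of $I^{TD}_\Gamma$, and $h_2(\alpha\beta)=h_2(\alpha)h_2(\beta)$ is immediate, so $h_2$ is a group homomorphism.

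\textbf{Injectivity.} Now assume $\Gamma$ is connected with no leaf vertex, and let $\alpha\in\ker h_2$. Under these hypotheses every standard generator has valence $\ge 2$ in $\Gamma$, so by Remark~\ref{RemStandardTrees}.(3) every standard tree is unbounded; hence $\alpha$ fixes each standard tree of $D_\Gamma$ setwise and each type $2$ vertex pointwise. I claim $\alpha$ fixes every type $1$ vertex. Let $v$ lie on the standard tree $T$; then $\alpha$ restricts to a simplicial automorphism of the tree $T$ (Lemma~\ref{LemmaCrisp}) fixing all of its type $2$ vertices. By the previous paragraph, every type $1$ vertex of $T$ has all of its (at least two, since $\Gamma$ has no leaves) type $2$ neighbours on $T$; so if $\alpha(v)=v'\neq v$ then $v$ and $v'$ would be two distinct vertices of the tree $T$ with at least two common neighbours — impossible. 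Hence $\alpha$ fixes every type $1$ vertex. Finally $\alpha$ fixes every type $0$ vertex: $\{g\}$ is incident in $D_\Gamma$ precisely to the type $1$ vertices $g\langle c\rangle$, $c\in V(\Gamma)$, and since $\bigcap_{c\in V(\Gamma)}\langle c\rangle=\{1\}$ (it suffices that $\langle c\rangle\cap\langle c'\rangle=\{1\}$ for one pair $c\ne c'$, which holds because the edge subgroup $A_{cc'}$ — free of rank $2$ or a dihedral Artin group — embeds in $A_\Gamma$ by \cite{van1983homotopy}), $\{g\}$ is the unique type $0$ vertex incident to all of the $g\langle c\rangle$; as $\alpha$ fixes each $g\langle c\rangle$, it fixes $\{g\}$. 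Thus $\alpha$ fixes every vertex of $D_\Gamma$ and, being simplicial, $\alpha=\mathrm{id}$.

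\textbf{Main obstacle and an alternative route.} The only delicate point is the first half, specifically the identification, at a type $2$ vertex, of the local branches making up a given standard tree, together with the verification that automorphisms respect them; this is where \cite[Proposition~40]{crisp2005automorphisms} (equivalently, the cellular structure of blocks) does the work, after which the construction of $h_2$ and the injectivity argument are short tree-combinatorics. An alternative way to obtain the first half is to transport the question through the isomorphism $h_1:\aut(D_\Gamma)\to\aut(C_\Gamma)$ of Proposition~\ref{prop:first comparison}: a cellular automorphism of $C_\Gamma$ permutes the families of singular lines introduced in Section~\ref{subsec:intersection graph} and preserves parallelism, hence permutes the parallel classes of plain lines, which correspond bijectively to the standard trees of $D_\Gamma$.
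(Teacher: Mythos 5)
Your proof is correct and, for the first half, follows essentially the same route as the paper: type-preservation is imported from \cite[Lemma~2.4]{horbez2020boundary}, and the key local step — deciding at a type~$2$ vertex which incident branches belong to the same standard tree — is handled via \cite[Proposition~40]{crisp2005automorphisms}, exactly as in the paper's invocation of parallelism of standard lines inside blocks. For injectivity, the paper passes to the Cayley complex $C_\Gamma$ and uses that, when $\Gamma$ has no leaf, every vertex of $C_\Gamma$ is the intersection of the blocks through it, so fixing type~$2$ vertices already pins down type~$0$ vertices; you instead stay inside $D_\Gamma$, first pinning down type~$1$ vertices by tree combinatorics (two distinct vertices of a tree cannot share two neighbours) and then type~$0$ vertices via $\bigcap_c\langle c\rangle=\{1\}$. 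Both arguments are valid and of comparable length; yours is marginally more self-contained, the paper's avoids the separate intersection-of-cyclic-subgroups fact. One small remark: your justification of $\langle c\rangle\cap\langle c'\rangle=\{1\}$ by "the edge subgroup embeds" is not by itself enough — the cleanest route in this paper's toolkit is via Theorem~\ref{ThmIntersectionParabolics} and Corollary~\ref{CorollaryTypeOfIntersections} (the intersection of the two rank-one parabolics $\langle c\rangle$ and $\langle c'\rangle$ is a parabolic of type $<1$, hence trivial).
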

	
	\begin{proof}
		This is a variation of \cite[Lemma 8.4 and Lemma 8.5]{horbez2020boundary}. 
		Let $\alpha:D_\Gamma\to D_\Gamma$ be an automorphism of $D_\Gamma$. As $\alpha$ preserves type of vertices \cite[Lemma 2.4]{horbez2020boundary}, and type 0 vertices of $D_\Gamma$ are identified with elements of $A_\Gamma$, we obtain a bijection $A_\Gamma\to A_\Gamma$, which is also denoted by $\alpha$.
		Let $x\in D_\Gamma$ be a type 2 vertex associated with a coset $gA_e$ with $e\subset \Gamma$ being an edge.
		Take two type 1 vertices $x_1$ and $x_2$ adjacent to $x$. We first claim that $x_1$ and $x_2$ belong to the same standard
		tree if and only if $\alpha(x_1)$ and $\alpha(x_2)$ belong to the same standard tree. Indeed, \cite[Proposition 40]{crisp2005automorphisms} implies that $\alpha$ restricted to $gA_e$ (which is identified
		as the collection of type 0 vertices adjacent to $x$) can be extended to a cellular map on
		the block of $C_\Gamma$ containing $gA_e$, thus parallelism of the standard lines inside the blocks
		is preserved (two standard lines are defined to be parallel if they have finite Hausdorff distance). By Lemma~\ref{lemcommensurableequal}, two standard lines are parallel if and only if they have
		the same stabiliser, and this is precisely the condition for the corresponding type one
		vertices of $D_\Gamma$ to belong to the same standard tree. Thus the claim follows.
		
		Note that for a type 1 vertex $x$ in a standard tree $T$, the collection of type 2
		vertices in $T$ which are adjacent to $x$ coincides with the collection of type 2 vertices of
		$D_\Gamma$ adjacent to $x$. As vertices in a standard tree alternate between type 1 and type 2,
		we deduce that $\alpha$ sends a standard tree to a standard tree. As $\alpha$ also maps type 2 vertices to type 2 vertices (\cite[Lemma 2.4]{horbez2020boundary}), it follows from Lemma~\ref{LemmaThreeDefinitionsCoincide} that $\alpha$ induces an automorphism of $I^{TD}_\Gamma$. This gives the group homomorphism $h_2:\aut(D_\Gamma)\to \aut(I^{TD}_\Gamma)$.
		
		Now we show $h_2$ is injective. If an automorphism $\alpha: D_\Gamma\to D_\Gamma$ induces the trivial automorphism of $I^{TD}_\Gamma$, then $\alpha$ fixes each type 2 vertex of $D_\Gamma$. As $\Gamma$ does not have leaf vertex, each vertex in $C_\Gamma$ is the intersection of all blocks containing this vertex. As $\alpha$ restricted to type $0$ vertices of $D_\Gamma$ gives a bijection from $A_\Gamma$ to $A_\Gamma$ sending blocks to blocks, we know $\alpha$ fixes each type 0 vertex of $D_\Gamma$. Hence $\alpha$ fixes each vertex of $D_\Gamma$ by definition of $D_\Gamma$. It follows that $\alpha$ is the identity map. 
	\end{proof}
	
	We give a criterion for the second comparison homomorphism to be an isomorphism.
	
	\begin{theorem}
		\label{thm:2ndiso}
		Let $A_{\Gamma}$ be a large-type Artin group such that $\Gamma$ is connected and does not have leaf vertices. If $\Gamma$ is fundamental, then the map $h_2: \aut(D_\Gamma) \to \aut(I^{TD}_\Gamma)$ from Lemma \ref{lem:second comparison} is an isomorphism.
	\end{theorem}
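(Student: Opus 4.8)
The plan is to prove surjectivity of $h_2$, injectivity being already established in Lemma~\ref{lem:second comparison} (where the no-leaf hypothesis is used). So fix $\phi\in\aut(I^{TD}_\Gamma)$; I will manufacture an $\alpha\in\aut(D_\Gamma)$ with $h_2(\alpha)=\phi$. First I would record that $\phi$ preserves the bipartition $\mathcal T\sqcup\mathcal D$ of $I^{TD}_\Gamma$ from Corollary~\ref{CoroBipartite}: when $\Gamma$ is triangle-free this is \cite[Proposition~41]{crisp2005automorphisms}, and when $\Gamma$ contains a triangle it follows from Lemma~\ref{LemmaDifference}, which singles out a subgraph $G_1\subseteq I^{TD}_\Gamma$ with a canonical central $D$-vertex that $\phi$ cannot carry into a $G_2$-type configuration; hence $\phi$ sends that $D$-vertex to a $D$-vertex, and bipartiteness forces $\phi(\mathcal D)=\mathcal D$ and $\phi(\mathcal T)=\mathcal T$ (note $\Gamma$ has no leaves, hence is not a tree as soon as it has at least two vertices; the one-vertex case is trivial).

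Next I would set up the fundamental-domain bookkeeping that ``fundamental'' is designed for. For $g\in A_\Gamma$, the boundary of the fundamental domain $gK_\Gamma$ is $g\Gamma_{bar}$, and the standard trees $g\fix(a)$ ($a\in V(\Gamma)$, all unbounded since $\Gamma$ has no leaves, by Remark~\ref{RemStandardTrees}) together with the type $2$ vertices $gA_{ab}$ ($ab\in E(\Gamma)$) span a subgraph $G_g\subseteq I^{TD}_\Gamma$ which one checks is $\Gamma$-characteristic. By the previous step $\phi(G_g)$ is again $\Gamma$-characteristic, so since $\Gamma$ is fundamental, Definition~\ref{DefiFundGraphII} hands us a \emph{unique} $\bar\alpha(g)\in A_\Gamma$ such that $\bar\alpha(g)K_\Gamma$ contains every $D$-vertex of $\phi(G_g)$ and a type $1$ vertex of $D_\Gamma$ on every $T$-vertex of $\phi(G_g)$. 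Since a type $1$ vertex lies on a unique standard tree, this forces $\phi(G_g)=G_{\bar\alpha(g)}$ on the nose; running the same construction for $\phi^{-1}$ shows $\bar\alpha\colon A_\Gamma\to A_\Gamma$ is a bijection.

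Then I would assemble $\alpha$. Put $\alpha=\phi$ on type $2$ vertices, $\alpha=\bar\alpha$ on type $0$ vertices, and for a type $1$ vertex $x$ with standard tree $T_x$, pick any triangle $[g,x,v]$ of $D_\Gamma$ containing it (with $g$ of type $0$, $v$ of type $2$) and declare $\alpha(x)$ to be the unique type $1$ vertex of $\bar\alpha(g)K_\Gamma$ lying on the standard tree $\phi(T_x)$. One checks this vertex is automatically adjacent to $\phi(v)$, so $[\bar\alpha(g),\alpha(x),\phi(v)]$ is a genuine triangle of $D_\Gamma$; granting well-definedness, $\alpha$ then maps triangles to triangles, hence is simplicial, and it is bijective because the analogous construction for $\phi^{-1}$ provides an inverse — so $\alpha\in\aut(D_\Gamma)$, and $h_2(\alpha)=\phi$ by construction (it agrees with $\phi$ on $\mathcal D$, and on a tree $T=g\fix(a)$ it sends the type $1$ vertex $g\langle a\rangle$ into $\phi(T)$, whence $\alpha(T)=\phi(T)$).

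The hard part — and the step I expect to be the main obstacle — is showing $\alpha(x)$ is independent of the chosen triangle $[g,x,v]$. Using that $\lk_{D_\Gamma}(x)$ is connected (a consequence of $D_\Gamma$ being CAT(0) of dimension $2$, Theorem~\ref{Thm:DeligneCAT(0)}), one reduces to two triangles through $x$ sharing an edge; if they share the edge $[x,g]$ the two recipes obviously agree, since $\bar\alpha(g)K_\Gamma$ has a unique type $1$ vertex on $\phi(T_x)$. The real case is two triangles $[g_1,x,v]$ and $[g_2,x,v]$ sharing the edge $[x,v]$. Writing $x=g_1\langle a\rangle$, the domains $g_1K_\Gamma$ and $g_2K_\Gamma$ then share, inside their characteristic subgraphs, the $T$-vertex $T_x$ and all its $D$-neighbours $g_1A_{ab}$ ($b\sim a$) — and there are at least two such neighbours because $\Gamma$ has no leaves. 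Applying $\phi$, the domains $\bar\alpha(g_1)K_\Gamma$ and $\bar\alpha(g_2)K_\Gamma$ share $\phi(T_x)$ together with at least two type $2$ vertices of $D_\Gamma$ lying on $\phi(T_x)$; each such shared type $2$ vertex is adjacent, in $D_\Gamma$, to both candidate images of $x$ (the unique type $1$ vertex of $\bar\alpha(g_i)K_\Gamma$ on $\phi(T_x)$), and in the tree $\phi(T_x)$ two distinct type $1$ vertices cannot have two distinct common type $2$ neighbours; hence the two candidates coincide. This is precisely where the no-leaf hypothesis is essential, while the uniqueness clause in the definition of ``fundamental'' is what makes $\bar\alpha$ well defined in the first place; the remaining verifications (that $G_g$ really is $\Gamma$-characteristic, that the maps built from $\phi$ and $\phi^{-1}$ are mutually inverse, and that $\alpha$ is simplicial) are routine.
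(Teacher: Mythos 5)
Your proof is correct and follows essentially the same strategy as the paper's: build the inverse of $h_2$ by sending type $2$ vertices via $\phi$, type $0$ vertices via the fundamental-domain uniqueness supplied by fundamentality, and type $1$ vertices to the unique common neighbour. The chief difference is one of detail: the paper phrases the type $1$ step as ``one can easily check,'' while you supply the actual argument (connectivity of the link reduces to two triangles sharing an edge through $x$, and the no-leaves hypothesis forces at least two shared type $2$ neighbours on the image tree, pinning down the image of $x$); you also make explicit the preservation of the $T/D$ bipartition, which the paper absorbs into the phrase ``purely graphical condition'' but really rests on the same Lemma~\ref{LemmaDifference}/\cite[Proposition~41]{crisp2005automorphisms} dichotomy used in Proposition~\ref{PropAutomorphismsPreserveType}.
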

	
	\begin{proof}
		We will now show how to construct the inverse map $h_2^{-1} : \aut(I^{TD}_\Gamma) \to \aut(D_\Gamma)$. Let $\beta \in \aut(I^{TD}_\Gamma)$. Then we define $h_2^{-1}(\beta)$ as follows:
		\medskip
		
		For any vertex $v \in D_{\Gamma}$ of type $2$, we see $v$ as a vertex of type $D$ in $I_{\Gamma}^{TD}$, and we let $h_2^{-1}(\beta)(v) \coloneqq \beta(v)$.
		\medskip
		
		For any vertex $y \in D_{\Gamma}$ of type $0$, we consider the unique element $g \in A_{\Gamma}$ such that $y \in g K_{\Gamma}$. We now consider the family $D_y$ of type $2$ vertices attached to $y$, and the family $T_y$ of standard trees that have a vertex attached to $y$. Note that both families are finite, and in fact, the subgraph $G_y \coloneqq span(D_y \cup T_y)$ of $I_{\Gamma}^{TD}$ is isomorphic to the boundary of $g K_{\Gamma}$, which is itself isomorphic to $g \Gamma_{bar}$. In other words, $G_y$ is characteristic. As it is a purely graphical condition, the image $\beta(G_y)$ is also a characteristic subgraph of $I_{\Gamma}^{TD}$. We now use the fact that $\Gamma$ is fundamental: there exists a unique element $g_y \in A_{\Gamma}$ such that $g_y K_{\Gamma}$ contains all the vertices of type $D$ of $\beta(G_y)$, and one type $1$ vertex in every vertex of type $T$ of $\beta(G_y)$ (seen as standard trees in $D_{\Gamma}$). We then set $h_2^{-1}(\beta)(y)$ to be the only type $0$ vertex in $g_y K_{\Gamma}$.
		\medskip
		
		If a type $2$ vertex $v$ and a type $0$ vertex $y$ are adjacent, then the corresponding vertex $v \in I_{\Gamma}^{TD}$ belongs to the characteristic subgraph $G_y$ corresponding to $y$. It follows that $\beta(v) \in \beta(G_y)$, and thus $h_2^{-1}(\beta)(v) \in g_y K_{\Gamma}$, i.e. $h_2^{-1}(\beta)(v)$ is attached to $h_2^{-1}(\beta)(y)$. This shows that adjacency between vertices of type $2$ and type $0$ is preserved.
		\medskip
		
		For any vertex $x \in D_{\Gamma}$ of type $1$, we consider the family $D_x$ of type $2$ vertices attached to $x$, and the family $Y_x$ of type $0$ vertices attached to $x$. By the previous points, we know how to define $h_2^{-1}(\beta)(D_x)$ and $h_2^{-1}(\beta)(Y_x)$. One can easily check that there is a unique type $1$ vertex $p_x$ that is adjacent to all the elements of $h_2^{-1}(\beta)(D_x)$ and of $h_2^{-1}(\beta)(Y_x)$, thus we set $h_2^{-1}(\beta)(x) \coloneqq p_x$. Adjacency between type $1$ vertices and other vertices is preserved by definition of $h_2^{-1}(\beta)$ on type $1$ vertices.
		\medskip
		
		One readily verifies that $h_2$ and $h^{-1}_2$ are indeed inverses of each other. This finishes the proof.
	\end{proof}

	\section{Proof of main results and applications to rigidity} \label{SectionProvingMainResults}

	\subsection{QI rigidity results}

	\begin{definition}[The $\Phi$-homomorphism]
		\label{def:Phi}
		Let $A_\Gamma$ be an Artin group of large-type. Then it is known from \cite[Theorem 10.11]{huang2017quasi} that each quasi-isometry $q:A_\Gamma\to A_\Gamma$ send a stable $\mathbb Z$-subgroup $L$ of $A_\Gamma$ (as defined in Section~\ref{subsec:intersection graph}), there is another stable $\mathbb Z$-subgroup $L'\subset A_\Gamma$ such that $d_H(Q(L),L')<\infty$ where $d_H$ denotes the Hausdorff distance. Note that if $L'$ exists, then it is the unique stable $\mathbb Z$-subgroup such that $d_H(Q(L),L')<\infty$, as different stable $\mathbb Z$-subgroups have infinite Hausdorff distance from each other. We also know from \cite[Theorem 10.16]{huang2017quasi} that if two stable $\mathbb Z$-subgroups of $A_\Gamma$ commute, then their $q$-images are Hausdorff close to a pair of commuting stable $\mathbb Z$-subgroup. Thus each quasi-isometry of $A_\Gamma$ induces an automorphism of the intersection graph $I_\Gamma$. This gives a group homomorphism $\Phi:\qi(A_\Gamma)\to \aut(I_\Gamma)$ from the quasi-isometry group of $A_\Gamma$ to the graph automorphism group of the intersection graph $I_\Gamma$.
	\end{definition}
	
	The following is a consequence of \cite[Corollary 10.17]{huang2017quasi}.
	\begin{proposition}
		\label{prop:Phi injective}
		Suppose $A_\Gamma$ is an Artin group of large-type. Suppose $\Gamma$ does not contain a leaf vertex. If $q:A_\Gamma\to A_\Gamma$ is an $(L,A)$-quasi-isometry with $\Phi(q)$ is the identity automorphism, then 
		there exists $C = C(L, A, \Gamma)$ such that
		$d(q(x), x) \le  C$ for any $x\in A_\Gamma$. In particular, $\Phi$ is injective.
	\end{proposition}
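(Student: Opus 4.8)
The plan is to invoke \cite[Corollary 10.17]{huang2017quasi} to upgrade $q$ to a genuine cellular automorphism of the Cayley complex, and then to recognise that automorphism as the identity using the comparison homomorphisms of Section~\ref{subsecComparison}.

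First I would record what \cite[Corollary 10.17]{huang2017quasi} gives in this situation: building on \cite[Theorems 10.11 and 10.16]{huang2017quasi} (the inputs behind Definition~\ref{def:Phi}), it produces a constant $C=C(L,A,\Gamma)$ and a cellular automorphism $c$ of $C_\Gamma$ with $d(q(x),c(x))\le C$ for every vertex $x\in A_\Gamma$; in particular $c$ is at bounded distance from $q$, so it represents the same class in $\qi(A_\Gamma)$ and hence $\Phi(c)=\Phi(q)=\mathrm{id}$.

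Next I would argue that $c=\mathrm{id}_{C_\Gamma}$. Being cellular, $c$ permutes the blocks and the standard lines of $C_\Gamma$, hence induces an automorphism of the poset of standard cosets, i.e.\ an element of $\aut(D_\Gamma)$, which is $h_1^{-1}(c)$ (Proposition~\ref{prop:first comparison}). Tracing through the definitions of $h_1$ and $h_2$ and the correspondences between standard trees (resp.\ type $2$ vertices) of $D_\Gamma$ and type $T$ (resp.\ type $D$) stable $\mathbb Z$-subgroups, the automorphism $h_2(h_1^{-1}(c))\in\aut(I^{TD}_\Gamma)$ is exactly the restriction of $\Phi(c)$ to $I^{TD}_\Gamma$, hence it is the identity. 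Since $\Gamma$ has no leaf vertex (and, as I may assume, is connected), $h_2$ is injective by Lemma~\ref{lem:second comparison} and $h_1$ is an isomorphism by Proposition~\ref{prop:first comparison}; therefore $c=\mathrm{id}_{C_\Gamma}$, which yields $d(q(x),x)\le C$ for all $x$. Injectivity of $\Phi$ is then immediate: $\qi(A_\Gamma)$ is the group of self-quasi-isometries of $A_\Gamma$ modulo bounded distance, and we have shown that any $q\in\ker\Phi$ lies at bounded distance from $\mathrm{id}_{A_\Gamma}$, hence is trivial in $\qi(A_\Gamma)$.

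The hard part is the first step, \cite[Corollary 10.17]{huang2017quasi} itself: passing from ``$q$ coarsely preserves every parallel class of singular lines, with uniform constants'' to ``$q$ is uniformly close to an honest cellular automorphism'' is what requires the coarse-geometric machinery of \cite{huang2017quasi} --- notably the control on intersections of singular lines and of blocks, together with the fact (also used for the injectivity of $h_2$) that in the absence of leaf vertices every vertex of $C_\Gamma$ is the intersection of the blocks through it. If instead \cite[Corollary 10.17]{huang2017quasi} is already stated in the form ``$\Phi(q)=\mathrm{id}$ implies $q$ is boundedly close to $\mathrm{id}_{A_\Gamma}$'', then the second and third paragraphs above collapse to the injectivity remark and nothing else is needed.
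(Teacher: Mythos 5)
The paper's ``proof'' is just the one-line citation immediately preceding the statement: ``The following is a consequence of \cite[Corollary 10.17]{huang2017quasi}.'' Your proposal correctly identifies this as the source, and your closing paragraph correctly anticipates that the citation is being used in its direct form --- $\Phi(q)=\mathrm{id}$ implies $q$ is at uniformly bounded distance from $\mathrm{id}_{A_\Gamma}$ --- which is indeed how the authors intend it, so your alternative route via a cellular automorphism $c$ and the comparison maps $h_1, h_2$ is not what the paper does and is not needed. That said, the alternative route is logically sound \emph{provided} one adds the hypothesis that $\Gamma$ is connected: you quietly insert that assumption (``and, as I may assume, is connected'') because Lemma~\ref{lem:second comparison} requires connectivity for $h_2$ to be injective, but the proposition itself does not state connectivity, so taken literally your first two paragraphs prove slightly less than claimed (though reducing to connected components would repair this). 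In short: correct and essentially matching the paper via the same citation, with an unnecessary but harmless elaboration whose only flaw is an unstated connectivity assumption.
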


	\begin{lemma}
		\label{lem:third comparison}
		Let $A_\Gamma$ be an Artin group of large-type such that $\Gamma$ is connected and it is not a $(3,3,3)$-triangle. As each automorphism of $I_\Gamma$ preserve the set of exotic vertices (Corollary~\ref{CorollaryExoticSentToExotic}), this induces an automorphism $h_3:\aut(I_\Gamma)\to \aut(I^{TD}_\Gamma)$. Then $h_3$ is injective.
	\end{lemma}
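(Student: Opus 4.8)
The plan is to prove the contrapositive refinement: an automorphism $\phi\in\aut(I_\Gamma)$ that restricts to the identity on $I^{TD}_\Gamma$ is the identity on all of $I_\Gamma$, i.e.\ it also fixes every vertex of type $E$. If $\Gamma$ contains no $(3,3,3)$-triangle then by Remark~\ref{RemarkFirst}.(2) there are no type $E$ vertices, $I_\Gamma=I^{TD}_\Gamma$, and $h_3$ is the identity homomorphism, so there is nothing to prove; hence assume $\Gamma$ contains a $(3,3,3)$-triangle (and $\Gamma\ne\Delta_{333}$). Fix a vertex $v_e$ of type $E$ and let $I_{\Gamma'}$ be the unique $5$-piece containing it (Lemma~\ref{LemmaTypeEinUniquePiece}); after conjugation $I_{\Gamma'}\cong\mathcal C_5$ via the isomorphism of Definition~\ref{defFstar}, under which (see Section~\ref{SectionI=CurveGraph}) vertices of types $T$, $D$, $E$ correspond respectively to essential curves of $\Sigma_5$ enclosing a pair inside $\{X,Y,Z\}$, the pair $\{N,S\}$, and a mixed pair. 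By Corollary~\ref{CorollaryExoticSentToExotic}, $\phi(v_e)$ is again of type $E$; let $I_{\Gamma''}$ be its unique $5$-piece.

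The geometric input I would isolate first is a filling property. In the curve model of $I_{\Gamma'}$, write $\alpha$ for the curve corresponding to $v_e$. All type $T$ neighbours of $v_e$ correspond to the curves of the relevant topological type lying in the thrice‑punctured component $D_\alpha$ of $\Sigma_5\setminus\alpha$ (this component is a $\Sigma_{0,4}$, and the relevant curves all enclose the one admissible pair, so they form a single ``colour class'' of the Farey graph of $D_\alpha$); there are infinitely many of them, and no two are disjoint, since two vertices of the same colour are never adjacent in the Farey graph. Hence any two distinct type $T$ neighbours $\gamma_1,\gamma_2$ of $v_e$ intersect, so $\gamma_1\cup\gamma_2$ fills $D_\alpha$; every complementary region of $\gamma_1\cup\gamma_2$ in $\Sigma_5$ is then a (possibly once‑punctured) disk except for the twice‑punctured component of $\Sigma_5\setminus\alpha$, whose only essential curve is $\alpha$. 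So the only essential curve of $\Sigma_5$ disjoint from both $\gamma_1$ and $\gamma_2$ is $\alpha$.

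Now I would pin down $I_{\Gamma''}$ and conclude. Pick two distinct type $T$ neighbours $s_1,s_2$ of $v_e$ (these exist, and are non‑adjacent to each other by Lemma~\ref{LemmaPossibleEdges}). Since $\phi$ fixes $s_1,s_2$ and they are neighbours of $\phi(v_e)$, Lemma~\ref{LemmaNeighboursInSamePiece} places $s_1,s_2$ in $I_{\Gamma''}$, so $I_{\Gamma'}\cap I_{\Gamma''}$ contains the two distinct type $T$ vertices $s_1,s_2$. By Lemma~\ref{LemmaIntersectionOfPieces} this intersection is therefore either $I_{\Gamma'}=I_{\Gamma''}$ or the star $st(v_d)$ of a type $D$ vertex $v_d$. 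In the second case $s_1,s_2$ are both adjacent to $v_d$, so in the curve model of $I_{\Gamma'}$ the type $D$ curve $\delta$ corresponding to $v_d$ is disjoint from both $\gamma_1,\gamma_2$; by the previous paragraph $\delta\simeq\alpha$, hence $v_d=v_e$, contradicting that $v_d$ and $v_e$ have different types. Thus $I_{\Gamma'}=I_{\Gamma''}$, so $\phi(v_e)\in I_{\Gamma'}$; letting $\beta$ be its curve, the curves $\gamma_1,\gamma_2$ are neighbours of $\phi(v_e)$ as well, hence disjoint from $\beta$, so $\beta\simeq\alpha$ and $\phi(v_e)=v_e$. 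As $v_e$ was arbitrary, $\phi$ fixes all vertices of $I_\Gamma$, so $h_3$ is injective.

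The main obstacle is the step ruling out $I_{\Gamma'}\cap I_{\Gamma''}=st(v_d)$, which forces one to use the explicit identification of the three vertex types of a $5$-piece with the three topological types of curves in $\Sigma_5$, together with the elementary but essential $\Sigma_{0,4}$ fact that curves sharing a topological type pairwise intersect (so that just two type $T$ neighbours of $v_e$ already fill, leaving $\alpha$ as the unique curve disjoint from both). Everything else is bookkeeping with Lemmas~\ref{LemmaTypeEinUniquePiece}, \ref{LemmaNeighoursOfExotic}, \ref{LemmaNeighboursInSamePiece} and \ref{LemmaIntersectionOfPieces}.
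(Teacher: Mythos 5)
Your proof is correct and rests on the same core idea as the paper's: a type~$E$ vertex $v_e$ has two distinct type~$T$ neighbours (this is, in effect, a sharpening of Lemma~\ref{LemmaNeighoursOfExotic}), and it is the unique vertex adjacent to both, so it is pinned down once those two type~$T$ vertices are fixed. Where you diverge is in how that uniqueness is justified. The paper's proof does it in one line by asserting ``$I_\Gamma$ has girth $\ge 5$'' globally, whereas you first force $\phi(v_e)$ into the \emph{same} $5$-piece $I_{\Gamma'}$ as $v_e$ using Lemmas~\ref{LemmaTypeEinUniquePiece}, \ref{LemmaNeighboursInSamePiece} and \ref{LemmaIntersectionOfPieces}, and then conclude inside a single copy of the curve graph $\mathcal C_5$. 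This is longer, but it is arguably the more careful route: the paper's global girth bound for $I_\Gamma$ is never explicitly established elsewhere (Corollary~\ref{CoroBipartite} only gives girth $\ge 6$ for the subgraph $I^{TD}_\Gamma$, and Lemma~\ref{lem:girth 5} gives girth $5$ for a single piece), and deducing the bound for all of $I_\Gamma$ requires precisely the piece-confinement reasoning you carry out for your particular $4$-cycle. Two small remarks on the write-up: (i) you do not actually need the Farey ``colour class'' observation — any two distinct essential simple closed curves in a $4$-holed sphere intersect, full stop, so distinct type~$T$ neighbours of $v_e$ automatically intersect; and (ii) once $\phi(v_e)$ is known to lie in $I_{\Gamma'}\cong\mathcal C_5$, the final step can be shortened by citing Lemma~\ref{lem:girth 5} directly in place of the filling argument, since a $4$-cycle in $\mathcal C_5$ is already ruled out there.
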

	
	\begin{proof}
		It suffices to show if an automorphism $\alpha$ of $I_\Gamma$ fixes vertices of type $T$ and $D$ pointwise, then it fixes each vertex of type $E$. Given a vertex $v\in I_\Gamma$ of type $E$. By Lemma~\ref{LemmaNeighoursOfExotic}, $v$ is adjacent to a vertex of type $T$. Actually, the proof of Lemma~\ref{LemmaNeighoursOfExotic} implies that $v$ is adjacent to two distinct vertices $v_1,v_2$ of type $T$. As $I_\Gamma$ has girth $\ge 5$, $v$ is the unique vertex which is adjacent to each of $v_1$ and $v_2$. Thus $\alpha(v)=v$, as desired.
	\end{proof}
	
	Let $C_\Gamma$ be the Cayley complex of $A_\Gamma$.
	We equipped the group $\aut(C_\Gamma)$ with the compact open topology, which makes it a locally compact second countable topological group (as $C_\Gamma$ is locally finite).
	
	\begin{lemma}
		\label{lem:star rigid}
		Suppose $A_\Gamma$ is large-type and $\Gamma$ is connected. The topological group $\aut(C_\Gamma)$ is discrete if and only if $\Gamma$ is star rigid in the sense that each label-preserving automorphism of $\Gamma$ fixing the closed star of a vertex of $\Gamma$ is identity. Moreover, if $\Gamma$ is not star rigid, then $\aut(I_\Gamma)$ is not discrete.
	\end{lemma}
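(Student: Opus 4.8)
The plan is to push everything through the first comparison isomorphism $h_1\colon\aut(D_\Gamma)\xrightarrow{\ \sim\ }\aut(C_\Gamma)$ of Proposition~\ref{prop:first comparison}. Since $C_\Gamma$ is locally finite, $\aut(C_\Gamma)$ acts properly on it; the pointwise stabilisers $U_r$ of the combinatorial balls $B(1,r)$ form a decreasing chain of subgroups with $\bigcap_r U_r=\{1\}$ and $U_0=\mathrm{Stab}_{\aut(C_\Gamma)}(1)$, so a short argument (a decreasing chain of finite groups with trivial intersection is eventually trivial) gives that $\aut(C_\Gamma)$ is discrete if and only if the vertex stabiliser $\mathrm{Stab}(1)$ is finite. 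Transporting through $h_1$, this stabiliser becomes $\mathrm{Stab}_{\aut(D_\Gamma)}(v_\emptyset)$, which acts on $\lk_{D_\Gamma}(v_\emptyset)\cong\Gamma_{bar}$; the induced action lands in the finite group $\aut_{\mathrm{lab}}(\Gamma)$ of label-preserving graph automorphisms (labels are recorded by the local combinatorial structure at the type $2$ vertices, which $h_1$-automorphisms preserve), with kernel exactly the pointwise stabiliser $\fix(K_\Gamma)$ of the fundamental domain, since the closed star of $v_\emptyset$ in $D_\Gamma$ is precisely $K_\Gamma$. So the first assertion reduces to: \emph{$\fix_{\aut(D_\Gamma)}(K_\Gamma)$ is finite if and only if $\Gamma$ is star rigid.}

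For the ``if'' direction, let $\Gamma$ be star rigid and $\alpha\in\fix(K_\Gamma)$. Then $\alpha$ fixes each type $1$ vertex $v_a$ and each type $2$ vertex $v_{ab}$ incident to $v_\emptyset$, hence fixes the block on the coset $A_{ab}$ setwise, together with the vertex $1$ and the two standard lines $\langle a\rangle,\langle b\rangle$ inside it. By \cite[Proposition~40]{crisp2005automorphisms} the restriction of $\alpha$ to this (dihedral) block is a cellular automorphism, and a cellular automorphism of the block fixing $1$ and both standard lines setwise is either the identity or the global inversion $\iota$. Because $\Gamma$ is connected and adjacent blocks at $1$ share a standard line, the same alternative holds on all blocks at $1$; composing with $\iota$ if necessary we may assume $\alpha$ is the identity on $\bigcup_{ab\in E(\Gamma)}$ (the block on $A_{ab}$), so $\alpha$ fixes $a,a^{-1},ab,\dots$ for all $a,b$. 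Now propagate: at an adjacent type $0$ vertex $av_\emptyset$ the automorphism $\alpha$ already fixes, inside $\lk_{D_\Gamma}(av_\emptyset)\cong\Gamma_{bar}$, the barycentric subdivision of the closed star of $a$ in $\Gamma$; by star rigidity $\alpha$ acts trivially on $\lk_{D_\Gamma}(av_\emptyset)$, i.e.\ $\alpha$ fixes $aK_\Gamma$ pointwise, and the argument repeats. Since $C_\Gamma$ is connected, $\alpha$ fixes all type $0$ vertices, hence all of $D_\Gamma$; thus $\fix(K_\Gamma)=\{1,\iota\}$ is finite and $\aut(C_\Gamma)$ is discrete.

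For the ``only if'' direction, which also yields the ``moreover'', assume $\Gamma$ is not star rigid and fix a label-preserving $\sigma\neq\mathrm{id}$ fixing the closed star of a vertex $v$ pointwise. The point is that such a $\sigma$ produces a $\sigma$-symmetric separation of $\Gamma$ along the star subgraph $\mathrm{St}_\Gamma(v)$: the components of $\Gamma$ minus the closed star of $v$ are permuted by $\sigma$, and (after replacing $v$ by a suitable vertex if $\sigma$ stabilises a single component) at least two distinct components are interchanged. Geometrically, in $C_\Gamma$ each translate of the sub-Cayley-complex on the parabolic $A_{\mathrm{St}(v)}$ separates $C_\Gamma$, and $\sigma$ matches the pieces on either side; this lets one perform, across such a translate chosen disjoint from $B(1,r)$, a ``cut-and-reglue'' yielding an automorphism of $C_\Gamma$ that is the identity on the side containing $B(1,r)$ and acts by $\sigma$ on the other, hence a non-identity element of $\fix(B(1,r))$. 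So $\fix(B(1,r))\neq\{1\}$ for every $r$ and $\aut(C_\Gamma)$ is not discrete. These automorphisms interchange a standard line labelled $a$ with one labelled $b$ in the regluing region, lines of distinct parallel classes; under the natural continuous homomorphism $\aut(C_\Gamma)\to\aut(I_\Gamma)$ (automorphisms of $C_\Gamma$ permute singular lines preserving parallelism and commutation) they therefore map to non-identity automorphisms of $I_\Gamma$ fixing arbitrarily large finite subgraphs, so $\aut(I_\Gamma)$ is not discrete either.

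The main obstacle is this last direction: converting the failure of star rigidity into genuine automorphisms of $C_\Gamma$ supported away from a basepoint. The two delicate steps are (a) showing a nontrivial $\sigma$ fixing a star really does give a $\sigma$-symmetric separating subgraph — this requires handling the case where $\sigma$ stabilises a component of the complement, by passing to a deeper vertex — and (b) verifying that the cut-and-reglue defines a cellular automorphism, where one again uses \cite[Proposition~40]{crisp2005automorphisms} together with the fact that $\sigma$ fixes \emph{all} of the closed star of $v$, which is exactly the compatibility needed along the cutting locus. By contrast, the ``if'' direction is a routine, if somewhat lengthy, block-by-block propagation.
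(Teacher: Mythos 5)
Your ``if'' direction (star rigid $\Rightarrow$ discrete) is essentially the same Crisp-style chamber-by-chamber propagation the paper uses; the extra bookkeeping showing $\fix_{\aut(D_\Gamma)}(K_\Gamma)=\{1,\iota\}$ is a correct refinement of the paper's ``the kernel is finite.''

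The ``only if'' direction has a genuine gap. You reduce to producing a $\sigma$-symmetric separation, and assert that if $\sigma$ stabilises every component of $\Gamma\setminus\overline{\mathrm{St}(v)}$ one can pass to a ``deeper'' vertex where two components are interchanged. This is false. Take $\Gamma$ with vertex set $\{v,a,b,c_1,c_2\}$ and edges $va,ab,bc_1,bc_2,c_1c_2$ (all labels $\geq 3$), and $\sigma$ the label-preserving automorphism swapping $c_1\leftrightarrow c_2$. Then $\sigma$ fixes $\overline{\mathrm{St}(v)}$ and $\overline{\mathrm{St}(a)}$ pointwise, but in both cases the complement $\Gamma\setminus\overline{\mathrm{St}(\cdot)}$ is a single connected subgraph that $\sigma$ stabilises and acts on non-trivially, and at the only other candidate $b$ the automorphism $\sigma$ does not fix $\overline{\mathrm{St}(b)}$ pointwise. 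So there is no vertex of the kind your argument requires, and your step (a) breaks down. Relatedly, the cut-and-reglue as written --- ``identity on the side containing $B(1,r)$ and $\sigma$ on the other,'' with the cut locus a far-away translate $g\cdot(\text{subcomplex on }A_{\mathrm{St}(v)})$ --- does not match along the cut unless $\sigma(g)=g$, which need not hold; one must glue a $g$-conjugated version of $\alpha_\sigma$, and then the claim that this is well-defined on the chosen ``side'' needs to be tied to the Bass--Serre picture rather than asserted.

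The paper sidesteps both issues. It forms the splitting $A_\Gamma=A_{\mathrm{St}(a)}*_{A_{\mathrm{lk}(a)}}A_{\Gamma\setminus\{a\}}$, observes that $\alpha_\sigma$ fixes $A_{\mathrm{St}(a)}$ pointwise and hence fixes the corresponding Bass--Serre vertex, and then defines the new automorphism to agree with $\alpha_\sigma$ on some half-trees of the Bass--Serre tree and with the identity on the others; since both maps restrict to the identity on the separating ``track'' $T_a$ (the component of $\pi^{-1}(F_a)$ through $v_0$, where $F_a$ is the subcone of $K_\Gamma$ over the link vertices), the glue is automatic and no component-interchanging hypothesis is required. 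Non-discreteness then follows from infinitely many conjugates of this element lying in the stabiliser of $F$. Your argument for passing from non-discreteness of $\aut(C_\Gamma)$ to non-discreteness of $\aut(I_\Gamma)$ is in the right spirit, but you should justify that a cellular automorphism of $C_\Gamma$ fixing a large ball pointwise actually fixes, setwise, all singular lines meeting a slightly smaller ball (this is true because a singular line is determined by local label and orientation data at any of its vertices); the paper instead simply observes that its infinitely many elements of $\aut_0(D_\Gamma)$ descend to infinitely many automorphisms of $I_\Gamma$ stabilising a common vertex.
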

	
	\begin{proof} 
		We assume $\Gamma$ is not a single vertex, otherwise the lemma is clear.
		Let $\aut_0(C_\Gamma)$ be the subgroup of $\aut(C_\Gamma)$ fixing the identity vertex $v_0\in C_\Gamma$. We claim that $\aut_0(C_\Gamma)$ is finite if and only if $\Gamma$ is star rigid. Then the lemma follows.
		
		Let $F\subset D_\Gamma$ be the closed star of the type 0 vertex $v_0$ corresponding to the identity of $A_\Gamma$. Note that $D_\Gamma/A_\Gamma$ can be identified with $F$, i.e. $F$ is a strict fundamental domain for the action $A_\Gamma\actson D_\Gamma$. This gives a map $\pi:D_\Gamma\to F$.
		Let $\aut_0(D_\Gamma)$ be the collection of automorphisms of $D_\Gamma$ fixing $F$ setwise.
		By Proposition~\ref{prop:first comparison}, the above claim reduces to showing that, $\aut_0(D_\Gamma)$ is finite if and only if $\Gamma$ is star rigid.
		
		We will first show if $\Gamma$ is not star rigid, then $\aut_0(D_\Gamma)$ is infinite. Then there is a non-trivial label-preserving automorphism $f$ of $\Gamma$ fixing the closed star a vertex $a\in \Gamma$. Then $f$ induces an automorphism of $A_\Gamma$, which gives an automorphism $\alpha_f$ of $D_\Gamma$.
		
		Let $T$ be the
		Bass-Serre tree of the splitting $A_\Gamma=A_{st(v)}*_{A_{\lk(v)}}A_{\Gamma\setminus\{v\}}$. We need the following alternative description of $T$ in terms of $D_\Gamma$.
		Let $F_a$ be the full subcomplex of $F$ spanned by $v_0$ and type $1$ vertices corresponding the identity cosets $\langle a'\rangle$ with $a'$ ranging over vertices in $\lk(a,\Gamma)$. It follows from the way we define metric on $D_\Gamma$ that each connected component of $\pi^{-1}(F_a)$ is a convex subset which is isometric to a tree. Then edges of $T$ are in 1-1 correspondence with components of $\pi^{-1}(F_a)$, vertices of $T$ are in 1-1 correspondence with components of $D_\Gamma\setminus \pi^{-1}(F_a)$, and an edge of $T$ contain a vertex of $T$ if the component of $\pi^{-1}(F_a)$ corresponding to the edge is contained in the closure of the component of $D_\Gamma\setminus \pi^{-1}(F_a)$ corresponding to the vertex.

		Let $T_a$ be the component of $\pi^{-1}(F_a)$ that passes through $v_0$. Then the above description of $T$ allows us to define an automorphism $\alpha\in \aut_0(D_\Gamma)$ which is equal to $\alpha_f$ on some components of $D_\Gamma\setminus T_a$, and equal to identity on the remain components of $D_\Gamma\setminus T_a$. Note that we can find infinitely many different conjugates of $\alpha$ by isometries of $D_\Gamma$ which are also in $\aut_0(D_\Gamma)$. Thus $\aut_0(D_\Gamma)$ is infinite.

		Now we show if $\Gamma$ is star rigid, then $\aut_0(D_\Gamma)$ is finite. This uses an argument of Crisp \cite[Section 11]{crisp2005automorphisms}, which we reproduce here. Given an automorphism $\alpha\in \aut_0(D_\Gamma)$. Then $\alpha$ induces an automorphism $\alpha'$ of $\lk(v_0,D_\Gamma)$, which is identified with the barycentric subdivision of $\Gamma$. As $\alpha$ preserves type of vertices, $\alpha'$ arises from an automorphism $\alpha''$ of $\Gamma$. This gives a group homomorphism $\aut_0(D_\Gamma)\to \aut(\Gamma)$. It suffices to show the kernel of this homomorphism is finite. So we will assume $\alpha$ is identity on $F$. We define a \emph{chamber} of $D_\Gamma$ to some $A_\Gamma$ of $F$, and two chambers are \emph{adjacent} if they share a type 1 vertex. If a chamber $F'$ is adjacent to $F$, then by the fact that $\alpha$ is identity on the closed star of each type 2 vertex of $F$ and that $\Gamma$ is star rigid, we know $\alpha$ is identity on $F'$. Then we can repeat the previous argument to deduce that $\alpha$ is identity on the closed star of each type 2 vertex in $F'$. 
		Note that $D_\Gamma$ is a union of chambers, and any two chambers are connected by a chain of chambers such that adjacent members in the chain are adjacent chambers. This implies that $\alpha$ is identity on $D_\Gamma$, as desired.
	\end{proof}
	We recall several definitions from coarse geometry.
	\begin{definition}
		An $(L,A)$-quasi-action of a group $G$ on a metric space $Z$ is a map $\rho:G\times Z\to Z$ so that $\rho(\gamma,\cdot):Z\to Z$ is an $(L,A)$ quasi-isometry for every $\gamma\in G$, $d(\rho(\gamma_1,\rho(\gamma_2,z)),\rho(\gamma_1\gamma_2,z))<A$ for every $\gamma_1,\gamma_2\in G$, $z\in Z$, and $d(\rho(e,z),z)<A$ for every $z\in Z$.
	\end{definition}
	
	The action $\rho$ is \textit{discrete} if for any point $z\in Z$ and any $R>0$, the set of all $\gamma\in G$ such that $\rho(\gamma,z)$ is contained in the ball $B_{R}(z)$ is finite; $\rho$ is \textit{cobounded} if $Z$ coincides with a finite tubular neighbourhood of the \textquotedblleft orbit\textquotedblright\ $\rho(G,z)$. If $\rho$ is a discrete and cobounded quasi-action of $G$ on $Z$, then the orbit map $\gamma\in G\to \rho(\gamma,z)$ is a quasi-isometry. Conversely, given a quasi-isometry between $G$ and $Z$, it induces a discrete and cobounded quasi-action of $G$ on $Z$.
	
	Two quasi-actions $\rho$ and $\rho'$ are \textit{equivalent} if there exists a constant $D$ so that 
	\begin{equation*}
		\sup_{\gamma\in G}\sup_{z\in Z}d(\rho(\gamma,z),\rho'(\gamma,z))<D.
	\end{equation*}

	\begin{proposition}
		\label{prop:qi}
		Suppose $A_\Gamma$ is a large-type Artin group, such that $\Gamma$ is connected without valence one vertices, and $\Gamma$ is not a triangle with each edge labelled by $3$.	Suppose $\Gamma$ is fundamental with respect to $D_\Gamma$ in the sense of Definition~\ref{DefiFundGraphI}. Then the following holds true:
		\begin{enumerate}
			\item any self quasi-isometry of $A_\Gamma$ is at bounded distance from an automorphism of the Cayley complex $C_\Gamma$;
			\item the quasi-isometry group of $A_\Gamma$ is isomorphic to $\aut(C_\Gamma)$;
			\item any finitely generated group quasi-isometric to $A_\Gamma$ is virtually isomorphic to a uniform lattice in the locally compact topological group $\aut(C_\Gamma)$;
			\item the outer automorphism group of $A_\Gamma$ is finite, and it is generated by global inversion and label-preserving automorphisms of the defining graph $\Gamma$.
		\end{enumerate}
		If in addition the topological group $\aut(C_\Gamma)$ is discrete (equivalently $\Gamma$ is star rigid), then 
		\begin{enumerate}
			\item any self quasi-isometry of $A_\Gamma$ is at bounded distance from an automorphism of $A_\Gamma$;
			\item any finitely generated group quasi-isometric to $A_\Gamma$ is virtually isomorphic to $A_\Gamma$;
			\item the quasi-isometry group of $A_\Gamma$ is isomorphic to $\aut_{\Gamma}(A_{\Gamma})$.
		\end{enumerate}
	\end{proposition}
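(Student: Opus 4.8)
The plan is to assemble the comparison homomorphisms of Sections~\ref{subsec:intersection graph} and~\ref{SectionIntersectionSubgraph} into a single commutative diagram, read off that every map in it is an isomorphism, and then deduce (1)--(4) from the resulting isomorphism $\qi(A_\Gamma)\cong\aut(C_\Gamma)$ by the standard quasi-isometric rigidity machinery. Alongside $\Phi\colon\qi(A_\Gamma)\to\aut(I_\Gamma)$ (Definition~\ref{def:Phi}), $h_3\colon\aut(I_\Gamma)\to\aut(I^{TD}_\Gamma)$ (Lemma~\ref{lem:third comparison}), $h_2\colon\aut(D_\Gamma)\to\aut(I^{TD}_\Gamma)$ (Lemma~\ref{lem:second comparison}) and $h_1\colon\aut(D_\Gamma)\to\aut(C_\Gamma)$ (Definition~\ref{def:first comparison}), I would use the tautological homomorphism $j\colon\aut(C_\Gamma)\to\qi(A_\Gamma)$ that restricts a cellular automorphism to the $0$-skeleton $A_\Gamma$, where it is an isometry for the word metric. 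Under the hypotheses of the proposition, $h_1$ is an isomorphism (Proposition~\ref{prop:first comparison}) and $h_2$ is an isomorphism (Theorem~\ref{thm:2ndiso}, using that $\Gamma$ is fundamental, connected and has no leaf vertex), while $\Phi$ and $h_3$ are injective (Proposition~\ref{prop:Phi injective} and Lemma~\ref{lem:third comparison}, using that $\Gamma$ has no valence-one vertex and is not the $(3,3,3)$-triangle).

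The step I expect to be the main obstacle is to check that the diagram commutes, i.e.\ that $h_3\circ\Phi\circ j=h_2\circ h_1^{-1}$ as maps $\aut(C_\Gamma)\to\aut(I^{TD}_\Gamma)$. Both composites describe the action of a cellular automorphism of $C_\Gamma$ on parallel classes of singular lines; they agree because of the identifications built into the four constructions -- type $1$ vertices of $D_\Gamma$ correspond both to standard lines of $C_\Gamma$ and to standard trees in $D_\Gamma$ and to parallel classes of plain lines, while type $2$ vertices of $D_\Gamma$ correspond to blocks of $C_\Gamma$ and to parallel classes of diamond lines. Once this is verified, everything else is formal: since $h_2\circ h_1^{-1}$ is an isomorphism and $h_3\circ\Phi$ is injective, $h_3\circ\Phi$ is also surjective, hence an isomorphism; then $h_3$ is an isomorphism (injective, with image containing that of $h_3\circ\Phi$), hence $\Phi$ is an isomorphism, and finally $j=\Phi^{-1}\circ h_3^{-1}\circ h_2\circ h_1^{-1}$ is an isomorphism. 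The surjectivity of $j$ gives (1), and $j$ being an isomorphism gives (2).

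For (3), a finitely generated group $G$ quasi-isometric to $A_\Gamma$ inherits a discrete, cobounded quasi-action on $C_\Gamma$, hence a homomorphism $\bar\rho\colon G\to\qi(A_\Gamma)\cong\aut(C_\Gamma)$; discreteness of the quasi-action makes $\ker\bar\rho$ finite and $\bar\rho(G)$ a discrete cocompact subgroup of the locally compact group $\aut(C_\Gamma)$, so $G$ is virtually isomorphic to this uniform lattice. For (4), the deck action embeds $A_\Gamma$ as a cocompact lattice in $\aut(C_\Gamma)$, matched under $j^{-1}$ with $\mathrm{Inn}(A_\Gamma)=\{[L_g]\colon g\in A_\Gamma\}\le\qi(A_\Gamma)$ (using $Z(A_\Gamma)=1$, which holds since $\Gamma$ is connected with no valence-one vertex and $\Gamma\neq\Delta_{333}$). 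As the centraliser of the deck group in $\aut(C_\Gamma)$ is trivial, conjugation identifies $N_{\aut(C_\Gamma)}(A_\Gamma)$ with $\aut(A_\Gamma)$, carrying the deck group to $\mathrm{Inn}(A_\Gamma)$; and since $A_\Gamma$ acts freely on $C_\Gamma$ with finite quotient $P_\Gamma$, elements of $N_{\aut(C_\Gamma)}(A_\Gamma)$ descend to cellular automorphisms of $P_\Gamma$, giving an embedding $\mathrm{Out}(A_\Gamma)\cong N_{\aut(C_\Gamma)}(A_\Gamma)/A_\Gamma\hookrightarrow\aut(P_\Gamma)$. As $\aut(P_\Gamma)$ is finite and generated by the global inversion and the label-preserving graph automorphisms of $\Gamma$, all of which lie in the image, this embedding is onto, proving (4).

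For the ``in addition'' statements, assume $\aut(C_\Gamma)$ is discrete, equivalently $\Gamma$ is star rigid by Lemma~\ref{lem:star rigid}. Then $\aut(C_\Gamma)$ acts transitively on the $0$-skeleton, so $\aut(C_\Gamma)=A_\Gamma\cdot\aut_0(C_\Gamma)$ with $\aut_0(C_\Gamma)$ the (now finite) stabiliser of the identity vertex; the star-rigidity argument reproduced in the proof of Lemma~\ref{lem:star rigid} shows each element of $\aut_0(C_\Gamma)$ is the lift of a label-preserving graph automorphism composed with the global inversion, and such lifts normalise the deck group, so $A_\Gamma\trianglelefteq\aut(C_\Gamma)$ and $\aut(C_\Gamma)=N_{\aut(C_\Gamma)}(A_\Gamma)\cong\aut(A_\Gamma)=\aut_\Gamma(A_\Gamma)$ by (4). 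Feeding this back: $\qi(A_\Gamma)\cong\aut_\Gamma(A_\Gamma)$, and since this isomorphism is realised by honest (bi-Lipschitz) automorphisms of $A_\Gamma$, every self quasi-isometry of $A_\Gamma$ is at bounded distance from an automorphism of $A_\Gamma$; finally any finitely generated group quasi-isometric to $A_\Gamma$ is virtually isomorphic to a uniform lattice in $\aut(C_\Gamma)$, which is discrete and contains $A_\Gamma$ with finite index, hence virtually isomorphic to $A_\Gamma$. All the genuinely delicate content sits in the commutativity check $h_3\circ\Phi\circ j=h_2\circ h_1^{-1}$; the rest is elementary group theory and the standard lattice/quasi-action package.
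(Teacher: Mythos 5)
Your proofs of assertions (1)--(3) take exactly the route the paper takes: you assemble $\Phi$, $h_3$, $h_2$, $h_1$ into a diagram, add the tautological $j\colon \aut(C_\Gamma)\to\qi(A_\Gamma)$, observe that $h_1,h_2$ are isomorphisms and $h_3,\Phi$ injective, and use the commutativity relation $h_3\circ\Phi\circ j=h_2\circ h_1^{-1}$ to upgrade everything to isomorphisms. The paper does not name $j$, but its argument is the same: it constructs $\gamma=h_1(h_2^{-1}(h_3\Phi(q)))\in\aut(C_\Gamma)$ and argues $h_3\Phi(\gamma)=h_3\Phi(q)$, then concludes $q=\gamma$ in $\qi(A_\Gamma)$ by injectivity. (Both you and the paper treat the commutativity identity as following from the definitions; you at least flag it explicitly as the delicate step.) Assertion (3) in the paper is likewise done by replacing each $\rho(h,\cdot)$ in a discrete cobounded quasi-action by the unique cellular automorphism at bounded distance to get an honest proper cocompact action, which is your argument phrased slightly more carefully.

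Your treatment of (4) is genuinely different. The paper works in the Deligne complex: given $\alpha\in\aut(A_\Gamma)$, it takes the corresponding $\alpha'\in\aut(D_\Gamma)$ obtained from (1), normalises it by inner and graph automorphisms so that $\alpha'$ fixes the fundamental domain $F$ pointwise, invokes Crisp's Lemmas 39 and 40 and the classification of non-virtually-abelian centralisers to force $\alpha|_{A_e}$ to be the identity or the inversion on every dihedral parabolic $A_e$, and then uses connectedness of $\Gamma$. You instead embed $A_\Gamma$ as the deck group in $\aut(C_\Gamma)$, identify $N_{\aut(C_\Gamma)}(A_\Gamma)$ with $\aut(A_\Gamma)$ via a trivial-centraliser argument, and pass to the quotient $N/A_\Gamma\hookrightarrow\aut(P_\Gamma)$. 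The covering-space bookkeeping here is fine (the centraliser really is trivial once (2) is available, because a quasi-isometry commuting with all left translations is at bounded distance from a right translation, which is bounded distance from the identity). However, the argument then rests entirely on the assertion that $\aut(P_\Gamma)$ is finite and generated by the global inversion together with label-preserving graph automorphisms, and you do not prove this. It is true for large-type $\Gamma$ --- a short sign-pattern argument on the attaching words of the $2$-cells shows that any cellular automorphism of $P_\Gamma$ must invert all edges or none, and must permute the $2$-cells label-preservingly --- but as written this is an unjustified step, and it is precisely the content that the paper obtains from Crisp's lemmas. So there is a fillable gap in (4); everything else in the proposal matches the paper's proof in substance if not always in presentation.
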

	
	\begin{proof}
		Given $q\in \qi(A_\Gamma)$, let $\alpha=h_3\circ \Phi (q)$ where $\Phi$ is defined in Definition~\ref{def:Phi} and $h_3$ is defined in Lemma~\ref{lem:third comparison}. 
		By our assumption, Theorem~\ref{thm:2ndiso} and Lemma~\ref{lem:second comparison}, there is a unique $\beta\in \aut(D_\Gamma)$ such that $h_2(\beta)=\alpha$. Let $\gamma\in \aut(C_\Gamma)$ be $\gamma=h_1(\alpha)$ with $h_1$ as in Definition~\ref{def:first comparison}. It follows from the definition of $h_1,h_2,h_3$ and $\Phi$ that $h_3\circ \Phi (\gamma)=h_3\circ \Phi(q)$ (we view $\gamma$ as a self quasi-isometry of $A_\Gamma$). Since $h_3,h_2,h_1, \Phi$ are all injective (Lemma~\ref{lem:third comparison}, Lemma~\ref{lem:second comparison} and Proposition~\ref{prop:first comparison}), we know that $q=\gamma$ in $\qi(A_\Gamma)$, thus the first assertion follows. The second assertion then follows from the injectivity of $h_3\circ \Phi$. From the first two assertions and Proposition~\ref{prop:Phi injective}, we know each $(L,A)$-self quasi-isometry of $A_\Gamma$ is at distance at most $C=C(L,A,\Gamma)$ from a unique automorphism of $C_\Gamma$. 
		
		Now we prove Assertion 3. Given a finitely generated group $H$ quasi-isometric to $A_\Gamma$, then there is a discrete and cobounded $(L,A)$-quasi-action $\rho: H\times A_\Gamma\to A_\Gamma$ of $H$ on $A_\Gamma$ for some choice of $L$ and $A$. By the previous paragraph, there exists $C>0$ such that for each $h\in H$, we can replace $\rho(h,\cdot):A_\Gamma\to A_\Gamma$ by a unique automorphism of $C_\Gamma$ at distance at most $C$ from $\rho(h,\cdot)$. This gives an quasi-action $\rho':H\curvearrowright C_\Gamma$ which is equivalent to $\rho$, moreover, the uniqueness of the replacement implies that $\rho'$ is an action.
		We also know that $\rho'$ is proper and cocompact. Thus $H$ is virtually a uniform lattice in $\aut(C_\Gamma)$. If in addition $\aut(C_\Gamma)$ is discrete, any two uniform lattice in $\aut(C_\Gamma)$ are commensurable, in particular, $H$ is virtually isomorphic to $A_\Gamma$.
		
		For Assertion 4, let $F\subset D_\Gamma$ and $v_0$ be as in the proof of Lemma~\ref{lem:star rigid}. Given an automorphism $\alpha$ of $A_\Gamma$, viewed as a quasi-isometry, there is a unique automorphism $\alpha'\in \aut(D_\Gamma)$ such that $h_2(\alpha')=h_3\circ \Phi$ as before. Up to modify $\alpha$ by an inner automorphism, we can assume $\alpha'(F)=F$. Then $\alpha'$ induces an automorphism of $\alpha'_{v_0}$ of $\lk(v_0,D_\Gamma)$, which comes from an automorphism $\alpha''$ of $\Gamma$ as in the proof of Lemma~\ref{lem:star rigid}. By \cite[Lemma 39]{crisp2005automorphisms}, $\alpha'_{v_0}$ sends a type 2 vertex of $F$ corresponding to $A_e$ to a possibly different type 2 vertex of $F$ corresponding to $A_{e'}$, with the label of $e$ and $e'$ being the same. Thus $\alpha''$ preserves labels of edges of $\Gamma$. Hence up to post-compose $\alpha'$ by an automorphism of $A_\Gamma$ induced by a label-preserving automorphism of $\Gamma$, we can assume $\alpha'$ fixes $F$ pointwise. This implies (by unwinding the definition of $\alpha'$) that the Hausdorff distance between $\alpha(A_e)$ and $A_e$ is finite for each edge $e\subset \Gamma$. As $\alpha(A_e)$ belongs to the list in \cite[Theorem D]{vaskou2023isomorphism}, the only possibly which is Hausdorff close to $A_e$ is $A_e$ itself. By \cite[Lemma 40]{crisp2005automorphisms}, $\alpha|_{A_e}$ is either identity or an inversion sending each generator to its inverse. As $\Gamma$ is connected, $\alpha$ is either identity or a global inversion, which finishes the proof of Assertion 4.
		
		Now we prove the “in addition” part. It follows from the argument in the last paragraph of the proof of Lemma~\ref{lem:star rigid} that for any $\beta\in \aut(D_\Gamma)$, $h^{-1}_1(\beta)$ is an automorphism of $A_\Gamma$. Now we repeat the argument in the first paragraph of the proof to see that any self quasi-isometry of $A_\Gamma$ is at bounded distance from an automorphism of $A_\Gamma$. Now the other components of the in addition part follows.
	\end{proof}

	Let $\Delta_{333}$ denotes the triangle with each edge labelled by $3$.
	
	\begin{corollary}
		\label{cor:qi}
		Suppose $A_\Gamma$ is a large-type Artin group with $\Gamma\neq\Delta_{333}$ such that $\Gamma$ has a twistless hierarchy terminating on twistless stars. 
		Then \begin{enumerate}
			\item any self quasi-isometry of $A_\Gamma$ is at bounded distance from an automorphism of the Caylye complex $C_\Gamma$;
			\item the quasi-isometry group of $A_\Gamma$ is isomorphic to $\aut(C_\Gamma)$;
			\item any finitely generated group quasi-isometric to $A_\Gamma$ is virtually isomorphic to a uniform lattice in the locally compact topological group $\aut(C_\Gamma)$;
			\item the outer automorphism group of $A_\Gamma$ is finite, and it is generated by global inversion and label-preserving automorphisms of the defining graph $\Gamma$.
		\end{enumerate}
		If in addition $\Gamma$ is star rigid, then 
		\begin{enumerate}
			\item any self quasi-isometry of $A_\Gamma$ is at bounded distance from an automorphism of $A_\Gamma$;
			\item any finitely generated group quasi-isometric to $A_\Gamma$ is virtually isomorphic to $A_\Gamma$;
			\item the quasi-isometry group of $A_\Gamma$ is isomorphic to $\aut_{\Gamma}(A_{\Gamma})$.
		\end{enumerate}
		If $\Gamma=\Delta_{333}$, then the outer automorphism group of $A_\Gamma$ is still generated by global inversion and label-preserving automorphisms of the defining graph. Moreover, there is a finite index super-group $H$ containing $A_\Gamma$ such that 
		\begin{enumerate}
			\item any self-quasi-isometry of $A_\Gamma$ is at bounded distance from a left translation of $H$, hence the quasi-isometry group of $A_\Gamma$ is isomorphic to $H$;
			\item any finitely generated group quasi-isometric to $A_\Gamma$ is virtually isomorphic to $A_\Gamma$.
		\end{enumerate}
	\end{corollary}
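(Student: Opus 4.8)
The plan is to derive the corollary from Proposition~\ref{prop:qi} and Proposition~\ref{prop:fund}, dealing with the excluded graph $\Delta_{333}$ separately by invoking the known rigidity of the mapping class group of the five-punctured sphere.

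Suppose first that $\Gamma \neq \Delta_{333}$ admits a twistless hierarchy terminating in twistless stars. The one hypothesis of Proposition~\ref{prop:qi} not handed to us directly is that $\Gamma$ is connected with no valence-one vertex, and I would obtain this by a routine induction along the hierarchy. A twistless star is connected and, unless it is a single edge, leaf-free (a leaf would make its neighbour a separating vertex). In an admissible twistless decomposition $\Gamma = \Gamma_1 \cup \Gamma_2$, neither $\Gamma_i$ can be a point or an edge, since then $\Gamma_1 \cap \Gamma_2 \subseteq \Gamma_i$ would be empty, a point, or an edge; so by induction both pieces are connected and leaf-free, and then any leaf of $\Gamma$ would sit inside the full subgraph $\Gamma_i$ containing it either as a leaf or as an isolated vertex, which is impossible. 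Hence $\Gamma$ is connected and leaf-free. By Proposition~\ref{prop:fund}, $\Gamma$ is fundamental with respect to $D_\Gamma$, so Proposition~\ref{prop:qi} applies and gives the first four assertions of the corollary. If in addition $\Gamma$ is star rigid, then $\aut(C_\Gamma)$ is discrete by Lemma~\ref{lem:star rigid}, and the three further assertions are precisely the ``in addition'' conclusions of Proposition~\ref{prop:qi}.

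Now suppose $\Gamma = \Delta_{333}$, so that $A_\Gamma$ is the $(3,3,3)$-Artin group. By \cite[Theorem~A]{vaskou2023automorphisms} we have $\aut(A_\Gamma) = \aut_\Gamma(A_\Gamma)$, and quotienting by the inner automorphisms shows that the outer automorphism group of $A_\Gamma$ is generated by the global inversion and the label-preserving graph automorphisms, as claimed. For the quasi-isometric statements, Proposition~\ref{propiso} realises $A_\Gamma$ as a subgroup of index $120$ in $H \coloneqq \mcg^\pm(\Sigma_5)$, so the inclusion is a quasi-isometry. I would then appeal to the strong (mapping-class-group type) quasi-isometric rigidity of the mapping class group of the five-punctured sphere \cite{hamenstaedt2005geometry, behrstock2008quasi}: every self quasi-isometry of $H$ lies at bounded distance from a left translation of $H$, the natural map $H \to \qi(H)$ is an isomorphism, and every finitely generated group quasi-isometric to $H$ is virtually isomorphic to $H$. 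Transporting this across the finite-index quasi-isometry $A_\Gamma \hookrightarrow H$ yields $\qi(A_\Gamma) \cong H$ and the first two assertions with this $H$ as the desired finite-index supergroup, while the last assertion follows because any finitely generated group quasi-isometric to $A_\Gamma$ is quasi-isometric to $H$, hence virtually isomorphic to $H$, hence virtually isomorphic to $A_\Gamma$.

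I do not expect a genuine obstacle, since the substantive content is already packaged in Proposition~\ref{prop:qi} and Proposition~\ref{prop:fund}. The only points needing care are the combinatorial verification above --- where one must use the fullness of the subgraphs appearing in an admissible decomposition, and (harmlessly) set aside the degenerate cases where $\Gamma$ is a single vertex or edge --- and, in the $\Delta_{333}$ case, citing the correct low-complexity mapping-class-group rigidity results and recording how the strong form of quasi-isometric rigidity descends along the finite-index inclusion $A_\Gamma \hookrightarrow \mcg^\pm(\Sigma_5)$.
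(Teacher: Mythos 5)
Your proposal is correct and follows essentially the same route as the paper's proof: for $\Gamma \neq \Delta_{333}$, combine Proposition~\ref{prop:qi} with Proposition~\ref{prop:fund}, and for $\Gamma = \Delta_{333}$ invoke Proposition~\ref{propiso} together with the quasi-isometric rigidity of mapping class groups \cite{hamenstaedt2005geometry,behrstock2008quasi} and \cite[Theorem A]{vaskou2023automorphisms} for the outer automorphism group. The one place you go beyond the paper's terse proof is the explicit induction showing that a twistless hierarchy terminating in twistless stars forces $\Gamma$ to be connected with no valence-one vertex — the paper asserts this only in a parenthetical in Theorem~\ref{thm:main} and never spells it out — and your argument (a leaf or isolated vertex inside a full piece $\Gamma_i$ would violate the inductive hypothesis; a piece that is a point or an edge makes the intersection too small to be a twistless decomposition) is the right way to close that gap.
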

	
	\begin{proof}
		The case when $\Gamma$ is not a triangle with each edge labelled by $3$ is a consequence of Proposition~\ref{prop:qi} and Proposition~\ref{prop:fund}. If $\Gamma$ is a triangle with each edge labelled by $3$, then $A_\Gamma$ has finite index in the mapping class group $H$ of $5$-punctured sphere. The corollary follows directly from quasi-isometric rigidity results on mapping class groups \cite{hamenstaedt2005geometry,behrstock2008quasi}. The statement about outer automorphism group of $A_\Gamma$ follows from \cite{vaskou2023automorphisms}.
	\end{proof}

	\subsection{On Conjecture~\ref{conj:main}}

	\begin{proposition}
		\label{lem:onlyif}
		The only if direction of Conjecture~\ref{conj:main} holds.
	\end{proposition}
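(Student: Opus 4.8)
The plan is to establish the contrapositive of each of the two implications. First recall that, for the topology of pointwise convergence, a subgroup of $\aut(I_\Gamma)$ is compact exactly when it is closed and all of its orbits on the vertex set $V(I_\Gamma)$ are finite; consequently $\aut(I_\Gamma)$ is locally compact if and only if the pointwise stabiliser of some finite subset of $V(I_\Gamma)$ has only finite orbits, and discrete if and only if some such stabiliser is trivial. So it suffices, whenever $\Gamma$ fails to be connected and twistless (resp.\ fails to be, in addition, star rigid), to produce for \emph{every} finite $F\subseteq V(I_\Gamma)$ an automorphism of $I_\Gamma$ fixing $F$ pointwise but moving some vertex along an infinite orbit. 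I use freely that every automorphism of $A_\Gamma$ induces one of $I_\Gamma$, since being a stable $\mathbb Z$-subgroup and the adjacency relation are purely group-theoretic (Definition~\ref{DefIntersectionGraphI}). The sole degenerate case, $\Gamma$ having no edges (so $A_\Gamma$ free), is set aside as in the Main Question; otherwise $I_\Gamma$ is infinite.

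\emph{Case 1: $\Gamma$ disconnected.} Write $\Gamma=\Gamma_1\sqcup\Gamma_2$ with both $\Gamma_i$ nonempty, so $A_\Gamma=A_{\Gamma_1}*A_{\Gamma_2}$. A stable $\mathbb Z$-subgroup has non-virtually-abelian, hence non-cyclic, centraliser; since in a free product of nontrivial groups an element with non-cyclic centraliser is conjugate into a factor, every vertex of $I_\Gamma$ lies in a conjugate $gI_{\Gamma_i}g^{-1}$, and since a $\mathbb Z^2$-subgroup of a free product is conjugate into a factor, two adjacent vertices lie in the same conjugate. These conjugates are pairwise vertex-disjoint (the factors meet trivially), so no connected component of $I_\Gamma$ meets two of them; as $I_\Gamma\neq\emptyset$ some $I_{\Gamma_i}$ is nonempty, and since the other factor is infinite there are infinitely many pairwise isomorphic connected components of $I_\Gamma$. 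Then $\aut(I_\Gamma)$ contains a copy of the infinite symmetric group permuting these components, and for any finite $F$ the pointwise stabiliser of $F$ still contains such a copy acting on the infinitely many components disjoint from $F$; hence no finite-set stabiliser is compact and $\aut(I_\Gamma)$ is not locally compact (a fortiori not discrete).

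\emph{Case 2: $\Gamma$ connected with a separating vertex or edge.} Write $\Gamma=\Gamma_1\cup\Gamma_2$ with $\Gamma_1\cap\Gamma_2$ a vertex $v$, resp.\ an edge $e=\{a,b\}$, giving a splitting $A_\Gamma=A_{\Gamma_1}*_C A_{\Gamma_2}$ with $C=\langle v\rangle$, resp.\ $C=A_{ab}$, and Bass--Serre tree $T$; let $z$ generate the centre of $C$. No edge or triangle of $\Gamma$ has vertices strictly on both sides of the separating vertex/edge, so by Proposition~\ref{PropositionCentralisers} the parabolic closure of the generator of any stable $\mathbb Z$-subgroup is conjugate into $A_{\Gamma_1}$ or $A_{\Gamma_2}$; hence \emph{every} stable $\mathbb Z$-subgroup fixes a point of $T$. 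Thus a finite $F\subseteq V(I_\Gamma)$ determines a finite subtree of $T$, and we may pick an edge $\tilde e$ of $T$ with $F$ entirely on one side of $\tilde e$. Following the construction in the proof of Lemma~\ref{lem:star rigid}, realise $T$ inside $D_\Gamma$ and define the \emph{partial twist} $\tau_{\tilde e}$ to be the automorphism of $D_\Gamma$ equal to conjugation by the generator of the centre of $\mathrm{Stab}_{A_\Gamma}(\tilde e)$ on the components of $D_\Gamma\setminus(\text{the }C\text{-subcomplex over }\tilde e)$ lying on the far side of $\tilde e$, and the identity on the near side. Being a simplicial automorphism of $D_\Gamma$, $\tau_{\tilde e}$ permutes standard trees, type~$2$ vertices, and (by the geometric description of exotic subcomplexes underlying Corollary~\ref{CorollaryExoticSentToExotic}) the exotic subcomplexes, hence induces an automorphism of $I_\Gamma$; it fixes $F$ pointwise, and a direct computation — using that $z$ is not central in $A_{\Gamma_2}$, e.g.\ that it moves the $\mathbb Z$-subgroup generated by some generator or some dihedral centre on the far side along an infinite orbit — shows $\tau_{\tilde e}$ has infinite order and an infinite orbit on $V(I_\Gamma)$. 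Hence $\aut(I_\Gamma)$ is not locally compact.

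\emph{Conclusion, and the main obstacle.} Cases 1 and 2 give the ``only if'' direction of part~(1): if $\aut(I_\Gamma)$ is locally compact then $\Gamma$ is connected and twistless. For part~(2): if $\aut(I_\Gamma)$ is discrete it is in particular locally compact, hence $\Gamma$ is connected and twistless by part~(1); and were $\Gamma$ not star rigid, the second assertion of Lemma~\ref{lem:star rigid} would force $\aut(I_\Gamma)$ to be non-discrete — so $\Gamma$ is also star rigid. The only point that is not pure bookkeeping is the claim in Case~2 that the partial twist $\tau_{\tilde e}$, obtained by cutting $D_\Gamma$ along the $C$-subcomplex over $\tilde e$ and re-gluing by a central element on one side, is a genuine simplicial automorphism of $D_\Gamma$ inducing an infinite-order automorphism of $I_\Gamma$. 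This is precisely the mechanism already set up in the proof of Lemma~\ref{lem:star rigid}, now applied to a cut vertex or cut edge in place of the star of an arbitrary vertex, so essentially no new input is required.
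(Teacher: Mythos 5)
Your high‐level framing — reduce to showing that, for every finite $F\subseteq V(I_\Gamma)$, the pointwise stabiliser $G_F$ has an infinite orbit, hence cannot be compact — is correct and a clean way to state what needs doing. Case 1 (disconnected $\Gamma$) is also essentially right: every stable $\mathbb Z$-subgroup of a free product has non‑cyclic centraliser only if it is conjugate into a factor, any commuting pair is conjugate into a common factor, and distinct conjugates of factors intersect trivially, so $I_\Gamma$ has infinitely many pairwise isomorphic components and a full symmetric group permuting the ones disjoint from any finite $F$ sits inside $G_F$. The reduction of part (2) to part (1) plus Lemma~\ref{lem:star rigid} is also sound.

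The gap is in Case 2. The ``partial twist'' $\tau_{\tilde e}$ you propose — identity on the near side of $\tilde e$, conjugation by $z$ on the far side, where $z$ generates $Z(C)$ and $C=\mathrm{Stab}_{A_\Gamma}(\tilde e)$ — does not glue into a simplicial automorphism of $D_\Gamma$. For the glueing to be well defined, you would need conjugation by $z$ (equivalently, left multiplication by $z$ on cosets) to restrict to the identity on the $C$-subcomplex over $\tilde e$. It does not. Taking $\tilde e$ to be the identity edge with $C=A_e$ for a separating edge $e=\{a,b\}$: the type $0$ vertex $g\in A_e$ is sent to $zg\neq g$, and the type $1$ vertex $g\langle a\rangle$ (again with $g\in A_e$) is sent to $zg\langle a\rangle$, which equals $g\langle a\rangle$ iff $z\in\langle a\rangle$ — false since $z$ generates the centre of the whole dihedral group $A_e$. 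For a separating vertex $v$ with $C=\langle v\rangle$ the same failure occurs: the putative cut locus (the $\langle v\rangle$-subcomplex) consists of the type $0$ vertices $\{v^k\}_{k\in\mathbb Z}$ and the type $1$ vertex $\langle v\rangle$, and left multiplication by $v$ sends $v^k\mapsto v^{k+1}$, so it fixes only the single type $1$ vertex. The analogy with the construction in the proof of Lemma~\ref{lem:star rigid} is misleading: there the glued map is $\alpha_f$, induced by a label‑preserving graph automorphism $f$ fixing $\mathrm{st}(a)$ pointwise, and $\alpha_f$ \emph{does} restrict to the identity on the cut locus $T_a$, precisely because $f$ is the identity on $\lk(a)$; no such flexibility is available when the map to be glued is conjugation by a central element of the edge group. (There is a further unaddressed subtlety: the fixed set in $T$ of a stable $\mathbb Z$-subgroup can be unbounded — for instance $\langle a\rangle$ with $a$ a vertex of the separating edge $e$ fixes an infinite subtree of $T$ — so it is not clear that a finite $F$ ``sits on one side'' of some edge $\tilde e$ in the first place.)

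For context, the paper's own proof of this proposition does not attempt to exhibit, for every finite $F$, an element of $G_F$ of infinite order; instead it fixes a single vertex $v$ lying in $I_{\Gamma_1}$ and observes that the (global) Dehn twists $\varphi_n$ all lie in $G_v$ without converging, arguing that a locally compact $\aut(I_\Gamma)$ would force $G_v$ to be compact. This route is shorter, but it relies on the statement that local compactness of $\aut(I_\Gamma)$ forces \emph{every} vertex stabiliser to be compact, which is itself not automatic for a closed subgroup of $\mathrm{Sym}(V(I_\Gamma))$ with the pointwise convergence topology and deserves justification; local compactness only gives you that \emph{some} finite‑set stabiliser is compact. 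So your instinct to prove the stronger assertion was reasonable, but the partial‑twist construction as written does not supply a genuine element of $\aut(I_\Gamma)$ and the step cannot be dismissed as ``essentially no new input.''
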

	
	\begin{proof}
		We first show if $\Gamma$ has a separating edge or a separating vertex, then $\aut(I_\Gamma)$ is not locally compact. We argue by contradiction and assume $\aut(I_\Gamma)$ is locally compact, then for any vertex $v\in I_\Gamma$, the stabiliser $G_v\le\aut(I_\Gamma)$ of $v$ is compact. If $\Gamma$ has a separating $e$, then there are full subgraphs $\Gamma_1,\Gamma_2$ of $\Gamma$ such that $\Gamma_1\cap\Gamma_2=e$,  $\Gamma_1\cup\Gamma_2=\Gamma$ and $\Gamma_1$ and $\Gamma_2$ properly contain $e$. Consider the splitting $A_{\Gamma}=A_{\Gamma_1}*_{A_e}A_{\Gamma_2}$ and let $z$ be a non-trivial element in the centre of $A_e$. Let $\phi_n$ be the automorphism of $A_\Gamma$ which is identity on $A_{\Gamma_1}$ and is the conjugation by $z^n$ on $A_{\Gamma}$. One readily checks that $\phi_n$ induces a sequence of automorphisms $\varphi_n:I_\Gamma\to I_\Gamma$. There is a vertex $v\in I_\Gamma$ which is fixed by any $\varphi_n$, although the sequence $\{\varphi_n\}_{n\ge 1}$ does not have a limit, which contradicts that $G_v$ is compact.
		
		Now we assume $\Gamma$ is twistless. It remains to show that, if $\Gamma$ is not star rigid, then $\aut(I_\Gamma)$ is not discrete. If $\aut(I_\Gamma)$ is discrete, then $G_v$ defined in the previous paragraph is finite. However, the infinitely many different elements we constructed in $\aut_0(D_\Gamma)$ in the proof of Lemma~\ref{lem:star rigid} induces infinitely many different elements in $\aut(I_\Gamma)$ which fix a common vertex, which is a contradiction.
	\end{proof}

	\begin{proposition}
		\label{prop:convert}
		Suppose $A_\Gamma$ is a large-type Artin group with $\Gamma\neq\Delta_{333}$ such that $\Gamma$ is connected and does not have leaf vertices.
		Suppose $\Gamma$ is fundamental with respect to $D_\Gamma$ in the sense of Definition~\ref{DefiFundGraphI}. Then
		$\aut(I_\Gamma)$ is locally compact and it is isomorphic to $\aut(C_\Gamma)$. If in addition that $\Gamma$ is star rigid, then $\aut(I_\Gamma)$ is discrete and it is isomorphic to $\aut_{\Gamma}(A_{\Gamma})$ (with $\aut_{\Gamma}(A_{\Gamma})$ defined in the beginning of Section~\ref{subsec:intro2}).
	\end{proposition}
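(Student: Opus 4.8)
The plan is to assemble the comparison homomorphisms already built in Section~\ref{subsecComparison} into a chain of isomorphisms linking $\aut(I_\Gamma)$ to $\aut(C_\Gamma)$, and then read off the topological consequences from Lemma~\ref{lem:star rigid}. Concretely, I would consider the composite
\[
\aut(I_\Gamma)\xrightarrow{\ h_3\ }\aut(I^{TD}_\Gamma)\xrightarrow{\ h_2^{-1}\ }\aut(D_\Gamma)\xrightarrow{\ h_1\ }\aut(C_\Gamma).
\]
Here $h_3$ is the restriction map of Lemma~\ref{lem:third comparison}, which makes sense because $\Gamma\neq\Delta_{333}$ and $\Gamma$ is connected, so Corollary~\ref{CorollaryExoticSentToExotic} applies and every automorphism of $I_\Gamma$ preserves the type $E$ vertices; by that lemma $h_3$ is injective. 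The map $h_2$ of Lemma~\ref{lem:second comparison} is injective since $\Gamma$ is connected with no leaf vertices, and it is surjective by Theorem~\ref{thm:2ndiso} since $\Gamma$ is fundamental; hence $h_2$ is an isomorphism and $h_2^{-1}$ makes sense. Finally $h_1$ is an isomorphism by Proposition~\ref{prop:first comparison}. So the composite above is an injective homomorphism $\Psi:\aut(I_\Gamma)\to\aut(C_\Gamma)$, and the only thing left is surjectivity of $\Psi$, equivalently surjectivity of $h_3$.

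The first real step, therefore, is to prove that $h_3$ is surjective, i.e.\ that every automorphism of $I^{TD}_\Gamma$ extends to an automorphism of $I_\Gamma$. Given $\beta\in\aut(I^{TD}_\Gamma)$, using $h_2^{-1}$ and $h_1$ we obtain $\gamma:=h_1\circ h_2^{-1}(\beta)\in\aut(C_\Gamma)$, and quasi-isometry of $C_\Gamma$ with $A_\Gamma$ lets us view $\gamma$ as a self-quasi-isometry of $A_\Gamma$; via the $\Phi$-homomorphism of Definition~\ref{def:Phi} this produces an element $\Phi(\gamma)\in\aut(I_\Gamma)$. I would then check, by unwinding the definitions of $h_1$, $h_2$ and $\Phi$ on standard lines/blocks versus standard trees/type~$2$ vertices, that $h_3(\Phi(\gamma))=\beta$: both sides act identically on vertices of type $T$ (standard trees $\leftrightarrow$ parallel classes of standard lines) and type $D$ (type~$2$ vertices $\leftrightarrow$ blocks). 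This is essentially the compatibility argument already used in the first paragraph of the proof of Proposition~\ref{prop:qi}, transported to this setting. Once $h_3$ is onto, $\Psi$ is a bijective group homomorphism.

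It remains to upgrade $\Psi$ to an isomorphism of topological groups and deduce local compactness. I would give $\aut(I_\Gamma)$ the topology of pointwise convergence and $\aut(C_\Gamma)$ the compact–open topology; since $C_\Gamma$ is locally finite the latter is locally compact second countable. The key point is that $\Psi$ is a homeomorphism: a basic neighbourhood of the identity in $\aut(I_\Gamma)$ is the pointwise stabiliser of a finite set of vertices, and through the dictionary of $h_1,h_2,h_3$ this corresponds, up to commensurability of stabilisers, to the pointwise stabiliser of a finite subcomplex of $C_\Gamma$; conversely a vertex stabiliser in $\aut(C_\Gamma)$ acts on only finitely many vertices of $I^{TD}_\Gamma$ near the corresponding fundamental domain, by finiteness of $\Gamma_{bar}$. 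Writing this out carefully is the main technical obstacle — one must show the stabiliser in $\aut(I_\Gamma)$ of a single type~$D$ vertex (a block) is compact, using that fixing a block pins down, via fundamentality and Proposition~\ref{prop:first comparison}, all adjacent type $0$ vertices and then propagates outward with only finitely much freedom at each step. Granting this, $\aut(I_\Gamma)\cong\aut(C_\Gamma)$ as topological groups, hence $\aut(I_\Gamma)$ is locally compact. For the ``in addition'' clause: if $\Gamma$ is star rigid then $\aut(C_\Gamma)$ is discrete by Lemma~\ref{lem:star rigid}, so $\aut(I_\Gamma)$ is discrete; and by the ``in addition'' part of the proof of Proposition~\ref{prop:qi}, $h_1^{-1}$ carries $\aut(D_\Gamma)$ into automorphisms of $A_\Gamma$, identifying $\aut(C_\Gamma)$ with $\aut_\Gamma(A_\Gamma)$, whence $\aut(I_\Gamma)\cong\aut_\Gamma(A_\Gamma)$.
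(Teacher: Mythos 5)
Your proposal takes essentially the same route as the paper: the chain $h_1\circ h_2^{-1}\circ h_3$ (injective by Lemma~\ref{lem:third comparison}, Theorem~\ref{thm:2ndiso}, Proposition~\ref{prop:first comparison}), surjectivity supplied by viewing elements of $\aut(C_\Gamma)$ as quasi-isometries of $A_\Gamma$ and feeding them into the $\Phi$-homomorphism of Definition~\ref{def:Phi}, and Lemma~\ref{lem:star rigid} for the discrete case. One small simplification you missed: both $\aut(I_\Gamma)$ and $\aut(C_\Gamma)$ are Polish groups, so once the composite is verified to be a continuous bijective group homomorphism the open-mapping theorem for Polish groups gives the homeomorphism automatically — there is no need for the hands-on neighbourhood-basis argument you sketch (which as stated is the weakest link: your claim that fixing a finite vertex set of $I_\Gamma$ "corresponds, up to commensurability of stabilisers," to fixing a finite subcomplex of $C_\Gamma$ would itself require the fundamentality bookkeeping from $h_2^{-1}$ to be spelled out in both directions).
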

	
	\begin{proof}
		By Theorem~\ref{thm:2ndiso}, the second comparison homomorphism $h_2:\aut(D_\Gamma)\to\aut(I^{TD}_\Gamma)$ is an isomorphism. By Lemma~\ref{lem:third comparison} and Proposition~\ref{prop:first comparison}, the homomorphism $h_1\circ h^{-1}_2\circ h_3:\aut(I_\Gamma)\to \aut(C_\Gamma)$ is injective. One readily checks this homomorphism is also continuous However, as each element in $\aut(C_\Gamma)$ can be viewed as an element in $\qi(A_\Gamma)$, hence induces an automorphism of $I_\Gamma$ via Definition~\ref{def:Phi}. It follows that $h_1\circ h^{-1}_2\circ h_3$ is surjective, hence an isomorphism. The in addition part is similar to the proof of the in addition part of Proposition~\ref{prop:qi}.
	\end{proof}
	
	The following is a consequence of Proposition~\ref{prop:convert} and Proposition~\ref{lem:onlyif} (note that $\Gamma$ being twistless implies that $\Gamma$ is connected and does not have leaf vertices).
	\begin{corollary}
		\label{cor:convert}
		Suppose $\Gamma$ is fundamental in the Deligne complex $D_\Gamma$ in the sense of Definition~\ref{DefiFundGraphI} whenever $\Gamma$ is twistless. Then Conjecture~\ref{conj:main} holds.
	\end{corollary}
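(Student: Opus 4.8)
The statement to prove is that, \emph{granting} the conjectural fact that every twistless large-type $\Gamma$ is fundamental with respect to $D_\Gamma$, Conjecture~\ref{conj:main} holds for all large-type $A_\Gamma$. The plan is simply to assemble the pieces already in place. The two ``only if'' implications, together with the observations that the ``moreover'' conclusions can only hold under the stated hypotheses, are exactly Proposition~\ref{lem:onlyif}; so the work is to establish the two ``if'' directions and their accompanying identifications of $\aut(I_\Gamma)$.

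For the ``if'' directions, suppose first that $\Gamma$ is connected and twistless, and in addition that $\Gamma\neq\Delta_{333}$. Twistlessness already forces $\Gamma$ to have no leaf vertices (a leaf would make its unique neighbour a separating vertex, unless $\Gamma$ is a single edge, which has a separating edge), so together with connectedness $\Gamma$ satisfies the running hypotheses of Proposition~\ref{prop:convert}. By the standing hypothesis of the corollary, $\Gamma$ is fundamental with respect to $D_\Gamma$ in the sense of Definition~\ref{DefiFundGraphI}. Proposition~\ref{prop:convert} then yields directly that $\aut(I_\Gamma)$ is locally compact and isomorphic to $\aut(C_\Gamma)$ (the cellular automorphism group of the Cayley complex), giving part (1); and, when $\Gamma$ is moreover star rigid, that $\aut(I_\Gamma)$ is discrete and isomorphic to $\aut_\Gamma(A_\Gamma)$. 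To finish part (2) I would note that a connected large-type defining graph with at least two vertices gives $A_\Gamma$ with trivial centre, so the natural action $A_\Gamma\actson I_\Gamma$ — which is nothing but the conjugation action on stable $\mathbb Z$-subgroups — corresponds under this isomorphism to the inclusion $A_\Gamma\cong\operatorname{Inn}(A_\Gamma)\hookrightarrow\aut_\Gamma(A_\Gamma)$, which is injective with image of finite index $|\operatorname{Out}_\Gamma(A_\Gamma)|<\infty$.

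It remains to treat the exceptional graph $\Gamma=\Delta_{333}$, which is connected, twistless, and (its star being the whole graph) star rigid, so Conjecture~\ref{conj:main} predicts $\aut(I_\Gamma)$ discrete with the natural map from $A_\Gamma$ having finite-index image. Here I would use the curve-graph description of Section~\ref{SectionI=CurveGraph}: Definition~\ref{defFstar} gives a graph isomorphism $F_*\colon I_{\Delta_{333}}\to\mathcal C_5$ induced by the embedding $F$ of Proposition~\ref{propiso}, and the theorem of Korkmaz (building on Luo) identifying the automorphism group of the curve graph of a sphere with at least five punctures gives $\aut(\mathcal C_5)\cong\mcg^\pm(\Sigma_5)$, a discrete — hence locally compact — group. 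Since $F_*$ intertwines the natural action $A_{\Delta_{333}}\actson I_{\Delta_{333}}$ with the action of $F(A_{\Delta_{333}})$ on $\mathcal C_5$, and $F(A_{\Delta_{333}})$ has index $120$ in $\mcg^\pm(\Sigma_5)$, the natural map $A_{\Delta_{333}}\to\aut(I_{\Delta_{333}})$ is injective with finite-index image; this yields part (2) and identifies $\aut(I_{\Delta_{333}})$ with $\mcg^\pm(\Sigma_5)$, which is the appropriate reading of the ``moreover'' clause of part (1) for this exceptional case — mirroring how $\Delta_{333}$ is handled in Corollary~\ref{cor:qi}.

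The substantive content of the theorem is not in this corollary: it is concentrated in Proposition~\ref{prop:convert}, which rests on the comparison homomorphisms $h_1,h_2,h_3$, on Theorem~\ref{thm:2ndiso} (reconstructing $\aut(D_\Gamma)$ from $\aut(I^{TD}_\Gamma)$ under fundamentality), and on its input that $\Gamma$ be fundamental. Consequently the only genuine difficulty internal to the proof of the corollary is bookkeeping: matching the two ``moreover'' clauses of Conjecture~\ref{conj:main} to the outputs of Proposition~\ref{prop:convert} and, in the $\Delta_{333}$ case, to the mapping-class-group picture, while checking that the harmless degenerate graphs (a single vertex, a single edge) either fail twistlessness or carry no content.
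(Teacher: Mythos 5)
Your proof is correct and follows the same route the paper takes: the ``only if'' directions come from Proposition~\ref{lem:onlyif}, and for the ``if'' directions (together with the two ``moreover'' clauses) you apply Proposition~\ref{prop:convert}, after observing that a connected twistless $\Gamma$ has no leaf vertices, so its hypotheses are satisfied once the standing fundamentality assumption is invoked. The paper records this as a one-line consequence of those two propositions.

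Where your write-up adds genuine value is in noticing and treating the case $\Gamma=\Delta_{333}$: this graph is connected, twistless, and star rigid, so Conjecture~\ref{conj:main} makes nonvacuous predictions for it, but Proposition~\ref{prop:convert} explicitly excludes it (the comparison homomorphism $h_3$ of Lemma~\ref{lem:third comparison} relies on Corollary~\ref{CorollaryExoticSentToExotic}, which fails for $\Delta_{333}$ because $I_{\Delta_{333}}\cong\mathcal C_5$ is vertex-transitive). The paper's own proof of this corollary does not mention $\Delta_{333}$ at all, and your use of Definition~\ref{defFstar}, the Ivanov--Korkmaz--Luo identification $\aut(\mathcal C_5)\cong\mcg^\pm(\Sigma_5)$, and the index-$120$ embedding of Proposition~\ref{propiso} is exactly how the paper handles $\Delta_{333}$ in the parallel Corollary~\ref{cor:qi}. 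The only place you should be a bit more careful is in the ``moreover'' of part (1) for this exceptional case: you assert that identifying $\aut(I_{\Delta_{333}})$ with $\mcg^\pm(\Sigma_5)$ is ``the appropriate reading'' of $\aut(C_\Gamma)$, but to literally match the conjecture's statement one would want the (true but not entirely trivial) fact that $\aut(C_{\Delta_{333}})\cong\mcg^\pm(\Sigma_5)$; as stated this is asserted rather than argued, though the paper is no more explicit here. Apart from that, the logical bookkeeping, including the remark that a connected large-type graph on at least three vertices gives centreless $A_\Gamma$ (so the natural map $A_\Gamma\to\aut(I_\Gamma)$ is injective), is sound.
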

	
	This motivates the following conjecture.
	\begin{conjecture}
		\label{conj:deligne}
		Suppose $A_\Gamma$ is of large-type and $\Gamma$ is twistless. Then $\Gamma$ is fundamental in the Deligne complex $D_\Gamma$ in the sense of Definition~\ref{DefiFundGraphI}
	\end{conjecture}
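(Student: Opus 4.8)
The plan is to follow the same local-to-global paradigm that underlies Proposition~\ref{prop:fund}, but to replace the rigid hypothesis of a ``twistless hierarchy terminating in twistless stars'' by two new ingredients: (A) an enlarged list of \emph{irreducible base cases}---twistless graphs admitting no nontrivial twistless decomposition---each shown directly to be fundamental; and (B) a structural result asserting that every twistless $\Gamma$ is built from such base cases by a finite sequence of twistless decompositions, so that fundamentality propagates through Lemma~\ref{LemmaExtendingToTwistlessHierarchy}. Concretely, given a $\Gamma$-characteristic subgraph $G\subseteq I^{TD}_\Gamma$ one covers it by subgraphs $G_1,\dots,G_N$, each of the form $g_i(\text{base case}_i)$ with $g_iK_\Gamma$ the associated fundamental domain, and then argues that all the $g_i$ coincide. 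The existing gluing lemma does this whenever consecutive $G_i$ overlap in ``at least two edges, or two non-adjacent vertices''; the whole difficulty is that in a general twistless graph the natural cover of $G$ will contain overlaps along single edges, which must be controlled using the \emph{global} combinatorial structure ($2$-connectedness, absence of separating edges) rather than purely locally.

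For ingredient (A), I would first treat triangle-free twistless graphs: there $I^{TD}_\Gamma=I_\Gamma$, and convexity of standard trees (Remark~\ref{RemStandardTrees}) together with the absence of $4$-cycles (Corollary~\ref{CoroBipartite}) should, after adapting Crisp's analysis of the $\Theta$-graph (\cite[Propositions 40--41]{crisp2005automorphisms}, already invoked elsewhere here), force a characteristic subgraph into a single fundamental domain. Next I would prove that every induced cycle is fundamental: this is the analogue of Lemma~\ref{LemmaConeOverCycle} with the cone point deleted, and its content is a rigidity statement for the closed geodesic polygon in $D_\Gamma$ whose consecutive sides lie in the convex standard trees $t_i$---one would pin it inside one translate of $K_\Gamma$ using the lower bounds on the Moussong angles at type-$2$ vertices (Definition~\ref{DefiMoussong}) provided by large-type, together with the flat-polygon criterion \cite[Theorem II.2.11]{BridsonHaefliger1999}, exactly as in the proof of Lemma~\ref{LemmaTwoStdTreesBorderingTwoFD}. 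The remaining irreducible cases are twistless graphs that \emph{do} contain triangles but are not twistless stars (the triangular prism is the smallest such example); here the plan is to run the ``chain of $6$-cycles'' bookkeeping of Lemma~\ref{LemmaChainOf6Cycles}, but with the $6$-cycles no longer required to pass through a common standard tree.

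For ingredient (B), the combinatorial heart is a decomposition theorem: \emph{every twistless graph either is a twistless star, or is triangle-free, or admits a twistless decomposition $\Gamma=\Gamma_1\cup\Gamma_2$ into strictly smaller twistless graphs.} I would attempt this by an ear-decomposition-style induction, exploiting that in a twistless graph a triangle cannot be joined to the rest along a single edge or vertex (that edge or vertex would be separating), so a suitable twistless-star neighbourhood of the triangle can be split off while the complement stays twistless; iterating would produce the hierarchy. Combined with the base cases of (A) and with Lemma~\ref{LemmaNoSepVer} and Lemma~\ref{LemmaExtendingToTwistlessHierarchy}, this finishes the argument.

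The step I expect to be the main obstacle is the generalization of Lemma~\ref{LemmaChainOf6Cycles} to chains of $6$-cycles not sharing a common standard tree. In the present proof the segments $[v_{\lambda_{k-1}},v_{\lambda_k}]$ and $[v_{\lambda_k},v_{\lambda_{k+1}}]$ concatenate to a geodesic \emph{because} they both lie in one convex tree $t_*$; without this one must instead show the turning angle at the intermediate type-$2$ vertex is still $\pi$. For labels as small as $3$ the Moussong angle at a type-$2$ vertex inside a fundamental domain can be a small multiple of $\pi/6$, so a priori a characteristic subgraph could ``fold'' across a wall of $D_\Gamma$; excluding this is precisely where the XXXL hypothesis does the work in \cite{blufstein2024homomorphisms}, and reproducing it in the general large-type case will require a subtler interplay between the local angle estimates and the global loop structure of the twistless graph---morally, a vanishing-of-monodromy statement for the gluing of fundamental domains along standard trees, which is the geometric shadow of $\Gamma$ being twistless. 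A secondary difficulty is the combinatorial decomposition theorem in (B), whose validity for all triangle-containing twistless graphs I have not checked and which may itself force additional base cases beyond twistless stars and triangle-free graphs.
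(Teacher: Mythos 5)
The statement you are addressing is presented in the paper as \emph{Conjecture}~\ref{conj:deligne}; the authors do not prove it. What they prove is the special case where $\Gamma$ admits a twistless hierarchy terminating in twistless stars (Proposition~\ref{prop:fund}), and they cite \cite{blufstein2024homomorphisms} for the XXXL case. There is therefore no proof in the paper to compare your plan against; your sketch must be judged as an attempt at an open problem. Judged that way, it correctly isolates the two missing ingredients but resolves neither, so it remains a programme rather than a proof.

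The central gap is the one you name yourself. Every base case that does not already fall under Lemma~\ref{LemmaNoSepVer} needs an analogue of Lemma~\ref{LemmaChainOf6Cycles} with no common convex standard tree $t_*$; but the step in that lemma where $[v_{\lambda_{k-1}},v_{\lambda_k}]$ and $[v_{\lambda_k},v_{\lambda_{k+1}}]$ concatenate to a geodesic rests entirely on both segments lying inside the convex tree $t_*$, and once you discard $t_*$ you must lower-bound the turning angle at an intermediate type-$2$ vertex. In the Moussong metric, at a vertex of a $3$-labelled dihedral parabolic that angle is $\pi/3$ within a single fundamental domain, so two or three consecutive such turns can easily sum to strictly less than $\pi$: a characteristic cycle really can ``fold'' across a wall of $D_\Gamma$ in the large-type range, which is exactly the phenomenon that the XXXL hypothesis rules out in \cite{blufstein2024homomorphisms}. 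The flat-quadrilateral argument in Lemma~\ref{LemmaTwoStdTreesBorderingTwoFD} cannot substitute here because it requires the two relevant generators to be \emph{non}-adjacent (that is what pushes the angle at $v_\emptyset$ past $\pi$), whereas a characteristic cycle in your irreducible base cases necessarily passes through adjacent pairs. Your secondary worry about ingredient~(B), the decomposition theorem, is also legitimate: nothing in the paper supports it, the ear-decomposition heuristic has not been carried out, and if it fails it multiplies the irreducible base cases that must be handled by the unresolved geometric argument. Until a monodromy-vanishing statement of the kind you gesture at in your final sentence is actually established, this proposal does not prove the conjecture.
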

	
	The following is a consequence of Proposition~\ref{lem:onlyif}, Proposition~\ref{prop:convert} and Proposition~\ref{prop:fund} (the assumption of the following theorem implies $\Gamma$ is connected and does not have leaf vertices).
	\begin{theorem}
		\label{thm:main}
		Suppose $A_\Gamma$ is of large-type. If $\Gamma$ admits a twistless hierarchy terminating in twistless stars, then Conjecture~\ref{conj:main} holds.
	\end{theorem}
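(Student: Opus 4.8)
Theorem~\ref{thm:main} asserts that Conjecture~\ref{conj:main} holds whenever $A_\Gamma$ is of large-type and $\Gamma$ admits a twistless hierarchy terminating in twistless stars. By the way the excerpt is set up, there is essentially nothing left to do but assemble three already-proved ingredients. First, the \emph{only if} direction of the conjecture is Proposition~\ref{lem:onlyif}, which is proved in full generality (no hierarchy assumption needed). Second, the hypothesis that $\Gamma$ admits a twistless hierarchy terminating in twistless stars forces $\Gamma$ to be connected and twistless in particular: a twistless star has no separating vertex or edge, and the twistless decompositions in the hierarchy never introduce one, so in particular $\Gamma$ has no leaf vertex and $\Gamma \neq \Delta_{333}$ is not an issue unless $\Gamma$ literally is $\Delta_{333}$ — but $\Delta_{333}$ is itself a twistless star (a complete graph), so it is covered directly as a base case and I should note this separately. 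Third, Proposition~\ref{prop:fund} says precisely that if $\Gamma$ admits such a hierarchy then $\Gamma$ is fundamental with respect to $D_\Gamma$ in the sense of Definition~\ref{DefiFundGraphI}.

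\medskip

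\noindent\textbf{Carrying it out.} I would organize the proof as follows. Step 1: observe that the hypothesis implies $\Gamma$ is connected, twistless (hence without separating vertex or separating edge, in particular no leaf vertex). Step 2: handle the exceptional case $\Gamma = \Delta_{333}$ separately — here $A_\Gamma$ is commensurable to $\mcg^\pm(\Sigma_5)$ and $I_\Gamma \cong \mathcal C_5$ by Definition~\ref{defFstar}, so $\aut(I_\Gamma) = \aut(\mathcal C_5)$ is essentially $\mcg^\pm(\Sigma_5)$ by the standard curve-complex rigidity (Ivanov), which is a discrete group; and one checks $\Delta_{333}$ is twistless and star rigid, and the natural map $A_\Gamma \to \aut(I_\Gamma)$ has finite-index image, consistent with both clauses of the conjecture. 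Alternatively, one simply remarks that in the statement of Theorem~\ref{thm:main} no exclusion of $\Delta_{333}$ is made, so this case must be addressed, but it follows immediately from Section~\ref{SectionI=CurveGraph} together with classical mapping class group results. Step 3: assume $\Gamma \neq \Delta_{333}$. Apply Proposition~\ref{prop:fund} to conclude $\Gamma$ is fundamental with respect to $D_\Gamma$. Step 4: feed this into Proposition~\ref{prop:convert}, which (under the hypotheses: $\Gamma \neq \Delta_{333}$, $\Gamma$ connected without leaf vertices, $\Gamma$ fundamental) gives that $\aut(I_\Gamma)$ is locally compact and isomorphic to $\aut(C_\Gamma)$, and moreover discrete and isomorphic to $\aut_\Gamma(A_\Gamma)$ when $\Gamma$ is in addition star rigid. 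Combining Step~4 with the ``only if'' Proposition~\ref{lem:onlyif}, we obtain both equivalences of Conjecture~\ref{conj:main}: local compactness $\Leftrightarrow$ (connected and twistless), discreteness $\Leftrightarrow$ (connected, twistless, star rigid), together with the stated identifications of $\aut(I_\Gamma)$ with the cellular automorphism group of the Cayley complex, respectively with $\aut_\Gamma(A_\Gamma)$ having finite-index image of $A_\Gamma$.

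\medskip

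\noindent\textbf{Where the difficulty lies.} Since all the substantive work has been carried out in the preceding sections — the characterisation of type-$E$ vertices (Corollary~\ref{CorollaryExoticSentToExotic}), the type-preservation for $T$ and $D$ (Proposition~\ref{PropAutomorphismsPreserveType}), the comparison isomorphisms $h_1, h_2, h_3$ (Proposition~\ref{prop:first comparison}, Theorem~\ref{thm:2ndiso}, Lemma~\ref{lem:third comparison}), and the verification that a twistless hierarchy terminating at twistless stars yields a fundamental graph (Proposition~\ref{prop:fund}, resting on Lemmas~\ref{LemmaNoSepVer} and \ref{LemmaExtendingToTwistlessHierarchy}) — the proof of Theorem~\ref{thm:main} itself is a short citation argument. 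The only real subtlety is bookkeeping: making sure the hypotheses of Proposition~\ref{prop:convert} (connected, no leaf vertices, $\Gamma \neq \Delta_{333}$, fundamental) are all genuinely implied by ``twistless hierarchy terminating in twistless stars'', and correctly matching the conclusions of Proposition~\ref{prop:convert} with the two bullet points in the statement of Conjecture~\ref{conj:main}. I would therefore write the proof as: ``The assumption implies $\Gamma$ is connected and does not have leaf vertices. If $\Gamma = \Delta_{333}$, apply Section~\ref{SectionI=CurveGraph} and curve-graph rigidity. Otherwise, $\Gamma$ is fundamental by Proposition~\ref{prop:fund}, so Conjecture~\ref{conj:main} follows from Proposition~\ref{lem:onlyif} and Proposition~\ref{prop:convert}.''
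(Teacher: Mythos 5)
Your proposal is correct and follows the paper's route exactly: the paper proves Theorem~\ref{thm:main} as an immediate consequence of Proposition~\ref{lem:onlyif}, Proposition~\ref{prop:fund} and Proposition~\ref{prop:convert}, after noting that the hypothesis forces $\Gamma$ to be connected with no leaf vertices. Your separate treatment of the case $\Gamma = \Delta_{333}$ — which \emph{is} a twistless star and hence satisfies the hypothesis of the theorem, yet is explicitly excluded from Proposition~\ref{prop:convert} — is a worthwhile piece of bookkeeping that the paper's one-line argument silently elides; handling it via the curve-graph identification of Section~\ref{SectionI=CurveGraph} together with Ivanov-type rigidity is a clean patch.
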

	
	\begin{theorem}
		\label{thm:qi reduction}
		If Conjecture~\ref{conj:deligne} holds. Then Conjecture~\ref{conj:qi} holds.
	\end{theorem}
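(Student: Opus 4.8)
Assume Conjecture~\ref{conj:deligne}, namely that every large-type twistless graph is fundamental with respect to its Deligne complex, and fix a large-type $A_\Gamma$ with $\Gamma\neq\Delta_{333}$. The plan is to prove the two biconditionals of Conjecture~\ref{conj:qi} one direction at a time; Conjecture~\ref{conj:deligne} is used only for the ``if'' implications, which reduce immediately to Proposition~\ref{prop:qi}. Indeed, suppose $\Gamma$ is connected and twistless. Then $\Gamma$ is connected, has no valence-one vertex, and is not $\Delta_{333}$; and by Conjecture~\ref{conj:deligne} it is fundamental with respect to $D_\Gamma$. Hence every hypothesis of Proposition~\ref{prop:qi} is in force, and its first assertion gives that any self quasi-isometry of $A_\Gamma$ is at bounded distance from an automorphism of $C_\Gamma$ — the ``if'' half of Conjecture~\ref{conj:qi}(1). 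If moreover $\Gamma$ is star rigid, then $\aut(C_\Gamma)$ is discrete by Lemma~\ref{lem:star rigid}, so the ``in addition'' clause of Proposition~\ref{prop:qi} applies and yields that any self quasi-isometry of $A_\Gamma$ is at bounded distance from an automorphism of $A_\Gamma$ — the ``if'' half of Conjecture~\ref{conj:qi}(2). This is exactly the mechanism by which Corollary~\ref{cor:convert} derives Conjecture~\ref{conj:main} from Conjecture~\ref{conj:deligne}.

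For the ``only if'' implications, which do not need Conjecture~\ref{conj:deligne}, one transports the obstructions in the proof of Proposition~\ref{lem:onlyif} to the quasi-isometry group via the homomorphism $\Phi$ of Definition~\ref{def:Phi}. If $\Gamma$ is disconnected, then $A_\Gamma$ is a nontrivial free product and the flexibility of the Bass--Serre tree of the free splitting yields uncountably many pairwise non-equivalent self quasi-isometries of $A_\Gamma$; since $\aut(A_\Gamma)$ is countable (being determined by the images of the finitely many standard generators in the countable group $A_\Gamma$), not every self quasi-isometry can be boundedly close to an automorphism of $A_\Gamma$, and a parallel argument using local compactness of $\aut(C_\Gamma)$ rules out the $C_\Gamma$ statement. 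If $\Gamma$ is connected but carries a separating vertex or edge, the twist automorphisms $\phi_n$ from the proof of Proposition~\ref{lem:onlyif} induce, via $\Phi$, an infinite sequence in $\aut(I_\Gamma)$ fixing a common vertex and admitting no convergent subsequence, so $\qi(A_\Gamma)$ cannot be realised as a locally compact group of automorphisms of $C_\Gamma$; and the partial twists along varying subtrees again form an uncountable subset of $\qi(A_\Gamma)$, which excludes the $\aut(A_\Gamma)$ statement. Finally, if $\Gamma$ is connected and twistless but not star rigid, the uncountable family of elements of $\aut_0(D_\Gamma)$ built in the proof of Lemma~\ref{lem:star rigid} produces, through the first comparison homomorphism $h_1$, an uncountable family of pairwise non-equivalent self quasi-isometries, again contradicting countability of $\aut(A_\Gamma)$.

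The ``if'' implications are essentially formal once Conjecture~\ref{conj:deligne} supplies fundamentality, so the genuine (though routine) work is confined to the ``only if'' implications. The one point that requires care is making precise the sense in which the non-local-compactness and non-discreteness statements about $\aut(I_\Gamma)$ coming from Proposition~\ref{lem:onlyif} obstruct self quasi-isometries of $A_\Gamma$ from being uniformly close to automorphisms of $C_\Gamma$ (respectively of $A_\Gamma$): this is a bookkeeping argument combining $\Phi$, the uniqueness statement in Proposition~\ref{prop:Phi injective}, and the cardinality comparison above, and it introduces no conceptual difficulty beyond what already appears in the proof of Proposition~\ref{lem:onlyif}.
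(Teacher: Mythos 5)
Your treatment of the ``if'' directions is correct and matches the paper's proof exactly: Conjecture~\ref{conj:deligne} furnishes fundamentality when $\Gamma$ is connected and twistless, twistlessness (for $|V(\Gamma)|\ge 3$) rules out valence-one vertices, and Proposition~\ref{prop:qi} then delivers both implications, with Lemma~\ref{lem:star rigid} handling the discreteness hypothesis in the star-rigid case.

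The ``only if'' directions, however, are where the paper is terse (``similar to Proposition~\ref{lem:onlyif}'') and where your filled-in argument does not hold up. Two concrete problems. First, the statement to negate in Conjecture~\ref{conj:qi}(1) is ``every self quasi-isometry is at bounded distance from a cellular automorphism of $C_\Gamma$''; its negation is the exhibition of a specific self quasi-isometry that is \emph{not} at bounded distance from any element of $\aut(C_\Gamma)$. Your conclusion that ``$\qi(A_\Gamma)$ cannot be realised as a locally compact group of automorphisms of $C_\Gamma$'' is not the same statement, and the non-convergence of $\Phi(\phi_n)$ in $\aut(I_\Gamma)$ does not by itself produce such a quasi-isometry: $\aut(C_\Gamma)$ is always locally compact, and the map $\Phi$ need not be surjective, so non-local-compactness of $\aut(I_\Gamma)$ tells you nothing directly about which quasi-isometries are or are not close to elements of $\aut(C_\Gamma)$. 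What is actually true — and what needs to be argued explicitly — is that each nontrivial twist $\phi_n$ is \emph{itself} such a quasi-isometry, because it sends the standard line $\{s^k\}_{k\in\mathbb Z}$ through the identity (for a generator $s$ of $\Gamma_2$ not in $\Gamma_1$) to the set $\{z^ns^kz^{-n}\}_{k\in\mathbb Z}$, which is not at finite Hausdorff distance from any standard line of $C_\Gamma$ (by Lemma~\ref{lemcommensurableequal}); a cellular automorphism of $C_\Gamma$ permutes standard lines, so $\phi_n$ cannot be boundedly close to one. Second, your cardinality argument against the $\aut(C_\Gamma)$ statement in the disconnected case is invalid: $\aut(C_\Gamma)$ is a locally compact Polish group and is generically uncountable, so no cardinality mismatch arises. (The cardinality comparison against $\aut(A_\Gamma)$ is fine in principle for Conjecture~\ref{conj:qi}(2), but then one must still show that uncountably many of the constructed quasi-isometries are pairwise at infinite Hausdorff distance from one another after post-composing by left translations, which is asserted but not argued; and the count of $\aut_0(D_\Gamma)$ in the proof of Lemma~\ref{lem:star rigid} is only shown to be infinite, not uncountable.) The remedy is to replace these indirect arguments with the direct one above — exhibiting explicit quasi-isometries (twists for the separating case, piecewise-defined automorphisms from Lemma~\ref{lem:star rigid} for the non-star-rigid case) and proving they destroy the structure ($C_\Gamma$-standard lines, respectively group-equivariance) that any cellular automorphism of $C_\Gamma$ (respectively automorphism of $A_\Gamma$) would preserve.
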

	
	\begin{proof}
		The only if direction of Conjecture~\ref{conj:qi} can be proved in a similar way to the only if direction of Conjecture~\ref{conj:main}. The if direction follows from Proposition~\ref{prop:qi}.
	\end{proof}

	\subsection{ME and OE rigidity}	
	Recall that an Artin group is of hyperbolic-type if the associated Coxeter group is Gromov hyperbolic. A large-type Artin group $A_\Gamma$ is of \emph{hyperbolic-type} if $\Gamma$ does not contain a triangle with each edge labelled by $3$. In this case, $I^{TD}_\Gamma=I_\Gamma$.
	\begin{proposition}
		\label{prop:ME}
		Suppose $A_\Gamma$ is a large-type, hyperbolic-type Artin group such that $\Gamma$ is connected and does not have leaf vertices. Suppose $\Gamma$ is fundamental with respect to $D_\Gamma$ in the sense of Definition~\ref{DefiFundGraphI}. Then any countable group measure equivalent to $A_\Gamma$ is virtually a lattice in the locally compact topological group $\aut(C_\Gamma)$.
		
		If in addition the topological group $\aut(C_\Gamma)$ is discrete (equivalently $\Gamma$ is star rigid), then any countable group measure equivalent to $A_\Gamma$ is virtually isomorphic to $A_\Gamma$.
	\end{proposition}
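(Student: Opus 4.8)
The plan is to obtain Proposition~\ref{prop:ME} by feeding the new identification of $\aut(I_\Gamma)$ supplied by Proposition~\ref{prop:convert} into the measure-equivalence rigidity machinery developed by Horbez and Huang in \cite{horbez2020boundary}; the structure parallels the quasi-isometry argument of Proposition~\ref{prop:qi}, with the homomorphism $\Phi$ of Definition~\ref{def:Phi} replaced by its measure-theoretic analogue. I would first record two immediate consequences of the hypotheses. Since $A_\Gamma$ is large-type of hyperbolic-type, $\Gamma$ contains no triangle with all edges labelled $3$; hence $\Gamma\neq\Delta_{333}$ and $I_\Gamma=I^{TD}_\Gamma$, so there are no vertices of type $E$. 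And since $\Gamma$ is connected, has no leaf vertices, is fundamental, and is $\neq\Delta_{333}$, Proposition~\ref{prop:convert} applies: the Polish group $\aut(I_\Gamma)$ is locally compact and is topologically isomorphic to $\aut(C_\Gamma)$, which is locally compact second countable with the compact-open topology ($C_\Gamma$ being locally finite).

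Next I would invoke the rigidity input of \cite{horbez2020boundary}. Under the present hypotheses $A_\Gamma$ is boundary amenable, and the intersection graph $I_\Gamma$ — together with the boundary of the modified Deligne complex $D_\Gamma$ — serves as a rigidity model for $A_\Gamma$ in the sense of measure equivalence: given a measure coupling between $A_\Gamma$ and a countable group $G$, one extracts (after passing to an ergodic component) a measurable equivariant map to $\aut(I_\Gamma)$, whose rigidity forces $G$ to be virtually isomorphic to a lattice in $\aut(I_\Gamma)$. In \cite{horbez2020boundary} the triangle-free hypothesis entered only to make $\aut(I_\Gamma)$ explicit via Crisp's computation; boundary amenability, the construction of the cocycle, and its untwisting do not see it, so the statement holds in our generality once $\aut(I_\Gamma)$ is known to be locally compact — which is exactly what Proposition~\ref{prop:convert} provides. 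Substituting $\aut(I_\Gamma)\cong\aut(C_\Gamma)$ then yields the first assertion: any countable group measure equivalent to $A_\Gamma$ is virtually a lattice in the locally compact topological group $\aut(C_\Gamma)$.

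For the second assertion, assume $\aut(C_\Gamma)$ is discrete; by Lemma~\ref{lem:star rigid} this is equivalent to $\Gamma$ being star rigid. A lattice in a discrete group is simply a finite-index subgroup. Now $A_\Gamma$ acts freely on its Cayley complex $C_\Gamma$ by left translations, transitively on vertices, so inside the discrete group $\aut(C_\Gamma)$ one has $\aut(C_\Gamma)=A_\Gamma\cdot\mathrm{Stab}(v_0)$ for a base vertex $v_0$, with $\mathrm{Stab}(v_0)$ finite because $C_\Gamma$ is locally finite and $\aut(C_\Gamma)$ is discrete; hence $A_\Gamma$ has finite index in $\aut(C_\Gamma)$. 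Therefore every finite-index subgroup of $\aut(C_\Gamma)$ is virtually isomorphic to $A_\Gamma$, and any countable group measure equivalent to $A_\Gamma$ is virtually isomorphic to $A_\Gamma$.

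The main obstacle is the middle step: one must check that the rigidity argument of \cite{horbez2020boundary} genuinely survives dropping the triangle-free assumption and that its conclusion can be phrased purely in terms of $\aut(I_\Gamma)$ with no a priori model for that group — in the paper this is presumably already isolated as a black-box statement about $I_\Gamma$ being a measure-equivalence rigidity model, whose hypotheses are satisfied here precisely because $A_\Gamma$ is large-type of hyperbolic-type with $\Gamma$ connected and leaf-free. Granting that, the only essentially new ingredient is Proposition~\ref{prop:convert}, and the remaining manipulations — tracking finite normal subgroups and finite-index subgroups through ``virtually isomorphic'', and the freeness and cocompactness of the $A_\Gamma$-action on $C_\Gamma$ — are routine.
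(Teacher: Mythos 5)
Your proposal is correct and follows essentially the same route as the paper: both first establish that $\aut(I_\Gamma)\cong\aut(C_\Gamma)$ via the Section~\ref{subsecComparison} comparison maps (you do this through Proposition~\ref{prop:convert}; the paper equivalently assembles it from Lemma~\ref{lem:second comparison} and Theorem~\ref{thm:2ndiso}), and then feed this into the measure-equivalence rigidity machine of \cite{horbez2020boundary}. Two remarks on precision. First, the step you flag as an obstacle --- whether the rigidity argument of \cite{horbez2020boundary} survives dropping triangle-freeness --- is already packaged in that reference: the paper invokes its Theorems~10.4 and~10.5, which are stated for large-type hyperbolic-type Artin groups in terms of the comparison map between $\aut(C_\Gamma)$ and $\aut(I_\Gamma)$ being an isomorphism, with no triangle-free hypothesis; what matters, and what one must not gloss over, is that the relevant isomorphism $\aut(C_\Gamma)\to\aut(I_\Gamma)$ is literally the map $h_2\circ h_1^{-1}$ constructed in \cite[Section~8]{horbez2020boundary}, not merely some abstract isomorphism of Polish groups, and the paper makes this explicit. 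Second, for the ``in addition'' clause you derive it from the first part by observing that in a discrete locally compact group lattices are finite-index subgroups and that $A_\Gamma\hookrightarrow\aut(C_\Gamma)$ with finite-index image (via transitivity on vertices and finiteness of vertex stabilisers); this is correct and amounts to the same observation the paper uses when it appeals to \cite[Theorem~10.5]{horbez2020boundary} together with the remark that discreteness forces $A_\Gamma\to\aut(C_\Gamma)$ to have finite-index image. So the two arguments are essentially identical; the only substantive gap in yours is that you present as ``presumed'' and ``to be checked'' what is in fact a precise citable statement, and you should name that statement rather than leave it as an open verification.
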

	
	\begin{proof}
		By Lemma~\ref{lem:second comparison} and Theorem~\ref{thm:2ndiso}, $h_2$ is an isomorphism. Then $h_2\circ h^{-1}_1:\aut(C_\Gamma)\to \aut(I_\Gamma)$ is an isomorphism, and it is the comparison map between $\aut(C_\Gamma)$ and $\aut(I_\Gamma)$ as defined in \cite[Section 8]{horbez2020boundary}. Thus the first part of the proposition follows from \cite[Theorem 10.4]{horbez2020boundary}.
		The in addition part follows from \cite[Theorem 10.5]{horbez2020boundary}, as $\aut(C_\Gamma)$ is discrete implies that the natural map $A_\Gamma\to \aut(C_\Gamma)$ induced by action of $A_\Gamma$ on $C_\Gamma$ by left translations has finite index image.
	\end{proof}

	\begin{proposition}
		\label{prop:SOE}
		Suppose $A_\Gamma$ is a large-type, hyperbolic-type Artin group such that $\Gamma$ is connected and does not have leaf vertices. Suppose $\Gamma$ is fundamental with respect to $D_\Gamma$ in the sense of Definition~\ref{DefiFundGraphI}.
		Assume in addition that $\Gamma$ is star rigid.
		
		Then for any ergodic measure-preserving essentially free $A_\Gamma\actson X$ on a probability measure space $X$, and any countable group $G'$ with an ergodic measure-preserving essentially free $G'$-action on a standard probability space $X'$, as long as actions $G\actson X$ and $G'\actson X'$ are stably orbit equivalent, then they are virtually conjugate.
	\end{proposition}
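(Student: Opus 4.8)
The plan is to imitate the proof of Proposition~\ref{prop:ME}, replacing the measure-equivalence rigidity input from \cite{horbez2020boundary} by its stable-orbit-equivalence counterpart, after first setting up the comparison isomorphism and recording that the canonical group in play is discrete.

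First I would collect the structural facts. Since $A_\Gamma$ is of hyperbolic-type, $\Gamma$ contains no triangle with all edges labelled $3$, so $I_\Gamma$ has no vertices of type $E$ and $I_\Gamma^{TD}=I_\Gamma$. As $\Gamma$ is connected, has no leaf vertices and is fundamental, Lemma~\ref{lem:second comparison} and Theorem~\ref{thm:2ndiso} show that $h_2\colon\aut(D_\Gamma)\to\aut(I_\Gamma^{TD})=\aut(I_\Gamma)$ is an isomorphism; composing with the isomorphism $h_1\colon\aut(D_\Gamma)\to\aut(C_\Gamma)$ of Proposition~\ref{prop:first comparison}, I obtain an isomorphism $h_2\circ h_1^{-1}\colon\aut(C_\Gamma)\to\aut(I_\Gamma)$, which is precisely the comparison homomorphism of \cite[Section~8]{horbez2020boundary}. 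Next, by Lemma~\ref{lem:star rigid}, star rigidity of $\Gamma$ forces $\aut(C_\Gamma)$ to be discrete, hence the tautological left-translation map $A_\Gamma\to\aut(C_\Gamma)$ has finite-index image.

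With these two inputs, the statement follows by invoking the strong stable-orbit-equivalence rigidity theorem for large-type, hyperbolic-type Artin groups from \cite{horbez2020boundary} (the action-level analogue of \cite[Theorem~10.5]{horbez2020boundary}): from the measured coupling encoding the stable orbit equivalence between $A_\Gamma\actson X$ and $G'\actson X'$, together with the fact that the comparison map is an isomorphism onto $\aut(I_\Gamma)$, that theorem produces a virtual conjugacy between the two actions, and in particular a virtual isomorphism between $A_\Gamma$ and $G'$. Here the hyperbolic-type hypothesis is used, exactly as in \cite{horbez2020boundary}, to supply the hyperbolicity inputs (on the relevant Deligne-type space) feeding the cocycle-rigidity machinery, while star rigidity together with fundamentality guarantees that the canonical locally compact group attached to $A_\Gamma$ by this machinery is the discrete finite-index overgroup $\aut(C_\Gamma)$, which is what upgrades ``virtually a lattice in $\aut(C_\Gamma)$'' to ``virtually conjugate''.

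The hard part is not the formal deduction but verifying that the rigidity theorem of \cite{horbez2020boundary}, whose proof was written for triangle-free large-type Artin groups, applies verbatim in our generality: one must check that (i) the intersection graph $I_\Gamma$ correctly plays the role of the curve-graph analogue in that argument, (ii) the comparison homomorphism $\aut(C_\Gamma)\to\aut(I_\Gamma)$ being an isomorphism is the only point at which triangle-freeness entered in that direction --- which is exactly what Theorem~\ref{thm:2ndiso} repairs --- and (iii) the measured-groupoid and boundary-amenability steps only rely on $A_\Gamma$ being large-type and of hyperbolic-type. This is the same style of ``verbatim extension'' already invoked for Proposition~\ref{prop:first comparison} and Proposition~\ref{prop:ME}; once it is checked, the conclusion is immediate.
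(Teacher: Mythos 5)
Your proposal follows essentially the same route as the paper: combine the comparison isomorphisms $h_1,h_2$ (Proposition~\ref{prop:first comparison}, Theorem~\ref{thm:2ndiso}) under the hyperbolic-type hypothesis (so $I_\Gamma = I_\Gamma^{TD}$), use star rigidity via Lemma~\ref{lem:star rigid} to get discreteness of the canonical automorphism group, and then feed this into the orbit-equivalence rigidity machinery of \cite{horbez2020boundary}. The paper packages the first part as a single appeal to Proposition~\ref{prop:convert} (which identifies $\aut(I_\Gamma)\cong\aut_\Gamma(A_\Gamma)$, equivalent to your $\aut(C_\Gamma)\cong\aut(I_\Gamma)$ plus discreteness), but that is the same content you re-derive. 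The one real imprecision in your write-up is the final citation: you gesture at ``an action-level analogue of \cite[Theorem~10.5]{horbez2020boundary}'' without naming a theorem that actually exists in that form, whereas the paper pins it down as the combination of \cite[Theorem~10.1]{horbez2020boundary} with \cite[Lemma~4.18]{furman2009survey}, the latter being the standard passage from stable orbit equivalence to measure equivalence so that the coupling rigidity of the former can be applied and then upgraded to virtual conjugacy. Your closing paragraph's concern about whether the triangle-free proofs extend verbatim is reasonable but is already absorbed by the way the paper sets up its comparison maps, so it is not an additional verification you would need to carry out separately.
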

	
	\begin{proof}
		By Proposition~\ref{prop:convert}, $\aut(I_\Gamma)$ is isomorphic to $\aut_{\Gamma}(A_{\Gamma})$. Now the proposition follows from \cite[Theorem 10.1]{horbez2020boundary}, and \cite[Lemma 4.18]{furman2009survey}.
	\end{proof}
	
	\begin{corollary}
		\label{cor:ME}
		Suppose $A_\Gamma$ is a large-type, hyperbolic-type Artin group such that $\Gamma$ has a twistless hierarchy terminating in twistless stars. Then any countable group measure equivalent to $A_\Gamma$ is virtually a lattice in the locally compact topological group $\aut(C_\Gamma)$.
		
		If in addition the graph $\Gamma$ is star rigid, then any countable group measure equivalent to $A_\Gamma$ is virtually isomorphic to $A_\Gamma$. Moreover, then for any ergodic measure-preserving essentially free $A_\Gamma\actson X$ on a probability measure space $X$, and any countable group $G'$ with an ergodic measure-preserving essentially free $G'$-action on a standard probability space $X'$, as long as actions $G\actson X$ and $G'\actson X'$ are stably orbit equivalent, then they are virtually conjugate.
	\end{corollary}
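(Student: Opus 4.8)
The plan is to obtain Corollary~\ref{cor:ME} purely by assembling earlier results, since all the analytic content is already packaged in Proposition~\ref{prop:ME} and Proposition~\ref{prop:SOE}. First I would record the elementary combinatorial observation that if $\Gamma$ admits a twistless hierarchy terminating in twistless stars, then $\Gamma$ itself is connected and has no leaf (valence-one) vertices: a twistless star is connected with no separating vertex or edge, and an admissible twistless decomposition $\Gamma'=\Gamma'_1\cup\Gamma'_2$ with $\Gamma'_1\cap\Gamma'_2$ neither empty, a vertex, nor an edge preserves connectedness and cannot create a leaf, so running the hierarchy from the terminal twistless stars back up to $\Gamma$ keeps these two properties. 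This is the same remark already invoked before Theorem~\ref{thm:main}.

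Next, by Proposition~\ref{prop:fund}, the hypothesis that $\Gamma$ admits a twistless hierarchy terminating in twistless stars implies that $\Gamma$ is fundamental with respect to $D_\Gamma$ in the sense of Definition~\ref{DefiFundGraphI}. Together with the first step and the standing assumption that $A_\Gamma$ is of hyperbolic-type (so that $I^{TD}_\Gamma=I_\Gamma$), this means all the hypotheses of Proposition~\ref{prop:ME} are satisfied, giving the first assertion: any countable group measure equivalent to $A_\Gamma$ is virtually a lattice in the locally compact group $\aut(C_\Gamma)$.

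For the ``in addition'' part, suppose further that $\Gamma$ is star rigid. Then by Lemma~\ref{lem:star rigid} the topological group $\aut(C_\Gamma)$ is discrete, so the ``in addition'' clause of Proposition~\ref{prop:ME} applies and yields that any countable group measure equivalent to $A_\Gamma$ is virtually isomorphic to $A_\Gamma$. The final sentence, about stable orbit equivalence implying virtual conjugacy, is exactly Proposition~\ref{prop:SOE}: its hypotheses (large-type, hyperbolic-type, $\Gamma$ connected without leaf vertices, $\Gamma$ fundamental, and $\Gamma$ star rigid) are all in force by the preceding steps, and here $G=A_\Gamma$.

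The only point that needs attention — and the closest thing to an obstacle in an otherwise routine deduction — is the bookkeeping: verifying that ``twistless hierarchy terminating in twistless stars'' genuinely supplies both ``$\Gamma$ connected with no leaf vertices'' and ``$\Gamma$ fundamental'' (the latter via Proposition~\ref{prop:fund}), and that the hyperbolic-type assumption is what makes $I^{TD}_\Gamma=I_\Gamma$, so the comparison homomorphisms $h_1,h_2,h_3$ used inside the proofs of Propositions~\ref{prop:ME} and~\ref{prop:SOE} are the isomorphisms claimed there. Everything else is a direct citation of the quoted measure-equivalence results, so the proof is a couple of lines once these checks are in place.
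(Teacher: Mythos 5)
Your proof is correct and follows essentially the same route as the paper, which proves Corollary~\ref{cor:ME} by citing Proposition~\ref{prop:ME}, Proposition~\ref{prop:SOE}, and Proposition~\ref{prop:fund}; you simply spell out the hypothesis-checking (connectedness, no leaf vertices, fundamentality via Proposition~\ref{prop:fund}, discreteness of $\aut(C_\Gamma)$ via Lemma~\ref{lem:star rigid}) that the paper leaves implicit.
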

	
	\begin{proof}
		Proposition~\ref{prop:ME}, Proposition~\ref{prop:SOE} and Proposition~\ref{prop:fund}.
	\end{proof}
	
	Now we discuss some consequences on lattice embedding of Artin groups. The following two theorems can be proved in a similar way as Proposition~\ref{prop:ME}, using 	\cite[Theorem 10.9]{horbez2020boundary} and 	\cite[Theorem 10.10]{horbez2020boundary} respectively.
	
	\begin{theorem}\label{theo:lattice-embeddings1}
		Suppose $A_\Gamma$ is a large-type, hyperbolic-type Artin group such that $\Gamma$ has a twistless hierarchy terminating in twistless stars. Suppose $G'$ is a finite index subgroup of $A_\Gamma$ which embeds into a locally compact second countable group $\mathsf{H}$ as a lattice via $f:G'\to \mathsf{H}$.
		
		Then there exists a continuous homomorphism $\psi:\mathsf{H}\to \aut(C_\Gamma)$ with compact kernel such that $\psi\circ f$ coincides with the natural map $G'\to\aut(C_\Gamma)$. 
	\end{theorem}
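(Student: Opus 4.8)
The plan is to follow the same template as the proof of Proposition~\ref{prop:ME}, reducing the statement to the abstract lattice-envelope theorem \cite[Theorem 10.9]{horbez2020boundary}. That theorem applies to two-dimensional Artin groups of hyperbolic type, a class containing our $A_\Gamma$ since large-type hyperbolic-type Artin groups are $2$-dimensional, and its conclusion about lattice embeddings is phrased in terms of the \emph{comparison map} $\aut(C_\Gamma)\to\aut(I_\Gamma)$ introduced in \cite[Section 8]{horbez2020boundary}; the hypothesis one needs to feed into it is precisely that this comparison map is an isomorphism. So the whole proof amounts to producing that isomorphism and then invoking \cite[Theorem 10.9]{horbez2020boundary}.

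First I would record that the standing assumption --- $\Gamma$ admits a twistless hierarchy terminating in twistless stars --- implies $\Gamma$ is connected and has no leaf vertices, and that by Proposition~\ref{prop:fund} the graph $\Gamma$ is fundamental with respect to $D_\Gamma$. Hence Lemma~\ref{lem:second comparison} and Theorem~\ref{thm:2ndiso} apply: the second comparison homomorphism $h_2:\aut(D_\Gamma)\to\aut(I^{TD}_\Gamma)$ is an isomorphism. Since $A_\Gamma$ is of hyperbolic-type, $\Gamma$ contains no triangle with all labels $3$, so $I_\Gamma$ has no vertex of type $E$ and $I^{TD}_\Gamma=I_\Gamma$; thus $h_2:\aut(D_\Gamma)\to\aut(I_\Gamma)$ is an isomorphism. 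Combining this with Proposition~\ref{prop:first comparison}, which gives that $h_1:\aut(D_\Gamma)\to\aut(C_\Gamma)$ is an isomorphism, the composite $h_2\circ h_1^{-1}:\aut(C_\Gamma)\to\aut(I_\Gamma)$ is an isomorphism of topological groups, and exactly as in the proof of Proposition~\ref{prop:ME}, unwinding the definitions of $h_1$ and $h_2$ shows it coincides with the comparison map of \cite[Section 8]{horbez2020boundary}.

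With this in hand, the theorem follows by applying \cite[Theorem 10.9]{horbez2020boundary} to the lattice embedding $f:G'\hookrightarrow\mathsf{H}$: it produces a continuous homomorphism $\psi:\mathsf{H}\to\aut(C_\Gamma)$ with compact kernel whose composite with $f$ is the natural map $G'\to\aut(C_\Gamma)$, namely the restriction to $G'\le A_\Gamma$ of the left-translation action of $A_\Gamma$ on $C_\Gamma$. Along the way one checks that the remaining mild non-degeneracy hypotheses of \cite[Theorem 10.9]{horbez2020boundary} --- boundary amenability of $A_\Gamma$ and the relevant measure-equivalence superrigidity input --- are met for large-type hyperbolic-type Artin groups with $\Gamma$ connected; these are the same inputs already used in Proposition~\ref{prop:ME} and Proposition~\ref{prop:SOE}.

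The genuine obstacle in this circle of ideas is not located in the present theorem at all: it lies in proving that $h_2$ is an isomorphism, which is precisely Theorem~\ref{thm:2ndiso} together with the verification, Proposition~\ref{prop:fund}, that twistless hierarchies terminating in twistless stars yield fundamental graphs. Granting those, the only point here that requires care is the bookkeeping identification of $h_2\circ h_1^{-1}$ with the comparison map of \cite{horbez2020boundary} --- an exercise in matching definitions, identical to the one carried out for Proposition~\ref{prop:ME} --- after which the cited Theorem~10.9 delivers the conclusion.
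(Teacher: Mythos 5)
Your proposal is correct and follows exactly the route the paper intends: the paper proves this theorem by noting it follows in the same way as Proposition~\ref{prop:ME}, establishing via Proposition~\ref{prop:fund}, Theorem~\ref{thm:2ndiso} and Proposition~\ref{prop:first comparison} that the comparison map $h_2\circ h_1^{-1}:\aut(C_\Gamma)\to\aut(I_\Gamma)$ is an isomorphism (using $I^{TD}_\Gamma=I_\Gamma$ in the hyperbolic-type case), and then invoking \cite[Theorem 10.9]{horbez2020boundary}.
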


	\begin{theorem}\label{theo:lattice-embeddings2}
		Suppose $A_\Gamma$ is a large-type, hyperbolic-type Artin group such that $\Gamma$ has a twistless hierarchy terminating in twistless stars. Then there exists $G_0$ which is virtually isomorphic to $G$ such that the following holds true.
		
		Suppose $A_\Gamma$ or one of its finite index subgroup embeds into a locally compact second countable group $\mathsf{H}$ as a lattice. Then there exists a continuous homomorphism $g:\mathsf{H}\to G_0$ with compact kernel.
	\end{theorem}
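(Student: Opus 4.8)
The plan is to follow the template of the proof of Proposition~\ref{prop:ME}, replacing its measure-equivalence input \cite[Theorem~10.4]{horbez2020boundary} by the lattice-envelope statement \cite[Theorem~10.10]{horbez2020boundary}. As in the introductory formulation (Theorem~\ref{theo:lattice-embeddings-2}) I will also use that $\Gamma$ is star rigid; this is what makes the output group $G_0$ virtually isomorphic to $G=A_\Gamma$ rather than merely to $\aut(C_\Gamma)$.

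First I would collect the structural consequences of the hypothesis. Since $\Gamma$ admits a twistless hierarchy terminating in twistless stars, it is connected, has no leaf vertex, and is fundamental with respect to $D_\Gamma$ in the sense of Definition~\ref{DefiFundGraphI}, by Proposition~\ref{prop:fund}. Because $A_\Gamma$ is of hyperbolic-type, $\Gamma$ contains no triangle with all labels $3$, so $I_\Gamma$ has no vertices of type $E$ and $I^{TD}_\Gamma=I_\Gamma$. Hence Theorem~\ref{thm:2ndiso} applies: the second comparison homomorphism $h_2:\aut(D_\Gamma)\to\aut(I^{TD}_\Gamma)=\aut(I_\Gamma)$ is an isomorphism, and by Proposition~\ref{prop:first comparison} the first comparison homomorphism $h_1:\aut(D_\Gamma)\to\aut(C_\Gamma)$ is an isomorphism. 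Composing, $h_2\circ h_1^{-1}:\aut(C_\Gamma)\to\aut(I_\Gamma)$ is an isomorphism of topological groups; and, exactly as verified inside the proof of Proposition~\ref{prop:ME}, it coincides with the comparison map between $\aut(C_\Gamma)$ and $\aut(I_\Gamma)$ built in \cite[Section~8]{horbez2020boundary}.

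Second, I would feed this into Horbez's lattice-envelope machinery. Star rigidity of $\Gamma$ implies, via Lemma~\ref{lem:star rigid}, that $\aut(C_\Gamma)$ is discrete; since $A_\Gamma$ acts on $C_\Gamma$ properly, cocompactly, and freely on the vertex set, the natural homomorphism $A_\Gamma\to\aut(C_\Gamma)$ is injective with finite-index image, so $\aut(C_\Gamma)$ is virtually isomorphic to $A_\Gamma$. With the comparison map identified as above, the hypotheses of \cite[Theorem~10.10]{horbez2020boundary} are now in force for $A_\Gamma$, and that theorem provides a group $G_0$, virtually isomorphic to $\aut(C_\Gamma)$ and hence to $G=A_\Gamma$, such that for every finite-index subgroup $G'\le A_\Gamma$ and every lattice embedding $f:G'\hookrightarrow\mathsf{H}$ into a locally compact second countable group $\mathsf{H}$ there is a continuous homomorphism $g:\mathsf{H}\to G_0$ with compact kernel (with $g\circ f$ the natural inclusion of $G'$ into $G_0$). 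Specializing to $G'=A_\Gamma$ or to an arbitrary finite-index subgroup gives the statement.

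The step I expect to be the actual obstacle is, as in the ME/OE results, not any of the citations but the identification carried out in the first step: one must check that the purely group-theoretic isomorphism $h_2\circ h_1^{-1}$ manufactured from Theorem~\ref{thm:2ndiso} and Proposition~\ref{prop:first comparison} genuinely agrees with the comparison map appearing in the hypotheses of \cite[Theorem~10.10]{horbez2020boundary} --- that it is continuous, equivariant with respect to left translations by $A_\Gamma$, and that it carries standard lines and blocks of $C_\Gamma$ to the type $1$ and type $2$ vertices of $I_\Gamma=I^{TD}_\Gamma$ in the prescribed fashion. This is done by unwinding Definition~\ref{def:first comparison} and Lemma~\ref{lem:second comparison} at the level of the induced bijections $A_\Gamma\to A_\Gamma$, which is precisely the verification already performed for Proposition~\ref{prop:ME}; once it is in place, the theorem follows by direct citation.
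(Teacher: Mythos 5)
Your proposal is correct and matches the paper's own approach: the paper's (terse) proof is exactly "argue as in Proposition~\ref{prop:ME}, citing \cite[Theorem~10.10]{horbez2020boundary}," which is what you spell out, using Proposition~\ref{prop:fund} plus hyperbolic-type to make $h_2\circ h_1^{-1}:\aut(C_\Gamma)\to\aut(I_\Gamma)$ an isomorphism and then Lemma~\ref{lem:star rigid} to get discreteness of $\aut(C_\Gamma)$. You are also right that star rigidity is genuinely needed for $G_0$ to be virtually isomorphic to $A_\Gamma$; the hypothesis appears in the introductory formulation Theorem~\ref{theo:lattice-embeddings-2} and is evidently an inadvertent omission in the body statement.
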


	\subsection{Rigidity of cross-product von Neumann algebra}
	Given a countable group $G$ and a standard probability space $(X,\mu)$ equipped with an ergodic measure-preserving free $G$-action by Borel automorphisms, let $L(G\actson X)$ be the associated cross-product von Neumann algebra.
	
	\begin{proposition}
		\label{prop:von-neumann-strong}
		Let $A_\Gamma$ be a large-type, hyperbolic-type Artin group. Suppose that
		\begin{enumerate}
			\item $\Gamma$ is connected without valence one vertex and it is not a complete graph;
			\item $\Gamma$ is star rigid, and Suppose $\Gamma$ is fundamental with respect to $D_\Gamma$ in the sense of Definition~\ref{DefiFundGraphI}.
		\end{enumerate}
		Let $H$ be a countable group, and let $A_\Gamma\actson X$ and $H\actson Y$ be free, ergodic, measure-preserving actions by Borel automorphisms on standard probability spaces.  
		
		If these two actions are stably $W^*$-equivalence, then they are virtually conjugate (in particular $A_\Gamma$ and $H$ are almost isomorphic).
	\end{proposition}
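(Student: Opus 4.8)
The plan is to run the standard Popa-style deformation/rigidity scheme: first establish that $L^\infty(X)$ is the unique Cartan subalgebra of $M:=L(A_\Gamma\actson X)$ up to unitary conjugacy, then use this to upgrade stable $W^*$-equivalence to stable orbit equivalence, and finally invoke the orbit equivalence rigidity of Proposition~\ref{prop:SOE}. Here $M$ is a $\mathrm{II}_1$ factor, the action being free, ergodic and measure-preserving with $A_\Gamma$ infinite, and likewise for $N:=L(H\actson Y)$. The Cartan-uniqueness statement is the only place where the extra hypothesis ``$\Gamma$ is not a complete graph'' is needed, and verifying its hypotheses is the technical heart of the argument.

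For that step, I would use the following. Since $\Gamma$ is connected, not complete and has no valence-one vertex, there is a vertex $v$ with $st(v)\subsetneq\Gamma$, yielding a non-trivial visual amalgamated free product decomposition
\[
A_\Gamma \;=\; A_{st(v)}\ \ast_{A_{\lk(v)}}\ A_{\Gamma\setminus\{v\}},
\]
in which both factors contain the parabolic edge group $A_{\lk(v)}$ properly and with infinite index. I would then check that this splitting satisfies the hypotheses of Ioana's Cartan-rigidity theorem \cite{ioana2015cartan}; the nontrivial point is to control the $A_\Gamma$-conjugates of the amalgamated parabolic subgroup, for which the relevant input is the structure theory of parabolic subgroups of large-type Artin groups (Theorem~\ref{ThmIntersectionParabolics} and \cite{cumplido2022parabolic,martin2023characterising}); this verification is essentially the content of \cite{horbez2020boundary}. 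Granting it, Ioana's theorem gives that $L^\infty(X)$ is the unique Cartan subalgebra of $M$ up to unitary conjugacy. This is the step I expect to be the main obstacle, and it is precisely where non-completeness of $\Gamma$ is exploited.

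Now suppose the two actions are stably $W^*$-equivalent, say $M^{t}\cong N$ for some $t>0$, equivalently $M\cong N^{1/t}$. Uniqueness of the Cartan subalgebra up to unitary conjugacy is an isomorphism-invariant property, so it holds in $N^{1/t}\cong M$; since $L^\infty(Y)^{1/t}$ is a Cartan subalgebra of $N^{1/t}$ (amplifications of Cartan subalgebras are again Cartan), it is unitarily conjugate inside $N^{1/t}$ to the image of $L^\infty(X)$. Composing the isomorphism with this unitary, we obtain an isomorphism $M\to N^{1/t}$ carrying $L^\infty(X)$ onto $L^\infty(Y)^{1/t}$; by Feldman--Moore theory and Singer's theorem this is implemented by an isomorphism of the associated measured equivalence relations up to amplification, i.e.\ $A_\Gamma\actson X$ and $H\actson Y$ are stably orbit equivalent. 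Finally, since $A_\Gamma$ is large-type and hyperbolic-type, $\Gamma$ is connected without valence-one vertices, star rigid and fundamental with respect to $D_\Gamma$, Proposition~\ref{prop:SOE} applies to these stably orbit equivalent actions and yields that they are virtually conjugate; in particular $A_\Gamma$ and $H$ are virtually isomorphic. Apart from the verification of Ioana's hypotheses, everything is a routine assembly of \cite{ioana2015cartan}, \cite{horbez2020boundary} and Proposition~\ref{prop:SOE}.
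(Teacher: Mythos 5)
Your proof follows the same route as the paper: establish uniqueness of the Cartan subalgebra $L^\infty(X)$ in $L(A_\Gamma\actson X)$ via Ioana's amalgamated-free-product Cartan-rigidity theorem (the paper cites this directly as \cite[Theorem 11.1]{horbez2020boundary}, which uses exactly such a splitting of $A_\Gamma$), upgrade stable $W^*$-equivalence to stable orbit equivalence, and conclude by Proposition~\ref{prop:SOE}. The paper simply black-boxes the Cartan step by citing the relevant theorem of \cite{horbez2020boundary} and the passage from $W^*$-equivalence to stable OE by citing \cite{horbez2022measure}, which is precisely what you acknowledge doing when you say the verification of Ioana's hypotheses ``is essentially the content of \cite{horbez2020boundary}.''
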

	
	\begin{proof}
		As $\Gamma$ is not a complete graph, by \cite[Theorem 11.1]{horbez2020boundary} (relying on \cite{Ioa2}), $L^\infty(X)$ is the unique Cartan subalgebra of $L(A_\Gamma\actson X)$ up to unitary conjugation. Thus the stably $W^*$-equivalence assumption implies that $A_\Gamma\actson X$ and $H\actson Y$ are stably orbit equivalent (see, e.g. the proof of \cite[Corollary 6.4]{horbez2022measure} for an explanation). Now we are done by Proposition~\ref{prop:SOE}.
	\end{proof}

	The following is a consequence of Proposition~\ref{prop:von-neumann-strong} and Proposition~\ref{prop:fund}.	
	\begin{corollary}
		\label{cor:von-neumann}
		Let $A_\Gamma$ be a large-type Artin group of hyperbolic-type. Suppose that
		\begin{enumerate}
			\item $\Gamma$ admits a twistless hierarchy terminating in twistless stars. 
			\item $\Gamma$ is star rigid, and $\Gamma$ is not a complete graph.
		\end{enumerate}
		Let $H$ be a countable group, and let $A_\Gamma\actson X$ and $H\actson Y$ be free, ergodic, measure-preserving actions by Borel automorphisms on standard probability spaces.  
		
		If these two actions are stably $W^*$-equivalence, then they are virtually conjugate (in particular $A_\Gamma$ and $H$ are almost isomorphic).
	\end{corollary}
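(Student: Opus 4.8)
The plan is to obtain this statement as a direct consequence of Proposition~\ref{prop:von-neumann-strong} and Proposition~\ref{prop:fund}, so the proof reduces to checking that the hypotheses of the former hold in the present setting. First I would invoke Proposition~\ref{prop:fund}: since $\Gamma$ admits a twistless hierarchy terminating in twistless stars, the graph $\Gamma$ is fundamental with respect to $D_\Gamma$ in the sense of Definition~\ref{DefiFundGraphI}. This supplies the one non-elementary combinatorial input needed to run Proposition~\ref{prop:von-neumann-strong}.

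Next I would verify the remaining hypotheses of Proposition~\ref{prop:von-neumann-strong}. By assumption $A_\Gamma$ is large-type and hyperbolic-type, and by hypothesis~(2) the graph $\Gamma$ is star rigid and is not a complete graph. It remains to note that $\Gamma$ is connected and has no valence-one vertex; this follows from the existence of a twistless hierarchy terminating in twistless stars, exactly as recorded in the parenthetical preceding Theorem~\ref{thm:main}: an admissible decomposition along an intersection that is neither empty, nor a vertex, nor an edge can never disconnect a graph nor create a leaf, and twistless stars are themselves connected and leafless, so the property propagates up the hierarchy. Hence conditions~(1) and~(2) of Proposition~\ref{prop:von-neumann-strong} are met.

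Finally, applying Proposition~\ref{prop:von-neumann-strong} to the given free, ergodic, measure-preserving actions $A_\Gamma\actson X$ and $H\actson Y$: if these two actions are stably $W^*$-equivalent, then they are virtually conjugate, and in particular $A_\Gamma$ and $H$ are almost isomorphic. This is precisely the assertion of the corollary.

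I do not expect a genuine obstacle at this level, since the substantive content lies inside Proposition~\ref{prop:von-neumann-strong} --- where \cite{horbez2020boundary} is used to deduce uniqueness of $L^\infty(X)$ as a Cartan subalgebra of $L(A_\Gamma\actson X)$ up to unitary conjugacy (which upgrades stable $W^*$-equivalence to stable orbit equivalence, Proposition~\ref{prop:SOE} then closing the argument via orbit equivalence rigidity) --- and inside Proposition~\ref{prop:fund}. The only point that warrants a line of care is the bookkeeping that a twistless hierarchy terminating in twistless stars produces a connected, leafless graph, so that the connectedness and ``valence one'' clauses of Proposition~\ref{prop:von-neumann-strong}(1) are genuinely verified rather than tacitly assumed; everything else is a direct citation.
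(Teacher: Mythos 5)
Your proof is correct and takes exactly the same route as the paper, which simply cites Proposition~\ref{prop:von-neumann-strong} together with Proposition~\ref{prop:fund}. The only added content over the paper's one-line proof is your explicit verification that a twistless hierarchy terminating in twistless stars forces $\Gamma$ to be connected and leafless, a fact the paper records only parenthetically before Theorem~\ref{thm:main}; your inheritance argument for this (connectedness and absence of valence-one vertices propagate through admissible decompositions with intersection larger than an edge) is sound.
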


	\subsection{Classification results}
	\begin{lemma}
		\label{lem:classification}
		Let $A_{\Gamma_1}$ and $A_{\Gamma_2}$ be two large-type Artin groups. Suppose $\Gamma_1$ is connected without leaf vertices, and $\Gamma_1$ is fundamental with respect to $D_{\Gamma_2}$ (Definition~\ref{DefiFundGraphI}).
		
		If $I_{\Gamma_1}$ and $I_{\Gamma_2}$ are isomorphic, then there is a label-preserving isomorphism
		$\Gamma_1\to\Gamma_2$.
	\end{lemma}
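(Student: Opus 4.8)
The plan is to push the given isomorphism $\psi\colon I_{\Gamma_1}\to I_{\Gamma_2}$ down to a simplicial isomorphism $D_{\Gamma_1}\to D_{\Gamma_2}$ of modified Deligne complexes, and then read off $\Gamma_1\cong\Gamma_2$ (with labels) from the local combinatorics of $D_{\Gamma_2}$. First I would dispose of the sporadic cases. If $\Gamma_1=\Delta_{333}$ or $\Gamma_2=\Delta_{333}$, then one of $I_{\Gamma_1},I_{\Gamma_2}$ is the curve graph $\mathcal C_5$; since $\mathcal C_5$ is vertex–transitive while for any connected $\Gamma\neq\Delta_{333}$ the graph $I_\Gamma$ does not admit a transitive automorphism group (by Corollary~\ref{CorollaryExoticSentToExotic} together with the type–preservation recalled below, the types $T,D,E$ cannot be interchanged), one checks that both defining graphs must equal $\Delta_{333}$ and there is nothing to prove. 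So assume $\Gamma_1,\Gamma_2\neq\Delta_{333}$. Since $\Gamma_1$ has no leaves it is not a tree; moreover a $6$–cycle in $I^{TD}_\Gamma$ forces a triangle in $\Gamma$ (Lemma~\ref{Lemma6CyclesinFD}), so $\Gamma_1$ and $\Gamma_2$ are simultaneously triangle–free or not. In the triangle–free case the conclusion is part of Crisp's classification \cite{crisp2005automorphisms}, so I would assume that $\Gamma_1$, hence $\Gamma_2$, contains a triangle. Then Corollary~\ref{CorollaryExoticSentToExotic} and Proposition~\ref{PropAutomorphismsPreserveType} show that $\psi$ restricts to a type–preserving isomorphism $\overline\psi\colon I^{TD}_{\Gamma_1}\to I^{TD}_{\Gamma_2}$, carrying standard trees to standard trees and type $2$ vertices to type $2$ vertices.

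Next I would replay the construction of $h_2^{-1}$ from the proof of Theorem~\ref{thm:2ndiso}, but relative to $D_{\Gamma_2}$. As $\Gamma_1$ has no leaves, every standard tree of $D_{\Gamma_1}$ is unbounded (Remark~\ref{RemStandardTrees}(3)), so for each type–$0$ vertex $y$ of $D_{\Gamma_1}$ the subgraph $G_y$ of $I^{TD}_{\Gamma_1}$ spanned by the type–$2$ vertices and standard trees adjacent to $y$ is a $\Gamma_1$–characteristic full subgraph, canonically identified with $\partial(yK_{\Gamma_1})\cong(\Gamma_1)_{bar}$. Then $\overline\psi(G_y)$ is a $\Gamma_1$–characteristic subgraph of $I^{TD}_{\Gamma_2}$, so by the hypothesis that $\Gamma_1$ is fundamental with respect to $D_{\Gamma_2}$ there is a unique $g_y\in A_{\Gamma_2}$ such that $g_yK_{\Gamma_2}$ contains every type–$D$ vertex of $\overline\psi(G_y)$ together with a type–$1$ representative of every type–$T$ vertex of $\overline\psi(G_y)$; I set $\Psi(y)$ to be the type–$0$ vertex of $g_yK_{\Gamma_2}$ and extend $\Psi$ to type–$1$ and type–$2$ vertices exactly as in the proof of Theorem~\ref{thm:2ndiso}. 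This yields a type–preserving simplicial map $\Psi\colon D_{\Gamma_1}\to D_{\Gamma_2}$, and $y\mapsto g_y$ is injective because distinct type–$0$ vertices of $D_{\Gamma_1}$ are adjacent to distinct sets of type–$2$ vertices and $\overline\psi$ is a bijection.

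The crux — and the step I expect to be the main obstacle — is to upgrade $\Psi$ to an isomorphism. It suffices to show that for every $y$ one has $\overline\psi(G_y)=\partial(g_yK_{\Gamma_2})$ (the full boundary, $\cong(\Gamma_2)_{bar}$) and that $y\mapsto g_y$ is onto. The first assertion says that the full subgraph $\overline\psi(G_y)\cong(\Gamma_1)_{bar}$, which a priori only sits inside the full subgraph $\partial(g_yK_{\Gamma_2})\cong(\Gamma_2)_{bar}$, in fact exhausts it; granting it, the induced type–respecting isomorphism $(\Gamma_1)_{bar}\to(\Gamma_2)_{bar}$ matches vertices of $\Gamma$ with vertices and edges with edges. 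I would prove both assertions together by exploiting that $\overline\psi$ is a global bijection with $I^{TD}_{\Gamma_1}=\bigcup_y G_y$: if a type–$2$ vertex of $\partial(g_yK_{\Gamma_2})$ failed to lie in $\overline\psi(G_y)$, its $\overline\psi^{-1}$–image $w$ would not be adjacent to the type–$0$ vertex $y$, and confronting the fundamental domains of $D_{\Gamma_2}$ that meet $\overline\psi(G_{y'})$ for the various $y'$ whose $G_{y'}$ contain $w$ with the uniqueness clause in Definition~\ref{DefiFundGraphI} gives a contradiction; dually every fundamental domain of $D_{\Gamma_2}$ is attained. This bookkeeping is the delicate part. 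Once it is in place, $\Psi$ is bijective on each vertex type and preserves adjacency in both directions, hence is a simplicial isomorphism (equivalently, $\Psi$ is a locally bijective simplicial map into the simply connected complex $D_{\Gamma_2}$, hence a covering, hence an isomorphism).

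Finally, label–preservation. The simplicial isomorphism $\Psi\colon D_{\Gamma_1}\to D_{\Gamma_2}$ restricts, on the link of any type–$0$ vertex, to an isomorphism $(\Gamma_1)_{bar}\to(\Gamma_2)_{bar}$ taking primary (type–$1$) vertices to primary vertices, hence induces a graph isomorphism $\Gamma_1\to\Gamma_2$. The label $m_e$ of an edge $e$ is recovered from $D_\Gamma$ as $\tfrac14$ of the girth of the link of the corresponding type–$2$ vertex (equivalently, transport $\Psi$ to a cellular isomorphism $C_{\Gamma_1}\to C_{\Gamma_2}$ via the first comparison homomorphism of Definition~\ref{def:first comparison}, under which the $2m_e$–gon $2$–cells encode the labels), and $\Psi$ carries links of type–$2$ vertices isomorphically onto links of type–$2$ vertices; therefore the graph isomorphism $\Gamma_1\to\Gamma_2$ is label–preserving, completing the proof.
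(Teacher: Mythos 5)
Your opening reductions (the $\Delta_{333}$ case, then using Corollary~\ref{CorollaryExoticSentToExotic} and Proposition~\ref{PropAutomorphismsPreserveType} to get a type--preserving isomorphism $\overline\psi\colon I^{TD}_{\Gamma_1}\to I^{TD}_{\Gamma_2}$) are exactly what the paper does, and your construction of the simplicial map $\Psi\colon D_{\Gamma_1}\to D_{\Gamma_2}$ by replaying the $h_2^{-1}$ recipe with the hypothesis ``$\Gamma_1$ fundamental w.r.t.\ $D_{\Gamma_2}$'' is the right analogue of the map the paper produces (which it obtains by citing Crisp's argument). The final paragraph on recovering the labelled graph isomorphism from the action on links of type--$0$ and type--$2$ vertices also matches the paper.

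The genuine gap is the step you yourself flagged: showing $\Psi$ is onto (equivalently, that $\overline\psi(G_y)$ exhausts $\partial(g_yK_{\Gamma_2})$ and all fundamental domains of $D_{\Gamma_2}$ occur). Your proposed ``bookkeeping'' contradiction from the uniqueness clause of Definition~\ref{DefiFundGraphI} does not go through as stated: fundamentality asserts that each $\Gamma_1$--characteristic subgraph picks out a \emph{unique} $g\in A_{\Gamma_2}$, but says nothing in the reverse direction, so knowing that $w'=\overline\psi(w)$ lies in several $g_{y'}K_{\Gamma_2}$ as well as in $g_yK_{\Gamma_2}$ produces no collision. (For the same reason your claim that $y\mapsto g_y$ is injective needs more care: two distinct $y,y'$ could a priori have $\overline\psi(G_y),\overline\psi(G_{y'})$ landing in the same translate of $K_{\Gamma_2}$ as different $\Gamma_1$--characteristic subgraphs if $\Gamma_1$ were a proper subgraph of $\Gamma_2$.) The paper closes this gap by a different route: it first shows $\Gamma_2$ cannot have a leaf vertex, by looking at the closed star of a type--$2$ vertex and observing that $\varphi$ must carry triangles without free edges to triangles without free edges, whereas a leaf in $\Gamma_2$ would force a free edge; once $\Gamma_2$ has no leaves, $D_{\Gamma_2}$ has no free edges, hence the geodesic extension property, and then the isometric embedding $\varphi\colon D_{\Gamma_1}\to D_{\Gamma_2}$ is surjective by the argument of \cite[Theorem 8.7]{horbez2020boundary}. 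You will need to supply an argument of this kind (or something equivalent) for surjectivity; the purely combinatorial confrontation of fundamental domains you sketch does not suffice.
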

	
	\begin{proof}
		First we consider the case $\Gamma_1=\Delta_{333}$. Then $I_{\Gamma_1}$ is isomorphic to the curve graph of the $5$-punctured sphere $\Sigma_5$, see Definition~\ref{defFstar}.
		As the mapping class group of $\Sigma_5$ acts transitively on the set of homotopy classes of essential simple close curves on $\Sigma_5$, we know the automorphism group of $I_{\Gamma_1}$ acts transitively on the vertex set of $I_{\Gamma_1}$. Note that $\Gamma_2$ must be connected, otherwise $I_{\Gamma_2}$ is not connected. If $\Gamma_2\neq\Delta_{333}$, then Corollary~\ref{CorollaryExoticSentToExotic} implies that $\aut(I_{\Gamma_2})$ does not act transitively on the vertex set of $I_{\Gamma_2}$, which contradicts that $I_{\Gamma_1}$ and $I_{\Gamma_2}$ are isomorphic as graphs. Thus $\Gamma_2=\Delta_{333}$.
		
		Now we assume $\Gamma_1\neq\Delta_{333}$. By the argument in the previous paragraph, we can also assume $\Gamma_2\neq\Delta_{333}$.
		Let $\alpha:I_{\Gamma_1}\to I_{\Gamma_2}$ be an isomorphism. It follows from Corollary~\ref{CorollaryExoticSentToExotic} that $\alpha$ induces an bijection of vertices of type $E$. Thus $\alpha$ induces an isomorphism of graphs $I^{TD}_{\Gamma_1}\to I^{TD}_{\Gamma_2}$, which we still denote by $\alpha$. As $\alpha$ preserves types of vertices by Proposition~\ref{PropAutomorphismsPreserveType}, we know it gives a 1-1 correspondence between type 2 vertices of $D_{\Gamma_1}$ and type 2 vertices of $D_{\Gamma_2}$. By the same argument as in \cite[pp. 1435]{crisp2005automorphisms}, this correspondence between type 2 vertices extends to an isometric embedding $\varphi:D_{\Gamma_1}\to D_{\Gamma_2}$.
		
		Next we show $\Gamma_2$ does not have any leaf vertices. Suppose $\Gamma_2$ has a leaf vertex $a$. Let $e=\overline{ab}$ be the edge of $\Gamma_2$ containing $e$. Let $v\in D_{\Gamma_2}$ be a vertex corresponding to $gA_e$ for some $g\in A_{\Gamma_2}$. Take $v'\in D_{\Gamma_1}$ with $\varphi(v')=v$ (such $v'$ exists as $\alpha$ induces a bijection of type 2 vertices). Then $\varphi$ sends the closed star of $v'$ to closed star of $v$ isometrically. As $\Gamma_1$ does not contain leaf vertices, each triangle of $D_{\Gamma_1}$ in the star of $v$ cannot contain any free edges (a free edge is an edge which is contained in exactly one triangle). On the other hand, as $a$ is a leaf vertex of $\Gamma_2$, there is a triangle in the closed star of $v$ such that this triangle contains a free edge. As $\varphi$ sends triangles without free edges to triangles without free edges, this leads to a contradiction. Thus $\Gamma_2$ does not have any leaf vertices.
		
		It follows that $D_{\Gamma_2}$ does not contain any free edges, hence it has geodesic extension property. Now we argue as \cite[Theorem 8.7]{horbez2020boundary} to see that $\varphi$ is actually surjective, hence it is an isometry. Now we restrict $\varphi$ to the closed star of a type 0 vertex and argue as in the last paragraph of the proof of Proposition~\ref{prop:qi} to see that $\varphi$ induces a label-preserving isomorphism between $\Gamma_1$ and $\Gamma_2$.
	\end{proof}
	
	\begin{corollary}
		\label{cor:classification1}
		Suppose $A_\Gamma$ and $A_{\Gamma'}$ are two large-type Artin groups such that $\Gamma$ admits a twistless hierarchy terminating in twistless stars. Then the following are equivalent:
		\begin{enumerate}
			\item there is a label-preserving isomorphism between $\Gamma$ and $\Gamma'$;
			\item $A_\Gamma$ and $A_{\Gamma'}$ are isomorphic;
			\item $A_\Gamma$ and $A_{\Gamma'}$ are commensurable;
			\item $A_\Gamma$ and $A_{\Gamma'}$ are quasi-isometric.
		\end{enumerate}
		Then there is a label-preserving isomorphism between their defining graphs.
	\end{corollary}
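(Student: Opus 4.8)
The plan is to prove the cycle of implications $(1)\Rightarrow(2)\Rightarrow(3)\Rightarrow(4)\Rightarrow(1)$; this evidently suffices (the final sentence of the statement is just a restatement of $(1)$). The first three implications are soft. For $(1)\Rightarrow(2)$, a label-preserving isomorphism $f\colon\Gamma\to\Gamma'$ carries the standard presentation of $A_\Gamma$ to that of $A_{\Gamma'}$: it sends vertices to vertices, and since it preserves edge labels it sends Artin relators to Artin relators, so it induces an isomorphism $A_\Gamma\cong A_{\Gamma'}$. For $(2)\Rightarrow(3)$, an isomorphism is in particular a commensuration (take the finite-index subgroups to be the whole groups). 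For $(3)\Rightarrow(4)$, Artin groups are finitely generated (the vertex set of the finite graph is a finite generating set), a finitely generated group is quasi-isometric to each of its finite-index subgroups via the inclusion, and quasi-isometry is an equivalence relation; so a common finite-index subgroup of $A_\Gamma$ and $A_{\Gamma'}$ witnesses that the two groups are quasi-isometric.

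The substantial step is $(4)\Rightarrow(1)$. First, the hierarchy hypothesis on $\Gamma$ forces $\Gamma$ to be connected with no leaf vertex, as recorded in the discussion preceding Theorem~\ref{thm:main}. Next, by the construction of Definition~\ref{def:Phi}, which rests on the quasi-isometry invariance of stable $\mathbb Z$-subgroups and of commutation between them (\cite[Theorems~10.11 and~10.16]{huang2017quasi}), a quasi-isometry $A_\Gamma\to A_{\Gamma'}$ induces a graph isomorphism $I_\Gamma\cong I_{\Gamma'}$. I then apply Lemma~\ref{lem:classification} with $\Gamma_1=\Gamma$ and $\Gamma_2=\Gamma'$. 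Its hypotheses ask that $\Gamma$ be connected without leaf vertices, which we have just observed, and that $\Gamma$ be fundamental relatively to $D_{\Gamma'}$. For the latter, note that Lemmas~\ref{Lemma6CyclesFundamental}, \ref{LemmaConeOverCycle}, \ref{LemmaNoSepVer} and~\ref{LemmaExtendingToTwistlessHierarchy} are all formulated for an arbitrary ambient large-type Artin group, so the proof of Proposition~\ref{prop:fund} goes through verbatim with $D_{\Gamma'}$ in place of $D_\Gamma$; since $\Gamma$ admits a twistless hierarchy terminating in twistless stars, we conclude that $\Gamma$ is fundamental relatively to $D_{\Gamma'}$. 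Lemma~\ref{lem:classification} then yields a label-preserving isomorphism $\Gamma\to\Gamma'$, closing the cycle. (The case $\Gamma=\Delta_{333}$, which is itself a twistless star, is covered as well: there Lemma~\ref{lem:classification} forces $\Gamma'=\Delta_{333}$ directly.)

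The main obstacle is entirely packaged inside Lemma~\ref{lem:classification}, which we may assume: one must promote the combinatorial isomorphism $I_\Gamma\cong I_{\Gamma'}$ to an isometry of modified Deligne complexes. The delicate ingredients there are that such an isomorphism preserves vertex types (Corollary~\ref{CorollaryExoticSentToExotic} together with Proposition~\ref{PropAutomorphismsPreserveType}), so that type-$2$ vertices — hence, via the fundamentality of $\Gamma$ relative to $D_{\Gamma'}$, entire fundamental domains together with their incidence data — correspond; that $\Gamma'$ inherits the absence of leaf vertices (else $D_{\Gamma'}$ would contain free edges, incompatible with this correspondence), so that $D_{\Gamma'}$ has the geodesic extension property; and that an isometric embedding between complexes with geodesic extension that is already onto on a spanning set is surjective, whence an isometry, from which the label-preserving isomorphism of the $\Gamma$'s is read off on the star of a type-$0$ vertex as in the last paragraph of the proof of Proposition~\ref{prop:qi}. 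Beyond invoking Lemma~\ref{lem:classification}, the one point in the present argument that genuinely requires care is verifying the ambient-independence of the notion ``fundamental'' used in the third step above, i.e.\ that the hierarchy lemmas really do not use any property of the ambient graph.
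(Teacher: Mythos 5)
Your proof is correct and follows the same route as the paper: the forward implications $(1)\Rightarrow(2)\Rightarrow(3)\Rightarrow(4)$ are elementary, and for $(4)\Rightarrow(1)$ you pass from the quasi-isometry to an isomorphism $I_\Gamma\cong I_{\Gamma'}$ via \cite[Theorem~10.16]{huang2017quasi} and then invoke Lemma~\ref{lem:classification} together with Proposition~\ref{prop:fund}, which is exactly what the paper's one-line proof does. One point you handle more carefully than the paper's terse statement: Lemma~\ref{lem:classification} requires that $\Gamma$ be fundamental relative to $D_{\Gamma'}$, whereas Proposition~\ref{prop:fund} as stated only certifies that $\Gamma$ is fundamental relative to its own $D_\Gamma$; your observation that Lemmas~\ref{Lemma6CyclesFundamental}, \ref{LemmaConeOverCycle}, \ref{LemmaNoSepVer} and \ref{LemmaExtendingToTwistlessHierarchy} are all ambient-independent (they fix an arbitrary large-type $A_\Gamma$ and vary the subgraph $\Gamma'$), so that the proof of Proposition~\ref{prop:fund} gives fundamentality of $\Gamma$ relative to any $D_{\Gamma'}$, is precisely the inference the paper leaves implicit and is worth spelling out. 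Your remark that the $\Delta_{333}$ case is already absorbed into Lemma~\ref{lem:classification} is also correct.
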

	
	\begin{proof}
		$1$ implies 2, $2$ implies $3$, and $3$ implies $4$ are trivial. It suffices to prove $4$ implies 1, which follows from Proposition~\ref{prop:fund}, Lemma~\ref{lem:classification} and \cite[Theorem 10.16]{huang2017quasi}.
	\end{proof}
	
	\begin{corollary}
		\label{cor:classification2}
		Suppose $A_\Gamma$ and $A_{\Gamma'}$ are two large-type and hyperbolic-type Artin groups such that $\Gamma$ admits a twistless hierarchy terminating in twistless stars. 
		Then the following are equivalent.
		\begin{enumerate}
			\item there is a label-preserving isomorphism between $\Gamma$ and $\Gamma'$ ;
			\item $A_\Gamma$ and $A_{\Gamma'}$ admits orbit equivalent or stably orbit equivalent free, ergodic, measure-preserving actions by Borel automorphisms on standard probability spaces;
			\item $A_\Gamma$ and $A_{\Gamma'}$ are measure equivalent.
		\end{enumerate}
		If in addition both $\Gamma$ and $\Gamma'$ are not complete graphs, then the above items are equivalent to that the cross product von-Neumann algebras $L(A_\Gamma\actson X)$ and $L(A_{\Gamma'}\actson Y)$ have isomorphic amplifications, for some free, ergodic, measure-preserving actions by Borel automorphisms on standard probability spaces $A_\Gamma\actson X$ and $A_{\Gamma'}\actson Y$.
	\end{corollary}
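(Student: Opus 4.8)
The plan is to prove the cycle $1 \Rightarrow 2 \Rightarrow 3 \Rightarrow 1$, and then treat the von Neumann algebra clause as a separate two–way implication attached to this cycle. The implications $1\Rightarrow 2$ and $2\Rightarrow 3$ are soft. A label-preserving isomorphism $\Gamma\to\Gamma'$ induces a group isomorphism $A_\Gamma\cong A_{\Gamma'}$; transporting any free ergodic measure-preserving action of $A_\Gamma$ through this isomorphism yields conjugate, hence orbit equivalent, actions, so $1\Rightarrow 2$. For $2\Rightarrow 3$, (stable) orbit equivalence of essentially free p.m.p.\ actions of two countable groups implies that the groups are measure equivalent; this is the standard correspondence, e.g.\ \cite[Theorem~2.5]{furman2009survey}.

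The substantive implication is $3\Rightarrow 1$. Since $\Gamma$ admits a twistless hierarchy terminating in twistless stars, it is fundamental by Proposition~\ref{prop:fund}, so Corollary~\ref{cor:ME} (i.e.\ Proposition~\ref{prop:ME} fed by \cite{horbez2020boundary}) applies: any countable group measure equivalent to $A_\Gamma$ is, up to finite normal subgroups and finite index, a uniform lattice in $\aut(C_\Gamma)$. In particular $A_{\Gamma'}$ is virtually a uniform lattice in $\aut(C_\Gamma)$, and as $A_\Gamma$ is itself a uniform lattice in $\aut(C_\Gamma)$ via its proper cocompact action on $C_\Gamma$, this forces $A_{\Gamma'}$ to be quasi-isometric to $A_\Gamma$. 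At this point I would invoke \cite[Theorem~10.16]{huang2017quasi} to obtain a graph isomorphism $I_\Gamma\cong I_{\Gamma'}$ and then Lemma~\ref{lem:classification} to upgrade it to a label-preserving isomorphism $\Gamma\to\Gamma'$ (equivalently, one may simply quote Corollary~\ref{cor:classification1} once quasi-isometry has been established). The one point requiring care is the verification of the hypotheses of Lemma~\ref{lem:classification}: $\Gamma$ is connected without leaf vertices by the twistless-hierarchy assumption, and the graph isomorphism $I_\Gamma\cong I_{\Gamma'}$ forces $\Gamma'$ to have no leaf vertices and produces an isometric embedding $D_\Gamma\hookrightarrow D_{\Gamma'}$, exactly as inside the proof of that lemma, which is what lets the fundamentality of $\Gamma$ be used against $D_{\Gamma'}$.

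For the von Neumann algebra clause, assume in addition that $\Gamma$ (and $\Gamma'$) is not complete. The implication $1\Rightarrow$ (isomorphic amplifications) is again transport of structure: $A_\Gamma\cong A_{\Gamma'}$ gives isomorphic cross-product algebras for corresponding actions, hence isomorphic amplifications. Conversely, isomorphic amplifications means the actions $A_\Gamma\actson X$ and $A_{\Gamma'}\actson Y$ are stably $W^*$-equivalent; since $\Gamma$ is not complete, $A_\Gamma$ admits amalgamated splittings satisfying the hypotheses of Ioana's Cartan-rigidity theorem, so by \cite[Theorem~11.1]{horbez2020boundary} the subalgebra $L^\infty(X)$ is the unique Cartan subalgebra of $L(A_\Gamma\actson X)$ up to unitary conjugacy. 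As in the proof of \cite[Corollary~6.4]{horbez2022measure}, this promotes stable $W^*$-equivalence to stable orbit equivalence, i.e.\ condition $2$, closing the loop.

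I expect the main obstacle to be precisely the step $3\Rightarrow 1$: distilling from an abstract measure-equivalence coupling a genuine graph isomorphism $I_\Gamma\cong I_{\Gamma'}$. This is where the entire ``fundamental'' apparatus of the paper is essential — it is Proposition~\ref{prop:fund} together with the comparison homomorphisms of Section~\ref{subsecComparison} (identifying $\aut(I_\Gamma)$ with $\aut(C_\Gamma)$) that allow the Horbez--Huang measure-equivalence rigidity theorem to be coupled with the combinatorial classification in Lemma~\ref{lem:classification}. Everything else is formal manipulation of known correspondences (SOE $\leftrightarrow$ ME, and $W^*$-equivalence $\leftrightarrow$ OE modulo uniqueness of the Cartan subalgebra).
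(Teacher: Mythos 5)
Your implications $1\Rightarrow 2$, $2\Rightarrow 3$, and the $W^*$-clause all match what the paper does (the $W^*$-clause is precisely the argument of Proposition~\ref{prop:von-neumann-strong}, as the paper says). Where you diverge from the paper, and where there is a real gap, is in $3\Rightarrow 1$.

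You route $3\Rightarrow 1$ through quasi-isometry: you quote Corollary~\ref{cor:ME} as saying that any countable group ME to $A_\Gamma$ is virtually a \emph{uniform} lattice in $\aut(C_\Gamma)$, then observe that $A_\Gamma$ is itself a uniform lattice, and conclude that $A_{\Gamma'}$ is QI to $A_\Gamma$, thereby reducing to Corollary~\ref{cor:classification1}. The difficulty is that Corollary~\ref{cor:ME} (and the underlying Theorem~\ref{theo:ME}) only asserts ``virtually a lattice in $\aut(C_\Gamma)$'', with no uniformity claim; the paper reserves the word ``uniform'' for the QI-rigidity statements in Theorem~\ref{theo:qi}, where uniformity is produced by a genuine geometric (quasi-)action on $C_\Gamma$, not by an ME coupling. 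A group $\aut(C_\Gamma)$ of this type is a totally disconnected locally compact group acting cocompactly on a $2$-complex and there is no reason a priori why all of its lattices are cocompact; two lattices in the same locally compact group are always ME but need not be QI. So the step ``$A_{\Gamma'}$ is virtually a lattice in $\aut(C_\Gamma)$, hence QI to $A_\Gamma$'' is not justified by what is proved in the paper.

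The paper's own proof of $3\Rightarrow 1$ avoids this entirely. Rather than first lattifying and then transferring to QI, it combines Proposition~\ref{prop:fund} and Lemma~\ref{lem:classification} with \cite[Theorem~10.1]{horbez2020boundary}, which is the ME/OE rigidity statement of Horbez--Huang that directly extracts from an ME coupling an isomorphism $I_\Gamma\cong I_{\Gamma'}$ (this is the finer result than the ``lattice envelope'' Theorem~10.4 underlying Corollary~\ref{cor:ME}, which you used). With that isomorphism in hand, Lemma~\ref{lem:classification}, fed by fundamentality of $\Gamma$, gives the label-preserving isomorphism $\Gamma\to\Gamma'$. Your remark about checking the hypotheses of Lemma~\ref{lem:classification} (leaflessness of $\Gamma'$, the isometric embedding of Deligne complexes) is on point, but the fix for your proof is to import the stronger ME rigidity theorem directly rather than to manufacture a quasi-isometry.
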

	
	\begin{proof}
		For the first part, it suffices to prove $3$ implies 1, which follows from Lemma~\ref{lem:classification}, Proposition~\ref{prop:fund} and \cite[Theorem 10.1]{horbez2020boundary}. The in addition part is similar to the proof of Proposition~\ref{prop:von-neumann-strong}.
	\end{proof}

	\bibliographystyle{alpha}
	\bibliography{mybib}

\end{document}